\documentclass[reqno, 12pt]{amsart} %reqno, 公式编号靠右; 12pt, 正文字体; amsart, 美国数学会数学类文章

\usepackage[expansion=false]{microtype} % 提高微排版质量，使文档更美观，减少断行问题
\usepackage{amsfonts,amsthm,amsmath,amssymb,amscd,mathrsfs} % 这些是 AMS（American Mathematical Society）数学排版宏包
\usepackage{mathtools,bm,aliascnt,enumitem,array} % 数学公式增强、数学粗体、计数器别名、自定义列表及数组/表格功能
\allowdisplaybreaks   %允许公式断行
\usepackage{latexsym} % 提供额外的数学符号
\usepackage[colorlinks=true,linkcolor=blue,citecolor=red,urlcolor=black]{hyperref} % 处理超链接
\usepackage[nameinlink,noabbrev]{cleveref} % 智能交叉引用：引用名称包含在超链接中，并使用完整名称而非缩写
\usepackage{graphicx}
\usepackage{indentfirst} % 让段落的第一行自动缩进
\usepackage{cite} % 改进文献引用格式，使多个引用合并，如 [1,2,3] 变为 [1–3]
\usepackage{color} % 允许在文档中使用颜色
\usepackage{lmodern} % 使用 Latin Modern 字体，提高可读性和输出质量
\usepackage[pagewise]{lineno} % 提供开启行号功能
\theoremstyle{plain} % 使用 plain 样式：标题通常为粗体，正文为斜体
\newtheorem{Thm}{Theorem}[section] % 定义定理环境 Thm，按 section 分节编号，如 Theorem 2.1

\newaliascnt{Prop}{Thm} % Proposition 与 Theorem 共用同一计数器
\newtheorem{Prop}[Prop]{Proposition} % 定义命题环境 Proposition
\aliascntresetthe{Prop} % 恢复 Proposition 的正确引用编号格式，供 cleveref 使用

\newaliascnt{Lem}{Thm} % Lemma 与 Theorem 共用同一计数器
\newtheorem{Lem}[Lem]{Lemma} % 定义引理环境 Lemma
\aliascntresetthe{Lem} % 恢复 Lemma 的正确引用编号格式，供 cleveref 使用

\newaliascnt{Cor}{Thm} % Corollary 与 Theorem 共用同一计数器
\newtheorem{Cor}[Cor]{Corollary} % 定义推论环境 Corollary
\aliascntresetthe{Cor} % 恢复 Corollary 的正确引用编号格式，供 cleveref 使用

\theoremstyle{definition} % 使用 definition 样式：标题通常为粗体，正文为正体

\newaliascnt{Def}{Thm} % Definition 与 Theorem 共用同一计数器
\newtheorem{Def}[Def]{Definition} % 定义 Definition 环境
\aliascntresetthe{Def} % 恢复 Definition 的正确引用编号格式，供 cleveref 使用

\theoremstyle{remark} % 使用 remark 样式：标题通常为斜体，正文为正体

\newaliascnt{Rem}{Thm} % Remark 与 Theorem 共用同一计数器
\newtheorem{Rem}[Rem]{Remark} % 定义 Remark 环境
\aliascntresetthe{Rem} % 恢复 Remark 的正确引用编号格式，供 cleveref 使用

\crefname{Thm}{theorem}{theorems} % \cref 引用定理时使用 theorem/theorems
\Crefname{Thm}{Theorem}{Theorems} % \Cref 引用定理时首字母大写

\crefname{Prop}{proposition}{propositions} % Proposition 的小写引用名称
\Crefname{Prop}{Proposition}{Propositions} % Proposition 的大写引用名称

\crefname{Lem}{lemma}{lemmas} % Lemma 的小写引用名称
\Crefname{Lem}{Lemma}{Lemmas} % Lemma 的大写引用名称

\crefname{Cor}{corollary}{corollaries} % Corollary 的小写引用名称
\Crefname{Cor}{Corollary}{Corollaries} % Corollary 的大写引用名称

\crefname{Rem}{remark}{remarks} % Remark 的小写引用名称
\Crefname{Rem}{Remark}{Remarks} % Remark 的大写引用名称

\crefname{Def}{definition}{definitions} % Definition 的小写引用名称
\Crefname{Def}{Definition}{Definitions} % Definition 的大写引用名称

\numberwithin{equation}{section} % 公式按 section 分节编号，如 (2.1)、(2.2)

\newcommand{\beq}{\begin{equation}}
	\newcommand{\eeq}{\end{equation}}
\newcommand{\ben}{\begin{eqnarray}}
	\newcommand{\een}{\end{eqnarray}}
\newcommand{\beno}{\begin{eqnarray*}}
	\newcommand{\eeno}{\end{eqnarray*}}

\newcommand{\px}{\partial_x}
\newcommand{\py}{\partial_y}
\newcommand{\pz}{\partial_z}
\newcommand{\pt}{\partial_t}

\newcommand{\abs}[1]{\lvert#1\rvert}  % \abs{x} 生成 |x|，确保正确的数学格式

\newcommand{\R}{\mathbb R}
\newcommand{\T}{\mathbb T}
\newcommand{\ee}{\mathrm e}

\newcommand{\pa}{\partial}
\newcommand{\nablaP}{\nabla_{\!\perp}}
\newcommand{\DeltaP}{\Delta_{\!\perp}}
\newcommand{\PP}{\mathbb P_{\!\perp}}

\newcommand{\norm}[2]{\|#1\|_{#2}}

\usepackage[
letterpaper,                 % 纸张尺寸 (amsart 默认，但明确指定更好)
textheight=8.35in,           % 正文高度
headsep=20pt,                % 页眉与正文间距 (您之前注释掉的)
footskip=36pt,               % 页脚与正文间距
marginparwidth=0pt,          % 禁用页边注释
marginparsep=0pt,            % 禁用页边注释
left=0.75in, 
right=0.75in
]{geometry}
\title[Nonlinear stability of 3D Couette flow]{Nonlinear stability of 3D Couette flow on $\mathbb T\times\mathbb R^2$ under critical transverse low-frequency control}

\author{
    Yubo~Chen$^{1}$\and
	Guoxu~Yang$^{1}$\and
    Jianbo~Yu$^{1,*}$
}

\thanks{$^1$School of Mathematical Sciences, Dalian University of Technology, Dalian 116024, China.}

\thanks{E-mail addresses:  
    1220823215@mail.dlut.edu.cn (Yubo~Chen),
	guoxu\_dlut@outlook.com (Guoxu~Yang),
    yujb@mail.dlut.edu.cn (Jianbo~Yu)
    .}

\thanks{$^*$Author to whom correspondence should be addressed.}

\subjclass[2020]{Primary 35Q30, 35Q35; Secondary 76E05, 76E07}

\keywords{Couette flow, Navier--Stokes equations, stability threshold, lift-up effect}

\hypersetup{pdftitle={Nonlinear stability of 3D Couette flow on $\mathbb T\times\mathbb R^2$ under critical transverse low-frequency control}, pdfauthor={Yubo Chen, Guoxu Yang, Jianbo Yu}}

\begin{document}
	
	\begin{abstract}
We establish nonlinear stability of three-dimensional Couette flow
for the incompressible Navier--Stokes equations on
$\mathbb T\times\mathbb R^2$.
Divergence-free perturbations satisfying
$\|u_{\rm in}\|_{H^2}
+\|u^\perp_{0,\rm in}\|_{\dot B^{-2}_{2,\infty}}\leq c_*\nu$
generate unique global $H^2$ solutions that remain close to
Couette flow, where $u^\perp_{0,\rm in}$ denotes the streamwise
average of the transverse velocity.
This result retains the $O(\nu)$ stability threshold despite
the absence of a spanwise spectral gap.
The nonzero streamwise modes exhibit enhanced dissipation,
their wall-normal component satisfies a viscosity-uniform
time-integrated inviscid-damping estimate, and the transverse
zero mode decays in $L^2$ at rate $(1+\nu t)^{-1}$.
The endpoint space $\dot B^{-2}_{2,\infty}$ controls the
transverse low frequencies, accommodates divergence-form
nonlinear forcing, and is critical for linear lift-up
from the wall-normal zero mode.
The proof combines propagation of this endpoint norm,
Fourier splitting, and a two-level roll-to-streak representation
with an adaptation of the nonzero-mode framework of Wei--Zhang (CPAM, 2021).
\end{abstract}
	
	\maketitle

	\tableofcontents
	
	\section{Introduction}

\subsection{Background and previous results}

Consider the incompressible Navier--Stokes equations on
$\Omega=\T_x\times\R_y\times\R_z$:
\begin{equation*}
 \pt v-\nu\Delta v+v\cdot\nabla v+\nabla p=0,
 \qquad \nabla\cdot v=0,
\end{equation*}
where $0<\nu\leq1$ is the viscosity. The variables $x$, $y$, and $z$
denote the streamwise, wall-normal, and spanwise directions, respectively,
and $\T=\mathbb R/\mathbb Z$, so $|\T|=1$.
We use angular Fourier frequencies $k\in2\pi\mathbb Z$ and
$(\eta,\ell)\in\mathbb R^2$, with $\partial_x,\partial_y,\partial_z$
represented by $ik,i\eta,i\ell$, respectively.
The Couette flow $U=(y,0,0)$ is a stationary solution.
With $v=U+u$, the perturbation equation becomes
\begin{equation}
 \pt u-\nu\Delta u+y\px u+u^2e_1+u\cdot\nabla u+\nabla p=0,
 \qquad \nabla\cdot u=0.
 \label{eq:pert}
\end{equation}
We ask how small the initial perturbation must be, as
$\nu\to0$, for the solution to remain close to Couette flow for all time.

The linear dynamics are governed by enhanced dissipation, inviscid
damping, and lift-up. These mechanisms act on different components,
so the streamwise average and its complement are distinguished from
the outset. For a scalar
or vector field $f$, define
\begin{equation}
 f_{0}(y,z):=P_{0}f:=\int_{\T}f(x,y,z)\,dx,
 \qquad f_{\neq}:=P_{\neq}f:=f-f_{0}.
 \label{eq:mode-convention}
\end{equation}
Write
\[
 x_\perp=(y,z),\qquad u^\perp=(u^2,u^3),\qquad
 \nablaP=(\partial_y,\partial_z),\qquad
 \DeltaP=\partial_y^2+\partial_z^2.
\]
The transverse average $u^\perp_{0}$ is called the roll, and
$u^1_{0}$ the streak. Spectral stability alone does not describe the linear evolution. Plane Couette
flow in a finite channel is spectrally stable at every Reynolds number
\cite{Romanov1973}, but its nonnormal linearized operator permits
substantial transient growth \cite{TrefethenEtAl1993}. The three
mechanisms can be seen directly from the linearized equations.

\emph{Enhanced dissipation.}
For the scalar equation $(\pt-\nu\Delta+y\px)f=0$, the coordinates
$X=x-ty$ remove the transport. If $(k,\eta,\ell)$ are the Fourier
frequencies conjugate to $(X,y,z)$, the viscous multiplier is
\[
 \exp\!\left[-\nu\int_0^t
 \bigl(k^2+(\eta-ks)^2+\ell^2\bigr)\,ds\right]
 \leq \exp\!\left(-\frac{\nu k^2t^3}{12}\right).
\]
For $k\neq0$, shear mixing creates progressively shorter spatial
scales, on which diffusion becomes effective. This gives the time scale
$\nu^{-1/3}$, much shorter than the diffusive scale $\nu^{-1}$ at unit
frequency. Mixing itself does not dissipate scalar $L^2$ energy; the
faster decay comes from its interaction with viscosity. General connections between mixing and enhanced dissipation were
established in \cite{ConstantinKiselevRyzhikZlatos2008}.
Quantitative bounds relating mixing rates to dissipation times
were developed in
\cite{FengIyer2019,CotiZelatiDelgadinoElgindi2020}.
For shear flows, effective approaches include hypocoercivity
\cite{BedrossianCotiZelati2017}, resolvent estimates
\cite{Wei2021Resolvent}, and quantitative versions of
H\"ormander's hypoellipticity theory
\cite{AlbrittonBeekieNovack2022}.
More recently, enhanced dissipation and Taylor dispersion have
been studied together for higher-dimensional parallel shear flows
\cite{CotiZelatiGallay2023}, and a probabilistic approach based on
Malliavin calculus has been developed in \cite{Villringer2025}.
For a recent survey of the underlying mechanisms, quantitative
methods, and applications to nonlinear PDEs, see
\cite{MazzucatoFengNobili2026} and the references therein.

\emph{Inviscid damping.}
Mixing also reduces velocity through the elliptic relation between
velocity and vorticity. For the Euler equations linearized around
three-dimensional Couette flow, the wall-normal component satisfies
\[
 (\pt+y\px)\Delta u^2_{\neq}=0.
\]
Although $\Delta u^2_{\neq}$ is transported without dissipation,
recovering $u^2_{\neq}$ introduces the multiplier
$[k^2+(\eta-kt)^2+\ell^2]^{-1}$. The resulting cancellation gives
velocity damping even when viscosity is absent. In the nonlinear
problem, this effect supplies time-integrated bounds that are uniform
as $\nu\to0$.

In two dimensions, linear inviscid damping for monotone shear profiles
under suitable regularity and spectral assumptions has been established in
\cite{Zillinger2017,WeiZhangZhao2018,Jia2020Monotone}.
Connections with enhanced dissipation for the linearized viscous
equations are studied in
\cite{GrenierNguyenRoussetSoffer2020,ChenWeiZhang2023}.
For non-monotone shear profiles, linear inviscid damping and
vorticity depletion at stationary streamlines are investigated in
\cite{WeiZhangZhao2019,IonescuIyerJia2024}.
More recently, viscosity-uniform damping and vorticity depletion
estimates, together with enhanced dissipation, have been obtained
for a class of periodic shear flows with two non-degenerate
critical points \cite{BeekieChenJia2026}.
Nonlinear results are discussed below.

\emph{Lift-up and the low-frequency difficulty.}
The zero modes are unaffected by shear mixing. Instead, the
wall-normal velocity forces the streak through the term $u^2_{0}e_1$.
This lift-up mechanism can produce linear-in-time growth
\cite{EllingsenPalm1975,Landahl1980}. Indeed, the linearized zero-mode
system is
\begin{equation*}
 \pt u^1_{0}-\nu\DeltaP u^1_{0}=-u^2_{0},
 \qquad \pt u^\perp_{0}-\nu\DeltaP u^\perp_{0}=0,
\end{equation*}
and hence
\begin{equation*}
 u^1_{0}(t)=\ee^{\nu t\DeltaP}u^1_{0,\mathrm{in}}
 -t\ee^{\nu t\DeltaP}u^2_{0,\mathrm{in}}.
\end{equation*}
For $\xi_\perp=(\eta,\ell)\in\R^2\setminus\{0\}$,
\begin{equation}
 \sup_{t>0}t\ee^{-\nu t|\xi_\perp|^2}
 =\frac{1}{\ee\nu|\xi_\perp|^2}.
 \label{eq:rho-sing}
\end{equation}
When $z$ is periodic, incompressibility and square integrability in $y$
force the spanwise average of $u^2_{0}$ to vanish. The remaining
spanwise frequencies are bounded away from zero, and lift-up is
bounded by $O(\nu^{-1})$ in $L^2$. On $\T\times\R^2$, both transverse
frequencies can approach zero. Diffusion then becomes arbitrarily
slow, and ordinary positive-order Sobolev control does not bound the
lift-up term uniformly in time. Thus removing spanwise periodicity
changes the zero-mode problem even though the nonzero streamwise
modes retain their spectral gap.

This difficulty motivates a condition that measures the transverse
low frequencies. For a homogeneous dyadic decomposition $\dot\Delta_j$
on $\R^2$, define
\begin{equation}
 \|f\|_{\dot B^{-2}_{2,\infty}}
 :=\sup_{j\in\mathbb Z}2^{-2j}\|\dot\Delta_jf\|_{L^2(\R^2)}.
 \label{eq:intro-besov-definition}
\end{equation}
The heat-semigroup characterization gives
\begin{equation}
 \sup_{t>0}\|t\ee^{\nu t\DeltaP}f\|_{L^2}
 \simeq\nu^{-1}\|f\|_{\dot B^{-2}_{2,\infty}}.
 \label{eq:intro-liftup-characterization}
\end{equation}
This norm is therefore exactly the condition for bounded linear
lift-up of the wall-normal zero mode in $L^2$. It is also compatible
with nonlinear forcing: the projected divergence of an $L^1$ tensor
belongs to the same Besov space by \eqref{eq:div-B}. The proof below
uses this compatibility to propagate the low-frequency condition.

The competition between transient growth, mixing, and diffusion is
central to the stability-threshold problem. Formal asymptotic
predictions were developed in \cite{Chapman2002}; for a broader account
of the mathematical theory and further references, see
\cite{BGM2017,BGMReview2019}.

For three-dimensional Couette flow on $\T\times\R\times\T$, global
stability for perturbations of size $O(\nu)$ in Gevrey regularity was
proved by Bedrossian--Germain--Masmoudi \cite{BGM2020}. Their Sobolev
theory gives stability for $O(\nu^{3/2})$ perturbations in $H^\sigma$,
$\sigma>9/2$ \cite{BGM2017}. The $O(\nu)$ stability scale was subsequently
obtained in $H^2$ by Wei--Zhang \cite{WeiZhang2021}. Their argument
combines enhanced dissipation and inviscid damping with coordinates and
good unknowns adapted to the evolving streamwise-independent flow.
For the finite channel $\T\times[-1,1]\times\T$ with no-slip boundary
conditions, an $H^2$ stability result at the same $O(\nu)$ scale was
established by Chen--Wei--Zhang \cite{ChenWeiZhang2024}.

The behavior above this scale has also been studied. For Gevrey
perturbations of size $\varepsilon\lesssim\nu^{2/3+\delta}$, with
$0<\delta<1/3$, the analysis in \cite{BGMAbove2015} describes the growth of
streamwise-independent structures up to times of order
$\varepsilon^{-1}$. This is a finite-time description, rather than a
global stability result at that larger scale. Related three-dimensional
threshold results have been proved for Kolmogorov flow
\cite{LiWeiZhang2020} and, more recently, plane Poiseuille flow
\cite{ChenDingLinZhang2026}. For further stability-threshold results beyond Couette flow, we refer to the references therein.

In two dimensions, nonlinear inviscid damping near Couette flow was
established for the Euler equations in Gevrey regularity
\cite{BedrossianMasmoudi2015}. For Navier--Stokes, stability and enhanced
dissipation were proved for Gevrey perturbations whose size is
independent of viscosity \cite{BMV2016}. For a finite channel with
Navier boundary conditions, \cite{BHIW2024} established nonlinear
inviscid damping, enhanced dissipation, and a long-time inviscid limit
for small Gevrey initial vorticity supported away from the boundaries,
with smallness independent of viscosity. In Sobolev regularity, the
$O(\nu^{1/2})$ result of \cite{BVW2018} was followed by
$O(\nu^{1/3})$ criteria in suitable vorticity norms
\cite{MasmoudiZhao2022,WeiZhang2023}. Li--Masmoudi--Zhao
\cite{LMZ2025} established regularity-dependent stability thresholds
connecting the Sobolev and Gevrey regimes.

Boundary conditions also matter:
an $O(\nu^{1/2})$ velocity criterion in $H^2$ was obtained in a finite
channel with no-slip conditions \cite{ChenLiWeiZhang2020}, whereas
recent results with Navier boundary conditions treat both nearly
Couette shear flows \cite{BedrossianHeIyerWang2025} and an
$O(\nu^{1/3})$ criterion for exact Couette flow
\cite{WeiZhang2026}. Stability thresholds for close-to-Couette shear
flows with no-slip boundary conditions were studied in
\cite{BHILW2025}. Boundary-layer formation can also drive instability
near Couette flow, as shown in \cite{BGMZ2025}.

On the whole plane, the absence of a streamwise spectral gap requires
additional low-frequency control \cite{LiLiuZhao2026}.
Arbon--Bedrossian \cite{AB2025} studied quantitative stability on
unbounded domains with Navier boundary conditions, combining enhanced
dissipation with Taylor dispersion. For streamwise-unbounded channels,
recent results treat both no-slip \cite{CLM2026a} and Navier-slip
\cite{CLM2026b} boundary conditions.
For further nonlinear inviscid-damping results near monotone shears,
see \cite{IonescuJia2023,MasmoudiZhao2024} and the references therein.
Viscous nonlinear stability near monotone shear flows in a critical
regularity class was established in \cite{LZ2024}.
Enhanced dissipation and stability thresholds for two-dimensional
Kolmogorov and Poiseuille flows are treated in
\cite{WeiZhangZhao2020,CotiZelatiElgindiWidmayer2020}.
More recently, Chen--Jia--Wei--Zhang \cite{CJWZ2025} proved nonlinear
asymptotic stability of Kolmogorov flow on a non-square torus for
$H^3$ vorticity perturbations at the $\nu^{1/3}$ scale.

Recent three-dimensional studies also show how additional physical
effects change the zero-mode dynamics. Rotation-induced dispersion is
used in \cite{CotiZelatiDelZottoWidmayer2025,LiSunWangWeiZhang2025},
including on $\T\times\R^2$ in the latter work. For stably stratified
Couette flow, suppression of lift-up and linear enhanced dissipation
were established in \cite{CZDZ2025}; dispersion and mixing are combined
in the nonlinear analysis of \cite{CotiZelatiDelZottoWidmayer2026}.
The combined effects of rotation and stratification were studied in
\cite{HLSX2026}. When the shear and stratification directions are
perpendicular, Yao \cite{Y2026} identified a different zero-mode
lift-up mechanism and used a potential-vorticity formulation together
with weighted initial conditions in the stability analysis.
For the three-dimensional Boussinesq system in a finite no-slip channel,
Li--Lin--Wang \cite{LLW2026} proved stability near Couette flow and a
constant temperature state, with $H^2$ perturbations of size $O(\nu)$
in velocity and $O(\nu^2)$ in temperature.
These additional couplings are absent from
\eqref{eq:pert}. The result below treats the ordinary Navier--Stokes
equations on $\T\times\R^2$ by combining the endpoint zero-mode
condition with the nonzero-mode estimates. In particular, the
transverse low frequencies are controlled without a spanwise spectral
gap or an additional dispersive mechanism.

\subsection{Main results}

The first theorem gives global existence and stability at the
$O(\nu)$ initial scale. The second records the different decay and
time-integrability properties of the nonzero modes, roll, and streak.

\begin{Thm}
 \label{thm:full-couette-final}
There exist absolute constants $c_*>0$ and $C>0$ such that the
following holds. Let $0<\nu\leq1$ and let $u_{\rm in}\in H^2(\Omega)$
be divergence free. If
\begin{equation}
 \varepsilon:=\|u_{\rm in}\|_{H^2(\Omega)}
 +\|u^\perp_{0,\rm in}\|_{\dot B^{-2}_{2,\infty}(\R^2)}
 \leq c_*\nu,
 \label{eq:main-smallness-final}
\end{equation}
then \eqref{eq:pert} has a unique global solution
\[
 u\in C([0,\infty);H^2(\Omega))
 \cap L^2_{\rm loc}([0,\infty);H^3(\Omega)),
\]
which satisfies
\begin{equation}
 \sup_{t\geq0}\|u(t)\|_{H^2(\Omega)} \leq Cc_*.
 \label{eq:true-Couette-stability-final}
\end{equation}
Moreover, for every $\tau_{0}>0$,
\begin{equation}
 u\in C([\tau_{0},\infty);H^4(\Omega))
 \cap L^2_{\rm loc}([\tau_{0},\infty);H^5(\Omega)).
 \label{eq:full-positive-time-H4}
\end{equation}
\end{Thm}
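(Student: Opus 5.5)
The proof is a bootstrap (continuity) argument built on local well-posedness in $H^2$. Local existence, uniqueness and parabolic smoothing for \eqref{eq:pert} in $C([0,T];H^2)\cap L^2(0,T;H^3)$ are standard; the linear term $y\partial_x u$ is handled by passing to sheared coordinates, or by energy estimates in which the commutator $[\partial_y, y\partial_x]=\partial_x$ costs only a Gronwall factor. Instantaneous smoothing then gives \eqref{eq:full-positive-time-H4} once a global $H^2$ bound is in hand, by differentiating twice more and using $\nu\int\|u\|_{H^3}^2<\infty$ on unit-length time intervals. The task therefore reduces to closing a priori bounds on $[0,T^*)$ that do not depend on $T^*$.

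\textbf{Bootstrap norms.} Following the architecture of Wei--Zhang \cite{WeiZhang2021}, I split $u=u_0+u_{\neq}$ and track the following quantities.

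\begin{enumerate}[label=(\roman*)]
\item The roll $u_0^\perp$, through $E_1=\sup_t\|u^\perp_0\|_{H^2}+\sup_t\|u^\perp_0\|_{\dot B^{-2}_{2,\infty}}$ together with the dissipation $\nu^{1/2}\|\nablaP u_0^\perp\|_{L^2_tH^2}$. I also track the decay $\|u_0^\perp(t)\|_{L^2}\lesssim\varepsilon(1+\nu t)^{-1}$.
\item The streak $u_0^1$, through $E_2=\nu\sup_t\|u_0^1\|_{H^2}$ plus its dissipation.
\item The nonzero modes, through enhanced-dissipation norms such as $\|e^{a\nu^{1/3}t}u_{\neq}\|_{L^\infty H^2}$. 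These are taken in good unknowns adapted to the streak-modified shear $y+u_0^1$, together with the time-integrated inviscid damping bound on $\|\nabla\Delta^{-1}\cdots u^2_{\neq}\|_{L^2_tL^2}$.
\end{enumerate}

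The goal is to show that $E_1\le C\varepsilon$, $E_2\le C\varepsilon$ and $E_3\le C\varepsilon$ imply the same bounds with $C/2$ when $\varepsilon\le c_*\nu$.

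\textbf{Zero modes.} The roll satisfies a forced heat equation, $\partial_t u^\perp_0-\nu\DeltaP u^\perp_0=-\PP\nablaP\cdot(u_0^\perp\otimes u_0^\perp+P_0(u_{\neq}\otimes u_{\neq}))$. Its $H^2$ energy closes as in 2D Navier--Stokes. The key new step is propagating the $\dot B^{-2}_{2,\infty}$ norm. Using \eqref{eq:div-B}, $\|\PP\nablaP\cdot F\|_{\dot B^{-2}_{2,\infty}}$ is controlled by a time integral of $\|F\|_{L^1}$-type quantities after heat smoothing. I would estimate the Duhamel term dyadically: for each $j$, $2^{-2j}\|\dot\Delta_j\int_0^t e^{\nu(t-s)\DeltaP}\nablaP F\,ds\|_{L^2}\lesssim\int_0^t 2^{-j}e^{-c\nu(t-s)2^{2j}}2^{j}\|F(s)\|_{L^1}ds$. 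This bound uses Bernstein, and then $\sup_t\int_0^t\|F\|_{L^1}ds$ is what has to be controlled. For the $u_{\neq}\otimes u_{\neq}$ part this is bounded by $\int\|u_{\neq}\|_{L^2}^2\lesssim\varepsilon^2\nu^{-1/3}$ from enhanced dissipation, or better via inviscid damping for $u^2_{\neq}$. For $u_0^\perp\otimes u_0^\perp$ it is $\int\|u_0^\perp\|_{L^2}^2$, which is finite only because of the $(1+\nu t)^{-1}$ decay: the integral is $\lesssim\varepsilon^2\nu^{-1}$. Since $\varepsilon\le c_*\nu$, this gives $\lesssim c_*\varepsilon$.

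The decay itself comes from Fourier splitting (Schonbek): the low-frequency ball is controlled by the Besov norm, $\|\mathbf 1_{|\xi|\le r}\hat u_0^\perp\|_{L^2}\lesssim r^2\|u_0^\perp\|_{\dot B^{-2}_{2,\infty}}$, and choosing $r^2\sim(\nu(1+t))^{-1}$ gives the rate $(1+\nu t)^{-1}$. For the streak I use the two-level roll-to-streak representation. I write $u_0^1=\ee^{\nu t\DeltaP}u^1_{0,\rm in}-\int_0^t\ee^{\nu(t-s)\DeltaP}u_0^2(s)\,ds+(\text{nonlinear})$, and apply Duhamel again to $u_0^2$ inside. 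The leading lift-up term becomes $-t\ee^{\nu t\DeltaP}u^2_{0,\rm in}$, bounded by $\nu^{-1}\varepsilon$ via \eqref{eq:intro-liftup-characterization}. The remainder, driven by divergence-form forcing, is bounded using the propagated Besov estimate in the same way. The output is $\|u_0^1\|_{H^2}\lesssim\varepsilon/\nu\le Cc_*$, which is \eqref{eq:true-Couette-stability-final}.

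\textbf{Nonzero modes; main obstacle.} For $u_{\neq}$ I adapt the Wei--Zhang framework: linear space-time estimates for the linearization around $(y+u_0^1,0,0)$ via new coordinates $V=y+u_0^1$, followed by nonlinear interaction estimates. Their proof used the $z$-spectral gap only for zero modes, so the nonzero-mode part should transfer; what must be rechecked is every place where $u_0$ enters with low-frequency weights. In particular, $u_0^1$ is now controlled only in $H^2$ (no $L^2$ decay) and $u_0^\perp$ only with the weaker time decay. I expect the main obstacle to be the interaction $u_0^\perp\cdot\nabla u_{\neq}$ and the coordinate-change errors involving $\nablaP u^1_0$. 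These require $\int_0^\infty\|\nablaP u_0^\perp\|_{H^2}dt$-type bounds, which are not available without a spectral gap. My first attempt is to use only $L^2_t$ dissipation, $\nu\int\|\nablaP u_0^\perp\|_{H^2}^2\lesssim\varepsilon^2$, combined with the exponential weight $e^{a\nu^{1/3}t}$ on $u_{\neq}$ via Cauchy--Schwarz in time. This yields factors $\varepsilon\nu^{-1/2}\cdot\nu^{-1/6}$-type, which I would check are $\le c_*$ after the $\nu$-scaling of the nonzero-mode energies. If that fails, I would use the decay $\|u_0^\perp\|_{L^2}\lesssim\varepsilon(1+\nu t)^{-1}$, interpolated with $H^2$, on the time range $t\gtrsim\nu^{-1/3}$. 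Closing all estimates with constants independent of $T^*$ and taking $c_*$ small completes the continuity argument, and uniqueness follows from the $H^2$ energy difference estimate.
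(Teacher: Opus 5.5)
Your zero-mode mechanism is the paper's: Besov propagation via Duhamel and \eqref{eq:div-B}, Fourier splitting at $r^2\sim(1+\nu t)^{-1}$ closed jointly with it (as in \Cref{thm:roll-main}), and the two-level roll-to-streak identity controlled by \eqref{eq:Sbound}. The gap is in the bootstrap that has to feed the Wei--Zhang nonzero-mode machinery.

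Tracking the streak only through $\nu\sup_t\|u^1_0\|_{H^2}$ is not enough. The variable-shear estimates require \eqref{eq:variable-small-assumption}, i.e.\ $\|u^1_0\|_{H^4}+\nu^{-1}\|\partial_tu^1_0\|_{H^2}\le\delta_0$. With $u^1_0\in H^2(\R^2)$ alone, $\nablaP u^1_0$ is not even bounded. So neither $V_y\ge1/2$ nor $\kappa\in H^3$, $\rho_1,\rho_2\in H^2$, $(\partial_t-\nu\DeltaP)\kappa=O(\nu)$ is available, and the shear coordinates, the good unknown $q$ and its correctors cannot be used. The $\partial_tu^1_0$ bound is also what gives the transient estimate $\|u^1_0(t)\|_{H^2}\lesssim(\varepsilon/\nu)\min\{\nu(1+t),1\}$. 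That estimate is what makes the streak--wave forcing perturbative in the constant-coefficient primary system.

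Propagating $u^1_0$ in $H^4$ then forces a second tier of energies at scale $\varepsilon/\nu$: the roll in $H^4$ and $(\partial_x,\partial_z)u_{\neq}$ in $H^3$. The reason is that the top streak energy sees $(u_{\neq}\cdot\nabla u^1_{\neq})_0$ in $Y^2$, bounded by $E_4E_5$, and the high wave energy sees roll--wave products bounded by $E_2E_5+E_3E_4$. Since $u_{\rm in}$ is only in $H^2$, this tier must be initialized after parabolic smoothing; the paper restarts at $t=1$ with $\|u(1)\|_{H^4}\lesssim\varepsilon/\nu$.

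Also, a norm like $\|\ee^{a\nu^{1/3}t}u_{\neq}\|_{L^\infty H^2}$ cannot be $O(\varepsilon)$. Already linearly, shear makes $\partial_y^2u^3_{\neq}$ grow to size $\nu^{-2/3}$ by time $\nu^{-1/3}$. So the $O(\varepsilon)$ wave tier can only contain $\Delta u^2_{\neq}$, $(\partial_x^2+\partial_z^2)u^3_{\neq}$, $\nu^{2/3}\Delta u^3_{\neq}$ and good-unknown quantities. Consequently $\sup_t\|u\|_{H^2}\le Cc_*$ comes from the $\varepsilon/\nu$ tier, not from the streak bound alone.

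Likewise, the roll's $H^2$ energy does not close ``as in 2D Navier--Stokes'' at size $\varepsilon$. The vorticity forcing $\partial_y(u_{\neq}\cdot\nabla u^3_{\neq})_0$ contains $\partial_y^2u^3_{\neq}$ and obeys only $\nu^{1/3}\|\ee^{4\gamma\nu^{1/3}t}\nablaP(u_{\neq}\cdot\nabla u^3_{\neq})_0\|_{L^2_tL^2}\lesssim\nu^{-1/2}\varepsilon^2$ (see \eqref{eq:wave-roll-forcing-revised}). Hence the $\partial_y\omega_0$ energy picks up $\nu^{-4/3}\varepsilon^2$, which is about $c_*\nu^{-1/3}\varepsilon$ at threshold. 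The paper therefore weights the highest wall-normal derivative by $\mu_\nu^{1/2}$, with $\mu_\nu=\min\{\nu^{2/3}+\nu t,1\}$. This weight costs exactly $\nu^{1/3}$ against $\ee^{-c\nu^{1/3}t}$, and only $\partial_z\omega_0$ stays unweighted.

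As a result, the unweighted bound $\nu\int\|\nablaP u^\perp_0\|_{H^2}^2\,dt\lesssim\varepsilon^2$ in your ``first attempt'' is not available for $u^3_0$. Your Cauchy--Schwarz pairing is the right idea and is what the paper does, but one derivative lower. It pairs $\nu^{1/2}\|\nablaP u^\perp_0\|_{L^2_tH^1}$ and $\nu^{1/2}\|\nablaP D_z^Vu^\perp_0\|_{L^2_tH^1}$ against $\|\ee^{3\gamma\nu^{1/3}t}\mathcal U_0\|_{L^\infty_t}$, and $L^\infty_t$ roll norms against $\nu^{1/2}\|\ee^{3\gamma\nu^{1/3}t}\mathcal U_1\|_{L^2_t}$. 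The single term needing the unweighted $\partial_y^2u^3_0$, namely $q\,\partial_y^2u^3_0$, goes into the weak $\nu^{-1/6}$ forcing norm using $\mu_\nu^{-1/2}\le\nu^{-1/3}$. With these weights the roll coefficient costs $E_2/\nu\lesssim c_*$, and your fallback via roll decay is not needed.
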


\begin{Rem}
The threshold $\varepsilon\lesssim\nu$ has the same exponent as
the results of Wei--Zhang \cite{WeiZhang2021} on
$\T\times\R\times\T$ and Chen--Wei--Zhang
\cite{ChenWeiZhang2024} in a finite channel with no-slip
boundary conditions.
This scale is also consistent with the sharp linear lift-up
mechanism on $\T\times\R^2$.
Indeed, the linearized zero mode satisfies
\[
 u^{1}_{0}(t)
 =\ee^{\nu t\Delta_\perp}
 \bigl(u^1_{0,\mathrm{in}}-t u^2_{0,\mathrm{in}}\bigr).
\]
For suitable roll data supported in a fixed transverse
frequency annulus, an initial perturbation of size $\varepsilon$
produces a streak of size $\varepsilon/\nu$ at times
$t\sim\nu^{-1}$.
Thus $\varepsilon\lesssim\nu$ is the natural scale for remaining
in a fixed small neighborhood of Couette flow.
The present theorem attains this scale despite the absence of
a spanwise spectral gap, with the transverse low frequencies
controlled by the additional endpoint Besov assumption.
\end{Rem}

\begin{Rem}
 \label{rem:full-roll-sufficiency-final}
The criticality in \eqref{eq:intro-liftup-characterization} concerns
linear lift-up of $u^2_{0}$. The condition on $u^3_{0}$ is used to
propagate the full nonlinear roll and its quadratic stresses. Thus
\eqref{eq:main-smallness-final} is a sufficient nonlinear stability
criterion.
\end{Rem}

\begin{Rem}
 \label{rem:stability-scale-final}
At $\varepsilon=c_*\nu$, lift-up permits a streak of size
$\varepsilon/\nu=O(c_*)$, while the roll and the low-order nonzero-mode
bounds remain $O(\varepsilon)$. The two scales in
\eqref{eq:full-energy-final} reflect this distinction. The theorem
therefore gives uniform smallness relative to the fixed Couette flow,
with enhanced dissipation of the streamwise-dependent part and
algebraic decay of the roll.
\end{Rem}

\begin{Thm}
 \label{thm:global-decay-final}
Under the assumptions of \Cref{thm:full-couette-final}, there exist
absolute constants $\lambda_*>0$ and $C>0$ such that the following
bounds hold.
\begin{enumerate}
\item The nonzero streamwise modes satisfy enhanced dissipation:
\begin{equation}
 \|u_{\neq}(t)\|_{L^2(\Omega)}
 \leq C\varepsilon\ee^{-\lambda_*\nu^{1/3}t},\qquad t\geq0,
 \label{eq:full-ED-final}
\end{equation}
and
\begin{align}
 \nu^{1/6}\|\ee^{\lambda_*\nu^{1/3}t}u_{\neq}\|_{L^2(0,\infty;L^2)}
 +\nu^{1/2}
 \|\ee^{\lambda_*\nu^{1/3}t}\nabla u_{\neq}\|_{L^2(0,\infty;L^2)}
 \leq C\varepsilon.
 \label{eq:global-ED-spacetime}
\end{align}
\item The wall-normal nonzero mode satisfies the inviscid-damping
estimate
\begin{align}
 \|\ee^{\lambda_*\nu^{1/3}t}u^2_{\neq}\|_{L^2(0,\infty;H^1)}
 +\|\ee^{\lambda_*\nu^{1/3}t}
 \partial_x\nabla u^2_{\neq}\|_{L^2(0,\infty;L^2)}
 \leq C\varepsilon.
 \label{eq:global-inviscid-damping}
\end{align}
\item The roll has algebraic decay and retains its endpoint control:
\begin{equation}
 \|u^\perp_{0}(t)\|_{L^2(\R^2)}
 \leq\frac{C\varepsilon}{1+\nu t},\qquad t\geq0,
 \label{eq:global-roll-decay}
\end{equation}
\begin{equation}
 \sup_{t\geq0}\|u^\perp_{0}(t)\|_{\dot B^{-2}_{2,\infty}}
 +\nu^{1/2}\|u^\perp_{0}\|_{L^2(0,\infty;L^2)}
 \leq C\varepsilon.
 \label{eq:global-roll-endpoint}
\end{equation}
The streak satisfies, after positive-time smoothing,
\begin{align}
 \sup_{t\geq1}\|u^1_{0}(t)\|_{H^4}
 +\nu^{1/2}\|\nablaP\DeltaP^2u^1_{0}\|_{L^2(1,\infty;L^2)}
 +\nu^{-1}\sup_{t\geq1}\|\partial_tu^1_{0}(t)\|_{H^2}
 \leq C\frac{\varepsilon}{\nu}.
 \label{eq:full-zero-final}
\end{align}
\end{enumerate}
\end{Thm}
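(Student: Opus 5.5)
The plan is to obtain \Cref{thm:global-decay-final} as a by-product of the bootstrap that proves \Cref{thm:full-couette-final}, so that both theorems close together.

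\textbf{Bootstrap setup.} Working on a maximal interval $[0,T^*)$ of local $H^2$ existence, I would introduce an energy functional $E(T)$ built from the following pieces:
\begin{itemize}
\item the weighted enhanced-dissipation norms of $u_{\neq}$ in \eqref{eq:full-ED-final}--\eqref{eq:global-ED-spacetime}, together with the inviscid-damping norm \eqref{eq:global-inviscid-damping} of $u^2_{\neq}$;
\item the roll norms $\sup_t\|u^\perp_0\|_{\dot B^{-2}_{2,\infty}}$, $\nu^{1/2}\|u^\perp_0\|_{L^2_tL^2}$ and $\sup_t(1+\nu t)\|u^\perp_0\|_{L^2}$;
\item $\nu\times$ the $H^2$ streak norms.
\end{itemize}
Following Wei--Zhang, the bootstrap hypothesis is $E\le 2C_0\varepsilon$, and the goal is to show $E\le C_0\varepsilon+C\varepsilon^2/\nu$. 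Since $\varepsilon\le c_*\nu$, this closes the bootstrap, and it gives all the bounds of \Cref{thm:global-decay-final} on $[0,\infty)$.

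\textbf{Nonzero modes.} For $u_{\neq}$ I would adapt Wei--Zhang's framework essentially verbatim:
\begin{itemize}
\item pass to coordinates adapted to the streak, $X=x-t y-\ldots$, using good unknowns built from $u^1_0$;
\item use the time-dependent Fourier multiplier that produces the $\nu^{1/3}$ decay rate;
\item take the inviscid damping from the equation for $\Delta u^2_{\neq}$.
\end{itemize}
The spanwise spectral gap is never used for the nonzero modes, because $k\neq0$ already supplies the gap. The only changes are that the streak is now $O(\varepsilon/\nu)$ in $H^2$ on all of $\R^2$, and that the quadratic interactions must be estimated in $L^2(\R^2)$-based norms.

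\textbf{Roll.} For $u^\perp_0$ I would write Duhamel's formula
\[
u^\perp_0(t)=\ee^{\nu t\DeltaP}u^\perp_{0,\mathrm{in}}-\int_0^t\ee^{\nu(t-s)\DeltaP}\PP\nablaP\cdot(u\otimes u)_0^\perp\,ds
\]
and estimate it as follows.
\begin{itemize}
\item \emph{Besov bound.} The forcing is a projected divergence of an $L^1$ tensor, so \eqref{eq:div-B} places it in $\dot B^{-2}_{2,\infty}$ up to one derivative. Integrating in time against the heat kernel, with frequency-by-frequency bounds of the form $2^{-2j}\int \ee^{-\nu(t-s)2^{2j}}2^{j}\|\cdot\|_{L^1}\,ds$, propagates $\sup_t\|u^\perp_0\|_{\dot B^{-2}_{2,\infty}}\lesssim\varepsilon$. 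This uses time-integrability of the stresses: $u_{\neq}\otimes u_{\neq}$ via \eqref{eq:global-ED-spacetime}, and $u^\perp_0\otimes u^\perp_0$ via $\nu^{1/2}\|u^\perp_0\|_{L^2_tL^2}$ and the $L^2$ decay.
\item \emph{$L^2$ decay.} The rate $(1+\nu t)^{-1}$ then follows by Fourier splitting. On $|\xi_\perp|^2\ge c/(\nu(1+t))$ the energy inequality gives exponential damping. On low frequencies, the Besov bound gives $\|\mathbf 1_{|\xi|\le r}u^\perp_0\|_{L^2}\lesssim r^2\varepsilon$ with $r^2\sim(\nu t)^{-1}$.
\end{itemize}

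\textbf{Streak.} For $u^1_0$, the two-level roll-to-streak representation is used: the linear lift-up $-\int_0^t\ee^{\nu(t-s)\DeltaP}u^2_0\,ds$ is bounded via \eqref{eq:intro-liftup-characterization} applied to the propagated Besov norm, while the quadratic forcing is treated perturbatively. Integrating $(1+\nu s)^{-1}$ produces a logarithm, so I expect to need the Besov form rather than raw decay to get $\varepsilon/\nu$ uniformly. The $H^4$ bounds for $t\ge1$ and the $\partial_tu^1_0$ bound come from standard parabolic smoothing on $[\tau_0,\infty)$.

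\textbf{Main obstacle.} I expect the hardest step to be propagating the endpoint Besov norm when it is coupled with the $(1+\nu t)^{-1}$ decay: the $u^\perp_0\otimes u^\perp_0$ stress is only borderline integrable in time. To handle it, I would first try to exploit the $L^1_t$ bound $\|u^\perp_0\|_{L^2}^2\in L^1_t$ with size $\varepsilon^2/\nu$, which gives a contribution $\varepsilon^2/\nu\le c_*\varepsilon$, at endpoint $p=\infty$ summability in $j$.
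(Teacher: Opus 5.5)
Most of your plan matches the paper. \Cref{thm:global-decay-final} is read off from the global bounds \eqref{eq:full-energy-final}, and the roll is treated as you describe: \Cref{lem:B-prop} and \Cref{lem:fourier-splitting}, with $\int_0^T\|u^\perp_0\|_{L^2}^2\,dt\lesssim\varepsilon^2/\nu$ absorbing the self-interaction, and pointwise decay from \eqref{eq:E-decay}. The nonzero-mode and inviscid-damping bounds come from Wei--Zhang-type energies.

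The gap is \eqref{eq:full-zero-final}. You obtain it after the fact by ``standard parabolic smoothing on $[\tau_0,\infty)$'', from a bootstrap that carries only $\nu\|u^1_0\|_{H^2}$. This loses a factor $\nu^{-1}$:
\begin{itemize}
\item Unit-time smoothing from $H^2$ to $H^4$ costs $\|\DeltaP\ee^{\nu\DeltaP}\|_{L^2\to L^2}\sim\nu^{-1}$. For $t\gtrsim\nu^{-1}$ the streak can genuinely be of size $\varepsilon/\nu$ in $H^2$, so you only get $\|u^1_0\|_{H^4}\lesssim\varepsilon/\nu^2$.
\item The time-derivative component inherits this loss through $\nu^{-1}\pt u^1_0=\DeltaP u^1_0+\dots$.
\item The global integral $\nu^{1/2}\|\nablaP\DeltaP^2u^1_0\|_{L^2(1,\infty;L^2)}$ is not a local-smoothing quantity at all.
\end{itemize}
Smoothing over windows of length $\nu^{-1}$ removes the semigroup loss, but it is then just a top-order energy estimate, with the inputs described below. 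More fundamentally, $\sup_t(\|u^1_0\|_{H^4}+\nu^{-1}\|\pt u^1_0\|_{H^2})\le\delta_0$ is a \emph{hypothesis} of the variable-shear estimates you import; see \eqref{eq:variable-small-assumption}. It is what makes $V=y+u^1_0$, $\kappa$, $D_z^V$ and the good unknowns admissible. So it cannot be recovered afterwards; it must be propagated inside the bootstrap.

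That propagation is where the missing structure enters. The paper carries $E_1$, which is exactly the left side of \eqref{eq:full-zero-final}, and closes it in \Cref{prop:A-closed-final} by a $\DeltaP^2$ energy estimate. Two forcing terms need more than your functional provides:
\begin{itemize}
\item The lift-up term costs $\nu^{-1/2}\|\nablaP\DeltaP u^2_0\|_{L^2_tL^2}\lesssim E_2/\nu$. This needs roll dissipation at the $H^3$ level, i.e.\ the vorticity components of $E_2$, not only $L^2$-level roll norms.
\item The term $(u_{\neq}\cdot\nabla u^1_{\neq})_0$ in $Y^2$ needs the fourth-order wave energy $E_5$, whose own estimate needs the high roll energy $E_3$.
\end{itemize}
These high-order energies live at scale $\varepsilon/\nu$ and are linearly driven by the low-order ones, e.g.\ $E_5\lesssim\nu^{-1}(E_4+E_6)+\dots$. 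So the closure is the ordered two-scale scheme of \Cref{lem:six-functional-bootstrap-final}, not $E\le C_0\varepsilon+C\varepsilon^2/\nu$ for a single functional. Because $H^2$ data do not initialize these energies, one smooths only on $[0,1]$ and restarts there. On that interval the solution is still $O(\varepsilon)$ in $H^2$, so the $\nu^{-1}$ loss produces exactly the $\varepsilon/\nu$ high-order scale.

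A smaller point concerns the streak amplitude. \eqref{eq:intro-liftup-characterization} is about $t\ee^{\nu t\DeltaP}f$ for a fixed $f$. Applying it to the propagated Besov norm therefore does not by itself bound $\int_0^t\ee^{\nu(t-s)\DeltaP}u^2_0(s)\,ds$. You can either use \eqref{eq:superposition-identity} with \Cref{lem:superposition}, as the paper does, or split the integral at $s=t/2$:
\begin{itemize}
\item on $[0,t/2]$, use $\|\ee^{\nu\tau\DeltaP}f\|_{L^2}\lesssim(\nu\tau)^{-1}\|f\|_{\dot B^{-2}_{2,\infty}}$;
\item on $[t/2,t]$, use your decay $\|u^2_0(s)\|_{L^2}\lesssim\varepsilon/(1+\nu s)$.
\end{itemize}
Each piece is $O(\varepsilon/\nu)$. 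This is a valid and slightly simpler alternative, since your functional already contains the decay.
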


\begin{Rem}
 \label{rem:no-physical-H1-decay-revised}
Estimate \eqref{eq:global-inviscid-damping} is a time-integrated form
of inviscid damping: its constant is independent of viscosity, and no
positive power of $\nu$ is placed before the norm. It is stronger than
the $L^2_t$ bound obtained by integrating enhanced dissipation alone,
which loses $\nu^{-1/6}$. A pointwise $t^{-2}$ nonlinear damping rate
is not asserted here. The exponential bound with $O(\varepsilon)$
amplitude in \eqref{eq:full-ED-final} is stated in physical $L^2$;
physical wall-normal derivatives also contain shear-induced growth.
\end{Rem}

Two sufficient conditions allow the Besov hypothesis to be checked
in more familiar spaces. The homogeneous Sobolev norm and the spatial
weight used below are
\[
 \|f\|_{\dot H^{-2}(\R^2)}
 :=\bigl\||\xi_\perp|^{-2}\widehat f(\xi_\perp)\bigr\|_{L^2_{\xi_\perp}},
 \qquad \langle x_\perp\rangle:=(1+|x_\perp|^2)^{1/2}.
\]

\begin{Cor}
 \label{cor:homogeneous-Hminus2}
There exists an absolute constant $c_H>0$ such that, for
$0<\nu\leq1$ and divergence-free $u_{\rm in}\in H^2(\Omega)$,
\begin{equation}
 \varepsilon_H:=\|u_{\rm in}\|_{H^2(\Omega)}
 +\|u^\perp_{0,\rm in}\|_{\dot H^{-2}(\R^2)}\leq c_H\nu
 \label{eq:Hminus2-smallness}
\end{equation}
implies all conclusions of
\Cref{thm:full-couette-final,thm:global-decay-final}, with
$\varepsilon$ replaced by $\varepsilon_H$ and an adjusted absolute
constant $C$.
\end{Cor}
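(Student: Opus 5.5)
The corollary follows from \Cref{thm:full-couette-final,thm:global-decay-final} once we show the continuous embedding $\dot H^{-2}(\R^2)\hookrightarrow\dot B^{-2}_{2,\infty}(\R^2)$ with an absolute constant $C_0$:
\begin{equation*}
 \|f\|_{\dot B^{-2}_{2,\infty}}\leq C_0\|f\|_{\dot H^{-2}}.
\end{equation*}
Granting this, we get $\varepsilon\leq\max(1,C_0)\,\varepsilon_H$. Set $c_H:=c_*/\max(1,C_0)$. Then \eqref{eq:Hminus2-smallness} implies \eqref{eq:main-smallness-final}, and both theorems apply. Every conclusion stated there has the form ``quantity $\leq C\varepsilon$'', ``$\leq C\varepsilon/\nu$'', or $\leq Cc_*$. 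Replacing $\varepsilon$ by $\max(1,C_0)\varepsilon_H$ gives the same bounds with $\varepsilon_H$ and the constant $C\max(1,C_0)$. For \eqref{eq:true-Couette-stability-final} the bound $Cc_*$ is unchanged; if one prefers to write it in terms of $c_H$, note that $c_*=\max(1,C_0)c_H$. Uniqueness, regularity and the smoothing statement \eqref{eq:full-positive-time-H4} transfer verbatim.

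To prove the embedding, we take $\dot\Delta_j$ with symbol $\varphi(2^{-j}\xi_\perp)$, where $0\leq\varphi\leq1$ is supported in an annulus $\{3/4\leq|\xi_\perp|\leq 8/3\}$. This is the standard Littlewood--Paley convention, presumably the one fixed in the paper's preliminaries. On the support of $\varphi(2^{-j}\cdot)$ we have $|\xi_\perp|\leq\tfrac83 2^{j}$, so $2^{-2j}\leq(8/3)^2|\xi_\perp|^{-2}$. Plancherel then gives
\begin{equation*}
 2^{-2j}\|\dot\Delta_jf\|_{L^2}
 =\bigl\|2^{-2j}\varphi(2^{-j}\xi_\perp)\widehat f\bigr\|_{L^2_{\xi_\perp}}
 \leq (8/3)^2\bigl\||\xi_\perp|^{-2}\widehat f\bigr\|_{L^2_{\xi_\perp}}.
\end{equation*}
Taking the supremum over $j\in\mathbb Z$ yields the embedding with $C_0=(8/3)^2$, up to the normalization of the Fourier transform, which cancels since both sides use the same one.

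One point needs care: making sure $u^\perp_{0,\rm in}$ with finite $\dot H^{-2}$ norm is a legitimate element of the realization of $\dot B^{-2}_{2,\infty}$ used in the theorem. Typically this means a tempered distribution whose low-frequency part tends to zero, i.e.\ $\|\dot S_jf\|_{L^\infty}\to0$ as $j\to-\infty$. Here $f=u^\perp_{0,\rm in}\in L^2(\R^2)$, because $u_{\rm in}\in H^2(\Omega)$ and averaging in $x$ over $\T$ (with $|\T|=1$) is bounded on $L^2$. Hence $\widehat f\in L^2$. By Cauchy--Schwarz, $\|\dot S_jf\|_{L^\infty}\lesssim 2^{j}\|f\|_{L^2}\to0$, so the realization issue does not arise. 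I expect this bookkeeping to be the only mildly delicate step; the rest is the one-line frequency comparison above.
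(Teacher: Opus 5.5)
Your proof is correct and follows the paper's argument: the paper also derives $\varepsilon\leq C\varepsilon_H$ from the embedding $\dot H^{-2}=\dot B^{-2}_{2,2}\hookrightarrow\dot B^{-2}_{2,\infty}$, and then chooses $c_H$ with $Cc_H\leq c_*$ before applying both theorems. Your Plancherel comparison on each dyadic annulus simply proves that embedding directly; the paper does not fix the annulus $\{3/4\leq|\xi_\perp|\leq 8/3\}$, but any fixed annulus with bounded $\varphi$ gives the same bound with a different absolute constant.
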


\begin{Rem}
 \label{rem:homogeneous-versus-inhomogeneous}
The space in \Cref{cor:homogeneous-Hminus2} is homogeneous.
The inhomogeneous $H^{-2}$ norm is already controlled by $L^2$ and
adds no condition at the transverse origin. Although
$\dot H^{-2}=\dot B^{-2}_{2,2}$ embeds into
$\dot B^{-2}_{2,\infty}$, it does not contain general projected
divergences of $L^1$ tensors; an explicit example is given after
\Cref{prop:linear-criticality}. Consequently, the stronger Sobolev
condition is imposed only on the initial data, and propagation is
carried out in the endpoint Besov space.
\end{Rem}

\begin{Cor}
 \label{cor:weighted-initial-data}
There exists an absolute constant $c_m>0$ such that, for
$0<\nu\leq1$ and divergence-free $u_{\rm in}\in H^2(\Omega)$,
\begin{equation}
 \varepsilon_m:=\|u_{\rm in}\|_{H^2(\Omega)}
 +\||x_\perp|u^\perp_{0,\rm in}\|_{L^1(\R^2)}\leq c_m\nu
 \label{eq:moment-smallness-global}
\end{equation}
implies all conclusions of
\Cref{thm:full-couette-final,thm:global-decay-final}, with
$\varepsilon$ replaced by $\varepsilon_m$ and an adjusted absolute
constant $C$. In particular, for each fixed $\sigma>2$, it suffices that
\begin{equation}
 \varepsilon_\sigma:=\|u_{\rm in}\|_{H^2(\Omega)}
 +\|\langle x_\perp\rangle^\sigma
 u^\perp_{0,\rm in}\|_{L^2(\R^2)}\leq c_\sigma\nu.
 \label{eq:weighted-L2-smallness}
\end{equation}
In this case the bounds hold with $\varepsilon_\sigma$ in place of
$\varepsilon$, and the constants may depend on $\sigma$.
\end{Cor}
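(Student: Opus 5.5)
The plan is to reduce \Cref{cor:weighted-initial-data} to \Cref{thm:full-couette-final,thm:global-decay-final} by proving two embedding estimates on $\R^2$. The first is
\[
 \|u^\perp_{0,\rm in}\|_{\dot B^{-2}_{2,\infty}}\lesssim \||x_\perp|u^\perp_{0,\rm in}\|_{L^1}+\|u^\perp_{0,\rm in}\|_{L^2}.
\]
The second is $\||x_\perp|f\|_{L^1}\lesssim_\sigma\|\langle x_\perp\rangle^\sigma f\|_{L^2}$ for $\sigma>2$. Since $\|u^\perp_{0,\rm in}\|_{L^2}\le\|u_{\rm in}\|_{L^2(\Omega)}$ (we have $|\T|=1$ and use Jensen/Cauchy--Schwarz in $x$), we get $\varepsilon\lesssim\varepsilon_m\lesssim_\sigma\varepsilon_\sigma$. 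Choosing $c_m$ and $c_\sigma$ small relative to $c_*$ then lets us apply the theorems, with all bounds in terms of $\varepsilon$ converted to $\varepsilon_m$ or $\varepsilon_\sigma$.

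The second embedding is Cauchy--Schwarz:
\[
 \int|x_\perp||f|\le\|\langle x_\perp\rangle^{1-\sigma}\|_{L^2(\R^2)}\|\langle x_\perp\rangle^\sigma f\|_{L^2}.
\]
The first factor is finite exactly when $2(\sigma-1)>2$, that is, $\sigma>2$.

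The key step is the first embedding, which uses the divergence-free structure. Write $g=u^\perp_{0,\rm in}$. Averaging $\nabla\cdot u_{\rm in}=0$ in $x$ kills $\partial_x u^1$ (by periodicity), so $\nablaP\cdot g=0$. Hence
\[
 g^i=\partial_j\bigl(x_i g^j\bigr),
\]
because $\partial_j(x_ig^j)=\delta_{ij}g^j+x_i\nablaP\cdot g=g^i$. So $g=\nablaP\cdot M$ with $M_{ji}=x_ig^j\in L^1(\R^2)$. It remains to show $\|\nablaP\cdot M\|_{\dot B^{-2}_{2,\infty}}\lesssim\|M\|_{L^1}$. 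This is presumably the estimate \eqref{eq:div-B} cited in the introduction; I would invoke it there, and otherwise prove it directly. By Bernstein on $\R^2$, $\|\dot\Delta_j M\|_{L^2}\lesssim 2^{j}\|M\|_{L^1}$, hence
\[
 \|\dot\Delta_j\nablaP\cdot M\|_{L^2}\lesssim 2^{2j}\|M\|_{L^1},
\]
and multiplying by $2^{-2j}$ gives a bound uniform in $j$. (Here no projection is needed since $g$ is already divergence free.) Thus $\|g\|_{\dot B^{-2}_{2,\infty}}\lesssim\||x_\perp|g\|_{L^1}$, and the $L^2$ term is not even needed.

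One technical point needs care: the identity $g=\nablaP\cdot M$ must hold in the distributional sense modulo polynomials, which is what the homogeneous Besov norm sees. Both $g\in L^2$ and $M\in L^1$ are tempered, so the identity holds distributionally after justifying the product rule for $g\in L^2$ with $\nablaP\cdot g=0$. I would do this by cutoff approximation: $x_i\chi(x_\perp/R)g^j$ converges to $x_ig^j$ in $L^1$. The commutator term is $x_iR^{-1}(\nabla\chi)(x_\perp/R)\cdot g$, whose $L^1$ norm is bounded by $\int_{|x_\perp|\sim R}|g|$ because $|x_i|/R\lesssim1$ on that annulus. This tends to $0$ since $|x_\perp|g\in L^1$ implies $\int_{|x_\perp|>R}|g|\le R^{-1}\||x_\perp|g\|_{L^1}\to0$.

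This justification, together with confirming that the dyadic pieces of $g$ satisfy $\dot\Delta_j g=\dot\Delta_j\nablaP\cdot M$ exactly, is the only delicate step. The remaining constants are absolute, except for the $\sigma$-dependence from the first factor $\|\langle x_\perp\rangle^{1-\sigma}\|_{L^2}$.
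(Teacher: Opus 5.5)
Your proof is correct, but you obtain the key embedding $\|u^\perp_{0,\rm in}\|_{\dot B^{-2}_{2,\infty}}\lesssim\||x_\perp|u^\perp_{0,\rm in}\|_{L^1}$ by a different route than the paper.

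The paper first checks that $u^\perp_{0,\rm in}\in L^1(\R^2)$, using the $L^2$ norm on the unit ball and the first moment outside it. It then invokes \Cref{prop:moment}, which works on the Fourier side. Continuity of $\widehat u^\perp_{0,\rm in}$ together with $\xi\cdot\widehat u^\perp_{0,\rm in}(\xi)=0$ forces $\widehat u^\perp_{0,\rm in}(0)=0$. The inequality $|\ee^{-ix_\perp\cdot\xi}-1|\le|x_\perp||\xi|$ then gives $|\widehat u^\perp_{0,\rm in}(\xi)|\le|\xi|\,\||x_\perp|u^\perp_{0,\rm in}\|_{L^1}$, and Plancherel on the dyadic annuli finishes.

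You instead write the divergence-free roll as $g^i=\nablaP\cdot(x_ig)$ and reduce to the scalar-divergence case of \Cref{lem:div-B}. On the Fourier side this is the same Lipschitz bound $|\widehat{g^i}(\xi)|=|\xi_j\widehat{x_ig^j}(\xi)|\le|\xi|\,\||x_\perp|g\|_{L^1}$, but you reach it structurally. Your route never needs $g\in L^1$ or the zero-mean property as separate inputs. It also shows that the weighted initial roll has exactly the divergence-of-an-$L^1$-tensor form on which the paper's propagation argument is built. The paper's route produces the zero-mean identity \eqref{eq:R-mean-zero} as a byproduct, which is recorded in \Cref{rem:weighted-roll-condition}.

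Your Cauchy--Schwarz step for $\sigma>2$ and the reduction $\varepsilon\lesssim\varepsilon_m\lesssim_\sigma\varepsilon_\sigma$ match the paper. One simplification: the cutoff approximation is unnecessary. Multiplication by the polynomial $x_i$ and the Leibniz rule are valid for any tempered distribution. Hence $\partial_j(x_ig^j)=g^i+x_i\nablaP\cdot g=g^i$ holds exactly in $\mathcal S'(\R^2)$, not merely modulo polynomials.
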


\begin{Rem}
 \label{rem:weighted-roll-condition}
The weights in \Cref{cor:weighted-initial-data} are required only for
the initial roll, not for the full three-dimensional perturbation.
Incompressibility and the first moment imply zero transverse mean
and $|\widehat u^\perp_{0,\rm in}(\xi_\perp)|\lesssim|\xi_\perp|$,
which is sufficient for the Besov condition. Such first-order
vanishing need not give a finite $\dot H^{-2}$ norm. The weighted
criterion therefore covers data not included in
\Cref{cor:homogeneous-Hminus2}; no propagation of the spatial moment
is needed.
\end{Rem}

\begin{Rem}
 \label{rem:streak-endpoint-decay}
Uniform streak control should be distinguished from streak decay.
Even for the linearized equations, the endpoint condition does not
force $t\ee^{\nu t\DeltaP}u^2_{0,\rm in}$ to tend to zero in $L^2$.
For example, let $\chi\in C_c^\infty(\R^2)$ be real, even, and equal to one
near the origin and set
$\widehat u^\perp_{0,\rm in}(\eta,\ell)
=i(-\ell,\eta)\chi(\eta,\ell)$.
This is a divergence-free Schwartz roll, but a change of variables
$\xi_\perp=(\nu t)^{-1/2}\zeta$ shows that
$\|t\ee^{\nu t\DeltaP}u^2_{0,\rm in}\|_{L^2}$ tends to a positive
multiple of $\nu^{-1}$. This explains why
\eqref{eq:full-zero-final} states boundedness rather than a decay
rate for the streak.
\end{Rem}

\subsection{Main ideas of the proof}

The main contribution is to retain the $O(\nu)$ stability threshold
when the spanwise direction is unbounded. The difficulty lies in the
streamwise-independent modes: arbitrarily low transverse frequencies
can sustain lift-up for arbitrarily long times. Our proof controls this
accumulation through a low-frequency condition that is preserved by
the nonlinear evolution. Three features of the argument are central.

\emph{An endpoint condition compatible with the nonlinearity.}
The linear lift-up formula determines the required low-frequency
control. By \eqref{eq:heat-char} and \Cref{prop:linear-criticality},
\[
 \sup_{t>0}\|t\ee^{\nu t\DeltaP}u^2_{0,\rm in}\|_{L^2}
 \simeq \nu^{-1}
 \|u^2_{0,\rm in}\|_{\dot B^{-2}_{2,\infty}}.
\]
This equivalence identifies the precise low-frequency condition for
uniformly bounded linear lift-up.
The same space also accommodates the nonlinear forcing. With $\PP$
denoting the Leray projector on $\R^2$, \eqref{eq:div-B} gives
\[
 \|\PP\nablaP\cdot F\|_{\dot B^{-2}_{2,\infty}}
 \lesssim \|F\|_{L^1(\R^2)}.
\]
The quadratic stress
$u^\perp_{0}\otimes u^\perp_{0}
+(u^\perp_{\neq}\otimes u^\perp_{\neq})_{0}$ therefore enters this
space using only $L^2$ bounds on the velocities.
This compatibility makes the endpoint condition useful for the full
nonlinear problem. The stronger space $\dot H^{-2}$ also controls
linear lift-up, but general projected divergences of $L^1$ tensors
need not belong to it; see the example after
\Cref{prop:linear-criticality}. We impose the Besov condition on the
full roll to propagate it through the coupled equations. Its linear
necessity concerns only the wall-normal component $u^2_{0}$.

\emph{Closing low-frequency propagation and roll decay together.}
Propagation of the endpoint norm requires time-integrated control of
the roll, which is precisely what the missing spectral gap prevents
us from obtaining by a Poincar\'e inequality. We recover this control
by Fourier splitting: the Besov norm bounds the low frequencies,
and viscosity controls the remaining frequencies. The roll
self-interaction cancels in the $L^2$ energy identity, so only the
wave stress enters as an external source.
Under \eqref{eq:stress-L1-main}--\eqref{eq:stress-L2-main},
\eqref{eq:B-prop} and \eqref{eq:fourier-splitting} give
\begin{align*}
 \sup_{0\leq t\leq T}
 \|u^\perp_{0}(t)\|_{\dot B^{-2}_{2,\infty}}
 &\lesssim \varepsilon
 +\int_0^T\|u^\perp_{0}(t)\|_{L^2}^2\,dt
 +\nu^{-1/3}\varepsilon^2,\\
 \int_0^T\|u^\perp_{0}(t)\|_{L^2}^2\,dt
 &\lesssim \nu^{-1}\left(
 \varepsilon^2+
 \sup_{0\leq t\leq T}
 \|u^\perp_{0}(t)\|_{\dot B^{-2}_{2,\infty}}^2\right)
 +\nu^{-3}\varepsilon^4.
\end{align*}
These inequalities close together for $\varepsilon\leq c\nu$ and yield
\[
 \sup_{0\leq t\leq T}
 \|u^\perp_{0}(t)\|_{\dot B^{-2}_{2,\infty}}
 +\nu^{1/2}\|u^\perp_{0}\|_{L^2(0,T;L^2)}
 \lesssim\varepsilon;
\]
see \eqref{eq:roll-B-main}--\eqref{eq:roll-L2-main}.
In particular, the roll self-interaction changes the endpoint norm
by at most $O(\varepsilon^2/\nu)$, which is smaller than its initial
size at the stated threshold. With the weighted wave-stress estimate
\eqref{eq:wave-roll-forcing-revised}, Fourier splitting also gives
$\|u^\perp_{0}(t)\|_{L^2}\lesssim\varepsilon(1+\nu t)^{-1}$,
as stated in \eqref{eq:global-roll-decay}.

\emph{Controlling accumulated lift-up without integrable roll decay.}
The decay just obtained is not integrable in time. Consequently,
estimating the streak forcing by
$\int_0^t\|u^2_{0}(s)\|_{L^2}\,ds$ would lose uniform control.
We instead substitute the roll equation into the streak equation
before estimating either time integral. The heat-semigroup property
then gives the exact representation \eqref{eq:superposition-identity}:
\[
\begin{aligned}
 u^1_{0}(t)={}&\ee^{\nu t\DeltaP}u^1_{0,\rm in}
 -t\ee^{\nu t\DeltaP}u^2_{0,\rm in}\\
 &+\int_0^t(t-s)\ee^{\nu(t-s)\DeltaP}
 \left[\PP\nablaP\cdot
 \bigl(u^\perp_{0}\otimes u^\perp_{0}
 +(u^\perp_{\neq}\otimes u^\perp_{\neq})_{0}\bigr)(s)\right]_y\,ds\\
 &-\int_0^t\ee^{\nu(t-s)\DeltaP}\nablaP\cdot
 \bigl(u^\perp_{0}u^1_{0}
 +(u^\perp_{\neq}u^1_{\neq})_{0}\bigr)(s)\,ds,
\end{aligned}
\]
where $[\cdot]_y$ denotes the wall-normal component.
The extra factor $t-s$ records the accumulated lift-up. Crucially,
this factor is balanced by the heat decay of a divergence-form source:
\[
 \sup_{\tau>0}
 \|\tau\ee^{\nu\tau\DeltaP}\PP\nablaP\cdot F\|_{L^2}
 \lesssim\nu^{-1}\|F\|_{L^1};
\]
see \eqref{eq:S-low}. The nonlinear lift-up is therefore controlled
by the time integral of the quadratic stress, which is finite by
the preceding estimates. In particular, the roll self-interaction
contributes at most $O(\varepsilon^2/\nu^2)$ to the streak.
The higher-order estimate \eqref{eq:Sbound} preserves this same
structure when derivatives are included.

These zero-mode estimates supply the coefficient bounds needed in
the Wei--Zhang nonzero-mode framework. The streak-adapted coordinates
and good unknowns control mixing and enhanced dissipation; their
linear frequency bounds remain uniform as the spanwise frequency
approaches zero. The mixed nonlinear estimates pair higher-order factors with
lower-order factors, as in
\eqref{eq:mixed-Y2-final}. This preserves the two distinct energy
scales
\[
 E_{2}(T)+E_{4}(T)+E_{6}(T)\lesssim\varepsilon,
 \qquad
 E_{1}(T)+E_{3}(T)+E_{5}(T)\lesssim\frac{\varepsilon}{\nu};
\]
see \eqref{eq:full-energy-final} for the functionals defined in
\eqref{eq:A-final}--\eqref{eq:E6-final}.
After short-time smoothing of the $H^2$ data, the energies are
initialized at physical time one. We close the lower-order
nonzero-mode and roll bounds first, followed by the higher-order
bounds and the streak. The resulting bootstrap improvement is proved
in \Cref{lem:six-functional-bootstrap-final} for
$\varepsilon\leq c_*\nu$.

The remainder of the paper is organized as follows. Section~\ref{sec:full-system} defines the six energies, states their
closure estimates, and proves both main theorems and the corollaries.
Section~\ref{sec:zero-endpoint} develops the endpoint zero-mode
estimates, including linear criticality and a physical-space
sufficient condition. The nonlinear interactions and energy
estimates are treated in Sections~\ref{sec:interactions-final}
and~\ref{sec:energy-estimates}. The linear, good-unknown, and product
estimates are collected in Appendices~\ref{sec:nonzero-linear}--\ref{app:functional-tools};
the local theory and approximation arguments are given in
Appendix~\ref{app:local-H4}.

\section{Energy framework and proof of the main results}
\label{sec:full-system}
% The notation and six energy functionals are collected first. Their closure
% estimates give global stability and the quantitative bounds in
% \Cref{thm:full-couette-final,thm:global-decay-final}.

\subsection{Notation and auxiliary equations}
\label{sec:framework-notation}
We first collect the notation used in this paper.

\begin{enumerate}[label=\textup{(\roman*)},leftmargin=2em,itemsep=0.6\baselineskip,topsep=0.4\baselineskip]
\item \emph{Space-time norms and projections.}
Write $x_\perp=(y,z)$. Spatial $L^p$ and $H^m$ norms of
streamwise-independent fields are taken on $\mathbb R^2$, and those
of three-dimensional fields on $\Omega$. We write $L_t^pL_x^q$ for
the corresponding mixed norm over $[0,T]$; $L^q(\mathbb T_x)$ refers
to the streamwise variable alone. The two interpretations of a
zero-mode norm agree because $|\mathbb T|=1$. The constant $C$ is independent of $\nu$ and $T$ unless stated otherwise and may change between estimates. The full and transverse Leray projections are
\begin{equation*}
 \mathbb P:=I-\nabla\Delta^{-1}\nabla\cdot,
 \qquad
 \PP:=I-\nablaP\DeltaP^{-1}\nablaP\cdot.
\end{equation*}

\item \emph{The Besov norm.}
Choose radial functions $\chi,\varphi\in C_c^\infty(\mathbb R^2)$, supported in a ball and an annulus, respectively, such that
\begin{equation*}
 \chi(\xi)+\sum_{j\geq0}\varphi(2^{-j}\xi)=1,
 \qquad
 \sum_{j\in\mathbb Z}\varphi(2^{-j}\xi)=1\quad(\xi\neq0).
\end{equation*}
Let $\dot\Delta_jf=\mathcal F^{-1}[\varphi(2^{-j}\cdot)\widehat f]$ and
\begin{equation}
 \norm{f}{\dot B^{-2}_{2,\infty}}
 :=\sup_{j\in\mathbb Z}2^{-2j}\norm{\dot\Delta_jf}{L^2(\mathbb R^2)}.
 \label{eq:Bdef}
\end{equation}
% Use the homogeneous realization on $\mathcal S'(\mathbb R^2)/\mathcal P$, where $\mathcal P$ is the space of polynomials. Solution variables also belong to $L^2$, which fixes their representatives. The dyadic definition also applies to divergences of $L^1$ tensors, as in \Cref{lem:div-B}. For the Littlewood--Paley theory and Bernstein inequalities, see \cite{BahouriCheminDanchin2011}.

\item \emph{Couette coordinates.}
Let $L_{0}:=\pt-\nu\Delta+y\px$ and define
\begin{equation}
 \widetilde f(t,X,Y,Z):=f(t,X+tY,Y,Z).
 \label{eq:couette-transform}
\end{equation}
The transformed derivatives are
\begin{equation*}
 \nabla_L:=(\pa_X,\pa_Y-t\pa_X,\pa_Z),
 \qquad \Delta_L:=\nabla_L\cdot\nabla_L,
\end{equation*}
so that $(\pt+y\px)f\longleftrightarrow\pt\widetilde f$ and $\nabla f\longleftrightarrow\nabla_L\widetilde f$.

\item \emph{Energy norms.}
The mode subscripts follow \eqref{eq:mode-convention}. For an $x$-nonzero function $f$, $0\leq\gamma\leq1$, and $I=[0,T]$, denote
\begin{align}
 \norm{f}{X_\gamma(I)}^2:={}&
 \norm{\ee^{\gamma\nu^{1/3}t}f}{L^\infty(I;L^2)}^2
 +\norm{\ee^{\gamma\nu^{1/3}t}\nabla\Delta^{-1}\px f}{L^2(I;L^2)}^2
 \nonumber\\
 &+\nu^{1/3}\norm{\ee^{\gamma\nu^{1/3}t}f}{L^2(I;L^2)}^2
 +\nu\norm{\ee^{\gamma\nu^{1/3}t}\nabla f}{L^2(I;L^2)}^2.
 \label{eq:Xgamma-def}
\end{align}
We omit $I$ when it is $[0,T]$. The four terms give uniform energy, inviscid damping, enhanced dissipation, and viscous dissipation. The inverse Laplacian acts on the $x$-nonzero subspace.
For either mode class and an integer $m\geq0$, define
\begin{equation}
 \|f\|_{Y^m}^2:=\|f\|_{L_t^\infty H^m}^2
 +\nu\|\nabla f\|_{L_t^2H^m}^2.
 \label{eq:Ym-final}
\end{equation}
For $f=f_{\neq}$, the streamwise Poincar\'e inequality yields
\begin{equation*}
 \nu^{1/2}\|f\|_{L_t^2H^{m+1}}
 \leq C\nu^{1/2}\|\nabla f\|_{L_t^2H^m}
 \leq C\|f\|_{Y^m}.
\end{equation*}
% For $f=f_{0}$, the dissipative term controls only $\nu^{1/2}\|\nabla_\perp f\|_{L_t^2H^m}$. The undifferentiated $L_t^2L^2$ component must be estimated separately whenever a full $L_t^2H^{m+1}$ bound is needed.
\end{enumerate}

We now turn to the equations. Taking the streamwise average of \eqref{eq:pert} gives the roll and streak systems
\begin{equation*}
 \pt u^\perp_{0}-\nu\DeltaP u^\perp_{0}
 =-\PP\nablaP\cdot\bigl(u^\perp_{0}\otimes u^\perp_{0}
 +(u^\perp_{\neq}\otimes u^\perp_{\neq})_{0}\bigr),
 \qquad \nablaP\cdot u^\perp_{0}=0,
\end{equation*}
\begin{equation}
 \pt u^1_{0}-\nu\DeltaP u^1_{0}
 =-u^2_{0}-\nablaP\cdot\bigl(u^\perp_{0}u^1_{0}
 +(u^\perp_{\neq}u^1_{\neq})_{0}\bigr).
 \label{eq:a-exact}
\end{equation}
% The quadratic roll stress belongs naturally to $L^1(\mathbb R^2)$. Its divergence is controlled by
% \begin{equation}
%  \norm{\PP\nablaP\cdot F}{\dot B^{-2}_{2,\infty}(\mathbb R^2)}
%  \lesssim\norm{F}{L^1(\mathbb R^2)}.
%  \label{eq:endpoint-div-intro}
% \end{equation}
% The same Besov norm controls the lift-up term in the streak equation: the heat-semigroup characterization
% \begin{equation*}
%  \norm{f}{\dot B^{-2}_{2,\infty}}
%  \simeq\sup_{\tau>0}\tau\norm{\ee^{\tau\DeltaP}f}{L^2}
% \end{equation*}
% implies
% \begin{equation}
%  \sup_{t>0}\norm{t\ee^{\nu t\DeltaP}f}{L^2}
%  \lesssim\nu^{-1}\norm{f}{\dot B^{-2}_{2,\infty}}.
%  \label{eq:liftup-besov-intro}
% \end{equation}
% Thus the space propagated by the roll forcing is also the input space for lift-up. Section~\ref{sec:zero-endpoint} proves these endpoint estimates and combines them with Fourier splitting and the two-level Duhamel formula.
For the nonzero modes, by \eqref{eq:pert} we obtain
\begin{equation}
\begin{aligned}
 &\partial_t u_{\neq}-\nu\Delta u_{\neq}
 +y\partial_xu_{\neq}+u^2_{\neq}e_1+\nabla p_{\neq}\\
 &\qquad=-u^1_{0}\partial_xu_{\neq}
 -u^\perp_{0}\cdot\nabla_\perp u_{\neq}
 -u^\perp_{\neq}\cdot\nabla_\perp u_{0}
 -(u_{\neq}\cdot\nabla u_{\neq})_{\neq},
 \qquad \nabla\cdot u_{\neq}=0.
\end{aligned}
\label{eq:nonzero-before-streak}
\end{equation}
The following construction is inspired by the work of Wei--Zhang \cite{WeiZhang2021}. One may absorb the streak term $u^1_{0}\partial_xu_{\neq}$
into the transport and let
\begin{equation*}
 V(t,y,z):=y+u^1_{0}(t,y,z),
 \qquad
 L_V:=\partial_t-\nu\Delta+V\partial_x.
\end{equation*}
If $\|u^1_{0}\|_{L^\infty(0,T;H^4(\R^2))}$ is sufficiently small,
Sobolev embedding gives $\|\nablaP u^1_{0}\|_{L^\infty}\leq1/2$,
and hence $V_y\geq1/2$. The shear coordinates and the tangential
derivative can then be defined by
\begin{equation*}
 \Psi_t(x,y,z):=(x,V(t,y,z),z),
 \qquad \kappa:=\frac{V_z}{V_y},
 \qquad D_z^V:=\pz-\kappa\py.
\end{equation*}
In particular, $D_z^VV=0$.

For the pressure, taking the divergence of \eqref{eq:pert} gives, with repeated indices summed over $i,j\in\{1,2,3\}$ and $\alpha,\beta\in\{2,3\}$,
\begin{equation*}
 \Delta p=-2\partial_xu^2_{\neq}
 -\partial_i u^j\partial_j u^i.
\end{equation*}
We decompose the pressure as
\begin{equation*}
 p=p_{0}+p_{\neq},
 \qquad
 p_{\neq}=p^V+p^{0\neq}+p^{\neq\neq}.
\end{equation*}
The zero-mode pressure contains both the zero--zero interaction
and the zero-mode output of the wave--wave interaction:
\begin{equation*}
 \Delta_\perp p_{0}
 =-\partial_\alpha\partial_\beta
 \bigl(u^\alpha_{0}u^\beta_{0}
 +(u^\alpha_{\neq}u^\beta_{\neq})_{0}\bigr).
\end{equation*}
For the nonzero pressure, $p^V$ combines the linear Couette
contribution with the streak--wave interaction.
The component $p^{0\neq}$ comes from the roll--wave interaction,
and $p^{\neq\neq}$ is the nonzero output of the wave--wave interaction.
Using $\partial_xu_{0}=0$, we obtain
\begin{align}
 \Delta p^V
 &=-2\partial_x\bigl(V_yu^2_{\neq}+V_zu^3_{\neq}\bigr),
 \label{eq:pV-final}\\
 \Delta p^{0\neq}
 &=-2\partial_\alpha u^\beta_{0}
       \partial_\beta u^\alpha_{\neq},
 \label{eq:pR-final}\\
 \Delta p^{\neq\neq}
 &=-\bigl(\partial_i u^j_{\neq}
          \partial_j u^i_{\neq}\bigr)_{\neq}.
 \label{eq:pW-final}
\end{align}
For comparison with constant Couette transport, we further write
$p^V=p^C+p^{\rm st}$, where
\begin{align*}
 \Delta p^C
 &=-2\partial_xu^2_{\neq},\\
 \Delta p^{\rm st}
 &=-2\partial_x\bigl(
   (\partial_yu^1_{0})u^2_{\neq}
   +(\partial_zu^1_{0})u^3_{\neq}\bigr).
\end{align*}

The nonlinear forcing is organized by its two input modes. For $j=2,3$, define
\begin{align}
 g_{0\neq}^j&:=u^\perp_{0}\cdot\nabla_\perp u^j_{\neq},\qquad
 g_{\neq0}^j:=u^\perp_{\neq}\cdot\nabla_\perp u^j_{0}, \nonumber\\
 g_{\neq\neq}^j&:=(u_{\neq}\cdot\nabla u^j_{\neq})_{\neq}
 +\partial_jp^{\neq\neq}, \qquad g^j:=g_{0\neq}^j+g_{\neq0}^j+g_{\neq\neq}^j
      +\partial_jp^{0\neq}.\label{eq:gj-final}
\end{align}
The transverse nonzero-mode equations then read
\begin{equation}
 L_Vu^j_{\neq}+\partial_jp^V=-g^j,
 \qquad j\in\{2,3\}.
 \label{eq:nonzero-transverse-final}
\end{equation}
The pressure source involves the combination
$V_yu^2_{\neq}+V_zu^3_{\neq}
=V_y(u^2_{\neq}+\kappa u^3_{\neq})$.
This suggests combining the two transverse components. To formulate the construction for both the linear and nonlinear estimates, let $(w^2,w^3)$ be a transverse field with zero streamwise average; in the nonlinear application, $w^j=u^j_{\neq}$. Define
\begin{equation*}
 q:=w^2+\kappa w^3.
\end{equation*}
The corresponding pressure coupling is incorporated into the operator
\begin{equation*}
 \mathscr L_Vq
 :=L_Vq-2(\py+\kappa\pz)\Delta^{-1}(V_y\px q).
\end{equation*}
The variation of $\kappa$ also produces viscous terms involving
$\nablaP\kappa\cdot\nablaP w^3$.
To separate their normal and tangential contributions, write
\begin{equation*}
 \nablaP\kappa\cdot\nablaP
 =\rho_1\nablaP V\cdot\nablaP
  +\rho_2(\pz-\kappa\py),
\end{equation*}
where
\begin{equation}
 \rho_1:=\frac{\py\kappa+\kappa\pz\kappa}
                  {V_y(1+\kappa^2)},
 \qquad
 \rho_2:=\frac{\pz\kappa-\kappa\py\kappa}
                  {1+\kappa^2}.
 \label{eq:rho-definitions}
\end{equation}
For the normal contribution, introduce the parabolic correctors
\begin{align}
 L_Vq^{(2)}
 &=-\rho_1\nablaP V\cdot\nablaP w^3, \qquad
  q^{(2)}(0)=0,
 \label{eq:q2-corrector}\\
 L_Vq^{(3)}
 &=-\nablaP V\cdot\nablaP w^3, \qquad
  q^{(3)}(0)=0,
 \label{eq:q3-corrector}
\end{align}
and define the second good unknown by
\begin{equation}
 q^{(1)}:=q-\nu q^{(2)}.
 \label{eq:q1-second-good}
\end{equation}
% The equations for these auxiliary quantities and the required commutator estimates are recorded in Appendix~\ref{sec:second-good}. 
Fix a positive constant $\gamma$, independent of $\nu$ and $T$, such that
\begin{equation*}
 0<4\gamma<\min\{1,\gamma_0,\gamma_1,\gamma_2,\gamma_3,
 \sigma_{\rm c},\sigma_{\rm h}\},\qquad \lambda_*:=\gamma.
\end{equation*}
Here $\gamma_0$ is chosen below both weight thresholds denoted by this
symbol in Appendix~\ref{sec:nonzero-linear}; the other thresholds are
those of Appendices~\ref{sec:nonzero-linear} and~\ref{sec:second-good}.
Thus all required linear estimates apply at weights $2\gamma$ and
$3\gamma$. This choice is fixed before the bootstrap smallness constants.
To measure the nonlinear forcing $(g^2,g^3)$ in
\eqref{eq:nonzero-transverse-final}, define
\begin{align}
 \mathfrak F(T):={}&\nu^{-1/2}\|\ee^{3\gamma\nu^{1/3}t}\nabla(g^2+\kappa g^3)\|_{L_t^2L_x^2}
 +\nu^{-1/2}\|\ee^{3\gamma\nu^{1/3}t}\partial_xg^3\|_{L_t^2L_x^2}
 \nonumber\\
 &+\nu^{-1/6}\|\ee^{3\gamma\nu^{1/3}t}\nabla g^3\|_{L_t^2L_x^2}.
 \label{eq:Fforcing-final}
\end{align}

\subsection{Energy functionals and initialization}
We use three energies for the zero modes and three for the nonzero streamwise modes. All time norms below are taken over $[0,T]$. For the roll, let $\nabla_\perp^\perp=(-\partial_z,\partial_y)$ and define
\begin{equation*}
 \omega_{0}:=\partial_yu^3_{0}-\partial_zu^2_{0},\qquad
 \mu_\nu(t):=\min\{\nu^{2/3}+\nu t,1\},\qquad
 m_\nu(t):=\min\{\nu(1+t),1\}.
\end{equation*}
% For a zero-mode field $f$ and a scalar time weight $\vartheta=\vartheta(t)$, the convention
% \begin{equation*}
%  \|\vartheta f\|_{Y^0}^2
%  :=\|\vartheta f\|_{L_t^\infty L^2}^2
%  +\nu\|\vartheta\nabla_\perp f\|_{L_t^2L^2}^2
% \end{equation*}
% agrees with \eqref{eq:Ym-final}; derivatives of the weight occur only in the weighted energy identity. 
For the transverse field introduced above, denote
\begin{equation*}
 \mathcal E_{\perp,\gamma}(T)
 :=\sum_{j=2}^3\left(
 \norm{\px^2 u^j_{\neq}}{X_\gamma}^2
 +\norm{\px D_z^V u^j_{\neq}}{X_\gamma}^2\right)
 +\norm{\px\nabla (u^2_{\neq}+\kappa u^3_{\neq})}{X_\gamma}^2.
\end{equation*}
Define
\begin{align}
 E_{1}(T):={}&\|u^1_{0}\|_{L_t^\infty H^4}
 +\nu^{1/2}\|\nabla_\perp\Delta_\perp^2u^1_{0}\|_{L_t^2L^2}
 +\nu^{-1}\|\partial_tu^1_{0}\|_{L_t^\infty H^2},
 \label{eq:A-final}\\
 E_{2}(T):={}&\|u^\perp_{0}\|_{Y^0}+\|\omega_{0}\|_{Y^0}
 +\|\partial_z\omega_{0}\|_{Y^0}
 +\|\mu_\nu^{1/2}\partial_y\omega_{0}\|_{Y^0}
 \nonumber\\*
 &+\sup_{0\leq t\leq T}\|u^\perp_{0}(t)\|_{\dot B^{-2}_{2,\infty}}
 +\nu^{1/2}\|u^\perp_{0}\|_{L_t^2L^2},
 \label{eq:E2-definition}\\
 E_{3}(T):={}&\|u^\perp_{0}\|_{L_t^\infty H^4(\mathbb R^2)}
 +\nu^{1/2}\|\nabla_\perp u^\perp_{0}\|_{L_t^2H^4(\mathbb R^2)},
 \label{eq:E3-definition}\\
 E_{4}(T):={}&\|\Delta u^2_{\neq}\|_{X_{2\gamma}}
 +\|(\partial_x^2+\partial_z^2)u^3_{\neq}\|_{X_{2\gamma}}
 +\nu^{2/3}\|\Delta u^3_{\neq}\|_{X_{3\gamma}},
 \label{eq:E4-definition}\\
 E_{5}(T):={}&\|\ee^{2\gamma\nu^{1/3}t}(\partial_x,\partial_z)u_{\neq}\|_{L_t^\infty H^3}
 +\nu^{1/2}\|\ee^{2\gamma\nu^{1/3}t}\nabla(\partial_x,\partial_z)u_{\neq}\|_{L_t^2H^3},
 \label{eq:E5-definition}\\
 E_{6}(T)^2:={}&\mathcal E_{\perp,3\gamma}(T)
 +\nu^{4/3}\|\Delta u^3_{\neq}\|_{X_{3\gamma}}^2.
 \label{eq:E6-final}
\end{align}
When $T$ is fixed, we write $E_{j}$ for $E_{j}(T)$.

The zero-mode energies distinguish the streak from the low- and high-order roll estimates. The $H^4$ component of $E_{1}$ controls the geometry of $V=y+u^1_{0}$, while the factor $\nu^{-1}$ on the time derivative records its slow variation. The roll energy $E_{2}$ combines velocity, vorticity, and anisotropic vorticity derivatives. The weight $\mu_\nu^{1/2}$ allows weaker initial control of the highest wall-normal derivative. Its Besov component propagates the transverse low frequencies, and Fourier splitting supplies the separate $\nu^{1/2}L_t^2L^2$ term. The higher-order roll energy $E_{3}$ supplies the coefficients needed in tame product estimates without weakening the low-order bound for $E_{2}$. For the nonzero modes, $E_{4}$ follows the primary coupled variables $\Delta u^2_{\neq}$ and $-(\partial_x^2+\partial_z^2)u^3_{\neq}$. The latter operator has multiplier $k^2+\ell^2$, whose lower bound uses only $k\neq0$. The factor $\nu^{2/3}$ compensates for the weaker estimate of $\Delta u^3_{\neq}$. The energy $E_{5}$ adds fourth-order control through the physical derivatives $(\partial_x,\partial_z)$. Finally, $E_{6}$ uses the tangent derivative $D_z^V$ and the combination $q$ to exploit the pressure structure. Its estimate is obtained through the parabolic correctors defined above. 
% These choices separate the two sizes required by the bootstrap:
% \begin{equation*}
%  E_{2}+E_{4}+E_{6}\lesssim\varepsilon,
%  \qquad E_{1}+E_{3}+E_{5}\lesssim\frac{\varepsilon}{\nu}.
% \end{equation*}
% The proof closes $(E_{6},E_{4})$ together, followed by $E_{2}$, $E_{5}$, $E_{3}$, and $E_{1}$. Thus the numbering groups the modes, while the argument follows the dependence of the estimates.

The Biot--Savart identity $u^\perp_{0}=\nabla_\perp^\perp\Delta_\perp^{-1}\omega_{0}$, together with the separate $L^2$ control of $u^\perp_{0}$, gives the following recovery bounds:
	\begin{align}
		&\|u^2_{0}(t)\|_{H^2}+\|u^3_{0}(t)\|_{H^1}+\|\partial_zu^3_{0}(t)\|_{H^1}
		+\mu_\nu(t)^{1/2}\|u^3_{0}(t)\|_{H^2}
		\leq C E_{2}(T),
		\label{eq:roll-recovery-final}
		\\
		&\nu^{1/2}\Big(
		\|\nabla_\perp u^2_{0}\|_{L_t^2H^2}
		+\|\nabla_\perp u^3_{0}\|_{L_t^2H^1}
		+\|\nabla_\perp\partial_zu^3_{0}\|_{L_t^2H^1}
		+\|\mu_\nu^{1/2}\nabla_\perp u^3_{0}\|_{L_t^2H^2}
		\Big)
		\leq C E_{2}(T).
		\label{eq:roll-recovery-time-final}
	\end{align}
	% Indeed, in transverse Fourier variables,
	% \begin{equation*}
	% 	|\xi_\perp|^2\widehat{u^2_{0}}=i\ell\widehat\omega_{0},
	% 	\qquad
	% 	|\xi_\perp|^2\widehat{u^3_{0}}=-i\eta\widehat\omega_{0}.
	% \end{equation*}
	% The first identity is controlled by $\omega_{0}$ and $\partial_z\omega_{0}$. The second identity is controlled by $\omega_{0}$ at order one and by $\partial_y\omega_{0}$ at order two; this is precisely why the factor $\mu_\nu^{1/2}$ is attached to the highest $y$ derivative. The time-integrated estimates follow from the dissipative components of the corresponding $Y^0$ norms.
	The definitions and the velocity reconstruction \eqref{eq:velocity-reconstruction-final} imply
\begin{align}
 \|u_{\neq}(t)\|_{L^2}&\leq C\ee^{-2\gamma\nu^{1/3}t}E_{4}(T),\notag\\
 \|u(t)\|_{H^2}&\leq C\bigl(E_{1}(T)+E_{3}(T)+E_{5}(T)\bigr),
 \qquad 0\leq t\leq T.
 \label{eq:physical-recovery-reorganized}
\end{align}
% Indeed, the first bound follows from the $s=0$ reconstruction and the streamwise Poincar\'e inequality; the second follows from the high-order components of the three functionals.
For the product estimates, we use the following pointwise profiles associated with $E_{4}$ and $E_{6}$:
	\begin{equation*}
		\Pi_{0}(t):=\|\Delta u^2_{\neq}(t)\|_{L^2}+\|(\partial_x^2+\partial_z^2)u^3_{\neq}(t)\|_{L^2},
		\qquad
		\Pi_1(t):=\|\nabla\Delta u^2_{\neq}(t)\|_{L^2}+\|\nabla(\partial_x^2+\partial_z^2)u^3_{\neq}(t)\|_{L^2},
	\end{equation*}
	and
	\begin{align}
		\mathcal U_{0}(t):={}&\sum_{j=2}^3\left(
		\|\partial_x^2u^j_{\neq}(t)\|_{L^2}+\|\partial_xD_z^Vu^j_{\neq}(t)\|_{L^2}\right)
		+\|\partial_x\nabla (u^2_{\neq}+\kappa u^3_{\neq})(t)\|_{L^2}
		+\nu^{2/3}\|\Delta u^3_{\neq}(t)\|_{L^2},
		\label{eq:U0-revised}
		\\
		\mathcal U_1(t):={}&\sum_{j=2}^3\left(
		\|\nabla\partial_x^2u^j_{\neq}(t)\|_{L^2}+\|\nabla\partial_xD_z^Vu^j_{\neq}(t)\|_{L^2}\right)
		+\|\nabla\partial_x\nabla (u^2_{\neq}+\kappa u^3_{\neq})(t)\|_{L^2}
		+\nu^{2/3}\|\nabla\Delta u^3_{\neq}(t)\|_{L^2}.
		\label{eq:U1-revised}
	\end{align}
	The definitions of $E_{4}$ and $E_{6}$ imply
	\begin{align}
		&\|\ee^{2\gamma\nu^{1/3}t}\Pi_{0}\|_{L_t^\infty}
		+\nu^{1/6}\|\ee^{2\gamma\nu^{1/3}t}\Pi_{0}\|_{L_t^2}
		+\nu^{1/2}\|\ee^{2\gamma\nu^{1/3}t}\Pi_1\|_{L_t^2}
		\leq C E_{4}(T),
		\label{eq:P-profiles-revised}
		\\
		&\|\ee^{3\gamma\nu^{1/3}t}\mathcal U_{0}\|_{L_t^\infty}
		+\nu^{1/6}\|\ee^{3\gamma\nu^{1/3}t}\mathcal U_{0}\|_{L_t^2}
		+\nu^{1/2}\|\ee^{3\gamma\nu^{1/3}t}\mathcal U_1\|_{L_t^2}
		\leq C E_{6}(T).
		\label{eq:U-profiles-revised}
	\end{align}

Next, we consider initialization. The long-time argument starts at physical time $t=1$, when parabolic smoothing supplies the fourth-order data. We record the interface with the local theory before stating the closure estimates.
Recall the initial size from the main theorem,
	\begin{equation*}
		\varepsilon=\|u_{\rm in}\|_{H^2(\Omega)}
		+\|u^\perp_{0,\rm in}\|_{\dot B^{-2}_{2,\infty}(\mathbb R^2)}.
	\end{equation*}
	The functionals $E_{1}$--$E_{6}$ are applied only after the fixed restart at physical time one. To keep the restart interface explicit, throughout the long-time sections we impose the following standing bounds at the restarted time origin:
	\begin{align}
		\mathcal I_{\rm lo}(0)
		&:=\|u(0)\|_{H^2}
		+\|u^\perp_{0}(0)\|_{\dot B^{-2}_{2,\infty}}
		+\|\Delta (u^2_{\neq}+\kappa u^3_{\neq})(0)\|_{L^2}+\|\Delta u^3_{\neq}(0)\|_{L^2}
		\leq C_{\rm r}\varepsilon,
		\label{eq:restarted-low-data-standing}\\
		\mathcal I_{\rm hi}(0)
		&:=\|u(0)\|_{H^4}
		+\nu^{-1}\|\partial_t u^1_{0}(0)\|_{H^2}
		\leq C_{\rm r}\frac{\varepsilon}{\nu},
		\label{eq:restarted-high-data-standing}
	\end{align}
	where $C_{\rm r}\geq1$ is a fixed absolute constant. Every initial quantity in the long-time estimates is understood under \eqref{eq:restarted-low-data-standing}--\eqref{eq:restarted-high-data-standing}. These bounds are not additional assumptions on the original physical data: they are verified from the $H^2+\dot B^{-2}_{2,\infty}$ hypothesis by the following \Cref{lem:H2-restart-initialization-final}. In particular, the unweighted fourth-order components of $E_{1}$, $E_{3}$, and $E_{5}$ are initialized from positive-time parabolic smoothing rather than imposed at physical time zero.
	
	\begin{Lem}
		\label{lem:H2-restart-initialization-final}
		Let $u_{\rm in}\in H^2(\Omega)$ be divergence free and $u^\perp_{0,\rm in}\in\dot B^{-2}_{2,\infty}(\mathbb R^2)$. Assume $\varepsilon\leq c_{\rm r}\nu$, where $c_{\rm r}>0$ is sufficiently small for the short-time theory (Subsection \ref{app:restart-proof}). Let $u$ be the short-time solution of \Cref{prop:H2-short-time-final}. Then
		\begin{equation}
			\sup_{0\le t\le2}\|u(t)\|_{H^2}^2
			+\nu\int_{0}^2\|u(t)\|_{H^3}^2\,dt\le C\varepsilon^2,
			\qquad
			\sup_{1\le t\le2}\|u(t)\|_{H^4}\le C \frac\varepsilon \nu,
			\label{eq:H2-restart-smoothing-global}
		\end{equation}
		and
		\begin{equation}
			\sup_{0\le t\le1}\|u^\perp_{0}(t)\|_{\dot B^{-2}_{2,\infty}}\le C\varepsilon.
			\label{eq:H2-restart-besov-global}
		\end{equation}
		At physical time $t=1$,
		\begin{equation}
			\|u(1)\|_{H^2}+\|u^\perp_{0}(1)\|_{\dot B^{-2}_{2,\infty}}\le C_{\rm r}\varepsilon,
			\qquad
			\|u(1)\|_{H^4}\le C_{\rm r}\frac{\varepsilon}{\nu},
			\label{eq:H2-restart-sizes-final}
		\end{equation}
		and
		\begin{equation}
			\nu^{-1}\|\partial_t u^1_{0}(1)\|_{H^2}\le C_{\rm r}\frac{\varepsilon}{\nu}.
			\label{eq:H2-restart-at-final}
		\end{equation}
		If all long-time correctors are restarted with zero initial data at physical time one and elapsed time is relabelled as $t=0$, then the standing bounds \eqref{eq:restarted-low-data-standing}--\eqref{eq:restarted-high-data-standing} hold. Moreover,
		\begin{equation}
			E_{2}(0)+E_{4}(0)+E_{6}(0)\le C_{\rm r}\varepsilon,
			\qquad
			E_{1}(0)+E_{3}(0)+E_{5}(0)\le C_{\rm r}\frac{\varepsilon}{\nu}.
			\label{eq:H2-energy-initialization-final}
		\end{equation}
	\end{Lem}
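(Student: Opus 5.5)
The plan has four steps: a short-time $H^2$ energy estimate on $[0,2]$, parabolic smoothing to $H^4$, propagation of the Besov norm on $[0,1]$, and then a read-off of the standing bounds and of $E_1(0),\dots,E_6(0)$ at the restart.

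\emph{Step 1: $H^2$ energy estimate on $[0,2]$.} We take the short-time solution from \Cref{prop:H2-short-time-final} and estimate its $H^2$ energy directly. Apply $\partial^\alpha$ with $|\alpha|\le2$ to \eqref{eq:pert} and pair with $\partial^\alpha u$. The contributions break down as follows:
\begin{itemize}
\item The transport term $y\partial_x$ is skew, up to commutators $[\partial_y,y\partial_x]=\partial_x$.
\item Those commutators, together with the lift-up term $u^2e_1$, contribute at most $C\|u\|_{H^2}^2$.
\item The pressure drops out by incompressibility.
\item The nonlinearity is bounded by $C\|u\|_{H^2}\|u\|_{H^3}\|u\|_{H^2}$, using $H^2\subset L^\infty$ in three dimensions.
\end{itemize}
This gives
\[
\frac{d}{dt}\|u\|_{H^2}^2+\nu\|\nabla u\|_{H^2}^2\le C\|u\|_{H^2}^2+C\nu^{-1}\|u\|_{H^2}^4 .
\]
Gronwall on $[0,2]$, together with a continuity argument using $\varepsilon\le c_{\rm r}\nu$ (so that $\nu^{-1}\|u\|_{H^2}^2\lesssim c_{\rm r}\varepsilon\le1$), yields the first bound in \eqref{eq:H2-restart-smoothing-global}. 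This is also where the smallness required of $c_{\rm r}$ is fixed.

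\emph{Step 2: smoothing to $H^4$.} We repeat the estimate at orders three and four with time weights $t$ and $t^2$ (or $(t-\frac12)_+$-type cutoffs), in the standard way. Each step uses the dissipation $\nu\|u\|_{H^{m+1}}^2$ obtained at the previous order. The key point is to count powers of $\nu$ correctly. The $L^2_tH^3$ bound carries a factor $\nu^{-1/2}\varepsilon$, and one order of smoothing at fixed time $t\sim1$ without waiting $\nu^{-1}$ costs $\nu^{-1/2}$. Two orders therefore give $\|u(t)\|_{H^4}\lesssim\nu^{-1}\varepsilon$ for $1\le t\le2$, with the nonlinear terms handled by tame product estimates and the smallness $\nu^{-1}\varepsilon\ll1$. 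I expect this step to be the main obstacle. The linear terms $y\partial_x$ and $u^2e_1$ produce commutator growth that is only $O(1)$ on the fixed interval, so they are harmless. The real care is in keeping the weighted quantities from accumulating extra negative powers of $\nu$ beyond $\nu^{-1}$, especially at the top order.

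\emph{Step 3: Besov norm on $[0,1]$.} For \eqref{eq:H2-restart-besov-global} we use the roll equation, treating $-\PP\nablaP\cdot(u^\perp_0\otimes u^\perp_0+(u^\perp_{\neq}\otimes u^\perp_{\neq})_0)$ as forcing. Apply $\dot\Delta_j$ and use the $L^2$ contractivity of the heat semigroup dyadically. Then \eqref{eq:div-B} bounds the forcing contribution by
\[
\int_0^1\|u^\perp\|_{L^2}^2\,dt\le C\varepsilon^2\le C\varepsilon .
\]

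\emph{Step 4: values at $t=1$ and the restart.} The bounds \eqref{eq:H2-restart-sizes-final} at $t=1$ follow from Steps 1--3. For \eqref{eq:H2-restart-at-final} we read $\partial_tu^1_0$ off \eqref{eq:a-exact} and bound it in $H^2$:
\begin{itemize}
\item $\nu\DeltaP u^1_0$ is controlled by $\nu\|u\|_{H^4}\lesssim\varepsilon$;
\item $u^2_0$ is controlled by $\|u\|_{H^2}\lesssim\varepsilon$;
\item the quadratic terms are controlled by $\|u\|_{H^3}^2\lesssim\varepsilon^2/\nu^2\lesssim\varepsilon/\nu$.
\end{itemize}
Hence $\|\partial_tu^1_0(1)\|_{H^2}\lesssim\varepsilon$, which is \eqref{eq:H2-restart-at-final}.

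After relabelling time, the standing bounds \eqref{eq:restarted-low-data-standing}--\eqref{eq:restarted-high-data-standing} follow. The only extra term is $\|\Delta(u^2_{\neq}+\kappa u^3_{\neq})\|_{L^2}$, which is bounded by $(1+\|\kappa\|_{W^{2,\infty}})\|u\|_{H^2}$. Here $\kappa$ is bounded via $\|u^1_0\|_{H^4}\lesssim\varepsilon/\nu\le c_{\rm r}$, since $V_y\ge1/2$.

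Finally, at $T=0$ the time-integrated parts of all the energies vanish, so each $E_j(0)$ reduces to a sum of norms at time zero. The correctors start from zero, so the $E_6$ quantities are controlled by $H^2$ norms of $u_{\neq}$ together with $\|\kappa\|_{W^{1,\infty}}$. With the weight $\mu_\nu(0)^{1/2}\le1$, the lower energies $E_2,E_4,E_6$ are bounded by the low-order data. The higher energies $E_1,E_3,E_5$ are bounded by the $H^4$ and $\partial_tu^1_0$ data. This gives \eqref{eq:H2-energy-initialization-final}.
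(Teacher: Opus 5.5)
Your overall route is the same as the paper's. It has four parts. First, an $H^2$ energy inequality whose nonlinearity is $C\nu^{-1}\|u\|_{H^2}^4$, closed by continuity under $\varepsilon\le c_{\rm r}\nu$. Second, time-weighted smoothing to $H^4$; your weights $t,t^2$ are the paper's $(\nu t),(\nu t)^2$ up to powers of $\nu$, and both give $\|u(t)\|_{H^4}\lesssim\varepsilon/(\nu t)$. Third, Duhamel plus \eqref{eq:div-B} for the Besov norm. Fourth, a read-off at $t=1$.

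Steps 1--3 are fine. The only caveat is justifying skew-symmetry of $y\partial_x$ on $H^2$, which the paper avoids by working in Kelvin variables.

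Step 4, however, contains an error that breaks \eqref{eq:H2-restart-at-final} as written. You bound the quadratic streak flux by $\|u\|_{H^3}^2\lesssim\varepsilon^2/\nu^2\lesssim\varepsilon/\nu$ and then conclude $\|\partial_tu^1_0(1)\|_{H^2}\lesssim\varepsilon$. But $\varepsilon/\nu$ is not $O(\varepsilon)$. This chain only gives $\nu^{-1}\|\partial_tu^1_0(1)\|_{H^2}\lesssim c_{\rm r}\varepsilon/\nu^2$, which misses the target by a factor $\nu^{-1}$. The same defect propagates to \eqref{eq:restarted-high-data-standing} and to $E_1(0)$.

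The fix is to keep the intermediate smoothing rather than bounding $\|u\|_{H^3}$ by $\|u\|_{H^4}$. Either of the following works:
\begin{itemize}
\item At $t=1$ your own Step 2 gives $\|u(1)\|_{H^3}^2\lesssim\varepsilon^2/\nu$ (equivalently, interpolate $\|u\|_{H^3}^2\le\|u\|_{H^2}\|u\|_{H^4}$).
\item Use a tame product with one factor at the $H^2$ level, as the paper does: $\|\nablaP\cdot(\cdots)\|_{H^2}\le C\|u\|_{H^2}\|u\|_{H^4}$.
\end{itemize}
Either way the quadratic term is $\lesssim\varepsilon^2/\nu\le c_{\rm r}\varepsilon$, and \eqref{eq:H2-restart-at-final} follows.

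There is also a smaller slip in the standing bound for $\Delta(u^2_{\neq}+\kappa u^3_{\neq})$. The quantity $\|\kappa\|_{W^{2,\infty}}$ is not controlled by $\|u^1_0(1)\|_{H^4(\R^2)}$: $\nabla^2\kappa$ contains $\nabla^3u^1_0\in H^1(\R^2)$, and $H^1(\R^2)\not\subset L^\infty(\R^2)$.

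When two derivatives fall on $\kappa$, put the wave factor in $L^\infty(\Omega)$ instead, using $H^2(\Omega)\hookrightarrow L^\infty(\Omega)$. This gives $\|(\DeltaP\kappa)u^3_{\neq}\|_{L^2}\le\|\kappa\|_{H^2}\|u^3_{\neq}\|_{L^\infty}$. For the remaining terms, $\|\kappa\|_{W^{1,\infty}}\lesssim\|\kappa\|_{H^3}\lesssim\varepsilon/\nu$ suffices. Together these give the bound $(1+\|\kappa\|_{H^3})\|u_{\neq}(1)\|_{H^2}$. This is the ``mixed-dimensional product'' step in the paper's proof.

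With these two corrections, the rest of your initialization argument matches the paper: the dissipative parts vanish at $T=0$, $\mu_\nu(0)\le1$, and the correctors start from zero data.
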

The proof of \Cref{lem:H2-restart-initialization-final} is given in Subsection~\ref{app:restart-proof}.
\begin{Rem}
\label{rem:restart-dictionary-final}
Elapsed time after physical time one is denoted by $t$. The shear and the algebraic unknown $q$ are reconstructed from the translated velocity. The correctors are solved anew with
\[
q^{(2)}(0)=q^{(3)}(0)=0,\qquad q^{(1)}(0)=q(0),\qquad
(\Delta u^3_{\neq}-2\partial_xq^{(3)})(0)=\Delta u^3_{\neq}(0).
\]
The low-order initial scale is $\varepsilon$, whereas the high-order scale is $\varepsilon/\nu$.
\end{Rem}
	
	\begin{Lem}
		\label{lem:restart-physical-time-final}
		Let $t\geq1$ denote physical time, so that the elapsed time after the restart is $t-1$. Then, for every $\lambda>0$,
		\begin{equation*}
			e^{-\lambda\nu^{1/3}(t-1)}\leq e^\lambda e^{-\lambda\nu^{1/3}t},
		\end{equation*}
		and
		\begin{equation}
			\mu_\nu(t)\leq2\mu_\nu(t-1).
			\label{eq:restart-mu-comparison}
		\end{equation}
		Consequently, every restarted unweighted $Y^m$ estimate on elapsed times $t-1\geq0$ is identical to the corresponding estimate on physical time $t\geq1$, while a factor $\mu_\nu^{1/2}$ loses at most the absolute factor $\sqrt2$ under this conversion.
	\end{Lem}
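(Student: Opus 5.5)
The lemma has two separate inequalities, and both are elementary.

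For the exponential comparison, I would just expand the exponent. With $t\ge1$ and $\lambda>0$,
\[
 -\lambda\nu^{1/3}(t-1)=-\lambda\nu^{1/3}t+\lambda\nu^{1/3}.
\]
Since $0<\nu\le1$, we have $\lambda\nu^{1/3}\le\lambda$. Hence $e^{-\lambda\nu^{1/3}(t-1)}\le e^{\lambda}e^{-\lambda\nu^{1/3}t}$. The constant $e^{\lambda}$ is uniform in $\nu$ and $T$. Once $\lambda=2\gamma$ or $3\gamma$ is fixed, it is absolute.

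For \eqref{eq:restart-mu-comparison}, I would set $s=t-1\ge0$ and compare $\mu_\nu(s+1)=\min\{\nu^{2/3}+\nu s+\nu,1\}$ with $\mu_\nu(s)=\min\{\nu^{2/3}+\nu s,1\}$. The key step is the bound $\nu\le\nu^{2/3}\le\nu^{2/3}+\nu s$, which uses $\nu\le1$. It gives
\[
 \nu^{2/3}+\nu(s+1)\le2(\nu^{2/3}+\nu s).
\]
The map $a\mapsto\min\{a,1\}$ is monotone and satisfies $\min\{2a,1\}\le2\min\{a,1\}$. Therefore $\mu_\nu(t)\le2\mu_\nu(t-1)$, and taking square roots gives the factor $\sqrt2$ for $\mu_\nu^{1/2}$.

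For the ``consequently'' part, I would note that the unweighted $Y^m$ norms in \eqref{eq:Ym-final} involve only $L^\infty_t$ and $L^2_t$ norms of the solution. These norms are invariant under the translation $t\mapsto t-1$, since the restarted solution is the translate of the physical one. So an unweighted $Y^m$ bound on elapsed times $[0,T]$ is literally the same statement on physical times $[1,T+1]$. The only places the time origin matters are the weights.
\begin{itemize}
\item The exponential weights cost at most $e^{\lambda}$, by the first inequality.
\item A factor $\mu_\nu^{1/2}(t)$ in physical time is bounded by $\sqrt2\,\mu_\nu^{1/2}(t-1)$. The norm $\|\mu_\nu^{1/2}(t)f\|$ over physical times $[1,T+1]$ is therefore at most $\sqrt2$ times the restarted norm $\|\mu_\nu^{1/2}(s)\tilde f\|$ over $[0,T]$, with $\tilde f(s)=f(s+1)$.
\end{itemize}

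There is no real obstacle here. The only point needing care is to use $\nu\le1$ correctly: it gives $\nu\le\nu^{2/3}$ in the $\mu_\nu$ comparison and $\nu^{1/3}\le1$ in the exponential comparison, which is what keeps the constants absolute.
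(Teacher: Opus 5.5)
Your proof is correct and follows the paper's own argument essentially verbatim. It gets the exponential bound from $\nu^{1/3}\le1$, the $\mu_\nu$ comparison from $\nu\le\nu^{2/3}+\nu(t-1)$ combined with $\min\{2a,1\}\le2\min\{a,1\}$, and the $Y^m$ assertion from translation invariance of the unweighted time norms, with the square-root step for the $\sqrt2$ factor made explicit where the paper leaves it implicit.
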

	
	\begin{proof}
		The exponential estimate follows from $0<\nu\leq1$. Since $\nu^{2/3}+\nu(t-1)\geq\nu$,
		\begin{equation*}
\begin{aligned}
\mu_\nu(t)&=\min\{(\nu^{2/3}+\nu(t-1))+\nu,1\}\leq\min\{2(\nu^{2/3}+\nu(t-1)),1\}\\&\leq2\min\{(\nu^{2/3}+\nu(t-1)),1\}=2\mu_\nu(t-1).
\end{aligned}
\end{equation*}
		The last assertion is then immediate from the definition of $Y^m$.
	\end{proof}
	
	\subsection{Closure estimates and proof of the main results}
\label{sec:global-bootstrap-final}
The following propositions are the inputs to the bootstrap. We work on a smooth restarted interval satisfying \eqref{eq:restarted-low-data-standing}--\eqref{eq:restarted-high-data-standing}. The geometric threshold is chosen so that $E_{1}(T)+\varepsilon/\nu\leq\delta_*$ implies every variable-shear smallness condition used in Section~\ref{sec:interactions-final} and Appendices~\ref{sec:nonzero-linear} and~\ref{sec:second-good}. This common condition is understood in the three nonzero-mode propositions below; the roll and streak propositions state their additional smallness assumptions explicitly.

\begin{Prop}
		\label{prop:E6-nonlinear-final}
		Assume $E_{1}(T)+\varepsilon/\nu\leq\delta_*$, with $\delta_*>0$ sufficiently small, and the restarted-data bounds \eqref{eq:restarted-low-data-standing}--\eqref{eq:restarted-high-data-standing},
		\begin{equation}
			E_{6}(T)
			\leq C\left(\|(u^2_{\neq},u^3_{\neq})(0)\|_{H^2}+\|\Delta q(0)\|_{L^2}\right)
			+C\frac{E_{2}(T)}{\nu}E_{6}(T)+C\frac{E_{4}(T)^2}{\nu}.
			\label{eq:E6-nonlinear-final}
		\end{equation}
	\end{Prop}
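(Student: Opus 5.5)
The estimate \eqref{eq:E6-nonlinear-final} is a linear estimate for the good unknowns, applied with nonlinear forcing, and then a bound on that forcing. We treat the forcing through the functional $\mathfrak F(T)$ from \eqref{eq:Fforcing-final}.

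\textbf{Linear step.} We view $(w^2,w^3)=(u^2_{\neq},u^3_{\neq})$ as solving \eqref{eq:nonzero-transverse-final} with forcing $-(g^2,g^3)$. We pass to $q=w^2+\kappa w^3$. A direct computation gives the equation for $q$: the pressure $p^V$ from \eqref{eq:pV-final} combines into $\mathscr L_Vq$. The commutators from $\kappa$ are absorbed as follows.
\begin{itemize}
\item The time derivative of $\kappa$ is handled using $\nu^{-1}\|\partial_tu^1_{0}\|_{H^2}\le E_{1}$.
\item The transport commutators are handled because $D^V_zV=0$.
\item The viscous terms are split with $\rho_1,\rho_2$ from \eqref{eq:rho-definitions}, and the normal part is removed by the corrector $q^{(2)}$ in \eqref{eq:q1-second-good}.
\end{itemize}
We would then invoke the linear space-time estimates of Appendices~\ref{sec:nonzero-linear} and~\ref{sec:second-good} at weight $3\gamma$. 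These are resolvent/enhanced-dissipation bounds for $\mathscr L_V$ and $L_V$, valid under $E_{1}+\varepsilon/\nu\le\delta_*$, for $q^{(1)}$, for $D_z^Vw^j$, and for $\Delta w^3-2\partial_xq^{(3)}$. They yield
\[
E_{6}(T)\le C\bigl(\|(u^2_{\neq},u^3_{\neq})(0)\|_{H^2}+\|\Delta q(0)\|_{L^2}\bigr)+C\mathfrak F(T).
\]
The weights in $\mathfrak F$ match the structure of $E_6$: $\nu^{-1/2}$ on the $q$-forcing and on $\partial_xg^3$ (the inviscid-damping/dissipation pairing), and $\nu^{-1/6}$ on $\nabla g^3$, which carries the $\nu^{2/3}$ loss on $\Delta u^3_{\neq}$. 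The corrector contributions $\nu q^{(2)}$ and $q^{(3)}$ are bounded by $w^3$ norms times coefficient smallness, and are absorbed.

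\textbf{Nonlinear step.} It remains to show
\[
\mathfrak F(T)\le C\nu^{-1}E_{2}E_{6}+C\nu^{-1}E_{4}^2 .
\]
We split $g^j$ as in \eqref{eq:gj-final}.

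\emph{Roll--wave terms} $g_{0\neq}$ and $\partial_jp^{0\neq}$. These are bilinear in $u^\perp_{0}$ and $u_{\neq}$. We place the roll in $L^\infty_t$ via the recovery bound \eqref{eq:roll-recovery-final}, or in $L^2_t$ via \eqref{eq:roll-recovery-time-final} when the $\nu^{1/2}$ gain is needed. We place the wave in the $\mathcal U_0,\mathcal U_1$ profiles of \eqref{eq:U-profiles-revised}. Each term then costs $\nu^{-1/2}\cdot\nu^{-1/2}$ at worst, giving $\nu^{-1}E_2E_6$.

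The delicate point here is $u^\perp_{0}\cdot\nabla_\perp$ hitting $w^3$ in the $\partial_y$ direction. It must be rewritten through $D_z^V$ and $q$, since $E_6$ does not control $\partial_x\partial_yw^3$ directly. For $u^2_{0}\partial_y$, we use $\partial_y w^2=\partial_y q-\partial_y(\kappa w^3)$.

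\emph{Wave--roll terms} $g_{\neq0}$. These involve $\nabla_\perp u_{0}^j$ times $u^\perp_{\neq}$, and are treated the same way using the $\mu_\nu$-weighted vorticity components of $E_2$.

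\emph{Wave--wave terms} $g_{\neq\neq}$. We estimate these by $\Pi_0,\Pi_1$ from \eqref{eq:P-profiles-revised}, using anisotropic Sobolev product bounds with $\partial_x$-Poincaré. The exponential weights satisfy $2\gamma+2\gamma\ge3\gamma$, and the $\nu$-powers give $\nu^{-1/2}\nu^{-1/6}\nu^{-1/2}\cdot\nu^{1/6}$, that is, $\nu^{-1}E_4^2$. The pressure $p^{\neq\neq}$ from \eqref{eq:pW-final} is handled by elliptic regularity for the divergence form.

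\textbf{Main obstacle.} The hardest part is the anisotropy in the roll--wave terms. The roll bound $E_2$ gives only $\mu_\nu^{1/2}$ control on $\|u^3_{0}\|_{H^2}$, and only $\nu^{1/2}L^2_t$ integrability, with no spectral gap. Meanwhile $E_6$ controls $w^3$ only through $\partial_x$, $D_z^V$ and the weakened $\nu^{2/3}\Delta$.

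The plan for the worst term, $\nabla(u^3_{0}\partial_zw^3)$ in $\nabla g^3$, is as follows:
\begin{itemize}
\item replace $\partial_z$ by $D_z^V+\kappa\partial_y$, with $\kappa=O(E_1)$ small;
\item use $\|u^3_{0}\|_{L^\infty}\lesssim\|u^3_0\|_{H^1}^{1/2}\|u^3_0\|_{H^2}^{1/2}$ together with $\mu_\nu\gtrsim\nu^{2/3}$;
\item this costs at most $\nu^{-1/6}$, which the $\nu^{-1/6}$ weight in $\mathfrak F$ is designed to accept.
\end{itemize}
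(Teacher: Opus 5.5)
Your top-level reduction is the paper's. The linear step is \Cref{prop:complementary-transverse-closure}, whose forcing terms sum to at most $\mathfrak F(T)^2$. The remaining inequality $\mathfrak F(T)\le C\nu^{-1}E_2E_6+C\nu^{-1}E_4^2$ is the first assertion of \Cref{prop:forcing-final}, and citing it makes the proof two lines. The gap is in your own sketch of that forcing bound, where you use $\kappa$ only as an $O(E_1)$-small coefficient. This loses a power of $\nu$ wherever $\kappa$ multiplies a weakly controlled quantity.

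The clearest case is the wave--wave part of the strong slot $\nu^{-1/2}\|\ee^{3\gamma\nu^{1/3}t}\nabla(g^2+\kappa g^3)\|_{L^2_tL^2}$, which contains $\kappa\nabla g^3_{\neq\neq}$ and hence $\kappa\,u^2_{\neq}\pa_y^2u^3_{\neq}$. The factor $\pa_y^2u^3_{\neq}$ is not controlled by $\Pi_0,\Pi_1$ at all, contrary to your wave--wave paragraph. Only the weak component $\nu^{2/3}\|\Delta u^3_{\neq}\|_{X_{3\gamma}}$ of $E_4$, with \eqref{eq:DW-interpolation-revised}, controls it. That gives merely $\|\ee^{4\gamma\nu^{1/3}t}\nabla g^3_{\neq\neq}\|_{L^2_tL^2}\le C\nu^{-5/6}E_4^2$, see \eqref{eq:wave-weak-forcing-revised}. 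With $\|\kappa\|_{L^\infty}\le C\delta_*$ you get $C\delta_*\nu^{-4/3}E_4^2$, which does not close.

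The missing $\nu^{1/3}$ is the transient smallness of the shear coefficient, \Cref{lem:transient-kappa-revised}. The restarted streak is $O(\varepsilon)$ in $H^2$ and $\|\pa_tu^1_0\|_{H^2}\le\nu E_1$, so $\|u^1_0(t)\|_{H^2}\le C(E_1+\varepsilon/\nu)m_\nu(t)$ and hence $\|\kappa(t)\|_{H^2}\le C(E_1+\varepsilon/\nu)m_\nu(t)^{1/2}$. Then $\sup_t m_\nu^{1/2}\ee^{-\gamma\nu^{1/3}t}\le C\nu^{1/3}$ turns the spare weight between $4\gamma$ and $3\gamma$ into exactly this factor, \eqref{eq:kappa-multiplier-gain-revised}; you note that spare weight but never use it. The same factor, through $m_\nu\le\mu_\nu$, cancels the $\mu_\nu^{-1/2}$ on $\pa_y^2u^3_0$ in $\kappa\pa_yu^3_0$ and $\kappa u^3_0$. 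This is what yields $\nu^{1/2}\|\nablaP D_z^Vu^\perp_0\|_{L^2_tH^1}\le CE_2$ in \eqref{eq:R1-time-complete}. That bound is needed for the wave--roll good combination $g^2_{\neq0}+\kappa g^3_{\neq0}=u^3_{\neq}D_z^Vu^2_0-u^2_{\neq}D_z^Vu^3_0$ from \eqref{eq:Gneq0-cancellation-revised}, a cancellation your treatment of $g_{\neq0}$ also omits.

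Your ``worst term'' argument also fails as stated. Relative to the strong slots, the weak slot $\nu^{-1/6}\|\ee^{3\gamma\nu^{1/3}t}\nabla g^3\|_{L^2_tL^2}$ has room for exactly one loss of $\nu^{-1/3}$. That room is used up whenever two $y$-derivatives land on $u^3_{\neq}$, since $\|\nabla\pa_yu^3_{\neq}\|_{L^2}\le C(\mathcal U_1+\nu^{-2/3}\mathcal U_0)$ by \eqref{eq:E6-recovery-W3-second}. In $\nabla(u^3_0\pa_zu^3_{\neq})$ with $\pa_z=D_z^V+\kappa\pa_y$, take the piece $\kappa u^3_0\nabla\pa_yu^3_{\neq}$. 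With your bound $\|u^3_0\|_{L^\infty}\lesssim\nu^{-1/6}E_2$ it costs $\nu^{-1/6}\cdot\delta_*\nu^{-1/6}\cdot\nu^{-2/3}\cdot\nu^{-1/6}=\delta_*\nu^{-7/6}$ times $E_2E_6$.

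The paper takes no loss on the roll. The unweighted $\|\pa_z\omega_0\|_{Y^0}$ in $E_2$ and the anisotropic embedding $\|f\|_{L^\infty(\R^2)}\le C(\|f\|_{H^1}+\|\pa_zf\|_{H^1})$ give $\|u^3_0\|_{L^\infty}\le CE_2$, see \eqref{eq:roll-coeff-point-E6}. It then writes $g^3_{0\neq}=(u^2_0+\kappa u^3_0)\pa_yu^3_{\neq}+u^3_0D_z^Vu^3_{\neq}$ as in \eqref{eq:g03-BR-decomposition}, with $\|u^2_0+\kappa u^3_0\|_{H^2}\le CE_2$ again by the transient factor. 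The genuinely critical terms are $(u^2_0+\kappa u^3_0)\nabla\pa_yu^3_{\neq}$ and $q\,\pa_y^2u^3_0$, the latter costing $\mu_\nu^{-1/2}\le\nu^{-1/3}$; each exhausts the available room.

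Finally, $E_6$ does control $\pa_x\pa_yu^3_{\neq}$ at the dissipative level: $\|\pa_x\pa_yu^3_{\neq}\|_{L^2}\le\mathcal U_1$ and $\nu^{1/2}\|\ee^{3\gamma\nu^{1/3}t}\mathcal U_1\|_{L^2_t}\le CE_6$. That is how the strong $\pa_xg^3$ slot closes, with no rewriting of $\pa_yu^3_{\neq}$. For the good combination the right identity is $g^2_{0\neq}+\kappa g^3_{0\neq}=u^\perp_0\cdot\nablaP q-(u^\perp_0\cdot\nablaP\kappa)u^3_{\neq}$. The substitution $\pa_yw^2=\pa_yq-\pa_y(\kappa w^3)$ alone reintroduces $\kappa\pa_yw^3$.
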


\begin{Prop}
		\label{prop:E3-primary-final}
		Assume \eqref{eq:restarted-low-data-standing}--\eqref{eq:restarted-high-data-standing}. There exists $C>0$ such that
		\begin{align}
			E_{4}(T)
			\leq{}&C\left(\|\Delta u^2_{\neq}(0)\|_{L^2}+\|u^3_{\neq}(0)\|_{H^2}\right)+C E_{6}(T)
			+C\bigl(E_{1}(T)+\frac{\varepsilon}{\nu}\bigr)\bigl(E_{4}(T)+E_{6}(T)\bigr)
			\nonumber\\*
			&+C\frac{E_{2}(T)}{\nu}E_{6}(T)
			+C\frac{E_{4}(T)^2}{\nu}.
			\label{eq:E3-primary-final}
		\end{align}
	\end{Prop}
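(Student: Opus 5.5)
We estimate $E_{4}$ by following the Wei--Zhang treatment of the primary coupled variables. The plan is to write linear evolution equations for $\Delta u^2_{\neq}$ and $(\partial_x^2+\partial_z^2)u^3_{\neq}$ under constant Couette transport, to place every streak-dependent and nonlinear contribution on the right-hand side, and to apply the linear space-time estimates of Appendix~\ref{sec:nonzero-linear} at weight $2\gamma$ (and at $3\gamma$ for $\nu^{2/3}\Delta u^3_{\neq}$).

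\textbf{Reformulation.} Take the Laplacian of the $u^2$ equation in \eqref{eq:nonzero-transverse-final}. Rewrite $L_V=L_{0}+u^1_{0}\partial_x$ and $p^V=p^C+p^{\rm st}$. Since $\Delta p^C=-2\partial_xu^2_{\neq}$, the pressure term cancels the $\partial_y$-coupling in the Couette operator, and we obtain
\begin{equation*}
 L_{0}\Delta u^2_{\neq}=-\Delta\bigl(u^1_{0}\partial_xu^2_{\neq}\bigr)-\partial_y\Delta p^{\rm st}-\Delta g^2 .
\end{equation*}
For $u^3$ we use the standard combination in which the lift-up coupling $\partial_x\partial_z$ of $u^2$ appears as a source:
\begin{equation*}
 L_{0}(\partial_x^2+\partial_z^2)u^3_{\neq}=2\partial_x\partial_z\partial_yu^2_{\neq}+\text{(streak and nonlinear terms)} .
\end{equation*}
The linear part is estimated in $X_{2\gamma}$ by the appendix propositions. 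These bounds are uniform as $\ell\to0$, because the multiplier $k^2+\ell^2$ is bounded below via $k\neq0$ alone. This yields the initial-data term in \eqref{eq:E3-primary-final}. The coupling source $\partial_x\partial_z\partial_yu^2_{\neq}$ is absorbed by the inviscid-damping component of $\|\Delta u^2_{\neq}\|_{X_{2\gamma}}$, as in the linear lift-up estimate for the nonzero modes.

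\textbf{Streak terms.} The contributions $u^1_{0}\partial_x\Delta u^2_{\neq}$ and its commutators, together with $p^{\rm st}$, would lose a derivative if treated perturbatively around Couette flow. The main obstacle is the commutator $[\Delta,u^1_{0}]\partial_x$, which contains $\partial_yu^1_{0}\,\partial_x\partial_y u^2_{\neq}$ and $\partial_zu^1_{0}\,\partial_x\partial_zu^j_{\neq}$. My plan is to control these through $E_{6}$ rather than through $E_{4}$. The $D_z^V$ derivatives and the combination $q=u^2+\kappa u^3$ measured in $E_{6}$ at the stronger weight $3\gamma$ are exactly the quantities that appear when these commutators are rewritten using $\kappa=V_z/V_y$ and $\partial_z=D_z^V+\kappa\partial_y$. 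This produces the linear term $CE_{6}$. The leftover pieces carry coefficients $\|u^1_{0}\|_{H^4}\le E_{1}$; under the standing bound \eqref{eq:restarted-high-data-standing} they carry $\varepsilon/\nu$ for the restart data. Paired with $E_{4}+E_{6}$ through the $L^2_t$ inviscid-damping and enhanced-dissipation components, and with $\partial_tu^1_{0}$ controlled via the $\nu^{-1}$ factor in $E_{1}$, these give $C(E_{1}+\varepsilon/\nu)(E_{4}+E_{6})$.

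\textbf{Nonlinear forcing.} We decompose $g^j$ as in \eqref{eq:gj-final}.

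The roll--wave pieces $g_{0\neq}$ and $\partial_jp^{0\neq}$ are bounded with $u^\perp_{0}$ in $L^\infty_tH^2$ via \eqref{eq:roll-recovery-final}. The wave factor is placed in $L^2_t$ through the $\nu^{1/2}\mathcal U_1$ bound \eqref{eq:U-profiles-revised}. The extra $\nu^{-1/2}$ from dualizing against the dissipation then gives $E_{2}E_{6}/\nu$.

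The wave--roll term $g_{\neq0}$ is handled in the same way, using the time-integrated bound \eqref{eq:roll-recovery-time-final} and $\Pi_{0}$ in $L^\infty_t$.

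The wave--wave terms are estimated by pairing $\Pi_{0}$ and $\Pi_{1}$ from \eqref{eq:P-profiles-revised}. The weight $\ee^{4\gamma\nu^{1/3}t}$ of the product dominates $\ee^{2\gamma\nu^{1/3}t}$, which yields $E_{4}^2/\nu$.

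The delicate points are the top-order $\nu^{2/3}\Delta u^3_{\neq}$ component and anisotropic factors like $\partial_yu^3_{\neq}$. In each product, the higher-order factor must be placed in the dissipative $L^2_t$ norm and the lower-order factor in $L^\infty_t$, as in the mixed estimates of Section~\ref{sec:interactions-final}.
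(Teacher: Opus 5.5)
\textbf{Where the gap is.} Your architecture matches the paper's: the constant-Couette system \eqref{eq:primary-W2-final}--\eqref{eq:primary-W3-final}, with all streak and nonlinear terms treated as forcing, closed by \Cref{prop:coupled-Couette-final}. The gap is in the streak step.

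That proposition measures the streak forcing in the dissipation-dual norms $\nu^{-1/2}\|\ee^{2\gamma\nu^{1/3}t}\nabla(\partial_yp^{\rm st}+u^1_{0}\partial_xu^2_{\neq})\|_{L_t^2L_x^2}$ and $\nu^{-1/2}\|\ee^{2\gamma\nu^{1/3}t}(\partial_x,\partial_z)(\partial_zp^{\rm st}+u^1_{0}\partial_xu^3_{\neq})\|_{L_t^2L_x^2}$. What is lost there is powers of $\nu$, not derivatives.

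Suppose you only use $\|u^1_{0}\|_{H^4}\le E_{1}$. Then $u^1_{0}\nabla\partial_xu^2_{\neq}$, even paired with the loss-free inviscid-damping bound $\|\ee^{2\gamma\nu^{1/3}t}\partial_x\nabla u^2_{\neq}\|_{L_t^2L_x^2}\le E_{4}$, costs $\nu^{-1/2}E_{1}E_{4}$. The pieces bounded pointwise by $\|u^1_{0}\|_{H^2}\mathcal U_{0}$ (the pressure $p^{\rm st}$, and terms with a derivative on $u^1_{0}$) cost $\nu^{-1/2}\nu^{-1/6}E_{1}E_{6}$ via \eqref{eq:U-profiles-revised}. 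With $E_{1}\sim\varepsilon/\nu$, closing would force $\varepsilon\ll\nu^{3/2}$, respectively $\varepsilon\ll\nu^{5/3}$. Pairing with the $L^2_t$ components of $E_4+E_6$, as you propose, does not by itself give the $\nu$-independent coefficient $C(E_{1}+\varepsilon/\nu)$.

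\textbf{The missing idea.} What is missing is the transient smallness of the streak, \Cref{lem:transient-streak-revised}. The low-order restart bound $\|u^1_{0}(0)\|_{H^2}\le C_{\rm r}\varepsilon$ from \eqref{eq:restarted-low-data-standing}, together with $\|\partial_tu^1_{0}\|_{H^2}\le\nu E_{1}$, gives $\|u^1_{0}(t)\|_{H^2}\le\min\{E_{1},C_{\rm r}\varepsilon+\nu tE_{1}\}\le C(E_{1}+\varepsilon/\nu)m_\nu(t)$. The gap between the required weight $2\gamma$ and the weight $3\gamma$ available in $E_{6}$ then turns $m_\nu$ into $\nu^{2/3}$ by \eqref{eq:mnu-first-power-revised}.

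Since $\nu^{-1/2}\nu^{2/3}\nu^{-1/6}=1$, the $\mathcal U_{0}$ pieces give $C(E_{1}+\varepsilon/\nu)E_{6}$. The transport pieces use $\|\nabla\partial_xu^2_{\neq}\|_{L^2}^2\le\|\partial_x^2u^2_{\neq}\|_{L^2}\|\Delta u^2_{\neq}\|_{L^2}\le\mathcal U_{0}\Pi_{0}$ (and its analogue for $u^3_{\neq}$) together with $m_\nu^2\ee^{-\gamma\nu^{1/3}t}\le C\nu^{4/3}$, giving $(E_{4}E_{6})^{1/2}$. This is \Cref{lem:streak-primary-revised}.

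So the $\varepsilon/\nu$ in the coefficient comes from the $O(\varepsilon)$ low-order size of the restarted streak, not from \eqref{eq:restarted-high-data-standing}. An initial streak of size $\varepsilon/\nu$ in $H^2$ would destroy the gain. The same mechanism, through $\|\kappa\|_{H^2}\le C(E_{1}+\varepsilon/\nu)m_\nu^{1/2}$ and \eqref{eq:kappa-multiplier-gain-primary}, is what \Cref{lem:primary-forcing-2gamma} uses to pass from the weakly controlled $\nabla g^3$ to the strong norms of $\nabla g^2$ and $\partial_zg^3$.

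\textbf{Smaller corrections.}

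\emph{The linear $E_6$ term.} The $O(1)$ term $CE_{6}$ in \eqref{eq:E3-primary-final} is not produced by streak commutators; all of those carry the factor $E_{1}+\varepsilon/\nu$. It is simply the third component of $E_{4}$, since $\nu^{2/3}\|\Delta u^3_{\neq}\|_{X_{3\gamma}}\le E_{6}$ by \eqref{eq:E6-final}. That component needs no separate linear estimate at weight $3\gamma$.

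\emph{The coupling.} You misstated it. From $\partial_zp^C=-2\partial_x\partial_z\Delta^{-1}u^2_{\neq}$ one gets \eqref{eq:primary-W3-final}, with the order-zero coupling $2\partial_x\partial_z\Delta^{-1}u^2_{\neq}=2\partial_x\partial_z\Delta^{-2}(\Delta u^2_{\neq})$. After applying $\partial_x^2+\partial_z^2$, which commutes with $L_{0}$, it becomes $2\partial_x\partial_z(\partial_x^2+\partial_z^2)\Delta^{-1}u^2_{\neq}$. This coupling is handled inside \Cref{prop:coupled-Couette-final}. The source $2\partial_x\partial_z\partial_yu^2_{\neq}$ you wrote is not the correct one, and it is not controlled by $\|\partial_x\nabla u^2_{\neq}\|_{L_t^2L_x^2}$.

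\emph{Roll regularity.} $u^\perp_{0}$ is not available in $L_t^\infty H^2$. By \eqref{eq:roll-recovery-final}, $E_{2}$ controls $u^2_{0}$ in $H^2$ but $u^3_{0}$ and $\partial_zu^3_{0}$ only in $H^1$. It controls $u^3_{0}$ in $H^2$ only with the weight $\mu_\nu^{1/2}$. This is why the paper routes the mixed terms through \eqref{eq:two-level-zero-wave-E6} and \eqref{eq:R1-time-complete}.
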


\begin{Prop}
		\label{prop:E4-closed-final}
		Assume \eqref{eq:restarted-low-data-standing}--\eqref{eq:restarted-high-data-standing}. There exists $C>0$ such that
		\begin{align}
			E_{5}(T)
			\leq{}&C\|u_{\neq}(0)\|_{H^4}+C\nu^{-1}\bigl(E_{4}(T)+E_{6}(T)\bigr)
			\nonumber\\*
			&+C\left(E_{1}(T)+\frac{\varepsilon}{\nu}+\frac{E_{2}(T)}{\nu}+\frac{E_{4}(T)}{\nu}\right)E_{5}(T)
			\nonumber\\*
			&+C\frac{E_{1}(T)+\frac{\varepsilon}{\nu}}{\nu}\bigl(E_{4}(T)+E_{6}(T)\bigr)
			+C\frac{E_{3}(T)}{\nu}E_{4}(T).
			\label{eq:E4-closed-final}
		\end{align}
	\end{Prop}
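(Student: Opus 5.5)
We prove \eqref{eq:E4-closed-final} by a fourth-order energy estimate for $f:=(\partial_x,\partial_z)u_{\neq}$ in the physical variables, with weight $\ee^{2\gamma\nu^{1/3}t}$. We do not use the Couette coordinates or any good unknown here. Since $E_{5}$ lives at the large scale $\varepsilon/\nu$, it suffices to let the linear coupling terms lose a factor $\nu^{-1}$ and to feed them by the low-order energies $E_{4},E_{6}$. We apply $\partial^\alpha$ with $|\alpha|\le3$ to $(\partial_x,\partial_z)$ of \eqref{eq:nonzero-before-streak}, pair with $\ee^{4\gamma\nu^{1/3}t}\partial^\alpha(\partial_x,\partial_z)u_{\neq}$, and integrate over $[0,T]$. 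The pressure drops out by incompressibility. The dissipation gives $\nu\|\ee^{2\gamma\nu^{1/3}t}\nabla f\|_{L^2_tH^3}^2$. The weight derivative $2\gamma\nu^{1/3}\|\ee^{2\gamma\nu^{1/3}t}f\|_{L^2_tH^3}^2$ is absorbed by dissipation through the streamwise Poincaré inequality once $\gamma$ is small. This uses $\nu^{1/3}\le\nu^{1/3}\cdot$(Poincaré), and the top-order piece is interpolated against $\nu$ dissipation plus the lower-order $E_4$ bound. Choosing the tangential derivatives $(\partial_x,\partial_z)$ is essential because they commute with $y\partial_x$. Hence the Couette transport is exactly antisymmetric and produces no commutator, whereas a $\partial_y$ would generate $\partial_x$ and the $t$-growth.

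\textbf{Linear terms.} Two linear terms remain. The first is the lift-up coupling $u^2_{\neq}e_1$. Its contribution is at most $\|\ee^{2\gamma\nu^{1/3}t}(\partial_x,\partial_z)u^2_{\neq}\|_{L^2_tH^3}E_5$. By Poincaré in $x$, $\|(\partial_x,\partial_z)u^2_{\neq}\|_{H^3}\lesssim\|\nabla\Delta u^2_{\neq}\|_{L^2}+\dots\lesssim\Pi_1$. Then \eqref{eq:P-profiles-revised} gives $\nu^{-1/2}E_4$ in $L^2_t$. After Young's inequality against the $\nu^{1/2}$ dissipation, this yields the term $C\nu^{-1}(E_4+E_6)$. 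The $E_6$ part enters in the same way when the top-order $u^3$ derivatives are expressed through $\mathcal U_1$. The second is the pressure, which vanishes in the divergence-free pairing. I would therefore treat the linear part as essentially free, up to this $\nu^{-1}$ loss.

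\textbf{Nonlinear terms.} I would use the splitting \eqref{eq:nonzero-before-streak}, estimating each term tamely so that the highest derivative always sits on a factor measured by $E_5$ or $E_3$ or $E_1$, with the other factor at low order.
\begin{itemize}
\item \emph{Streak transport $u^1_0\partial_xu_{\neq}$.} Its top-order part cancels by antisymmetry. The commutators carry at least one derivative on $u^1_0$, bounded by $\|u^1_0\|_{H^4}\le E_1$, times $E_5$. This gives $C(E_1+\varepsilon/\nu)E_5$. When all derivatives fall on $u^1_0$, paired with a low-order $u_{\neq}$, we get $\frac{E_1+\varepsilon/\nu}{\nu}(E_4+E_6)$; the $1/\nu$ comes from Young's inequality with dissipation.
\item \emph{Roll transport $u^\perp_0\cdot\nablaP u_{\neq}$.} This is handled identically. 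Low-order roll factors are bounded via \eqref{eq:roll-recovery-final} by $E_2$ and give $(E_2/\nu)E_5$. The term with all derivatives on the roll gives $(E_3/\nu)E_4$.
\item \emph{Term $u^\perp_{\neq}\cdot\nablaP u_0$.} This includes $u^2_{\neq}\partial_yu^1_0$, where the streak is large. Putting $H^4$ on $u^1_0$ costs $E_1$ times low-order $u^2_{\neq}$, which is fine. Putting the derivatives on $u^\perp_{\neq}$ costs $E_5$ times $\|\nablaP u_0\|_{W^{1,\infty}}\lesssim E_1+E_3$. This is where $\varepsilon/\nu$ also appears; the restart data \eqref{eq:restarted-high-data-standing} control the initial piece.
\item \emph{Wave--wave term.} It gives $(E_4/\nu)E_5$ by pairing a low-order wave factor, in $L^\infty_t$ via $\Pi_0$, with $E_5$ in $L^2_t$.
\end{itemize}

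\textbf{Main obstacle.} The delicate point is the $\partial_y$ derivatives hidden in $H^3$ and in $\nabla$. First, the commutator of $\partial_y$ with $y\partial_x$ produces $\partial_x$ terms without decay. I would handle this by inducting on the number of $\partial_y$ derivatives, each step bounding $\partial_x\partial_y^{k-1}$ by the previous step with weights. Alternatively, I would note that $y\partial_x$ commutes with $\partial_y^k(\partial_x,\partial_z)$ up to $k\partial_x\partial_y^{k-1}(\partial_x,\partial_z)$, which is still of the form $(\partial_x,\partial_z)\times H^{2}$ and closes with a Grönwall-free Young absorption using $\nu^{1/3}$ enhanced dissipation. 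Second, the $u^3_{\neq}$ top-order terms must be routed through $E_6$ rather than $E_4$, because of the weaker $\nu^{2/3}$ control of $\Delta u^3_{\neq}$. Summing all contributions and taking square roots yields \eqref{eq:E4-closed-final}.
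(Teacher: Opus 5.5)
The genuine gap is in the streak--wave terms. In your energy identity every nonlinear contribution enters as $\int_0^T\ee^{4\gamma\nu^{1/3}t}\langle\,\cdot\,,\partial^\alpha f\rangle\,dt$. However, $E_5$ controls $\|\ee^{2\gamma\nu^{1/3}t}f\|_{L^2_tH^3}$ only through the dissipation, i.e.\ up to a factor $\nu^{-1/2}$. Consider the streak-transport commutators $\partial^{\alpha-\beta}u^1_0\,\partial_x\partial^\beta f$, and, even more clearly, the non-antisymmetric term $\partial_zu^1_0\,\partial^\alpha(\partial_x,\partial_z)u^3_{\neq}$ tested against $\partial^\alpha(\partial_x,\partial_z)u^1_{\neq}$. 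With only $\|u^1_0\|_{H^4}\le E_1$, these are bounded by $\|\nablaP u^1_0\|_{L^\infty_{t,x}}\|\ee^{2\gamma\nu^{1/3}t}f\|_{L^2_tH^3}^2\lesssim(E_1/\nu)E_5^2$, not by $(E_1+\varepsilon/\nu)E_5^2$. The streak has size $\varepsilon/\nu$ and neither decays nor is integrable in time (\Cref{rem:streak-endpoint-decay}). Hence $E_1/\nu$, of order $c_*/\nu$, cannot be absorbed at $\varepsilon\sim c_*\nu$.

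The missing idea is the transient smallness of the streak. The component $\nu^{-1}\|\partial_tu^1_0\|_{L^\infty_tH^2}$ of $E_1$ and the restarted data give $\|u^1_0(t)\|_{H^2}\le C(E_1+\varepsilon/\nu)m_\nu(t)$ (\Cref{lem:transient-streak-revised}). This, not an $H^4$ bound, is the source of the factor $E_1+\varepsilon/\nu$. The interpolation $m_\nu\|F\|_{H^r}\le C\nu\|F\|_{H^{r+1}}+C\nu(1+t)^3\|F\|_{H^{r-2}}$ then puts an explicit factor $\nu$ on the top-order part. The polynomial factor on the low-order part is absorbed by the gap between the weight $2\gamma$ of $E_5$ and the weight $3\gamma$ of $E_6$ (\Cref{lem:mnu-interpolation-revised,lem:streak-spanwise-profile,lem:streak-streamwise-profile}). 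For $u^3_{\neq}\partial_zu^1_0$ you also need \Cref{lem:streak-physical-tame}, because the low-order energies control $(\partial_x,\partial_z)u^3_{\neq}$ but not its pure $\partial_y$ derivatives. The result, \Cref{lem:streak-high-product-revised}, gives $C\nu(E_1+\varepsilon/\nu)E_5+C(E_1+\varepsilon/\nu)(E_4+E_6)$ in $Y^2$. Its factor $\nu$ cancels the $\nu^{-1}$ loss; without it a term $(E_1/\nu)E_5$ survives and the bootstrap does not close.

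Your treatment of the $\partial_y$ commutators also fails as stated. A physical-variable energy estimate has no $\nu^{1/3}$ enhanced-dissipation term. An induction on the number of $\partial_y$ derivatives with Young's inequality against $\nu\|\nabla f\|_{H^3}^2$ loses $\nu^{-1}$ per derivative. This part is repairable by integrating by parts: each commutator $\alpha_y\langle\partial_x\partial^{\alpha-e_y}f,\partial^\alpha f\rangle$ is bounded by $\|\nabla\partial_xf\|_{L^2}\|\nabla f\|_{H^3}$. Moreover $\|\nabla\partial_x(\partial_x,\partial_z)u_{\neq}\|_{L^2}\le C\Pi_1$ by \eqref{eq:velocity-reconstruction-final} with $s=1$, which yields $C\nu^{-1}E_4$; this is exactly the last term of \Cref{lem:triangular-high-revised}.

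Two smaller points need the same care. The lift-up pairing requires moving one derivative onto the $u^1_{\neq}$ factor, since $\Pi_1$ controls $\|u^2_{\neq}\|_{H^3}$ but not $\|(\partial_x,\partial_z)u^2_{\neq}\|_{H^3}$. The weight-derivative term is handled only by your interpolation against $E_4$, not by Poincar\'e, since $\nu^{1/3}\gg\nu$. With these repairs your estimate reproduces the linear part of the paper's argument. The paper applies \Cref{lem:triangular-high-revised} after Leray projection and treats all of $(u\cdot\nabla u)_{\neq}$ as $Y^2$ forcing at cost $\nu^{-1}$. None of this removes the need for the streak mechanism described above.
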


\begin{Prop}
		\label{prop:ZR-closed-final}
		Assume \eqref{eq:restarted-low-data-standing}--\eqref{eq:restarted-high-data-standing}. There exists $\delta_*>0$ such that, under the bootstrap smallness
		$\varepsilon+E_{2}(T)+E_{4}(T)\leq\delta_*\nu$,
		\begin{equation}
			E_{2}(T)\leq C\left(\|u^\perp_{0}(0)\|_{H^2}+\|u^\perp_{0}(0)\|_{\dot B^{-2}_{2,\infty}}\right)+C\nu^{-1}E_{4}(T)^2.
			\label{eq:ZR-closed-final}
		\end{equation}
	\end{Prop}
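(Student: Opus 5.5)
We prove \eqref{eq:ZR-closed-final} by estimating the six components of $E_{2}$ in turn. The $L^2$-type pieces use energy identities for the roll velocity and roll vorticity. The two low-frequency pieces, the Besov norm and $\nu^{1/2}\|u^\perp_{0}\|_{L_t^2L^2}$, use the coupled endpoint/Fourier-splitting scheme sketched in the introduction. In all estimates the roll self-interaction is quadratic in $E_{2}$; under $E_{2}\leq\delta_*\nu$ it is absorbed into the left-hand side. The wave stress $(u^\perp_{\neq}\otimes u^\perp_{\neq})_{0}$ contains only the transverse components $u^2_{\neq},u^3_{\neq}$, which are exactly the ones controlled by $E_{4}$ through \eqref{eq:P-profiles-revised} and the first bound in \eqref{eq:physical-recovery-reorganized}. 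This is why the wave contribution is $O(\nu^{-1}E_{4}^2)$ and no $E_{5}$ or $E_{1}$ enters.

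\emph{Step 1 (roll velocity and vorticity).} Test the roll equation against $u^\perp_{0}$. The self-interaction vanishes by incompressibility. The wave forcing is handled by integrating by parts and bounding
\[
\nu^{-1/2}\|(u^\perp_{\neq}\otimes u^\perp_{\neq})_{0}\|_{L_t^2L^2}\,\nu^{1/2}\|\nablaP u^\perp_{0}\|_{L_t^2L^2}.
\]
For the first factor I use the Sobolev bound
\[
\|(u^\perp_{\neq}\otimes u^\perp_{\neq})_{0}\|_{L^2(\R^2)}\lesssim\|u^\perp_{\neq}\|_{H^1}\|u^\perp_{\neq}\|_{H^2}\lesssim\Pi_0^2
\]
together with the $L^2_t$ bound $\nu^{1/6}\|\ee^{2\gamma\nu^{1/3}t}\Pi_0\|_{L_t^2}\lesssim E_{4}$. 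This gives $\|u^\perp_{0}\|_{Y^0}\lesssim\|u^\perp_{0}(0)\|_{L^2}+\nu^{-1}E_{4}^2$.

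For $\omega_{0}$, which satisfies $(\pt-\nu\DeltaP+u^\perp_{0}\cdot\nablaP)\omega_{0}=-\nablaP^\perp\cdot\nablaP\cdot(u^\perp_{\neq}\otimes u^\perp_{\neq})_{0}$, the $L^2$ transport term also cancels.

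For $\partial_z\omega_{0}$, the commutator $\partial_zu^\perp_{0}\cdot\nablaP\omega_{0}$ is treated by the two-dimensional Gagliardo--Nirenberg inequality and the recovery bounds \eqref{eq:roll-recovery-final}--\eqref{eq:roll-recovery-time-final}. This yields a term $\lesssim\nu^{-1}E_{2}^3$. The anisotropy matters here: $\partial_zu^3_{0}$ is controlled by $E_{2}$ without the factor $\mu_\nu$.

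For $\mu_\nu^{1/2}\partial_y\omega_{0}$, the weight derivative $\mu_\nu'\leq\nu$ produces $\nu\|\partial_y\omega_{0}\|_{L_t^2L^2}^2\leq E_{2}^2$, which is admissible. The commutator involves $\partial_yu^3_{0}$, which is paired with the weight. The initial factor $\mu_\nu(0)^{1/2}\geq\nu^{1/3}$ only helps, since $\|u_{0}^\perp(0)\|_{H^2}$ is available.

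In every vorticity estimate, the wave forcing is placed in the dual dissipative norm. This requires
\[
\nu^{-1/2}\|\nabla\partial(u^\perp_{\neq}\otimes u^\perp_{\neq})_{0}\|_{L_t^2L^2}\lesssim\nu^{-1/2}\|\ee^{2\gamma\nu^{1/3}t}\Pi_1\|_{L_t^2}\,\|\ee^{2\gamma\nu^{1/3}t}\Pi_0\|_{L_t^\infty}\lesssim\nu^{-1}E_{4}^2,
\]
which uses \eqref{eq:P-profiles-revised} directly.

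\emph{Step 2 (endpoint norm).} Write Duhamel for $u^\perp_{0}$. Apply $\dot\Delta_j$ and the divergence estimate \eqref{eq:div-B} at each time, so that
\[
\sup_t\|u^\perp_{0}\|_{\dot B^{-2}_{2,\infty}}\lesssim\|u^\perp_{0}(0)\|_{\dot B^{-2}_{2,\infty}}+\int_0^T\bigl(\|u^\perp_{0}\|_{L^2}^2+\|u^\perp_{\neq}\|_{L^2}^2\bigr)dt.
\]
This uses the $L^1\to\dot B^{-2}_{2,\infty}$ bound uniformly in the heat semigroup, i.e.\ boundedness of $\ee^{\nu t\DeltaP}$ on dyadic blocks. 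The wave part is bounded by $\nu^{-1/3}E_{4}^2$ via enhanced dissipation.

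\emph{Step 3 (Fourier splitting), the main obstacle.} There is no Poincar\'e inequality, so the time integral of $\|u^\perp_{0}\|_{L^2}^2$ must come from splitting frequencies. In the energy identity, write
\[
\nu\|\nablaP u^\perp_{0}\|^2\geq\nu\rho(t)^2\|u^\perp_{0}\|^2-\nu\rho(t)^2\|\mathbf 1_{|\xi_\perp|\leq\rho}\widehat u^\perp_{0}\|^2 .
\]
The low part is bounded by $\rho^{4}\sup\|u^\perp_{0}\|_{\dot B^{-2}_{2,\infty}}^2$, after summing dyadic blocks below $\rho$. With a fixed $\rho\sim1$ this gives
\[
\int_0^T\|u^\perp_{0}\|_{L^2}^2dt\lesssim\nu^{-1}\Bigl(\|u^\perp_{0}(0)\|_{L^2}^2+\sup_t\|u^\perp_{0}\|^2_{\dot B^{-2}_{2,\infty}}\Bigr)+\nu^{-3}E_{4}^4 .
\]
Denote $B:=\sup_t\|u^\perp_{0}\|_{\dot B^{-2}_{2,\infty}}$ and $Q:=\int_0^T\|u^\perp_{0}\|^2_{L^2}\,dt$. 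Inserting Step 3 into Step 2 gives $B\lesssim\ldots+\nu^{-1}B^2$, and $B\leq\delta_*\nu$ absorbs this term; then $Q\lesssim\nu^{-1}(\ldots)^2$. I expect the delicate point to be ensuring that the $\nu^{-3}E_{4}^4$ and $\nu^{-1/3}E_{4}^2$ contributions come out linear as $\nu^{-1}E_{4}^2$ after taking square roots. This holds because $E_{4}\leq\delta_*\nu$. Combining Steps 1--3 gives \eqref{eq:ZR-closed-final}.
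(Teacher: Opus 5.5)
Your outline follows the paper's structure: energy identities for $u^\perp_0$, $\omega_0$, $\partial_z\omega_0$, $\mu_\nu^{1/2}\partial_y\omega_0$, then Duhamel propagation of the endpoint norm, then Fourier splitting, then absorption using $E_2\le\delta_*\nu$. Absorbing directly is a legitimate shortcut for the paper's continuity argument. However, two steps fail as written.

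\emph{Fourier splitting with a fixed radius does not give a bound uniform in $T$.} Insert the low-mass bound with $\rho\sim1$ into the energy inequality. You get $\frac{d}{dt}\|u^\perp_0\|_{L^2}^2+\nu\rho^2\|u^\perp_0\|_{L^2}^2\lesssim\nu\rho^6B^2+\nu^{-1}\|(u^\perp_{\neq}\otimes u^\perp_{\neq})_0\|_{L^2}^2$, whose source $\nu\rho^6B^2$ is constant in time. Integrating gives $\int_0^T\|u^\perp_0\|_{L^2}^2\,dt\lesssim\nu^{-1}\rho^{-2}\|u^\perp_0(0)\|_{L^2}^2+\rho^4B^2T+\cdots$, so this term grows linearly in $T$; Gronwall only bounds $\|u^\perp_0(t)\|_{L^2}^2$ by something tending to $C\rho^4B^2$. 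No time-independent radius, even one chosen depending on $T$, removes this growth. The radius must shrink in time. The paper takes $r^2=3/(1+\nu t)$ in \eqref{eq:D-lower} and multiplies by the integrating factor $(1+\nu t)^3$. This gives the pointwise decay \eqref{eq:E-decay}, $\|u^\perp_0(t)\|_{L^2}^2\lesssim(1+\nu t)^{-2}(\|u^\perp_0(0)\|_{L^2}^2+B^2)+\cdots$, whose time integral is $O(\nu^{-1})$. The wave stress then enters as $\nu^{-2}\int_0^T(1+\nu t)\|(u^\perp_{\neq}\otimes u^\perp_{\neq})_0\|_{L^2}^2\,dt$, and the factor $1+\nu t$ is absorbed by the exponential weight. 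This is the only source of $\nu^{1/2}\|u^\perp_0\|_{L^2_tL^2}$ without a spectral gap, so it is the core of the proposition, not a detail.

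\emph{The wave forcing in the $\mu_\nu^{1/2}\partial_y\omega_0$ estimate is not bounded by $\Pi_0\Pi_1$.} You claim every two-derivative wave stress is bounded by $\Pi_0\Pi_1$. This fails for $\partial_y(u_{\neq}\cdot\nabla u^3_{\neq})_0$, which contains $\partial_y^2u^3_{\neq}$. That derivative is controlled only through the weak component $\nu^{2/3}\|\Delta u^3_{\neq}\|_{X_{3\gamma}}$ of $E_4$, hence the $\nu^{-1/3}$ loss in \eqref{eq:bad-wave3-weighted-revised}. Unweighted, this term costs $\nu^{-1/2}\cdot\nu^{-1/3}\cdot\nu^{-1/2}E_4^2=\nu^{-4/3}E_4^2$, not $\nu^{-1}E_4^2$. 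What closes it is the weight itself. By \eqref{eq:mu-wave-gap-revised}, $\mu_\nu(t)\ee^{-8\gamma\nu^{1/3}t}\le C\nu^{2/3}$, so $\mu_\nu^{1/2}$ supplies exactly the missing $\nu^{1/3}$; this is \eqref{eq:weighted-NW3-revised}. So $\mu_\nu^{1/2}$ is needed for the wave forcing, not only for the roll commutator with $\partial_yu^3_0$.

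\emph{Minor: the Step 1 stress bound.} The same defect appears in your bound $\|(u^\perp_{\neq}\otimes u^\perp_{\neq})_0\|_{L^2}\lesssim\|u^\perp_{\neq}\|_{H^1}\|u^\perp_{\neq}\|_{H^2}\lesssim\Pi_0^2$. $\Pi_0$ controls neither $\|u^3_{\neq}\|_{H^2}$ nor even $\|\partial_yu^3_{\neq}\|_{L^2}$; compare \eqref{eq:bad-W3-profile-reconstruction}. The fix is easy: use the anisotropic bound $\|u_{\neq}\otimes u_{\neq}\|_{L^2}\lesssim\Pi_0\Pi_1$ from \eqref{eq:quadratic-M-final} and pair $L^\infty_t$ with $L^2_t$. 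This still yields $\nu^{-1}E_4^2$.
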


\begin{Prop}
		\label{prop:ZR4-closed-final}
		Assume \eqref{eq:restarted-low-data-standing}--\eqref{eq:restarted-high-data-standing}. There exists $\delta_*>0$ such that, if $E_{2}(T)\leq\delta_*\nu$, then
		\begin{equation}
			E_{3}(T)
			\leq C\|u^\perp_{0}(0)\|_{H^4}+C\nu^{-1}\Big(E_{4}(T)^2+E_{4}(T)E_{5}(T)\Big).
			\label{eq:ZR4-closed-final}
		\end{equation}
	\end{Prop}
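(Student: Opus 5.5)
The plan is a direct $H^4$ energy estimate for the roll equation
\[
\pt u^\perp_{0}-\nu\DeltaP u^\perp_{0}
=-\PP\nablaP\cdot\bigl(u^\perp_{0}\otimes u^\perp_{0}
+(u^\perp_{\neq}\otimes u^\perp_{\neq})_{0}\bigr).
\]
The roll self-interaction is absorbed using the low-order integrability already contained in $E_{2}$. The wave stress is treated as an external source, estimated by a tame product bound that pairs one low-order factor from $E_{4}$ with one high-order factor from $E_{5}$.

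\emph{Step 1: energy identity.} For transverse multi-indices $|\alpha|\leq4$, apply $\partial^\alpha$, test against $\partial^\alpha u^\perp_{0}$, and sum. Since $\PP$ is self-adjoint and $u^\perp_{0}$ is divergence free, the projection can be dropped. After one integration by parts in the nonlinear terms, this gives
\[
\tfrac12\|u^\perp_{0}(t)\|_{H^4}^2+\nu\int_0^t\|\nablaP u^\perp_{0}\|_{H^4}^2
\leq\tfrac12\|u^\perp_{0}(0)\|_{H^4}^2
+\int_0^t\Bigl(\|u^\perp_{0}\otimes u^\perp_{0}\|_{H^4}
+\|(u^\perp_{\neq}\otimes u^\perp_{\neq})_{0}\|_{H^4}\Bigr)\|\nablaP u^\perp_{0}\|_{H^4}.
\]
We then take the supremum over $t\in[0,T]$.

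\emph{Step 2: the self-interaction.} The two-dimensional tame product estimate gives
\[
\|u^\perp_{0}\otimes u^\perp_{0}\|_{H^4}\lesssim\|u^\perp_{0}\|_{L^\infty}\|u^\perp_{0}\|_{H^4}.
\]
We use the Gagliardo--Nirenberg inequality $\|f\|_{L^\infty(\R^2)}\lesssim\|f\|_{L^2}^{1/2}\|\nablaP^2f\|_{L^2}^{1/2}$, together with $\|\nablaP^2u^\perp_{0}\|_{L^2}\lesssim\|\nablaP\omega_{0}\|_{L^2}$ from Biot--Savart. The dissipative part of $\|\omega_{0}\|_{Y^0}$ and the term $\nu^{1/2}\|u^\perp_{0}\|_{L^2_tL^2}$ in $E_{2}$ then yield
\[
\|u^\perp_{0}\|_{L^2_tL^\infty}\lesssim\nu^{-1/2}E_{2}(T).
\]
Hence this contribution is bounded by $C\nu^{-1/2}E_{2}\cdot E_{3}\cdot\nu^{-1/2}E_{3}=C(E_{2}/\nu)E_{3}^2$. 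It is absorbed into the left-hand side once $E_{2}(T)\leq\delta_*\nu$. This is precisely where that hypothesis enters. The absence of a spectral gap is harmless here, because the undifferentiated $L^2_tL^2$ roll bound is already part of $E_{2}$, having been supplied by Fourier splitting.

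\emph{Step 3: the wave stress (main obstacle).} We use $\|(fg)_{0}\|_{H^4(\R^2)}\leq\|fg\|_{H^4(\Omega)}$ and the tame estimate in $\Omega$:
\[
\|u_{\neq}\otimes u_{\neq}\|_{H^4}\lesssim\|u_{\neq}\|_{L^\infty}\|u_{\neq}\|_{H^4}.
\]
For the high factor, we combine the streamwise Poincar\'e inequality with the dissipative part of $E_{5}$:
\[
\nu^{1/2}\|\ee^{2\gamma\nu^{1/3}t}u_{\neq}\|_{L^2_tH^4}\lesssim\nu^{1/2}\|\ee^{2\gamma\nu^{1/3}t}\nabla\px u_{\neq}\|_{L^2_tH^3}\lesssim E_{5}.
\]
For the low factor, the aim is $\|u_{\neq}\|_{L^\infty_tL^\infty}\lesssim E_{4}+(\text{interpolated }E_{4}E_{5}\text{-type terms})$. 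This would use the velocity reconstruction and divergence-free recovery of $u^1_{\neq}=-\px^{-1}(\py u^2_{\neq}+\pz u^3_{\neq})$. Where $E_{4}$ alone misses a wall-normal derivative of $u^3_{\neq}$, we would interpolate with $E_{5}$, or use the exponential weights to trade $L^\infty_t$ for $L^2_t$. The outcome we aim for is
\[
\|(u^\perp_{\neq}\otimes u^\perp_{\neq})_{0}\|_{L^2_tH^4}\lesssim\nu^{-1/2}\bigl(E_{4}^2+E_{4}E_{5}\bigr),
\]
so that this source contributes $\lesssim\nu^{-1}E_{3}(E_{4}^2+E_{4}E_{5})$. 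I expect the accounting of the low factor in $L^\infty$ to be the delicate point: $E_{4}$ controls $\Delta u^2_{\neq}$ and $(\px^2+\pz^2)u^3_{\neq}$, but $\Delta u^3_{\neq}$ only with the weight $\nu^{2/3}$. Some care is needed to place the weak direction on the $E_{5}$ factor.

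\emph{Step 4: conclusion.} Combining Steps 1--3 gives
\[
E_{3}^2\leq C\|u^\perp_{0}(0)\|_{H^4}^2+C\frac{E_{2}}{\nu}E_{3}^2+C\nu^{-1}E_{3}\bigl(E_{4}^2+E_{4}E_{5}\bigr).
\]
After absorbing the middle term and applying Young's inequality to the last, this yields \eqref{eq:ZR4-closed-final}.
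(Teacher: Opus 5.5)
The energy identity and your treatment of the roll self-interaction are fine. Bounding it by $\|u^\perp_0\|_{L^2_tL^\infty}\|u^\perp_0\|_{L^\infty_tH^4}\|\nablaP u^\perp_0\|_{L^2_tH^4}$, with $\|u^\perp_0\|_{L^2_tL^\infty}\lesssim\nu^{-1/2}E_2$, is a valid substitute for the paper's commutator estimate (\Cref{lem:2d-high-roll-commutator}) plus Gronwall. It does lean on the Fourier-splitting component of $E_2$, which the paper's route does not need.

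\textbf{The gap is the wave stress.} You write it as the full tensor $(u^\perp_{\neq}\otimes u^\perp_{\neq})_0$ in $L^2_tH^4$ and apply an isotropic tame product. This produces top-order terms such as $(u^3_{\neq}\partial_y^4u^3_{\neq})_0$ and $(u^3_{\neq}\partial_y^4u^2_{\neq})_0$. There the high factor, carrying pure $y$-derivatives, is available only in $\nu^{1/2}L^2_t$ through $E_5$. So the low factor $u^3_{\neq}$ must be bounded in $L^\infty_tL^\infty$.

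Neither $E_4$ nor $E_6$ gives this uniformly in $\nu$. Pointwise in time they control $u^3_{\neq}$ only in the good directions $(\partial_x,\partial_z)$. Its $y$-regularity enters only through the lossy bound $\|\Delta u^3_{\neq}\|_{L^2}\leq\nu^{-2/3}E_4$, so any $L^\infty$ bound costs a negative power of $\nu$. The paper explicitly warns that $E_4+E_6$ do not control $\|u_{\neq}\|_{L^\infty_tH^1}$.

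Your fallbacks do not repair this. Interpolating the low factor with $E_5$ gives $E_4^{1-\theta}E_5^{1+\theta}$, which is superlinear in $E_5\sim\varepsilon/\nu$. It loses $\nu^{-\theta}$ against the target $\nu^{-1}E_4E_5$, and the bootstrap would then close only for $\varepsilon\lesssim\nu^{1+\theta}$. Exponential weights cannot help either, since the top-order factor is only in $L^2_t$.

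\textbf{The missing idea is to keep the advective (null) structure} rather than estimating the stress componentwise. For $1\leq|\alpha|\leq4$, the paper moves only one derivative onto the test function. The forcing then enters as $(u_{\neq}\cdot\nabla u^\perp_{\neq})_0=\nablaP\cdot(u^\perp_{\neq}\otimes u^\perp_{\neq})_0$ in $L^2_tH^3$.

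In this form incompressibility has already recombined the dangerous products; for example, $u^3\partial_yu^2+u^3\partial_zu^3=-u^3\partial_xu^1$. Thus $u^1_{\neq}$ and $u^3_{\neq}$ always carry a good derivative $\partial_x$ or $\partial_z$, while $u^2_{\neq}$ is bounded in $H^2$ by $\Pi_0$. The mean-zero anisotropic products of \Cref{lem:mean-zero-high-product} then place only $L^2$-type norms on the low factor.

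Concretely, since this forcing is streamwise independent, the paper proceeds as follows.
\begin{itemize}
\item It splits $\|f\|_{H^3}\leq C(\|f\|_{L^2}+\|\nablaP f\|_{H^2})$.
\item It bounds $\nu^{1/2}\|(u_{\neq}\cdot\nabla u^\perp_{\neq})_0\|_{L^2_tL^2}\leq CE_4^2$ by \eqref{eq:wave-roll-forcing-revised}.
\item It bounds $\nu^{1/2}\|\nablaP(u_{\neq}\cdot\nabla u^\perp_{\neq})_0\|_{L^2_tH^2}\leq CE_4E_5$ by the $Y^2$ estimate \eqref{eq:wave-Y2-final}.
\item It takes the undifferentiated level from \eqref{eq:R-Y0-roll-revised}.
\end{itemize}
With this replacing your Step 3, the rest of your argument yields \eqref{eq:ZR4-closed-final}.
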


\begin{Prop}
		\label{prop:A-closed-final}
		Assume \eqref{eq:restarted-low-data-standing}--\eqref{eq:restarted-high-data-standing}. Suppose also that $E_{1}(T)\leq\delta_*$ and $E_{2}(T)\leq\delta_*\nu$. Then
		\begin{align}
			E_{1}(T)
			\leq C\Big(&\|u^1_{0}(0)\|_{H^4}+\nu^{-1}\|u^2_{0}(0)\|_{\dot B^{-2}_{2,\infty}}+\frac{E_{2}(T)}{\nu}
			+\frac{E_{2}(T)^2}{\nu^2}
			+\frac{E_{4}(T)^2}{\nu^{4/3}}
			+\frac{E_{4}(T)E_{5}(T)}{\nu}\Big)
			\nonumber\\*
			&+C\frac{E_{2}(T)}{\nu}\bigl(E_{1}(T)+\frac{\varepsilon}{\nu}\bigr).
			\label{eq:A-closed-final}
		\end{align}
	\end{Prop}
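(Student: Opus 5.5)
We prove \eqref{eq:A-closed-final} from the two-level roll-to-streak representation of the streak equation \eqref{eq:a-exact}. We substitute the Duhamel formula for $u^2_{0}$ from the roll equation into the lift-up term $-u^2_{0}$ and use the semigroup property. This gives
\begin{equation*}
 u^1_{0}(t)=\ee^{\nu t\DeltaP}u^1_{0}(0)-t\ee^{\nu t\DeltaP}u^2_{0}(0)
 +\int_0^t(t-s)\ee^{\nu(t-s)\DeltaP}\bigl[\PP\nablaP\cdot F(s)\bigr]_y\,ds
 -\int_0^t\ee^{\nu(t-s)\DeltaP}\nablaP\cdot G(s)\,ds,
\end{equation*}
where $F=u^\perp_{0}\otimes u^\perp_{0}+(u^\perp_{\neq}\otimes u^\perp_{\neq})_{0}$ and $G=u^\perp_{0}u^1_{0}+(u^\perp_{\neq}u^1_{\neq})_{0}$.

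We then estimate each term in the $L^\infty_tH^4$ norm and in the dissipative norm $\nu^{1/2}\|\nablaP\DeltaP^2\cdot\|_{L^2_tL^2}$.
\begin{itemize}
\item The free term is bounded by $\|u^1_{0}(0)\|_{H^4}$ through the standard heat energy estimate.
\item For the linear lift-up term, we split frequencies. For $|\xi_\perp|\leq1$ we use the heat characterization \eqref{eq:intro-liftup-characterization}, which gives $\nu^{-1}\|u^2_{0}(0)\|_{\dot B^{-2}_{2,\infty}}$. For $|\xi_\perp|\geq1$ the factor $t|\xi_\perp|^2\ee^{-\nu t|\xi_\perp|^2}\lesssim\nu^{-1}$ moves two derivatives onto the data, and $\|u^2_{0}(0)\|_{H^2}\leq C E_{2}$ then gives $E_{2}/\nu$.
\item In the accumulated-lift-up term, the low frequencies are handled by the estimate $\sup_\tau\|\tau\ee^{\nu\tau\DeltaP}\PP\nablaP\cdot F\|_{L^2}\lesssim\nu^{-1}\|F\|_{L^1}$ from \eqref{eq:S-low}. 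Integrating in $s$, the roll part costs $\nu^{-1}\|u^\perp_{0}\|_{L^2_tL^2}^2\lesssim E_{2}^2/\nu^2$ by the Fourier-splitting component of $E_{2}$. The wave part costs $\nu^{-1}\|u_{\neq}\|_{L^2_tL^2}^2\lesssim\nu^{-1}\nu^{-1/3}E_{4}^2$ by \eqref{eq:P-profiles-revised}, which gives the $E_{4}^2/\nu^{4/3}$ term. The high frequencies use the higher-order version \eqref{eq:Sbound} of the same structure. There $(t-s)|\xi_\perp|^2\ee^{-\nu(t-s)|\xi_\perp|^2}$ again gives $\nu^{-1}$ against a nonlinear source with one derivative. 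We estimate that source by tame products: roll$\times$roll in $H^4$ via $E_{2}$ and the recovery bounds \eqref{eq:roll-recovery-final}--\eqref{eq:roll-recovery-time-final}, and wave$\times$wave via pairing $E_{4}$ with $E_{5}$. This produces the $E_{2}^2/\nu^2$ and $E_{4}E_{5}/\nu$ contributions.
\item The transport term $G$ is handled by parabolic regularity. Its roll--streak part gives $\nu^{-1/2}\|u^\perp_{0}\|\cdot\nu^{1/2}\|\nablaP u^1_{0}\|_{H^4}$-type products, bounded by $(E_{2}/\nu)E_{1}$. The lowest-order piece uses $\varepsilon/\nu$ in place of $E_{1}$ where only the initial high norm is available. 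The wave part is bounded by $E_{4}E_{5}/\nu$ using the exponential weights, which make the time integrals finite.
\end{itemize}

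For the time-derivative component $\nu^{-1}\|\partial_tu^1_{0}\|_{L^\infty_tH^2}$, we read $\partial_tu^1_{0}$ directly from \eqref{eq:a-exact}.
\begin{itemize}
\item $\nu\DeltaP u^1_{0}$ in $H^2$ is at most $\nu E_{1}$.
\item $u^2_{0}$ in $H^2$ is at most $E_{2}$.
\item The transport terms in $H^2$ are controlled by $E_{2}E_{1}+E_{4}E_{5}$.
\end{itemize}
Dividing by $\nu$ gives $E_{2}/\nu$, $(E_{2}/\nu)E_{1}$ and $E_{4}E_{5}/\nu$. The term $\nu E_{1}/\nu=E_{1}$ has no small coefficient, so it cannot be absorbed. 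We therefore replace $\DeltaP u^1_{0}$ in $H^2$ by the already established Duhamel bound for $u^1_{0}$ in $H^4$, which is then inserted.

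The main obstacle is the high-frequency, top-order part of the accumulated lift-up term. There the factor $t-s$ must be traded for $\nu^{-1}|\xi_\perp|^{-2}$ while the $H^4$ norm of the roll self-interaction is still controlled using only $E_{2}$, without $E_{3}$. We would first try distributing derivatives so that at most two fall on one factor at the $\mu_\nu$-weighted level. If that fails, we would absorb the remainder into $(E_{2}/\nu)(E_{1}+\varepsilon/\nu)$ by rewriting part of $F$ through $u^1$-independent quantities.
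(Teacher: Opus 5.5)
Your low-frequency $L^2$ analysis (heat term, Besov lift-up, \eqref{eq:S-low} for the accumulated lift-up, and the transport term) and your treatment of $\partial_tu^1_{0}$ agree with Steps~1 and~3 of the paper. The top-order part, however, has a genuine gap, exactly where you flag it.

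Feeding the high frequencies of the accumulated lift-up term into \eqref{eq:Sbound} with $N=4$ requires $\|F\|_{L^1_tH^3}$. For the roll self-interaction this needs $u^3_{0}\in L^2_tH^3$. But $E_2$ controls that norm only with the weight $\mu_\nu^{1/2}\geq\nu^{1/3}$, so you get roughly $\nu^{-7/3}E_2^2$ instead of $E_2^2/\nu^2$. For the wave stress, trade $(t-s)|\xi_\perp|^2$ for $\nu^{-1}$ and then use the available $E_4E_5$ bounds \eqref{eq:wave-Y2-final} on $(u_{\neq}\cdot\nabla u^\perp_{\neq})_{0}$ (weighted $L^\infty_tH^2$, or $\nu^{1/2}L^2_tH^3$). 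This gives at best $\nu^{-4/3}E_4E_5$, not $E_4E_5/\nu$. Both pieces are too large by $\nu^{-1/3}$. At the bootstrap sizes $E_2,E_4\lesssim\varepsilon$, $E_5\lesssim\varepsilon/\nu$, $\varepsilon\simeq c_*\nu$, they are of order $c_*\nu^{-1/3}\,\varepsilon/\nu$, so the streak bound cannot close at the $O(\nu)$ threshold. Your fallback of rewriting part of $F$ through $u^1$-independent quantities does not help, since $F$ contains no $u^1$ at all.

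The missing idea is that the roll-to-streak substitution is needed only where the non-integrable $(1+\nu t)^{-1}$ decay of the roll is the obstruction, that is, for the $L^2$ amplitude. At top order, do not expand $u^2_{0}$. Instead apply $\DeltaP^2$ to \eqref{eq:a-exact}, test with $\DeltaP^2u^1_{0}$, and move one derivative off each forcing term. The lift-up term then costs only $\nu^{-1/2}\|\nablaP\DeltaP u^2_{0}\|_{L^2_tL^2}\leq CE_2/\nu$, by the dissipative roll bound \eqref{eq:roll-recovery-time-final}. In your Duhamel language: for $|\xi_\perp|\geq1$, bound $\int_0^t\ee^{\nu(t-s)\DeltaP}u^2_{0}(s)\,ds$ directly by $C\nu^{-1/2}\|u^2_{0}\|_{L^2_t\dot H^3}$, not through $F$.

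The transport forcing must likewise be measured in non-divergence form, $u^\perp_{0}\cdot\nablaP u^1_{0}+(u_{\neq}\cdot\nabla u^1_{\neq})_{0}$ in $\nu^{-1/2}L^2_t\dot H^3$. Putting the flux $G$ into \Cref{lem:direct-Duhamel} with $N=4$ would demand $u^\perp_{0}\in L^2_tH^4$, i.e.\ $E_3$. In the right norm, the only delicate product is $u^3_{0}\partial_zu^1_{0}$ with three derivatives on $u^3_{0}$. It is closed by the transient bound \eqref{eq:a-H3-transient-revised}, $\|u^1_{0}\|_{H^3}\leq C(E_1+\varepsilon/\nu)m_\nu^{1/2}$, together with $m_\nu\leq\mu_\nu$, which cancels the $\mu_\nu^{1/2}$ weight on $\|u^3_{0}\|_{H^3}$ (\Cref{lem:Na-Y2-revised}). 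This, rather than a lowest-order piece, is where $\varepsilon/\nu$ enters the estimate.
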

% The zero-mode and nonzero-mode propositions are proved in Subsections~\ref{sec:zero-closure-final} and~\ref{sec:nonzero-closure-final}, respectively. Their order of use in the bootstrap is $(E_{6},E_{4})$, then $E_{2}$, $E_{5}$, $E_{3}$, and $E_{1}$.

With the definitions above, the proof of \Cref{thm:full-couette-final} also yields the following quantitative estimates. After restarting the long-time energies at physical time $t=1$ and relabelling elapsed time as $t$, for every $T>0$,
		\begin{align}
			E_{2}(T)+E_{4}(T)+E_{6}(T)&\leq C\varepsilon,
			\nonumber\\*
			E_{1}(T)+E_{3}(T)+E_{5}(T)&\leq C\frac{\varepsilon}{\nu}.
			\label{eq:full-energy-final}
		\end{align}
		The corresponding roll estimate in physical time is the following, with all time norms taken over $[0,\infty)$:
\begin{align}
			&\sup_{t\geq0}\|u^\perp_{0}(t)\|_{\dot B^{-2}_{2,\infty}}
			+\nu^{1/2}\|u^\perp_{0}\|_{L_t^2L^2}
			\nonumber\\*
			&\quad+\|\Delta_\perp u^2_{0}\|_{Y^0}
			+\|u^3_{0}\|_{Y^1}
			+\|\mu_\nu^{1/2}\Delta_\perp u^3_{0}\|_{Y^0}
			\leq C\varepsilon,
			\label{eq:full-roll-final}
		\end{align}
		
We first close the six functionals using the propositions just stated. The local theory, initialization, continuity, and continuation arguments are proved in Appendix~\ref{app:local-H4}.
\begin{Lem}
		\label{lem:six-functional-bootstrap-final}
		Let $0<\nu\leq1$, $\varepsilon>0$, and suppose the restarted initialization \eqref{eq:H2-energy-initialization-final} and the hypotheses of \Cref{prop:E6-nonlinear-final,prop:E3-primary-final,prop:E4-closed-final,prop:ZR4-closed-final,prop:ZR-closed-final,prop:A-closed-final} hold on $[0,T]$. Let $C_{\rm boot}$ dominate the constants in these propositions and let $M\geq M_{0}(C_{\rm boot},C_{\rm r})$. Assume
		\begin{equation*}
			E_{2}+E_{4}+E_{6}\leq M\varepsilon,
			\qquad
			E_{1}+E_{3}+E_{5}\leq M\frac{\varepsilon}{\nu}.
		\end{equation*}
		There exists $\delta_{\rm b}=\delta_{\rm b}(M,C_{\rm boot},C_{\rm r})>0$ such that, if $0<\frac{\varepsilon}{\nu}\leq\delta_{\rm b}$, then
		\begin{equation}
			E_{2}+E_{4}+E_{6}\leq\frac M2\varepsilon,
			\qquad
			E_{1}+E_{3}+E_{5}\leq\frac M2\frac{\varepsilon}{\nu}.
			\label{eq:six-functional-bootstrap-improvement}
		\end{equation}
	\end{Lem}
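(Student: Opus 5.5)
The plan is to feed the bootstrap hypotheses into the six closure propositions in the order $(E_{6},E_{4})$, then $E_{2}$, $E_{5}$, $E_{3}$, $E_{1}$, so that each functional already improved is available when the next is estimated. Write $\delta:=\varepsilon/\nu\leq\delta_{\rm b}$. Under the bootstrap assumption, $E_{1}+\varepsilon/\nu\leq(M+1)\delta$, $E_{2}+E_{4}\leq M\varepsilon\leq M\delta\nu$, and $E_{2}\leq M\delta\nu$. Choosing $\delta_{\rm b}$ so that $(M+1)\delta_{\rm b}\leq\delta_*$ therefore verifies all smallness hypotheses of \Cref{prop:E6-nonlinear-final,prop:E3-primary-final,prop:E4-closed-final,prop:ZR-closed-final,prop:ZR4-closed-final,prop:A-closed-final}. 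The initial data terms are bounded by \eqref{eq:restarted-low-data-standing}--\eqref{eq:restarted-high-data-standing}: low-order data by $C_{\rm r}\varepsilon$, and $\|u(0)\|_{H^4}$ by $C_{\rm r}\varepsilon/\nu$. For the term $\nu^{-1}\|u^2_{0}(0)\|_{\dot B^{-2}_{2,\infty}}$ we use $\nu^{-1}C_{\rm r}\varepsilon$.

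\emph{Low-order closure.} By \eqref{eq:E6-nonlinear-final},
\[
E_{6}\leq C_{\rm boot}C_{\rm r}\varepsilon+C_{\rm boot}M\delta\,E_{6}+C_{\rm boot}M^2\delta\,\varepsilon .
\]
Absorbing the middle term for $\delta$ small gives $E_{6}\leq 2C_{\rm boot}C_{\rm r}\varepsilon+2C_{\rm boot}M^2\delta\varepsilon$. Substituting into \eqref{eq:E3-primary-final}, all nonlinear terms are $O(M^2\delta\varepsilon)$, so $E_{4}\leq K_1\varepsilon+O(M^2\delta)\varepsilon$ with $K_1=K_1(C_{\rm boot},C_{\rm r})$ independent of $M$. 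Then \eqref{eq:ZR-closed-final} gives $E_{2}\leq 2C_{\rm boot}C_{\rm r}\varepsilon+C_{\rm boot}M^2\delta\varepsilon$. Summing, $E_{2}+E_{4}+E_{6}\leq K\varepsilon+C(M)\delta\varepsilon$.

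\emph{High-order closure.} In \eqref{eq:E4-closed-final}, the linear term $\nu^{-1}(E_{4}+E_{6})$ is at most $\nu^{-1}K\varepsilon$ by the improved low-order bounds. This is the key point: it uses $K$, not $M$. The coefficient of $E_{5}$ is $O(M\delta)$ and is absorbed. The remaining terms are $\nu^{-1}(E_1+\varepsilon/\nu)(E_4+E_6)$ and $\nu^{-1}E_{3}E_{4}$, each $O(M^2\delta)\varepsilon/\nu$. Hence $E_{5}\leq K'\varepsilon/\nu+C(M)\delta\varepsilon/\nu$. Next, \eqref{eq:ZR4-closed-final} gives
\[
E_{3}\leq C_{\rm boot}C_{\rm r}\varepsilon/\nu+C\nu^{-1}M\varepsilon\cdot M\varepsilon/\nu,
\]
whose second term is $O(M^2\delta)\varepsilon/\nu$. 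Finally, in \eqref{eq:A-closed-final} the linear terms are $E_{2}/\nu\leq K\varepsilon/\nu$ (again using the improved $E_{2}$) and the Besov datum, which is at most $C_{\rm r}\varepsilon/\nu$. The remaining terms are small:
\begin{itemize}
\item $E_{2}^2/\nu^2\leq M^2\delta\,\varepsilon/\nu$;
\item $E_{4}E_{5}/\nu\leq M^2\delta\,\varepsilon/\nu$;
\item $(E_{2}/\nu)(E_{1}+\varepsilon/\nu)\leq M(M+1)\delta\,\varepsilon/\nu$.
\end{itemize}

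\emph{Main obstacle.} The one delicate term is $E_{4}^2/\nu^{4/3}$, which is at most $M^2\varepsilon^2\nu^{-4/3}=M^2\delta\,\nu^{-1/3}\,\varepsilon/\nu$. This is not small uniformly in $\nu$ under the naive bootstrap. I would first check whether the proof of \Cref{prop:A-closed-final} actually yields $E_{4}^2/\nu^{2}$-type weights that are compatible with the scaling, or equivalently $\nu^{-4/3}E_{4}^2\lesssim$ (something) using $\nu^{1/6}$ enhanced-dissipation gains. Failing that, I would exploit the sharper low-order bound: $E_{4}\leq K\varepsilon$ together with $\varepsilon\leq\delta\nu$ gives $K^2\delta\nu^{2/3}\varepsilon/\nu^{4/3}\cdot\nu^{1/3}$-type bookkeeping. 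If a genuine $\nu^{-1/3}$ loss remains, I would require $\delta_{\rm b}$ to absorb it; this demands that the statement's $\delta_{\rm b}$ be $\nu$-independent, so the loss must be shown to be fictitious via the Wei--Zhang pairing \eqref{eq:mixed-Y2-final}. Granting this, choosing $M_{0}\geq 4(K+K'+\dots)$ and then $\delta_{\rm b}$ small enough that every $C(M)\delta$ term is at most $M/8$ gives \eqref{eq:six-functional-bootstrap-improvement}.
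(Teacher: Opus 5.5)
Your proposal follows the paper's argument. You use the same order $(E_6,E_4)\to E_2\to E_5\to E_3\to E_1$. You make the same key observation: the linear terms $\nu^{-1}(E_4+E_6)$ in the $E_5$ estimate and $E_2/\nu$ in the $E_1$ estimate must be fed with the already improved, $M$-independent low-order bounds. And every other term is absorbed through an $O_M(\varepsilon/\nu)$ factor. The paper absorbs $E_4^2/\nu\le M(\varepsilon/\nu)E_4$ linearly in the $E_6$ step instead of bounding it by $M^2\delta\varepsilon$, but that difference is cosmetic.

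The one step you do not close, your ``main obstacle,'' is an arithmetic slip rather than a real difficulty. With $\varepsilon=\delta\nu$ and $0<\nu\le1$,
\[
\frac{E_4^2}{\nu^{4/3}}\le\frac{M^2\varepsilon^2}{\nu^{4/3}}=M^2\delta\,\nu^{-1/3}\,\varepsilon=M^2\delta\,\nu^{2/3}\,\frac{\varepsilon}{\nu}\le M^2\delta\,\frac{\varepsilon}{\nu}.
\]
You wrote $M^2\delta\,\nu^{-1/3}\,\varepsilon/\nu$, which is off by a factor $\nu^{-1}$. The correct power relative to the target scale $\varepsilon/\nu$ is $\nu^{+2/3}$. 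So this term is actually better than $E_2^2/\nu^2$ and $E_4E_5/\nu$, which are exactly of size $M^2\delta\,\varepsilon/\nu$. The paper simply lumps it into $C_5\varepsilon^2/\nu^2$. Using the improved bound $E_4\le K\varepsilon$, you even get $K^2\delta\,\varepsilon/\nu$ with no $M$-dependence.

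Consequently none of the remedies you contemplate are needed:
\begin{itemize}
\item there is nothing to re-examine in the proof of \Cref{prop:A-closed-final};
\item no $\nu$-dependence enters $\delta_{\rm b}$;
\item \eqref{eq:mixed-Y2-final} plays no role here. It controls the roll--wave interaction in the $E_5$ estimate, whereas the $E_4^2/\nu^{4/3}$ term in $E_1$ comes from the $L_t^1L^1$ wave stress through $\mathcal S_\nu$.
\end{itemize}
As written, your proof ends with ``granting this'' and so is incomplete at exactly this step. Replace that paragraph with the one-line bound above and the argument is complete and matches the paper's.
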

    
    \begin{Rem}
        The order of the proof is
		\begin{equation*}
			(E_{6},E_{4})\longrightarrow E_{2}\longrightarrow E_{5}\longrightarrow E_{3}\longrightarrow E_{1}.
		\end{equation*}
    \end{Rem}
	
	\begin{proof}
		All functionals below are evaluated at the same terminal time $T$. The $E_{6}$ estimate gives
		\begin{equation}
			E_{6}\leq C\varepsilon+C M\frac{\varepsilon}{\nu} E_{6}+C M\frac{\varepsilon}{\nu} E_{4},
			\label{eq:six-functional-E6-first}
		\end{equation}
		because $E_{2}/\nu\leq M\frac{\varepsilon}{\nu}$ and $E_{4}^2/\nu\leq M\frac{\varepsilon}{\nu} E_{4}$. After absorbing the middle term, substitute \eqref{eq:six-functional-E6-first} into the $E_{4}$ estimate. Since $E_{1}+\frac{\varepsilon}{\nu}\leq(M+1)\frac{\varepsilon}{\nu}$, every coefficient multiplying $E_{4}$ or $E_{6}$ is $O_M(\frac{\varepsilon}{\nu})$. For $\delta_{\rm b}$ sufficiently small, the resulting two-by-two low-order system can be absorbed and gives
		\begin{equation}
			(E_{4}+E_{6})\leq C_1\varepsilon\leq\frac M8\varepsilon.
			\label{eq:H2-E36-improve-final}
		\end{equation}
		Here and below $M_{0}$ is enlarged so that each fixed constant $C_j$ is at most $M/8$. This first step uses only the bootstrap bounds for the high-order quantities, not any high-order improvement.
		
		The closed roll estimate now uses the already improved $E_{4}$ and yields
		\begin{equation}
			E_{2}\leq C\varepsilon+C\nu^{-1}E_{4}^2
			\leq C_2\varepsilon+C M\frac{\varepsilon}{\nu}\,E_{4}
			\leq\frac M8\varepsilon.
			\label{eq:H2-ZR-improve-final}
		\end{equation}
		This completes the low-order block before any high-order functional is improved.
		
		For $E_{5}$, the genuine restarted initial term satisfies $\|u_{\neq}(0)\|_{H^4}=O(\frac{\varepsilon}{\nu})$. The already improved low-order estimate gives $\nu^{-1}(E_{4}+E_{6})=O(\frac{\varepsilon}{\nu})$. The coefficient of $E_{5}$ is
		\begin{equation*}
			E_{1}+\frac{\varepsilon}{\nu}+\frac{E_{2}}{\nu}+\frac{E_{4}}{\nu}=O_M(\frac{\varepsilon}{\nu}),
		\end{equation*}
		while the remaining couplings satisfy
		\begin{equation*}
			\frac{E_{1}+\frac{\varepsilon}{\nu}}{\nu}(E_{4}+E_{6})=O_M(\frac{\varepsilon^{2}}{\nu^{2}}),
			\qquad
			\frac{E_{3}}{\nu}E_{4}=O_M(\frac{\varepsilon^{2}}{\nu^{2}}).
		\end{equation*}
		After absorption,
		\begin{equation}
			E_{5}\leq C_3\frac{\varepsilon}{\nu}+C_M\frac{\varepsilon^{2}}{\nu^{2}}\leq\frac M8\frac{\varepsilon}{\nu}.
			\label{eq:H2-E4-improve-final}
		\end{equation}
		The high-roll estimate is now evaluated using the improved $E_{4}$ and $E_{5}$:
		\begin{align}
			E_{3}
			\leq C\|u^\perp_{0}(0)\|_{H^4}+C\nu^{-1}\bigl(E_{4}^2+E_{4}E_{5}\bigr)
			\leq C_4\frac{\varepsilon}{\nu}+C_M\frac{\varepsilon^{2}}{\nu^{2}}\leq\frac M8\frac{\varepsilon}{\nu}.
			\label{eq:H2-ZR4-improve-final}
		\end{align}
		Finally, the restarted data give
		\begin{equation*}
			\|u^1_{0}(0)\|_{H^4}+\nu^{-1}\|u^2_{0}(0)\|_{\dot B^{-2}_{2,\infty}}=O(\frac{\varepsilon}{\nu}).
		\end{equation*}
		Using the improved $E_{2}$, $E_{4}$, and $E_{5}$ in \Cref{prop:A-closed-final}, together with the still-valid bootstrap bound for $E_{1}$ only in the absorbable last term, gives
		\begin{equation*}
			E_{1}\leq C_5(\frac{\varepsilon}{\nu}+\frac{\varepsilon^{2}}{\nu^{2}})+C_5\frac{\varepsilon}{\nu} E_{1}.
		\end{equation*}
		After one final absorption,
		\begin{equation}
			E_{1}\leq\frac M8\frac{\varepsilon}{\nu}.
			\label{eq:H2-A-improve-final}
		\end{equation}
		Adding \eqref{eq:H2-E36-improve-final} and \eqref{eq:H2-ZR-improve-final}, and separately adding \eqref{eq:H2-E4-improve-final}, \eqref{eq:H2-ZR4-improve-final}, and \eqref{eq:H2-A-improve-final}, proves \eqref{eq:six-functional-bootstrap-improvement}.
	\end{proof}
	
	The global stability theorem is proved first. The decay estimates and
	the two sufficient initial-data conditions are then deduced from the
	same energy bounds.
	
	\begin{proof}[Proof of \Cref{thm:full-couette-final}]
		Let $c_{\rm loc}$ be the smallness constant in \Cref{prop:H2-short-time-final}. For the moment assume $c_*\leq\min\{c_{\rm loc},c_{\rm r}\}$; its final size will be fixed below. Then \Cref{lem:H2-restart-initialization-final} applies. Translate physical time $t=1$ to the new origin, denote elapsed time by $t$, and use the same notation $u$ for the translated velocity. Restart all auxiliary correctors with zero initial data. Then \eqref{eq:restarted-low-data-standing}--\eqref{eq:restarted-high-data-standing} and \eqref{eq:H2-energy-initialization-final} hold with the absolute constant $C_{\rm r}$.
		
		The closure propositions are now applied to the restarted solution. Let $C_{\rm boot}$ dominate the constants in \Cref{prop:E6-nonlinear-final,prop:E3-primary-final,prop:E4-closed-final,prop:ZR4-closed-final,prop:ZR-closed-final,prop:A-closed-final}, and let $\delta_{\rm adm}$ be the minimum of all geometric, modal, interaction, and zero-mode smallness thresholds. The non-circular order of choices is
		\begin{equation*}
			\text{linear modal constants}\longrightarrow(\gamma,\lambda_*)\longrightarrow(\delta_{\rm adm},C_{\rm boot})\longrightarrow M\longrightarrow c_*.
		\end{equation*}
		Choose
		\begin{equation*}
			M\geq\max\left\{M_{0}(C_{\rm boot},C_{\rm r}),64C_{\rm boot}(1+C_{\rm boot}),16C_{\rm r}\right\},
		\end{equation*}
		and then choose $c_*>0$ sufficiently small, depending only on the already fixed constants and $M$, so that
		\begin{equation}
			(C_{\rm r}+M+1)c_*\ll\delta_{\rm adm},
			\qquad C_{\rm boot}(M+1)c_*\le\frac1{64},
			\qquad c_*\leq\delta_{\rm b}(M,C_{\rm boot},C_{\rm r}).
			\label{eq:H2-cstar-choice-final}
		\end{equation}
		On the restarted maximal interval, bootstrap
		\begin{equation}
			E_{2}+E_{4}+E_{6}\le M\varepsilon,
			\qquad
			E_{1}+E_{3}+E_{5}\le M\frac{\varepsilon}{\nu}.
			\label{eq:H2-bootstrap-final}
		\end{equation}
		The initialization follows from \eqref{eq:H2-energy-initialization-final}. The admissibility assumptions in all closure propositions follow from $E_{2}/\nu\le M\frac{\varepsilon}{\nu}$ and $E_{1}+\frac{\varepsilon}{\nu}\le(M+1)\frac{\varepsilon}{\nu}$.
		
		By \Cref{lem:six-functional-bootstrap-final} and \eqref{eq:H2-cstar-choice-final}, both inequalities in \eqref{eq:H2-bootstrap-final} improve by a factor of two. The bounds for $(E_{6},E_{4})$ are closed first, followed by $E_{2}$, $E_{5}$, $E_{3}$, and $E_{1}$.
		
		By continuity of the terminal-time functionals and the positive-time $H^4$ continuation criterion, the restarted solution exists for all elapsed times. Joined with the short-time $H^2$ solution, this proves global existence in
		\begin{equation*}
			C([0,\infty);H^2)\cap L^2_{\rm loc}([0,\infty);H^3).
		\end{equation*}
		Since the improved inequalities hold on every finite terminal interval, they also prove the quantitative estimate \eqref{eq:full-energy-final}, with a universal constant depending only on the fixed bootstrap constants.
		
		For every $\tau_{0}>0$, the short-time smoothing result \eqref{eq:H2-positive-time-H4-continuity} gives $H^4$ continuity up to and across physical time one, and the restarted $H^4$ continuation theory then yields the positive-time regularity asserted in \eqref{eq:full-positive-time-H4}.
		
		Finally, \eqref{eq:physical-recovery-reorganized} and
\eqref{eq:full-energy-final} imply
$\sup_{t\geq1}\|u(t)\|_{H^2}\leq C\varepsilon/\nu$.
The short-time estimate \eqref{eq:H2-restart-smoothing-global}
controls $0\leq t\leq1$, proving
\eqref{eq:true-Couette-stability-final}. Uniqueness follows from the
local $H^2$ theory and continuation. This proves
\Cref{thm:full-couette-final}.
\end{proof}

\begin{proof}[Proof of \Cref{thm:global-decay-final}]
The global energy bounds \eqref{eq:full-energy-final} have already
been obtained. In the following estimates, $s$ denotes elapsed time
after the restart, so physical time is $t=1+s$. All $s$-norms are
taken over $[0,\infty)$, and $E_4(\infty):=\sup_{T>0}E_4(T)$.

For the nonzero modes, \eqref{eq:physical-recovery-reorganized}
and \eqref{eq:P-profiles-revised} give
\[
 \|u_{\neq}(1+s)\|_{L^2}\leq C\varepsilon\ee^{-2\gamma\nu^{1/3}s}.
\]
Moreover, velocity reconstruction
\eqref{eq:velocity-reconstruction-final} and the streamwise spectral
gap imply
\[
 \|u_{\neq}(1+s)\|_{L^2}\leq C\Pi_0(s),\qquad
 \|\nabla u_{\neq}(1+s)\|_{L^2}\leq C(\Pi_0(s)+\Pi_1(s)).
\]
Thus \eqref{eq:P-profiles-revised} gives the two time-integrated
bounds in \eqref{eq:global-ED-spacetime} on $[1,\infty)$.
Here the contribution of $\Pi_0$ to the gradient bound costs
$\nu^{1/2}\nu^{-1/6}=\nu^{1/3}\leq1$.
Taking $\lambda_*=\gamma$, the change from $s$ to $t$ costs at most
$\ee^{\lambda_*}$. The short-time $H^2$ estimate
\eqref{eq:H2-restart-smoothing-global} then proves
\eqref{eq:full-ED-final}--\eqref{eq:global-ED-spacetime} on the full
physical half-line.

The inviscid-damping component of $X_{2\gamma}$, applied to
$\Delta u^2_{\neq}$ in $E_4$, is particularly simple:
\[
 \nabla\Delta^{-1}\partial_x\Delta u^2_{\neq}
 =\partial_x\nabla u^2_{\neq}.
\]
The nonzero streamwise frequency also gives
$\|u^2_{\neq}\|_{H^1}\leq C\|\partial_x\nabla u^2_{\neq}\|_{L^2}$.
Therefore
\[
 \|\ee^{2\gamma\nu^{1/3}s}u^2_{\neq}(1+s)\|_{L_s^2H^1}
 +\|\ee^{2\gamma\nu^{1/3}s}
 \partial_x\nabla u^2_{\neq}(1+s)\|_{L_s^2L^2}
 \leq CE_4(\infty)\leq C\varepsilon.
\]
The same time translation and short-time $H^2$ bound prove
\eqref{eq:global-inviscid-damping}.

For the roll, all unweighted $Y^m$ components on $t\geq1$ are invariant under the time translation, and \eqref{eq:restart-mu-comparison} converts the restarted $\mu_\nu^{1/2}$ component to the physical weight with only an absolute loss. On $0\leq t\leq1$, the $L_t^\infty H^2$ and $\nu^{1/2}L_t^2H^3$ bounds in \eqref{eq:H2-restart-smoothing-global} control
		\begin{equation*}
			\|\Delta_\perp u^2_{0}\|_{Y^0(0,1)}
			+\|u^3_{0}\|_{Y^1(0,1)}
			+\|\mu_\nu^{1/2}\Delta_\perp u^3_{0}\|_{Y^0(0,1)}
			+\nu^{1/2}\|u^\perp_{0}\|_{L^2(0,1;L^2)}
			\leq C\varepsilon.
		\end{equation*}
		Together with the short-time Besov bound \eqref{eq:H2-restart-besov-global} and the restarted $E_{2}$ estimate, this proves \eqref{eq:full-roll-final}, and in particular
\eqref{eq:global-roll-endpoint}, on the full physical half-line.
		
		For the pointwise roll decay, set
$F(s):=(u^\perp_{\neq}\otimes u^\perp_{\neq})_{0}(1+s)$.
The wave-stress estimate \eqref{eq:wave-roll-forcing-revised} and
$E_4(\infty)\leq C\varepsilon$ imply
\begin{equation}
 \int_0^\infty(1+\nu s)^3\|F(s)\|_{L^2}^2\,ds
 \leq C\|\ee^{4\gamma\nu^{1/3}s}F\|_{L_s^2L^2}^2
 \leq C\nu^{-1}\varepsilon^4.
 \label{eq:cubic-weight-wave-stress}
\end{equation}
The first inequality follows from
$\sup_{s\geq0}(1+\nu s)^3\ee^{-8\gamma\nu^{1/3}s}\leq C_\gamma$,
since $0<\nu\leq1$. Apply \eqref{eq:E-decay} to the restarted roll.
Its initial $L^2$ norm and propagated Besov norm are bounded by
$C\varepsilon$, so \eqref{eq:cubic-weight-wave-stress} yields
\begin{align*}
 \|u^\perp_{0}(1+s)\|_{L^2}^2
 &\leq \frac{C\varepsilon^2}{(1+\nu s)^2}
 +\frac{C\varepsilon^4}{\nu^2(1+\nu s)^3}
 \leq\frac{C\varepsilon^2}{(1+\nu s)^2}.
\end{align*}
In the last step, $\varepsilon/\nu\leq c_*$ was used.
Since $1+\nu(t-1)\geq(1+\nu t)/2$ for $t\geq1$, this gives
\eqref{eq:global-roll-decay} on $[1,\infty)$. The short-time
$H^2$ estimate supplies the remaining interval.

Finally, time translation leaves the unweighted $H^4$, dissipative,
and $\partial_tu^1_{0}$ components of $E_1$ unchanged. Its global
bound in \eqref{eq:full-energy-final} proves
\eqref{eq:full-zero-final}.
\end{proof}

\begin{proof}[Proof of \Cref{cor:homogeneous-Hminus2}]
The embedding
$\dot H^{-2}=\dot B^{-2}_{2,2}\hookrightarrow\dot B^{-2}_{2,\infty}$
gives $\varepsilon\leq C\varepsilon_H$.
Choose $c_H$ so that $Cc_H\leq c_*$ and apply
\Cref{thm:full-couette-final,thm:global-decay-final}.
\end{proof}

\begin{proof}[Proof of \Cref{cor:weighted-initial-data}]
The initial roll belongs to $L^1(\R^2)$: its integral on the unit
ball is controlled by its $L^2$ norm, and its integral outside the
ball is controlled by the first moment. Since it is divergence free,
\Cref{prop:moment} gives
\[
 \|u^\perp_{0,\rm in}\|_{\dot B^{-2}_{2,\infty}}
 \leq C\||x_\perp|u^\perp_{0,\rm in}\|_{L^1}.
\]
Thus $\varepsilon\leq C\varepsilon_m$, and $Cc_m\leq c_*$ ensures
that both main theorems apply. For $\sigma>2$, Cauchy--Schwarz yields
\[
 \||x_\perp|u^\perp_{0,\rm in}\|_{L^1}
 \leq\bigl\||x_\perp|\langle x_\perp\rangle^{-\sigma}\bigr\|_{L^2}
 \|\langle x_\perp\rangle^\sigma u^\perp_{0,\rm in}\|_{L^2}
 \leq C_\sigma\varepsilon_\sigma.
\]
Choosing $c_\sigma$ accordingly proves the last assertion.
\end{proof}

\section{Endpoint control of the zero modes}
\label{sec:zero-endpoint}
In this section, the low-frequency condition in the main results is justified first.
Fourier splitting then controls the transverse velocity $u^\perp_{0}$,
and the two-level Duhamel formula controls the growth of the streak
$u^1_{0}$.
Together with the nonlinear estimates in
Section~\ref{sec:interactions-final}, these bounds yield the
zero-mode energy estimates in Subsection~\ref{sec:zero-closure-final}.

\subsection{Endpoint space and linear lift-up}
\label{sec:prelim}
\label{sec:sharpness}
The heat characterization and projected-divergence estimate explain the choice of $\dot B^{-2}_{2,\infty}(\mathbb R^2)$ in the energy framework.

\begin{Lem}
		\label{lem:heat-char}
		It holds for every $f\in L^2(\R^2)\cap\dot B^{-2}_{2,\infty}(\R^2)$ that
		\begin{equation}
			\norm{f}{\dot B^{-2}_{2,\infty}} \sim \sup_{\tau>0}\tau\norm{\ee^{\tau\DeltaP}f}{L^2}.
			\label{eq:heat-char}
		\end{equation}
		Consequently, for every $\nu>0$,
		\begin{equation}
			\sup_{t>0}\norm{t\ee^{\nu t\DeltaP}f}{L^2} \lesssim \nu^{-1}\norm{f}{\dot B^{-2}_{2,\infty}}.
			\label{eq:heat-liftup}
		\end{equation}
	\end{Lem}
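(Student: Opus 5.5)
We prove the two inequalities in \eqref{eq:heat-char} separately, working on the Fourier side with the dyadic pieces $\dot\Delta_jf$. Since $f\in L^2$, Plancherel gives $\|f\|_{L^2}^2\sim\sum_j\|\dot\Delta_jf\|_{L^2}^2$, and the Littlewood--Paley series converges to $f$ in $L^2$. This removes any ambiguity about polynomials in the homogeneous realization.

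\emph{Upper bound for the heat quantity.} Fix $\tau>0$ and write $e^{\tau\DeltaP}f=\sum_j e^{\tau\DeltaP}\dot\Delta_jf$. The piece $\dot\Delta_jf$ has Fourier support in $\{|\xi|\sim2^j\}$, where $e^{-\tau|\xi|^2}\le e^{-c\tau2^{2j}}$. Hence
\[
 \tau\|e^{\tau\DeltaP}\dot\Delta_jf\|_{L^2}
 \le \tau2^{2j}e^{-c\tau2^{2j}}\,2^{-2j}\|\dot\Delta_jf\|_{L^2}
 \le \tau2^{2j}e^{-c\tau2^{2j}}\|f\|_{\dot B^{-2}_{2,\infty}}.
\]
Summing over $j$ requires $\sum_{j\in\mathbb Z}\tau2^{2j}e^{-c\tau2^{2j}}\le C$ uniformly in $\tau$. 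This holds because the sum is a geometric-type sum in the variable $x=\tau2^{2j}$: for $x\le1$ it is dominated by $\sum 2^{2j}\tau$ over those $j$, and for $x\ge1$ the exponential decay controls it. Using the triangle inequality and then taking the supremum over $\tau$ gives $\sup_\tau\tau\|e^{\tau\DeltaP}f\|_{L^2}\lesssim\|f\|_{\dot B^{-2}_{2,\infty}}$.

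\emph{Lower bound.} Fix $j$ and choose $\tau=2^{-2j}$. On the support of $\varphi(2^{-j}\cdot)$ we have $|\xi|\le C2^j$, so $e^{-\tau|\xi|^2}\ge e^{-C^2}$. Therefore
\[
 \|\dot\Delta_jf\|_{L^2}
 \le e^{C^2}\|\dot\Delta_je^{\tau\DeltaP}f\|_{L^2}
 \le e^{C^2}\|e^{\tau\DeltaP}f\|_{L^2},
\]
where the last step uses that $\varphi$ is bounded, so $\dot\Delta_j$ is bounded on $L^2$. Multiplying by $2^{-2j}=\tau$ gives $2^{-2j}\|\dot\Delta_jf\|_{L^2}\lesssim\sup_{\tau>0}\tau\|e^{\tau\DeltaP}f\|_{L^2}$. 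Taking the supremum over $j$ finishes the proof of \eqref{eq:heat-char}.

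\emph{Consequence.} Estimate \eqref{eq:heat-liftup} follows by substituting $\tau=\nu t$:
\[
 t\|e^{\nu t\DeltaP}f\|_{L^2}=\nu^{-1}\,\tau\|e^{\tau\DeltaP}f\|_{L^2}\le\nu^{-1}\sup_{\tau>0}\tau\|e^{\tau\DeltaP}f\|_{L^2},
\]
and then applying the upper bound just proved.

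The only mildly delicate step is the uniform summation over $j\in\mathbb Z$ in the upper bound, since both high and low frequencies must be controlled. The factor $\tau2^{2j}$ handles the low frequencies and the Gaussian handles the high frequencies, so no real obstacle arises.
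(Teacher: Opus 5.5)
Your proof is correct and follows the same route as the paper's: dyadic decay of the heat multiplier on each annulus for the upper bound, and evaluation at $\tau=2^{-2j}$ for the lower bound. The only difference is that you sum the dyadic pieces with the triangle inequality and bound $\sum_j\tau2^{2j}e^{-c\tau2^{2j}}$, whereas the paper uses almost orthogonality and bounds $\sum_j(\tau2^{2j})^2e^{-c\tau2^{2j}}$; both sums are uniformly bounded for the same geometric reason.
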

	
	\begin{proof}
		Since the heat semigroup and $\dot\Delta_j$ commute and the heat multiplier is exponentially small on the support of $\varphi(2^{-j}\cdot)$, almost orthogonality gives
		\begin{equation*}
			\norm{\ee^{\tau\DeltaP}f}{L^2}^2\lesssim\sum_{j\in\mathbb Z}\ee^{-c\tau2^{2j}}\norm{\dot\Delta_jf}{L^2}^2\leq \norm{f}{\dot B^{-2}_{2,\infty}}^2\sum_{j\in\mathbb Z}2^{4j}\ee^{-c\tau2^{2j}}.
		\end{equation*}
		Multiplying by $\tau^2$, we obtain
		\begin{equation*}
			\tau^2\norm{\ee^{\tau\DeltaP}f}{L^2}^2\lesssim \norm{f}{\dot B^{-2}_{2,\infty}}^2\sum_{j\in\mathbb Z}(\tau2^{2j})^2\ee^{-c\tau2^{2j}}\lesssim \norm{f}{\dot B^{-2}_{2,\infty}}^2,
		\end{equation*}
		where the last sum is uniformly bounded because successive values of $\tau2^{2j}$ have ratio $4$. Next, fix $j\in\mathbb Z$ and evaluate the heat semigroup at time $2^{-2j}$. On the support of $\varphi(2^{-j}\xi)$ the factor $\ee^{-2^{-2j}|\xi|^2}$ is bounded below by a positive constant depending only on the fixed annulus. Thus, we have
		\begin{equation*}
			\norm{\dot\Delta_jf}{L^2}\lesssim \norm{\dot\Delta_j\ee^{2^{-2j}\DeltaP}f}{L^2}\lesssim \norm{\ee^{2^{-2j}\DeltaP}f}{L^2}
		\end{equation*}
		which implies
		\begin{equation*}
			2^{-2j}\norm{\dot\Delta_jf}{L^2}\lesssim \sup_{\tau>0}\tau\norm{\ee^{\tau\DeltaP}f}{L^2}.
		\end{equation*}
		Hence, taking the supremum over $j$ proves \eqref{eq:heat-char}. Finally, evaluating the supremum in \eqref{eq:heat-char} at $\nu t$ and dividing by $\nu$ yields \eqref{eq:heat-liftup}.
	\end{proof}
	
	\begin{Lem}
		\label{lem:div-B}
		For every matrix field $F\in L^1(\R^2;\R^{2\times2})$,
		\begin{equation}
			\norm{\PP\nablaP\cdot F}{\dot B^{-2}_{2,\infty}} \lesssim \norm{F}{L^1}.
			\label{eq:div-B}
		\end{equation}
		The same estimate holds for a vector field $G\in L^1(\R^2;\R^2)$ with $\nablaP\cdot G$ in place of $\PP\nablaP\cdot F$.
	\end{Lem}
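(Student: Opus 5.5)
The estimate is a direct Fourier-side computation on each dyadic block, using the Hausdorff--Young bound $\|\widehat F\|_{L^\infty}\leq\|F\|_{L^1}$. The plan is to bound the symbol of $\PP\nablaP\cdot$ pointwise, then integrate over the annulus of radius $\sim2^j$.

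In transverse Fourier variables $\xi=\xi_\perp\in\R^2$, the operator $\PP\nablaP\cdot$ acting on a matrix field $F$ has symbol
\[
 \widehat{\PP\nablaP\cdot F}(\xi)=\Bigl(I-\frac{\xi\otimes\xi}{|\xi|^2}\Bigr)\,i\,\widehat F(\xi)\,\xi .
\]
The matrix $I-\xi\otimes\xi/|\xi|^2$ is an orthogonal projection, so it has norm at most one for $\xi\neq0$. Hence
\[
 |\widehat{\PP\nablaP\cdot F}(\xi)|\leq|\xi|\,|\widehat F(\xi)|\leq|\xi|\,\|F\|_{L^1}.
\]
For the vector case, $\widehat{\nablaP\cdot G}(\xi)=i\xi\cdot\widehat G(\xi)$ gives the same bound $|\xi|\,\|G\|_{L^1}$.

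Now fix $j\in\mathbb Z$. By Plancherel,
\[
 \|\dot\Delta_j(\PP\nablaP\cdot F)\|_{L^2}^2
 =(2\pi)^{-2}\int_{\R^2}|\varphi(2^{-j}\xi)|^2|\xi|^2|\widehat F(\xi)|^2\,d\xi .
\]
On the support of $\varphi(2^{-j}\cdot)$ we have $|\xi|\sim2^j$, and this support has area $\sim2^{2j}$. The integral is therefore $\lesssim2^{2j}\cdot2^{2j}\|F\|_{L^1}^2$. Multiplying the square root by $2^{-2j}$ gives
\[
 2^{-2j}\|\dot\Delta_j(\PP\nablaP\cdot F)\|_{L^2}\lesssim\|F\|_{L^1},
\]
uniformly in $j$. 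Taking the supremum over $j$ yields \eqref{eq:div-B}, and the identical argument proves the estimate for $\nablaP\cdot G$.

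The only point requiring care is functional-analytic rather than computational. For $F\in L^1$, the distribution $\PP\nablaP\cdot F$ need not lie in $L^2$, and the symbol of $\PP$ is not defined at $\xi=0$. I will handle this by defining the left side through the dyadic blocks. The multiplier $\varphi(2^{-j}\xi)(I-\xi\otimes\xi/|\xi|^2)\xi$ is smooth and compactly supported away from the origin, so each $\dot\Delta_j\PP\nablaP\cdot F$ is a well-defined $L^2$ function. This matches the homogeneous realization mentioned after \eqref{eq:Bdef}, so the estimate is meaningful for the stresses it is applied to.

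Notably, the endpoint $q=\infty$ is exactly what this argument produces: each block is bounded by the same constant, but the blocks are not summable in $j$. This is consistent with the paper's remark that such divergences need not belong to $\dot H^{-2}$.
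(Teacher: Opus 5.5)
Your proof is correct and is essentially the paper's argument. The paper bounds $\|\dot\Delta_j\PP\nablaP\cdot F\|_{L^2}\lesssim 2^j\|\dot\Delta_jF\|_{L^2}$ using the order-zero Leray symbol on the annulus, and then applies the $L^1$--$L^2$ Bernstein inequality $\|\dot\Delta_jF\|_{L^2}\lesssim 2^j\|F\|_{L^1}$; your bound $|\widehat F|\leq\|F\|_{L^1}$ followed by Plancherel over an annulus of area $\sim2^{2j}$ is exactly that Bernstein inequality written out, and it is the same computation the paper uses in \Cref{prop:moment}.
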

	
	\begin{proof}
		The two-dimensional Leray projector is a bounded Fourier multiplier of order zero on each dyadic annulus. Therefore,
		\begin{equation*}
			\norm{\dot\Delta_j\PP\nablaP\cdot F}{L^2}\lesssim 2^j\norm{\dot\Delta_jF}{L^2}.
		\end{equation*}
		The $L^1$--$L^2$ Bernstein inequality in dimension two gives
		\begin{equation*}
			\norm{\dot\Delta_jF}{L^2}\lesssim 2^j\norm{F}{L^1}
		\end{equation*}
		which implies
		\begin{equation*}
			2^{-2j}\norm{\dot\Delta_j\PP\nablaP\cdot F}{L^2}\lesssim \norm{F}{L^1}
		\end{equation*}
		for every $j$, proving \eqref{eq:div-B}. The scalar-divergence case is identical.
	\end{proof}

The heat characterization in \Cref{lem:heat-char} gives the following linear statement.
	
	\begin{Prop}
		\label{prop:linear-criticality}
		For every $f\in L^2(\R^2)$ and $\nu>0$,
		\begin{equation*}
			\sup_{t>0}\norm{t\ee^{\nu t\DeltaP}f}{L^2}\simeq \nu^{-1}\norm{f}{\dot B^{-2}_{2,\infty}}.
		\end{equation*}
		% with the convention that either side may be infinite. Therefore $\dot B^{-2}_{2,\infty}$ is the endpoint $L^2$ low-frequency space for uniform-in-time linear lift-up control.
	\end{Prop}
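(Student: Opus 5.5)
The plan is to reduce the statement to the heat characterization of \Cref{lem:heat-char}, by rescaling time and handling separately the case where either side is infinite. Lemma~\ref{lem:heat-char} was stated for $f\in L^2\cap\dot B^{-2}_{2,\infty}$. Here we only assume $f\in L^2$, so both sides may be infinite, and the two inequalities have to be checked separately.

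\emph{Upper bound.} If $\|f\|_{\dot B^{-2}_{2,\infty}}=\infty$ there is nothing to prove. Otherwise \eqref{eq:heat-liftup} gives
\[
\sup_{t>0}\|t\ee^{\nu t\DeltaP}f\|_{L^2}\lesssim\nu^{-1}\|f\|_{\dot B^{-2}_{2,\infty}}.
\]
For clarity we record the substitution $\tau=\nu t$:
\[
t\|\ee^{\nu t\DeltaP}f\|_{L^2}=\nu^{-1}\,\tau\|\ee^{\tau\DeltaP}f\|_{L^2}.
\]
As $t$ ranges over $(0,\infty)$, so does $\tau$. Hence
\[
\sup_{t>0}\|t\ee^{\nu t\DeltaP}f\|_{L^2}=\nu^{-1}\sup_{\tau>0}\tau\|\ee^{\tau\DeltaP}f\|_{L^2}.
\]
This identity holds exactly, including when both sides are infinite. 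It therefore remains only to show $\|f\|_{\dot B^{-2}_{2,\infty}}\sim\sup_\tau\tau\|\ee^{\tau\DeltaP}f\|_{L^2}$ for every $f\in L^2$, without assuming a priori finiteness.

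\emph{Lower bound.} We repeat the second half of the proof of \Cref{lem:heat-char}, which never used finiteness of the Besov norm. Fix $j$. On the support of $\varphi(2^{-j}\cdot)$, the multiplier $\ee^{-2^{-2j}|\xi|^2}$ is bounded below by a constant depending only on the annulus. Since $f\in L^2$, each $\dot\Delta_jf$ is a well-defined $L^2$ function, and Plancherel gives
\[
\|\dot\Delta_jf\|_{L^2}\lesssim\|\ee^{2^{-2j}\DeltaP}f\|_{L^2}.
\]
Multiplying by $2^{-2j}$ and taking the supremum over $j$ yields
\[
\|f\|_{\dot B^{-2}_{2,\infty}}\lesssim\sup_{\tau>0}\tau\|\ee^{\tau\DeltaP}f\|_{L^2}.
\]
This holds with the convention that the right side may be $+\infty$. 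Similarly, the upper bound from the first half of that proof is valid whenever the Besov norm is finite. So the two quantities are finite simultaneously and comparable.

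\emph{Main subtlety.} The only delicate point is that for $f\in L^2$ the dyadic sum $\sum_j\dot\Delta_jf$ reconstructs $f$ in $L^2$: the origin has measure zero, so no polynomial ambiguity arises. This makes the almost-orthogonality step $\|\ee^{\tau\DeltaP}f\|_{L^2}^2\lesssim\sum_j\ee^{-c\tau2^{2j}}\|\dot\Delta_jf\|_{L^2}^2$ legitimate. Everything else is the rescaling above.
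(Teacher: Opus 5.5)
Your proposal is correct and follows the paper's own proof: the rescaling $\tau=\nu t$ together with \Cref{lem:heat-char} gives the upper bound when the Besov norm is finite. The second half of that lemma's argument (the heat multiplier at time $2^{-2j}$ is bounded below on the $j$th annulus) needs only $f\in L^2$; it gives the reverse inequality and shows that the two sides are finite simultaneously. Your remark that $f\in L^2$ removes any polynomial ambiguity in the dyadic reconstruction correctly justifies the almost-orthogonality step, which the paper uses implicitly.
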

	
	\begin{proof}
For finite Besov norm, the assertion follows from \eqref{eq:heat-char}
by setting $\tau=\nu t$. Conversely, for every $f\in L^2$ and $j\in\mathbb Z$,
the heat multiplier is bounded below on the $j$th annulus at time $2^{-2j}$, so
\begin{equation*}
 2^{-2j}\|\dot\Delta_jf\|_{L^2}
 \leq C2^{-2j}\|\ee^{2^{-2j}\DeltaP}f\|_{L^2}
 \leq C\sup_{\tau>0}\tau\|\ee^{\tau\DeltaP}f\|_{L^2}.
\end{equation*}
Taking the supremum over $j$ shows that finiteness of the heat-side
quantity implies finiteness of the Besov norm. This also proves the
assertion when either side is infinite.
\end{proof}

\begin{Rem}
    Since $\dot H^{-2}=\dot B^{-2}_{2,2}\hookrightarrow\dot B^{-2}_{2,\infty}$, an $\dot H^{-2}$ condition also controls the linear lift-up, but it is stronger than necessary. More importantly, the actual Leray-projected divergence forcing is naturally endpoint-Besov rather than endpoint-Sobolev. Let $\phi\in C_c^\infty(\mathbb R^2)$ have nonzero mean, set
	\begin{equation*}
		F=\begin{pmatrix}\phi&0\\0&0\end{pmatrix},
		\qquad
		g:=\mathbb P_\perp\operatorname{div}_\perp F,
	\end{equation*}
	and observe from \Cref{lem:div-B} that $g\in\dot B^{-2}_{2,\infty}$. On the other hand,
	\begin{equation*}
		\widehat g(\xi)
		=i\xi_1\left(I-\frac{\xi\otimes\xi}{|\xi|^2}\right)e_1\widehat\phi(\xi),
		\qquad
		|\widehat g(\xi)|^2
		=\frac{\xi_1^2\xi_2^2}{|\xi|^2}|\widehat\phi(\xi)|^2.
	\end{equation*}
	Because $\widehat\phi(0)\neq0$, there are $r_{0},c_{0}>0$ such that $|\widehat\phi(\xi)|\geq c_{0}$ for $|\xi|\leq r_{0}$. On any fixed angular sector on which $|\xi_1|,|\xi_2|\geq c|\xi|$, one therefore has
	\begin{equation*}
		\int_{|\xi|\leq r_{0}}|\xi|^{-4}|\widehat g(\xi)|^2\,d\xi
		\geq c\int_{0}^{r_{0}}\frac{dr}{r}=\infty.
	\end{equation*}
	Thus $g\notin\dot H^{-2}$ even though it has exactly the projected divergence structure of the nonlinear roll forcing. This explains why $\dot H^{-2}$ is not the correct nonlinear endpoint.
\end{Rem}

The Besov hypothesis has a simple sufficient formulation in terms of one spatial moment.
	
	\begin{Prop}
		\label{prop:moment}
		Let $u^\perp_{0}\in L^1(\R^2;\R^2)$ be divergence free and suppose $|x_\perp|u^\perp_{0}\in L^1(\R^2)$. Then
		\begin{equation}
			\int_{\R^2}u^\perp_{0}(x_\perp)\,dx_\perp=0
			\label{eq:R-mean-zero}
		\end{equation}
		and
		\begin{equation}
			\norm{u^\perp_{0}}{\dot B^{-2}_{2,\infty}}\leq C\norm{|x_\perp|u^\perp_{0}}{L^1}.
			\label{eq:moment-B}
		\end{equation}
	\end{Prop}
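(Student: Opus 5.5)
The plan is to prove the two assertions in order: the vanishing mean \eqref{eq:R-mean-zero} from incompressibility, then the Besov bound \eqref{eq:moment-B} by writing $u^\perp_{0}$ as the divergence of an $L^1$ tensor and applying \Cref{lem:div-B}.

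\emph{Vanishing mean.} Write $u=u^\perp_{0}$. Since $u\in L^1$ and $|x_\perp|u\in L^1$, the Fourier transform $\widehat u$ is $C^1$, with $\nabla_\xi\widehat u$ bounded by $\||x_\perp|u\|_{L^1}$. The divergence-free condition reads $\xi\cdot\widehat u(\xi)=0$ for all $\xi$. Differentiating in $\xi_a$ at $\xi=0$ gives $\widehat u^a(0)=0$ for each component $a$. This is exactly \eqref{eq:R-mean-zero}.

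Equivalently, in physical space one may test $\nablaP\cdot u=0$ against $x_a\theta(x_\perp/R)$ with a cutoff $\theta$ and let $R\to\infty$. The error term is bounded by $\int_{|x|\sim R}|x||u|/R$, which tends to zero by the moment assumption.

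\emph{Besov bound, via an explicit $L^1$ tensor.} Since $u$ has zero mean and a finite first moment, it can be written as $u^a=\partial_bF^{ab}$ with $\|F\|_{L^1}\lesssim\||x_\perp|u\|_{L^1}$. A convenient choice is the standard radial primitive:
\[
F^{ab}(x)=-\int_1^\infty u^a(sx)\,x_b\,s\,ds\quad\text{(suitably normalized)}.
\]
Alternatively, one can use the Bogovski\u{\i}-type formula $F^{ab}=-x_b u^a$ corrected by terms involving $\nabla\Delta^{-1}$, but those are not $L^1$-bounded, so I prefer a direct Fourier argument, described next.

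\emph{Besov bound, via the Fourier side (preferred route).} Use $|\widehat u(\xi)|\le|\xi|\sup|\nabla_\xi\widehat u|\le|\xi|\,\||x_\perp|u\|_{L^1}$, which follows from the mean-value theorem and $\widehat u(0)=0$. Then for the dyadic block at scale $2^j$,
\[
\|\dot\Delta_ju\|_{L^2}^2\lesssim\int_{|\xi|\sim2^j}|\xi|^2\,\||x_\perp|u\|_{L^1}^2\,d\xi\lesssim2^{4j}\,\||x_\perp|u\|_{L^1}^2.
\]
Hence $2^{-2j}\|\dot\Delta_ju\|_{L^2}\lesssim\||x_\perp|u\|_{L^1}$ uniformly in $j$, which proves \eqref{eq:moment-B}.

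\emph{Main obstacle.} The only delicate point is the regularity of $\widehat u$ needed to differentiate the constraint $\xi\cdot\widehat u(\xi)=0$ at the origin. This is handled by dominated convergence, because $|x_\perp|u\in L^1$. The justification of the divergence-free condition in the distributional sense for an $L^1$ field is immediate on the Fourier side.
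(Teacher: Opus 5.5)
Your preferred Fourier-side route is correct and is essentially the paper's proof. The paper also derives the zero mean from $\xi\cdot\widehat u^\perp_{0}(\xi)=0$ by restricting to $\xi=(r,0)$ and $\xi=(0,r)$ and letting $r\to0$, which needs only continuity of $\widehat u^\perp_{0}$ rather than the $C^1$ regularity you invoke; it then bounds $|\widehat u^\perp_{0}(\xi)|\le|\xi|\,\||x_\perp|u^\perp_{0}\|_{L^1}$ via $|\ee^{-ix_\perp\cdot\xi}-1|\le|x_\perp||\xi|$ and integrates over each dyadic annulus by Plancherel, exactly as you do.
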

	
	\begin{proof}
		Since $u^\perp_{0}\in L^1$, $\widehat u^\perp_{0}$ is continuous. The divergence-free condition gives $\xi\cdot\widehat u^\perp_{0}(\xi)=0$. Taking $\xi=(r,0)$ with $r\neq0$ yields $\widehat u^2_{0}(r,0)=0$, and letting $r\to0$ gives $\widehat u^2_{0}(0)=0$. Similarly, taking $\xi=(0,r)$ gives $\widehat u^3_{0}(0)=0$. Hence \eqref{eq:R-mean-zero} holds.
		
		Using the zero mean and $|\ee^{-i x\cdot\xi}-1|\leq|x||\xi|$,
		\begin{equation*}
			|\widehat u^\perp_{0}(\xi)|=\abs{\int_{\R^2}(\ee^{-ix_\perp\cdot\xi}-1)u^\perp_{0}(x_\perp)\,dx_\perp}\leq |\xi|\norm{|x_\perp|u^\perp_{0}}{L^1}.
		\end{equation*}
		On the support of the $j$th dyadic block, $|\xi|\simeq2^j$ and the annulus has measure $O(2^{2j})$. Hence Plancherel gives
		\begin{equation*}
			\norm{\dot\Delta_ju^\perp_{0}}{L^2}\lesssim 2^{2j}\norm{|x_\perp|u^\perp_{0}}{L^1}.
		\end{equation*}
		Multiplying by $2^{-2j}$ and taking the supremum in $j$ proves \eqref{eq:moment-B}.
	\end{proof}
	
	The estimate \eqref{eq:moment-B} is the physical-space input to
\Cref{cor:weighted-initial-data}; it requires only the first moment
of the initial roll.

\subsection{Roll propagation and Fourier splitting}
\label{sec:roll}
We now use the endpoint estimates to control the nonlinear roll. Recall that
	\begin{equation}
		\pt u^\perp_{0}-\nu\DeltaP u^\perp_{0}=-\PP\nablaP\cdot(u^\perp_{0}\otimes u^\perp_{0}+(u^\perp_{\neq}\otimes u^\perp_{\neq})_{0}),\qquad \nablaP\cdot u^\perp_{0}=0,
		\label{eq:forced-roll}
	\end{equation}
	on $[0,T]\times\R^2$.
The next theorem propagates the endpoint norm and recovers time-integrated roll control under explicit bounds on the wave stress.
	
	\begin{Thm}
		\label{thm:roll-main}
		Fix $M\geq1$. There exist constants $c_{0}=c_{0}(M)>0$ and $C=C(M)>0$ such that the following holds. Let $0<\nu\leq1$ and let $u^\perp_{0}$ solve the forced roll equation \eqref{eq:forced-roll} on $[0,T]$. Assume
		\begin{equation*}
			\|u^\perp_{0,\rm in}\|_{L^2}+\|u^\perp_{0,\rm in}\|_{\dot B^{-2}_{2,\infty}}\leq\varepsilon,
		\end{equation*}
		\begin{align}
			\int_{0}^T\|\bigl(u^\perp_{\neq}(t)\otimes u^\perp_{\neq}(t)\bigr)_{0}\|_{L^1}\,dt&\leq M\nu^{-1/3}\varepsilon^2,
			\label{eq:stress-L1-main}\\
			\int_{0}^T(1+\nu t)\|\bigl(u^\perp_{\neq}(t)\otimes u^\perp_{\neq}(t)\bigr)_{0}\|_{L^2}^2\,dt&\leq M\nu^{-1}\varepsilon^4,
			\label{eq:stress-L2-main}
		\end{align}
		and $\varepsilon\leq c_{0}\nu$. Then
		\begin{equation}
			\sup_{0\leq t\leq T}\|u^\perp_{0}(t)\|_{\dot B^{-2}_{2,\infty}}\leq C\varepsilon,
			\label{eq:roll-B-main}
		\end{equation}
		\begin{equation}
			\int_{0}^T\|u^\perp_{0}(t)\|_{L^2}^2\,dt\leq C\nu^{-1}\varepsilon^2.
			\label{eq:roll-L2-main}
		\end{equation}
		The constants are independent of $T$ and $\nu$.
	\end{Thm}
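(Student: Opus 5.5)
The two estimates \eqref{eq:roll-B-main} and \eqref{eq:roll-L2-main} will be proved together by a continuity argument. The plan is to derive the two coupled inequalities announced in the introduction and close them for $\varepsilon\leq c_0\nu$. Set
\[
B(T):=\sup_{0\leq t\leq T}\|u^\perp_{0}(t)\|_{\dot B^{-2}_{2,\infty}},\qquad
N(T):=\int_0^T\|u^\perp_{0}\|_{L^2}^2\,dt,
\]
and write $F:=(u^\perp_{\neq}\otimes u^\perp_{\neq})_{0}$.

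\emph{Step 1 (Besov propagation).} Apply $\dot\Delta_j$ to the Duhamel formula for \eqref{eq:forced-roll}. The heat semigroup is a contraction on each dyadic block, so
\[
2^{-2j}\|\dot\Delta_ju^\perp_{0}(t)\|_{L^2}\leq 2^{-2j}\|\dot\Delta_ju^\perp_{0,\rm in}\|_{L^2}+\int_0^t2^{-2j}\bigl\|\dot\Delta_j\PP\nablaP\cdot\bigl(u^\perp_{0}\otimes u^\perp_{0}+F\bigr)(s)\bigr\|_{L^2}\,ds .
\]
By \Cref{lem:div-B}, the integrand is bounded by $\|u^\perp_{0}(s)\|_{L^2}^2+\|F(s)\|_{L^1}$, uniformly in $j$. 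Taking the supremum over $j$ and using \eqref{eq:stress-L1-main} gives
\[
B(T)\leq C\varepsilon+N(T)+M\nu^{-1/3}\varepsilon^2 .
\]

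\emph{Step 2 (energy and Fourier splitting).} Pair \eqref{eq:forced-roll} with $u^\perp_{0}$. Since $u^\perp_0$ is divergence free, the self-interaction cancels, and $\langle\PP\nablaP\cdot F,u^\perp_0\rangle=-\langle F,\nablaP u^\perp_0\rangle$. This yields
\[
\tfrac{d}{dt}\|u^\perp_0\|_{L^2}^2+\nu\|\nablaP u^\perp_0\|_{L^2}^2\leq\nu^{-1}\|F\|_{L^2}^2 .
\]
Next choose a time-dependent radius $\rho(t)^2=k/(\nu(1+\nu t))$ with $k$ large, and split
\[
\nu\|\nablaP u\|^2\geq\nu\rho^2\|u\|^2-\nu\rho^2\|u\|_{\rm low}^2 ,
\]
where $\|u\|_{\rm low}$ is the $L^2$ norm over $|\xi|\leq\rho$. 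The Besov norm bounds the low-frequency part:
\[
\int_{|\xi|\leq\rho}|\widehat u|^2\lesssim\sum_{2^j\lesssim\rho}2^{4j}B^2\lesssim\rho^4B(T)^2 .
\]
Multiply the resulting inequality by $(1+\nu t)^k$ and integrate. This gives the pointwise decay bound
\[
\|u^\perp_0(t)\|_{L^2}^2\lesssim\frac{\varepsilon^2+B(T)^2}{(1+\nu t)^2}+\frac{\nu^{-1}}{(1+\nu t)^{k}}\int_0^t(1+\nu s)^k\|F\|_{L^2}^2\,ds .
\]
This is presumably the estimate \eqref{eq:E-decay} used later, although there a cubic weight is needed. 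For \eqref{eq:roll-L2-main} it suffices to integrate in time. The first term integrates to $\nu^{-1}(\varepsilon^2+B^2)$. For the forcing term, swap the order of integration and use $k\geq2$:
\[
\int_s^\infty\frac{(1+\nu s)^k}{(1+\nu t)^k}\,dt\lesssim\nu^{-1}(1+\nu s),
\]
so the forcing term is bounded by $\nu^{-2}\int(1+\nu s)\|F\|_{L^2}^2\,ds\leq M\nu^{-3}\varepsilon^4$ by \eqref{eq:stress-L2-main}. Hence
\[
N(T)\lesssim\nu^{-1}\bigl(\varepsilon^2+B(T)^2\bigr)+\nu^{-3}\varepsilon^4 .
\]

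\emph{Step 3 (closing).} Substitute the bound for $N$ into Step 1:
\[
B\leq C\varepsilon+C\nu^{-1}\varepsilon^2+C\nu^{-1}B^2+C\nu^{-3}\varepsilon^4+M\nu^{-1/3}\varepsilon^2 .
\]
Under the bootstrap hypothesis $B\leq 2C\varepsilon$ with $\varepsilon\leq c_0\nu$, every term other than $C\varepsilon$ is $O(c_0)\varepsilon$. For example, $\nu^{-1/3}\varepsilon^2\leq c_0\nu^{2/3}\varepsilon\leq c_0\varepsilon$ and $\nu^{-1}B^2\lesssim c_0\varepsilon$. So the hypothesis improves, and continuity in $T$ closes the argument. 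Continuity holds because $u^\perp_0\in C([0,T];L^2)$ and the Besov norm is lower semicontinuous; the initial value is $B(0)\leq\varepsilon$. Plugging the resulting bound on $B$ back into Step 2 gives \eqref{eq:roll-L2-main}.

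The main obstacle is Step 2. The time-weighted Fourier splitting must be arranged so that only the $(1+\nu t)$ weight of \eqref{eq:stress-L2-main} is used after integrating in time. The integration by parts of the projected forcing against the dissipation also needs care, since the $\nu^{-1}\|F\|_{L^2}^2$ cost is exactly what \eqref{eq:stress-L2-main} is calibrated for. If the weighted ODE argument proves awkward, the fallback is to split at a fixed radius on dyadic time intervals $[2^m\nu^{-1},2^{m+1}\nu^{-1}]$ and sum.
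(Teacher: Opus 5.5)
Your proposal follows the paper's proof essentially step for step: Duhamel plus \Cref{lem:div-B} for the endpoint norm, the energy identity with the self-interaction cancelled, Fourier splitting with the low-frequency mass bounded by $\rho^4B^2$, a polynomial integrating factor with Fubini, and a bootstrap for $\varepsilon\le c_0\nu$. Two steps need repair; the first fails as written.

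\textbf{The splitting radius.} Your radius is off by a factor of $\nu$. With $\rho^2=k/(\nu(1+\nu t))$ the low-frequency error is $\nu\rho^6B^2=k^3\nu^{-2}(1+\nu t)^{-3}B^2$. This is a factor $\nu^{-3}$ too large. Integrating gives only $\|u^\perp_0(t)\|_{L^2}^2\lesssim\nu^{-3}B^2(1+\nu t)^{-2}$ and $N(T)\lesssim\nu^{-4}B^2$, which does not close. The correct choice is $\rho^2=k/(1+\nu t)$, i.e.\ $\rho^2\sim(\nu(t+\nu^{-1}))^{-1}$; the paper takes $k=3$. Then $\nu\rho^2=k\nu/(1+\nu t)$ is exactly the logarithmic derivative of $(1+\nu t)^k$. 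Moreover, $(1+\nu t)^k\nu\rho^6B^2=k^3\nu(1+\nu t)^{k-3}B^2$ integrates to $O(B^2(1+\nu t)^{k-2})$ for $k>2$. This gives precisely the decay bound you displayed, and the rest of your Step 2 goes through.

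\textbf{The continuity argument.} Lower semicontinuity does not justify the continuity claim. Since $u^\perp_0\in C([0,T];L^2)$, the map $t\mapsto\|u^\perp_0(t)\|_{\dot B^{-2}_{2,\infty}}$ is lower semicontinuous, and this makes $T\mapsto B(T)$ left-continuous. The openness part of your bootstrap, however, needs right-continuity: you must rule out $B$ jumping across the gap between the two roots of $x=a+bx^2$. There are two ways to fix this.
\begin{enumerate}
\item Prove $u^\perp_0\in C([0,T];\dot B^{-2}_{2,\infty})$ from Duhamel. Use $\|(\ee^{\nu h\DeltaP}-I)f\|_{\dot B^{-2}_{2,\infty}}\lesssim\nu h\|f\|_{L^2}$ together with $\|u^\perp_0\|_{L^2}^2+\|F\|_{L^1}\in L^1(0,T)$; this is done in \Cref{lem:bootstrap-continuity-final}.
\item Bootstrap instead, as the paper does, on the manifestly continuous quantity $N(T)$, with hypothesis $N(T)\le K\nu^{-1}\varepsilon^2$ and $N(0)=0$. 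Your Step 1 then gives $B(T)\le 2\varepsilon$. Step 2 returns $N(T)\le(C_1+CM\varepsilon^2\nu^{-2})\nu^{-1}\varepsilon^2$, which improves the hypothesis once $K=2C_1+2$ is fixed and $c_0$ is small.
\end{enumerate}
With these two corrections your proof is complete.
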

	
	% To prove \Cref{thm:roll-main}, we combine Besov propagation with Fourier splitting.

	The Duhamel formula and \Cref{lem:div-B} immediately give the first basic estimate.
	
	\begin{Lem}
		\label{lem:B-prop}
		Every smooth solution of \eqref{eq:forced-roll} satisfies
		\begin{equation}
			\sup_{0\leq t\leq T}\norm{u^\perp_{0}(t)}{\dot B^{-2}_{2,\infty}}\leq \norm{u^\perp_{0,\rm in}}{\dot B^{-2}_{2,\infty}}+C\int_{0}^T\norm{u^\perp_{0}(t)}{L^2}^2\,dt+C\int_{0}^T\norm{\bigl(u^\perp_{\neq}(t)\otimes u^\perp_{\neq}(t)\bigr)_{0}}{L^1}\,dt.
			\label{eq:B-prop}
		\end{equation}
	\end{Lem}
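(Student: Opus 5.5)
We write the roll equation \eqref{eq:forced-roll} in Duhamel form:
\begin{equation*}
 u^\perp_{0}(t)=\ee^{\nu t\DeltaP}u^\perp_{0,\rm in}
 -\int_0^t\ee^{\nu(t-s)\DeltaP}\PP\nablaP\cdot F(s)\,ds,
 \qquad
 F:=u^\perp_{0}\otimes u^\perp_{0}+(u^\perp_{\neq}\otimes u^\perp_{\neq})_{0}.
\end{equation*}
The plan is to apply the Besov norm to each term, taking the supremum over dyadic blocks only after the time integral has been estimated.

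\textbf{Step 1: the free evolution.} The heat multiplier $\ee^{-\nu t|\xi|^2}$ is bounded by one and commutes with $\dot\Delta_j$. Hence
\[
 \|\dot\Delta_j\ee^{\nu t\DeltaP}u^\perp_{0,\rm in}\|_{L^2}\le\|\dot\Delta_ju^\perp_{0,\rm in}\|_{L^2}
\]
for every $j$, and the free part is bounded by $\|u^\perp_{0,\rm in}\|_{\dot B^{-2}_{2,\infty}}$ with constant exactly one. This is why the first term in \eqref{eq:B-prop} carries no constant.

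\textbf{Step 2: the Duhamel integral.} Fix $j$. By Minkowski's inequality,
\[
 2^{-2j}\Bigl\|\dot\Delta_j\int_0^t\ee^{\nu(t-s)\DeltaP}\PP\nablaP\cdot F(s)\,ds\Bigr\|_{L^2}
 \le\int_0^t2^{-2j}\|\dot\Delta_j\PP\nablaP\cdot F(s)\|_{L^2}\,ds ,
\]
where we have again dropped the contractive heat factor. By the proof of \Cref{lem:div-B}, which is uniform in $j$, the integrand is at most $C\|F(s)\|_{L^1}$. Taking the supremum over $j$ and then over $t\le T$ gives
\[
 \sup_{0\le t\le T}\|u^\perp_{0}(t)\|_{\dot B^{-2}_{2,\infty}}
 \le\|u^\perp_{0,\rm in}\|_{\dot B^{-2}_{2,\infty}}+C\int_0^T\|F(s)\|_{L^1}\,ds .
\]

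\textbf{Step 3: splitting the stress.} Finally,
\[
 \|F\|_{L^1}\le\|u^\perp_{0}\|_{L^2}^2+\|(u^\perp_{\neq}\otimes u^\perp_{\neq})_{0}\|_{L^1}
\]
by Cauchy--Schwarz, which yields \eqref{eq:B-prop}.

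\textbf{Main obstacle: justification rather than estimation.} One must make sense of the Duhamel formula in the homogeneous Besov setting. For smooth solutions with $u^\perp_{0}\in L^2$, the formula holds in $L^2$, and each dyadic block of $\PP\nablaP\cdot F$ is a well-defined $L^2$ function. This is because $F\in L^1$ has bounded Fourier transform and the symbol of $\PP\nablaP$ is bounded on each annulus. Consequently, applying $\dot\Delta_j$ to the Duhamel identity is legitimate block by block. No issue with polynomials modulo $\mathcal S'$ arises, since $u^\perp_{0}(t)\in L^2$ fixes the representative.
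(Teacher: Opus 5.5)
Your proposal is correct and follows the paper's proof: Duhamel formula, contractivity of the heat semigroup on each dyadic block, Minkowski combined with the projected-divergence bound of \Cref{lem:div-B}, and $\|u^\perp_{0}\otimes u^\perp_{0}\|_{L^1}\leq\|u^\perp_{0}\|_{L^2}^2$. Your remarks that the initial term carries constant exactly one, and on justifying the Duhamel formula block by block, make explicit details the paper leaves implicit.
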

	
	\begin{proof}
		The Duhamel formula for \eqref{eq:forced-roll} reads
		\begin{equation*}
			u^\perp_{0}(t)=\ee^{\nu t\DeltaP}u^\perp_{0,\rm in}-\int_{0}^t\ee^{\nu(t-s)\DeltaP}\PP\nablaP\cdot(u^\perp_{0}\otimes u^\perp_{0}+(u^\perp_{\neq}\otimes u^\perp_{\neq})_{0})(s)\,ds.
		\end{equation*}
		The heat semigroup is bounded on $\dot B^{-2}_{2,\infty}$. Hence \Cref{lem:div-B} and Minkowski give
		\begin{equation*}
			\norm{u^\perp_{0}(t)}{\dot B^{-2}_{2,\infty}}\leq\norm{u^\perp_{0,\rm in}}{\dot B^{-2}_{2,\infty}}+C\int_{0}^t\bigl(\norm{u^\perp_{0}\otimes u^\perp_{0}}{L^1}+\norm{(u^\perp_{\neq}\otimes u^\perp_{\neq})_{0}}{L^1}\bigr)(s)\,ds.
		\end{equation*}
		Since $\norm{u^\perp_{0}\otimes u^\perp_{0}}{L^1}\leq\norm{u^\perp_{0}}{L^2}^2$, taking the supremum in $t$ proves \eqref{eq:B-prop}.
	\end{proof}
	
The endpoint norm bounds the low-frequency Fourier mass. Splitting the energy at a time-dependent radius then yields the missing time-integrated $L^2$ estimate.
	
	\begin{Lem}
		\label{lem:lowmass}
		Let $f\in L^2(\R^2)\cap\dot B^{-2}_{2,\infty}(\R^2)$. Then for every $r>0$,
		\begin{equation*}
			\int_{|\xi|\leq r}|\widehat f(\xi)|^2\,d\xi\leq C r^4\norm{f}{\dot B^{-2}_{2,\infty}}^2.
		\end{equation*}
	\end{Lem}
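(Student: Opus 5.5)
By Plancherel, the low-frequency mass $\int_{|\xi|\le r}|\widehat f|^2\,d\xi$ is controlled by the $L^2$ norms of finitely many dyadic blocks plus a geometric tail. The plan is to decompose the ball $\{|\xi|\le r\}$ into dyadic shells, bound each shell's mass by the Besov norm, and sum.

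\emph{Step 1: reduce the ball to dyadic blocks.} Let $j_0\in\mathbb Z$ be the integer with $2^{j_0-1}<r\le 2^{j_0}$. Since $f\in L^2$, its Fourier transform has no mass concentrated at the origin. Hence, for almost every $\xi$ with $0<|\xi|\le r$, we have $\widehat f(\xi)=\sum_{j}\varphi(2^{-j}\xi)\widehat f(\xi)$. Only indices $j\le j_0+N$ contribute, where $N$ is a fixed integer determined by the support annulus of $\varphi$.

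\emph{Step 2: apply the triangle inequality in $L^2$ on the ball.} Because $0\le\varphi\le1$, this gives
\[
 \Bigl(\int_{|\xi|\le r}|\widehat f|^2\,d\xi\Bigr)^{1/2}
 \le\sum_{j\le j_0+N}\|\dot\Delta_jf\|_{L^2}
 \le\|f\|_{\dot B^{-2}_{2,\infty}}\sum_{j\le j_0+N}2^{2j}.
\]
The last sum is geometric and equals $C2^{2j_0}\le C'r^2$. Squaring then yields the claimed bound $Cr^4\|f\|_{\dot B^{-2}_{2,\infty}}^2$.

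\emph{Possible subtlety: whether the dyadic sum really reconstructs $\widehat f$ inside the ball.} The partition of unity holds pointwise for every $\xi\neq0$, and $\{0\}$ has measure zero. Moreover, $\widehat f\in L^2$ already pins down the representative of $f$, so no polynomial ambiguity from the homogeneous realization arises. Since every term of the sum is nonnegative and bounded by $|\widehat f|$, the reconstruction holds pointwise almost everywhere, and no convergence issue appears.

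For a cleaner argument one can avoid the triangle inequality. Bounded overlap of the supports of the $\varphi(2^{-j}\cdot)$ gives, pointwise,
\[
|\widehat f|^2\le C\sum_j\varphi(2^{-j}\xi)^2|\widehat f|^2 .
\]
Summing $\|\dot\Delta_jf\|_{L^2}^2\le2^{4j}\|f\|_{\dot B^{-2}_{2,\infty}}^2$ over $j\le j_0+N$ again produces a convergent geometric series of size $O(r^4)$. I would present this almost-orthogonality version, since it matches the argument already used in \Cref{lem:heat-char}.
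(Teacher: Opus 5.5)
Your proposal is correct. The almost-orthogonality version you say you would present is essentially the paper's proof: bounded overlap and Plancherel reduce the ball to the blocks with $j\le J+C_0$, each block is bounded by $2^{4j}\norm{f}{\dot B^{-2}_{2,\infty}}^2$, and the geometric sum gives $r^4$. Your triangle-inequality variant is also valid and differs only cosmetically.
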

	
	\begin{proof}
		Choose $J\in\mathbb Z$ so that $2^J\leq r<2^{J+1}$. By the finite overlap of the dyadic decomposition and Plancherel,
		\begin{equation*}
			\int_{|\xi|\leq r}|\widehat f(\xi)|^2\,d\xi\lesssim\sum_{j\leq J+C_{0}}\norm{\dot\Delta_jf}{L^2}^2.
		\end{equation*}
		Using \eqref{eq:Bdef},
		\begin{equation*}
			\sum_{j\leq J+C_{0}}\norm{\dot\Delta_jf}{L^2}^2\leq\norm{f}{\dot B^{-2}_{2,\infty}}^2\sum_{j\leq J+C_{0}}2^{4j}\lesssim 2^{4J}\norm{f}{\dot B^{-2}_{2,\infty}}^2\lesssim r^4\norm{f}{\dot B^{-2}_{2,\infty}}^2.
		\end{equation*}
	\end{proof}
	
	\begin{Lem}
		\label{lem:fourier-splitting}
		Let $u^\perp_{0}$ solve \eqref{eq:forced-roll}. Then
		\begin{equation}
			\int_{0}^T\norm{u^\perp_{0}(t)}{L^2}^2\,dt\leq \frac{C}{\nu}\bigl(\norm{u^\perp_{0,\rm in}}{L^2}^2+\left(\sup_{0\leq s\leq T}\norm{u^\perp_{0}(s)}{\dot B^{-2}_{2,\infty}}\right)^2\bigr)+\frac{C}{\nu^2}\int_{0}^T(1+\nu t)\norm{\bigl(u^\perp_{\neq}(t)\otimes u^\perp_{\neq}(t)\bigr)_{0}}{L^2}^2\,dt.
			\label{eq:fourier-splitting}
		\end{equation}
	\end{Lem}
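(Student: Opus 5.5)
The plan is a weighted energy estimate for $u^\perp_{0}$ combined with Schonbek-type Fourier splitting at a time-dependent radius. The low-frequency part of the splitting is controlled by \Cref{lem:lowmass}, with $B:=\sup_{0\leq s\leq T}\|u^\perp_{0}(s)\|_{\dot B^{-2}_{2,\infty}}$.

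\emph{Step 1: energy inequality.} Write $F:=(u^\perp_{\neq}\otimes u^\perp_{\neq})_{0}$ and $E(t):=\|u^\perp_{0}(t)\|_{L^2}^2$. Testing \eqref{eq:forced-roll} against $u^\perp_{0}$ and using $\nablaP\cdot u^\perp_{0}=0$, the projection $\PP$ drops and the self-interaction term $\int u^\perp_{0}\cdot\nablaP u^\perp_{0}\cdot u^\perp_{0}$ vanishes. Integrating the wave stress by parts gives
\begin{equation*}
 \frac{d}{dt}E+2\nu\|\nablaP u^\perp_{0}\|_{L^2}^2=2\int F:\nablaP u^\perp_{0}\leq\nu\|\nablaP u^\perp_{0}\|_{L^2}^2+\nu^{-1}\|F\|_{L^2}^2 .
\end{equation*}
This leaves $E'+\nu\|\nablaP u^\perp_{0}\|_{L^2}^2\leq\nu^{-1}\|F\|_{L^2}^2$.

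\emph{Step 2: splitting.} For a radius $r(t)>0$, Plancherel and \Cref{lem:lowmass} give
\begin{equation*}
 \|\nablaP u^\perp_{0}\|_{L^2}^2\geq r^2\Bigl(E-\int_{|\xi|\leq r}|\widehat{u^\perp_{0}}|^2\,d\xi\Bigr)\geq r^2E-Cr^6B^2 .
\end{equation*}
Take the weight $w(t)=(1+\nu t)^3$ and the radius $r(t)^2=3/(1+\nu t)$, so that $\nu r^2 w=w'$. Then
\begin{equation*}
 \frac{d}{dt}(wE)\leq C\nu B^2(1+\nu t)^{3}(1+\nu t)^{-3}\cdot(1+\nu t)^{-0}\,\frac{\nu r^6 w}{\nu(1+\nu t)^{0}}\Big/\!\!\;w'\!\cdot w' + \nu^{-1}w\|F\|_{L^2}^2 ,
\end{equation*}
which simplifies, since $\nu r^6 w\simeq\nu$, to
\begin{equation*}
 \frac{d}{dt}(wE)\leq C\nu B^2+\nu^{-1}(1+\nu t)^3\|F\|_{L^2}^2 .
\end{equation*}
Integrating from $0$ to $t$ yields
\begin{equation*}
 E(t)\leq\frac{E(0)}{(1+\nu t)^3}+\frac{CB^2\,\nu t}{(1+\nu t)^3}+\frac{1}{\nu(1+\nu t)^3}\int_0^t(1+\nu s)^3\|F(s)\|_{L^2}^2\,ds .
\end{equation*}
The middle term is at most $CB^2(1+\nu t)^{-2}$. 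The same computation with a general exponent $\alpha>2$ also gives the pointwise decay used later in \eqref{eq:E-decay}.

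\emph{Step 3: time integration.} Integrate over $[0,T]$. The first two terms contribute at most $C\nu^{-1}(E(0)+B^2)$, because $\int_0^\infty(1+\nu t)^{-2}\,dt=\nu^{-1}$. For the forcing term, exchange the order of integration and use
\begin{equation*}
 \int_s^\infty(1+\nu t)^{-3}\,dt=\frac{1}{2\nu}(1+\nu s)^{-2} .
\end{equation*}
This bounds the forcing contribution by
\begin{equation*}
 \frac{C}{\nu^2}\int_0^T(1+\nu s)\|F(s)\|_{L^2}^2\,ds ,
\end{equation*}
which is exactly the last term in \eqref{eq:fourier-splitting}.

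\emph{Main obstacle.} The delicate point is choosing the weight exponent and the splitting radius together. The low-frequency remainder $\nu r^6 wB^2$ must be integrable in time after division by $w$, or the bound acquires a logarithm; for example, the weight $(1+\nu t)^2$ leads to a $\log$ growth. The exponent must also be large enough that the exchanged forcing integral only costs the factor $(1+\nu s)$ allowed by \eqref{eq:stress-L2-main}. The choice $\alpha=3$ satisfies both requirements. Rigor requires the solution to be smooth and $L^2$-valued so that the energy identity holds; this is assumed in the statement.
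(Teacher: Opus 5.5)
Your proof is correct and is the paper's proof: you use the same energy inequality, the splitting radius $r^2=3/(1+\nu t)$ with \Cref{lem:lowmass}, the weight $(1+\nu t)^3$, the pointwise bound \eqref{eq:E-decay}, and the same Fubini exchange. Two cosmetic points remain. First, the garbled first display in Step~2 should simply read $(wE)'=wE'+w'E\leq w\bigl(\nu^{-1}\|F\|_{L^2}^2+C\nu r^6B^2\bigr)$, using $w'=\nu r^2w$. Second, your ``main obstacle'' paragraph overstates the difficulty: any weight exponent $\alpha>1$ already gives \eqref{eq:fourier-splitting}, including $\alpha=2$, whose logarithm is still integrable in time. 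An exponent $\alpha>2$, such as $3$, matters only for the pointwise $(1+\nu t)^{-2}$ decay used later.
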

	
	\begin{proof}
		Taking the $L^2$ inner product of \eqref{eq:forced-roll} with $u^\perp_{0}$ and using $\nablaP\cdot u^\perp_{0}=0$, the quadratic self-interaction cancels:
		\begin{equation*}
			\int_{\R^2}u^\perp_{0}\cdot\PP\nablaP\cdot(u^\perp_{0}\otimes u^\perp_{0})=\int_{\R^2}u^\perp_{0}\cdot\nablaP\cdot(u^\perp_{0}\otimes u^\perp_{0})=0.
		\end{equation*}
		Since $\PP$ is self-adjoint on divergence-free test fields,
		\begin{equation*}
			\frac12\frac{d}{dt}\norm{u^\perp_{0}(t)}{L^2}^2+\nu \norm{\nablaP u^\perp_{0}(t)}{L^2}^2=\int_{\R^2}(u^\perp_{\neq}\otimes u^\perp_{\neq})_{0}:\nablaP u^\perp_{0}\,dx_\perp.
		\end{equation*}
		Young's inequality therefore yields
		\begin{equation}
			\frac{d}{dt}\norm{u^\perp_{0}(t)}{L^2}^2+\nu \norm{\nablaP u^\perp_{0}(t)}{L^2}^2\leq C\nu^{-1}\norm{\bigl(u^\perp_{\neq}(t)\otimes u^\perp_{\neq}(t)\bigr)_{0}}{L^2}^2.
			\label{eq:E-basic}
		\end{equation}
		
		Fix $r>0$. By Plancherel and \Cref{lem:lowmass},
		\begin{align}
			\norm{\nablaP u^\perp_{0}(t)}{L^2}^2&=\int_{\R^2}|\xi|^2|\widehat u^\perp_{0}(t,\xi)|^2\,d\xi
			\geq r^2\int_{|\xi|>r}|\widehat u^\perp_{0}(t,\xi)|^2\,d\xi\nonumber\\
			&\geq r^2\norm{u^\perp_{0}(t)}{L^2}^2-Cr^6\left(\sup_{0\leq s\leq T}\norm{u^\perp_{0}(s)}{\dot B^{-2}_{2,\infty}}\right)^2.
			\label{eq:D-lower}
		\end{align}
		Taking $r^2=3/(1+\nu t)$ in \eqref{eq:D-lower} and combining with \eqref{eq:E-basic},
		\begin{equation*}
			\frac{d}{dt}\norm{u^\perp_{0}(t)}{L^2}^2+\frac{3\nu}{1+\nu t}\norm{u^\perp_{0}(t)}{L^2}^2\leq \frac{C\nu \left(\sup_{0\leq s\leq T}\norm{u^\perp_{0}(s)}{\dot B^{-2}_{2,\infty}}\right)^2}{(1+\nu t)^3}+C\nu^{-1}\norm{\bigl(u^\perp_{\neq}(t)\otimes u^\perp_{\neq}(t)\bigr)_{0}}{L^2}^2.
		\end{equation*}
		Multiplying by $(1+\nu t)^3$ gives
		\begin{equation*}
			\frac{d}{dt}\bigl((1+\nu t)^3\norm{u^\perp_{0}(t)}{L^2}^2\bigr)\leq C\nu \left(\sup_{0\leq s\leq T}\norm{u^\perp_{0}(s)}{\dot B^{-2}_{2,\infty}}\right)^2+C\nu^{-1}(1+\nu t)^3\norm{\bigl(u^\perp_{\neq}(t)\otimes u^\perp_{\neq}(t)\bigr)_{0}}{L^2}^2.
		\end{equation*}
		Hence
		\begin{equation}
\begin{aligned}
\norm{u^\perp_{0}(t)}{L^2}^2&\leq \frac{\norm{u^\perp_{0,\rm in}}{L^2}^2}{(1+\nu t)^3}+\frac{C\left(\sup_{0\leq s\leq T}\norm{u^\perp_{0}(s)}{\dot B^{-2}_{2,\infty}}\right)^2\nu t}{(1+\nu t)^3}\\&\quad+\frac{C}{\nu}(1+\nu t)^{-3}\int_{0}^t(1+\nu s)^3\norm{\bigl(u^\perp_{\neq}(s)\otimes u^\perp_{\neq}(s)\bigr)_{0}}{L^2}^2\,ds.
\end{aligned}
\label{eq:E-decay}
\end{equation}
		Integrating the first two terms of \eqref{eq:E-decay} over $[0,T]$ gives $C\nu^{-1}(\norm{u^\perp_{0,\rm in}}{L^2}^2+\left(\sup_{0\leq s\leq T}\norm{u^\perp_{0}(s)}{\dot B^{-2}_{2,\infty}}\right)^2)$. For the last term, Fubini yields
		\begin{align}
			&\frac{C}{\nu}\int_{0}^T(1+\nu t)^{-3}\int_{0}^t(1+\nu s)^3\norm{\bigl(u^\perp_{\neq}(s)\otimes u^\perp_{\neq}(s)\bigr)_{0}}{L^2}^2\,ds\,dt\nonumber\\
			&\qquad=\frac{C}{\nu}\int_{0}^T(1+\nu s)^3\norm{\bigl(u^\perp_{\neq}(s)\otimes u^\perp_{\neq}(s)\bigr)_{0}}{L^2}^2\int_s^T(1+\nu t)^{-3}\,dt\,ds\nonumber\\
			&\qquad\leq\frac{C}{\nu^2}\int_{0}^T(1+\nu s)\norm{\bigl(u^\perp_{\neq}(s)\otimes u^\perp_{\neq}(s)\bigr)_{0}}{L^2}^2\,ds.\notag
		\end{align}
		This proves \eqref{eq:fourier-splitting}.
	\end{proof}
	
    We then prove \Cref{thm:roll-main}.
	
	\begin{proof}[Proof of \Cref{thm:roll-main}]

		By \Cref{lem:B-prop} and \eqref{eq:stress-L1-main},
		\begin{equation}
			\sup_{0\leq s\leq T}\norm{u^\perp_{0}(s)}{\dot B^{-2}_{2,\infty}}\leq\varepsilon+C\int_{0}^T\|u^\perp_{0}(t)\|_{L^2}^2\,dt+M\nu^{-1/3}\varepsilon^2.
			\label{eq:BT-from-Y}
		\end{equation}
		Combining \Cref{lem:fourier-splitting} with \eqref{eq:stress-L2-main} gives
		\begin{equation}
			\int_{0}^T\|u^\perp_{0}(t)\|_{L^2}^2\,dt\leq\frac{C}{\nu}(\varepsilon^2+\left(\sup_{0\leq s\leq T}\norm{u^\perp_{0}(s)}{\dot B^{-2}_{2,\infty}}\right)^2)+CM\nu^{-3}\varepsilon^4.
			\label{eq:Y-from-B}
		\end{equation}
		Let $K>1$ and suppose $\int_{0}^T\|u^\perp_{0}(t)\|_{L^2}^2\,dt\leq K\nu^{-1}\varepsilon^2$.  Since $\varepsilon\leq\nu$ implies
$\varepsilon\nu^{-1/3}\leq\varepsilon^{2/3}$,
\eqref{eq:BT-from-Y} yields
		\begin{equation*}
			\sup_{0\leq s\leq T}\norm{u^\perp_{0}(s)}{\dot B^{-2}_{2,\infty}}\leq\left(1+CK\frac{\varepsilon}{\nu}+M\varepsilon^{2/3}\right)\varepsilon.
		\end{equation*}
		After fixing $K$ and taking $c_{0}$ sufficiently small, $\sup_{0\leq s\leq T}\norm{u^\perp_{0}(s)}{\dot B^{-2}_{2,\infty}}\leq2\varepsilon$.  Substituting in \eqref{eq:Y-from-B},
		\begin{equation*}
			\int_{0}^T\|u^\perp_{0}(t)\|_{L^2}^2\,dt\leq\left(C_1+CM\frac{\varepsilon^{2}}{\nu^{2}}\right)\nu^{-1}\varepsilon^2.
		\end{equation*}
		Take $K=2C_1+2$ and then reduce $c_{0}$ so that the second term is at most $\nu^{-1}\varepsilon^2$.  The resulting bound strictly improves the bootstrap.  Continuity gives
		$\int_{0}^T\|u^\perp_{0}(t)\|_{L^2}^2\,dt\leq C\nu^{-1}\varepsilon^2$ for every $T$, which is \eqref{eq:roll-L2-main}.  Inserting this estimate in \eqref{eq:BT-from-Y} proves \eqref{eq:roll-B-main}.
	\end{proof}

\subsection{Roll-to-streak superposition}
\label{sec:streak}

The additional time integration from the roll to the streak produces a heat kernel with a time factor. We first record its endpoint bounds and then derive the exact Duhamel identity.
	
	\begin{Lem}
		\label{lem:initial-liftup}
		Let $N\geq2$. For every $f\in H^{N-2}(\R^2)\cap\dot B^{-2}_{2,\infty}(\R^2)$,
		\begin{equation}
			\sup_{t>0}\norm{t\ee^{\nu t\DeltaP}f}{H^N}\leq C_N\nu^{-1}\left(\norm{f}{\dot B^{-2}_{2,\infty}}+\norm{f}{H^{N-2}}\right).
			\label{eq:initial-liftup}
		\end{equation}
	\end{Lem}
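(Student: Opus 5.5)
Since $H^N$ is defined through the multiplier $\langle\xi\rangle^{N}=(1+|\xi|^2)^{N/2}$, we split the norm into a low-frequency and a high-frequency part. With $\widehat{f}$ the transverse Fourier transform, Plancherel gives
\begin{equation*}
 \norm{t\ee^{\nu t\DeltaP}f}{H^N}^2
 =\int_{\R^2}(1+|\xi|^2)^N t^2\ee^{-2\nu t|\xi|^2}|\widehat f(\xi)|^2\,d\xi .
\end{equation*}
We divide the integral into the regions $|\xi|\leq1$ and $|\xi|>1$. On $|\xi|\leq1$ the weight $(1+|\xi|^2)^N$ is at most $2^N$. This part is therefore bounded by $2^N\norm{t\ee^{\nu t\DeltaP}f}{L^2}^2$. \Cref{lem:heat-char}, specifically \eqref{eq:heat-liftup}, then bounds it by $C_N\nu^{-2}\norm{f}{\dot B^{-2}_{2,\infty}}^2$, uniformly in $t>0$. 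This is the only place where the endpoint norm is used.

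On $|\xi|>1$ we have $(1+|\xi|^2)^N\leq 2^N|\xi|^4(1+|\xi|^2)^{N-2}$. The integrand is therefore bounded by
\begin{equation*}
 2^N\bigl(t|\xi|^2\ee^{-\nu t|\xi|^2}\bigr)^2(1+|\xi|^2)^{N-2}|\widehat f(\xi)|^2 .
\end{equation*}
The elementary bound $\sup_{a>0}a\ee^{-a}=\ee^{-1}$, applied with $a=\nu t|\xi|^2$, gives $t|\xi|^2\ee^{-\nu t|\xi|^2}\leq(\ee\nu)^{-1}$. This is the same computation as in \eqref{eq:rho-sing}. The high-frequency contribution is then bounded by $C_N\nu^{-2}\norm{f}{H^{N-2}}^2$, again uniformly in $t$.

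Adding the two contributions, taking square roots, and then taking the supremum over $t>0$ gives \eqref{eq:initial-liftup}. The argument has no real obstacle. The only point needing care is that the low-frequency region must be handled by the Besov characterization and not by a pointwise multiplier bound. On $|\xi|\leq1$ the symbol $t\ee^{-\nu t|\xi|^2}$ is unbounded in $t$ as $\xi\to0$, and \Cref{lem:heat-char} is exactly what controls it. That lemma is stated for $f\in L^2$, which holds here because $N\geq2$ gives $H^{N-2}\subset L^2$. The high-frequency region uses only the two derivatives gained from the heat factor, which accounts for the loss from $H^N$ to $H^{N-2}$.
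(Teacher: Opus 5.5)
Your proof is correct and takes essentially the same route as the paper. The endpoint piece is handled by \eqref{eq:heat-liftup} from \Cref{lem:heat-char}, and the remaining piece by the bound $s\ee^{-s}\leq C$ with $s=\nu t|\xi|^2$, which trades the factor $t$ for two derivatives; the only cosmetic difference is that the paper splits the norm as $\|h\|_{L^2}+\||D_\perp|^Nh\|_{L^2}$, whereas you split frequency space at $|\xi|=1$.
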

	
	\begin{proof}
		By \Cref{lem:heat-char}, the $L^2$ component is bounded by $C\nu^{-1}\norm{f}{\dot B^{-2}_{2,\infty}}$. For the top homogeneous derivative, Plancherel and the elementary bound $s\ee^{-s}\leq C$ give
		\begin{equation*}
			\norm{|D_\perp|^N t\ee^{\nu t\DeltaP}f}{L^2}=\nu^{-1}\norm{(\nu t|D_\perp|^2)\ee^{\nu t\DeltaP}|D_\perp|^{N-2}f}{L^2}\leq C\nu^{-1}\norm{f}{\dot H^{N-2}}.
		\end{equation*}
		Since $\norm{h}{H^N}\lesssim\norm{h}{L^2}+\norm{|D_\perp|^Nh}{L^2}$, \eqref{eq:initial-liftup} follows.
	\end{proof}
	
	\begin{Lem}
		\label{lem:superposition}
		Let $N\in\mathbb N_0$ and define
		\begin{equation*}
			\mathcal S_\nu F(t):=\int_{0}^t(t-s)\ee^{\nu(t-s)\DeltaP}\PP\nablaP\cdot F(s)\,ds.
		\end{equation*}
		Then
		\begin{equation}
			\norm{\mathcal S_\nu F}{L^\infty(0,T;H^N)}\leq C_N\nu^{-1}\left(\norm{F}{L^1(0,T;L^1)}+\mathbf 1_{\{N\geq1\}}\norm{F}{L^1(0,T;H^{N-1})}\right).
			\label{eq:Sbound}
		\end{equation}
	\end{Lem}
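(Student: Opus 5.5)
The plan is to move the supremum in $t$ inside the time integral with Minkowski's inequality, which reduces \eqref{eq:Sbound} to a fixed-time bound on the kernel $\tau\ee^{\nu\tau\DeltaP}\PP\nablaP\cdot$ that holds uniformly in $\tau>0$. For each $t\in[0,T]$,
\[
 \|\mathcal S_\nu F(t)\|_{H^N}\le\int_0^t\bigl\|(t-s)\ee^{\nu(t-s)\DeltaP}\PP\nablaP\cdot F(s)\bigr\|_{H^N}\,ds .
\]
It therefore suffices to show, for every $\tau>0$ and every $G\in L^1\cap H^{N-1}$,
\[
 \|\tau\ee^{\nu\tau\DeltaP}\PP\nablaP\cdot G\|_{H^N}\le C_N\nu^{-1}\bigl(\|G\|_{L^1}+\mathbf 1_{\{N\ge1\}}\|G\|_{H^{N-1}}\bigr).
\]
Integrating this in $s$ then gives the lemma, with constants independent of $T$ and $\nu$.

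\emph{Low-order ($L^2$) part.} Write $f:=\PP\nablaP\cdot G$. By \Cref{lem:div-B}, $\|f\|_{\dot B^{-2}_{2,\infty}}\lesssim\|G\|_{L^1}$. The heat characterization \eqref{eq:heat-liftup} then gives $\|\tau\ee^{\nu\tau\DeltaP}f\|_{L^2}\lesssim\nu^{-1}\|G\|_{L^1}$. The only point needing care is that \Cref{lem:heat-char} is stated for $f\in L^2$, and $f$ need not be in $L^2$ at low frequency. I would avoid this by rerunning the first half of that proof directly in dyadic blocks. The bound $\tau^2\sum_j \ee^{-c\nu\tau2^{2j}}\|\dot\Delta_jf\|_{L^2}^2\lesssim\nu^{-2}\sup_j2^{-4j}\|\dot\Delta_jf\|_{L^2}^2$ uses only the dyadic Besov quantity, which \Cref{lem:div-B} already controls. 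Alternatively, I would argue by density for $G\in L^1\cap L^2$.

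\emph{Top-order part ($N\ge1$).} Use $\|h\|_{H^N}\lesssim\|h\|_{L^2}+\||D_\perp|^Nh\|_{L^2}$. Since $\PP$ is an order-zero bounded multiplier, $\||D_\perp|^N\PP\nablaP\cdot G\|$ costs $N+1$ derivatives of $G$. Absorbing two of them with $\nu\tau|\xi|^2\ee^{-\nu\tau|\xi|^2}\le C$, as in \Cref{lem:initial-liftup}, gives
\[
 \||D_\perp|^N\tau\ee^{\nu\tau\DeltaP}\PP\nablaP\cdot G\|_{L^2}\le C\nu^{-1}\||D_\perp|^{N-1}G\|_{L^2}\le C\nu^{-1}\|G\|_{H^{N-1}} .
\]
When $N=0$, only the $L^2$ part is present, which explains the indicator in \eqref{eq:Sbound}.

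The only mild obstacle is the functional-analytic justification of the low-frequency step. The divergence $\PP\nablaP\cdot G$ of an $L^1$ tensor may fail to lie in $L^2$ near the origin (compare the example after \Cref{prop:linear-criticality}). However, $\tau\ee^{\nu\tau\DeltaP}\PP\nablaP\cdot G$ lies in $L^2$ for each fixed $\tau>0$, because $|\widehat G|\le\|G\|_{L^1}$ and $|\xi|\,\ee^{-\nu\tau|\xi|^2}\in L^2_\xi$. So the dyadic computation above is legitimate as it stands, and no density argument is needed.
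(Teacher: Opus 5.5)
Your proposal is correct. The Minkowski reduction to a uniform-in-$\tau$ fixed-time bound, and the top-order step that absorbs two derivatives with $\nu\tau|\xi|^2\ee^{-\nu\tau|\xi|^2}\le C$, are exactly what the paper does.

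The difference is in the $L^2$ part. The paper never passes through \Cref{lem:div-B} or the Besov norm. It bounds the symbol of $\ee^{\nu\tau\DeltaP}\PP\nablaP\cdot$ by $C|\xi|\ee^{-\nu\tau|\xi|^2}$, uses $|\widehat F(\xi)|\le\|F\|_{L^1}$, and computes $\int_{\R^2}|\xi|^2\ee^{-2\nu\tau|\xi|^2}\,d\xi=\tfrac{\pi}{4}(\nu\tau)^{-2}$. This gives $\|\tau\ee^{\nu\tau\DeltaP}\PP\nablaP\cdot F\|_{L^2}\le C\nu^{-1}\|F\|_{L^1}$ in one line. It is precisely the observation in your last paragraph: had you computed that $L^2_\xi$ norm instead of only noting it is finite, the dyadic detour would be unnecessary.

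Your route is nonetheless valid. The Bernstein step in \Cref{lem:div-B} needs only $F\in L^1$. The almost-orthogonality bound $\|g\|_{L^2}^2\lesssim\sum_j\|\dot\Delta_jg\|_{L^2}^2$, applied to $g=\tau\ee^{\nu\tau\DeltaP}f$, uses only the dyadic quantity $\sup_j2^{-2j}\|\dot\Delta_jf\|_{L^2}$, so no a priori $L^2$ membership of $f$ is required. What your version buys is conceptual. It exhibits the estimate as the composition of two facts: the projected divergence of an $L^1$ tensor lies in $\dot B^{-2}_{2,\infty}$, and that space controls $\sup_\tau\tau\|\ee^{\nu\tau\DeltaP}\cdot\|_{L^2}$. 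This matches the paper's narrative, while the paper's version is shorter and self-contained.

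One correction to your commentary. The obstruction to $\PP\nablaP\cdot G\in L^2$ is at high frequency, not low: $|\widehat{\PP\nablaP\cdot G}(\xi)|\le|\xi|\,\|G\|_{L^1}$ is square integrable near the origin in two dimensions. The example after \Cref{prop:linear-criticality} concerns the failure of $\dot H^{-2}$, and its $g$ does lie in $L^2$. For the same reason, density in $L^1\cap L^2$ would not place $f$ in $L^2$ as \Cref{lem:heat-char} requires; you would need something like $G\in L^1\cap H^1$. None of this affects your actual argument, which does not rely on that alternative.
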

	
	\begin{proof}
		Fix $s$ and interpret $\PP\nablaP\cdot F(s)$ in the sense of distributions. We do not invoke \Cref{lem:heat-char} here, because $F(s)\in L^1$ does not imply $\PP\nablaP\cdot F(s)\in L^2$. Instead, the Fourier multiplier of $\ee^{\nu\tau\DeltaP}\PP\nablaP\cdot$ has size at most $C|\xi|\ee^{-\nu\tau|\xi|^2}$. Since $|\widehat F(\xi)|\leq\|F(s)\|_{L^1}$, Plancherel gives, in two dimensions,
		\begin{align}
			\norm{\ee^{\nu\tau\DeltaP}\PP\nablaP\cdot F(s)}{L^2}^2
			&\leq C\|F(s)\|_{L^1}^2\int_{\R^2}|\xi|^2\ee^{-2\nu\tau|\xi|^2}\,d\xi
			\nonumber\\
			&\leq C(\nu\tau)^{-2}\|F(s)\|_{L^1}^2.
			\notag
		\end{align}
		Consequently,
		\begin{equation}
			\sup_{\tau>0}\norm{\tau\ee^{\nu\tau\DeltaP}\PP\nablaP\cdot F(s)}{L^2}\leq C\nu^{-1}\norm{F(s)}{L^1}.
			\label{eq:S-low}
		\end{equation}
		If $N\geq1$, the top derivative satisfies
		\begin{align}
			\norm{|D_\perp|^N\tau\ee^{\nu\tau\DeltaP}\PP\nablaP\cdot F(s)}{L^2}&\lesssim \norm{\tau|D_\perp|^{N+1}\ee^{\nu\tau\DeltaP}F(s)}{L^2}\nonumber\\
			&\leq C\nu^{-1}\norm{F(s)}{\dot H^{N-1}},
			\notag
		\end{align}
		again because $\nu\tau|\xi|^2\ee^{-\nu\tau|\xi|^2}$ is uniformly bounded.  For $N=0$ only \eqref{eq:S-low} is used.  Combining the estimates gives
		\begin{equation*}
			\sup_{\tau>0}\norm{\tau\ee^{\nu\tau\DeltaP}\PP\nablaP\cdot F(s)}{H^N}\leq C_N\nu^{-1}\left(\norm{F(s)}{L^1}+\mathbf 1_{\{N\geq1\}}\norm{F(s)}{H^{N-1}}\right).
		\end{equation*}
		Minkowski's inequality then implies
		\begin{align}
			\sup_{0\leq t\leq T}\norm{\mathcal S_\nu F(t)}{H^N}&\leq\int_{0}^T\sup_{\tau>0}\norm{\tau\ee^{\nu\tau\DeltaP}\PP\nablaP\cdot F(s)}{H^N}\,ds,\notag
		\end{align}
		which proves \eqref{eq:Sbound}.
	\end{proof}
	
	\begin{Lem}
		\label{lem:direct-Duhamel}
		Let $N\geq0$ and define
		\begin{equation*}
			\mathcal D_\nu G(t):=\int_{0}^t\ee^{\nu(t-s)\DeltaP}\nablaP\cdot G(s)\,ds.
		\end{equation*}
		Then
		\begin{equation*}
			\norm{\mathcal D_\nu G}{L^\infty(0,T;H^N)}\leq C\nu^{-1/2}\norm{G}{L^2(0,T;H^N)}.
		\end{equation*}
	\end{Lem}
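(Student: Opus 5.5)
The estimate is the standard maximal-regularity bound for the heat semigroup, applied to a divergence-form source. I would prove it by a $TT^*$-free energy argument in Fourier variables; no low-frequency control is needed, because the divergence gains exactly the factor of $|\xi_\perp|$ that the time integration spends.

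Fix $t\in[0,T]$ and let $\xi_\perp$ denote the transverse frequency. The Fourier multiplier of $\ee^{\nu(t-s)\DeltaP}\nablaP\cdot$ has size at most $|\xi_\perp|\ee^{-\nu(t-s)|\xi_\perp|^2}$. Hence, pointwise in $\xi_\perp$,
\[
 |\widehat{\mathcal D_\nu G}(t,\xi_\perp)|
 \leq\int_0^t|\xi_\perp|\ee^{-\nu(t-s)|\xi_\perp|^2}|\widehat G(s,\xi_\perp)|\,ds.
\]
Apply Cauchy--Schwarz in $s$ and use
\[
 \int_0^t|\xi_\perp|^2\ee^{-2\nu(t-s)|\xi_\perp|^2}\,ds\leq\frac{1}{2\nu}.
\]
This gives
\[
 |\widehat{\mathcal D_\nu G}(t,\xi_\perp)|^2
 \leq\frac{1}{2\nu}\int_0^t|\widehat G(s,\xi_\perp)|^2\,ds.
\]

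Next, multiply by $\langle\xi_\perp\rangle^{2N}$ and integrate in $\xi_\perp$. Plancherel and Fubini then yield
\[
 \|\mathcal D_\nu G(t)\|_{H^N}^2\leq\frac{1}{2\nu}\|G\|_{L^2(0,t;H^N)}^2.
\]
Taking the supremum over $t\in[0,T]$ gives the claim with $C=2^{-1/2}$. The bound is uniform in $N$, which is consistent with the statement.

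The only point needing care is that the multiplier bound is genuinely uniform down to $\xi_\perp=0$. This holds because the time integral of $|\xi_\perp|^2\ee^{-2\nu\tau|\xi_\perp|^2}$ is bounded by $(2\nu)^{-1}$ independently of $|\xi_\perp|$. For this reason, unlike in \Cref{lem:superposition}, no $L^1$ or Besov information is required.

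For rigor with $G\in L^2(0,T;H^N)$, I would verify the bound first for Schwartz-in-space, continuous-in-time $G$. Density then extends it to the general case, with $\mathcal D_\nu G$ understood as the $L^\infty_tH^N$ limit. I expect no real obstacle here; the argument is a direct computation.
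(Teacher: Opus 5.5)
Your proof is correct and is essentially the paper's argument: Fourier transform in $x_\perp$, Cauchy--Schwarz in time using $\int_0^t|\xi_\perp|^2\ee^{-2\nu(t-s)|\xi_\perp|^2}\,ds\leq(2\nu)^{-1}$, then weighting by $\langle\xi_\perp\rangle^{2N}$ and applying Plancherel, with a bound uniform in $t$.
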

	
	\begin{proof}
		Taking the Fourier transform in $x_\perp=(y,z)$ and using Cauchy--Schwarz in time,
		\begin{align}
			\abs{\widehat{\mathcal D_\nu G}(t,\xi)}&\leq\left(\int_{0}^t|\xi|^2\ee^{-2\nu(t-s)|\xi|^2}\,ds\right)^{1/2}\left(\int_{0}^t|\widehat G(s,\xi)|^2\,ds\right)^{1/2}\nonumber\\
			&\leq (2\nu)^{-1/2}\left(\int_{0}^T|\widehat G(s,\xi)|^2\,ds\right)^{1/2}.\notag
		\end{align}
		Multiply by $\langle\xi\rangle^N$, square, integrate in $\xi$, and use Plancherel. The resulting bound is uniform in $t$.
	\end{proof}

Substituting the roll equation into the streak equation makes the extra time integration explicit.
	
	The roll equation gives
	\begin{equation}
		u^\perp_{0}(t)=\ee^{\nu t\DeltaP}u^\perp_{0,\rm in}-\int_{0}^t\ee^{\nu(t-s)\DeltaP}\PP\nablaP\cdot \bigl(u^\perp_{0}\otimes u^\perp_{0}+(u^\perp_{\neq}\otimes u^\perp_{\neq})_{0}\bigr)(s)\,ds.
		\label{eq:R-Duhamel}
	\end{equation}
	The streak equation gives
	\begin{equation}
		u^1_{0}(t)=\ee^{\nu t\DeltaP}u^1_{0,\rm in}-\int_{0}^t\ee^{\nu(t-s)\DeltaP}u^2_{0}(s)\,ds-\int_{0}^t\ee^{\nu(t-s)\DeltaP}\nablaP\cdot \bigl(u^\perp_{0}u^1_{0}+(u^\perp_{\neq}u^1_{\neq})_{0}\bigr)(s)\,ds.
		\label{eq:a-Duhamel-first}
	\end{equation}
	Substituting the second component of \eqref{eq:R-Duhamel} into \eqref{eq:a-Duhamel-first} and using the semigroup property gives the exact identity below.
	
	\begin{Lem}
		\label{lem:superposition-identity}
		For every $t\in[0,T]$,
		\begin{align}
			u^1_{0}(t)={}&\ee^{\nu t\DeltaP}u^1_{0,\rm in}-t\ee^{\nu t\DeltaP}u^2_{0,\rm in}\nonumber\\
			&+\int_{0}^t(t-s)\ee^{\nu(t-s)\DeltaP}\left[\PP\nablaP\cdot \bigl(u^\perp_{0}\otimes u^\perp_{0}+(u^\perp_{\neq}\otimes u^\perp_{\neq})_{0}\bigr)(s)\right]_y\,ds\nonumber\\
			&-\int_{0}^t\ee^{\nu(t-s)\DeltaP}\nablaP\cdot \bigl(u^\perp_{0}u^1_{0}+(u^\perp_{\neq}u^1_{\neq})_{0}\bigr)(s)\,ds,
			\label{eq:superposition-identity}
		\end{align}
		where $[\cdot]_y$ denotes the $y$ component of a transverse vector field, corresponding to the wall-normal roll component $u^2_{0}$.
	\end{Lem}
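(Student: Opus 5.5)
The identity follows by substituting the second component of the roll Duhamel formula \eqref{eq:R-Duhamel} into the lift-up integral of \eqref{eq:a-Duhamel-first}, then simplifying with the semigroup property and Fubini.

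\emph{Step 1: the linear part.} For the linear contribution, write
\[
 -\int_0^t\ee^{\nu(t-s)\DeltaP}\ee^{\nu s\DeltaP}u^2_{0,\rm in}\,ds
 =-\int_0^t\ee^{\nu t\DeltaP}u^2_{0,\rm in}\,ds
 =-t\ee^{\nu t\DeltaP}u^2_{0,\rm in}.
\]
This is the linear lift-up term in \eqref{eq:superposition-identity}.

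\emph{Step 2: the nonlinear part.} Set $N(\sigma):=\PP\nablaP\cdot\bigl(u^\perp_{0}\otimes u^\perp_{0}+(u^\perp_{\neq}\otimes u^\perp_{\neq})_{0}\bigr)(\sigma)$. The nonlinear part of $-\int_0^t\ee^{\nu(t-s)\DeltaP}u^2_{0}(s)\,ds$ then equals
\[
 \int_0^t\int_0^s\ee^{\nu(t-s)\DeltaP}\ee^{\nu(s-\sigma)\DeltaP}[N(\sigma)]_y\,d\sigma\,ds
 =\int_0^t\int_\sigma^t ds\;\ee^{\nu(t-\sigma)\DeltaP}[N(\sigma)]_y\,d\sigma .
\]
Here the semigroup property gives $\ee^{\nu(t-\sigma)\DeltaP}$, which is independent of $s$, and Fubini exchanges the order of integration. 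The inner integral equals $t-\sigma$. This produces the double-Duhamel term with the weight $(t-s)$ after renaming $\sigma$ as $s$. The last integral of \eqref{eq:a-Duhamel-first} carries over unchanged.

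\emph{Step 3: justification.} The only delicate point is justifying Fubini and the semigroup composition when $N(\sigma)$ is merely a distribution. For a smooth solution on $[0,T]$ (e.g.\ the restarted $H^4$ solution), the stresses lie in $L^\infty_tH^3\cap L^\infty_tL^1$. Hence $N(\sigma)\in L^\infty_t H^2$, and all integrands are Bochner integrable in $L^2$, so the interchange is legitimate. Taking the $y$ component commutes with the scalar heat semigroup, since $\ee^{\nu\tau\DeltaP}$ acts componentwise.

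For less regular data, I would instead compare Fourier transforms pointwise in $\xi_\perp$. The bound $|\widehat N(\sigma,\xi)|\lesssim|\xi|\|F(\sigma)\|_{L^1}$ together with the heat factor $\ee^{-\nu(t-\sigma)|\xi|^2}$ gives absolute integrability on $\{0\le\sigma\le s\le t\}$ for each fixed $\xi$. This makes the exchange elementary, and the bound \eqref{eq:S-low} of \Cref{lem:superposition} shows the resulting term lies in $L^2$.

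I expect no real obstacle; this regularity bookkeeping is the only step requiring care.
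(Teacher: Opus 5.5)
Your proposal is correct and follows the paper's proof: you substitute the $y$-component of the roll Duhamel formula \eqref{eq:R-Duhamel} into \eqref{eq:a-Duhamel-first}, use the semigroup property to get $t\ee^{\nu t\DeltaP}u^2_{0,\rm in}$, and apply Fubini to produce the $(t-s)$ weight, with the signs coming out correctly. Your Step 3 on justifying the interchange is a harmless addition that the paper leaves implicit.
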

	
	\begin{proof}
		From \eqref{eq:R-Duhamel},
		\begin{equation*}
			u^2_{0}(s)=\ee^{\nu s\DeltaP}u^2_{0,\rm in}-\int_{0}^s\ee^{\nu(s-\sigma)\DeltaP}\left[\PP\nablaP\cdot \bigl(u^\perp_{0}\otimes u^\perp_{0}+(u^\perp_{\neq}\otimes u^\perp_{\neq})_{0}\bigr)(\sigma)\right]_y\,d\sigma.
		\end{equation*}
		Hence
		\begin{equation*}
			\int_{0}^t\ee^{\nu(t-s)\DeltaP}\ee^{\nu s\DeltaP}u^2_{0,\rm in}\,ds=t\ee^{\nu t\DeltaP}u^2_{0,\rm in}.
		\end{equation*}
		For the double integral, Fubini and the semigroup property give
		\begin{align}
			&\int_{0}^t\ee^{\nu(t-s)\DeltaP}\int_{0}^s\ee^{\nu(s-\sigma)\DeltaP}\left[\PP\nablaP\cdot \bigl(u^\perp_{0}\otimes u^\perp_{0}+(u^\perp_{\neq}\otimes u^\perp_{\neq})_{0}\bigr)(\sigma)\right]_y\,d\sigma\,ds\nonumber\\
			&\qquad=\int_{0}^t\int_\sigma^t\ee^{\nu(t-\sigma)\DeltaP}\left[\PP\nablaP\cdot \bigl(u^\perp_{0}\otimes u^\perp_{0}+(u^\perp_{\neq}\otimes u^\perp_{\neq})_{0}\bigr)(\sigma)\right]_y\,ds\,d\sigma\nonumber\\
			&\qquad=\int_{0}^t(t-\sigma)\ee^{\nu(t-\sigma)\DeltaP}\left[\PP\nablaP\cdot \bigl(u^\perp_{0}\otimes u^\perp_{0}+(u^\perp_{\neq}\otimes u^\perp_{\neq})_{0}\bigr)(\sigma)\right]_y\,d\sigma,\notag
		\end{align}
		Substitution into \eqref{eq:a-Duhamel-first} proves \eqref{eq:superposition-identity}.
	\end{proof}

The estimates above will be applied to the streak energy in Subsection~\ref{sec:zero-closure-final}.

\section{Nonlinear interaction estimates}
\label{sec:interactions-final}
In this section, the nonlinear terms are estimated using the energy functionals defined
in Section~\ref{sec:full-system}.
Velocity bounds are first derived to control products.
The interactions between zero and nonzero modes, the interactions
among nonzero modes, and the corresponding pressure terms
are then estimated.

\subsection{Wave estimates}
\begin{Lem}
		\label{lem:transient-streak-revised}
		For every $0\leq t\leq T$,
		\begin{equation}
			\|u^1_{0}(t)\|_{H^2}
			\leq C\bigl(E_{1}(T)+\frac{\varepsilon}{\nu}\bigr)m_\nu(t).
			\label{eq:transient-streak-bound-revised}
		\end{equation}
		Moreover, for every $c>0$,
		\begin{equation}
			\sup_{t\geq0}m_\nu(t)^2\ee^{-c\nu^{1/3}t}
			\leq C_c\nu^{4/3}.
			\label{eq:mnu-exponential-revised}
		\end{equation}
	\end{Lem}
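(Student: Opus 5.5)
The first bound follows from the fundamental theorem of calculus in time, using the two components of $E_{1}$ together with the restarted low-order data bound. The second is an elementary calculus inequality. We split according to whether $m_\nu(t)=1$ or $m_\nu(t)=\nu(1+t)<1$.

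\emph{Large times.} When $\nu(1+t)\geq1$ we have $m_\nu(t)=1$. Since $H^4\hookrightarrow H^2$, the definition \eqref{eq:A-final} gives
\[
\|u^1_{0}(t)\|_{H^2}\leq\|u^1_{0}\|_{L_t^\infty H^4}\leq E_{1}(T),
\]
which is the claim.

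\emph{Small times.} For general $t$ we write
\[
u^1_{0}(t)=u^1_{0}(0)+\int_0^t\partial_tu^1_{0}(s)\,ds
\]
and estimate the two terms separately.
\begin{itemize}
\item The initial term uses the low-order restart bound \eqref{eq:restarted-low-data-standing}, which gives
\[
\|u^1_{0}(0)\|_{H^2}\leq\|u(0)\|_{H^2}\leq C_{\rm r}\varepsilon=C_{\rm r}\,\nu\,\frac{\varepsilon}{\nu}.
\]
It is important to use this low-order bound here, not the $H^4$ bound of size $\varepsilon/\nu$: only the former carries the factor $\nu$.
\item The time-derivative component of $E_{1}$ gives $\|\partial_tu^1_{0}\|_{L_t^\infty H^2}\leq\nu E_{1}(T)$, so the integral term is at most $\nu tE_{1}(T)$.
\end{itemize}
Adding these,
\[
\|u^1_{0}(t)\|_{H^2}\leq C_{\rm r}\nu\frac{\varepsilon}{\nu}+\nu tE_{1}(T)\leq C\Bigl(E_{1}(T)+\frac{\varepsilon}{\nu}\Bigr)\nu(1+t).
\]
Taking the minimum with the large-time bound yields \eqref{eq:transient-streak-bound-revised}.

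\emph{The exponential bound.} We use $m_\nu(t)^2\leq\nu^2(1+t)^2\leq2\nu^2+2\nu^2t^2$. Substituting $s=\nu^{1/3}t$ gives $\nu^2t^2=\nu^{4/3}s^2$, so
\[
m_\nu(t)^2\ee^{-c\nu^{1/3}t}\leq2\nu^2+2\nu^{4/3}\sup_{s\geq0}s^2\ee^{-cs}\leq C_c\nu^{4/3},
\]
where the last step uses $\nu\leq1$. This proves \eqref{eq:mnu-exponential-revised}.

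No step is a genuine obstacle.
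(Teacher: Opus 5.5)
Your proof is correct and follows the paper's argument. The paper likewise bounds $\|u^1_{0}(t)\|_{H^2}$ by $\min\{E_{1}(T),\,C_{\rm r}\varepsilon+\nu tE_{1}(T)\}$, using the low-order restart bound and the $\nu^{-1}\partial_tu^1_{0}$ component of $E_{1}$. It then proves the weight bound via $m_\nu(t)\leq\nu(1+t)=\nu^{2/3}(\nu^{1/3}+s)$ with $s=\nu^{1/3}t$, of which your splitting $\nu^2(1+t)^2\leq2\nu^2+2\nu^{4/3}s^2$ is just an expanded form.
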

	
	\begin{proof}
Integrate the time derivative and retain the restarted initial size:
\begin{equation*}
 \|u^1_{0}(t)\|_{H^2}
 \leq\min\{E_1(T),\,C_{\rm r}\varepsilon+\nu tE_1(T)\}
 \leq C\bigl(E_1(T)+\varepsilon/\nu\bigr)m_\nu(t).
\end{equation*}
The extra $\varepsilon/\nu$ accounts for the initial value; no
spanwise Poincar\'e estimate enters this argument.
For the weight, $m_\nu(t)\leq\nu(1+t)$ and $s=\nu^{1/3}t$ give
\begin{equation*}
 m_\nu(t)^2\ee^{-c\nu^{1/3}t}
 \leq\nu^{4/3}(\nu^{1/3}+s)^2\ee^{-cs}
 \leq C_c\nu^{4/3}.
\end{equation*}
\end{proof}

	\begin{Lem}
		\label{lem:transient-kappa-revised}
		Assume $E_{1}(T)\leq\delta_{0}$ and recall $\kappa=V_z/V_y$. Then, for every $0\leq t\leq T$,
		\begin{align}
			\|\kappa(t)\|_{H^1}&\leq C\bigl(E_{1}(T)+\frac{\varepsilon}{\nu}\bigr)m_\nu(t),
			\notag\\
			\|\kappa(t)\|_{H^2}&\leq C\bigl(E_{1}(T)+\frac{\varepsilon}{\nu}\bigr)m_\nu(t)^{1/2},
			\label{eq:kappa-H2-transient-revised}\\
			\|\kappa(t)\|_{H^3}&\leq C E_{1}(T).
			\notag
		\end{align}
		Moreover, for every $c>0$,
		\begin{equation}
			\sup_{t\geq0}m_\nu(t)\ee^{-c\nu^{1/3}t}\leq C_c\nu^{2/3}.
			\label{eq:mnu-first-power-revised}
		\end{equation}
		Consequently, if $h$ is $x$-nonzero, then
		\begin{align}
			&\nu^{-1/2}\|\ee^{3\gamma\nu^{1/3}t}\nabla(\kappa h)\|_{L_t^2L_x^2}
			\nonumber\\
			&\qquad\leq C\bigl(E_{1}(T)+\frac{\varepsilon}{\nu}\bigr)
			\nu^{-1/6}\|\ee^{4\gamma\nu^{1/3}t}\nabla h\|_{L_t^2L_x^2},
			\label{eq:kappa-multiplier-gain-revised}
		\end{align}
		and, one exponential level lower,
		\begin{align}
			&\nu^{-1/2}\|\ee^{2\gamma\nu^{1/3}t}\nabla(\kappa h)\|_{L_t^2L_x^2}
			+\nu^{-1/2}\|\ee^{2\gamma\nu^{1/3}t}\kappa\nabla h\|_{L_t^2L_x^2}
			\nonumber\\
			&\qquad\leq C\bigl(E_{1}(T)+\frac{\varepsilon}{\nu}\bigr)
			\nu^{-1/6}\|\ee^{3\gamma\nu^{1/3}t}\nabla h\|_{L_t^2L_x^2}.
			\label{eq:kappa-multiplier-gain-primary}
		\end{align}
		Here and below the zero-order term in a product with $\nabla\kappa$ is absorbed by the $x$-Poincar\'e inequality.
	\end{Lem}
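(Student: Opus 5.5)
The plan is to write $\kappa=V_z/V_y=\partial_zu^1_0/(1+\partial_yu^1_0)$ and treat $\kappa$ as a smooth function of $\nabla_\perp u^1_0$. Under $E_1(T)\le\delta_0$, Sobolev embedding ($H^4(\mathbb R^2)\hookrightarrow W^{2,\infty}$) gives $\|\nabla_\perp u^1_0\|_{W^{2,\infty}}\le CE_1\le1/2$. We then use the algebra and Moser (composition) estimates on $\mathbb R^2$ for $G(a,b)=b/(1+a)$, which satisfies $G(a,0)=0$. This gives
\[
\|\kappa\|_{H^m}\le C\|\nabla_\perp u^1_0\|_{H^m}\le C\|u^1_0\|_{H^{m+1}}
\]
for $m\le3$, with constants depending only on the $L^\infty$ and $W^{1,\infty}$ bounds. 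In particular $\|\kappa\|_{H^3}\le C\|u^1_0\|_{H^4}\le CE_1(T)$, and $\|\kappa\|_{H^1}\le C\|u^1_0\|_{H^2}\le C(E_1+\varepsilon/\nu)m_\nu(t)$ by \eqref{eq:transient-streak-bound-revised}. For the $H^2$ bound we interpolate:
\[
\|\kappa\|_{H^2}\le C\|u^1_0\|_{H^3}\le C\|u^1_0\|_{H^2}^{1/2}\|u^1_0\|_{H^4}^{1/2}\le C\bigl(E_1+\tfrac{\varepsilon}{\nu}\bigr)m_\nu^{1/2}.
\]
The weight bound \eqref{eq:mnu-first-power-revised} follows exactly as in the proof of \eqref{eq:mnu-exponential-revised}: with $s=\nu^{1/3}t$ we have $m_\nu\le\nu^{2/3}(\nu^{1/3}+s)$, and $(\nu^{1/3}+s)e^{-cs}\le C_c$.

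For the multiplier gains we expand $\nabla(\kappa h)=\kappa\nabla h+h\nabla\kappa$ and bound pointwise in time. First, $\|\kappa\nabla h\|_{L^2}\le\|\kappa\|_{L^\infty}\|\nabla h\|_{L^2}\le C\|\kappa\|_{H^2}\|\nabla h\|_{L^2}$. Second, for the term $h\nabla\kappa$, $\nabla\kappa$ depends only on $(y,z)$. We use anisotropic H\"older: $L^4_{y,z}$ on $\nabla\kappa$ (controlled by $\|\kappa\|_{H^2}$ via Ladyzhenskaya) and $L^2_xL^4_{y,z}$ on $h$. Then $\|h\|_{L^2_xL^4_{yz}}\le C\|h\|_{H^1}\le C\|\nabla h\|_{L^2}$, using the $x$-Poincar\'e inequality for the zero-order part, as the statement indicates. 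Hence
\[
\|\nabla(\kappa h)\|_{L^2}\le C\bigl(E_1+\tfrac{\varepsilon}{\nu}\bigr)m_\nu(t)^{1/2}\|\nabla h\|_{L^2}.
\]

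The key step, and the only delicate one, is converting the factor $m_\nu^{1/2}$ into the gain $\nu^{1/3}$ in the exponent of $\nu$ while giving up one level of exponential weight. Write $e^{3\gamma\nu^{1/3}t}=e^{-\gamma\nu^{1/3}t}e^{4\gamma\nu^{1/3}t}$ and apply \eqref{eq:mnu-first-power-revised} with $c=2\gamma$:
\[
m_\nu^{1/2}e^{-\gamma\nu^{1/3}t}=\bigl(m_\nu e^{-2\gamma\nu^{1/3}t}\bigr)^{1/2}\le C\nu^{1/3}.
\]
Taking $L^2_t$ and multiplying by $\nu^{-1/2}$ gives the factor $\nu^{-1/2}\nu^{1/3}=\nu^{-1/6}$, which is \eqref{eq:kappa-multiplier-gain-revised}. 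The lower-level estimate \eqref{eq:kappa-multiplier-gain-primary} follows identically with weights $2\gamma$ and $3\gamma$. There the separate term $\kappa\nabla h$ is the first piece of the product bound above. The main point to check is that only the $H^2$ norm of $\kappa$, with its $m_\nu^{1/2}$ factor, appears. The $H^3$ norm has no time factor and would lose the $\nu^{1/3}$ gain, so the Moser and H\"older steps must stay at regularity $H^2$ on $\kappa$.
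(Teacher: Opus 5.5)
Your proposal is correct and follows the paper's proof essentially step by step. The shared steps are the Moser composition bound $\|\kappa\|_{H^j}\le C\|u^1_0\|_{H^{j+1}}$, the $H^3$ interpolation combined with the transient streak bound, the product bound $\|\nabla(\kappa h)\|_{L^2}+\|\kappa\nabla h\|_{L^2}\le C\|\kappa\|_{H^2}\|\nabla h\|_{L^2}$ (your anisotropic H\"older step for $h\nabla\kappa$ is just an explicit form of the paper's use of \eqref{eq:anis-prod-L2}), and the coefficient gain $m_\nu^{1/2}\ee^{-\gamma\nu^{1/3}t}\le C_\gamma\nu^{1/3}$. One small slip: $H^4(\R^2)$ only places $\nabla_\perp u^1_0$ in $W^{1,\infty}$, not $W^{2,\infty}$, but the Moser estimate needs only the $L^\infty$ bound on $\nabla_\perp u^1_0$, so nothing in the argument is affected.
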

	
	\begin{proof}
By Sobolev composition and $V_y\geq1/2$, on $\R^2$ we have
\begin{equation*}
 \|\kappa\|_{H^j}\leq C\|u^1_{0}\|_{H^{j+1}},\quad j=1,2,3,
 \qquad
 \|u^1_{0}\|_{H^3}^2\leq
 \|u^1_{0}\|_{H^2}\|u^1_{0}\|_{H^4}.
\end{equation*}
Together with \Cref{lem:transient-streak-revised}, these prove
\eqref{eq:kappa-H2-transient-revised}. The one-power version of its
weight calculation proves \eqref{eq:mnu-first-power-revised}.

For the forcing multipliers, \eqref{eq:anis-prod-L2},
$H^2(\R^2)\hookrightarrow L^\infty$, and $h_{0}=0$ give
\begin{equation*}
 \|\nabla(\kappa h)\|_{L^2}+\|\kappa\nabla h\|_{L^2}
 \leq C\|\kappa\|_{H^2}\|\nabla h\|_{L^2}.
\end{equation*}
The coefficient gain is
\begin{equation*}
 \sup_{t\geq0}m_\nu(t)^{1/2}\ee^{-\gamma\nu^{1/3}t}
 \leq C_\gamma\nu^{1/3}.
\end{equation*}
Applying this between weights $3\gamma,4\gamma$ and between
$2\gamma,3\gamma$ gives the two stated multiplier estimates,
with $\nu^{-1/2}\nu^{1/3}=\nu^{-1/6}$.
\end{proof}

The pointwise profiles defined in Section~\ref{sec:full-system} connect the energy bounds to the velocity factors in nonlinear products. We first establish this recovery and then estimate the quadratic wave terms.

	\begin{Lem}
		\label{lem:E6-wave-recovery}
		Assume \eqref{eq:variable-small-assumption}. Then,
		\begin{align}
			\|u^\perp_{\neq}\|_{L^2}+\|D_z^Vu_{\neq}^\perp\|_{L^2}+\|q\|_{H^1}+\|D_z^Vq\|_{L^2}
			&\leq C\mathcal U_{0},
			\label{eq:E6-recovery-low}\\
			\|\nabla u^\perp_{\neq}\|_{L^2}+\|\nabla D_z^Vu_{\neq}^\perp\|_{L^2}+\|q\|_{H^2}+\|D_z^Vq\|_{H^1}
			&\leq C\mathcal U_1,
			\label{eq:E6-recovery-high}\\
			\|\nabla^2u^3_{\neq}\|_{L^2}
			&\leq C\bigl(\mathcal U_1+\nu^{-2/3}\mathcal U_{0}\bigr).
			\label{eq:E6-recovery-W3-second}
		\end{align}
		Moreover,
		\begin{equation}
			\|D_z^V\partial_mu^j_{\neq}\|_{L^2}
			\leq C\mathcal U_1,
			\qquad m\in\{y,z\},\quad j\in\{2,3\}.
			\label{eq:E6-D-commuted}
		\end{equation}
	\end{Lem}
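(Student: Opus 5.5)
The proof is purely elliptic and algebraic, at each fixed time. It uses the $x$-Poincar\'e inequality (every field here has zero streamwise average, so $\|f\|_{L^2}\leq C\|\partial_xf\|_{L^2}$ and likewise for derivatives), the smallness \eqref{eq:variable-small-assumption}, and the definitions \eqref{eq:U0-revised}--\eqref{eq:U1-revised}. Under \eqref{eq:variable-small-assumption}, $\|\kappa\|_{H^3}$ and $\|\nablaP u^1_0\|_{H^3}$ are small, so $\kappa,\nabla\kappa,\nabla^2\kappa$ are small in $L^\infty$ or in $L^4$, with $\nabla^2\kappa$ small in $H^1$.

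\emph{Low-order bound \eqref{eq:E6-recovery-low}.} Poincar\'e gives $\|u^j_{\neq}\|_{L^2}\leq C\|\partial_x^2u^j_{\neq}\|_{L^2}$ and $\|D_z^Vu^j_{\neq}\|_{L^2}\leq C\|\partial_xD_z^Vu^j_{\neq}\|_{L^2}$. For $q=u^2_{\neq}+\kappa u^3_{\neq}$ we have $\|q\|_{H^1}\leq C\|\partial_x\nabla q\|_{L^2}$, which is one of the terms in $\mathcal U_0$. For $D_z^Vq$ we use $D_z^V$ applied to $\partial_x q$ and Poincar\'e. Since $\partial_x$ commutes with $D_z^V$, we get $\|D_z^Vq\|_{L^2}\leq C\|\partial_xD_z^Vq\|_{L^2}\leq C(\|\partial_x\nabla q\|_{L^2}+\|\kappa\|_{L^\infty}\|\partial_x\nabla q\|_{L^2})$.

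\emph{High-order bounds \eqref{eq:E6-recovery-high} and \eqref{eq:E6-D-commuted}.} The same reasoning applies one derivative higher, but two points need care. First, commuting $\nabla$ through $D_z^V$ produces $[\nabla,D_z^V]=-(\nabla\kappa)\partial_y$. Second, $\nabla u^\perp_{\neq}$ is not directly $\partial_x^2$-controlled in the $y,z$ directions. For $\nabla u^j_{\neq}$, Poincar\'e twice gives $\|\nabla u^j\|\leq C\|\nabla\partial_x^2u^j\|$, which is part of $\mathcal U_1$. For $\nabla D_z^Vu^j$, Poincar\'e gives $\|\nabla D_z^V u^j\|\leq C\|\nabla\partial_xD_z^Vu^j\|$, also part of $\mathcal U_1$. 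For $\|q\|_{H^2}\leq C\|\nabla\partial_x\nabla q\|$ this is immediate. For $\|D_z^Vq\|_{H^1}$, we write $\nabla D_z^Vq=D_z^V\nabla q-\nabla\kappa\,\partial_yq$ and apply Poincar\'e in $x$. The commutator is bounded by $\|\nabla\kappa\|_{L^4}\|\partial_x\partial_yq\|_{L^4}$, which is handled by the anisotropic product estimate \eqref{eq:anis-prod-L2} with $h_0=0$ and is small times $\mathcal U_1$. Then \eqref{eq:E6-D-commuted} follows by writing $D_z^V\partial_mu^j=\partial_mD_z^Vu^j+(\partial_m\kappa)\partial_yu^j$, bounding the first term by the previous step and the second by $\|\nabla\kappa\|_{H^2}$ times $\|\nabla u^j\|$ via the same product estimate.

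\emph{Second derivatives of $u^3_{\neq}$, \eqref{eq:E6-recovery-W3-second}.} This is the delicate step, because $\mathcal U$ contains $\Delta u^3_{\neq}$ only through $\nu^{2/3}\|\nabla\Delta u^3_{\neq}\|\leq\mathcal U_1$ and $\nu^{2/3}\|\Delta u^3_{\neq}\|\leq\mathcal U_0$. The plan is to use elliptic regularity on $\R^2\times\T$: $\|\nabla^2u^3_{\neq}\|_{L^2}\leq C\|\Delta u^3_{\neq}\|_{L^2}\leq C\nu^{-2/3}\mathcal U_0$, which is the second term in the bound. Only the $\mathcal U_1$ term then needs to capture any mixed contribution, and it enters trivially since $\mathcal U_1\geq0$.

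The step I expect to need the most care is the commutator bookkeeping in the high-order $D_z^V$ bounds. One must check that every term containing $\nabla\kappa$ or $\nabla^2\kappa$ lands on a factor already in $\mathcal U_1$ and never requires $\partial_y^2u^3_{\neq}$ unweighted. If a $\partial_y^2u^3$ term did appear, I would first try to rewrite it through $q$ and $D_z^V$, for instance via $u^2=q-\kappa u^3$ and $\partial_z=D_z^V+\kappa\partial_y$, before resorting to the $\nu^{-2/3}$ loss.
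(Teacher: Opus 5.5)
Your proposal is correct. For \eqref{eq:E6-recovery-low}, \eqref{eq:E6-recovery-high}, and \eqref{eq:E6-D-commuted} it follows the paper's scheme: streamwise Poincar\'e, the commutator $[D_z^V,\partial_m]f=(\partial_m\kappa)\partial_yf$, and smallness of $\kappa$ in $W^{1,\infty}$.

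Two steps take a different route.

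\emph{The bound on $D_z^Vq$.} You read it off directly from the components $\|\partial_x\nabla q\|_{L^2}$ and $\|\nabla\partial_x\nabla q\|_{L^2}$ of $\mathcal U_0,\mathcal U_1$, via $D_z^Vq=\partial_zq-\kappa\partial_yq$. The paper instead expands $D_z^Vq=D_z^Vu^2_{\neq}+(D_z^V\kappa)u^3_{\neq}+\kappa D_z^Vu^3_{\neq}$ and invokes two-dimensional product estimates. Your route is shorter and uses only $\|\kappa\|_{W^{1,\infty}}\leq C\|\kappa\|_{H^3}$.

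\emph{The bound \eqref{eq:E6-recovery-W3-second}.} The Plancherel identity $\|\nabla^2u^3_{\neq}\|_{L^2}=\|\Delta u^3_{\neq}\|_{L^2}$ on $\T\times\R^2$ gives the whole estimate from $\nu^{-2/3}\mathcal U_0$ alone, so the $\mathcal U_1$ term is superfluous. The paper splits the Hessian direction by direction. Entries containing $\partial_x$ go through the streamwise gap, and entries containing $\partial_z$ go through $\partial_z=D_z^V+\kappa\partial_y$. This isolates $\partial_y^2u^3_{\neq}$ as the only entry that genuinely costs $\nu^{-2/3}$. That extra structure is what the later remark ``the last term uses \eqref{eq:E6-recovery-W3-second} only when a second $y$ derivative falls on $u^3_{\neq}$'' reflects. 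It is not part of the statement, so your argument suffices.

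Two minor points:
\begin{enumerate}
\item The $L^4$ and anisotropic-product detour for $(\nabla\kappa)\partial_yq$ is unnecessary, since $\|(\nabla\kappa)\partial_yq\|_{L^2}\leq\|\nabla\kappa\|_{L^\infty}\|q\|_{H^1}$ already gives a small multiple of $\mathcal U_1$.
\item The concern in your last paragraph does not arise. The only commutator terms in \eqref{eq:E6-recovery-high} and \eqref{eq:E6-D-commuted} are $(\nabla\kappa)\partial_yq$ and $(\partial_m\kappa)\partial_yu^j_{\neq}$. Both involve a single derivative of the wave, so no unweighted $\partial_y^2u^3_{\neq}$ ever appears.
\end{enumerate}
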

	
	\begin{proof}
		Since $(u_{\neq})_{0}=0$, the Poincar\'e inequality gives, for $j=2,3$,
		\begin{equation*}
			\|u^j_{\neq}\|_{L^2}\leq\|\partial_x^2u^j_{\neq}\|_{L^2},
			\qquad
			\|D_z^Vu^j_{\neq}\|_{L^2}\leq\|\partial_xD_z^Vu^j_{\neq}\|_{L^2},
		\end{equation*}
		and the corresponding inequalities after applying one ordinary derivative. Likewise,
		\begin{equation*}
			\|q\|_{H^1}\leq C\|\partial_x\nabla q\|_{L^2},
			\qquad
			\|q\|_{H^2}\leq C\|\nabla\partial_x\nabla q\|_{L^2}.
		\end{equation*}
		These quantities occur in \eqref{eq:U0-revised}--\eqref{eq:U1-revised}.
		
		Noting $q=u^2_{\neq}+\kappa u^3_{\neq}$, we have
		\begin{equation*}
			D_z^Vq=D_z^Vu^2_{\neq}+(D_z^V\kappa)u^3_{\neq}+\kappa D_z^Vu^3_{\neq}.
		\end{equation*}
		The coefficient bounds $\|\kappa\|_{H^3}\leq C\delta_{0}$ and the two-dimensional Sobolev product estimates therefore imply
		\begin{equation*}
			\|D_z^Vq\|_{L^2}\leq C\mathcal U_{0},
			\qquad
			\|D_z^Vq\|_{H^1}\leq C\mathcal U_1.
		\end{equation*}
		This proves \eqref{eq:E6-recovery-low}--\eqref{eq:E6-recovery-high}.
		
		For \eqref{eq:E6-recovery-W3-second}, every second derivative containing $\partial_x$ is controlled by $\mathcal U_1$ through the streamwise gap. Write $\partial_z=D_z^V+\kappa\partial_y$. A second derivative containing $\partial_z$ is reduced to $\nabla D_z^Vu^3_{\neq}$ and terms containing $\kappa$ or $\nabla\kappa$, except for the pure derivative $\partial_y^2u^3_{\neq}$. The latter satisfies
		\begin{equation*}
			\|\partial_y^2u^3_{\neq}\|_{L^2}\leq\|\Delta u^3_{\neq}\|_{L^2}\leq\nu^{-2/3}\mathcal U_{0}.
		\end{equation*}
		Finally,
		\begin{equation*}
			[D_z^V,\partial_m]f=(\partial_m\kappa)\partial_yf,
			\qquad m\in\{y,z\},
		\end{equation*}
		and $\|\partial_m\kappa\|_{L^\infty}\leq C\delta_{0}$, while $\|\partial_yu^j_{\neq}\|_{L^2}+\|\partial_mD_z^Vu^j_{\neq}\|_{L^2}\leq C\mathcal U_1$. This proves \eqref{eq:E6-D-commuted} and completes the proof.
	\end{proof}

	The general streamwise--spanwise bilinear, mean-zero high-order, and tame zero--wave product estimates used in this subsection are collected in \Cref{lem:streamwise-spanwise-bilinear,lem:mean-zero-high-product,lem:tame-zero-wave-high}; see Appendix~\ref{app:anisotropic-tools}.
	
	\begin{Lem}
		\label{lem:velocity-reconstruction-final}
		For every integer $s\geq0$ and every divergence-free $x$-nonzero field $u_{\neq}$,
		\begin{equation}
			\|\nabla^s(\partial_x,\partial_z)\partial_xu_{\neq}\|_{L^2}
			\leq C_s\bigl(\|\nabla^s\Delta u^2_{\neq}\|_{L^2}+\|\nabla^s(\partial_x^2+\partial_z^2)u^3_{\neq}\|_{L^2}\bigr).
			\label{eq:velocity-reconstruction-final}
		\end{equation}
		In particular,
		\begin{align}
			\|u_{\neq}\|_{L^2}&\leq C\Pi_{0},
			&\|\nabla u_{\neq}\|_{L^2}&\leq C(\Pi_{0}+\Pi_1),
			\label{eq:velocity-low-reconstruction}\\
			\|u^2_{\neq}\|_{H^2}&\leq C\Pi_{0},
			&\|u^2_{\neq}\|_{H^3}&\leq C(\Pi_{0}+\Pi_1),
			\label{eq:W2-reconstruction}\\
			\|(\partial_x,\partial_z)u_{\neq}\|_{H^1}&\leq C(\Pi_{0}+\Pi_1).
			\label{eq:good-W-reconstruction}
		\end{align}
		The genuinely bad derivatives of the third component are recorded separately:
		\begin{equation}
			\|\partial_y^2u^3_{\neq}\|_{L^2}\leq C\nu^{-2/3}\mathcal U_{0},
			\qquad
			\|\nabla\partial_y^2u^3_{\neq}\|_{L^2}\leq C\nu^{-2/3}\mathcal U_1.
			\label{eq:bad-W3-profile-reconstruction}
		\end{equation}
		Moreover, every occurrence of $\partial_xq$, $\partial_xD_z^Vu^j_{\neq}$, and one additional derivative thereof is bounded, respectively, by $\mathcal U_{0}$ and $\mathcal U_1$.
	\end{Lem}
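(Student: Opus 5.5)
The whole statement reduces to Fourier-side algebra, using only incompressibility and the streamwise spectral gap. I would work at fixed frequency $(k,\eta,\ell)$ with $k\neq0$, set $\widehat u^j=\widehat u^j_{\neq}(k,\eta,\ell)$ and $|\xi|^2=k^2+\eta^2+\ell^2$, and prove the pointwise multiplier bound
\begin{equation*}
 \bigl(|k|+|\ell|\bigr)|k|\,|\widehat u|
 \leq C\bigl(|\xi|^2|\widehat u^2|+(k^2+\ell^2)|\widehat u^3|\bigr).
\end{equation*}
Multiplying by $|\xi|^s$ and applying Plancherel then gives \eqref{eq:velocity-reconstruction-final} for every $s$ at once. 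This works because $\nabla^s$ commutes with everything and has multiplier bounded above and below by $|\xi|^s$ in each component sense.

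For the pointwise bound I treat the three components separately.
\begin{itemize}
\item The component $\widehat u^2$ is immediate, since $(|k|+|\ell|)|k|\leq 2|\xi|^2$.
\item For $\widehat u^3$, I use $(|k|+|\ell|)|k|\leq 2(k^2+\ell^2)$.
\item For $\widehat u^1$, incompressibility gives $k\widehat u^1=-\eta\widehat u^2-\ell\widehat u^3$. Hence
\begin{equation*}
 (|k|+|\ell|)|k\widehat u^1|\leq (|k|+|\ell|)|\eta||\widehat u^2|+(|k|+|\ell|)|\ell||\widehat u^3|.
\end{equation*}
The first term is at most $C|\xi|^2|\widehat u^2|$ by Cauchy--Schwarz, and the second is at most $2(k^2+\ell^2)|\widehat u^3|$.
\end{itemize}
I expect this third case to be the only point needing care. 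There the derivatives $(\partial_x,\partial_z)$ must be arranged so that the $\widehat u^3$ coefficient carries only $k,\ell$ and never $\eta$. This is exactly why the reconstruction is stated for $(\partial_x,\partial_z)\partial_x$ and not for the full gradient.

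The particular cases follow from the spectral gap $|k|\geq2\pi$.
\begin{itemize}
\item Taking $s=0$ and dividing by $(|k|+|\ell|)|k|\geq 4\pi^2$ gives $\|u_{\neq}\|_{L^2}\leq C\Pi_0$.
\item For $\|\nabla u_{\neq}\|_{L^2}$, I use $|\xi|\leq |\xi|\,|k|(|k|+|\ell|)/(4\pi^2)$ and the $s=1$ estimate, which bound it by $C\Pi_1$.
\item For $u^2_{\neq}$ directly, $\langle\xi\rangle^2\leq C|\xi|^2$ on $k\neq0$, so $\|u^2_{\neq}\|_{H^2}\leq C\|\Delta u^2_{\neq}\|_{L^2}$, and similarly $\|u^2_{\neq}\|_{H^3}\leq C(\Pi_0+\Pi_1)$.
\item The bound \eqref{eq:good-W-reconstruction} is the $s=0,1$ case again, since $|k|\geq2\pi$ lets me drop the extra $\partial_x$.
\end{itemize}

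The bad derivatives come from $\mathcal U_0$ and $\mathcal U_1$ by definition. $\mathcal U_0$ contains $\nu^{2/3}\|\Delta u^3_{\neq}\|_{L^2}$, and $\eta^2\leq|\xi|^2$ gives $\|\partial_y^2u^3_{\neq}\|_{L^2}\leq\|\Delta u^3_{\neq}\|_{L^2}\leq\nu^{-2/3}\mathcal U_0$. One more derivative gives the same bound with $\mathcal U_1$, which contains $\nu^{2/3}\|\nabla\Delta u^3_{\neq}\|_{L^2}$.

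Finally, $\partial_xq$ and $\partial_xD_z^Vu^j_{\neq}$ are controlled as follows.
\begin{itemize}
\item $\|\partial_xq\|_{L^2}\leq\|\partial_x\nabla q\|_{L^2}$ by the streamwise Poincar\'e inequality, and $\partial_xD_z^Vu^j_{\neq}$ appears directly in $\mathcal U_0$.
\item With one additional derivative, $\nabla\partial_xq$ and $\nabla\partial_xD_z^Vu^j_{\neq}$ appear directly in $\mathcal U_1$. If the extra derivative is $\partial_x$, I apply Poincar\'e in the form $\|\partial_x^2f\|\leq\ldots$ as in the proof of \Cref{lem:E6-wave-recovery}.
\end{itemize}
Nothing beyond the definitions \eqref{eq:U0-revised}--\eqref{eq:U1-revised} is needed here.
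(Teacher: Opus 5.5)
Your proposal is correct and follows essentially the same route as the paper: a pointwise Fourier inequality based on $k\widehat u^1=-\eta\widehat u^2-\ell\widehat u^3$, arranged so that only $k,\ell$ multiply $\widehat u^3$, followed by Plancherel, the streamwise gap, and the definitions of $\mathcal U_0,\mathcal U_1$. One small slip: $\nabla\partial_xq$ appears directly in $\mathcal U_0$, not in $\mathcal U_1$ (which contains $\nabla\partial_x\nabla q$), so bounding it by $\mathcal U_1$ needs one more $x$-Poincar\'e step, $\|\nabla\partial_xq\|_{L^2}\leq\|\nabla\partial_x\nabla q\|_{L^2}$.
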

	
	\begin{proof}
The recovery estimate identified above uses only $\partial_xu^1_{\neq}=-\partial_yu^2_{\neq}
-\partial_zu^3_{\neq}$ and pointwise Fourier inequalities, which
hold for real $\ell$. Since $|k|\geq1$ on the $x$-nonzero subspace, the $x$-Poincar\'e inequality can be applied repeatedly. The cases $s=0,1$ of \eqref{eq:velocity-reconstruction-final}, together with $x$-Poincar\'e, give \eqref{eq:velocity-low-reconstruction}; the case $s=1$ gives precisely the $H^1$ good-derivative estimate \eqref{eq:good-W-reconstruction}. The estimate \eqref{eq:W2-reconstruction} follows from the elliptic bound $\|u^2_{\neq}\|_{H^{m+2}}\leq C(\|\Delta u^2_{\neq}\|_{H^m}+\|u^2_{\neq}\|_{L^2})$. No $H^2$ estimate for $(\partial_x,\partial_z)u_{\neq}$ is claimed at the level of $\Pi_{0}+\Pi_1$; such a bound would require one additional differentiated profile.
		
		Finally,
		\begin{equation*}
			\|\partial_y^2u^3_{\neq}\|_{L^2}\leq\|\Delta u^3_{\neq}\|_{L^2},
			\qquad
			\|\nabla\partial_y^2u^3_{\neq}\|_{L^2}\leq\|\nabla\Delta u^3_{\neq}\|_{L^2},
		\end{equation*}
		and the last terms in \eqref{eq:U0-revised}--\eqref{eq:U1-revised} prove \eqref{eq:bad-W3-profile-reconstruction}. The assertions involving $q$ and $D_z^V$ are direct consequences of the definitions of $\mathcal U_{0}$ and $\mathcal U_1$, together with the $x$-Poincar\'e inequality. Notice that no unweighted estimate of the full $H^2$ norm of $u^3_{\neq}$ is asserted; its pure $y$ derivatives are exactly the weak quantities in \eqref{eq:bad-W3-profile-reconstruction}.
	\end{proof}
	
	\begin{Lem}
		\label{lem:weak-wave3-gradient-revised}
		Define
		\begin{equation*}
			\mathcal D_{\neq}(t):=\|\Delta\partial_xu_{\neq}(t)\|_{L^2}+\|\Delta\partial_zu^3_{\neq}(t)\|_{L^2}.
		\end{equation*}
		Then,
		\begin{align}
			\mathcal D_{\neq}(t)
			&\leq C\left(\nu^{-1/3}\Pi_1(t)+\nu^{1/3}\|\nabla\Delta u^3_{\neq}(t)\|_{L^2}\right),
			\label{eq:DW-interpolation-revised}\\
			\|\nabla(u_{\neq}\cdot\nabla u^3_{\neq})(t)\|_{L^2}
			&\leq C\Pi_{0}(t)\left(\mathcal D_{\neq}(t)+\|\Delta u^3_{\neq}(t)\|_{L^2}\right).
			\label{eq:weak-wave3-point-revised}
		\end{align}
		Consequently,
		\begin{equation}
			\nu^{1/3}\|\ee^{4\gamma\nu^{1/3}t}\nabla(u_{\neq}\cdot\nabla u^3_{\neq})\|_{L_t^2L_x^2}
			\leq C\nu^{-1/2}E_{4}(T)^2.
			\label{eq:weak-wave3-gradient-aux}
		\end{equation}
	\end{Lem}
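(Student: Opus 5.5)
For the interpolation bound \eqref{eq:DW-interpolation-revised}, I will work in Fourier variables on the $x$-nonzero subspace, with $|k|\geq 2\pi$. The first component of $\Delta\partial_x u_{\neq}$ is reconstructed from incompressibility, $\partial_x u^1_{\neq}=-\partial_y u^2_{\neq}-\partial_z u^3_{\neq}$. This leaves $\Delta\partial_y u^2_{\neq}$, which is bounded by $\Pi_1$, and $\Delta\partial_z u^3_{\neq}$, which is the same kind of term as the second piece of $\mathcal D_{\neq}$. It therefore suffices to bound $\Delta\partial_x u^3_{\neq}$ and $\Delta\partial_z u^3_{\neq}$, both by the same argument.

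For $m\in\{k,\ell\}$ I will split the frequencies according to whether $|\xi|^2\leq \nu^{-2/3}(k^2+\ell^2)$ or not. On the first region,
\[
|\xi|^2|m|\leq |\xi|\,|\xi|\,(k^2+\ell^2)^{1/2}\leq \nu^{-1/3}|\xi|(k^2+\ell^2),
\]
and the right-hand side is the multiplier of $\nu^{-1/3}\nabla(\partial_x^2+\partial_z^2)u^3_{\neq}$, which is controlled by $\nu^{-1/3}\Pi_1$. On the second region, $|m|\leq(k^2+\ell^2)^{1/2}<\nu^{1/3}|\xi|$, so
\[
|\xi|^2|m|\leq \nu^{1/3}|\xi|^3,
\]
which is the multiplier of $\nu^{1/3}\nabla\Delta u^3_{\neq}$. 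Plancherel on each region and summing gives \eqref{eq:DW-interpolation-revised}.

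For the pointwise product bound \eqref{eq:weak-wave3-point-revised}, I expand
\[
\nabla(u_{\neq}\cdot\nabla u^3_{\neq})=\nabla u_{\neq}\cdot\nabla u^3_{\neq}+u_{\neq}\cdot\nabla\nabla u^3_{\neq}.
\]
These are anisotropic products of $x$-nonzero functions on $\T\times\R^2$. I will use the streamwise--spanwise bilinear lemma from the appendix, which puts $L^\infty$ in a form controlled by $\partial_x,\partial_z$ derivatives plus one full derivative. The aim is to put the weak pure-$y$ derivatives of $u^3_{\neq}$ only into factors measured by $\|\Delta u^3_{\neq}\|_{L^2}$ or $\mathcal D_{\neq}$. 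The other factor is $u_{\neq}$ or $\nabla u_{\neq}$, measured in $L^\infty$-type norms through \eqref{eq:velocity-low-reconstruction}--\eqref{eq:good-W-reconstruction}, and reduced to $\Pi_0$ by the $x$-Poincar\'e inequality. Concretely, in $u_{\neq}\cdot\nabla\nabla u^3_{\neq}$ I bound $\|u_{\neq}\|_{L^\infty}$ anisotropically by $\Pi_0$ and $\nabla^2 u^3_{\neq}$ by $\|\Delta u^3_{\neq}\|_{L^2}$ via elliptic regularity. Here I may need a $\partial_x$ or $\partial_z$ derivative on one factor, which is what $\mathcal D_{\neq}$ provides. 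In $\nabla u_{\neq}\cdot\nabla u^3_{\neq}$ I place the $L^2$ on the factor carrying more derivatives.

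\textbf{Main obstacle.} The hard step is this pairing. The $L^\infty$ norm of $u^1_{\neq}$ needs roughly $H^{3/2+}$ control, and $\Pi_0$ gives only $\|u_{\neq}\|_{L^2}$ together with $(\partial_x,\partial_z)$-derivatives. I will resolve this using $\|f\|_{L^\infty}\lesssim\|f\|_{L^2}^{1/4}\cdots$ in the anisotropic product lemma. The extra derivatives of $u_{\neq}$ are transferred onto the $u^3_{\neq}$ factor, where $\Delta\partial_x u_{\neq}$ and $\Delta\partial_z u^3_{\neq}$ in $\mathcal D_{\neq}$ absorb them. This is exactly why $\mathcal D_{\neq}$ is defined with those good derivatives.

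Finally, for \eqref{eq:weak-wave3-gradient-aux}, I multiply the pointwise bound by $\nu^{1/3}\ee^{4\gamma\nu^{1/3}t}$, split the exponential as $\ee^{2\gamma\nu^{1/3}t}\cdot\ee^{2\gamma\nu^{1/3}t}$, and take $L_t^\infty$ on $\Pi_0$, which is bounded by $E_4$. For the $\mathcal D_{\neq}$ term, inserting \eqref{eq:DW-interpolation-revised} gives $\nu^{1/3}\nu^{-1/3}\Pi_1$, bounded in weighted $L_t^2$ by $\nu^{-1/2}E_4$. The remaining $\nu^{2/3}\|\nabla\Delta u^3_{\neq}\|$ is bounded by $\nu^{-1/2}$ times the $\nu^{2/3}\|\Delta u^3_{\neq}\|_{X_{3\gamma}}$ component of $E_4$. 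The $\|\Delta u^3_{\neq}\|_{L^2}$ term is handled by the enhanced-dissipation part, giving $\nu^{1/3}\nu^{-2/3}\nu^{-1/6}E_4=\nu^{-1/2}E_4$. Altogether this yields $C\nu^{-1/2}E_4^2$.
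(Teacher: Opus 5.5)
Your interpolation bound \eqref{eq:DW-interpolation-revised} is correct and matches the paper; it uses incompressibility for the first component and the split at $|\xi|^2=\nu^{-2/3}(k^2+\ell^2)$. Your time integration is also correct. The gap is the pointwise product bound \eqref{eq:weak-wave3-point-revised}, which is the real content of the lemma. The pairings you state there are false, because $\Pi_0$ controls no $\partial_y$-derivative of $u^1_{\neq}$ or $u^3_{\neq}$. For example, take $u^2_{\neq}=0$, $u^3_{\neq}=\phi(x,z)\psi_n(y)$ with $\psi_n(y)=n^{1/2}\psi(ny)$, and $u^1_{\neq}$ fixed by incompressibility. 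Then $\Pi_0$ stays bounded while $\|u^3_{\neq}\|_{L^\infty}$ and $\|\partial_yu^3_{\neq}\|_{L^2}$ blow up.

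So neither $\|u_{\neq}\|_{L^\infty}$ (in any anisotropic form containing $L^\infty_y$) nor any norm of $\nabla u_{\neq}$ is bounded by $\Pi_0$. The paper only has $\|\nabla u_{\neq}\|_{L^2}\le C(\Pi_0+\Pi_1)$. Consequently:
- In $u^3_{\neq}\partial_y\partial_zu^3_{\neq}$ or $u^1_{\neq}\partial_y\partial_xu^3_{\neq}$, you cannot pair a $\Pi_0$-controlled $L^\infty$ coefficient with $\|\nabla^2u^3_{\neq}\|_{L^2}\lesssim\|\Delta u^3_{\neq}\|_{L^2}$.
- In $\partial_yu^3_{\neq}\,\partial_zu^3_{\neq}$ or $\partial_yu^1_{\neq}\,\partial_xu^3_{\neq}$, one factor is not controlled by $\Pi_0$ at all. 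The other is controlled only in $L^2$ with one extra $(\partial_x,\partial_z)$ derivative, which is not an $L^\infty$-type norm.

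Your ``main obstacle'' paragraph does not supply an argument. It also misses why $\mathcal D_{\neq}$ contains $\|\Delta\partial_xu^1_{\neq}\|_{L^2}$: that is the top norm of the coefficient $u^1_{\neq}$ itself. It is needed precisely because the derivatives cannot all be moved onto $u^3_{\neq}$.

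The missing idea is to split by component, $u_{\neq}\cdot\nabla u^3_{\neq}=\sum_j u^j_{\neq}\partial_ju^3_{\neq}$, and to use a tame estimate in which the top norm may fall on either factor. For $j=1,3$ the derivative on $u^3_{\neq}$ is a good one. \Cref{lem:mean-zero-high-product} with $m=1$, $f=u^j_{\neq}$, $g=\partial_ju^3_{\neq}$ then gives
\begin{equation*}
\|u^j_{\neq}\partial_ju^3_{\neq}\|_{H^1}\le C\bigl(\|\partial_xu^j_{\neq}\|_{H^2}\|\partial_ju^3_{\neq}\|_{L^2}+\|\partial_xu^j_{\neq}\|_{L^2}\|\partial_ju^3_{\neq}\|_{H^2}\bigr)\le C\Pi_0\mathcal D_{\neq},
\end{equation*}
since both $L^2$ norms are bounded by $\Pi_0$ and both $H^2$ norms by $\mathcal D_{\neq}$. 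Mixed terms such as $\partial_yu^3_{\neq}\partial_zu^3_{\neq}$ are absorbed by interpolation inside that lemma, for instance $\|\partial_xu^3_{\neq}\|_{H^1}\|\partial_zu^3_{\neq}\|_{H^1}\le C\Pi_0\mathcal D_{\neq}$. They are not handled by an $L^\infty$--$L^2$ split at fixed levels.

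For $j=2$, use $\|u^2_{\neq}\|_{H^2}\le C\Pi_0$ and $\|u^3_{\neq}\|_{H^2}\le C\|\Delta u^3_{\neq}\|_{L^2}$ with the ordinary three-dimensional embeddings, which gives $C\Pi_0\|\Delta u^3_{\neq}\|_{L^2}$. This is the only term where your ``$L^\infty$ coefficient times $\nabla^2u^3_{\neq}$'' pairing is valid. A mixed-norm argument can also work, for example $L^\infty_{x,z}L^2_y\times L^2_{x,z}L^\infty_y$, but the norms must be assigned separately for each term, and your sketch does not do this.
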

	
	\begin{proof}
The Fourier interpolation identified above proves
\eqref{eq:DW-interpolation-revised} for real $\ell$.
The differentiated transport argument gives
\begin{align*}
 \sum_{j=1,3}\|u^j_{\neq}\partial_ju^3_{\neq}\|_{H^1}
 &\leq C\Pi_0\mathcal D_{\neq},\\
 \|u^2_{\neq}\partial_yu^3_{\neq}\|_{H^1}
 &\leq C\Pi_0\|\Delta u^3_{\neq}\|_{L^2}.
\end{align*}
These use \Cref{lem:mean-zero-high-product,lem:velocity-reconstruction-final},
the continuous-spanwise versions of the product and recovery estimates
used there. Their sum is \eqref{eq:weak-wave3-point-revised}.

It remains to check the time weights in the present energy.
Its weak Laplacian component gives
\begin{equation*}
 \nu^{5/6}\|\ee^{3\gamma\nu^{1/3}t}\Delta u^3_{\neq}\|_{L_t^2L_x^2}
 +\nu^{7/6}\|\ee^{3\gamma\nu^{1/3}t}\nabla\Delta u^3_{\neq}\|_{L_t^2L_x^2}
 \leq CE_4(T).
\end{equation*}
Consequently, \eqref{eq:P-profiles-revised} and the two pointwise
estimates imply
\begin{align*}
 &\nu^{1/3}\|\ee^{4\gamma\nu^{1/3}t}
 \nabla(u_{\neq}\cdot\nabla u^3_{\neq})\|_{L_t^2L_x^2}\\
 &\quad\leq C\|\ee^{2\gamma\nu^{1/3}t}\Pi_0\|_{L_t^\infty}
 \Bigl(\|\ee^{2\gamma\nu^{1/3}t}\Pi_1\|_{L_t^2}
 +\nu^{2/3}\|\ee^{3\gamma\nu^{1/3}t}\nabla\Delta u^3_{\neq}\|_{L_t^2L_x^2}
 +\nu^{1/3}\|\ee^{3\gamma\nu^{1/3}t}\Delta u^3_{\neq}\|_{L_t^2L_x^2}\Bigr)\\
 &\quad\leq C\nu^{-1/2}E_4(T)^2.
\end{align*}
\end{proof}
	
	\begin{Lem}
		\label{lem:quadratic-wave-revised}
        It holds that
		\begin{align}
			&\|u_{\neq}\otimes u_{\neq}\|_{L^2}+\|u_{\neq}\cdot\nabla u_{\neq}\|_{L^2}
			+\|\nabla(u_{\neq}\cdot\nabla u^2_{\neq})\|_{L^2}
			+\|(\partial_x,\partial_z)(u_{\neq}\cdot\nabla u^3_{\neq})\|_{L^2}
			\nonumber\\
			&\qquad+\|\nabla^2p^{\neq\neq}\|_{L^2}+\|\nabla\partial_zp^{\neq\neq}\|_{L^2}
			\leq C\Pi_{0}(t)\Pi_1(t).
			\label{eq:quadratic-M-final}
		\end{align}
		Consequently,
		\begin{align}
			&\|\ee^{4\gamma\nu^{1/3}t}(u_{\neq}\otimes u_{\neq},u_{\neq}\cdot\nabla u_{\neq},
			\nabla(u_{\neq}\cdot\nabla u^2_{\neq}),(\partial_x,\partial_z)(u_{\neq}\cdot\nabla u^3_{\neq}),
			\nabla^2p^{\neq\neq},\nabla\partial_zp^{\neq\neq})\|_{L_t^2L_x^2}
			\nonumber\\
			&\qquad\leq C\nu^{-1/2}E_{4}(T)^2,
			\label{eq:quadratic-time-final}
		\end{align}
		and the only derivative not contained in \eqref{eq:quadratic-time-final} satisfies
		\begin{equation}
			\nu^{1/3}\|\ee^{4\gamma\nu^{1/3}t}\nabla(u_{\neq}\cdot\nabla u^3_{\neq})\|_{L_t^2L_x^2}
			\leq C\nu^{-1/2}E_{4}(T)^2.
			\label{eq:bad-wave3-weighted-revised}
		\end{equation}
		The same estimates hold for both the zero and nonzero parts of the product.
	\end{Lem}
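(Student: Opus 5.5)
The plan is to derive the pointwise bound \eqref{eq:quadratic-M-final} term by term with anisotropic product estimates, and then to get the time-integrated bounds by splitting the exponential weight. The underlying principle is that each quadratic wave term pairs one factor controlled at the $\Pi_0$ level with one factor controlled at the $\Pi_1$ level. We never estimate the bad derivative $\partial_y^2u^3_{\neq}$ directly; it is either absent or carried by an $x$ or $z$ derivative.

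For the undifferentiated terms we use $H^1(\Omega)\hookrightarrow L^4$ together with \eqref{eq:velocity-low-reconstruction}. This gives $\|u_{\neq}\otimes u_{\neq}\|_{L^2}\le C\|u_{\neq}\|_{H^1}^2\le C\Pi_0(\Pi_0+\Pi_1)$, and the $x$-Poincaré inequality applied to the profile gives $\Pi_0\le C\Pi_1$. For $u_{\neq}\cdot\nabla u_{\neq}$ we use the mean-zero product estimate \Cref{lem:mean-zero-high-product}: the transported factor goes in $L^\infty_{x,z}L^2_y$-type norms (controlled by $\Pi_0$ plus good derivatives), and the differentiated factor goes in $L^2$ with one derivative (controlled by $\Pi_1$). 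For $\nabla(u_{\neq}\cdot\nabla u^2_{\neq})$ we expand. The term $\nabla u_{\neq}\cdot\nabla u^2_{\neq}$ is handled by $\|u^2_{\neq}\|_{H^3}\le C(\Pi_0+\Pi_1)$ from \eqref{eq:W2-reconstruction} together with the anisotropic Sobolev bound on $\nabla u_{\neq}$. The term $u_{\neq}\cdot\nabla^2u^2_{\neq}$ puts $u_{\neq}$ in $L^\infty$ through the streamwise--spanwise bilinear estimate \Cref{lem:streamwise-spanwise-bilinear}, which needs only $(\partial_x,\partial_z)u_{\neq}\in H^1$ from \eqref{eq:good-W-reconstruction}.

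For $(\partial_x,\partial_z)(u_{\neq}\cdot\nabla u^3_{\neq})$, the key point is that $\partial_x$ or $\partial_z$ falls on one factor. When it falls on $\nabla u^3_{\neq}$, the second derivatives $\partial_x\nabla u^3$ and $\partial_z\nabla u^3$ are bounded by \eqref{eq:velocity-reconstruction-final} with $s=0,1$, so no $\partial_y^2 u^3$ appears. When it falls on $u_{\neq}$, we use \eqref{eq:good-W-reconstruction}. In the problematic product $u^2_{\neq}\partial_y u^3_{\neq}$ we place $u^2_{\neq}$ in $L^\infty$ via $H^2$, which costs only $\Pi_0$.

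The pressure is handled through $p^{\neq\neq}$, whose source in \eqref{eq:pW-final} equals $\partial_i\partial_j(u^i_{\neq}u^j_{\neq})_{\neq}$ by incompressibility. Hence $\nabla^2p^{\neq\neq}=\nabla^2\Delta^{-1}\partial_i\partial_j(u^iu^j)$, which is bounded in $L^2$ by $\|\nabla(u^i_{\neq}u^j_{\neq})\|_{L^2}$. Writing $\partial_i(u^iu^j)=u\cdot\nabla u^j$ gives $\|\nabla^2p^{\neq\neq}\|_{L^2}\lesssim\|u_{\neq}\cdot\nabla u_{\neq}\|_{L^2}$, which was bounded above; $\nabla\partial_zp^{\neq\neq}$ is a subcase of $\nabla^2 p^{\neq\neq}$. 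Taking zero or nonzero parts is an $L^2$ contraction, which gives the final sentence of the statement.

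The time-integrated bound \eqref{eq:quadratic-time-final} then follows by splitting $e^{4\gamma\nu^{1/3}t}=e^{2\gamma\nu^{1/3}t}e^{2\gamma\nu^{1/3}t}$. This gives
\[
\|e^{2\gamma\nu^{1/3}t}\Pi_0\|_{L^\infty_t}\,\|e^{2\gamma\nu^{1/3}t}\Pi_1\|_{L^2_t}\le CE_4\cdot \nu^{-1/2}E_4
\]
by \eqref{eq:P-profiles-revised}. The estimate \eqref{eq:bad-wave3-weighted-revised} is exactly \eqref{eq:weak-wave3-gradient-aux} of \Cref{lem:weak-wave3-gradient-revised}.

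The main obstacle is the $L^\infty$ placement in $u_{\neq}\cdot\nabla^2u^2_{\neq}$ and in $u^3_{\neq}\partial_z\nabla u^3_{\neq}$, where $u^3_{\neq}$ is not controlled in $H^2$. For these terms I would first try the anisotropic embedding $\|f\|_{L^\infty}\lesssim\|f\|_{L^2}^{1/4}\|(\partial_x,\partial_z)f\|_{\cdot}\cdots$. This uses at most one $y$-derivative of $u^3_{\neq}$ together with good derivatives, as packaged in \Cref{lem:streamwise-spanwise-bilinear}.
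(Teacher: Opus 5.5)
Your outer layer matches the paper's proof. The split $\ee^{4\gamma\nu^{1/3}t}=\ee^{2\gamma\nu^{1/3}t}\ee^{2\gamma\nu^{1/3}t}$ against \eqref{eq:P-profiles-revised}, the appeal to \eqref{eq:weak-wave3-gradient-aux}, and the contraction property of averaging are exactly what is done there. However, your argument does not establish the pointwise bound \eqref{eq:quadratic-M-final}, and the pressure step is wrong. The operator $\nabla^2\Delta^{-1}\partial_i\partial_j$ has order two, so $\nabla^2p^{\neq\neq}$ costs two derivatives of $u_{\neq}\otimes u_{\neq}$, i.e.\ one derivative of $u_{\neq}\cdot\nabla u_{\neq}$. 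It cannot be bounded by $\|\nabla(u^i_{\neq}u^j_{\neq})\|_{L^2}$ or by $\|u_{\neq}\cdot\nabla u_{\neq}\|_{L^2}$. The paper instead uses $\|\nabla^2p^{\neq\neq}\|_{L^2}+\|\nabla\partial_zp^{\neq\neq}\|_{L^2}\le C\|\Delta p^{\neq\neq}\|_{L^2}$ with $\Delta p^{\neq\neq}=-\bigl(\partial_j(u_{\neq}\cdot\nabla u^j_{\neq})\bigr)_{\neq}$, and the index structure is the point. Only $\partial_x(u_{\neq}\cdot\nabla u^1_{\neq})$, $\partial_y(u_{\neq}\cdot\nabla u^2_{\neq})$ and $\partial_z(u_{\neq}\cdot\nabla u^3_{\neq})$ occur, never the bad $\partial_y(u_{\neq}\cdot\nabla u^3_{\neq})$, which satisfies only the weakened bound \eqref{eq:bad-wave3-weighted-revised}. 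So even the dimensionally correct but crude bound by $\|\nabla(u_{\neq}\cdot\nabla u_{\neq})\|_{L^2}$ would not give \eqref{eq:quadratic-M-final}. The $j=1$ term $\partial_x(u_{\neq}\cdot\nabla u^1_{\neq})$ is also missing from your list and must be estimated.

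The second gap is that the lemma is precisely a $\Pi_0\cdot\Pi_1$ pairing, and several of your products put both factors at the $\Pi_1$ level. The inequality $\|u_{\neq}\|_{H^1}^2\le C\Pi_0(\Pi_0+\Pi_1)$ is false. The norm $\|\nabla u_{\neq}\|_{L^2}$ genuinely requires $\Pi_1$, since the symbol $|\eta|$ of $\partial_yu^3_{\neq}$ is not dominated by $k^2+\ell^2$, and $\Pi_0\le\Pi_1$ points the wrong way. Likewise, $\|u^2_{\neq}\|_{H^3}$ against $\|\nabla u_{\neq}\|_{L^2}$ for $\nabla u_{\neq}\cdot\nabla u^2_{\neq}$ gives only $(\Pi_0+\Pi_1)^2$. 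Such bounds are useless for \eqref{eq:quadratic-time-final}, because $E_4$ controls $\Pi_1$ only in $\nu^{1/2}L^2_t$.

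The repair is to make every product asymmetric:
\begin{enumerate}
\item Use $\|u_{\neq}\|_{L^\infty}\le C\|(\partial_x,\partial_z)u_{\neq}\|_{H^1}\le C(\Pi_0+\Pi_1)$, which follows from Cauchy--Schwarz in Fourier variables because $\sum_{k\neq0}\int_{\R^2}(1+k^2+\eta^2+\ell^2)^{-1}(k^2+\ell^2)^{-1}\,d\eta\,d\ell<\infty$. Pair it with an $L^2$ factor at the $\Pi_0$ level, such as $u_{\neq}$, $(\partial_x,\partial_z)u_{\neq}$, $\nabla^2u^2_{\neq}$ or $(\partial_x,\partial_z)^2u^3_{\neq}$. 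This settles your ``main obstacle'' and the term $u_{\neq}\otimes u_{\neq}$.
\item Use $\|u^2_{\neq}\|_{L^\infty}\le C\Pi_0$ against $\partial_yu^1_{\neq}$, $\partial_yu^3_{\neq}$ and $\partial_y\partial_zu^3_{\neq}$, which are at the $\Pi_1$ level.
\item For $\partial_mu^1_{\neq}\,\partial_xu^2_{\neq}$, $\partial_yu^3_{\neq}\,\partial_xu^2_{\neq}$ and $\partial_yu^3_{\neq}\,\partial_zu^2_{\neq}$, neither factor sits at the $\Pi_0$ level in a H\"older-usable norm. Use \eqref{eq:bilinear-good-derivative} with $\|\Delta u^2_{\neq}\|_{L^2}\le\Pi_0$ and $\|\partial_y\partial_zu^3_{\neq}\|_{L^2}\le C\Pi_1$. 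The last bound holds because $|\eta\ell|\le|\eta|(k^2+\ell^2)$ for $|k|\ge1$.
\end{enumerate}
These are the pointwise transport bounds the paper imports from Wei--Zhang. With them every term is bounded by $C\Pi_0\Pi_1$, and the rest of your argument goes through.
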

	
	\begin{proof}
In the pointwise transport bounds identified above, take
$A_0=\Pi_{0}$ and $A_1=\Pi_1$.
The required recovery and bilinear estimates on the present domain are
\Cref{lem:velocity-reconstruction-final,lem:streamwise-spanwise-bilinear}.
The pressure corresponds to the streamwise nonzero part of $p^{(4)}$
in the cited source. Its Hessian is controlled by its Laplacian:
\begin{equation*}
 \Delta p^{\neq\neq}
 =-\bigl(\partial_j(u_{\neq}\cdot\nabla u^j_{\neq})\bigr)_{\neq},
 \qquad
 \|\nabla^2p^{\neq\neq}\|_{L^2}
 +\|\nabla\partial_zp^{\neq\neq}\|_{L^2}
 \leq C\|\Delta p^{\neq\neq}\|_{L^2}.
\end{equation*}
The transport bound also includes the $j=1$ derivative, proving \eqref{eq:quadratic-M-final}.
Using \eqref{eq:P-profiles-revised},
\begin{equation*}
 \|\ee^{4\gamma\nu^{1/3}t}\Pi_{0}\Pi_1\|_{L_t^2}
 \leq\|\ee^{2\gamma\nu^{1/3}t}\Pi_{0}\|_{L_t^\infty}
 \|\ee^{2\gamma\nu^{1/3}t}\Pi_1\|_{L_t^2}
 \leq C\nu^{-1/2}E_{4}(T)^2.
\end{equation*}
Estimate \eqref{eq:bad-wave3-weighted-revised} is
\eqref{eq:weak-wave3-gradient-aux}. Streamwise averaging and taking
the nonzero part are contractions in the $L^2$-based norms, which
proves the last assertion.
\end{proof}
	
	\begin{Lem}
		\label{lem:wave-Y2-final}
		One has
		\begin{equation}
			\|\ee^{2\gamma\nu^{1/3}t}(u_{\neq}\cdot\nabla u_{\neq})_{\neq}\|_{Y^2}
			+\|\ee^{2\gamma\nu^{1/3}t}(u_{\neq}\cdot\nabla u_{\neq})_{0}\|_{Y^2}
			\leq C E_{4}(T)E_{5}(T).
			\label{eq:wave-Y2-final}
		\end{equation}
	\end{Lem}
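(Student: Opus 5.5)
We have to bound, with the weight $\ee^{2\gamma\nu^{1/3}t}$, the $Y^2$ norm of $N:=u_{\neq}\cdot\nabla u_{\neq}$ and of its two projections. Recall that $Y^2$ consists of two pieces: the $L_t^\infty H^2$ part and the dissipative part $\nu^{1/2}\|\nabla\,\cdot\|_{L_t^2H^2}$. The projections $P_0$ and $P_{\neq}$ are contractions in every $L^2$-based Sobolev norm. It therefore suffices to prove
\[
\|\ee^{2\gamma\nu^{1/3}t}N\|_{L_t^\infty H^2}+\nu^{1/2}\|\ee^{2\gamma\nu^{1/3}t}\nabla N\|_{L_t^2H^2}\leq CE_4E_5 .
\]
We split the exponential weight as $\ee^{2\gamma\nu^{1/3}t}\leq \ee^{2\gamma\nu^{1/3}t}\cdot\ee^{2\gamma\nu^{1/3}t}$. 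One factor goes to the low-order term, controlled by $E_4$, and the other to the high-order term, controlled by $E_5$. This weight surplus is harmless.

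\emph{Product structure.} Because $\nabla\cdot u_{\neq}=0$, we write $N=u^1_{\neq}\partial_xu_{\neq}+u^2_{\neq}\partial_yu_{\neq}+u^3_{\neq}\partial_zu_{\neq}$. The first and third terms carry a good derivative $(\partial_x,\partial_z)$ on the second factor. In the middle term, the factor $u^2_{\neq}$ is itself controlled in $H^3$ by $\Pi_0+\Pi_1$ (see \eqref{eq:W2-reconstruction}). The key point is that $E_5$ controls $(\partial_x,\partial_z)u_{\neq}$ in $H^3$, and by the streamwise Poincar\'e inequality this includes $u_{\neq}$ itself in $H^3$. Hence
\[
\|u_{\neq}\|_{H^4}\lesssim\|\partial_xu_{\neq}\|_{H^3}+\dots
\]
fails only for the pure $\partial_y^4$ derivative. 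Since $|k|\geq1$, we also have $\|u_{\neq}\|_{H^4}\leq\|\partial_x u_{\neq}\|_{H^4}$, and the quantity $\|\nabla\partial_xu_{\neq}\|_{H^3}$ appears in the dissipative part of $E_5$.

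For the $L_t^\infty H^2$ piece, we use the Moser/tame estimate on $\Omega$ (the mean-zero high-order product, \Cref{lem:mean-zero-high-product}):
\[
\|fg\|_{H^2}\lesssim\|f\|_{L^\infty}\|g\|_{H^2}+\|f\|_{H^2}\|g\|_{L^\infty},
\]
together with $x$-anisotropic embeddings in place of $L^\infty$. Here $\nabla u_{\neq}\in H^2$ and $u_{\neq}\in H^3$ are bounded by $E_5$, while the low-norm factors $\|u_{\neq}\|_{L^\infty}$ and $\|\nabla u_{\neq}\|_{L^\infty}$ should be bounded by $\Pi_0$ and $\Pi_1$ via \Cref{lem:streamwise-spanwise-bilinear}. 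Pointwise this gives roughly $\Pi_0\cdot\|(\partial_x,\partial_z)u_{\neq}\|_{H^3}$.

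For the dissipative piece we distribute the extra derivative. When all high derivatives fall on one factor, we pair $\nu^{1/2}\|\nabla(\partial_x,\partial_z)u_{\neq}\|_{L_t^2H^3}\le E_5$ (weighted) with $\|\Pi_0\|_{L^\infty_t}\le E_4$. When derivatives split, we use $L^\infty_t$ of $E_5$ and $\nu^{1/2}L^2_t$ of $\Pi_1$, which is bounded by $E_4$ through \eqref{eq:P-profiles-revised}.

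\emph{Main obstacle.} The delicate terms are those in which the top-order derivatives land as $\partial_y^3$ or $\partial_y^4$ on $u^3_{\neq}$ (or on $u^1_{\neq}$) with no accompanying $x$- or $z$-derivative. Only the weaker $\nu^{2/3}$-weighted bound on $\Delta u^3_{\neq}$ is available for such factors. I plan to treat them by the structure above. The pure $\partial_y$ derivatives hit the second factor only in the term $u^2_{\neq}\partial_y u_{\neq}$. There, $\partial_y u_{\neq}\in H^{3}$ is still bounded by $E_5$, since $E_5$ controls $\|u_{\neq}\|_{H^4}$ through $\partial_x$ and Poincar\'e. The first factor $u^2_{\neq}$ carries only good (profile) control, because $u^2_{\neq}$ is recovered from $\Delta u^2_{\neq}$ by elliptic regularity and has no bad direction. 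In the remaining terms, any $\partial_y$ falling on $u^j_{\neq}$ ($j=1,3$) is compensated by the streamwise gap, so the $E_5$ norms absorb it. If a pure-$y$ high-derivative on the low factor nevertheless appears, I would fall back on Poincar\'e in $x$ applied to that factor's $\partial_x$ derivative, which costs nothing since $|k|\ge1$. Summing all terms gives \eqref{eq:wave-Y2-final}.
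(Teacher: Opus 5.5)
Your proposal is essentially the paper's argument. The spatial bound comes from \Cref{lem:mean-zero-high-product} together with the velocity reconstruction, with $u^2_{\neq}\partial_yu_{\neq}$ handled through $\|u^2_{\neq}\|_{H^2}\le C\Pi_0$ and $\|u^2_{\neq}\|_{H^3}\le C(\Pi_0+\Pi_1)$. The time integration pairs $\|\Pi_0\|_{L_t^\infty}$ with the weighted $\nu^{1/2}L_t^2$ part of $E_5$, and $\nu^{1/2}\|\Pi_1\|_{L_t^2}$ with its $L_t^\infty$ part.

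Two cautions. First, the literal bounds $\|u_{\neq}\|_{L^\infty}\lesssim\Pi_0$ and $\|\nabla u_{\neq}\|_{L^\infty}\lesssim\Pi_1$ are false for $u^1_{\neq},u^3_{\neq}$. At those levels the profiles give no $y$-regularity beyond $L^2$, respectively one derivative in $L^2$. So the anisotropic lemma, which places only $\|g\|_{L^2}$ or $\|\partial_xf\|_{L^2}$ on the low factor, is genuinely needed and not just a convenience. Second, your \emph{main obstacle} does not actually arise. By the streamwise Poincar\'e inequality, $E_5$ already controls $u_{\neq}$ in $L_t^\infty H^3$ and in $\nu^{1/2}L_t^2H^4$, which is all that the $L_t^\infty H^2$ and dissipative components of the $Y^2$ norm require of the high factor.
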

	
	\begin{proof}
In the $Y^2$ estimate identified above, the energies $E_3,E_4$
correspond to the present $E_{4},E_{5}$, and $Y_0^2$ corresponds to $Y^2$.
The weights $2,3$ become $2\gamma,3\gamma$.
The spatial argument uses
\Cref{lem:mean-zero-high-product,lem:velocity-reconstruction-final}.
Time integration places the $2\gamma$ weight on
the high-order factor and the unweighted energy and dissipation
bounds supplied by $E_{4}$ for the other factor.
It therefore gives $CE_{4}E_{5}$ with the same powers of $\nu$.
Both projections are bounded in $Y^2$, proving \eqref{eq:wave-Y2-final}.
\end{proof}

\begin{Lem}
		\label{lem:wave-roll-forcing-revised}
		One has
		\begin{align}
			&\|\ee^{4\gamma\nu^{1/3}t}((u^\perp_{\neq}\otimes u^\perp_{\neq})_{0},(u_{\neq}\cdot\nabla u^\perp_{\neq})_{0},\nabla_\perp (u_{\neq}\cdot\nabla u^2_{\neq})_{0},
			\partial_z(u_{\neq}\cdot\nabla u^3_{\neq})_{0})\|_{L_t^2L_x^2}
			\nonumber\\
			&\qquad
			+\nu^{1/3}\|\ee^{4\gamma\nu^{1/3}t}\nabla_\perp (u_{\neq}\cdot\nabla u^3_{\neq})_{0}\|_{L_t^2L_x^2}
			\leq C\nu^{-1/2}E_{4}(T)^2.
			\label{eq:wave-roll-forcing-revised}
		\end{align}
		Moreover,
		\begin{equation}
			\|\mu_\nu^{1/2}\partial_y(u_{\neq}\cdot\nabla u^2_{\neq})_{0}\|_{L_t^2L_x^2}
			\leq C\nu^{-1/6}E_{4}(T)^2,
			\label{eq:weighted-NW2-revised}
		\end{equation}
		and
		\begin{equation}
			\nu^{-1/2}\|\mu_\nu^{1/2}\partial_y(u_{\neq}\cdot\nabla u^3_{\neq})_{0}\|_{L_t^2L_x^2}
			\leq C\nu^{-1}E_{4}(T)^2.
			\label{eq:weighted-NW3-revised}
		\end{equation}
	\end{Lem}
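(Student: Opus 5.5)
The estimate \eqref{eq:wave-roll-forcing-revised} follows from the pointwise product bounds of \Cref{lem:quadratic-wave-revised}, since taking the streamwise average is a contraction in $L^2$-based norms. First, $(u^\perp_{\neq}\otimes u^\perp_{\neq})_{0}$ and $(u_{\neq}\cdot\nabla u^\perp_{\neq})_{0}$ are zero modes of quantities already controlled in \eqref{eq:quadratic-time-final}. So their weighted $L_t^2L_x^2$ norms are bounded by $C\nu^{-1/2}E_{4}^2$.

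Next, $\nabla_\perp(u_{\neq}\cdot\nabla u^2_{\neq})_{0}=(\nabla_\perp(u_{\neq}\cdot\nabla u^2_{\neq}))_{0}$ because $P_0$ commutes with transverse derivatives. Likewise $\partial_z(u_{\neq}\cdot\nabla u^3_{\neq})_{0}$ is the average of a term appearing in \eqref{eq:quadratic-M-final}. The remaining term, with the bad $\partial_y$ derivative on the third component, carries the factor $\nu^{1/3}$ and is exactly \eqref{eq:bad-wave3-weighted-revised} after averaging.

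For the $\mu_\nu^{1/2}$-weighted bounds, I would trade the exponential weight for $\mu_\nu^{1/2}$, arguing pointwise in time.

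\textbf{Bound \eqref{eq:weighted-NW2-revised}.} Use $\mu_\nu\le1$ and the pointwise bound $\|\nabla(u_{\neq}\cdot\nabla u^2_{\neq})\|_{L^2}\le C\Pi_0\Pi_1$. Placing $\ee^{2\gamma\nu^{1/3}t}\Pi_0$ in $L_t^\infty$ and $\ee^{2\gamma\nu^{1/3}t}\Pi_1$ in $L_t^2$ only gives $\nu^{-1/2}E_4^2$, which is weaker than required. To reach $\nu^{-1/6}$, I would instead use the gain $\mu_\nu(t)^{1/2}\ee^{-c\nu^{1/3}t}\lesssim\nu^{1/3}$. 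This follows as in \eqref{eq:mnu-first-power-revised}, since $\mu_\nu\le\nu^{2/3}+\nu t$ and $\sup_{s\ge0}(\nu^{2/3}+\nu^{2/3}s)\ee^{-cs}\lesssim\nu^{2/3}$ with $s=\nu^{1/3}t$. Writing $\mu_\nu^{1/2}=\mu_\nu^{1/2}\ee^{-4\gamma\nu^{1/3}t}\cdot\ee^{4\gamma\nu^{1/3}t}$ then gives $\nu^{1/3}\cdot\nu^{-1/2}E_4^2=\nu^{-1/6}E_4^2$.

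\textbf{Bound \eqref{eq:weighted-NW3-revised}.} The same gain applies to \eqref{eq:bad-wave3-weighted-revised}. The factor $\nu^{1/3}$ it carries is exactly offset, so $\|\mu_\nu^{1/2}\partial_y(\cdot)_0\|_{L_t^2L_x^2}\le C\nu^{1/3}\nu^{-1/3}\nu^{-1/2}E_4^2$. Multiplying by $\nu^{-1/2}$ yields $\nu^{-1}E_4^2$.

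The main point to check is that the exponential weight $4\gamma$ in \eqref{eq:quadratic-time-final} is genuinely available, i.e.\ that the two $2\gamma$ profile weights in \eqref{eq:P-profiles-revised} add up to $4\gamma$. This is the case, and it leaves room to spend the weight against $\mu_\nu^{1/2}$.

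A second point is the identification $\partial_y(u_{\neq}\cdot\nabla u^3_{\neq})_0$ with the $\nabla$-component controlled by \eqref{eq:weak-wave3-gradient-aux}. Here only $\Delta u^3_{\neq}$ (not $\nabla^2 u^3_{\neq}$ separately) is needed, through \eqref{eq:weak-wave3-point-revised} and \eqref{eq:DW-interpolation-revised}. I expect that verifying this pointwise product structure is the only delicate step; everything else is bookkeeping of weights.
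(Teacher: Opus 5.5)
Your proposal is correct and follows essentially the same route as the paper. You obtain the first estimate from \Cref{lem:quadratic-wave-revised}, using that streamwise averaging is an $L^2$ contraction commuting with transverse derivatives, and you obtain the two $\mu_\nu^{1/2}$-weighted bounds from $\sup_{t\ge0}\mu_\nu(t)^{1/2}\ee^{-4\gamma\nu^{1/3}t}\le C\nu^{1/3}$. The pointwise product structure you flag as delicate is already proved in \Cref{lem:weak-wave3-gradient-revised}, so it does not need to be re-verified here.
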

	
	\begin{proof}
Streamwise averaging is an $L^2$ contraction and commutes with
transverse derivatives. Applying it to
\Cref{lem:quadratic-wave-revised} proves
\eqref{eq:wave-roll-forcing-revised}, including the weak third-component term.
For the two additional weights in this paper, observe that
\begin{equation}
 \sup_{t\geq0}\mu_\nu(t)\ee^{-8\gamma\nu^{1/3}t}
 \leq\nu^{2/3}\sup_{s\geq0}(1+s)\ee^{-8\gamma s}
 \leq C_\gamma\nu^{2/3}.
 \label{eq:mu-wave-gap-revised}
\end{equation}
Thus multiplication by $\mu_\nu^{1/2}$ costs at most
$C\nu^{1/3}$ relative to the $4\gamma$ weight.
For the second component this gives
$\nu^{1/3}\nu^{-1/2}E_4^2=\nu^{-1/6}E_4^2$.
For the third component it cancels the $\nu^{1/3}$ on the left of
\eqref{eq:bad-wave3-weighted-revised}; the additional factor
$\nu^{-1/2}$ in \eqref{eq:weighted-NW3-revised} then gives
$C\nu^{-1}E_4^2$.
\end{proof}
\subsection{Mixed interactions and nonlinear forcing}
We next estimate the mixed interactions and the nonlinear forcing
in the norms required by $E_{6}$ and $E_{4}$.

\begin{Lem}
		\label{lem:roll-coefficients-E6}
		Assume $E_{1}(T)+\frac{\varepsilon}{\nu}\leq\delta_*$. 
		Then, for every $t\in[0,T]$,
		\begin{align}
			&\|u^\perp_{0}(t)\|_{L^\infty}+\|u^\perp_{0}(t)\|_{H^1}
			+\|\bigl(u^2_{0}+\kappa u^3_{0}\bigr)(t)\|_{H^2}+\|D_z^Vu^\perp_{0}(t)\|_{H^1}
			\leq C E_{2}(T),
			\label{eq:roll-coeff-point-E6}\\
			&\|\nabla_\perp u^2_{0}(t)\|_{H^1}
			+\|\partial_zu^3_{0}(t)\|_{H^1}
			+\|\kappa(t)\partial_yu^3_{0}(t)\|_{H^1}
			\leq C E_{2}(T),
			\notag
		\end{align}
		and
		\begin{equation}
			\nu^{1/2}\|\bigl(\|\nabla_\perp u^\perp_{0}(t)\|_{H^1}+\|\nabla_\perp D_z^V u^\perp_{0}(t)\|_{H^1}\bigr)\|_{L_t^2}\leq C E_{2}(T).
			\label{eq:R1-time-complete}
		\end{equation}
	\end{Lem}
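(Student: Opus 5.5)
The plan is to reduce every quantity in \Cref{lem:roll-coefficients-E6} to the recovery bounds \eqref{eq:roll-recovery-final}--\eqref{eq:roll-recovery-time-final} for the roll. For the terms involving $\kappa$ we use the coefficient bounds of \Cref{lem:transient-kappa-revised}, namely $\|\kappa\|_{H^3}\le CE_1(T)\le C\delta_*$. Since $H^3(\R^2)$ is an algebra embedded in $W^{1,\infty}$, multiplication by $\kappa$ is bounded on $H^1$ and $H^2$ with norm $C\delta_*\le C$.

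\emph{Pointwise bounds.} By \eqref{eq:roll-recovery-final}, $\|u^2_0\|_{H^2}+\|u^3_0\|_{H^1}\le CE_2$, so $\|u^\perp_0\|_{H^1}\le CE_2$. Likewise $\|\nablaP u^2_0\|_{H^1}\le\|u^2_0\|_{H^2}\le CE_2$ and $\|\pz u^3_0\|_{H^1}\le CE_2$ follow directly. The $L^\infty$ bound is the only delicate pointwise term, because $u^3_0$ is controlled in $H^2$ only with the degenerate weight $\mu_\nu^{1/2}$. For $u^2_0$ we use $H^2\hookrightarrow L^\infty$. For $u^3_0$ we use the anisotropic (Agmon-type) inequality
\[
\|f\|_{L^\infty(\R^2)}^2\le C\|f\|_{L^2}^{1/2}\|\py f\|_{L^2}^{1/2}\|\pz f\|_{L^2}^{1/2}\|\py\pz f\|_{L^2}^{1/2}.
\]
This needs only $f$, $\py f$, $\pz f$, $\py\pz f$ in $L^2$, and all four are controlled by $\|u^3_0\|_{H^1}+\|\pz u^3_0\|_{H^1}\le CE_2$.

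For the combination $u^2_0+\kappa u^3_0$ in $H^2$, write the second-order derivatives of $\kappa u^3_0$ as follows. Every term containing at least one $\pz$, or a derivative falling on $\kappa$, is bounded by $\|\kappa\|_{H^3}(\|u^3_0\|_{H^1}+\|\pz u^3_0\|_{H^1})$. The remaining term $\kappa\py^2u^3_0$ is handled through incompressibility, $\py u^2_0=-\pz u^3_0$, so that $\py^2 u^3_0$ appears only via the vorticity. We use $\py^2u^3_0=\py\omega_0+\py\pz u^2_0$. The first piece is the weak quantity, so here we use instead that $\kappa$ gains: by \eqref{eq:kappa-H2-transient-revised} and the lift-up structure, the small factor $m_\nu^{1/2}\lesssim\mu_\nu^{1/2}$ compensates the weight. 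Indeed, $m_\nu(t)\le 2\mu_\nu(t)$ whenever $\nu\le\nu^{2/3}$, and $\|\kappa\|_{L^\infty}\le C(E_1+\varepsilon/\nu)m_\nu^{1/2}$. This is the step I expect to be the main obstacle; if the weight comparison fails at small $t$, I would instead exploit that $\|\kappa(t)\|_{H^2}\lesssim (E_1+\varepsilon/\nu)m_\nu^{1/2}$ with $m_\nu(0)=\nu$ while $\mu_\nu(0)=\nu^{2/3}$, so $m_\nu\le C\mu_\nu$ for all $t$. The same argument gives $\|\kappa\py u^3_0\|_{H^1}\le CE_2$. Then $D_z^Vu^\perp_0=\pz u^\perp_0-\kappa\py u^\perp_0$ is controlled in $H^1$ by the preceding bounds together with $\|\py u^2_0\|_{H^1}\le CE_2$.

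\emph{Time-integrated bound.} For \eqref{eq:R1-time-complete} we repeat the argument with one more transverse derivative, using \eqref{eq:roll-recovery-time-final} in place of \eqref{eq:roll-recovery-final}. The terms $\nablaP u^2_0$ in $L^2_tH^2$ and $\nablaP u^3_0,\nablaP\pz u^3_0$ in $L^2_tH^1$ are direct. The weak pure-$y$ piece $\nablaP\py^2u^3_0$ appears only multiplied by $\kappa$ (through $D_z^V$), and is absorbed by $\|\kappa\|_{L^\infty}\lesssim m_\nu^{1/2}\lesssim\mu_\nu^{1/2}$ together with the $\mu_\nu^{1/2}$-weighted component in \eqref{eq:roll-recovery-time-final}. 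Terms in which derivatives fall on $\kappa$ use $\|\kappa\|_{H^3}\le C\delta_*$ against $\nablaP u^3_0$ in $L^2_tH^1$. Note that $\|\nablaP u^\perp_0\|_{H^1}$ itself requires only $\|\nablaP u^3_0\|_{H^1}$, which is unweighted.
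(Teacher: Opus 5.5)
Your proposal is correct and follows essentially the paper's route. Both arguments rest on the recovery bounds, the anisotropic $L^\infty$ embedding for $u^3_0$, and the transient gain $\|\kappa\|_{H^2}\le C(E_1+\varepsilon/\nu)m_\nu^{1/2}$, which together with $m_\nu\le\mu_\nu$ offsets the $\mu_\nu^{-1/2}$ loss in the pure-$y$ derivatives of $u^3_0$. The differences are minor. You isolate $\kappa\partial_y^2u^3_0$ (pointwise) and $\kappa\partial_y^3u^3_0$ (time-integrated) as the only terms needing the weight, and you bound $(\nabla_\perp\kappa)\partial_yu^3_0$ by $\|\kappa\|_{H^3}$ against the unweighted $L^2_tH^1$ norm, whereas the paper applies the $m_\nu^{1/2}$ gain to the whole product. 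The vorticity detour and the hedge about small $t$ are unnecessary, since $m_\nu\le\mu_\nu$ holds for all $t$ whenever $\nu\le1$.
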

	
	\begin{proof}
		The pointwise roll recovery \eqref{eq:roll-recovery-final} and the anisotropic two-dimensional embedding
		\begin{equation*}
			\|f\|_{L^\infty(\mathbb R^2)}
			\leq C\bigl(\|f\|_{H^1(\mathbb R^2)}+\|\partial_zf\|_{H^1(\mathbb R^2)}\bigr)
		\end{equation*}
		give the bounds for $u^\perp_{0}$ itself. Since $0<\nu\leq1$, $m_\nu(t)\leq\mu_\nu(t)$, and \Cref{lem:transient-kappa-revised} gives
		\begin{align}
			\|\kappa\partial_yu^3_{0}\|_{H^1}
			&\leq C\|\kappa\|_{H^2}\|\partial_yu^3_{0}\|_{H^1}
			\leq C(E_{1}+\frac{\varepsilon}{\nu})m_\nu^{1/2}\mu_\nu^{-1/2}E_{2}
			\leq C\delta_* E_{2},
			\notag\\
			\|\kappa u^3_{0}\|_{H^2}
			&\leq C\|\kappa\|_{H^2}\|u^3_{0}\|_{H^2}
			\leq C(E_{1}+\frac{\varepsilon}{\nu})m_\nu^{1/2}\mu_\nu^{-1/2}E_{2}
			\leq C\delta_* E_{2},\notag
		\end{align}
		where in the last line we used \eqref{eq:kappa-H2-transient-revised}, \eqref{eq:roll-recovery-final}, and $m_\nu\leq\mu_\nu$.
		Consequently, $D_z^Vu^3_{0}=\partial_zu^3_{0}-\kappa\partial_yu^3_{0}$ and $u^2_{0}+\kappa u^3_{0}$ satisfy the asserted pointwise estimates. The bounds for $D_z^Vu^2_{0}$ are easier because $u^2_{0}\in H^2$ pointwise.
		
		For the spacetime estimate, use
		\begin{equation*}
			\nabla_\perp D_z^Vu^\perp_{0}
			=\nabla_\perp\partial_zu^\perp_{0}-(\nabla_\perp\kappa)\partial_yu^\perp_{0}
			-\kappa\nabla_\perp\partial_yu^\perp_{0}.
		\end{equation*}
		The first term is controlled by \eqref{eq:roll-recovery-time-final}. For the terms containing $u^2_{0}$, the unweighted estimate for $\nabla_\perp u^2_{0}$ in \eqref{eq:roll-recovery-time-final} and the uniform $H^3$ bound for $\kappa$ are sufficient. The only point requiring the weighted roll norm is the third component. By the two-dimensional Sobolev product estimate, \eqref{eq:kappa-H2-transient-revised}, and $m_\nu\leq\mu_\nu$,
		\begin{align}
			\nu^{1/2}\|(\nabla_\perp\kappa)\partial_yu^3_{0}\|_{L_t^2H^1}
			&\leq C(E_{1}+\frac{\varepsilon}{\nu})\nu^{1/2}\|m_\nu^{1/2}\partial_yu^3_{0}\|_{L_t^2H^2}
			\nonumber\\
			&\leq C(E_{1}+\frac{\varepsilon}{\nu})\nu^{1/2}\|\mu_\nu^{1/2}\partial_yu^3_{0}\|_{L_t^2H^2}
			\leq C\delta_* E_{2},
			\label{eq:kappagrad-roll-time-E6}
			\\
			\nu^{1/2}\|\kappa\nabla_\perp\partial_yu^3_{0}\|_{L_t^2H^1}
			&\leq C(E_{1}+\frac{\varepsilon}{\nu})\nu^{1/2}\|m_\nu^{1/2}\nabla_\perp\partial_yu^3_{0}\|_{L_t^2H^1}
			\nonumber\\
			&\leq C(E_{1}+\frac{\varepsilon}{\nu})\nu^{1/2}\|\mu_\nu^{1/2}\nabla_\perp\partial_yu^3_{0}\|_{L_t^2H^1}
			\leq C\delta_* E_{2}.
			\notag
		\end{align}
		Thus no unweighted highest $y$ derivative of $u^3_{0}$ is used. Combining these estimates with \eqref{eq:roll-recovery-time-final} proves \eqref{eq:R1-time-complete}.
	\end{proof}
	
	We shall also use repeatedly the two-level consequence of \eqref{eq:bilinear-two-level}. If $b=b(y,z)$ and $f_{0}=0$, then
	\begin{equation}
		\|bf\|_{L^2}\leq C\|b\|_{H^1(\mathbb R^2)}\|f\|_{L^2}
		+C\|b\|_{L^2(\mathbb R^2)}\|f\|_{H^1(\Omega)}.
		\label{eq:two-level-zero-wave-E6}
	\end{equation}
	Thus a differentiated roll factor is paired with the lower wave profile $\mathcal U_{0}$, whereas an undifferentiated roll factor is paired with the differentiated wave profile $\mathcal U_1$. All the mixed estimates below remain at the $3\gamma$ weight; no upgrade from the $2\gamma$-weighted primary profiles is used.
	
	\begin{Lem}
		\label{lem:0neq-transport-revised}
		Assume \eqref{eq:variable-small-assumption} and $E_{1}(T)+\frac{\varepsilon}{\nu}\leq\delta_*$. Then
		\begin{align}
			&\nu^{-1/2}\|\ee^{3\gamma\nu^{1/3}t}\nabla(g_{0\neq}^2+\kappa g_{0\neq}^3)\|_{L_t^2L_x^2}
			+\nu^{-1/2}\|\ee^{3\gamma\nu^{1/3}t}\partial_xg_{0\neq}^3\|_{L_t^2L_x^2}
			\nonumber\\
			&\qquad
			+\nu^{-1/6}\|\ee^{3\gamma\nu^{1/3}t}\nabla g_{0\neq}^3\|_{L_t^2L_x^2}
			\leq C\frac{E_{2}(T)}{\nu}E_{6}(T).
			\label{eq:0neq-forcing-revised}
		\end{align}
	\end{Lem}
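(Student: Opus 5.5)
The plan is to estimate the three norms of $g_{0\neq}^j=u^\perp_{0}\cdot\nabla_\perp u^j_{\neq}$ by Leibniz expansion. Roll coefficients go in pointwise in time (from \Cref{lem:roll-coefficients-E6}) or in $L_t^2$ (from \eqref{eq:R1-time-complete}). Wave factors go into the $3\gamma$-weighted profiles $\mathcal U_0,\mathcal U_1$ via \Cref{lem:E6-wave-recovery} and \eqref{eq:U-profiles-revised}. The product structure is always \eqref{eq:two-level-zero-wave-E6}: a differentiated roll factor is paired with $\mathcal U_0$, an undifferentiated one with $\mathcal U_1$.

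\emph{Combined quantity.} The key algebraic step is to rewrite
\[
g_{0\neq}^2+\kappa g_{0\neq}^3
=u^\perp_{0}\cdot\nabla_\perp q-(u^\perp_{0}\cdot\nabla_\perp\kappa)\,u^3_{\neq},
\qquad q=u^2_{\neq}+\kappa u^3_{\neq}.
\]
Taking $\nabla$ gives three types of terms.
\begin{itemize}
\item $u^\perp_0\cdot\nabla_\perp\nabla q$, bounded by $\|u^\perp_0\|_{L^\infty}\|q\|_{H^2}\le CE_2\,\mathcal U_1$.
\item $\nabla u^\perp_0\cdot\nabla_\perp q$, bounded via \eqref{eq:two-level-zero-wave-E6} by $\|u^\perp_0\|_{H^2}$-type norms times $\mathcal U_0+\mathcal U_1$.
\item The $\kappa$-terms, which carry an extra small factor $(E_1+\varepsilon/\nu)m_\nu^{1/2}$ from \Cref{lem:transient-kappa-revised} and are therefore harmless.
\end{itemize}
For the first type, $\nu^{-1/2}\|\ee^{3\gamma\nu^{1/3}t}E_2\mathcal U_1\|_{L^2_t}\le E_2\nu^{-1}E_6$ by \eqref{eq:U-profiles-revised}, which is exactly the claimed scale.

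For the second type, the full $H^2$ norm of $u^3_0$ is unavailable; only $\mu_\nu^{1/2}\|u^3_0\|_{H^2}$ and $\nu^{1/2}L^2_t$ versions are. I will handle it in one of two ways.
\begin{itemize}
\item Put the roll factor in $L^2_t$ via \eqref{eq:R1-time-complete} and the wave factor in $L^\infty_t$ through $\mathcal U_0$. This gives $\nu^{-1/2}\nu^{-1/2}E_2E_6$.
\item Alternatively, split $\nabla_\perp=(\partial_y,D^V_z+\kappa\partial_y)$ so that only $\partial_y u^3_0$ at low order meets $\partial_y q$.
\end{itemize}

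\emph{The $\partial_x g^3_{0\neq}$ term.} Since $\partial_x$ does not hit the roll, $\partial_xg^3_{0\neq}=u^\perp_0\cdot\nabla_\perp\partial_xu^3_{\neq}$. I will write $u^\perp_0\cdot\nabla_\perp=(u^2_0+\kappa u^3_0)\partial_y+u^3_0D^V_z$. By \eqref{eq:roll-coeff-point-E6} the coefficients $u^2_0+\kappa u^3_0$ and $u^3_0$ are bounded in $H^2$ and $L^\infty$ respectively by $E_2$, while $\partial_x\partial_yu^3_{\neq}$ and $\partial_xD^V_zu^3_{\neq}$ are controlled by $\mathcal U_1$ and $\mathcal U_0$. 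Then $\nu^{-1/2}\cdot\nu^{-1/2}$ from the $\mathcal U_1$ time norm again yields $E_2E_6/\nu$.

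\emph{The $\nabla g^3_{0\neq}$ term.} This is the main obstacle. Here $\nabla$ can produce $\partial_y^2u^3_{\neq}$, which by \eqref{eq:bad-W3-profile-reconstruction} and \eqref{eq:E6-recovery-W3-second} costs $\nu^{-2/3}\mathcal U_0$. The weaker prefactor $\nu^{-1/6}$ is designed for this. Counting powers:
\begin{itemize}
\item $\|u^\perp_0\|_{L^\infty}\,\nu^{-2/3}\mathcal U_0$ in $L^2_t$ gives $\nu^{-1/6}\nu^{-2/3}\nu^{-1/6}E_2E_6=\nu^{-1}E_2E_6$.
\item The $\mathcal U_1$ part gives $\nu^{-1/6}\nu^{-1/2}E_2E_6$, which is better than needed.
\end{itemize}
Terms where the derivative falls on the roll, $\nabla u^\perp_0\,\nabla_\perp u^3_{\neq}$, are handled with \eqref{eq:two-level-zero-wave-E6}. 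When $\nabla_\perp u^3_0$ needs an $H^1$ bound, I will first try the pointwise $\|u^3_0\|_{H^1}+\|\partial_zu^3_0\|_{H^1}$ bounds of \eqref{eq:roll-recovery-final}. If a full $\partial_y^2u^3_0$ appears, I will use the $\mu_\nu^{1/2}$ weight and absorb $\mu_\nu^{-1/2}\le\nu^{-1/3}$ into the available slack: the power count $\nu^{-1/6}\cdot\nu^{-1/3}\cdot(\mathcal U_1\text{ in }L^2_t:\nu^{-1/2})=\nu^{-1}$ closes.

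Summing all contributions gives \eqref{eq:0neq-forcing-revised}.
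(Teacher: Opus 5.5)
Your proposal is correct and follows essentially the paper's route. It uses the same identity $g_{0\neq}^2+\kappa g_{0\neq}^3=u^\perp_0\cdot\nabla_\perp q-(u^\perp_0\cdot\nabla_\perp\kappa)u^3_{\neq}$, the same two-level pairing with the differentiated roll placed in $\nu^{-1/2}L^2_t$ via \eqref{eq:R1-time-complete} against $\mathcal U_0$ in $L^\infty_t$, and the same count $\nu^{-1/6}\nu^{-2/3}\nu^{-1/6}=\nu^{-1}$ for $\partial_y^2u^3_{\neq}$.

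There are two small differences from the paper.
\begin{itemize}
\item In the roll-derivative part of $\nabla g^3_{0\neq}$, you pay $\mu_\nu^{-1/2}\le\nu^{-1/3}$ pointwise against $\mathcal U_1$ in $L^2_t$. The paper instead uses \eqref{eq:g03-BR-decomposition} to pair $\nabla_\perp u^3_0$ with $D_z^Vu^3_{\neq}$, which is controlled by $\mathcal U_0$, and then integrates the roll factor in time. Both close because of the weak $\nu^{-1/6}$ prefactor.
\item The smallness of your $\kappa$-terms comes from $\|\kappa\|_{H^3}\le C\delta_*$, not from an $m_\nu^{1/2}$ factor. That factor is only available at the $H^2$ level of $\kappa$. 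This does not affect the argument.
\end{itemize}
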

	
	\begin{proof}
The good combination satisfies
\begin{equation*}
 g_{0\neq}^2+\kappa g_{0\neq}^3
 =u^\perp_{0}\cdot\nabla_\perp q
 -(u^\perp_{0}\cdot\nabla_\perp\kappa)u^3_{\neq}.
\end{equation*}
In the mixed product argument identified above, use
\Cref{lem:roll-coefficients-E6} for the zero-mode recovery. The continuous-domain products needed here are
\eqref{eq:anis-prod-L2} and \eqref{eq:two-level-zero-wave-E6}.
Together with \Cref{lem:E6-wave-recovery}, they give
\begin{align*}
 &\|\nabla(g_{0\neq}^2+\kappa g_{0\neq}^3)\|_{L^2}\\
 &\quad\leq CE_2\mathcal U_1
 +C\bigl(\|\nabla_\perp u^\perp_{0}\|_{H^1}
 +\|\nabla_\perp D_z^Vu^\perp_{0}\|_{H^1}\bigr)\mathcal U_0.
\end{align*}
Here a derivative on the roll uses the two-level product; one on the
wave uses $\mathcal U_1$. The terms containing $\nabla\kappa$ or
$\nabla^2\kappa$ obey the same bound by $\|\kappa\|_{H^3}\leq C\delta_*$.
For the third component, retain the identity
\begin{equation}
 g_{0\neq}^3=(u^2_{0}+\kappa u^3_{0})\partial_yu^3_{\neq}
 +u^3_{0}D_z^Vu^3_{\neq}.
 \label{eq:g03-BR-decomposition}
\end{equation}
The same product estimates yield
\begin{align*}
 \|\partial_xg_{0\neq}^3\|_{L^2}
 &\leq CE_2(\mathcal U_0+\mathcal U_1),\\
 \|\nabla g_{0\neq}^3\|_{L^2}
 &\leq C\bigl(\|\nabla_\perp u^\perp_{0}\|_{H^1}
 +\|\nabla_\perp D_z^Vu^\perp_{0}\|_{H^1}\bigr)\mathcal U_0
 +CE_2(\mathcal U_1+\nu^{-2/3}\mathcal U_0).
\end{align*}
The last term uses \eqref{eq:E6-recovery-W3-second} only when a
second $y$ derivative falls on $u^3_{\neq}$.
The time integration is fixed by
\eqref{eq:U-profiles-revised} and \eqref{eq:R1-time-complete}:
\begin{align}
 \nu^{-1/2}E_2\|\ee^{3\gamma\nu^{1/3}t}\mathcal U_1\|_{L_t^2}
 &\leq C\frac{E_2}{\nu}E_6,
 \label{eq:0neq-time-ZU1}\\
 \nu^{-1/2}\bigl\|\|\nabla_\perp u^\perp_{0}\|_{H^1}
 +\|\nabla_\perp D_z^Vu^\perp_{0}\|_{H^1}\bigr\|_{L_t^2}
 \|\ee^{3\gamma\nu^{1/3}t}\mathcal U_0\|_{L_t^\infty}
 &\leq C\frac{E_2}{\nu}E_6.
 \label{eq:0neq-time-RU0}
\end{align}
The $\mathcal U_0$ term in the streamwise derivative costs only
$C\nu^{-2/3}E_2E_6$. In the weak full gradient, the exceptional term costs
$C\nu^{-1/6}\nu^{-2/3}\nu^{-1/6}E_2E_6
=C\nu^{-1}E_2E_6$; all other terms are smaller.
This proves \eqref{eq:0neq-forcing-revised} at the $3\gamma$ weight.
\end{proof}
	
	\begin{Lem}
		\label{lem:neq0-transport-revised}
		Assume \eqref{eq:variable-small-assumption} and $E_{1}(T)+\frac{\varepsilon}{\nu}\leq\delta_*$. Then
		\begin{align}
			&\nu^{-1/2}\|\ee^{3\gamma\nu^{1/3}t}\nabla(g_{\neq0}^2+\kappa g_{\neq0}^3)\|_{L_t^2L_x^2}
			+\nu^{-1/2}\|\ee^{3\gamma\nu^{1/3}t}\partial_xg_{\neq0}^3\|_{L_t^2L_x^2}
			\nonumber\\
			&\qquad
			+\nu^{-1/6}\|\ee^{3\gamma\nu^{1/3}t}\nabla g_{\neq0}^3\|_{L_t^2L_x^2}
			\leq C\frac{E_{2}(T)}{\nu}E_{6}(T).
			\label{eq:neq0-forcing-revised}
		\end{align}
	\end{Lem}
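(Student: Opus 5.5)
The plan mirrors the proof of \Cref{lem:0neq-transport-revised}, with the roles of the two factors reversed: the wave now carries no derivative and the roll is differentiated. Recall $g_{\neq0}^j=u^\perp_{\neq}\cdot\nabla_\perp u^j_{0}$. First we rewrite the good combination using $\partial_z=D_z^V+\kappa\partial_y$:
\begin{equation*}
 g_{\neq0}^2+\kappa g_{\neq0}^3
 =u^2_{\neq}\partial_y\bigl(u^2_{0}+\kappa u^3_{0}\bigr)
 +u^3_{\neq}D_z^V\bigl(u^2_{0}+\kappa u^3_{0}\bigr)
 -\bigl(u^\perp_{\neq}\cdot\nabla_\perp\kappa\bigr)u^3_{0}
 +\kappa u^3_{\neq}\,\partial_y(u^2_{0}+\kappa u^3_{0})\cdot 0 .
\end{equation*}
More precisely, we expand $u^\perp_{\neq}\cdot\nabla_\perp(u^2_0+\kappa u^3_0)-(u^\perp_{\neq}\cdot\nabla_\perp\kappa)u^3_0$. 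In this expansion the roll always appears through $u^2_{0}+\kappa u^3_{0}$, whose $H^2$ norm is bounded by $E_2$ via \eqref{eq:roll-coeff-point-E6}, or through $u^3_0$ multiplied by $\nabla\kappa$. Neither involves the unweighted highest $y$-derivative of $u^3_0$. For the third component we use
\[
g_{\neq0}^3=u^2_{\neq}\partial_yu^3_0+u^3_{\neq}\partial_zu^3_0 .
\]

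\textbf{Pointwise bounds.} Taking $\nabla$ of these expressions, we apply \eqref{eq:two-level-zero-wave-E6} and \eqref{eq:anis-prod-L2} together with \Cref{lem:E6-wave-recovery} and \Cref{lem:roll-coefficients-E6}. When the derivative falls on the wave, the undifferentiated roll factor ($u^\perp_0\in H^1$ or $\nabla_\perp(u^2_0+\kappa u^3_0)\in H^1$) is paired with $\mathcal U_1$. When it falls on the roll, the factor $\|\nabla_\perp u^\perp_0\|_{H^1}+\|\nabla_\perp D_z^Vu^\perp_0\|_{H^1}$ is paired with $\mathcal U_0$ through the two-level product. This should give
\[
\|\nabla(g_{\neq0}^2+\kappa g_{\neq0}^3)\|_{L^2}+\|\partial_xg^3_{\neq0}\|_{L^2}\le CE_2\,\mathcal U_1+C\bigl(\|\nabla_\perp u^\perp_0\|_{H^1}+\|\nabla_\perp D_z^Vu^\perp_0\|_{H^1}\bigr)\mathcal U_0 .
\]
The $\partial_x$ derivative only falls on the wave, so the $\partial_xg^3_{\neq0}$ term is bounded by $CE_2(\mathcal U_0+\mathcal U_1)$. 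Terms containing $\nabla\kappa$ or $\nabla^2\kappa$ are handled by $\|\kappa\|_{H^3}\le C\delta_*$.

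\textbf{Main obstacle.} The difficult term is the full gradient of $g^3_{\neq0}$, specifically $u^2_{\neq}\partial_y^2u^3_0$. This term carries an unweighted second $y$-derivative of $u^3_0$, and $E_2$ controls that derivative only with the weight $\mu_\nu^{1/2}$. We plan to bound it by
\[
\|u^2_{\neq}\|_{H^2}\,\mu_\nu^{-1/2}\,\|\mu_\nu^{1/2}\partial_y^2u^3_0\|_{H^1}
\]
and use $\mu_\nu\ge\nu^{2/3}$, so that $\mu_\nu^{-1/2}\le\nu^{-1/3}$. Here $\|u^2_{\neq}\|_{H^2}\lesssim\mathcal U_0+\mathcal U_1$, with $u^2_{\neq}=q-\kappa u^3_{\neq}$ and \Cref{lem:E6-wave-recovery}. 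In the time integration, the factor $\nu^{1/2}\|\mu_\nu^{1/2}\nabla_\perp u^3_0\|_{L_t^2H^2}\le CE_2$ from \eqref{eq:roll-recovery-time-final} is paired with $\|e^{3\gamma\nu^{1/3}t}\mathcal U_0\|_{L^\infty_t}\le CE_6$. The total cost is then
\[
\nu^{-1/6}\,\nu^{-1/3}\,\nu^{-1/2}\,E_2E_6=\nu^{-1}E_2E_6,
\]
which is exactly the allowed bound. The $\mathcal U_1$ part can instead be placed in $L^2_t$, with the roll factor in $L^\infty_t$, using $\mu_\nu^{1/2}\|u^3_0\|_{H^2}\le CE_2$; this is no worse. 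If this term resisted, the fallback would be to rewrite $\partial_y u^3_0=-\partial_z u^2_0/\!\!\ $via $\nabla_\perp\cdot u^\perp_0=0$ ($\partial_yu^3_0$ is not directly given by this, but $\partial_zu^3_0=-\partial_yu^2_0$), so only the $u^2_{\neq}\partial_y^2u^3_0$ piece needs the weight.

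\textbf{Time integration.} The remaining terms are integrated exactly as in \eqref{eq:0neq-time-ZU1}--\eqref{eq:0neq-time-RU0}, using \eqref{eq:U-profiles-revised} and \eqref{eq:R1-time-complete} at the $3\gamma$ weight. The $\nu^{-1/2}$-weighted terms give $C\nu^{-1}E_2E_6$ or better, and the $\partial_x$ term gives $C\nu^{-2/3}E_2E_6$. Together these prove \eqref{eq:neq0-forcing-revised}.
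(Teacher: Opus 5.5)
Your plan works in substance and shares the paper's architecture: pointwise products against $\mathcal U_0,\mathcal U_1$, time integration as in \eqref{eq:0neq-time-ZU1}--\eqref{eq:0neq-time-RU0}, and the loss $\mu_\nu^{-1/2}\le\nu^{-1/3}$ spent only in the $\nu^{-1/6}$-weighted full gradient. The algebra differs, though.

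The paper writes $g^j_{\neq0}=q\,\partial_yu^j_{0}+u^3_{\neq}D_z^Vu^j_{0}$ and uses $\nablaP\cdot u^\perp_0=0$ to get the exact identity \eqref{eq:Gneq0-cancellation-revised}, $g^2_{\neq0}+\kappa g^3_{\neq0}=u^3_{\neq}D_z^Vu^2_{0}-u^2_{\neq}D_z^Vu^3_{0}$. Thus only tangential roll derivatives enter the strong norm, and no derivatives of $\kappa$ appear.

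You expand instead with $B:=u^2_0+\kappa u^3_0$. Your first display is wrong as written (there is no ``$\cdot0$'' term); the correct form is $q\,\partial_yB+u^3_{\neq}D_z^VB-(u^\perp_{\neq}\cdot\nablaP\kappa)u^3_0$. The $\nabla\kappa$ terms you carry actually cancel against the $\kappa$-derivative terms hidden in $\partial_yB=-D_z^Vu^3_0+(\partial_y\kappa)u^3_0$ and in $D_z^VB$. Keeping them is harmless, since $\|\kappa\|_{H^3}\le C\delta_*$ and $\|u^3_0\|_{L^\infty}\le CE_2$.

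Your route works because the cancellation is already packaged in $\|B\|_{H^2}\le CE_2$ from \eqref{eq:roll-coeff-point-E6}. With \eqref{eq:anis-prod-L2} and \eqref{eq:anis-prod-L2-coeff}, every term of $\nabla(g^2_{\neq0}+\kappa g^3_{\neq0})$ and of $\partial_xg^3_{\neq0}$ is bounded pointwise by $CE_2\,\mathcal U_1$, with the roll in $L^\infty_t$.

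Your raw splitting $g^3_{\neq0}=u^2_{\neq}\partial_yu^3_0+u^3_{\neq}\partial_zu^3_0$ for the weak gradient is also fine, and it spares the paper's extra splitting of $\partial_z\partial_yu^3_0$. Terms built on $\partial_zu^3_0=-\partial_yu^2_0$ are unweighted in $H^1$. The others can absorb $\mu_\nu^{-1/2}$; this includes $(\nabla u^2_{\neq})\partial_yu^3_0$, via \eqref{eq:anis-prod-L2}.

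The one step that fails as written is your bound for $u^2_{\neq}\partial_y^2u^3_0$, for two reasons.
\begin{itemize}
\item The claim $\|u^2_{\neq}\|_{H^2}\lesssim\mathcal U_0+\mathcal U_1$ is not available from $E_6$. \Cref{lem:E6-wave-recovery} controls $\|q\|_{H^2}$, but $\kappa u^3_{\neq}$ in $H^2$ contains $\kappa\partial_y^2u^3_{\neq}$. That term is controlled by \eqref{eq:E6-recovery-W3-second} only at cost $\nu^{-2/3}\mathcal U_0$, and the transient smallness of $\kappa$ does not offset this.
\item The product $\|u^2_{\neq}\|_{H^2}\|\partial_y^2u^3_0\|_{H^1}$ does not match your time pairings. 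The $L^\infty_t$ bound $\mu_\nu^{1/2}\|u^3_0\|_{H^2}\le CE_2$ does not control $\|\partial_y^2u^3_0\|_{H^1}$.
\end{itemize}
The repair is never to place the wave in $H^2$. By \eqref{eq:anis-prod-L2-coeff},
\[
 \|u^2_{\neq}\partial_y^2u^3_0\|_{L^2}\le C\|\partial_y^2u^3_0\|_{L^2}\bigl(\|u^2_{\neq}\|_{H^1}+\|D_z^Vu^2_{\neq}\|_{H^1}\bigr)\le C\mu_\nu^{-1/2}E_2\,\mathcal U_1,
\]
which is exactly the paper's treatment of $q\,\partial_y^2u^3_0$. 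Alternatively, \eqref{eq:two-level-zero-wave-E6} with $\|u^2_{\neq}\|_{L^2}\le C\mathcal U_0$ and $\|u^2_{\neq}\|_{H^1}\le C\mathcal U_1$ produces precisely the two pairings you describe. Either way the cost is $\nu^{-1/6}\nu^{-1/3}\nu^{-1/2}E_2E_6=\nu^{-1}E_2E_6$, as you computed.

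A minor point: the $\mathcal U_1$ part of $\partial_xg^3_{\neq0}$ costs $\nu^{-1}E_2E_6$, not $\nu^{-2/3}E_2E_6$. This is still within budget.
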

	
	\begin{proof}
We retain the coefficient estimates in this decomposition because
the present roll norm
controls the highest $y$ derivatives with $\mu_\nu^{1/2}$.

		For every $x$-independent scalar $f$,
		\begin{equation*}
			u^\perp_{\neq}\cdot\nabla_\perp f=q\,\partial_yf+u^3_{\neq}D_z^Vf.
		\end{equation*}
		Hence
		\begin{equation}
			g_{\neq0}^j=q\,\partial_yu^j_{0}+u^3_{\neq}D_z^Vu^j_{0},
			\qquad j=2,3.
			\label{eq:gneq0-component-complete}
		\end{equation}
		The good combination has the exact cancellation
		\begin{equation}
			g_{\neq0}^2+\kappa g_{\neq0}^3
			=u^3_{\neq}D_z^Vu^2_{0}-u^2_{\neq}D_z^Vu^3_{0}.
			\label{eq:Gneq0-cancellation-revised}
		\end{equation}
		Indeed, incompressibility gives $\partial_yu^2_{0}+D_z^Vu^3_{0}+\kappa\partial_yu^3_{0}=0$, and $q=u^2_{\neq}+\kappa u^3_{\neq}$.
		
		Differentiating \eqref{eq:Gneq0-cancellation-revised} gives the exact four-term identity
		\begin{align}
			\nabla(g_{\neq0}^2+\kappa g_{\neq0}^3)
			={}&(\nabla u^3_{\neq})D_z^Vu^2_{0}+u^3_{\neq}\nabla D_z^Vu^2_{0}
			\nonumber\\
			&-(\nabla u^2_{\neq})D_z^Vu^3_{0}-u^2_{\neq}\nabla D_z^Vu^3_{0}.
			\notag
		\end{align}
		For the terms in which the derivative falls on the wave, \Cref{lem:E6-wave-recovery,lem:roll-coefficients-E6} give
		\begin{equation*}
			\|(\nabla u^3_{\neq})D_z^Vu^2_{0}\|_{L^2}+\|(\nabla u^2_{\neq})D_z^Vu^3_{0}\|_{L^2}
			\leq C E_{2}\mathcal U_1.
		\end{equation*}
		For the terms in which the derivative falls on the roll, the two-level estimate \eqref{eq:two-level-zero-wave-E6} gives
		\begin{equation*}
			\|u^3_{\neq}\nabla D_z^Vu^2_{0}\|_{L^2}+\|u^2_{\neq}\nabla D_z^Vu^3_{0}\|_{L^2}
			\leq C\bigl(\|\nabla_\perp u^\perp_{0}(t)\|_{H^1}+\|\nabla_\perp D_z^V u^\perp_{0}(t)\|_{H^1}\bigr)\mathcal U_{0}+C E_{2}\mathcal U_1.
		\end{equation*}
		Hence
		\begin{equation*}
			\|\nabla(g_{\neq0}^2+\kappa g_{\neq0}^3)\|_{L^2}
			\leq C E_{2}\mathcal U_1+C\bigl(\|\nabla_\perp u^\perp_{0}(t)\|_{H^1}+\|\nabla_\perp D_z^V u^\perp_{0}(t)\|_{H^1}\bigr)\mathcal U_{0}.
		\end{equation*}
		
		From \eqref{eq:gneq0-component-complete},
		\begin{equation*}
			\partial_xg_{\neq0}^3=(\partial_xq)\partial_yu^3_{0}+(\partial_xu^3_{\neq})D_z^Vu^3_{0}.
		\end{equation*}
		For the first product, use \eqref{eq:two-level-zero-wave-E6} with the $H^1$ roll norm in $\bigl(\|\nabla_\perp u^\perp_{0}(t)\|_{H^1}+\|\nabla_\perp D_z^V u^\perp_{0}(t)\|_{H^1}\bigr)$ and the $H^1$ wave norm in $\mathcal U_1$; for the second, use \eqref{eq:roll-coeff-point-E6}. This gives
		\begin{equation*}
			\|\partial_xg_{\neq0}^3\|_{L^2}
			\leq C\bigl(\|\nabla_\perp u^\perp_{0}(t)\|_{H^1}+\|\nabla_\perp D_z^V u^\perp_{0}(t)\|_{H^1}\bigr)\mathcal U_{0}+C E_{2}(\mathcal U_{0}+\mathcal U_1).
		\end{equation*}
		
		For the weak full gradient, differentiating \eqref{eq:gneq0-component-complete} gives
		\begin{align}
			\nabla g_{\neq0}^3
			={}&(\nabla q)\partial_yu^3_{0}+q\nabla\partial_yu^3_{0}
			\nonumber\\
			&+(\nabla u^3_{\neq})D_z^Vu^3_{0}+u^3_{\neq}\nabla D_z^Vu^3_{0}.
			\label{eq:gneq0-gradient-explicit}
		\end{align}
		The $z$-component hidden in $q\nabla\partial_yu^3_{0}$ is not treated by an unweighted highest-$y$ derivative. Indeed, since $D_z^V=\partial_z-\kappa\partial_y$,
		\begin{equation*}
			\partial_z\partial_yu^3_{0}
			=\partial_yD_z^Vu^3_{0}+(\partial_y\kappa)\partial_yu^3_{0}+\kappa\partial_y^2u^3_{0}.
		\end{equation*}
		The first piece satisfies
		\begin{equation*}
			\|q\,\partial_yD_z^Vu^3_{0}\|_{L^2}
			\leq C\bigl(\|\nabla_\perp u^\perp_{0}(t)\|_{H^1}+\|\nabla_\perp D_z^V u^\perp_{0}(t)\|_{H^1}\bigr)\mathcal U_{0}+C E_{2}\mathcal U_1
		\end{equation*}
		by \eqref{eq:two-level-zero-wave-E6}. The second piece is kept in spacetime. The already established stronger estimate \eqref{eq:kappagrad-roll-time-E6} implies
		\begin{equation*}
			\nu^{1/2}\|(\partial_y\kappa)\partial_yu^3_{0}\|_{L_t^2H^1}
			\leq C\delta_* E_{2}.
		\end{equation*}
		Together with \eqref{eq:E6-recovery-low} and the two-level product estimate, this contributes at most $C\delta_*(E_{2}/\nu)E_{6}$ to the weak forcing norm. For the third piece, \eqref{eq:kappa-H2-transient-revised}, $m_\nu\leq\mu_\nu$, and \eqref{eq:anis-prod-L2-coeff} give the pointwise bound
		\begin{equation*}
			\|\kappa q\,\partial_y^2u^3_{0}\|_{L^2}
			\leq C(E_{1}+\frac{\varepsilon}{\nu}) E_{2}\mathcal U_1,
		\end{equation*}
		which is smaller than the exceptional term below under the bootstrap smallness assumption. We now record the three remaining regular pieces in \eqref{eq:gneq0-gradient-explicit} explicitly. First, the two-level zero--wave estimate, the two derivative norms in \eqref{eq:R1-time-complete}, \eqref{eq:roll-recovery-final}, and \eqref{eq:E6-recovery-low}--\eqref{eq:E6-recovery-high} give
		\begin{align}
			\|(\nabla q)\partial_yu^3_{0}\|_{L^2}
			&\leq C\|\partial_yu^3_{0}\|_{H^1}\|\nabla q\|_{L^2}
			+C\|\partial_yu^3_{0}\|_{L^2}\|\nabla q\|_{H^1}
			\nonumber\\
			&\leq C\bigl(\|\nabla_\perp u^\perp_{0}(t)\|_{H^1}+\|\nabla_\perp D_z^V u^\perp_{0}(t)\|_{H^1}\bigr)\mathcal U_{0}+C E_{2}\mathcal U_1.
			\notag
		\end{align}
		Next, using \eqref{eq:anis-prod-L2} with the $x$-independent coefficient $D_z^Vu^3_{0}$, together with \eqref{eq:roll-coeff-point-E6},
		\begin{equation*}
			\|(\nabla u^3_{\neq})D_z^Vu^3_{0}\|_{L^2}
			\leq C E_{2}\mathcal U_1.
		\end{equation*}
		Finally, applying the two-level zero--wave estimate with $b=\nabla D_z^Vu^3_{0}$ and $f=u^3_{\neq}$, and using \eqref{eq:roll-coeff-point-E6} and the two derivative norms in \eqref{eq:R1-time-complete},
		\begin{align}
			\|u^3_{\neq}\nabla D_z^Vu^3_{0}\|_{L^2}
			&\leq C\|\nabla D_z^Vu^3_{0}\|_{H^1}\|u^3_{\neq}\|_{L^2}
			+C\|\nabla D_z^Vu^3_{0}\|_{L^2}\|u^3_{\neq}\|_{H^1}
			\nonumber\\
			&\leq C\bigl(\|\nabla_\perp u^\perp_{0}(t)\|_{H^1}+\|\nabla_\perp D_z^V u^\perp_{0}(t)\|_{H^1}\bigr)\mathcal U_{0}+C E_{2}\mathcal U_1.
			\notag
		\end{align}
		Consequently,
		\begin{align*}
			&\| (\nabla q)\partial_yu^3_{0}\|_{L^2}
			+\|(\nabla u^3_{\neq})D_z^Vu^3_{0}\|_{L^2}
			+\|u^3_{\neq}\nabla D_z^Vu^3_{0}\|_{L^2}
			\\ &\quad\leq C E_{2}\mathcal U_1+C\bigl(\|\nabla_\perp u^\perp_{0}(t)\|_{H^1}+\|\nabla_\perp D_z^V u^\perp_{0}(t)\|_{H^1}\bigr)\mathcal U_{0}.
		\end{align*}
		The only term requiring the explicit factor $\mu_\nu^{-1/2}$ is therefore $q\,\partial_y^2u^3_{0}$, which is estimated by \eqref{eq:anis-prod-L2-coeff}, \eqref{eq:E6-recovery-high}, and the weighted roll recovery:
		\begin{equation*}
			\|q\,\partial_y^2u^3_{0}\|_{L^2}
			\leq C\mu_\nu(t)^{-1/2}E_{2}\mathcal U_1(t).
		\end{equation*}
		Here the $H^1$ norms of both $q$ and $D_z^Vq$ required by \eqref{eq:anis-prod-L2-coeff} are contained in $\mathcal U_1$.
		
		The regular terms are integrated using \eqref{eq:0neq-time-ZU1}--\eqref{eq:0neq-time-RU0}; they contribute at most $C(E_{2}/\nu)E_{6}$ to the first two norms and a smaller $C\nu^{-2/3}E_{2}E_{6}$ to the weak norm. Since $\mu_\nu\geq\nu^{2/3}$,
		\begin{align}
			\nu^{-1/6}\|\ee^{3\gamma\nu^{1/3}t}q\,\partial_y^2u^3_{0}\|_{L_t^2L_x^2}
			\leq C\nu^{-1/6}\nu^{-1/3}E_{2}
			\|\ee^{3\gamma\nu^{1/3}t}\mathcal U_1\|_{L_t^2}
			\leq C\frac{E_{2}}{\nu}E_{6}.
			\notag
		\end{align}
		This proves \eqref{eq:neq0-forcing-revised}.
	\end{proof}
	
The remaining mixed forcing is the pressure generated by one roll factor and one wave factor.

\begin{Lem}
		\label{lem:mixed-pressure-revised}
		Assume \eqref{eq:variable-small-assumption} and $E_{1}(T)+\frac{\varepsilon}{\nu}\leq\delta_*$. Let $p^{0\neq}$ be defined by \eqref{eq:pR-final}. Then
		\begin{align}
			&\nu^{-1/2}\|\ee^{3\gamma\nu^{1/3}t}\nabla(\partial_yp^{0\neq}+\kappa\partial_zp^{0\neq})\|_{L_t^2L_x^2}
			+\nu^{-1/2}\|\ee^{3\gamma\nu^{1/3}t}\partial_x\partial_zp^{0\neq}\|_{L_t^2L_x^2}
			\nonumber\\
			&\qquad
			+\nu^{-1/6}\|\ee^{3\gamma\nu^{1/3}t}\nabla\partial_zp^{0\neq}\|_{L_t^2L_x^2}
			\leq C\frac{E_{2}(T)}{\nu}E_{6}(T).
			\label{eq:mixed-pressure-revised}
		\end{align}
	\end{Lem}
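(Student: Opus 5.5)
The plan is to write $\Delta p^{0\neq}=-2\partial_\alpha u^\beta_{0}\partial_\beta u^\alpha_{\neq}$ in divergence form and treat $p^{0\neq}$ as an elliptic image of a zero--wave product. Then each of the three norms in \eqref{eq:mixed-pressure-revised} reduces to the product bounds already used for $g_{0\neq}$ and $g_{\neq0}$ in \Cref{lem:0neq-transport-revised,lem:neq0-transport-revised}. Since $\nablaP\cdot u^\perp_{0}=0$ and $\nabla\cdot u_{\neq}=0$, with $\partial_xu_{0}=0$, we have
\begin{equation*}
 \partial_\alpha u^\beta_{0}\partial_\beta u^\alpha_{\neq}
 =\partial_\beta\bigl(\partial_\alpha u^\beta_{0}\,u^\alpha_{\neq}\bigr)
 =\partial_\alpha\bigl(u^\beta_{0}\partial_\beta u^\alpha_{\neq}\bigr)+\partial_xu^\beta_{0}(\cdots),
\end{equation*}
where the last term vanishes. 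Thus $\partial_jp^{0\neq}=2\partial_j\partial_\alpha\Delta^{-1}(u^\beta_{0}\partial_\beta u^\alpha_{\neq})$, or equivalently $2\partial_j\partial_\beta\Delta^{-1}(\partial_\alpha u^\beta_{0}u^\alpha_{\neq})$. The operators $\partial_j\partial_\alpha\Delta^{-1}$ are $L^2$-bounded Riesz transforms on the $x$-nonzero subspace, and they commute with $\nabla$ and $\partial_x$.

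\emph{Streamwise and weak gradient terms.} I would bound $\|\partial_x\partial_zp^{0\neq}\|_{L^2}$ by $\|\partial_x(u^\beta_{0}\partial_\beta u^\alpha_{\neq})\|_{L^2}$ and $\|\nabla\partial_zp^{0\neq}\|_{L^2}$ by $\|\nabla(\partial_\alpha u^\beta_{0}u^\alpha_{\neq})\|_{L^2}$. These are of the form already handled in \Cref{lem:0neq-transport-revised,lem:neq0-transport-revised}. I would use the decompositions $u^\perp_{0}\cdot\nablaP f=(u^2_{0}+\kappa u^3_{0})\partial_yf+u^3_{0}D_z^Vf$ and $u^\perp_{\neq}\cdot\nablaP f=q\partial_yf+u^3_{\neq}D_z^Vf$, together with \Cref{lem:roll-coefficients-E6}, the two-level estimate \eqref{eq:two-level-zero-wave-E6}, and \Cref{lem:E6-wave-recovery}. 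The bad pieces, namely $\partial_y^2u^3_{\neq}$ and $\partial_y^2u^3_{0}$, would be paid with $\nu^{-2/3}\mathcal U_{0}$ and $\mu_\nu^{-1/2}\le\nu^{-1/3}$ respectively. The weak norm carries only the prefactor $\nu^{-1/6}$, so after time integration via \eqref{eq:U-profiles-revised} and \eqref{eq:R1-time-complete} the result is $C\nu^{-1}E_2E_6$, exactly as in the proof of \Cref{lem:0neq-transport-revised}.

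\emph{The good combination.} This is the main obstacle, since $\kappa$ does not commute with $\Delta^{-1}$. I would write
\begin{equation*}
 \partial_yp^{0\neq}+\kappa\partial_zp^{0\neq}
 =(\partial_y+\kappa\partial_z)\Delta^{-1}\Delta p^{0\neq}.
\end{equation*}
The full gradient of $\partial_yp^{0\neq}$ alone costs the bad $\partial_y^2u^3$ derivatives at the strong prefactor $\nu^{-1/2}$. To avoid this, I would exploit the structure of the source. Expanding $\partial_\alpha u^\beta_{0}\partial_\beta u^\alpha_{\neq}$ and using incompressibility of the roll, the only term containing $\partial_yu^3_{0}$ pairs with $\partial_zu^2_{\neq}$. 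Converting $\partial_z=D_z^V+\kappa\partial_y$ throughout and using $q=u^2_{\neq}+\kappa u^3_{\neq}$ should rewrite the source so that every factor involves $D_z^V$ applied to a roll or wave component, $\nablaP(u^2_{0}+\kappa u^3_{0})$, $\nabla q$, or $\nabla\kappa$ times a controlled quantity. These are exactly the good quantities in \eqref{eq:roll-coeff-point-E6} and $\mathcal U_0,\mathcal U_1$. The commutator $[\kappa,\partial_z\Delta^{-1}]$ would be bounded by $\|\kappa\|_{H^3}\le C\delta_*$ and gains one derivative, so it contributes lower-order terms. The same pairing as before, placing the differentiated factor on the roll with $\mathcal U_{0}$ and the undifferentiated roll with $\mathcal U_{1}$, then gives $C\nu^{-1/2}\nu^{-1/2}E_2E_6$ after time integration. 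If the cancellation in the good combination fails for some term, my fallback is to bound that term by $\kappa$-smallness from \Cref{lem:transient-kappa-revised}, using the gain $m_\nu^{1/2}$ against $\mu_\nu^{-1/2}$.
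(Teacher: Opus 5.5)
Your argument can be completed, but it is built around an obstacle that does not exist, and one of your identities is false.

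\emph{The second divergence form is wrong.} Only the roll is transversally divergence free; for the wave, $\nablaP\cdot u^\perp_{\neq}=-\partial_xu^1_{\neq}\neq0$. Hence
\[
\partial_\alpha u^\beta_{0}\partial_\beta u^\alpha_{\neq}=\partial_\alpha\bigl(u^\beta_{0}\partial_\beta u^\alpha_{\neq}\bigr)+\partial_x\bigl(u^\beta_{0}\partial_\beta u^1_{\neq}\bigr),
\]
and the last term does not vanish. It is $\partial_x$ acting on $u^1_{\neq}$, not on the roll. Your streamwise bound through $\|\partial_x(u^\beta_0\partial_\beta u^\alpha_{\neq})\|_{L^2}$ therefore drops a $u^1_{\neq}$ contribution. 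This is repairable: use your first form $\partial_\beta(\partial_\alpha u^\beta_0u^\alpha_{\neq})$, which is correct, or control $\nabla\partial_xu^1_{\neq}$ via \Cref{cor:w1-recovery}. As written, however, the step is incorrect.

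\emph{The ``main obstacle'' is illusory.} $\nabla^2\Delta^{-1}$ is an order-zero multiplier, and $\partial_z\Delta^{-1}$ is bounded on $L^2$ for $x$-nonzero functions. So every quantity in \eqref{eq:mixed-pressure-revised} is bounded by $\|\Delta p^{0\neq}\|_{L^2}$ itself, with no derivative placed on the product. This covers $\nabla\partial_yp^{0\neq}$, $\partial_x\partial_zp^{0\neq}$, $\nabla\partial_zp^{0\neq}$ and $\partial_zp^{0\neq}$; it is \eqref{eq:pR-elliptic-from-source}. The bad derivatives $\partial_y^2u^3_{\neq}$ and $\partial_y^2u^3_0$ enter your plan only because you pass to divergence form and then differentiate the product. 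Your own formula $(\partial_y+\kappa\partial_z)\Delta^{-1}\Delta p^{0\neq}$ already shows that $\nabla\partial_y p^{0\neq}$ is harmless.

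No cancellation from the good combination is needed. The term $\nabla(\kappa\partial_zp^{0\neq})$ is just the product rule with $\|\kappa\|_{W^{1,\infty}}\le C\delta_*$, so no commutator and no fallback are required. The paper therefore handles all three norms at the strong prefactor $\nu^{-1/2}$, using $\nu^{-1/6}\le\nu^{-1/2}$ for the weak one. Your divergence-form treatment of the weak gradient, paying $\nu^{-2/3}$ or $\mu_\nu^{-1/2}$, works but is lossy and unnecessary.

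\emph{The real content is left at ``should''.} What remains is the $L^2$ bound for the non-divergence source at the strong prefactor. The paper obtains it from \eqref{eq:pR-good-expansion}. Substituting $\partial_zu^3_0=-\partial_yu^2_0$ and $\partial_z=D_z^V+\kappa\partial_y$, the two terms $\pm\kappa\partial_yu^2_0\partial_yu^3_{\neq}$ cancel, leaving four products of first derivatives. Three of them are bounded by $CE_2\mathcal U_1$ using \eqref{eq:anis-prod-L2}, \Cref{lem:roll-coefficients-E6} and \eqref{eq:E6-D-commuted}.

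The fourth, $\partial_yu^3_0D_z^Vu^2_{\neq}$, genuinely requires the two-level estimate \eqref{eq:two-level-zero-wave-E6}. Using \eqref{eq:anis-prod-L2} would need $\|\partial_yu^3_0\|_{H^1}\le C\mu_\nu^{-1/2}E_2$ pointwise, which loses $\nu^{-1/3}$. Your $\kappa$-smallness fallback cannot rescue this term, since it contains no $\kappa$. Your ``same pairing'' remark is exactly what is needed here. Time integration by \eqref{eq:0neq-time-ZU1}--\eqref{eq:0neq-time-RU0} then gives \eqref{eq:pR-source-E6-time}, and the lemma follows in one line.
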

	
	\begin{proof}
The mixed pressure corresponds to $p^{(3)}$ in the source identified
above. We check its source using the present roll norms:

\begin{equation}
 \Delta p^{0\neq}
 =-2\sum_{\alpha,\beta\in\{2,3\}}
 \partial_\alpha u^\beta_{0}\,\partial_\beta u^\alpha_{\neq}.
 \label{eq:pR-source-SR}
\end{equation}
Using $\partial_zu^3_{0}=-\partial_yu^2_{0}$ gives
\begin{align}
 \sum_{\alpha,\beta\in\{2,3\}}
 \partial_\alpha u^\beta_{0}\,\partial_\beta u^\alpha_{\neq}
 ={}&(\partial_yu^2_{0}+\kappa\partial_yu^3_{0})\partial_yu^2_{\neq}
 +\partial_yu^3_{0}D_z^Vu^2_{\neq}\nonumber\\
 &+(D_z^Vu^2_{0})\partial_yu^3_{\neq}
 -\partial_yu^2_{0}D_z^Vu^3_{\neq}.
 \label{eq:pR-good-expansion}
\end{align}
By \Cref{lem:roll-coefficients-E6,lem:E6-wave-recovery} and
\eqref{eq:anis-prod-L2}, the first, third, and fourth terms are
bounded by $CE_2\mathcal U_1$.
For the second, \eqref{eq:two-level-zero-wave-E6} gives
\begin{equation*}
 \|\partial_yu^3_{0}D_z^Vu^2_{\neq}\|_{L^2}
 \leq C\bigl(\|\nabla_\perp u^\perp_{0}\|_{H^1}
 +\|\nabla_\perp D_z^Vu^\perp_{0}\|_{H^1}\bigr)\mathcal U_0
 +CE_2\mathcal U_1.
\end{equation*}
The two time estimates \eqref{eq:0neq-time-ZU1}--\eqref{eq:0neq-time-RU0}
therefore imply
\begin{equation}
 \nu^{-1/2}\left\|\ee^{3\gamma\nu^{1/3}t}
 \sum_{\alpha,\beta\in\{2,3\}}
 \partial_\alpha u^\beta_{0}\,\partial_\beta u^\alpha_{\neq}
 \right\|_{L_t^2L_x^2}
 \leq C\frac{E_2}{\nu}E_6.
 \label{eq:pR-source-E6-time}
\end{equation}
Order-zero multipliers control the pressure Hessian. Since this source is $x$-nonzero, the first derivative is also
bounded, uniformly for real $\ell$:
\begin{equation}
 \|\nabla\partial_yp^{0\neq}\|_{L^2}
 +\|\partial_x\partial_zp^{0\neq}\|_{L^2}
 +\|\partial_z^2p^{0\neq}\|_{L^2}
 +\|\nabla\partial_zp^{0\neq}\|_{L^2}
 +\|\partial_zp^{0\neq}\|_{L^2}
 \leq C\left\|\sum_{\alpha,\beta\in\{2,3\}}
 \partial_\alpha u^\beta_{0}\,\partial_\beta u^\alpha_{\neq}\right\|_{L^2}.
 \label{eq:pR-elliptic-from-source}
\end{equation}
The product rule for $\nabla(\kappa\partial_zp^{0\neq})$,
$\|\kappa\|_{W^{1,\infty}}\leq C\delta_*$, and
$\nu^{-1/6}\leq\nu^{-1/2}$ finish the proof.
\end{proof}

\begin{Lem}
		\label{lem:neqneq-forcing-revised}
		Assume $E_{1}(T)+\frac{\varepsilon}{\nu}\leq\delta_*$. Let $g_{\neq\neq}^j=(u_{\neq}\cdot\nabla u^j_{\neq})_{\neq}+\partial_jp^{\neq\neq}$. Then
		\begin{align}
			&\nu^{-1/2}\|\ee^{3\gamma\nu^{1/3}t}\nabla(g_{\neq\neq}^2+\kappa g_{\neq\neq}^3)\|_{L_t^2L_x^2}
			+\nu^{-1/2}\|\ee^{3\gamma\nu^{1/3}t}\partial_xg_{\neq\neq}^3\|_{L_t^2L_x^2}
			\nonumber\\
			&\qquad
			+\nu^{-1/6}\|\ee^{3\gamma\nu^{1/3}t}\nabla g_{\neq\neq}^3\|_{L_t^2L_x^2}
			\leq C\frac{E_{4}(T)^2}{\nu}.
			\label{eq:neqneq-forcing-revised}
		\end{align}
	\end{Lem}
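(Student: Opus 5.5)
The plan is to reduce every term to the pointwise quadratic bounds of \Cref{lem:quadratic-wave-revised} and then insert the $\kappa$ coefficient through \Cref{lem:transient-kappa-revised}. First write
\begin{equation*}
 g_{\neq\neq}^2+\kappa g_{\neq\neq}^3
 =(u_{\neq}\cdot\nabla u^2_{\neq})_{\neq}+\partial_yp^{\neq\neq}
 +\kappa\bigl((u_{\neq}\cdot\nabla u^3_{\neq})_{\neq}+\partial_zp^{\neq\neq}\bigr).
\end{equation*}
The pieces without $\kappa$ are controlled directly. \eqref{eq:quadratic-time-final} bounds $\nabla(u_{\neq}\cdot\nabla u^2_{\neq})$, $\nabla^2p^{\neq\neq}$, $\nabla\partial_zp^{\neq\neq}$ and $(\partial_x,\partial_z)(u_{\neq}\cdot\nabla u^3_{\neq})$ in $\ee^{4\gamma\nu^{1/3}t}L^2_tL^2_x$ by $C\nu^{-1/2}E_4^2$. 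Since the target weight is $3\gamma<4\gamma$, multiplying by $\nu^{-1/2}$ gives $C\nu^{-1}E_4^2$. This already settles the $\nabla\partial_yp^{\neq\neq}$ term in the first norm and the whole second norm, because $\partial_xg^3_{\neq\neq}=\partial_x(u_{\neq}\cdot\nabla u^3_{\neq})_{\neq}+\partial_x\partial_zp^{\neq\neq}$, and both pieces appear in \eqref{eq:quadratic-time-final}.

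For the weak third norm, $\nabla g^3_{\neq\neq}$ contains the bad full gradient $\nabla(u_{\neq}\cdot\nabla u^3_{\neq})$. Here I use \eqref{eq:bad-wave3-weighted-revised}, which gives $\|\ee^{4\gamma\nu^{1/3}t}\nabla(u_{\neq}\cdot\nabla u^3_{\neq})\|_{L^2_tL^2_x}\le C\nu^{-5/6}E_4^2$. Multiplying by $\nu^{-1/6}$ yields exactly $C\nu^{-1}E_4^2$. The pressure part $\nabla\partial_zp^{\neq\neq}$ is better, and the nonzero projection is a contraction.

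The main obstacle is the $\kappa$-weighted term in the first norm, $\nabla(\kappa h)$ with $h=(u_{\neq}\cdot\nabla u^3_{\neq})_{\neq}+\partial_zp^{\neq\neq}$. The prefactor there is $\nu^{-1/2}$, but only the weak $\nu^{-5/6}$ bound is available for $\nabla h$. I would apply \eqref{eq:kappa-multiplier-gain-revised}, which gives
\begin{equation*}
 \nu^{-1/2}\|\ee^{3\gamma\nu^{1/3}t}\nabla(\kappa h)\|_{L_t^2L_x^2}
 \le C\Bigl(E_1+\frac{\varepsilon}{\nu}\Bigr)\nu^{-1/6}\|\ee^{4\gamma\nu^{1/3}t}\nabla h\|_{L_t^2L_x^2}.
\end{equation*}
By the weak-norm computation above, this is at most $C\delta_*\nu^{-1}E_4^2$. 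The point is that the transient smallness $m_\nu^{1/2}$ of $\|\kappa\|_{H^2}$ against the weight gap $\ee^{-\gamma\nu^{1/3}t}$ produces the $\nu^{1/3}$ gain that compensates for the bad $\partial_y$ derivative. I would check that the proof of \eqref{eq:kappa-multiplier-gain-revised} only uses $h_0=0$, which holds because both pieces of $h$ are $x$-nonzero. Summing the three pieces then gives \eqref{eq:neqneq-forcing-revised}.
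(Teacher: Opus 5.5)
Your proposal is correct and is essentially the paper's own proof. The $\kappa$-free pieces and $\partial_x g^3_{\neq\neq}$ come from \eqref{eq:quadratic-time-final}, and the weak full gradient of $g^3_{\neq\neq}$ comes from \eqref{eq:bad-wave3-weighted-revised} together with the pressure part of \eqref{eq:quadratic-time-final}. The term $\nabla(\kappa g^3_{\neq\neq})$ is handled by \eqref{eq:kappa-multiplier-gain-revised} fed with that weak bound, after which the factor $E_1+\varepsilon/\nu\le\delta_*$ is absorbed. Your check that $g^3_{\neq\neq}$ is $x$-nonzero, so that the multiplier estimate applies, is a point the paper leaves implicit.
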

	
	\begin{proof}
Combine \Cref{lem:quadratic-wave-revised,lem:transient-kappa-revised}
at the required forcing weights.

		By \eqref{eq:quadratic-time-final},
		\begin{equation*}
			\|\ee^{4\gamma\nu^{1/3}t}\nabla g_{\neq\neq}^2\|_{L_t^2L_x^2}
			+\|\ee^{4\gamma\nu^{1/3}t}\partial_xg_{\neq\neq}^3\|_{L_t^2L_x^2}
			\leq C\nu^{-1/2}E_{4}^2.
		\end{equation*}
		Moreover, \eqref{eq:bad-wave3-weighted-revised} and the pressure part of \eqref{eq:quadratic-time-final} give
		\begin{equation}
			\nu^{1/3}\|\ee^{4\gamma\nu^{1/3}t}\nabla g_{\neq\neq}^3\|_{L_t^2L_x^2}
			\leq C\nu^{-1/2}E_{4}^2.
			\label{eq:wave-weak-forcing-revised}
		\end{equation}
		The first two terms in \eqref{eq:neqneq-forcing-revised} now follow directly.  The last follows by multiplying \eqref{eq:wave-weak-forcing-revised} by $\nu^{-1/2}$.
		
		It remains to control the occurrence of $\kappa g_{\neq\neq}^3$ in the good combination.  By \Cref{lem:transient-kappa-revised},
		\begin{align}
			&\nu^{-1/2}\|\ee^{3\gamma\nu^{1/3}t}\nabla(\kappa g_{\neq\neq}^3)\|_{L_t^2L_x^2}
			\nonumber\\
			&\qquad\leq C(E_{1}+\frac{\varepsilon}{\nu})\nu^{-1/6}
			\|\ee^{4\gamma\nu^{1/3}t}\nabla g_{\neq\neq}^3\|_{L_t^2L_x^2}
			\leq C(E_{1}+\frac{\varepsilon}{\nu})\frac{E_{4}^2}{\nu}.\notag
		\end{align}
		Since $E_{1}+\frac{\varepsilon}{\nu}$ is chosen smaller than an absolute constant in the bootstrap, this is absorbed in the right-hand side of \eqref{eq:neqneq-forcing-revised}.
	\end{proof}
	
We now assemble the interaction classes and record the high-order products needed in the energy closure.

\begin{Lem}
		\label{lem:streak-physical-tame}
		Let $b=b(y,z)\in H^3(\mathbb R^2)$ and let $f=f(x,y,z)$ satisfy $f_{0}=0$. Then
		\begin{align}
			\|bf\|_{H^2}
			&\leq C\|b\|_{H^1}\|(\partial_x,\partial_z)f\|_{H^2}
			+C\|b\|_{H^3}\|(\partial_x,\partial_z)f\|_{L^2},
			\label{eq:streak-physical-tame-H2}\\
			\|bf\|_{H^3}
			&\leq C\|b\|_{H^1}\|(\partial_x,\partial_z)f\|_{H^3}
			+C\|b\|_{H^3}\|(\partial_x,\partial_z)f\|_{H^1}.
			\label{eq:streak-physical-tame-H3}
		\end{align}
	\end{Lem}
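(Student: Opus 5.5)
The plan is to expand $\nabla^m(bf)$ by the Leibniz rule for $m\le 3$ and estimate each term $\nabla^\alpha b\,\nabla^\beta f$, $|\alpha|+|\beta|\le m$, in $L^2(\Omega)$. The estimate rests on an anisotropic Sobolev principle. Since $b$ depends only on $(y,z)$ and $f$ has zero streamwise mean, every $L^\infty$ norm must be placed on a function of at most two variables, and the streamwise Poincar\'e inequality is used to produce the gain. Concretely, we use the two-dimensional embedding in the $(x,z)$ variables for fixed $y$:
\begin{equation*}
\|g(\cdot,y,\cdot)\|_{L^\infty_{x,z}}\lesssim \|(\partial_x,\partial_z)g(\cdot,y,\cdot)\|_{L^2_{x,z}}^{1/2}\|(\partial_x,\partial_z)^2g(\cdot,y,\cdot)\|_{L^2_{x,z}}^{1/2}+\dots,
\end{equation*}
valid for $x$-mean-zero $g$, where a Gagliardo--Nirenberg inequality on $\mathbb T\times\mathbb R$ involves only tangential derivatives. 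We combine it with the one-dimensional embedding $\|h(y)\|_{L^\infty_y}\lesssim\|h\|_{H^1_y}$ for the coefficient, or equally the embedding $H^1_yL^2_z\cap\dots$. This is exactly the mechanism behind \eqref{eq:anis-prod-L2} and the two-level bound \eqref{eq:two-level-zero-wave-E6}, which may be used as black boxes.

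The proof of \eqref{eq:streak-physical-tame-H2} splits according to how many derivatives fall on $b$. When all derivatives fall on $f$ ($|\alpha|=0$), we bound $\|b\,\nabla^m f\|_{L^2}$. Because $b$ is independent of $x$, we apply the product estimate in the form $\|bg\|_{L^2}\lesssim\|b\|_{H^1}\|(\partial_x,\partial_z)g\|_{L^2}$. This follows from writing $\|bg\|^2_{L^2}=\int_y\!\int_{x,z}|b|^2|g|^2$, placing $b$ in $L^\infty_yL^2_z$ (controlled by $\|b\|_{H^1}$ in two dimensions) and $g$ in $L^2_yL^\infty_z L^2_x$, and then using $\|g\|_{L^\infty_zL^2_x}\lesssim\|g\|^{1/2}\|\partial_zg\|^{1/2}$ together with $\|g\|\le\|\partial_xg\|$. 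Taking $g=\nabla^m f$ gives $\|b\|_{H^1}\|(\partial_x,\partial_z)f\|_{H^m}$, which is the first term. When all derivatives fall on $b$, we use $\|\nabla^m b\, f\|_{L^2}\le\|\nabla^mb\|_{L^2_{y,z}}\|f\|_{L^\infty_{y,z}L^2_x}$. The last factor is controlled by the anisotropic Sobolev inequality in $(y,z)$ plus Poincar\'e in $x$, but since only $(\partial_x,\partial_z)$-derivatives are available in the second term, we instead place $\nabla^mb$ in $L^\infty_yL^2_z$. This requires $\|b\|_{H^{m+1}}\le\|b\|_{H^3}$ for $m\le2$, and leaves $f$ in $L^2_yL^\infty_zL^2_x$, which is bounded by $\|(\partial_x,\partial_z)f\|_{L^2}$ as above.

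The intermediate cases are handled by the same splitting with $\nabla^\alpha b$ in $L^\infty_yL^2_z$ (costing $\|b\|_{H^{|\alpha|+1}}$) and $\nabla^\beta f$ in $L^2_yL^\infty_zL^2_x$ (costing $\|(\partial_x,\partial_z)f\|_{H^{|\beta|}}$). Each mixed term is then bounded by $\|b\|_{H^{k}}\|(\partial_x,\partial_z)f\|_{H^{m-k+1}}$ with $2\le k\le m$, and we interpolate between the two endpoint pairs. The Gagliardo--Nirenberg interpolation $\|b\|_{H^k}\lesssim\|b\|_{H^1}^{\theta}\|b\|_{H^3}^{1-\theta}$, with the matching interpolation for $f$ and Young's inequality, bounds every such term by the sum of the two endpoint terms. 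For \eqref{eq:streak-physical-tame-H3}, with $m=3$, the case where all three derivatives hit $b$ would cost $\|b\|_{H^4}$ under the previous placement. Here we instead put $\nabla^3b$ in $L^2_{y,z}$ and $f$ in $L^\infty_{y,z}L^2_x$. The anisotropic embedding $\|f\|_{L^\infty_{y,z}L^2_x}\lesssim\|f\|_{H^1}+\|\partial_z f\|_{H^1}$ (compare Lemma~\ref{lem:roll-coefficients-E6}) together with Poincar\'e in $x$ bounds this by $\|(\partial_x,\partial_z)f\|_{H^1}$, matching the second term.

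The main obstacle is the bookkeeping ensuring that no pure $\partial_y$-derivative of $f$ ever appears without a companion $\partial_x$ or $\partial_z$. Every factor of $f$ must be measured through $(\partial_x,\partial_z)f$, so in each placement the $L^\infty$ gain has to come from the $z$-direction (via $\partial_z$) or from Poincar\'e in $x$, never from $\partial_y$. This is achieved by always taking the $y$-supremum on the coefficient $b$, which is legitimate since $b$ carries the $H^1$ or $H^3$ norm. In the top case of \eqref{eq:streak-physical-tame-H3}, the $L^\infty_y$ of $f$ is unavoidable, and we absorb it using the $H^1$ level of the second factor.
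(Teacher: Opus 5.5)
Your proof is correct and follows essentially the paper's route: Leibniz expansion (only transverse derivatives can fall on $b$), then the flat anisotropic product \eqref{eq:anis-prod-L2} for every term except $\nabla^3 b\,f$ in \eqref{eq:streak-physical-tame-H3}, where you rightly switch to the placement $\|\nabla^3 b\|_{L^2}\|f\|_{L^\infty_{y,z}L^2_x}$ of \eqref{eq:anis-prod-L2-coeff}, and finally Sobolev interpolation with Young's inequality together with the streamwise Poincar\'e inequality. The differences are cosmetic: you apply Poincar\'e term by term rather than first proving the intermediate bound \eqref{eq:continuous-WZ-tame} in terms of $\|f\|_{H^r}+\|\partial_z f\|_{H^r}$, and the $(x,z)$ Gagliardo--Nirenberg embedding announced in your first paragraph is never actually used.
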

	
	\begin{proof}
The coefficient products identified in Appendix~\ref{app:functional-tools} give, for $m=2,3$,
\begin{align}
 \|bf\|_{H^m}
 \leq{}&C\|b\|_{H^1}
   (\|f\|_{H^m}+\|\partial_zf\|_{H^m})\nonumber\\
 &+C\|b\|_{H^3}
   (\|f\|_{H^{m-2}}+\|\partial_zf\|_{H^{m-2}}).
 \label{eq:continuous-WZ-tame}
\end{align}
Their proof uses the flat anisotropic products verified on
$\mathbb R^2$ in \Cref{lem:anisotropic-products-R2}, followed by
Leibniz expansion and Sobolev interpolation; these steps are unchanged.
Since $f_{0}=0$,
$\|f\|_{H^r}+\|\partial_zf\|_{H^r}
\leq C\|(\partial_x,\partial_z)f\|_{H^r}$.
Substitution proves both estimates.
\end{proof}

	\begin{Lem}
		\label{lem:wide-gap-zero-wave-H3}
		Let $b=b(y,z)\in H^4(\mathbb R^2)$ and $f=f(x,y,z)$. Then
		\begin{equation}
			\|bf\|_{H^3(\Omega)}
			\leq C\|b\|_{H^1(\mathbb R^2)}
			\bigl(\|f\|_{H^3(\Omega)}+\|\partial_zf\|_{H^3(\Omega)}\bigr)
			+C\|b\|_{H^4(\mathbb R^2)}
			\bigl(\|f\|_{L^2}+\|\partial_zf\|_{L^2}\bigr).
			\label{eq:wide-gap-zero-wave-H3}
		\end{equation}
	\end{Lem}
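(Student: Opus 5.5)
The plan is to prove \eqref{eq:wide-gap-zero-wave-H3} by Leibniz expansion and a product estimate in which each derivative count is paired so that the coefficient carries either at most one derivative or all of them. Write $\|bf\|_{H^3}\lesssim\sum_{|\alpha|\leq3}\|\partial^\alpha(bf)\|_{L^2}$. Since $b$ is independent of $x$, $\partial_x$ derivatives always land on $f$. Thus every term has the form $(\partial_\perp^\beta b)(\partial^{\alpha'}f)$ with $|\beta|+|\alpha'|\leq3$ and $\beta$ a transverse multi-index.

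The basic tool will be the flat anisotropic product on $\Omega=\T\times\R^2$, in the form already used for \eqref{eq:anis-prod-L2}. For $g=g(y,z)$ and $h=h(x,y,z)$ it reads
\[
 \|gh\|_{L^2(\Omega)}\leq C\|g\|_{L^2(\R^2)}^{1/2}\|g\|_{H^1(\R^2)}^{1/2}\bigl(\|h\|_{H^1}+\|\partial_zh\|_{H^1}\bigr)^{\ldots},
\]
or, in the cruder form I will actually use,
\[
 \|gh\|_{L^2}\leq C\|g\|_{H^1(\R^2)}\bigl(\|h\|_{L^2}+\|\partial_zh\|_{L^2}\bigr)^{1/2}\bigl(\|h\|_{H^1}+\|\partial_zh\|_{H^1}\bigr)^{1/2}.
\]
This comes from $L^\infty_z$ control via $\|h\|_{L^\infty_z}\lesssim\|h\|_{L^2_z}^{1/2}\|\partial_zh\|_{L^2_z}^{1/2}$, combined with Ladyzhenskaya/Gagliardo--Nirenberg in $y$ for $g$. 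Concretely, I will use two endpoint versions:
\[
 \|gh\|_{L^2}\lesssim\|g\|_{H^1}\bigl(\|h\|_{H^1}+\|\partial_zh\|_{H^1}\bigr),\qquad
 \|gh\|_{L^2}\lesssim\|g\|_{H^2}\bigl(\|h\|_{L^2}+\|\partial_zh\|_{L^2}\bigr)
\]
(the second uses $\|g\|_{L^\infty_y L^2_z}$-type bounds or simply $\|g\|_{L^\infty}\lesssim\|g\|_{H^2}$).

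Then I classify the Leibniz terms by $|\beta|$:
\begin{enumerate}
\item For $|\beta|=0$: $b\,\partial^{\alpha'}f$ with $|\alpha'|\leq3$. Use $\|b\|_{L^\infty}\lesssim\|b\|_{H^1}^{1/2}\|b\|_{H^3}^{1/2}$, or better the first endpoint with $h=\partial^{\alpha'}f$. This needs $\|\partial^{\alpha'}f\|_{H^1}$, which is one derivative too many. So instead I bound $\|b\|_{L^\infty}\lesssim\|b\|_{H^1}+\|b\|_{H^4}$ crudely (in 2D, $H^{1}\cap H^4\subset L^\infty$), which gives $\|b\|_{H^1}\|f\|_{H^3}+\|b\|_{H^4}\|f\|_{H^3}$. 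That is too lossy, so I will instead interpolate: $\|b\|_{L^\infty}\|\nabla^3f\|_{L^2}$, with $\|b\|_{L^\infty}\lesssim\|b\|_{H^1}^{1-\theta}\|b\|_{H^4}^{\theta}$ for small $\theta$, and $\|\nabla^3f\|\lesssim\|f\|_{L^2}^{\theta}\|f\|_{H^{3/(1-\theta)}}^{\ldots}$. Since $f$ has only $H^3$ in the target, the cleanest route is the anisotropic one: $\|b\|_{L^\infty_yL^2_z}\lesssim\|b\|_{H^1}$ and $\|h\|_{L^2_{x,y}L^\infty_z}\lesssim\|h\|_{L^2}^{1/2}\|\partial_zh\|_{L^2}^{1/2}$. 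This gives $\|b h\|_{L^2}\lesssim\|b\|_{H^1}(\|h\|_{L^2}+\|\partial_zh\|_{L^2})$ with $h=\partial^{\alpha'}f$. So $|\beta|=0$ terms are bounded by $\|b\|_{H^1}(\|f\|_{H^3}+\|\partial_zf\|_{H^3})$. The same anisotropic bound handles $|\beta|=1$, since $\|\partial b\|_{L^\infty_yL^2_z}\lesssim\|b\|_{H^2}$. For $|\beta|=2$, $\|\partial^2b\|_{L^\infty_yL^2_z}\lesssim\|b\|_{H^3}$, paired with $\|\partial^{\leq1}f\|+\|\partial_z\partial^{\leq1}f\|$. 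For $|\beta|=3$, $\|\partial^3b\|_{L^\infty_yL^2_z}\lesssim\|b\|_{H^4}$, paired with $\|f\|+\|\partial_zf\|$.
\item For the intermediate cases $|\beta|=1,2$, I interpolate the coefficient, $\|b\|_{H^{1+k}}\lesssim\|b\|_{H^1}^{1-k/3}\|b\|_{H^4}^{k/3}$. I also interpolate the wave factor $\|f\|_{H^{3-k}}+\|\partial_zf\|_{H^{3-k}}$ between $L^2$ and $H^3$ with exponents $k/3$ and $1-k/3$. Young's inequality then splits each product into the two endpoint terms of \eqref{eq:wide-gap-zero-wave-H3}.
\end{enumerate}

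The main obstacle is the anisotropic embedding $\|g\|_{L^\infty_yL^2_z}\lesssim\|g\|_{H^1(\R^2)}$ together with $L^\infty_z$ control of the wave by one $z$-derivative. It must be checked on $\R_z$, which has no Poincaré inequality. Both are one-dimensional Agmon inequalities, valid on $\R$ with the inhomogeneous norms, so I expect no loss. It remains to verify that the Gagliardo--Nirenberg interpolation of $\|f\|_{H^{3-k}}+\|\partial_zf\|_{H^{3-k}}$ holds uniformly. It does, because $\partial_z$ commutes with Fourier multipliers in $(k,\eta,\ell)$.
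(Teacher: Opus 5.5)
Your proposal is correct and, once the exploratory false starts are set aside, it is the paper's proof: the flat anisotropic product $\|gh\|_{L^2}\lesssim\|g\|_{H^1(\R^2)}(\|h\|_{L^2}+\|\partial_zh\|_{L^2})$ applied to each Leibniz term gives $\|b\|_{H^{\ell+1}}(\|f\|_{H^{3-\ell}}+\|\partial_zf\|_{H^{3-\ell}})$, and both factors are then interpolated with $\theta=\ell/3$ and split by the weighted arithmetic--geometric mean inequality. The only difference is cosmetic: you split the mixed norms as $L^\infty_yL^2_z\times L^2_{x,y}L^\infty_z$, while the paper uses $L^2_zL^\infty_y\times L^\infty_zL^2_{x,y}$ via \eqref{eq:flat-fixed-z-product}--\eqref{eq:Hilbert-Sobolev-z}; both are valid on $\R_z$ without any Poincar\'e inequality.
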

	
	\begin{proof}
		
		Let $|\alpha|\leq3$ and let $\beta\leq\alpha$ contain $\ell=|\beta|$ derivatives falling on $b$. Since $b$ is independent of $x$, $\beta$ is purely transverse. The flat product estimate proved in \eqref{eq:flat-fixed-z-product}--\eqref{eq:Hilbert-Sobolev-z}, applied to $\partial^\beta b$ and $\partial^{\alpha-\beta}f$, gives
		\begin{equation}
			\|\partial^\beta b\,\partial^{\alpha-\beta}f\|_{L^2}
			\leq C\|b\|_{H^{\ell+1}(\mathbb R^2)}\,\bigl(\|f\|_{H^{3-\ell}}+\|\partial_zf\|_{H^{3-\ell}}\bigr),
			\qquad 0\leq\ell\leq3.
			\label{eq:wide-gap-Leibniz-level}
		\end{equation}
		Here we have harmlessly enlarged the derivative order on $f$ to $3-\ell$. Put $\theta=\ell/3$. Sobolev interpolation yields
		\begin{align}
			\|b\|_{H^{\ell+1}}
			&\leq \|b\|_{H^1}^{1-\theta}\|b\|_{H^4}^{\theta},\notag\\
			\bigl(\|f\|_{H^{3-\ell}}+\|\partial_zf\|_{H^{3-\ell}}\bigr)
			&\leq \bigl(\|f\|_{H^{3}}+\|\partial_zf\|_{H^{3}}\bigr)^{1-\theta}\bigl(\|f\|_{H^{0}}+\|\partial_zf\|_{H^{0}}\bigr)^{\theta}.\notag
		\end{align}
		Thus each term in \eqref{eq:wide-gap-Leibniz-level} is at most
		\begin{equation*}
\begin{aligned}
&
			\bigl(\|b\|_{H^1}\bigl(\|f\|_{H^{3}}+\|\partial_zf\|_{H^{3}}\bigr)\bigr)^{1-\theta}
			\bigl(\|b\|_{H^4}\bigl(\|f\|_{H^{0}}+\|\partial_zf\|_{H^{0}}\bigr)\bigr)^{\theta}
			\\&\leq \|b\|_{H^1}\bigl(\|f\|_{H^{3}}+\|\partial_zf\|_{H^{3}}\bigr)+\|b\|_{H^4}\bigl(\|f\|_{H^{0}}+\|\partial_zf\|_{H^{0}}\bigr),
\end{aligned}
\end{equation*}
		by the weighted arithmetic--geometric mean inequality. Summing the finitely many Leibniz terms proves \eqref{eq:wide-gap-zero-wave-H3}.
	\end{proof}
	
	\begin{Prop}
		\label{prop:forcing-final}
		Assume the restarted-data bounds \eqref{eq:restarted-low-data-standing}--\eqref{eq:restarted-high-data-standing}, \eqref{eq:variable-small-assumption}, and $E_{1}(T)+\frac{\varepsilon}{\nu}\leq\delta_*$, where $\delta_*$ is sufficiently small. Then
		\begin{equation*}
			\mathfrak F(T)
			\leq C\frac{E_{2}(T)}{\nu}E_{6}(T)+C\frac{E_{4}(T)^2}{\nu}.
		\end{equation*}
		Moreover,
		\begin{align}
			&\|\ee^{2\gamma\nu^{1/3}t}(u^\perp_{0}\cdot\nabla_\perp u_{\neq}+u^\perp_{\neq}\cdot\nabla_\perp u^\perp_{0})\|_{Y^2}
			\nonumber\\
			&\qquad\leq C E_{2}(T)E_{5}(T)+C E_{3}(T)E_{4}(T),
			\label{eq:mixed-Y2-final}
		\end{align}
		and \eqref{eq:wave-Y2-final} controls the $\neq\cdot\neq$ interaction in $Y^2$.
	\end{Prop}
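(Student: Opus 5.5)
The proof of the first bound is a matter of assembling the lemmas already established. By \eqref{eq:gj-final}, each component of the forcing splits as
\[
g^j=g^j_{0\neq}+g^j_{\neq0}+g^j_{\neq\neq}+\partial_jp^{0\neq}.
\]
The three norms defining $\mathfrak F(T)$ in \eqref{eq:Fforcing-final} are seminorms. Hence $\mathfrak F(T)$ is bounded by the sum of the same three norms applied to each interaction class separately, with the good combination $g^2+\kappa g^3$ splitting in the same way.

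The individual classes are then estimated as follows.
\begin{enumerate}
\item For the roll--wave transport, \Cref{lem:0neq-transport-revised} gives the contribution $C(E_{2}/\nu)E_{6}$.
\item For the wave--roll transport, \Cref{lem:neq0-transport-revised} gives the same bound.
\item For the mixed pressure, I apply \Cref{lem:mixed-pressure-revised} with $\partial_2=\partial_y$ and $\partial_3=\partial_z$. Then $\partial_yp^{0\neq}+\kappa\partial_zp^{0\neq}$ is exactly the good combination, and $\partial_x\partial_zp^{0\neq}$ and $\nabla\partial_zp^{0\neq}$ are the remaining two norms. This again yields $C(E_{2}/\nu)E_{6}$.
\item For the wave--wave class, including the pressure $p^{\neq\neq}$, \Cref{lem:neqneq-forcing-revised} gives $CE_{4}^2/\nu$.
\end{enumerate}
All of these lemmas require $E_{1}+\varepsilon/\nu\le\delta_*$ and \eqref{eq:variable-small-assumption}, which are exactly the hypotheses of the proposition. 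Summing the four contributions proves the bound on $\mathfrak F(T)$.

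For \eqref{eq:mixed-Y2-final} I would estimate each product in the $Y^2$ norm with the weight $\ee^{2\gamma\nu^{1/3}t}$. The $Y^2$ norm consists of the $L^\infty_tH^2$ part and the $\nu^{1/2}L^2_tH^3$ part.

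For the roll--wave term $u^\perp_{0}\cdot\nablaP u_{\neq}$, I use the tame product \Cref{lem:streak-physical-tame} with $b=u^\alpha_{0}$ and $f=\partial_\alpha u_{\neq}$.
\begin{itemize}
\item The $H^2$ part is bounded by $\|b\|_{H^1}\|(\partial_x,\partial_z)\nabla u_{\neq}\|_{H^2}$, which is $\lesssim E_{2}E_{5}$.
\item It also contains $\|b\|_{H^3}\|(\partial_x,\partial_z)\nabla u_{\neq}\|_{L^2}$. This is $\lesssim E_{3}\,\ee^{-2\gamma\nu^{1/3}t}E_{4}$ by \eqref{eq:good-W-reconstruction} and \eqref{eq:P-profiles-revised}.
\item For the dissipative $H^3$ part I use \eqref{eq:streak-physical-tame-H3}. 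The coefficient $\|u^\perp_{0}\|_{H^1}$ is bounded pointwise by $E_{2}$ and is paired with $\nu^{1/2}\|\ee^{2\gamma\nu^{1/3}t}\nabla(\partial_x,\partial_z)u_{\neq}\|_{L^2_tH^3}\le E_{5}$. The coefficient $\|u^\perp_{0}\|_{H^3}\le E_{3}$ is paired with $\nu^{1/2}\|\ee^{2\gamma\nu^{1/3}t}\Pi_1\|_{L^2_t}\le E_{4}$.
\end{itemize}

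For the wave--roll term $u^\perp_{\neq}\cdot\nablaP u^\perp_{0}$, the roll factor carries the derivative.
\begin{itemize}
\item At top order I use \Cref{lem:wide-gap-zero-wave-H3} with $b=\nablaP u^\perp_{0}$. Its $H^4$ norm appears in the dissipative part of $E_{3}$ and is paired with the low-order wave norms controlled by $E_{4}$.
\item The $H^1$ coefficient $\|\nablaP u^\perp_{0}\|_{H^1}$ is controlled by $E_{2}$ only with the $\mu_\nu^{1/2}$ weight on $\partial_y^2u^3_{0}$. I would simply bound it by $E_{3}$ instead, giving $E_{3}E_{4}$-type terms.
\item The only pairing that must use $E_{5}$ is the one with $\|(\partial_x,\partial_z)u_{\neq}\|_{H^3}$ against the pointwise roll size; there $E_{2}$ suffices.
\end{itemize}
Finally, the $\neq\neq$ interaction in $Y^2$ is exactly \eqref{eq:wave-Y2-final}.

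I expect the main obstacle to be the time-integrability bookkeeping in the $L^2_tH^3$ part. Every product must have one factor in $L^\infty_t$ and the other in $\nu^{1/2}L^2_t$. Moreover, a derivative landing on $u^3_{\neq}$ in the $y$ direction must never be measured by the weak $\partial_y^2u^3_{\neq}$ norm unless a $(\partial_x,\partial_z)$ derivative is available. This is precisely why the tame lemmas are phrased with $(\partial_x,\partial_z)f$, and I would check each Leibniz term against that constraint.
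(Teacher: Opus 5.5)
Your bound on $\mathfrak F(T)$ is correct and matches the paper, which simply sums \Cref{lem:0neq-transport-revised,lem:neq0-transport-revised,lem:mixed-pressure-revised,lem:neqneq-forcing-revised}. The gap is in \eqref{eq:mixed-Y2-final}: several of your pairings use wave norms that the energies do not control.

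\emph{Roll--wave term.} Taking $f=\partial_\alpha u_{\neq}$ in \Cref{lem:streak-physical-tame} puts a derivative on the wave inside the low-order factor.
\begin{itemize}
\item For the $L^\infty_tH^2$ part you need $\|(\partial_x,\partial_z)\nabla u_{\neq}(t)\|_{L^2}$ pointwise in time. By \eqref{eq:good-W-reconstruction} this is bounded only by $\Pi_0+\Pi_1$. However, \eqref{eq:P-profiles-revised} controls $\Pi_1$ only through $\nu^{1/2}\|\ee^{2\gamma\nu^{1/3}t}\Pi_1\|_{L^2_t}$, not in $L^\infty_t$. This is precisely the false implication $E_4+E_6\Rightarrow\|u_{\neq}\|_{L^\infty_tH^1}$ that the paper explicitly avoids.
\item In the dissipative part, \eqref{eq:streak-physical-tame-H3} pairs $\|u^\perp_0\|_{H^3}$ with $\|(\partial_x,\partial_z)\partial_\alpha u_{\neq}\|_{H^1}\sim\|(\partial_x,\partial_z)u_{\neq}\|_{H^2}$. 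This is not controlled by $\Pi_0+\Pi_1$ at all: for $u^3_{\neq}$ the symbol ratio $|\xi|/\sqrt{k^2+\ell^2}$ is unbounded as $|\eta|\to\infty$.
\item Falling back on $E_5$ for these factors gives $E_3E_5=O(\varepsilon^2/\nu^2)$. In \eqref{eq:E4-pre-final} this is multiplied by $\nu^{-1}$, leaving a coefficient $\sim\varepsilon/\nu^2$ in front of $E_5$, so the closure fails.
\end{itemize}

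\emph{Wave--roll term.} Here the derivative sits on the roll, so the low coefficient in \Cref{lem:wide-gap-zero-wave-H3} is $\|\nablaP u^\perp_0\|_{H^1}$, and it is paired with the \emph{high} wave norm. Bounding it by $E_3$ therefore produces $E_3E_5$, not an $E_3E_4$-type term. Bounding it by $E_2$ pointwise costs $\mu_\nu^{-1/2}\leq\nu^{-1/3}$, which is again fatal after the factor $\nu^{-1}$.

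\emph{The missing idea.} The only wave quantity with pointwise control by $E_4$ is of order zero. So the top roll norm must absorb every derivative of the low factor. The paper achieves this by rewriting the mixed terms in divergence form. Since $\nablaP\cdot u^\perp_0=0$, $\nabla\cdot u_{\neq}=0$ and $\partial_xu_0=0$, one has, with $\widetilde u^\perp_0=(0,u^2_0,u^3_0)$,
\[
u^\perp_{0}\cdot\nablaP u_{\neq}+u^\perp_{\neq}\cdot\nablaP u^\perp_{0}
=\operatorname{div}\bigl(\widetilde u^\perp_{0}\otimes u_{\neq}+u_{\neq}\otimes\widetilde u^\perp_{0}\bigr).
\]
Hence the $Y^2$ norm is bounded by the $L^\infty_tH^3$ and $\nu^{1/2}L^2_tH^4$ norms of the undifferentiated products $u^j_0u^k_{\neq}$. \Cref{lem:wide-gap-zero-wave-H3} and \Cref{lem:tame-zero-wave-high} with $m=4$ then give a wide $H^1$/$H^4$ split:
\begin{itemize}
\item $\|u^\perp_0\|_{H^1}\leq CE_2$ is paired with the $E_5$ norms of $(u_{\neq},\partial_zu_{\neq})$;
\item $\|u^\perp_0\|_{H^4}\leq E_3$, available pointwise in time, is paired with $\|(u_{\neq},\partial_zu_{\neq})\|_{L^2}\leq C\Pi_0$ in $L^\infty_t$, or with $\|(u_{\neq},\partial_zu_{\neq})\|_{H^1}\leq C(\Pi_0+\Pi_1)$ in $\nu^{1/2}L^2_t$.
\end{itemize}
Your $H^1$/$H^3$ split with a derivative already on the wave cannot reproduce this. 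You would need either this divergence form or the equivalent interpolation between $\|u^\perp_0\|_{H^1}$ and $\|u^\perp_0\|_{H^4}$ built into \Cref{lem:wide-gap-zero-wave-H3}.
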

	
	\begin{proof}
		The estimate for $\mathfrak F$ is obtained by summing \Cref{lem:0neq-transport-revised,lem:neq0-transport-revised,lem:mixed-pressure-revised,lem:neqneq-forcing-revised}. We now prove the $Y^2$ estimate with a two-level tensor estimate which keeps the low wave factor at physical order zero. This is the point at which the continuous transverse low frequency matters.
		
		Extend $u^\perp_{0}$ to $\widetilde u^\perp_{0}=(0,u^2_{0},u^3_{0})$. Since $\operatorname{div}_\perp u^\perp_{0}=0$ and $\operatorname{div}u_{\neq}=0$,
		\begin{equation*}
			u^\perp_{0}\cdot\nabla_\perp u_{\neq}+u^\perp_{\neq}\cdot\nabla_\perp u^\perp_{0}
			=\operatorname{div}\bigl(\widetilde u^\perp_{0}\otimes u_{\neq}+u_{\neq}\otimes\widetilde u^\perp_{0}\bigr).
		\end{equation*}
		Therefore
		\begin{align}
			&\|\ee^{2\gamma\nu^{1/3}t}(u^\perp_{0}\cdot\nabla_\perp u_{\neq}+u^\perp_{\neq}\cdot\nabla_\perp u^\perp_{0})\|_{Y^2}
			\nonumber\\
			&\quad\leq C\|\ee^{2\gamma\nu^{1/3}t}(\widetilde u^\perp_{0}\otimes u_{\neq}+u_{\neq}\otimes\widetilde u^\perp_{0})\|_{L_t^\infty H^3}
			\nonumber\\
			&\qquad+C\nu^{1/2}\|\ee^{2\gamma\nu^{1/3}t}(\widetilde u^\perp_{0}\otimes u_{\neq}+u_{\neq}\otimes\widetilde u^\perp_{0})\|_{L_t^2H^4}.
			\label{eq:mixed-Y2-div-reduction}
		\end{align}
		
		We record the wave norms used in the two tensor estimates. The definition of $E_{5}$ and the $x$-Poincar\'e inequality give
		\begin{equation}
			\|\ee^{2\gamma\nu^{1/3}t}(u_{\neq},\partial_zu_{\neq})\|_{L_t^\infty H^3}
			+\nu^{1/2}\|\ee^{2\gamma\nu^{1/3}t}(u_{\neq},\partial_zu_{\neq})\|_{L_t^2H^4}
			\leq C E_{5}.
			\label{eq:mixed-wave-high-two-level}
		\end{equation}
		For the lower wave level, \Cref{lem:velocity-reconstruction-final} gives
		\begin{equation*}
			\|u_{\neq}\|_{L^2}+\|\partial_zu_{\neq}\|_{L^2}\leq C\Pi_{0},
			\qquad
			\|u_{\neq}\|_{H^1}+\|\partial_zu_{\neq}\|_{H^1}\leq C(\Pi_{0}+\Pi_1).
		\end{equation*}
		Indeed, $\|\partial_zu_{\neq}\|_{L^2}\leq\|\partial_x\partial_zu_{\neq}\|_{L^2}$ by the streamwise gap, and the latter is contained in \eqref{eq:velocity-reconstruction-final} with $s=0$. Hence \eqref{eq:P-profiles-revised} implies
		\begin{align}
			\|\ee^{2\gamma\nu^{1/3}t}(u_{\neq},\partial_zu_{\neq})\|_{L_t^\infty L^2}
			&\leq C E_{4},
			\label{eq:mixed-low-Linf}\\
			\nu^{1/2}\|\ee^{2\gamma\nu^{1/3}t}(u_{\neq},\partial_zu_{\neq})\|_{L_t^2H^1}
			&\leq C E_{4}.
			\label{eq:mixed-low-L2}
		\end{align}
		For completeness, the $\Pi_{0}$ contribution in \eqref{eq:mixed-low-L2} is
		\begin{equation*}
			\nu^{1/2}\|\ee^{2\gamma\nu^{1/3}t}\Pi_{0}\|_{L_t^2}
			\leq \nu^{1/3}\Bigl(\nu^{1/6}\|\ee^{2\gamma\nu^{1/3}t}\Pi_{0}\|_{L_t^2}\Bigr)
			\leq E_{4},
		\end{equation*}
		whereas the $\Pi_1$ contribution is directly bounded by $E_{4}$. Thus no spurious factor $\nu^{1/3}$ is placed in front of the entire lower wave norm.
		
		For the $L_t^\infty H^3$ tensor norm, apply \Cref{lem:wide-gap-zero-wave-H3} componentwise with $b=u^j_{0}$ and $f=u^k_{\neq}$. The roll bounds give
		\begin{equation*}
			\|u^\perp_{0}\|_{H^1}\leq C E_{2},
			\qquad
			\|u^\perp_{0}\|_{H^4}\leq E_{3}.
		\end{equation*}
		Using \eqref{eq:mixed-wave-high-two-level} and \eqref{eq:mixed-low-Linf},
		\begin{equation}
			\|\ee^{2\gamma\nu^{1/3}t}(\widetilde u^\perp_{0}\otimes u_{\neq}+u_{\neq}\otimes\widetilde u^\perp_{0})\|_{L_t^\infty H^3}
			\leq C E_{2} E_{5}+C E_{3}E_{4}.
			\label{eq:mixed-tensor-H3}
		\end{equation}
		Notice that the top roll derivatives in this estimate are paired only with $\|u_{\neq}\|_{L^2}+\|\partial_zu_{\neq}\|_{L^2}$, which is controlled pointwise by $\Pi_{0}$; no pointwise $H^1$ norm of the full $u^3_{\neq}$ is used.
		
		For the dissipative tensor norm, apply \Cref{lem:tame-zero-wave-high} with $m=4$. It gives, componentwise,
		\begin{align}
			\|u^j_{0}u^k_{\neq}\|_{H^4}
			\leq C\|u^j_{0}\|_{H^1}
			\bigl(\|u^k_{\neq}\|_{H^4}+\|\partial_zu^k_{\neq}\|_{H^4}\bigr)
			+C\|u^j_{0}\|_{H^4}
			\bigl(\|u^k_{\neq}\|_{H^1}+\|\partial_zu^k_{\neq}\|_{H^1}\bigr).\notag
		\end{align}
		Multiplying by $\nu^{1/2}\ee^{2\gamma\nu^{1/3}t}$, integrating in time, and using \eqref{eq:mixed-wave-high-two-level} and \eqref{eq:mixed-low-L2}, we obtain
		\begin{equation}
			\nu^{1/2}\|\ee^{2\gamma\nu^{1/3}t}(\widetilde u^\perp_{0}\otimes u_{\neq}+u_{\neq}\otimes\widetilde u^\perp_{0})\|_{L_t^2H^4}
			\leq C E_{2} E_{5}+C E_{3}E_{4}.
			\label{eq:mixed-tensor-H4}
		\end{equation}
		Combining \eqref{eq:mixed-Y2-div-reduction}, \eqref{eq:mixed-tensor-H3}, and \eqref{eq:mixed-tensor-H4} proves \eqref{eq:mixed-Y2-final}. This proof uses the high roll norm only against genuinely lower wave profiles and therefore avoids the false implication $E_{4}+E_{6}\Rightarrow \|u_{\neq}\|_{L_t^\infty H^1}$. Finally, \Cref{lem:wave-Y2-final} gives the $\neq\cdot\neq$ contribution.
	\end{proof}

	\begin{Lem}
		\label{lem:primary-forcing-2gamma}
		Assume \eqref{eq:variable-small-assumption} and $E_{1}(T)+\frac{\varepsilon}{\nu}\leq\delta_*$, with $\delta_*$ sufficiently small. Then
		\begin{align}
			\mathfrak F_{\rm prim}(T):={}&
			\nu^{-1/2}\|\ee^{2\gamma\nu^{1/3}t}\nabla g^2\|_{L_t^2L_x^2}
			+\nu^{-1/2}\|\ee^{2\gamma\nu^{1/3}t}(\partial_x,\partial_z)g^3\|_{L_t^2L_x^2}
			\nonumber\\
			&\leq C\frac{E_{2}(T)}{\nu}E_{6}(T)+C\frac{E_{4}(T)^2}{\nu}.
			\label{eq:primary-forcing-2gamma}
		\end{align}
	\end{Lem}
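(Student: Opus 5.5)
We would prove \eqref{eq:primary-forcing-2gamma} by splitting $g^j=g^j_{0\neq}+g^j_{\neq0}+g^j_{\neq\neq}+\partial_jp^{0\neq}$ as in \eqref{eq:gj-final} and treating each class with the estimates already established at the $3\gamma$ weight. The basic observation is that the primary norm carries the weaker weight $\ee^{2\gamma\nu^{1/3}t}\leq\ee^{3\gamma\nu^{1/3}t}$. It also requires only $\nabla g^2$ and the good derivatives $(\partial_x,\partial_z)g^3$, with no full gradient of $g^3$. The only genuinely new ingredient relative to $\mathfrak F(T)$ is that $\nabla g^2$ itself, rather than the combination $\nabla(g^2+\kappa g^3)$, appears. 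We therefore write
\begin{equation*}
 \nabla g^2=\nabla(g^2+\kappa g^3)-\nabla(\kappa g^3),
\end{equation*}
and control the first piece directly by the corresponding terms in \Cref{lem:0neq-transport-revised,lem:neq0-transport-revised,lem:mixed-pressure-revised,lem:neqneq-forcing-revised}, after lowering the weight from $3\gamma$ to $2\gamma$.

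The correction $\nabla(\kappa g^3)$ is handled by the lower-level multiplier estimate \eqref{eq:kappa-multiplier-gain-primary} with $h=g^3$, which is $x$-nonzero. This gives
\begin{equation*}
 \nu^{-1/2}\|\ee^{2\gamma\nu^{1/3}t}\nabla(\kappa g^3)\|_{L_t^2L_x^2}
 \leq C\bigl(E_1+\tfrac{\varepsilon}{\nu}\bigr)\nu^{-1/6}\|\ee^{3\gamma\nu^{1/3}t}\nabla g^3\|_{L_t^2L_x^2}.
\end{equation*}
The right side is exactly $C\delta_*$ times the weak component of $\mathfrak F(T)$. Summing the four lemmas again, it is bounded by $C\delta_*\bigl(\tfrac{E_2}{\nu}E_6+\tfrac{E_4^2}{\nu}\bigr)$. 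The transfer of $\kappa$ costs $\nu^{1/3}$, via $m_\nu^{1/2}\ee^{-\gamma\nu^{1/3}t}\lesssim\nu^{1/3}$, which is precisely the gap between the exponents $\nu^{-1/2}$ and $\nu^{-1/6}$. For this step to work, the extra weight level must be available, which is why the $2\gamma$ weight is used here.

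For $(\partial_x,\partial_z)g^3$, the $\partial_x$ part is already in $\mathfrak F$ with the strong prefactor $\nu^{-1/2}$. The $\partial_z$ part is the main obstacle, because $\mathfrak F$ only controls $\nabla g^3$ with the weak factor $\nu^{-1/6}$. For this part we would revisit each class. For $g^3_{\neq\neq}$, \eqref{eq:quadratic-time-final} explicitly contains $\partial_z(u_{\neq}\cdot\nabla u^3_{\neq})$ and $\nabla\partial_zp^{\neq\neq}$ at the $4\gamma$ weight with bound $C\nu^{-1/2}E_4^2$, so the contribution is $C\nu^{-1}E_4^2$. For the pressure, \eqref{eq:pR-elliptic-from-source} bounds $\partial_z^2p^{0\neq}$ by the source, and \eqref{eq:pR-source-E6-time} then gives $C\tfrac{E_2}{\nu}E_6$.

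For the transport terms $\partial_zg^3_{0\neq}$ and $\partial_zg^3_{\neq0}$, we would redo the product estimates of the mixed-transport lemmas with $\partial_z$ in place of $\nabla$. We use the decompositions \eqref{eq:g03-BR-decomposition} and \eqref{eq:gneq0-component-complete}, together with $\partial_z=D_z^V+\kappa\partial_y$. The key point is that the exceptional terms, in which two $y$-derivatives land on $u^3_{\neq}$ or on $u^3_{0}$ (costing $\nu^{-2/3}$ or $\mu_\nu^{-1/2}$), arise only from $\partial_y$. Under $\partial_z$ they become $D_z^V\partial_y u^3$ terms, controlled by \eqref{eq:E6-D-commuted} and \eqref{eq:R1-time-complete}, plus terms carrying $\kappa$ or $\partial\kappa$. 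The $\kappa$ terms are absorbed using $m_\nu\leq\mu_\nu$ and \eqref{eq:kappa-H2-transient-revised}, as in \Cref{lem:roll-coefficients-E6}. All remaining terms then fall under \eqref{eq:0neq-time-ZU1}--\eqref{eq:0neq-time-RU0}, giving $C\tfrac{E_2}{\nu}E_6$. Collecting all the pieces proves \eqref{eq:primary-forcing-2gamma}.
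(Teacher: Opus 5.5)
Your proposal is correct and follows the paper's proof essentially step for step. It uses the same splitting $\nabla g^2=\nabla(g^2+\kappa g^3)-\nabla(\kappa g^3)$, with \eqref{eq:kappa-multiplier-gain-primary} applied to the weak component of $\mathfrak F(T)$. It reads $\partial_xg^3$ off from $\mathfrak F(T)$, and it handles $\partial_zg^3_{\neq\neq}$ and $\partial_z^2p^{0\neq}$ through \eqref{eq:quadratic-time-final}, \eqref{eq:pR-elliptic-from-source}, and \eqref{eq:pR-source-E6-time}.

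The only difference is bookkeeping in the mixed transport terms $h\in\{g^3_{0\neq},g^3_{\neq0}\}$. The paper writes $\partial_zh=D_z^Vh+\kappa\partial_yh$ for the whole product, expands $D_z^Vh$ by the derivation rule, and absorbs $\kappa\partial_yh$ at once via \eqref{eq:kappa-multiplier-gain-primary} and the weak gradient bounds of \Cref{lem:0neq-transport-revised,lem:neq0-transport-revised}. You instead split $\partial_z$ inside the individual factors, which also works. For the wave-side term $\kappa\,\partial_y^2u^3_{\neq}$, the needed factor $\nu^{1/3}$ comes from the $2\gamma$/$3\gamma$ weight gap, $m_\nu^{1/2}\ee^{-\gamma\nu^{1/3}t}\lesssim\nu^{1/3}$, not from $m_\nu\le\mu_\nu$. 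The latter only serves the roll-side term $\kappa\,\partial_y^2u^3_{0}$, so that step should cite the weight gap.
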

	
	\begin{proof}
		Write
		\begin{equation*}
			K_{0}^{\perp}:=\frac{E_{2}(T)}{\nu}E_{6}(T),
			\qquad
			K_{\neq}:=\frac{E_{4}(T)^2}{\nu}.
		\end{equation*}
		We estimate the two components required by the coupled primary system separately.
		
		\emph{Step 1: the strong derivative of $g^2$.}
		Expanding the combined forcing,
		\begin{equation}
			\nabla g^2=\nabla (g^2+\kappa g^3)-\nabla(\kappa g^3).
			\label{eq:g2-from-G-primary}
		\end{equation}
		By \Cref{prop:forcing-final}, the complete $E_{6}$ forcing estimate gives
		\begin{equation}
			\nu^{-1/2}\|\ee^{2\gamma\nu^{1/3}t}\nabla (g^2+\kappa g^3)\|_{L_t^2L_x^2}
			\leq
			\nu^{-1/2}\|\ee^{3\gamma\nu^{1/3}t}\nabla (g^2+\kappa g^3)\|_{L_t^2L_x^2}
			\leq C(K_{0}^{\perp}+K_{\neq}).
			\label{eq:g2-G-primary}
		\end{equation}
		By the lower-weight multiplier estimate \eqref{eq:kappa-multiplier-gain-primary} and the weak third-component term in \eqref{eq:Fforcing-final},
		\begin{align}
			\nu^{-1/2}\|\ee^{2\gamma\nu^{1/3}t}\nabla(\kappa g^3)\|_{L_t^2L_x^2}
			&\leq C(E_{1}+\frac{\varepsilon}{\nu})\nu^{-1/6}
			\|\ee^{3\gamma\nu^{1/3}t}\nabla g^3\|_{L_t^2L_x^2}
			\nonumber\\
			&\leq C\delta_*(K_{0}^{\perp}+K_{\neq}).
			\label{eq:g2-kappa-primary}
		\end{align}
		Combining \eqref{eq:g2-from-G-primary}--\eqref{eq:g2-kappa-primary} yields
		\begin{equation}
			\nu^{-1/2}\|\ee^{2\gamma\nu^{1/3}t}\nabla g^2\|_{L_t^2L_x^2}
			\leq C(K_{0}^{\perp}+K_{\neq}).
			\label{eq:g2-primary-final-bound}
		\end{equation}
		
		\emph{Step 2: the streamwise derivative of $g^3$.}
		The definition of $\mathfrak F$ immediately gives
		\begin{equation}
			\nu^{-1/2}\|\ee^{2\gamma\nu^{1/3}t}\partial_xg^3\|_{L_t^2L_x^2}
			\leq
			\nu^{-1/2}\|\ee^{3\gamma\nu^{1/3}t}\partial_xg^3\|_{L_t^2L_x^2}
			\leq C(K_{0}^{\perp}+K_{\neq}).
			\label{eq:g3-x-primary}
		\end{equation}
		
		\emph{Step 3: a strong $D_z^V$ derivative of the mixed transport terms.}
		For the $0\cdot\neq$ interaction, use \eqref{eq:g03-BR-decomposition} and the fact that $D_z^V$ is a derivation:
		\begin{align}
			D_z^Vg_{0\neq}^3
			={}&(D_z^V\bigl(u^2_{0}+\kappa u^3_{0}\bigr))\partial_yu^3_{\neq}+\bigl(u^2_{0}+\kappa u^3_{0}\bigr)D_z^V\partial_yu^3_{\neq}
			\nonumber\\
			&+(D_z^Vu^3_{0})D_z^Vu^3_{\neq}+u^3_{0}(D_z^V)^2u^3_{\neq}.
			\label{eq:Dz-g03-primary-expansion}
		\end{align}
		We first record the coefficient bound
		\begin{equation}
			\|D_z^V\bigl(u^2_{0}+\kappa u^3_{0}\bigr)\|_{H^1}
			\leq C E_{2}.
			\label{eq:DzBR-primary}
		\end{equation}
		By the product rule,
		\begin{equation*}
			D_z^V\bigl(u^2_{0}+\kappa u^3_{0}\bigr)=D_z^Vu^2_{0}+(D_z^V\kappa)u^3_{0}+\kappa D_z^Vu^3_{0},
		\end{equation*}
		and \Cref{lem:roll-coefficients-E6}, $\|\kappa\|_{H^3}\leq C\delta_*$, and the two-dimensional $H^2\cdot H^1\to H^1$ product estimate prove \eqref{eq:DzBR-primary}. Moreover,
		\begin{equation*}
			\|(D_z^V)^2u^3_{\neq}\|_{L^2}
			\leq C\|\nabla D_z^Vu^3_{\neq}\|_{L^2}
			\leq C\mathcal U_1,
		\end{equation*}
		while \eqref{eq:E6-D-commuted} gives $\|D_z^V\partial_yu^3_{\neq}\|_{L^2}\leq C\mathcal U_1$. Applying \eqref{eq:anis-prod-L2} to the first and third products in \eqref{eq:Dz-g03-primary-expansion} gives
		\begin{align}
			\|(D_z^V\bigl(u^2_{0}+\kappa u^3_{0}\bigr))\partial_yu^3_{\neq}\|_{L^2}
			+\|(D_z^Vu^3_{0})D_z^Vu^3_{\neq}\|_{L^2}
			\leq C E_{2}\bigl(\mathcal U_{0}+\mathcal U_1\bigr).\notag
		\end{align}
		For the second product, $\bigl(u^2_{0}+\kappa u^3_{0}\bigr)\in H^2(\mathbb R^2)\hookrightarrow L^\infty(\mathbb R^2)$ and \eqref{eq:E6-D-commuted} give
		\begin{equation*}
			\|\bigl(u^2_{0}+\kappa u^3_{0}\bigr)D_z^V\partial_yu^3_{\neq}\|_{L^2}
			\leq C\|\bigl(u^2_{0}+\kappa u^3_{0}\bigr)\|_{H^2}\|D_z^V\partial_yu^3_{\neq}\|_{L^2}
			\leq C E_{2}\mathcal U_1.
		\end{equation*}
		For the fourth product we use the separate pointwise roll recovery $\|u^3_{0}\|_{L^\infty}\leq C E_{2}$ from \Cref{lem:roll-coefficients-E6}:
		\begin{equation*}
			\|u^3_{0}(D_z^V)^2u^3_{\neq}\|_{L^2}
			\leq \|u^3_{0}\|_{L^\infty}\|(D_z^V)^2u^3_{\neq}\|_{L^2}
			\leq C E_{2}\mathcal U_1.
		\end{equation*}
		Consequently,
		\begin{equation*}
			\|D_z^Vg_{0\neq}^3\|_{L^2}
			\leq C E_{2}\bigl(\mathcal U_{0}+\mathcal U_1\bigr).
		\end{equation*}
		Consequently,
		\begin{equation}
			\nu^{-1/2}\|\ee^{3\gamma\nu^{1/3}t}D_z^Vg_{0\neq}^3\|_{L_t^2L_x^2}
			\leq C K_{0}^{\perp},
			\label{eq:Dz-g03-primary-time}
		\end{equation}
		because the $\mathcal U_1$ term uses $\nu^{1/2}\|\ee^{3\gamma\nu^{1/3}t}\mathcal U_1\|_{L_t^2}\lesssim E_{6}$, whereas the $\mathcal U_{0}$ term is smaller by \eqref{eq:U-profiles-revised}.
		
		For the $\neq\cdot0$ interaction, start from \eqref{eq:gneq0-component-complete}:
		\begin{align}
			D_z^Vg_{\neq0}^3
			=(D_z^Vq)\partial_yu^3_{0}+qD_z^V\partial_yu^3_{0}
			+(D_z^Vu^3_{\neq})D_z^Vu^3_{0}+u^3_{\neq}(D_z^V)^2u^3_{0}.
			\notag
		\end{align}
		The first product is estimated by the two-level inequality \eqref{eq:two-level-zero-wave-E6}:
		\begin{align}
			\|(D_z^Vq)\partial_yu^3_{0}\|_{L^2}
			&\leq C\|\partial_yu^3_{0}\|_{H^1}\|D_z^Vq\|_{L^2}
			+C\|\partial_yu^3_{0}\|_{L^2}\|D_z^Vq\|_{H^1}
			\nonumber\\
			&\leq C\mu_\nu^{-1/2}E_{2}\mathcal U_{0}+C E_{2}\mathcal U_1.
			\label{eq:Dz-gneq0-term1-primary}
		\end{align}
		Next,
		\begin{equation*}
			D_z^V\partial_yu^3_{0}=\partial_yD_z^Vu^3_{0}+(\partial_y\kappa)\partial_yu^3_{0}.
		\end{equation*}
		The first part is estimated pointwise by \eqref{eq:anis-prod-L2}:
		\begin{equation}
			\|q\,\partial_yD_z^Vu^3_{0}\|_{L^2}
			\leq C\bigl(\|\nabla_\perp u^\perp_{0}(t)\|_{H^1}+\|\nabla_\perp D_z^V u^\perp_{0}(t)\|_{H^1}\bigr)\mathcal U_{0}.
			\label{eq:Dz-gneq0-term2a-primary}
		\end{equation}
		The coefficient in the second part is treated in spacetime rather than by an unweighted pointwise estimate. This is the componentwise version of the stronger estimate \eqref{eq:kappagrad-roll-time-E6}; we record the short calculation here because it is used directly in the primary forcing norm. Since $H^2(\mathbb R^2)$ acts as a multiplier on $H^1(\mathbb R^2)$ and $H^1\cdot H^2\to H^1$ in two dimensions,
		\begin{align}
			\nu^{1/2}\|(\partial_y\kappa)\partial_yu^3_{0}\|_{L_t^2H^1}
			&\leq C\nu^{1/2}
			\left(\int_{0}^T\|\kappa(t)\|_{H^2}^2\|\partial_yu^3_{0}(t)\|_{H^2}^2\,dt\right)^{1/2}
			\nonumber\\
			&\leq C(E_{1}+\frac{\varepsilon}{\nu})\nu^{1/2}\|m_\nu^{1/2}\partial_yu^3_{0}\|_{L_t^2H^2}
			\nonumber\\
			&\leq C(E_{1}+\frac{\varepsilon}{\nu})\nu^{1/2}\|\mu_\nu^{1/2}\partial_yu^3_{0}\|_{L_t^2H^2}
			\leq C\delta_* E_{2}.
			\notag
		\end{align}
		Therefore, by \eqref{eq:anis-prod-L2} and \eqref{eq:E6-recovery-low},
		\begin{align}
			&\nu^{-1/2}\|\ee^{3\gamma\nu^{1/3}t}q(\partial_y\kappa)\partial_yu^3_{0}\|_{L_t^2L_x^2}
			\nonumber\\
			&\qquad\leq C\nu^{-1/2}\|(\partial_y\kappa)\partial_yu^3_{0}\|_{L_t^2H^1}
			\|\ee^{3\gamma\nu^{1/3}t}\mathcal U_{0}\|_{L_t^\infty}
			\leq C\delta_* K_{0}^{\perp}.
			\label{eq:Dz-gneq0-term2b-primary-time}
		\end{align}
		Furthermore,
		\begin{equation*}
			\|(D_z^V)^2u^3_{0}\|_{H^1}
			\leq C\|\nabla_\perp D_z^Vu^3_{0}\|_{H^1}
			\leq C\bigl(\|\nabla_\perp u^\perp_{0}(t)\|_{H^1}+\|\nabla_\perp D_z^V u^\perp_{0}(t)\|_{H^1}\bigr),
		\end{equation*}
		because $(D_z^V)^2u^3_{0}=\partial_zD_z^Vu^3_{0}-\kappa\partial_yD_z^Vu^3_{0}$ and $\|\kappa\|_{H^2}$ is small. Therefore \eqref{eq:anis-prod-L2}, \eqref{eq:E6-recovery-low}--\eqref{eq:E6-recovery-high}, and \Cref{lem:roll-coefficients-E6} imply
		\begin{align}
			\|(D_z^Vu^3_{\neq})D_z^Vu^3_{0}\|_{L^2}
			&\leq C E_{2}(\mathcal U_{0}+\mathcal U_1),
			\label{eq:Dz-gneq0-term3-primary}\\
			\|u^3_{\neq}(D_z^V)^2u^3_{0}\|_{L^2}
			&\leq C\bigl(\|\nabla_\perp u^\perp_{0}(t)\|_{H^1}+\|\nabla_\perp D_z^V u^\perp_{0}(t)\|_{H^1}\bigr)\mathcal U_{0}.
			\label{eq:Dz-gneq0-term4-primary}
		\end{align}
		Combining the pointwise terms \eqref{eq:Dz-gneq0-term1-primary}, \eqref{eq:Dz-gneq0-term2a-primary}, \eqref{eq:Dz-gneq0-term3-primary}, and \eqref{eq:Dz-gneq0-term4-primary}, while keeping \eqref{eq:Dz-gneq0-term2b-primary-time} separate, gives
		\begin{align}
			\|D_z^Vg_{\neq0}^3-q(\partial_y\kappa)\partial_yu^3_{0}\|_{L^2}
			\leq{}&C E_{2}\mathcal U_1+C\bigl(\|\nabla_\perp u^\perp_{0}(t)\|_{H^1}+\|\nabla_\perp D_z^V u^\perp_{0}(t)\|_{H^1}\bigr)\mathcal U_{0}
			+C\mu_\nu^{-1/2}E_{2}\mathcal U_{0}.
			\notag
		\end{align}
		The three terms on the right are all integrable with the strong prefactor. Indeed,
		\begin{align}
			\nu^{-1/2}E_{2}\|\ee^{3\gamma\nu^{1/3}t}\mathcal U_1\|_{L_t^2}
			&\leq C K_{0}^{\perp},
			\notag\\
			\nu^{-1/2}\|\bigl(\|\nabla_\perp u^\perp_{0}(t)\|_{H^1}+\|\nabla_\perp D_z^V u^\perp_{0}(t)\|_{H^1}\bigr)\|_{L_t^2}
			\|\ee^{3\gamma\nu^{1/3}t}\mathcal U_{0}\|_{L_t^\infty}
			&\leq C K_{0}^{\perp},
			\notag\\
			\nu^{-1/2}E_{2}\|\mu_\nu^{-1/2}\|_{L_t^\infty}
			\|\ee^{3\gamma\nu^{1/3}t}\mathcal U_{0}\|_{L_t^2}
			&\leq C\nu^{-1/2}\nu^{-1/3}\nu^{-1/6}E_{2}E_{6}
			=C K_{0}^{\perp}.
			\notag
		\end{align}
		Together with the separately estimated term \eqref{eq:Dz-gneq0-term2b-primary-time}, this yields
		\begin{equation}
			\nu^{-1/2}\|\ee^{3\gamma\nu^{1/3}t}D_z^Vg_{\neq0}^3\|_{L_t^2L_x^2}
			\leq C K_{0}^{\perp}.
			\label{eq:Dz-gneq0-primary-time}
		\end{equation}
		
		To recover the physical $z$ derivative of either transport contribution $h\in\{g_{0\neq}^3,g_{\neq0}^3\}$, use
		\begin{equation*}
			\partial_zh=D_z^Vh+\kappa\partial_yh.
		\end{equation*}
		By \eqref{eq:kappa-multiplier-gain-primary} and the weak estimates in \Cref{lem:0neq-transport-revised,lem:neq0-transport-revised},
		\begin{equation*}
			\nu^{-1/2}\|\ee^{2\gamma\nu^{1/3}t}\kappa\partial_yh\|_{L_t^2L_x^2}
			\leq C(E_{1}+\frac{\varepsilon}{\nu})\nu^{-1/6}\|\ee^{3\gamma\nu^{1/3}t}\nabla h\|_{L_t^2L_x^2}
			\leq C\delta_* K_{0}^{\perp}.
		\end{equation*}
		Together with \eqref{eq:Dz-g03-primary-time} and \eqref{eq:Dz-gneq0-primary-time}, this proves
		\begin{equation}
			\nu^{-1/2}\|\ee^{2\gamma\nu^{1/3}t}\partial_z(g_{0\neq}^3+g_{\neq0}^3)\|_{L_t^2L_x^2}
			\leq C K_{0}^{\perp}.
			\label{eq:mixed-transport-z-primary}
		\end{equation}
		
		\emph{Step 4: wave--wave and mixed-pressure contributions to the physical $z$ derivative.}
		By \eqref{eq:quadratic-time-final},
		\begin{equation}
			\nu^{-1/2}\|\ee^{2\gamma\nu^{1/3}t}\partial_zg_{\neq\neq}^3\|_{L_t^2L_x^2}
			\leq C K_{\neq},
			\label{eq:wavewave-primary-strong}
		\end{equation}
		because $g_{\neq\neq}^3=(u_{\neq}\cdot\nabla u^3_{\neq})_{\neq}+\partial_zp^{\neq\neq}$ and \eqref{eq:quadratic-time-final} controls both $\partial_z(u_{\neq}\cdot\nabla u^3_{\neq})$ and $\partial_z^2p^{\neq\neq}$. For the mixed pressure, $\Delta p^{0\neq}=-2\sum_{\alpha,\beta\in\{2,3\}}\partial_\alpha u^\beta_{0}\,\partial_\beta u^\alpha_{\neq}$ and \eqref{eq:pR-elliptic-from-source} give
		\begin{equation*}
			\|\partial_z^2p^{0\neq}\|_{L^2}
			\leq C\|\sum_{\alpha,\beta\in\{2,3\}}\partial_\alpha u^\beta_{0}\,\partial_\beta u^\alpha_{\neq}\|_{L^2}.
		\end{equation*}
		Hence \eqref{eq:pR-source-E6-time} implies
		\begin{equation}
			\nu^{-1/2}\|\ee^{2\gamma\nu^{1/3}t}\partial_z^2p^{0\neq}\|_{L_t^2L_x^2}
			\leq C K_{0}^{\perp}.
			\label{eq:pR-primary-strong}
		\end{equation}
		The streamwise derivative of all four interaction classes, including the pressure terms, has already been controlled in \eqref{eq:g3-x-primary}. Summing \eqref{eq:g3-x-primary}, \eqref{eq:mixed-transport-z-primary}, \eqref{eq:wavewave-primary-strong}, and \eqref{eq:pR-primary-strong} gives
		\begin{equation}
			\nu^{-1/2}\|\ee^{2\gamma\nu^{1/3}t}(\partial_x,\partial_z)g^3\|_{L_t^2L_x^2}
			\leq C(K_{0}^{\perp}+K_{\neq}).
			\label{eq:g3-primary-final-bound}
		\end{equation}
		Finally, \eqref{eq:g2-primary-final-bound} and \eqref{eq:g3-primary-final-bound} prove \eqref{eq:primary-forcing-2gamma}.
	\end{proof}

\subsection{Streak--wave interactions}
The high-order wave estimate requires the following frequency splitting and tame product bounds. They retain the transient gain in the streak coefficient.
The elementary interpolation between the transient factor and the enhanced-dissipation weight is recorded in \Cref{lem:mnu-interpolation-revised}; see Appendix~\ref{app:interpolation-tools}.
	
	\begin{Lem}
		\label{lem:streak-spanwise-profile}
One has
		\begin{align}
			&\|\ee^{2\gamma\nu^{1/3}t}\|(\partial_x,\partial_z)u^3_{\neq}(t)\|_{H^0}\|_{L_t^\infty}
			+\nu^{1/2}\|\ee^{2\gamma\nu^{1/3}t}\|(\partial_x,\partial_z)u^3_{\neq}(t)\|_{H^1}\|_{L_t^2}
			\leq C E_{4}(T),
			\label{eq:spanwise-low-profile}\\
			&\|\ee^{2\gamma\nu^{1/3}t}m_\nu(t)\|(\partial_x,\partial_z)u^3_{\neq}(t)\|_{H^2}\|_{L_t^\infty}
			+\nu^{1/2}\|\ee^{2\gamma\nu^{1/3}t}m_\nu(t)\|(\partial_x,\partial_z)u^3_{\neq}(t)\|_{H^3}\|_{L_t^2}
			\nonumber\\
			&\hspace{35mm}\leq C\nu E_{5}(T)+C E_{4}(T)+C E_{6}(T).
			\label{eq:spanwise-high-profile}
		\end{align}
	\end{Lem}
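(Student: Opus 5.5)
The two estimates rest on different energy components, so I would prove them separately. Throughout, the streamwise gap $|k|\geq1$ converts $(\partial_x,\partial_z)$ information into the available profiles.

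\emph{Low profile \eqref{eq:spanwise-low-profile}.} The $E_4$ component $\|(\partial_x^2+\partial_z^2)u^3_{\neq}\|_{X_{2\gamma}}$ controls $\|(k^2+\ell^2)\widehat{u^3_{\neq}}\|$. Since $|k|\geq1$, we have $|(k,\ell)|\leq k^2+\ell^2$ pointwise, so
\[
\|(\partial_x,\partial_z)u^3_{\neq}\|_{L^2}\leq\|(\partial_x^2+\partial_z^2)u^3_{\neq}\|_{L^2}.
\]
After one extra gradient, the same pointwise inequality gives
\[
\|\nabla(\partial_x,\partial_z)u^3_{\neq}\|_{L^2}\leq\|\nabla(\partial_x^2+\partial_z^2)u^3_{\neq}\|_{L^2}.
\]
The $L^\infty_t$ bound then follows from the uniform-energy part of $X_{2\gamma}$. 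The $\nu^{1/2}L^2_tH^1$ bound follows from the viscous part $\nu\|\nabla\cdot\|^2_{L^2_tL^2}$, with the zero-order term handled by the $\nu^{1/3}$ enhanced-dissipation term, since $\nu^{1/2}\leq\nu^{1/6}$. Both bounds are already recorded in \eqref{eq:P-profiles-revised}.

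\emph{High profile \eqref{eq:spanwise-high-profile}.} I would split each derivative count by whether a derivative falls in $(\partial_x,\partial_z)$ or in $\partial_y$. Any $H^2$ (respectively $H^3$) norm of $(\partial_x,\partial_z)u^3_{\neq}$ is a sum of terms $\partial^\alpha(\partial_x,\partial_z)u^3_{\neq}$ with $|\alpha|\leq2$ (respectively $\leq3$).

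\begin{itemize}
\item If $\alpha$ contains at least one $\partial_x$ or $\partial_z$, the term is bounded by $\|(\partial_x,\partial_z)u_{\neq}\|_{H^3}$ or $H^4$ (after Poincar\'e). This is controlled by $E_5$ with weight $2\gamma$, and the factor $m_\nu\leq1$ is simply discarded. Here I must be careful: $E_5$ carries no $\nu$ in front, so a bare bound would give $E_5$, not $\nu E_5$. The factor $m_\nu$ must therefore supply the $\nu$, interpolated against the exponential weight via \Cref{lem:mnu-interpolation-revised}.
\item The precise plan is to write $m_\nu\leq\nu(1+t)$, which costs $(1+t)$. I would absorb that cost by using the margin between the weight $2\gamma$ on the left and the weights $3\gamma$ in $E_6$ (or $2\gamma$ plus lower order in $E_4$) on the right. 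This gives a factor $\sup_t(1+t)e^{-\gamma\nu^{1/3}t}\lesssim\nu^{-1/3}$, so this route yields only $\nu^{2/3}E_5$. That is still at most $\nu E_5+E_4$ only if the mixed terms are instead placed on $E_4$ and $E_6$.
\end{itemize}

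So I would rather use this second route: bound the mixed terms with one $\partial_x$ directly by $E_4$ and $E_6$, through $\mathcal U_0,\mathcal U_1$, $\Pi_0,\Pi_1$ and \eqref{eq:E6-D-commuted}, writing $\partial_z=D^V_z+\kappa\partial_y$. Specifically, $\partial_x D_z^V u^3_{\neq}$ and $\partial_x^2u^3_{\neq}$ at one higher derivative are in $\mathcal U_1$, and the $\kappa\partial_y$ remainder is small by \Cref{lem:transient-kappa-revised}. The genuinely bad term is the pure case $\partial_y^2\partial_zu^3_{\neq}$ (and $\partial_y^3\partial_z$ for $H^3$). For it I would use the $E_5$ norm of $\partial_zu_{\neq}$ in $H^3$ and $H^4$. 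In that case the $m_\nu$ weight and the ratio $\nu^{1/2}$ versus $\nu^{1/2}$ match, and the inequality $m_\nu\leq1$ is paired with the observation that $m_\nu\leq\nu(1+t)$ combined with the exponential gap should give $\nu E_5$ up to the weight loss.

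\emph{Main obstacle.} The hard step is exactly this weight bookkeeping for the pure-$y$ terms. I need the explicit factor $\nu$ in front of $E_5$ to come out of $m_\nu$ without losing $\nu^{-1/3}$. The first thing I would try is to check whether \Cref{lem:mnu-interpolation-revised} is designed as an interpolation of the form
\[
m_\nu\|f\|_{H^2}\lesssim\nu\|f\|_{H^3}+\|f\|_{H^1},
\]
derived in Fourier space, where $m_\nu\leq\min\{\nu(1+t),1\}$ is split at the frequency $|\xi|\sim\nu^{-1}$. Such a splitting would place high frequencies on $\nu E_5$ and low frequencies on the $E_4$ and $E_6$ profiles, with the time-weight mismatch handled by the $2\gamma$ versus $3\gamma$ gap.
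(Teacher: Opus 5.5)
\textbf{The low profile is fine; the high profile has a genuine gap.}

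Your argument for \eqref{eq:spanwise-low-profile} is correct and is the paper's. Since $|k|\geq1$ one has $\sqrt{k^2+\ell^2}\leq k^2+\ell^2$, and the bound is read off from \eqref{eq:P-profiles-revised}.

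For \eqref{eq:spanwise-high-profile}, none of your routes closes, and the last one is only a conjecture.
\begin{itemize}
\item Dropping $m_\nu$ gives $E_5$ rather than $\nu E_5$, as you note.
\item Replacing $m_\nu$ by $\nu(1+t)$ cannot be repaired by a weight gap. $E_5$ carries the same weight $2\gamma$ as the left-hand side, so you are left with the unbounded $\nu(1+t)E_5$.
\item Bounding the ``mixed'' terms directly by $E_4,E_6$ also fails. The $L^\infty_tH^2$ norm of $(\partial_x,\partial_z)u^3_{\neq}$ contains third derivatives such as $\partial_x^3u^3_{\neq}$, but $\mathcal U_1$ and $\Pi_1$ are controlled only in $\nu^{1/2}L^2_t$. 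The $L^2_tH^3$ part contains fourth derivatives, which are beyond anything in $E_4$ or $E_6$.
\end{itemize}
So in every derivative direction, mixed or pure-$y$, the factor $m_\nu$ must be traded for one extra derivative carrying $\nu$. A frequency splitting is therefore unavoidable, and the decomposition $\partial_z=D_z^V+\kappa\partial_y$ is beside the point.

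Your guessed inequality $m_\nu\|f\|_{H^2}\lesssim\nu\|f\|_{H^3}+\|f\|_{H^1}$ cannot hold uniformly in $t$. Testing a single frequency $|\xi|\sim\nu^{-1/2}$ forces the constant to be at least of order $m_\nu\nu^{-1/2}$. A split at the fixed frequency $\nu^{-1}$ also loses powers of $\nu$. Above all, you never say which components of $E_4,E_6$ absorb the low-frequency remainder, and that is the real difficulty. Pure $y$-derivatives of $u^3_{\neq}$ are available only through the weak component $\nu^{2/3}\|\Delta u^3_{\neq}\|_{X_{3\gamma}}$. The remainder must therefore either carry exactly a compensating $\nu^{2/3}$ or land on the good multiplier $k^2+\ell^2$.

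\textbf{The missing step} is the paper's anisotropic, time-dependent splitting. For $r=2,3$,
\begin{equation*}
m_\nu\sqrt{k^2+\ell^2}\,\langle k,\eta,\ell\rangle^r\leq C_\gamma\Bigl(\nu\sqrt{k^2+\ell^2}\,\langle k,\eta,\ell\rangle^{r+1}+(k^2+\ell^2)\langle k,\eta,\ell\rangle^{r-2}+\nu^{2/3}\ee^{\gamma\nu^{1/3}t}\langle k,\eta,\ell\rangle^{r}\Bigr).
\end{equation*}
\begin{itemize}
\item For $\langle k,\eta,\ell\rangle\geq m_\nu/(2\nu)$ the first term dominates.
\item Below that threshold, write $m_\nu\sqrt{k^2+\ell^2}=\bigl(\sqrt{k^2+\ell^2}/\langle k,\eta,\ell\rangle\bigr)\bigl(m_\nu\langle k,\eta,\ell\rangle\bigr)$ and apply Young's inequality. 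One half is the good multiplier $(k^2+\ell^2)\langle k,\eta,\ell\rangle^{-2}$. The other half is $m_\nu^2\langle k,\eta,\ell\rangle^2\lesssim m_\nu^4\nu^{-2}\leq C_\gamma\nu^{2/3}\ee^{\gamma\nu^{1/3}t}$, using $\sup_{t\geq0}m_\nu(t)^4\ee^{-\gamma\nu^{1/3}t}\lesssim\nu^{8/3}$.
\end{itemize}
Multiply by $|\widehat{u^3_{\neq}}|$ and by $\ee^{2\gamma\nu^{1/3}t}$, and also by $\nu^{1/2}$ for the $L^2_t$ part. The three pieces are then bounded as follows:
\begin{itemize}
\item the first by $\nu E_5$;
\item the second by the $(\partial_x^2+\partial_z^2)u^3_{\neq}$ component of $E_4$;
\item the third by $\nu^{2/3}\|\Delta u^3_{\neq}\|_{X_{3\gamma}}\leq E_4$, using $\|u^3_{\neq}\|_{H^r}\lesssim\|\nabla^{r-2}\Delta u^3_{\neq}\|_{L^2}$.
\end{itemize}
The third piece is the only place the $2\gamma$ versus $3\gamma$ gap is used, and there the powers $\nu^{2/3}$ match exactly.
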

	
	\begin{proof}
		Fix $k\in2\pi\mathbb Z\setminus\{0\}$ and $(\eta,\ell)\in\mathbb R^2$.
		Thus $1\leq\sqrt{k^2+\ell^2}\leq\langle k,\eta,\ell\rangle$. We first prove the scalar multiplier inequality
		\begin{equation}
\begin{aligned}
m_\nu(t)\,\sqrt{k^2+\ell^2}\langle k,\eta,\ell\rangle^r
			&\leq C_\gamma\Bigl(
			\nu\sqrt{k^2+\ell^2}\langle k,\eta,\ell\rangle^{r+1}
			+(k^2+\ell^2)\langle k,\eta,\ell\rangle^{r-2}
			\Bigr.\\&\quad+C_\gamma\Bigl.\nu^{2/3}\ee^{\gamma \nu^{1/3}t}\langle k,\eta,\ell\rangle^r
			\Bigr),
			\qquad r=2,3.
\end{aligned}
\label{eq:spanwise-frequency-splitting}
\end{equation}
		If $\langle k,\eta,\ell\rangle\geq m_\nu/(2\nu)$, the first term on the right-hand side dominates the left-hand side. Suppose instead that $\langle k,\eta,\ell\rangle<m_\nu/(2\nu)$ and use $0<\sqrt{k^2+\ell^2}/\langle k,\eta,\ell\rangle\leq1$. Young's inequality gives
		\begin{equation*}
			m_\nu\sqrt{k^2+\ell^2}
			\leq \frac12\frac{k^2+\ell^2}{\langle k,\eta,\ell\rangle^2}+C m_\nu^2\langle k,\eta,\ell\rangle^2.
		\end{equation*}
		After multiplication by $\langle k,\eta,\ell\rangle^r$, the first term becomes $\frac12(k^2+\ell^2)\langle k,\eta,\ell\rangle^{r-2}$. Moreover,
		\begin{equation*}
			m_\nu^2\langle k,\eta,\ell\rangle^2\leq C m_\nu^4\nu^{-2}.
		\end{equation*}
		The definition of $m_\nu$ implies
		\begin{equation*}
			\sup_{t\geq0}m_\nu(t)^4\ee^{-\gamma\nu^{1/3}t}
			\leq C_\gamma\nu^{8/3}.
		\end{equation*}
		Indeed, for $t\leq\nu^{-1}$ one has $m_\nu(t)\leq C\nu^{2/3}(1+\nu^{1/3}t)$, while for $t\geq\nu^{-1}$ the exponential is smaller than $\exp(-\gamma\nu^{-2/3})$. Hence
		\begin{equation*}
			m_\nu^2\langle k,\eta,\ell\rangle^2\leq C_\gamma\nu^{2/3}\ee^{\gamma \nu^{1/3}t},
		\end{equation*}
		which proves \eqref{eq:spanwise-frequency-splitting}.
		
		For $r=2$, multiply \eqref{eq:spanwise-frequency-splitting} by $|\widehat{u^3_{\neq}}|$. We use
		\begin{equation*}
			(A+B+C)^2\leq3(A^2+B^2+C^2)
		\end{equation*}
		and then integrate in $(k,\eta,\ell)$. Since $k\neq0$, the inhomogeneous multiplier satisfies
		\begin{equation}
			\langle k,\eta,\ell\rangle\simeq (k^2+\eta^2+\ell^2)^{1/2},
			\label{eq:Lambda-equivalence-nonzero}
		\end{equation}
		with constants independent of the continuous spanwise frequency $\ell$. Hence Plancherel identifies the three terms on the right-hand side with, respectively,
		\begin{equation*}
			\nu\|(\partial_x,\partial_z)u^3_{\neq}\|_{H^3},
			\qquad
			\|-(\partial_x^2+\partial_z^2)u^3_{\neq}\|_{L^2},
			\qquad
			\nu^{2/3}\ee^{\gamma\nu^{1/3}t}\|u^3_{\neq}\|_{H^2}.
		\end{equation*}
		Indeed, the multipliers $\sqrt{k^2+\ell^2}\langle k,\eta,\ell\rangle^3$, $(k^2+\ell^2)$, and $\langle k,\eta,\ell\rangle^2$ are precisely the Fourier representatives of these three quantities up to the equivalence \eqref{eq:Lambda-equivalence-nonzero}. Since $(u^3_{\neq})_{0}=0$, $\|u^3_{\neq}\|_{H^2}\leq C\|\Delta u^3_{\neq}\|_{L^2}$. Multiplication by $\ee^{2\gamma\nu^{1/3}t}$ and taking the time supremum therefore gives the first component of \eqref{eq:spanwise-high-profile} from the definitions of $E_{5}$, $E_{4}$, and $E_{6}$.
		
		For $r=3$, the same Plancherel argument identifies the three right-hand multipliers with
		\begin{equation}
			\nu\|(\partial_x,\partial_z)u^3_{\neq}\|_{H^4},
			\qquad
			\|-\nabla(\partial_x^2+\partial_z^2)u^3_{\neq}\|_{L^2},
			\qquad
			\nu^{2/3}\ee^{\gamma\nu^{1/3}t}\|u^3_{\neq}\|_{H^3}.
			\label{eq:spanwise-r3-identification}
		\end{equation}
		Here the middle multiplier is $(k^2+\ell^2)\langle k,\eta,\ell\rangle$, and the last one is $\langle k,\eta,\ell\rangle^3$. The last term in \eqref{eq:spanwise-r3-identification} is bounded by $C\nu^{2/3}\ee^{\gamma\nu^{1/3}t}\|\nabla\Delta u^3_{\neq}\|_{L^2}$ on the streamwise nonzero subspace. Multiplying by $\nu^{1/2}\ee^{2\gamma\nu^{1/3}t}$ and integrating in time proves the second component of \eqref{eq:spanwise-high-profile}. Finally, since $\sqrt{k^2+\ell^2}\leq(k^2+\ell^2)$ and $\sqrt{k^2+\ell^2}\langle k,\eta,\ell\rangle\leq(k^2+\ell^2)\langle k,\eta,\ell\rangle$, \eqref{eq:spanwise-low-profile} follows immediately from \eqref{eq:P-profiles-revised}.
	\end{proof}
	
	\begin{Lem}
		\label{lem:streak-streamwise-profile}
		One has
		\begin{align}
			&\|\ee^{3\gamma\nu^{1/3}t}\partial_xu_{\neq}\|_{L_t^\infty L^2}
			+\nu^{1/2}\|\ee^{3\gamma\nu^{1/3}t}\partial_xu_{\neq}\|_{L_t^2H^1}
			\leq C\bigl(E_{4}(T)+E_{6}(T)\bigr),
			\label{eq:streak-low-x-profile}\\
			&\|\ee^{2\gamma\nu^{1/3}t}m_\nu(t)\partial_xu_{\neq}\|_{L_t^\infty H^2}
			+\nu^{1/2}\|\ee^{2\gamma\nu^{1/3}t}m_\nu(t)\partial_xu_{\neq}\|_{L_t^2H^3}
			\nonumber\\
			&\hspace{35mm}\leq C\nu E_{5}(T)+C\bigl(E_{4}(T)+E_{6}(T)\bigr).
			\label{eq:streak-high-x-profile}
		\end{align}
	\end{Lem}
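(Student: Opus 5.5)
The plan is to treat \Cref{lem:streak-streamwise-profile} as the streamwise counterpart of \Cref{lem:streak-spanwise-profile}. It has two parts: a direct recovery at the low level, and the same frequency-splitting multiplier inequality at the high level.

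\emph{Low-level bound \eqref{eq:streak-low-x-profile}.} Write $\partial_xu_{\neq}=(\partial_xu^1_{\neq},\partial_xu^2_{\neq},\partial_xu^3_{\neq})$ and take the components separately.
\begin{itemize}
\item For $j=2,3$, the streamwise gap gives $\|\partial_xu^j_{\neq}\|_{L^2}\le\|\partial_x^2u^j_{\neq}\|_{L^2}\le\mathcal U_0$ and $\|\partial_xu^j_{\neq}\|_{H^1}\le C\|\nabla\partial_x^2u^j_{\neq}\|_{L^2}\le C\mathcal U_1$.
\item For $u^1_{\neq}$, incompressibility gives $\partial_xu^1_{\neq}=-\partial_yu^2_{\neq}-\partial_zu^3_{\neq}$. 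Here $\|\partial_yu^2_{\neq}\|_{H^1}\le C\|u^2_{\neq}\|_{H^2}$ is covered by $\mathcal U_1$ through $q=u^2_{\neq}+\kappa u^3_{\neq}$ and \eqref{eq:E6-recovery-high}, with $\kappa u^3_{\neq}$ handled by the small $H^3$ norm of $\kappa$. Then $\partial_zu^3_{\neq}=D_z^Vu^3_{\neq}+\kappa\partial_yu^3_{\neq}$ is controlled by \eqref{eq:E6-recovery-low}--\eqref{eq:E6-recovery-high}.
\item In $L^2$ one could alternatively use \eqref{eq:velocity-reconstruction-final} with $s=0$, which bounds $\partial_x^2u_{\neq}$ by $\Pi_0$.
\end{itemize}
The $3\gamma$ weight and the factors $\nu^{1/2}$ then come directly from \eqref{eq:U-profiles-revised}. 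The only place $E_4$ enters is through the $L^2$-level reconstruction, where $\nu^{1/2}\le\nu^{1/6}$ suffices. If $\kappa\partial_yu^3_{\neq}$ causes trouble in $H^1$, I would fall back on \eqref{eq:velocity-reconstruction-final} with $s=1$ and $\Pi_1$, accepting the $2\gamma$ weight there. In that case I need to check that the $3\gamma$ claim survives: either only the $\mathcal U$ route is used, or the statement effectively needs $E_6$ alone.

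\emph{High-level bound \eqref{eq:streak-high-x-profile}.} Here I repeat the multiplier argument of \Cref{lem:streak-spanwise-profile} with the symbol $|k|$ in place of $\sqrt{k^2+\ell^2}$. For $r=2,3$, I claim
\[
m_\nu|k|\langle k,\eta,\ell\rangle^r\le C_\gamma\bigl(\nu|k|\langle k,\eta,\ell\rangle^{r+1}+|k|^2\langle k,\eta,\ell\rangle^{r-2}\cdot\langle k,\eta,\ell\rangle^{0}+\nu^{2/3}\ee^{\gamma\nu^{1/3}t}|k|\langle k,\eta,\ell\rangle^{r-1}\bigr).
\]
The proof splits into the same two cases.
\begin{itemize}
\item If $\langle k,\eta,\ell\rangle\ge m_\nu/(2\nu)$, the first term dominates.
\item Otherwise, Young's inequality gives $m_\nu|k|\le\tfrac12|k|^2\langle\cdot\rangle^{-2}+Cm_\nu^2\langle\cdot\rangle^2$, and $m_\nu^2\langle\cdot\rangle^2\le Cm_\nu^4\nu^{-2}\le C_\gamma\nu^{2/3}\ee^{\gamma\nu^{1/3}t}$.
\end{itemize}
Via Plancherel, the three resulting quantities are bounded as follows.
\begin{itemize}
\item $\nu\|\partial_xu_{\neq}\|_{H^{r+1}}$ is bounded by $\nu E_5$.
\item $\|\partial_x^2u_{\neq}\|_{H^{r-2}}$ is bounded by the low-level quantities: $\Pi_0,\Pi_1$ via \eqref{eq:velocity-reconstruction-final}, or $\mathcal U_0,\mathcal U_1$.
\item $\nu^{2/3}\ee^{\gamma\nu^{1/3}t}\|u_{\neq}\|_{H^r}$ needs more care. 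For $u^2_{\neq}$ it follows from $\Delta u^2_{\neq}$ in $E_4$ at the $2\gamma$ level. For $u^3_{\neq}$ it comes from $\nu^{2/3}\|\Delta u^3_{\neq}\|_{X_{3\gamma}}$ in $E_4$. For $u^1_{\neq}$ I would use $\partial_xu^1_{\neq}=-\partial_yu^2_{\neq}-\partial_zu^3_{\neq}$ at the multiplier level: replace $|k|\,|\widehat{u^1}|$ by $|\eta||\widehat{u^2}|+|\ell||\widehat{u^3}|$ before splitting, so that $u^1$ never appears.
\end{itemize}
Multiplying by $\ee^{2\gamma\nu^{1/3}t}$, taking the supremum in time (for $r=2$) and the $\nu^{1/2}L^2_t$ norm (for $r=3$), and noting that the extra $\ee^{\gamma\nu^{1/3}t}$ is absorbed by the $3\gamma$ weights, gives \eqref{eq:streak-high-x-profile}.

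\emph{Main obstacle.} I expect the hardest step to be the $u^1_{\neq}$ component and the bad $\partial_y^2u^3_{\neq}$ direction. Substituting the divergence relation turns $\partial_xu^1_{\neq}$ into $\partial_zu^3_{\neq}$, whose top $H^3$ norm is available only through $E_5$ and through the weak $\nu^{2/3}\Delta u^3_{\neq}$ component. I would first try to absorb $\partial_zu^3_{\neq}$ by invoking \eqref{eq:spanwise-high-profile} directly, since it gives exactly $m_\nu\|(\partial_x,\partial_z)u^3_{\neq}\|_{H^2}$ with the same right-hand side. I would then check that no $\nu^{-2/3}$ loss reappears.
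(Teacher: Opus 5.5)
\textbf{Low-level bound: the streamwise component.} Your bounds for $\partial_xu^2_{\neq}$ and $\partial_xu^3_{\neq}$ are fine, but the streamwise component of \eqref{eq:streak-low-x-profile} has a genuine gap.

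You treat $-\partial_yu^2_{\neq}$ and $-\partial_zu^3_{\neq}$ separately, through $u^2_{\neq}=q-\kappa u^3_{\neq}$ and $\partial_zu^3_{\neq}=D_z^Vu^3_{\neq}+\kappa\partial_yu^3_{\neq}$. This leaves a term $\kappa\partial_yu^3_{\neq}$ in each piece, and that term cannot be controlled at the required level:
\begin{itemize}
\item It is not bounded by $\mathcal U_0$. The $\partial_y$ direction of $u^3_{\neq}$ enters only $\mathcal U_1$, or costs $\nu^{-1/3}\mathcal U_0$ by interpolation.
\item After one derivative it contains $\kappa\partial_y^2u^3_{\neq}$, which costs $\nu^{-2/3}\mathcal U_0$ by \eqref{eq:E6-recovery-W3-second}.
\item Smallness of $\|\kappa\|_{H^3}$ does not cancel a negative power of $\nu$. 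Using the transient factor $m_\nu^{1/2}$ in $\|\kappa\|_{H^2}$ would need exponential weight beyond $3\gamma$, and there is none to spare.
\end{itemize}
Your fallbacks cannot give the stated weight either. \eqref{eq:velocity-reconstruction-final} with $s=0,1$ yields $\Pi_0,\Pi_1$, and $E_4$ controls these only with $\ee^{2\gamma\nu^{1/3}t}$ by \eqref{eq:P-profiles-revised}. Your check ``$\nu^{1/2}\le\nu^{1/6}$'' tracks only the power of $\nu$, not the exponential weight.

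The missing observation is that the two copies of $\kappa\partial_yu^3_{\neq}$ cancel exactly:
\[
\partial_xu^1_{\neq}=-\partial_yq-D_z^Vu^3_{\neq}+(\partial_y\kappa)u^3_{\neq}.
\]
This is the identity behind \Cref{cor:w1-recovery}. Each term on the right is bounded by $C\mathcal U_0$, and after one derivative by $C\mathcal U_1$, using \eqref{eq:E6-recovery-low}--\eqref{eq:E6-recovery-high}, \eqref{eq:anis-prod-L2} and the smallness of $\|\kappa\|_{H^3}$. Then \eqref{eq:U-profiles-revised} gives the $3\gamma$ bound from $E_6$ alone. This is how the paper argues, in the form $\|\partial_xu^1_{\neq}\|_{X_{3\gamma}}\le CE_6$; the $E_4$ on the right-hand side is harmless.

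\textbf{High-level bound.} For \eqref{eq:streak-high-x-profile} the paper does not split frequencies at all. It applies \eqref{eq:mnu-interpolation-revised}, $m_\nu\|F\|_{H^r}\le C\nu\|F\|_{H^{r+1}}+C\nu(1+t)^3\|F\|_{H^{r-2}}$, to the whole vector $F=\partial_xu_{\neq}$. The first term is $\nu E_5$. The second is the $3\gamma$ low bound, with $\nu(1+t)^3$ absorbed by the spare $\ee^{-\gamma\nu^{1/3}t}$ through \eqref{eq:weight-gap-polynomial-revised}. So the high-level estimate rests entirely on the corrected low-level one.

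Your transplanted splitting instead leaves the remainder $\nu^{2/3}\ee^{\gamma\nu^{1/3}t}\|u_{\neq}\|_{H^r}$. (Your Young step yields the symbol $\langle k,\eta,\ell\rangle^r$ there, not the displayed $|k|\langle k,\eta,\ell\rangle^{r-1}$.) After the $2\gamma$ weight, this remainder needs $3\gamma$-weighted control of $\nu^{2/3}\|u^2_{\neq}\|_{H^r}$ in $L_t^\infty H^2$ and in $\nu^{1/2}L_t^2H^3$. That control is not available from the energies on the right:
\begin{itemize}
\item $E_4$ carries $\Delta u^2_{\neq}$ only at weight $2\gamma$, contrary to your remark that the extra exponential is absorbed.
\item $E_6$ gives $q$ only in $L_t^\infty H^1$ and in $\nu^{1/2}L_t^2H^2$, one derivative short in both.
\end{itemize}
Your reduction of $u^1_{\neq}$ to $\partial_yu^2_{\neq}$ runs into the same term. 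Such a weak remainder is affordable only for $u^3_{\neq}$, through the $\nu^{2/3}\Delta u^3_{\neq}$ component. That is why the splitting is used in \Cref{lem:streak-spanwise-profile} and not here.
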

	
	\begin{proof}
		We first prove the low-order estimate component by component. For $j=2,3$, the definitions of $\mathcal U_{0},\mathcal U_1$, the streamwise Poincar\'e inequality, and \eqref{eq:U-profiles-revised} give
		\begin{align}
			&\|\ee^{3\gamma\nu^{1/3}t}\partial_xu^j_{\neq}\|_{L_t^\infty L^2}
			+\nu^{1/2}\|\ee^{3\gamma\nu^{1/3}t}\partial_xu^j_{\neq}\|_{L_t^2H^1}
			\leq C E_{6}.
			\label{eq:streak-low-x-W23}
		\end{align}
		For the streamwise component, \Cref{cor:w1-recovery} at the weight $3\gamma$ yields
		\begin{equation*}
			\|u^1_{\neq}\|_{X_{3\gamma}}^2+\|\partial_xu^1_{\neq}\|_{X_{3\gamma}}^2\leq C E_{6}^2.
		\end{equation*}
		Hence the $L_t^\infty L^2$ part of $\partial_xu^1_{\neq}$ is bounded by $CE_{6}$, while the dissipative component of $X_{3\gamma}$ gives
		\begin{equation*}
			\nu^{1/2}\|\ee^{3\gamma\nu^{1/3}t}\nabla\partial_xu^1_{\neq}\|_{L_t^2L^2}\leq C E_{6}.
		\end{equation*}
		Since $(\partial_xu^1_{\neq})_{0}=0$, streamwise Poincar\'e also gives
		\begin{equation*}
			\|\partial_xu^1_{\neq}\|_{L^2}\leq\|\partial_x^2u^1_{\neq}\|_{L^2}\leq\|\nabla\partial_xu^1_{\neq}\|_{L^2},
		\end{equation*}
		so the full $L_t^2H^1$ part is controlled by $CE_{6}$. Combining this with \eqref{eq:streak-low-x-W23} proves \eqref{eq:streak-low-x-profile} (with the harmless stated right-hand side $C(E_{4}+E_{6})$).
		
		For the high-order estimate, apply \eqref{eq:mnu-interpolation-revised} with $F=\partial_xu_{\neq}$ and $r=2,3$. The one-higher-derivative terms are exactly the two components of $\nu E_{5}$. The lower-order terms are controlled by \eqref{eq:streak-low-x-profile}; the factor $\nu(1+t)^3$ is absorbed by the gap between the $2\gamma$ and $3\gamma$ exponential weights through \eqref{eq:weight-gap-polynomial-revised}.
	\end{proof}
	
	\begin{Lem}
		\label{lem:streak-high-product-revised}
		It holds
		\begin{align}
			&\|\ee^{2\gamma\nu^{1/3}t}
			(u^1_{0}\partial_xu_{\neq}+u^\perp_{\neq}\cdot\nabla_\perp u^1_{0}\,e_1)\|_{Y^2}
			\nonumber\\
			&\qquad\leq C\nu\bigl(E_{1}(T)+\frac{\varepsilon}{\nu}\bigr)E_{5}(T)
			+C\bigl(E_{1}(T)+\frac{\varepsilon}{\nu}\bigr)\bigl(E_{4}(T)+E_{6}(T)\bigr).
			\label{eq:streak-high-product-revised}
		\end{align}
	\end{Lem}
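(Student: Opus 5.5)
We split the product into the streak-transport term $u^1_{0}\partial_xu_{\neq}$ and the lift-up term $(u^\perp_{\neq}\cdot\nabla_\perp u^1_{0})e_1$, and estimate each in $Y^2$. For this we need the $L_t^\infty H^2$ norm and $\nu^{1/2}$ times the $L_t^2H^3$ norm, both with weight $\ee^{2\gamma\nu^{1/3}t}$. Both terms have the form $b f$ with $b=b(y,z)$ built from $u^1_{0}$ and $f$ streamwise-nonzero. We therefore apply the tame products of \Cref{lem:streak-physical-tame}, \eqref{eq:streak-physical-tame-H2}--\eqref{eq:streak-physical-tame-H3}. In these bounds the low norm $\|b\|_{H^1}$ multiplies the high wave norm, and $\|b\|_{H^3}$ multiplies the low wave norm $\|(\partial_x,\partial_z)f\|_{L^2}$ or $H^1$.

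\emph{Streak transport.} Take $b=u^1_{0}$ and $f=\partial_xu_{\neq}$. Two coefficient bounds enter. By \Cref{lem:transient-streak-revised}, $\|u^1_{0}\|_{H^1}\le\|u^1_{0}\|_{H^2}\le C(E_1+\varepsilon/\nu)m_\nu(t)$. From $E_1$ alone, $\|u^1_{0}\|_{H^3}\le E_1$. In the high term, $\|u^1_{0}\|_{H^1}\|(\partial_x,\partial_z)\partial_xu_{\neq}\|_{H^2}$, we keep the factor $m_\nu$ on the wave. Then \eqref{eq:streak-high-x-profile}, together with the analogous bound for the $\partial_z$ component from \eqref{eq:spanwise-high-profile}, gives
\[
\|\ee^{2\gamma\nu^{1/3}t}m_\nu\,\partial_x(\partial_x,\partial_z)u_{\neq}\|_{L^\infty H^2}\lesssim \nu E_5+E_4+E_6 .
\]
For the $\partial_x^2$ and $\partial_x\partial_z$ pieces we can simply use $E_5$ directly when a $\partial_x$ is available. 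The product contributes $(E_1+\varepsilon/\nu)(\nu E_5+E_4+E_6)$, which has the required form. The low term $\|u^1_{0}\|_{H^3}\|(\partial_x,\partial_z)\partial_xu_{\neq}\|_{L^2}$ is controlled by $E_1(E_4+E_6)$ through \eqref{eq:velocity-reconstruction-final} with $s=0$, which bounds $\Pi_0\lesssim E_4$. The dissipative $H^3$ part is handled in the same way using \eqref{eq:streak-physical-tame-H3}. There the high factor is $\nu^{1/2}\|m_\nu\partial_x(\partial_x,\partial_z)u_{\neq}\|_{L^2H^3}$, and the low factor is $\nu^{1/2}\|(\partial_x,\partial_z)\partial_xu_{\neq}\|_{L^2H^1}\lesssim E_4$ by \eqref{eq:P-profiles-revised}. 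One caveat: when the $H^3$ coefficient norm appears we need $\|u^1_{0}\|_{H^3}$ paired with an $L_t^2$ wave profile, and for this we use the time supremum of $E_1$.

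\emph{Lift-up term.} Take $b=\nabla_\perp u^1_{0}$ and $f=u^\perp_{\neq}$. Now $b$ carries one extra derivative, so the high coefficient norm is $\|u^1_{0}\|_{H^4}\le E_1$, which is acceptable. The low coefficient norm is $\|\nabla_\perp u^1_{0}\|_{H^1}\lesssim (E_1+\varepsilon/\nu)m_\nu$, again by \Cref{lem:transient-streak-revised}. The wave factors are $(\partial_x,\partial_z)u^\perp_{\neq}$ in $H^2$ (or $H^3$) with weight $m_\nu$, controlled by \eqref{eq:spanwise-high-profile} and \eqref{eq:streak-high-x-profile}. The low wave factor is $\|(\partial_x,\partial_z)u^\perp_{\neq}\|_{L^2}\lesssim E_4$, or its $H^1$ version with $\nu^{1/2}L_t^2$. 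Here the $\partial_x$ part of $u^2_{\neq}$ is given by \eqref{eq:W2-reconstruction}, and the $u^3_{\neq}$ part by \eqref{eq:spanwise-low-profile}.

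\emph{Main obstacle.} The delicate step is making sure that no unweighted high-order norm of $u^3_{\neq}$ is ever paired with the unweighted, non-decaying streak. Each time $\|u^1_{0}\|_{H^1}$ or $\|u^1_{0}\|_{H^2}$ multiplies a wave norm of order higher than $E_4$ can bound, the transient factor $m_\nu$ must be transferred to the wave. Only then can the frequency-splitting profiles \eqref{eq:spanwise-high-profile} and \eqref{eq:streak-high-x-profile} convert it into $\nu E_5$ plus low-order energies. If the tame split in \Cref{lem:streak-physical-tame} leaves an intermediate term of the form $\|b\|_{H^2}\|f\|_{H^1}$, I would interpolate it between the two endpoints rather than estimate it directly. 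The exponential weights fit without loss: every profile we use carries a weight of at least $2\gamma$.
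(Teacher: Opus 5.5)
Your treatment of the lift-up term is essentially the paper's. The streak-transport step, however, has a genuine gap.

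\textbf{The gap in the streak-transport term.} By applying \Cref{lem:streak-physical-tame} with $b=u^1_{0}$ and $f=\partial_xu_{\neq}$, you put an extra good derivative on the wave. The high term then needs $\ee^{2\gamma\nu^{1/3}t}m_\nu\|\partial_x(\partial_x,\partial_z)u_{\neq}\|_{L_t^\infty H^2}$, together with $\nu^{1/2}$ times its $L_t^2H^3$ analogue. Your displayed bound for this quantity does not follow from \eqref{eq:streak-high-x-profile} or \eqref{eq:spanwise-high-profile}. Those control $m_\nu\partial_xu_{\neq}$ and $m_\nu(\partial_x,\partial_z)u^3_{\neq}$ in $L_t^\infty H^2$, which is one derivative lower.

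The bound cannot be recovered from the energies either. The splitting/interpolation mechanism converts $m_\nu\|F\|_{H^r}$ into $\nu\|F\|_{H^{r+1}}$ plus lower order. Here that means $\nu\|\partial_x(\partial_x,\partial_z)u_{\neq}\|_{H^3}$ pointwise in time, which lies beyond $E_5$. In frequency, the ratio of $m_\nu|k|\sqrt{k^2+\ell^2}\langle k,\eta,\ell\rangle^2$ to the $\nu E_5$ multiplier $\nu\sqrt{k^2+\ell^2}\langle k,\eta,\ell\rangle^3$ is about $m_\nu/\nu$ when $|k|\sim\langle k,\eta,\ell\rangle$. This can be as large as $\nu^{-1}$, while $E_4,E_6$ control only second-order multipliers pointwise in time.

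Your fallback of using $E_5$ directly gives $C(E_1+\varepsilon/\nu)m_\nu\ee^{-2\gamma\nu^{1/3}t}E_5$. Since $m_\nu=1$ for $t\geq\nu^{-1}$ and there is no weight gap between $E_5$ and the target, this yields $(E_1+\varepsilon/\nu)E_5$ without the factor $\nu$. That factor is exactly what makes the term perturbative after the $\nu^{-1}$ in \eqref{eq:E4-pre-final}.

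\textbf{The fix.} For the transport term, do not use the anisotropic tame lemma. The factor $\partial_xu_{\neq}$ is already good in every direction, so the isotropic coefficient product suffices:
\[
 \|u^1_{0}F\|_{H^r}\leq C\|u^1_{0}\|_{H^2}\|F\|_{H^r}+C\|u^1_{0}\|_{H^4}\|F\|_{H^{r-2}},\qquad r=2,3,\quad F=\partial_xu_{\neq}.
\]
Assign zero or one coefficient derivative to $H^2$ and two or three to $H^4$, using $H^1\hookrightarrow L^4$ and $H^2\hookrightarrow L^\infty$ in $(y,z)$.
\begin{itemize}
\item The coefficient $\|u^1_{0}\|_{H^2}$ still carries $m_\nu$ by \Cref{lem:transient-streak-revised}.
\item The wave factor is now exactly $m_\nu\|\partial_xu_{\neq}\|_{H^2}$ or $m_\nu\|\partial_xu_{\neq}\|_{H^3}$, which is what \eqref{eq:streak-high-x-profile} controls.
\item The $H^4$ coefficient pairs with $\|\partial_xu_{\neq}\|_{L^2}$ or $\|\partial_xu_{\neq}\|_{H^1}$, controlled by \eqref{eq:streak-low-x-profile}.
\end{itemize}
The anisotropic \Cref{lem:streak-physical-tame} is needed only for $u^3_{\neq}\partial_zu^1_{0}$, where $\partial_y^2u^3_{\neq}$ is weak.

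\textbf{A smaller point.} For $u^2_{\neq}\partial_yu^1_{0}$ you invoke the profile lemmas for $m_\nu\|\partial_zu^2_{\neq}\|_{H^2}$, which neither lemma states; it does follow by rerunning the frequency-splitting proof of \Cref{lem:streak-spanwise-profile} for $u^2_{\neq}$. It is simpler to use
\[
 \|u^2_{\neq}\partial_yu^1_{0}\|_{H^r}\leq C\|u^1_{0}\|_{H^4}\|u^2_{\neq}\|_{H^r}
\]
together with $\|\ee^{2\gamma\nu^{1/3}t}u^2_{\neq}\|_{Y^2}\leq CE_4$. This gives $CE_1E_4$, and no transient factor is needed.
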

	
	\begin{proof}
For the streak decomposition identified above, the high and low
wave norms are supplied by
\Cref{lem:streak-spanwise-profile,lem:streak-streamwise-profile}.

For the transport by $u^1_{0}$, the required spatial estimates are
\begin{align*}
 \|u^1_{0}\partial_xu_{\neq}\|_{H^2}
 &\leq C\|u^1_{0}\|_{H^2}\|\partial_xu_{\neq}\|_{H^2}
 +C\|u^1_{0}\|_{H^4}\|\partial_xu_{\neq}\|_{L^2},\\
 \|u^1_{0}\partial_xu_{\neq}\|_{H^3}
 &\leq C\|u^1_{0}\|_{H^2}\|\partial_xu_{\neq}\|_{H^3}
 +C\|u^1_{0}\|_{H^4}\|\partial_xu_{\neq}\|_{H^1}.
\end{align*}
To verify these on $\R^2$, assign zero or one derivative of the
coefficient to its $H^2$ norm, and two or three to its $H^4$ norm.
The mixed terms use $H^1_{y,z}\hookrightarrow L^4_{y,z}$,
also for $L_x^2$-valued functions; the remaining terms use
$H^2_{y,z}\hookrightarrow L^\infty_{y,z}$.
Since $\|u^1_{0}\|_{H^2}\leq
C(E_1+\varepsilon/\nu)m_\nu$ and $\|u^1_{0}\|_{H^4}\leq E_1$,
\Cref{lem:streak-streamwise-profile} gives
\begin{equation}
 \|\ee^{2\gamma\nu^{1/3}t}u^1_{0}\partial_xu_{\neq}\|_{Y^2}
 \leq C\nu(E_1+\varepsilon/\nu)E_5
 +C(E_1+\varepsilon/\nu)(E_4+E_6).
 \label{eq:aWx-Y2-direct}
\end{equation}

For the other interaction, write
$u^\perp_{\neq}\cdot\nabla_\perp u^1_{0}
=u^2_{\neq}\partial_yu^1_{0}+u^3_{\neq}\partial_zu^1_{0}$.
The Sobolev product estimate with the coefficient in
$L_t^\infty H^3$ gives
\begin{equation*}
 \|u^2_{\neq}\partial_yu^1_{0}\|_{H^r}
 \leq C\|u^1_{0}\|_{H^4}\|u^2_{\neq}\|_{H^r},\qquad r=2,3.
\end{equation*}
Together with $\|\ee^{2\gamma\nu^{1/3}t}u^2_{\neq}\|_{Y^2}\leq CE_4$,
this gives
\begin{equation}
 \|\ee^{2\gamma\nu^{1/3}t}u^2_{\neq}\partial_yu^1_{0}\|_{Y^2}
 \leq CE_1E_4.
 \label{eq:ayW2-Y2-direct}
\end{equation}
For $u^3_{\neq}\partial_zu^1_{0}$,
\Cref{lem:streak-physical-tame} gives, for $r=2,3$,
\begin{align*}
 \|u^3_{\neq}\partial_zu^1_{0}\|_{H^r}
 \leq{}&C(E_1+\varepsilon/\nu)m_\nu
 \|(\partial_x,\partial_z)u^3_{\neq}\|_{H^r}\\
 &+CE_1\|(\partial_x,\partial_z)u^3_{\neq}\|_{H^{r-2}}.
\end{align*}
Apply \Cref{lem:streak-spanwise-profile} separately in
$L_t^\infty H^2$ and $\nu^{1/2}L_t^2H^3$. This gives
\begin{equation}
 \|\ee^{2\gamma\nu^{1/3}t}u^3_{\neq}\partial_zu^1_{0}\|_{Y^2}
 \leq C\nu(E_1+\varepsilon/\nu)E_5
 +C(E_1+\varepsilon/\nu)(E_4+E_6).
 \label{eq:azW3-Y2-direct}
\end{equation}
The weak pure-$y$ derivatives are handled by the frequency splitting
in that profile lemma. Summing the three estimates proves the claim.
\end{proof}

The $E_{4}$ estimate uses the constant Couette system with the streak contribution treated as forcing. The following identities and bounds provide this final interaction input.
With the pressures $p^C$ and $p^{\rm st}$ fixed in Section~\ref{sec:full-system}, the equations for the primary variables are
	\begin{align}
		L_{0}\Delta u^2_{\neq}&=-\Delta\bigl(\partial_yp^{\rm st}+u^1_{0}\partial_xu^2_{\neq}+g^2\bigr),
		\label{eq:primary-W2-final}\\
		L_{0}u^3_{\neq}-2\partial_x\partial_z\Delta^{-1}u^2_{\neq}
		&=-\partial_zp^{\rm st}-u^1_{0}\partial_xu^3_{\neq}-g^3.
		\label{eq:primary-W3-final}
	\end{align}
	The cancellation in \eqref{eq:primary-W2-final} follows from
	$[\Delta,y\partial_x]u^2_{\neq}=2\partial_x\partial_yu^2_{\neq}$ and
	$\Delta\partial_yp^C=-2\partial_x\partial_yu^2_{\neq}$.
	
	\begin{Lem}
		\label{lem:streak-primary-revised}
		Let
		\begin{align}
			\mathfrak S_{\rm st}(T):={}&\nu^{-1/2}\|\ee^{2\gamma\nu^{1/3}t}\nabla(\partial_yp^{\rm st}+u^1_{0}\partial_xu^2_{\neq})\|_{L_t^2L_x^2}
			\nonumber\\
			&+\nu^{-1/2}\|\ee^{2\gamma\nu^{1/3}t}(\partial_x,\partial_z)(\partial_zp^{\rm st}+u^1_{0}\partial_xu^3_{\neq})\|_{L_t^2L_x^2}.
			\notag
		\end{align}
		Then
		\begin{equation}
			\mathfrak S_{\rm st}(T)
			\leq C\bigl(E_{1}(T)+\frac{\varepsilon}{\nu}\bigr)
			\left[E_{6}(T)+\bigl(E_{4}(T)E_{6}(T)\bigr)^{1/2}\right].
			\label{eq:Sa-bound-revised}
		\end{equation}
	\end{Lem}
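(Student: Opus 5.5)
We would estimate the pressure part and the transport part of $\mathfrak S_{\rm st}$ separately, always pairing the streak coefficient with wave profiles that carry the $3\gamma$ weight, so that the gap between $2\gamma$ and $3\gamma$ absorbs the transient factor $m_\nu$.

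\emph{Transport terms.} For $u^1_{0}\partial_xu^j_{\neq}$, $j=2,3$, distribute $\nabla$ (respectively $(\partial_x,\partial_z)$) by the product rule. When the derivative falls on the wave, use $\|u^1_{0}\|_{L^\infty}\leq C\|u^1_{0}\|_{H^2}\leq C(E_1+\varepsilon/\nu)m_\nu$ from \Cref{lem:transient-streak-revised}. This leaves $\nabla\partial_xu^2_{\neq}$ and $(\partial_x,\partial_z)\partial_xu^3_{\neq}$. Both are controlled by $\mathcal U_1$: write $\partial_z=D_z^V+\kappa\partial_y$ and use \Cref{lem:E6-wave-recovery}, with $u^2_{\neq}=q-\kappa u^3_{\neq}$. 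Then $\nu^{-1/2}\|\ee^{3\gamma\nu^{1/3}t}\mathcal U_1\|_{L^2_t}\leq C\nu^{-1}E_6$, and \eqref{eq:mnu-exponential-revised} in its first-power form \eqref{eq:mnu-first-power-revised} shows that $m_\nu\ee^{-\gamma\nu^{1/3}t}\leq C\nu^{2/3}$. This gives $C(E_1+\varepsilon/\nu)\nu^{-1/3}E_6$, which is better than needed.

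When the derivative falls on the streak, use the two-level product \eqref{eq:two-level-zero-wave-E6} with $b=\nabla_\perp u^1_{0}$, whose $H^1$ norm is at most $C\|u^1_{0}\|_{H^2}$ and whose $L^2$ norm is at most $C\|u^1_{0}\|_{H^1}$, both $O((E_1+\varepsilon/\nu)m_\nu)$. Pair this with $\partial_xu_{\neq}$ in $L^2$ and $H^1$, which \eqref{eq:streak-low-x-profile} controls by $E_4+E_6$. The factor $m_\nu$ again converts the missing $\nu^{-1/2}$ into $\nu^{1/6}$ through the weight gap.

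\emph{Pressure terms.} From $\Delta p^{\rm st}=-2\partial_x(\partial_yu^1_{0}u^2_{\neq}+\partial_zu^1_{0}u^3_{\neq})$, order-zero multipliers give
\[
\|\nabla\partial_yp^{\rm st}\|_{L^2}+\|(\partial_x,\partial_z)\partial_zp^{\rm st}\|_{L^2}\leq C\|\nablaP u^1_{0}\,\partial_x u^\perp_{\neq}\|_{L^2}.
\]
The estimate \eqref{eq:two-level-zero-wave-E6} then bounds this by $(E_1+\varepsilon/\nu)m_\nu\,(\|\partial_xu_{\neq}\|_{L^2}+\|\partial_xu_{\neq}\|_{H^1})$.

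\emph{Main obstacle.} The $L^2_t$ integration of the factor with one wave derivative must be handled carefully. Only $\nu^{1/2}\|\ee^{3\gamma\nu^{1/3}t}\partial_xu_{\neq}\|_{L^2_tH^1}\leq C(E_4+E_6)$ is available, and the prefactor $\nu^{-1/2}$ costs $\nu^{-1}$, while $m_\nu$ recovers only $\nu^{2/3}$. We would therefore interpolate $\|\partial_xu_{\neq}\|_{H^1}$ and $\|\partial_x u_{\neq}\|_{L^2}$ differently. Cauchy--Schwarz gives $\|fg\|\leq\|f\|^{1/2}\|f\|_{H^1}^{1/2}\dots$, so we would bound the time integral by the geometric mean of the $L^\infty_t$ bound from $E_4$ (the $\Pi_0$ control together with $\nu^{1/6}L^2_t$) and the dissipative bound from $E_6$.

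This is where the square root $(E_4E_6)^{1/2}$ in \eqref{eq:Sa-bound-revised} should arise. The product $\nu^{-1/2}\cdot\nu^{2/3}$ against $\nu^{-1/12}\nu^{-1/4}$ yields a nonnegative total power of $\nu$. If the exponents fail to close, we would instead use a sharper weight bound that trades more of $m_\nu$: by \eqref{eq:mnu-exponential-revised}, $m_\nu^2\ee^{-c\nu^{1/3}t}\leq C\nu^{4/3}$, together with the $L^2$ bound of $\|u^1_{0}\|_{H^1}$ by $m_\nu$.
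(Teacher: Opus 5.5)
There is a genuine gap. Each estimate in the proposal that goes through $\mathcal U_1$, or through the full $H^1$ norm of $\partial_xu_{\neq}$, loses a negative power of $\nu$, and the proposal never removes this loss.

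\emph{Derivative on the wave.} Your own count gives $C(E_1+\varepsilon/\nu)\nu^{-1/3}E_6$. Since $\nu\leq1$, this is worse than the target, not better. The factor $\nu^{-1/2}\cdot\nu^{2/3}=\nu^{1/6}$, coming from the prefactor and from $m_\nu\ee^{-\gamma\nu^{1/3}t}\leq C\nu^{2/3}$, offsets exactly the $\nu^{-1/6}$ time cost of a low profile. It does not offset the $\nu^{-1/2}$ cost of $\mathcal U_1$. A $\nu^{-1/3}$ loss here would enter \Cref{prop:E3-primary-final} as a coefficient $\nu^{-1/3}(E_1+\varepsilon/\nu)$ and destroy the $O(\nu)$ threshold.

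The missing idea is that these terms never need $\mathcal U_1$. By Cauchy--Schwarz on the Fourier side,
\[
\|\nabla\partial_xu^2_{\neq}\|_{L^2}^2\leq\|\partial_x^2u^2_{\neq}\|_{L^2}\|\Delta u^2_{\neq}\|_{L^2},
\qquad
\|(\partial_x,\partial_z)\partial_xu^3_{\neq}\|_{L^2}^2\leq\|\partial_x^2u^3_{\neq}\|_{L^2}\|(\partial_x^2+\partial_z^2)u^3_{\neq}\|_{L^2}.
\]
Both terms are therefore bounded by $(\mathcal U_0\Pi_0)^{1/2}$. Now square the time norm and use $m_\nu^2\ee^{-\gamma\nu^{1/3}t}\leq C\nu^{4/3}$. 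Cauchy--Schwarz in time with the $\nu^{1/6}L^2_t$ bounds in \eqref{eq:P-profiles-revised} and \eqref{eq:U-profiles-revised} then gives $\nu^{-1}\nu^{4/3}\nu^{-1/6}\nu^{-1/6}E_4E_6=E_4E_6$. This term, not the pressure, is the source of $(E_4E_6)^{1/2}$.

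\emph{Pressure and derivative on the streak.} The two-level product \eqref{eq:two-level-zero-wave-E6} pairs the coefficient with $\|\partial_xu_{\neq}\|_{H^1}$. That norm contains $\partial_y\partial_xu^3_{\neq}$, which is controlled only at the dissipative level, so this step also loses $\nu^{-1/3}$. Your interpolation does not repair it. The exponent is $-\tfrac12+\tfrac23-\tfrac1{12}-\tfrac14=-\tfrac16<0$, not nonnegative. The bound $m_\nu^2\ee^{-c\nu^{1/3}t}\leq C\nu^{4/3}$ is only the square of the same gain and buys nothing extra.

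Use instead the anisotropic product \eqref{eq:anis-prod-L2} with $b=\nablaP u^1_{0}$ and $f=\partial_xu^j_{\neq}$. It requires only $\|f\|_{L^2}+\|D_z^Vf\|_{L^2}$. Since $D_z^V$ commutes with $\partial_x$ and $\partial_xD_z^Vu^j_{\neq}$ appears in $\mathcal U_0$, this quantity is bounded by $\mathcal U_0$ via streamwise Poincar\'e. The pressure and the coefficient-derivative terms are then bounded pointwise by $C\|u^1_{0}\|_{H^2}\mathcal U_0$. The count $\nu^{-1/2}\nu^{2/3}\nu^{-1/6}=1$ gives $C(E_1+\varepsilon/\nu)E_6$ with no loss.
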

	
	\begin{proof}
In the streak-pressure argument identified above, $p^{\rm st}$
corresponds to $p^{(1)}$, since
\begin{equation*}
 \Delta p^{\rm st}=-2\bigl(
 \partial_yu^1_{0}\partial_xu^2_{\neq}
 +\partial_zu^1_{0}\partial_xu^3_{\neq}\bigr).
\end{equation*}
The continuous-domain product \eqref{eq:anis-prod-L2} and the
order-zero pressure multipliers give
\begin{equation*}
 \|\nabla\partial_yp^{\rm st}\|_{L^2}
 +\|(\partial_x,\partial_z)\partial_zp^{\rm st}\|_{L^2}
 \leq C\|u^1_{0}\|_{H^2}\mathcal U_0.
\end{equation*}
The transport terms in the same argument use
\begin{align*}
 \|\nabla\partial_xu^2_{\neq}\|_{L^2}^2
 &\leq\|\partial_x^2u^2_{\neq}\|_{L^2}\|\Delta u^2_{\neq}\|_{L^2},\\
 \|(\partial_x,\partial_z)\partial_xu^3_{\neq}\|_{L^2}^2
 &\leq\|\partial_x^2u^3_{\neq}\|_{L^2}
 \|-(\partial_x^2+\partial_z^2)u^3_{\neq}\|_{L^2}.
\end{align*}
Both are bounded by $C\mathcal U_0\Pi_0$. If a derivative falls on
the coefficient, \eqref{eq:anis-prod-L2} instead gives
$C\|u^1_{0}\|_{H^2}\mathcal U_0$.
Thus the sum of the two spatial forcing norms defining
$\mathfrak S_{\rm st}$ is bounded by
\begin{equation}
 C\|u^1_{0}(t)\|_{H^2}
 \bigl(\mathcal U_0(t)+(\mathcal U_0(t)\Pi_0(t))^{1/2}\bigr).
 \label{eq:Sa-point-revised}
\end{equation}
This retains the lower-order term needed in our norms.

For the time integration, \Cref{lem:transient-streak-revised},
\eqref{eq:mnu-first-power-revised}, and \eqref{eq:U-profiles-revised}
bound the first term by
\begin{equation*}
 C(E_1+\varepsilon/\nu)\nu^{-1/2}\nu^{2/3}\nu^{-1/6}E_6
 =C(E_1+\varepsilon/\nu)E_6.
\end{equation*}
For the square of the mixed term, use
\eqref{eq:mnu-exponential-revised} and Cauchy--Schwarz in time:
\begin{align*}
 &\nu^{-1}\int_0^T\ee^{4\gamma\nu^{1/3}t}
 m_\nu(t)^2\mathcal U_0(t)\Pi_0(t)\,dt\\
 &\quad\leq C\nu^{-1}\nu^{4/3}
 \|\ee^{3\gamma\nu^{1/3}t}\mathcal U_0\|_{L_t^2}
 \|\ee^{2\gamma\nu^{1/3}t}\Pi_0\|_{L_t^2}
 \leq CE_4E_6.
\end{align*}
Taking the square root and restoring $E_1+\varepsilon/\nu$ proves
\eqref{eq:Sa-bound-revised}.
\end{proof}

\section{Energy estimates}
\label{sec:energy-estimates}
We will prove the six closure propositions of Section~\ref{sec:full-system} using the estimates of Section~\ref{sec:zero-endpoint} and the nonlinear bounds of Section~\ref{sec:interactions-final}.

\subsection{Zero-mode energy estimates}
\label{sec:zero-closure-final}
We first control the roll energies $E_{2}$ and $E_{3}$ and then the streak energy $E_{1}$.

% The velocity and vorticity equations provide the dissipative terms in $E_{2}$; Besov propagation and Fourier splitting supply its low-frequency components.
	
Since $u_{\neq}$ is divergence free and every streamwise derivative has zero streamwise average,
	\begin{equation*}
		(u_{\neq}\cdot\nabla u^\perp_{\neq})_{0}=\operatorname{div}_\perp (u^\perp_{\neq}\otimes u^\perp_{\neq})_{0}.
	\end{equation*}
	Recall the roll and roll-vorticity equations:
	\begin{align}
		\partial_tu^\perp_{0}-\nu\Delta_\perp u^\perp_{0}+u^\perp_{0}\cdot\nabla_\perp u^\perp_{0}+\nabla_\perp p_{0}
		&=-(u_{\neq}\cdot\nabla u^\perp_{\neq})_{0},
		\label{eq:roll-NW-revised}\\
		\partial_t\omega_{0}-\nu\Delta_\perp\omega_{0}+u^\perp_{0}\cdot\nabla_\perp\omega_{0}
		&=\bigl(-\partial_y(u_{\neq}\cdot\nabla u^3_{\neq})_{0}+\partial_z(u_{\neq}\cdot\nabla u^2_{\neq})_{0}\bigr).
		\label{eq:roll-vorticity-final}
	\end{align}

\begin{proof}[Proof of \Cref{prop:ZR-closed-final}]
		We divide the proof into four steps.
		
		\emph{Step 1: the basic velocity and vorticity energies.}
		Taking the $L^2$ scalar product of \eqref{eq:roll-NW-revised} with $u^\perp_{0}$ and using $\operatorname{div}_\perp u^\perp_{0}=0$ gives
		\begin{equation*}
			\frac12\frac{d}{dt}\|u^\perp_{0}\|_{L^2}^2+\nu\|\nabla_\perp u^\perp_{0}\|_{L^2}^2
			=\int_{\mathbb R^2}(u^\perp_{\neq}\otimes u^\perp_{\neq})_{0}:\nabla_\perp u^\perp_{0}.
		\end{equation*}
		Thus
		\begin{equation}
			\|u^\perp_{0}\|_{Y^0}
			\leq C\|u^\perp_{0}(0)\|_{L^2}+C\nu^{-1/2}\|(u^\perp_{\neq}\otimes u^\perp_{\neq})_{0}\|_{L_t^2L_x^2}.
			\label{eq:R-Y0-roll-revised}
		\end{equation}
		For \eqref{eq:roll-vorticity-final}, integration by parts gives
		\begin{equation}
			\langle\bigl(-\partial_y(u_{\neq}\cdot\nabla u^3_{\neq})_{0}+\partial_z(u_{\neq}\cdot\nabla u^2_{\neq})_{0}\bigr),\omega_{0}\rangle
			=\langle (u_{\neq}\cdot\nabla u^3_{\neq})_{0},\partial_y\omega_{0}\rangle
			-\langle (u_{\neq}\cdot\nabla u^2_{\neq})_{0},\partial_z\omega_{0}\rangle.
		\end{equation}
		The transport term cancels, and therefore
		\begin{equation}
			\|\omega_{0}\|_{Y^0}
			\leq C\|\omega_{0}(0)\|_{L^2}
			+C\nu^{-1/2}\|(u_{\neq}\cdot\nabla u^\perp_{\neq})_{0}\|_{L_t^2L_x^2}.
			\label{eq:omega-Y0-roll-revised}
		\end{equation}
		By \Cref{lem:wave-roll-forcing-revised}, the right-hand sides of \eqref{eq:R-Y0-roll-revised}--\eqref{eq:omega-Y0-roll-revised} are bounded by $C\|u^\perp_{0}(0)\|_{H^2}+C\nu^{-1}E_{4}^2$.
		
		\emph{Step 2: the unweighted $z$ derivative.}
		
		Note
		\begin{equation*}
			\partial_t\partial_z\omega_{0}-\nu\Delta_\perp \partial_z\omega_{0}+u^\perp_{0}\cdot\nabla_\perp \partial_z\omega_{0}
			=-(\partial_zu^\perp_{0})\cdot\nabla_\perp\omega_{0}+\operatorname{div}_\perp \bigl(-\partial_z(u_{\neq}\cdot\nabla u^3_{\neq})_{0},\,\partial_z(u_{\neq}\cdot\nabla u^2_{\neq})_{0}\bigr).
		\end{equation*}
		By $\operatorname{div}_\perp(\partial_zu^\perp_{0})=0$, we have
		\begin{equation*}
			\int(\partial_zu^\perp_{0}\cdot\nabla_\perp\omega_{0})\partial_z\omega_{0}
			=-\int(\partial_zu^\perp_{0}\cdot\nabla_\perp \partial_z\omega_{0})\omega_{0}.
		\end{equation*}
		The map $\omega_{0}\mapsto\partial_zu^\perp_{0}$ is a matrix of order-zero Riesz transforms.  Hence the two-dimensional Gagliardo--Nirenberg inequality gives
		\begin{align}
			\left|\int(\partial_zu^\perp_{0}\cdot\nabla_\perp \partial_z\omega_{0})\omega_{0}\right|
			&\leq C\|\omega_{0}\|_{L^4}^2\|\nabla_\perp \partial_z\omega_{0}\|_{L^2}
			\nonumber\\
			&\leq C\|\omega_{0}\|_{L^2}\|\nabla_\perp\omega_{0}\|_{L^2}\|\nabla_\perp \partial_z\omega_{0}\|_{L^2}
			\nonumber\\
			&\leq \frac{\nu}{8}\|\nabla_\perp \partial_z\omega_{0}\|_{L^2}^2
			+C\nu^{-1}\|\omega_{0}\|_{L^2}^2\|\nabla_\perp\omega_{0}\|_{L^2}^2.
			\notag
		\end{align}
		Under $\|\omega_{0}\|_{L_t^\infty L^2}\leq E_{2}\leq\delta_*\nu$, the last term is bounded by $C\delta_*^2\nu\|\nabla_\perp\omega_{0}\|_{L^2}^2$ and is controlled, after integration, by the already established basic vorticity dissipation; it is perturbative once $\delta_*$ is sufficiently small.  Integrating the source by parts and applying Young's inequality yields
		\begin{equation*}
			\|\partial_z\omega_{0}\|_{Y^0}
			\leq C\|\partial_z\omega_{0}(0)\|_{L^2}+C\delta_*\|\omega_{0}\|_{Y^0}
			+C\nu^{-1/2}\|\bigl(-\partial_z(u_{\neq}\cdot\nabla u^3_{\neq})_{0},\,\partial_z(u_{\neq}\cdot\nabla u^2_{\neq})_{0}\bigr)\|_{L_t^2L_x^2}.
		\end{equation*}
		By \eqref{eq:wave-roll-forcing-revised}, we have $\nu^{-1/2}\|\bigl(-\partial_z(u_{\neq}\cdot\nabla u^3_{\neq})_{0},\,\partial_z(u_{\neq}\cdot\nabla u^2_{\neq})_{0}\bigr)\|_{L_t^2L_x^2}\leq C\nu^{-1}E_{4}^2$.
		
		\emph{Step 3: the weighted $y$ derivative.}
		
		Multiplying the differentiated equation by $\mu_\nu \partial_y\omega_{0}$ gives the exact identity
		\begin{align}
			\frac12\frac{d}{dt}\bigl(\mu_\nu\|\partial_y\omega_{0}\|_{L^2}^2\bigr)
			+\nu\mu_\nu\|\nabla_\perp \partial_y\omega_{0}\|_{L^2}^2
			={}&\frac12\mu_\nu'\|\partial_y\omega_{0}\|_{L^2}^2
			-\mu_\nu\int(\partial_yu^\perp_{0}\cdot\nabla_\perp\omega_{0})\partial_y\omega_{0}
			\nonumber\\
			&+\mu_\nu\langle\operatorname{div}_\perp \bigl(-\partial_y(u_{\neq}\cdot\nabla u^3_{\neq})_{0},\,\partial_y(u_{\neq}\cdot\nabla u^2_{\neq})_{0}\bigr),\partial_y\omega_{0}\rangle.
			\notag
		\end{align}
		The positive weight derivative is retained. By $0\leq\mu_\nu'\leq\nu$, we have
		\begin{equation*}
			\int_{0}^T\mu_\nu'\|\partial_y\omega_{0}\|_{L^2}^2\,dt
			\leq\nu\int_{0}^T\|\nabla_\perp\omega_{0}\|_{L^2}^2\,dt,
		\end{equation*}
		which is controlled by \eqref{eq:omega-Y0-roll-revised}.  As in Step 2,
		\begin{align}
			\mu_\nu\left|\int(\partial_yu^\perp_{0}\cdot\nabla_\perp\omega_{0})\partial_y\omega_{0}\right|
			&=\mu_\nu\left|\int(\partial_yu^\perp_{0}\cdot\nabla_\perp \partial_y\omega_{0})\omega_{0}\right|
			\nonumber\\
			&\leq\frac{\nu\mu_\nu}{8}\|\nabla_\perp \partial_y\omega_{0}\|_{L^2}^2
			+C\nu^{-1}\mu_\nu\|\omega_{0}\|_{L^2}^2\|\nabla_\perp\omega_{0}\|_{L^2}^2.\notag
		\end{align}
		The last term is controlled by the already established basic vorticity dissipation and is perturbative for sufficiently small $\delta_*$. The source satisfies
		\begin{equation*}
\begin{aligned}
&\mu_\nu|\langle\operatorname{div}_\perp \bigl(-\partial_y(u_{\neq}\cdot\nabla u^3_{\neq})_{0},\,\partial_y(u_{\neq}\cdot\nabla u^2_{\neq})_{0}\bigr),\partial_y\omega_{0}\rangle|
			\\&\leq\frac{\nu\mu_\nu}{8}\|\nabla_\perp \partial_y\omega_{0}\|_{L^2}^2
			+C\nu^{-1}\mu_\nu\|\bigl(-\partial_y(u_{\neq}\cdot\nabla u^3_{\neq})_{0},\,\partial_y(u_{\neq}\cdot\nabla u^2_{\neq})_{0}\bigr)\|_{L^2}^2.
\end{aligned}
\end{equation*}
		Consequently,
		\begin{equation*}
			\|\mu_\nu^{1/2}\partial_y\omega_{0}\|_{Y^0}
			\leq C\nu^{1/3}\|\partial_y\omega_{0}(0)\|_{L^2}+C\|\omega_{0}\|_{Y^0}
			+C\nu^{-1/2}\|\mu_\nu^{1/2}\bigl(-\partial_y(u_{\neq}\cdot\nabla u^3_{\neq})_{0},\,\partial_y(u_{\neq}\cdot\nabla u^2_{\neq})_{0}\bigr)\|_{L_t^2L_x^2}.
		\end{equation*}
		The initial factor is $\mu_\nu(0)^{1/2}=\nu^{1/3}$.  For the two components of $\bigl(-\partial_y(u_{\neq}\cdot\nabla u^3_{\neq})_{0},\,\partial_y(u_{\neq}\cdot\nabla u^2_{\neq})_{0}\bigr)$, \eqref{eq:weighted-NW2-revised} gives
		\begin{equation*}
			\nu^{-1/2}\|\mu_\nu^{1/2}\partial_y(u_{\neq}\cdot\nabla u^2_{\neq})_{0}\|_{L_t^2L_x^2}
			\leq C\nu^{-2/3}E_{4}^2
			\leq C\nu^{-1}E_{4}^2,
		\end{equation*}
		where we used $0<\nu\leq1$, while \eqref{eq:weighted-NW3-revised} gives the same required $C\nu^{-1}E_{4}^2$ bound for the first component. Hence
		\begin{equation*}
			\nu^{-1/2}\|\mu_\nu^{1/2}\bigl(-\partial_y(u_{\neq}\cdot\nabla u^3_{\neq})_{0},\,\partial_y(u_{\neq}\cdot\nabla u^2_{\neq})_{0}\bigr)\|_{L_t^2L_x^2}
			\leq C\nu^{-1}E_{4}^2.
		\end{equation*}
		Combining Steps 1--3 gives
		\begin{align}
			&\|u^\perp_{0}\|_{Y^0}+\|\omega_{0}\|_{Y^0}+\|\partial_z\omega_{0}\|_{Y^0}
			+\|\mu_\nu^{1/2}\partial_y\omega_{0}\|_{Y^0}
			\leq C\|u^\perp_{0}(0)\|_{H^2}+C\nu^{-1}E_{4}^2.
			\label{eq:roll-high-closed-revised}
		\end{align}
		
		\emph{Step 4: the critical low-frequency pieces.}

		We first record explicitly the $L_t^1L^1$ bound for the wave stress. Jensen's inequality in $x$ and Fubini's theorem give, for every $t$,
		\begin{align}
			\|\bigl(u^\perp_{\neq}(t)\otimes u^\perp_{\neq}(t)\bigr)_{0}\|_{L^1(\mathbb R^2)}
			&\leq C\int_{\mathbb R^2}\int_{\mathbb T}|u_{\neq}(t,x,y,z)|^2\,dx\,dy\,dz
			\nonumber\\
			&\leq C\|u_{\neq}(t)\|_{L^2}^2
			\leq C\Pi_{0}(t)^2.
			\notag
		\end{align}
		Hence \eqref{eq:P-profiles-revised} yields
		\begin{equation}
			\int_{0}^T\|\bigl(u^\perp_{\neq}(t)\otimes u^\perp_{\neq}(t)\bigr)_{0}\|_{L^1}\,dt
			\leq C\int_{0}^T\Pi_{0}(t)^2\,dt
			\leq C\nu^{-1/3}E_{4}(T)^2.
			\label{eq:T-L1-time-explicit}
		\end{equation}
		The Besov propagation inequality \eqref{eq:B-prop}, together with \eqref{eq:T-L1-time-explicit}, therefore gives
		\begin{equation}
			\sup_{0\leq s\leq T}\norm{u^\perp_{0}(s)}{\dot B^{-2}_{2,\infty}}\leq\|u^\perp_{0}(0)\|_{\dot B^{-2}_{2,\infty}}+C\int_{0}^T\|u^\perp_{0}(t)\|_{L^2}^2\,dt+C\nu^{-1/3}E_{4}^2.
			\label{eq:BT-final-roll-revised}
		\end{equation}
		Furthermore, \eqref{eq:quadratic-time-final} and the exponential weight give
		\begin{equation*}
			\int_{0}^T(1+\nu t)\|\bigl(u^\perp_{\neq}(t)\otimes u^\perp_{\neq}(t)\bigr)_{0}\|_{L^2}^2\,dt
			\leq C\nu^{-1}E_{4}^4.
		\end{equation*}
		Indeed, $(1+\nu t)\ee^{-8\gamma\nu^{1/3}t}$ is uniformly bounded.  Hence \Cref{lem:fourier-splitting} yields
		\begin{equation}
			\int_{0}^T\|u^\perp_{0}(t)\|_{L^2}^2\,dt\leq\frac{C}{\nu}(\|u^\perp_{0}(0)\|_{L^2}^2+\left(\sup_{0\leq s\leq T}\norm{u^\perp_{0}(s)}{\dot B^{-2}_{2,\infty}}\right)^2)+C\nu^{-3}E_{4}^4.
			\label{eq:YT-final-roll-revised}
		\end{equation}

		Then $\left(\|u^\perp_{0}(0)\|_{H^2}+\|u^\perp_{0}(0)\|_{\dot B^{-2}_{2,\infty}}+\nu^{-1}E_{4}(T)^2\right)/\nu\leq C\delta_*$ under the assumptions of the proposition, and
		$\nu^{-1/3}E_{4}^2\leq \left(\|u^\perp_{0}(0)\|_{H^2}+\|u^\perp_{0}(0)\|_{\dot B^{-2}_{2,\infty}}+\nu^{-1}E_{4}(T)^2\right)$.  We close a bootstrap on the continuous, nondecreasing quantity $\int_{0}^T\|u^\perp_{0}(t)\|_{L^2}^2\,dt$.  Suppose
		\begin{equation}
			\int_{0}^T\|u^\perp_{0}(t)\|_{L^2}^2\,dt\leq K\nu^{-1}\left(\|u^\perp_{0}(0)\|_{H^2}+\|u^\perp_{0}(0)\|_{\dot B^{-2}_{2,\infty}}+\nu^{-1}E_{4}(T)^2\right)^2.
			\label{eq:YT-pair-bootstrap-revised}
		\end{equation}
		Equation \eqref{eq:BT-final-roll-revised} gives
		\begin{equation}
\begin{aligned}
\sup_{0\leq s\leq T}\norm{u^\perp_{0}(s)}{\dot B^{-2}_{2,\infty}}&\leq\left(1+CK\frac{\left(\|u^\perp_{0}(0)\|_{H^2}+\|u^\perp_{0}(0)\|_{\dot B^{-2}_{2,\infty}}+\nu^{-1}E_{4}(T)^2\right)}{\nu}\right)\\&\quad\times\left(\|u^\perp_{0}(0)\|_{H^2}+\|u^\perp_{0}(0)\|_{\dot B^{-2}_{2,\infty}}+\nu^{-1}E_{4}(T)^2\right)\\&\leq2\left(\|u^\perp_{0}(0)\|_{H^2}+\|u^\perp_{0}(0)\|_{\dot B^{-2}_{2,\infty}}+\nu^{-1}E_{4}(T)^2\right)
\end{aligned}
\label{eq:BT-pair-improve-revised}
\end{equation}
		provided $\delta_*$ is sufficiently small after $K$ is fixed.  Substitution in \eqref{eq:YT-final-roll-revised} yields
		\begin{equation*}
\begin{aligned}
\int_{0}^T\|u^\perp_{0}(t)\|_{L^2}^2\,dt
			&\leq C\nu^{-1}\Bigl(\|u^\perp_{0}(0)\|_{L^2}^2+4\left(\|u^\perp_{0}(0)\|_{H^2}+\|u^\perp_{0}(0)\|_{\dot B^{-2}_{2,\infty}}+\nu^{-1}E_{4}(T)^2\right)^2\Bigr)
			\\&\quad+C\nu^{-1}(\nu^{-1}E_{4}^2)^2
			\\&\leq C_1\nu^{-1}\left(\|u^\perp_{0}(0)\|_{H^2}+\|u^\perp_{0}(0)\|_{\dot B^{-2}_{2,\infty}}+\nu^{-1}E_{4}(T)^2\right)^2.
\end{aligned}
\end{equation*}
		Choose $K=2C_1+2$ and then reduce $\delta_*$.  This strictly improves \eqref{eq:YT-pair-bootstrap-revised}; continuity therefore gives the improved estimate for every $T$.  Together with \eqref{eq:BT-pair-improve-revised},
		\begin{equation}
			\sup_{0\leq s\leq T}\norm{u^\perp_{0}(s)}{\dot B^{-2}_{2,\infty}}+\nu^{1/2}\left(\int_{0}^T\|u^\perp_{0}(t)\|_{L^2}^2\,dt\right)^{1/2}
			\leq C\left(\|u^\perp_{0}(0)\|_{H^2}+\|u^\perp_{0}(0)\|_{\dot B^{-2}_{2,\infty}}\right)+C\nu^{-1}E_{4}^2.
			\label{eq:roll-low-closed-revised}
		\end{equation}
		
		Combining \eqref{eq:roll-high-closed-revised} and \eqref{eq:roll-low-closed-revised} proves \eqref{eq:ZR-closed-final}.
	\end{proof}

\begin{Lem}
		\label{lem:2d-high-roll-commutator}
		Let $u^\perp_{0}\in H^5(\mathbb R^2;\mathbb R^2)$ be divergence free. Then
		\begin{equation}
			\sum_{1\leq|\alpha|\leq4}
			\left|\left\langle[\partial^\alpha,u^\perp_{0}\cdot\nabla_\perp]u^\perp_{0},\partial^\alpha u^\perp_{0}\right\rangle\right|
			\leq C\|\nabla_\perp u^\perp_{0}\|_{L^2}\|u^\perp_{0}\|_{H^4}\|\nabla_\perp u^\perp_{0}\|_{H^4}.
			\label{eq:2d-high-roll-commutator}
		\end{equation}
	\end{Lem}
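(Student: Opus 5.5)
The plan is to expand the commutator by the Leibniz rule and estimate each resulting trilinear integral by H\"older's inequality and two-dimensional Gagliardo--Nirenberg interpolation. The aim is to place every factor at derivative order at least one, so that only $\nabla_\perp u^\perp_{0}$, and never $u^\perp_{0}$ itself, appears at the bottom level. Write $u=u^\perp_{0}$ for brevity. For $1\leq|\alpha|\leq4$,
\begin{equation*}
 [\partial^\alpha,u\cdot\nabla_\perp]u
 =\sum_{0\neq\beta\leq\alpha}\binom{\alpha}{\beta}
 (\partial^\beta u)\cdot\nabla_\perp\partial^{\alpha-\beta}u .
\end{equation*}
The term with $\beta=0$, which would carry $|\alpha|+1$ derivatives on one factor, is exactly the part removed by the commutator. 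For this reason divergence-freeness is not needed beyond the definition of the commutator.

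Fix $\beta$ with $j:=|\beta|\geq1$. The three factors in $\int(\partial^\beta u\cdot\nabla_\perp\partial^{\alpha-\beta}u)\cdot\partial^\alpha u$ carry $a=j\geq1$, $b=|\alpha|-j+1\geq1$ and $|\alpha|$ derivatives. I would bound the integral by
\begin{equation*}
 \|\partial^\beta u\|_{L^4}\,\|\nabla_\perp\partial^{\alpha-\beta}u\|_{L^4}\,\|\partial^\alpha u\|_{L^2},
\end{equation*}
and use $\|\partial^\alpha u\|_{L^2}\leq\|u\|_{H^4}$ for the last factor. For the two $L^4$ factors I would apply, on $\mathbb R^2$, the Gagliardo--Nirenberg inequality written in terms of $g=\nabla_\perp u$:
\begin{equation*}
 \|D^m u\|_{L^4}\lesssim\|\nabla_\perp u\|_{L^2}^{1-\theta_m}\|D^4\nabla_\perp u\|_{L^2}^{\theta_m},
 \qquad \theta_m=\frac{m-1/2}{4},\quad 1\leq m\leq4.
\end{equation*}
This is legitimate because $\|D^{m-1}g\|_{L^4}$ interpolates between $\|g\|_{L^2}$ and $\|D^4g\|_{L^2}$, with $0<\theta_m<1$.

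The exponents then add up to $\theta_a+\theta_b=(a+b-1)/4=|\alpha|/4\leq1$. The product of the two $L^4$ norms is therefore at most
\begin{equation*}
 \|\nabla_\perp u\|_{L^2}^{2-|\alpha|/4}\|\nabla_\perp u\|_{H^4}^{|\alpha|/4}
 \leq\|\nabla_\perp u\|_{L^2}\|\nabla_\perp u\|_{H^4},
\end{equation*}
using $\|\nabla_\perp u\|_{L^2}\leq\|\nabla_\perp u\|_{H^4}$. Multiplying by $\|u\|_{H^4}$ and summing over the finitely many pairs $(\alpha,\beta)$ gives \eqref{eq:2d-high-roll-commutator}. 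The extreme case $j=|\alpha|$ is the familiar Ladyzhenskaya-type bound $\|\nabla_\perp u\|_{L^2}\|\partial^\alpha u\|_{L^4}^2$, and it is covered by the same computation.

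The only point needing care is the exponent bookkeeping at $|\alpha|=4$. There the scaling is critical, with $\theta_a+\theta_b=1$ exactly. One must check that every index satisfies $1\leq m\leq4$, so that no factor requires more than $\|D^5u\|_{L^2}$ or less than $\|\nabla_\perp u\|_{L^2}$. This holds because $a,b\geq1$ and $a+b=5$. For $|\alpha|<4$ there is slack, which is absorbed by the inhomogeneous norm as above.
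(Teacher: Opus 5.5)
Your proof is correct and follows essentially the same route as the paper: Leibniz expansion of the commutator, H\"older with exponents $L^4\cdot L^4\cdot L^2$, and two-dimensional Gagliardo--Nirenberg applied to $\nabla_\perp u^\perp_{0}$. The only difference is cosmetic. The paper interpolates up to $\|\nabla_\perp^{|\alpha|+1}u^\perp_{0}\|_{L^2}$, so the two exponents sum exactly to one, whereas you always interpolate up to the fifth derivative and absorb the slack $|\alpha|/4\leq1$ via $\|\nabla_\perp u^\perp_{0}\|_{L^2}\leq\|\nabla_\perp u^\perp_{0}\|_{H^4}$.
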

	
	\begin{Rem}
		\label{rem:high-roll-H5-ae}
		In the application, $u^\perp_{0}\in L^\infty(0,T;H^4)\cap L^2(0,T;H^5)$. Thus the hypothesis $u^\perp_{0}(t)\in H^5$ in \Cref{lem:2d-high-roll-commutator} is available for almost every time and only supplies the viscous factor $\|\nabla_\perp u^\perp_{0}\|_{H^4}$. The estimate is applied first to smooth approximants and then integrated in time; it imposes no $H^5$ condition on the initial data.
	\end{Rem}
	
	\begin{proof}
After the divergence-free transport term cancels, H\"older's inequality
bounds each Leibniz term with $m=|\alpha|$ and $r=|\beta|$ by
\begin{equation*}
 C\|\nabla_\perp^r u^\perp_{0}\|_{L^4}
   \|\nabla_\perp^{m-r+1}u^\perp_{0}\|_{L^4}
   \|\nabla_\perp^m u^\perp_{0}\|_{L^2},
 \qquad 1\leq r\leq m\leq4.
\end{equation*}
The two-dimensional homogeneous Gagliardo--Nirenberg inequality,
applied to $\nabla_\perp u^\perp_{0}$, gives
\begin{equation*}
 \|\nabla_\perp^r u^\perp_{0}\|_{L^4}
 \leq C_m\|\nabla_\perp u^\perp_{0}\|_{L^2}^{1-(r-1/2)/m}
             \|\nabla_\perp^{m+1}u^\perp_{0}\|_{L^2}^{(r-1/2)/m}.
\end{equation*}
The exponents for $r$ and $m-r+1$ add to one. Hence
\begin{align*}
 \|\nabla_\perp^r u^\perp_{0}\|_{L^4}
   \|\nabla_\perp^{m-r+1}u^\perp_{0}\|_{L^4}
   \|\nabla_\perp^m u^\perp_{0}\|_{L^2}
 &\leq C_m\|\nabla_\perp u^\perp_{0}\|_{L^2}
                 \|\nabla_\perp^{m+1}u^\perp_{0}\|_{L^2}
                 \|\nabla_\perp^m u^\perp_{0}\|_{L^2}\\
 &\leq C\|\nabla_\perp u^\perp_{0}\|_{L^2}
                \|u^\perp_{0}\|_{H^4}
                \|\nabla_\perp u^\perp_{0}\|_{H^4}.
\end{align*}
Summing over $\alpha,\beta$ proves \eqref{eq:2d-high-roll-commutator}.
\end{proof}
	
	\begin{proof}[Proof of \Cref{prop:ZR4-closed-final}]
		Apply $\partial^\alpha$ with $1\leq|\alpha|\leq4$ to \eqref{eq:roll-NW-revised}, take the $L^2$ product with $\partial^\alpha u^\perp_{0}$, and sum.  The undifferentiated $L^2$ component is kept from the already proved basic roll estimate contained in $E_{2}$.  For $|\alpha|\geq1$, choose an index $j$ with $\alpha_j\geq1$ and integrate one derivative in
		\begin{equation}
			\langle\partial^\alpha (u_{\neq}\cdot\nabla u^\perp_{\neq})_{0},\partial^\alpha u^\perp_{0}\rangle
			=-\langle\partial^{\alpha-e_j}(u_{\neq}\cdot\nabla u^\perp_{\neq})_{0},\partial_j\partial^\alpha u^\perp_{0}\rangle.
			\label{eq:ZR4-force-shift}
		\end{equation}
		The principal transport term cancels.  By \Cref{lem:2d-high-roll-commutator}, \eqref{eq:ZR4-force-shift}, and Young's inequality,
		\begin{align}
			\frac{d}{dt}\sum_{1\leq|\alpha|\leq4}\|\partial^\alpha u^\perp_{0}\|_{L^2}^2
			+\nu\sum_{1\leq|\alpha|\leq4}\|\nabla_\perp\partial^\alpha u^\perp_{0}\|_{L^2}^2
			\leq{}&C\nu^{-1}\|\nabla_\perp u^\perp_{0}\|_{L^2}^2\|u^\perp_{0}\|_{H^4}^2
			\nonumber\\
			&+C\nu^{-1}\|(u_{\neq}\cdot\nabla u^\perp_{\neq})_{0}\|_{H^3}^2.
			\notag
		\end{align}
		The undifferentiated component is estimated separately by \eqref{eq:R-Y0-roll-revised}. Thus the wave forcing requires only three derivatives after one derivative is shifted to the tested roll factor.  The basic roll energy gives
		\begin{equation*}
			\nu^{-1}\int_{0}^T\|\nabla_\perp u^\perp_{0}\|_{L^2}^2dt
			\leq C\frac{E_{2}(T)^2}{\nu^2}
			\leq C\delta_*^2.
		\end{equation*}
		Integrating the homogeneous differentiated inequality, applying Gronwall, and then adding \eqref{eq:R-Y0-roll-revised} for the undifferentiated component yield, after reducing $\delta_*$,
		\begin{equation}
			\begin{aligned}
 E_{3}(T)\leq{}&C\|u^\perp_{0}(0)\|_{H^4}
 +C\nu^{-1/2}\|(u^\perp_{\neq}\otimes u^\perp_{\neq})_{0}\|_{L_t^2L^2}
 \\
 &+C\nu^{-1/2}\|(u_{\neq}\cdot\nabla u^\perp_{\neq})_{0}\|_{L_t^2H^3}.
\end{aligned}
			\label{eq:ZR4-before-wave}
		\end{equation}
		The stress term retained from \eqref{eq:R-Y0-roll-revised} satisfies, by \eqref{eq:wave-roll-forcing-revised},
\begin{equation*}
 \nu^{-1/2}\|(u^\perp_{\neq}\otimes u^\perp_{\neq})_{0}\|_{L_t^2L^2}
 \leq C\nu^{-1}E_4(T)^2.
\end{equation*}
At this point one must keep the transverse low frequency of the zero mode separate.  Since $(u_{\neq}\cdot\nabla u^\perp_{\neq})_{0}$ is streamwise independent, the $Y^2$ estimate does not by itself recover its full inhomogeneous $L_t^2H^3$ norm.  Instead we use
		\begin{equation}
			\|f\|_{H^3(\mathbb R^2)}
			\leq C\Big(\|f\|_{L^2(\mathbb R^2)}+\|\nabla_\perp f\|_{H^2(\mathbb R^2)}\Big).
			\label{eq:H3-zero-split}
		\end{equation}
		The zeroth-order spacetime component is supplied by \eqref{eq:wave-roll-forcing-revised}, while the differentiated component is supplied by \eqref{eq:wave-Y2-final}.  More precisely,
		\begin{align}
			\nu^{1/2}\|(u_{\neq}\cdot\nabla u^\perp_{\neq})_{0}\|_{L_t^2L^2}
			&\leq C E_{4}(T)^2,
			\label{eq:NW-L2-lowfreq-ZR4}\\
			\nu^{1/2}\|\nabla_\perp (u_{\neq}\cdot\nabla u^\perp_{\neq})_{0}\|_{L_t^2H^2}
			&\leq C E_{4}(T)E_{5}(T).
			\label{eq:NW-H3-highfreq-ZR4}
		\end{align}
		Here the exponential weights in \eqref{eq:wave-roll-forcing-revised} and \eqref{eq:wave-Y2-final} are at least one and may simply be discarded.  Combining \eqref{eq:H3-zero-split}--\eqref{eq:NW-H3-highfreq-ZR4} gives
		\begin{equation*}
			\nu^{1/2}\|(u_{\neq}\cdot\nabla u^\perp_{\neq})_{0}\|_{L_t^2H^3}
			\leq C\Big(E_{4}(T)^2+E_{4}(T)E_{5}(T)\Big).
		\end{equation*}
		Equivalently, the $|\alpha|=1$ part of \eqref{eq:ZR4-force-shift} is the only place where the undifferentiated $(u_{\neq}\cdot\nabla u^\perp_{\neq})_{0}$ appears and is controlled by \eqref{eq:NW-L2-lowfreq-ZR4}; for $2\leq|\alpha|\leq4$, at least one derivative remains on $(u_{\neq}\cdot\nabla u^\perp_{\neq})_{0}$ and \eqref{eq:NW-H3-highfreq-ZR4} applies.  Substitution into \eqref{eq:ZR4-before-wave} proves \eqref{eq:ZR4-closed-final}.
	\end{proof}

The roll estimates and the two-level Duhamel identity control the streak amplitude. A homogeneous energy estimate then supplies the fourth derivatives and the time derivative in $E_{1}$. Recall the streak equation:
	\begin{equation}
		\partial_t u^1_{0}-\nu\Delta_\perp u^1_{0}=-u^2_{0}-\bigl(u^\perp_{0}\cdot\nabla_\perp u^1_{0}+(u_{\neq}\cdot\nabla u^1_{\neq})_{0}\bigr).
		\label{eq:a-Na-revised}
	\end{equation}
	
	\begin{Lem}
		\label{lem:Na-Y2-revised}
		Assume $E_{1}(T)\leq\delta_*$ and $E_{2}(T)\leq\delta_*\nu$.  Then
		\begin{equation}
			\|\bigl(u^\perp_{0}\cdot\nabla_\perp u^1_{0}+(u_{\neq}\cdot\nabla u^1_{\neq})_{0}\bigr)\|_{Y^2}
			\leq C E_{2}(T)\bigl(E_{1}(T)+\frac{\varepsilon}{\nu}\bigr)+C E_{4}(T)E_{5}(T).
			\label{eq:Na-Y2-revised}
		\end{equation}
		Moreover,
		\begin{equation}
			\|\bigl(u^\perp_{0}\cdot\nabla_\perp u^1_{0}+(u_{\neq}\cdot\nabla u^1_{\neq})_{0}\bigr)(t)\|_{H^2}
			\leq C E_{2}(T)\bigl(E_{1}(T)+\frac{\varepsilon}{\nu}\bigr)+C E_{4}(T)E_{5}(T).
			\label{eq:Na-point-revised}
		\end{equation}
	\end{Lem}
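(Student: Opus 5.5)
The plan is to split the nonlinearity into the roll--streak transport $u^\perp_{0}\cdot\nabla_\perp u^1_{0}=u^2_{0}\partial_yu^1_{0}+u^3_{0}\partial_zu^1_{0}$ and the wave--wave zero mode $(u_{\neq}\cdot\nabla u^1_{\neq})_{0}$, and to estimate each in $Y^2$.

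\emph{Wave--wave term.} This is immediate from \Cref{lem:wave-Y2-final}. Its first component is the $x$-average of $u_{\neq}\cdot\nabla u_{\neq}$, so
\[
\|(u_{\neq}\cdot\nabla u^1_{\neq})_{0}\|_{Y^2}\leq\|\ee^{2\gamma\nu^{1/3}t}(u_{\neq}\cdot\nabla u_{\neq})_{0}\|_{Y^2}\leq CE_{4}E_{5},
\]
where the exponential weight is simply discarded because it is at least one. Since the $Y^2$ norm contains $L_t^\infty H^2$, the same bound also gives the pointwise estimate \eqref{eq:Na-point-revised} for this piece.

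\emph{The $u^2_{0}$ part of the transport.} Here $u^2_{0}\in H^2(\R^2)$ with $\|u^2_{0}\|_{H^2}\leq CE_{2}$ by \eqref{eq:roll-recovery-final}. Since $H^2(\R^2)$ is an algebra and $\|\partial_yu^1_{0}\|_{H^3}\leq E_{1}$, the $L_t^\infty H^2$ bound is $CE_{2}E_{1}$. For the dissipative part $\nu^{1/2}\|\nabla_\perp(\cdot)\|_{L_t^2H^2}$, I distribute derivatives by Leibniz.
\begin{itemize}
\item When the top derivatives fall on the roll, I use $\nu^{1/2}\|\nabla_\perp u^2_{0}\|_{L_t^2H^2}\leq CE_{2}$ from \eqref{eq:roll-recovery-time-final}, with $\partial_yu^1_{0}$ placed in $L_t^\infty W^{1,\infty}\subset L_t^\infty H^3$.
\item When four derivatives fall on $u^1_{0}$, I use $\nu^{1/2}\|\nabla_\perp\DeltaP^2u^1_{0}\|_{L_t^2L^2}\leq E_{1}$, with $u^2_{0}\in L_t^\infty L^\infty$.
\end{itemize}
The intermediate terms are handled by Gagliardo--Nirenberg interpolation between these two endpoints, always keeping $L_t^2$ on exactly one factor with a $\nu^{1/2}$.

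\emph{The $u^3_{0}$ part: the main obstacle.} The difficulty is that only $\mu_\nu^{1/2}\|u^3_{0}\|_{H^2}$ is controlled; the pure derivative $\partial_y^2u^3_{0}$ has no unweighted bound. The plan is to pay for this with the transient smallness of the streak, and this is where the factor $E_{1}+\varepsilon/\nu$ comes from.
\begin{itemize}
\item By \Cref{lem:transient-streak-revised} and the interpolation $\|u^1_{0}\|_{H^3}^2\leq\|u^1_{0}\|_{H^2}\|u^1_{0}\|_{H^4}$ (as in the proof of \Cref{lem:transient-kappa-revised}), $\|u^1_{0}\|_{H^3}\leq C(E_{1}+\varepsilon/\nu)m_\nu^{1/2}$.
\item The term $\partial_y^2u^3_{0}\,\partial_zu^1_{0}$ is then bounded by
\[
\mu_\nu^{-1/2}E_{2}\cdot(E_{1}+\varepsilon/\nu)\,m_\nu^{1/2}\leq C(E_{1}+\varepsilon/\nu)E_{2},
\]
using $m_\nu\leq\mu_\nu$ and $H^2(\R^2)\hookrightarrow L^\infty$ on the $\partial_zu^1_{0}$ factor.
\item All other pointwise terms use $\|u^3_{0}\|_{H^1}+\|\partial_zu^3_{0}\|_{H^1}\leq CE_{2}$ together with $\|u^1_{0}\|_{H^4}\leq E_{1}$, through anisotropic products such as $\|u^3_{0}\|_{L^\infty}\leq CE_{2}$ (\Cref{lem:roll-coefficients-E6}).
\item In the dissipative part, the top roll derivative is handled by the weighted time bound $\nu^{1/2}\|\mu_\nu^{1/2}\nabla_\perp u^3_{0}\|_{L_t^2H^2}\leq CE_{2}$ from \eqref{eq:roll-recovery-time-final}, again paired with $m_\nu^{1/2}$-small $\|\partial_zu^1_{0}\|_{L^\infty}$. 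The top streak derivative is handled by $E_{1}$'s dissipation with $u^3_{0}\in L^\infty$, and mixed terms by interpolation as before.
\end{itemize}
Summing all pieces gives \eqref{eq:Na-Y2-revised}. Its $L_t^\infty H^2$ component is \eqref{eq:Na-point-revised}. The hypotheses $E_{1}\leq\delta_*$ and $E_{2}\leq\delta_*\nu$ enter only through the coefficient bounds quoted above.
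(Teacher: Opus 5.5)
Your overall structure matches the paper's. You take the wave--wave piece from \Cref{lem:wave-Y2-final}, split the roll--streak transport into its $u^2_0$ and $u^3_0$ parts, and use the transient gain $\|u^1_0\|_{H^3}\le C(E_1+\varepsilon/\nu)m_\nu^{1/2}$ with $m_\nu\le\mu_\nu$ to cancel the $\mu_\nu^{-1/2}$ loss on $\partial_y^2u^3_0$. Your pointwise $L_t^\infty H^2$ bounds are fine. (A small slip: the inclusion should read $H^3(\mathbb R^2)\subset W^{1,\infty}$, not the reverse.)

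\textbf{Gap.} The problem is in the dissipative part, in the terms where the roll carries no derivative, namely $u^2_0\,\nabla_\perp^3\partial_yu^1_0$ and $u^3_0\,\nabla_\perp^3\partial_zu^1_0$.
\begin{itemize}
\item These contain exactly four derivatives of $u^1_0$. You propose to put the $L_t^2$ on the streak via $\nu^{1/2}\|\nabla_\perp\Delta_\perp^2u^1_0\|_{L_t^2L^2}$, but that quantity controls five derivatives.
\item $E_1$ contains no $\nu^{1/2}L_t^2$ bound on $\nabla_\perp^4u^1_0$ or any lower derivative. The streak does not decay and has no transverse Poincar\'e inequality.
\item Interpolating between $L_t^\infty H^4$ and $\nu^{1/2}L_t^2\dot H^5$ leaves a factor of size $(\nu T)^{1-\theta}$, which grows with $T$.
\end{itemize}
So this step fails as written. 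Your interpolation of the intermediate terms ``between these two endpoints'' inherits the same defect at that endpoint.

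\textbf{Fix.} The paper always places $\nu^{1/2}L_t^2$ on the roll factor and keeps the streak in $L_t^\infty H^4$, or in $L_t^\infty H^3$ with the $m_\nu^{1/2}$ gain. The streak dissipation is never used.
\begin{itemize}
\item For an undifferentiated roll this needs $\nu^{1/2}\|u^\perp_0\|_{L_t^2L^\infty}$. On $\mathbb R^2$ the dissipation $\nu^{1/2}\|\nabla_\perp u^\perp_0\|_{L_t^2H^k}$ alone cannot give it, since $\dot H^1(\mathbb R^2)\not\hookrightarrow L^\infty$.
\item You must use the separate low-frequency component $\nu^{1/2}\|u^\perp_0\|_{L_t^2L^2}$ of $E_2$, which comes from Besov propagation and Fourier splitting. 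Combined with \eqref{eq:roll-recovery-time-final}, it gives $\nu^{1/2}\|u^2_0\|_{L_t^2H^3}+\nu^{1/2}\|u^3_0\|_{L_t^2H^2}+\nu^{1/2}\|\mu_\nu^{1/2}u^3_0\|_{L_t^2H^3}\le CE_2$.
\item The asymmetric tame products of \Cref{lem:2d-asymmetric-tame} then yield $\nu^{1/2}\|u^\perp_0\cdot\nabla_\perp u^1_0\|_{L_t^2H^3}\le CE_2(E_1+\varepsilon/\nu)$.
\end{itemize}
Your proposal never invokes the $L_t^2L^2$ roll component. It is exactly the ingredient that replaces the missing spanwise spectral gap at this point.
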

	
	\begin{proof}
		The wave term is controlled by \Cref{lem:wave-Y2-final}. We estimate the roll--streak transport. Applying \Cref{lem:2d-asymmetric-tame} with $g=\partial_y u^1_{0}$ or $g=\partial_z u^1_{0}$ gives
		\begin{align}
			\|u^2_{0}\partial_y u^1_{0}\|_{H^2}
			&\leq C\|u^2_{0}\|_{H^2}\|u^1_{0}\|_{H^4},
			\label{eq:R2ay-H2-revised}\\
			\|u^3_{0}\partial_z u^1_{0}\|_{H^2}
			&\leq C\|u^3_{0}\|_{H^1}\|u^1_{0}\|_{H^4}
			+C\|u^3_{0}\|_{H^2}\|u^1_{0}\|_{H^3}.
			\label{eq:R3az-H2-revised}
		\end{align}
		By interpolation, \Cref{lem:transient-streak-revised}, and the definition of $E_{1}$,
		\begin{equation}
			\|u^1_{0}(t)\|_{H^3}
			\leq C\|u^1_{0}(t)\|_{H^2}^{1/2}\|u^1_{0}(t)\|_{H^4}^{1/2}
			\leq C(E_{1}+\frac{\varepsilon}{\nu})m_\nu(t)^{1/2}.
			\label{eq:a-H3-transient-revised}
		\end{equation}
		Since $m_\nu(t)\leq\mu_\nu(t)$, the second term in \eqref{eq:R3az-H2-revised} is bounded by $CE_{2}(E_{1}+\frac{\varepsilon}{\nu})$.  Equations \eqref{eq:R2ay-H2-revised}--\eqref{eq:R3az-H2-revised} prove the $L_t^\infty H^2$ estimate.
		
		For the dissipative part, the $H^3$ estimate in \Cref{lem:2d-asymmetric-tame} gives
		\begin{align}
			\|u^2_{0}\partial_y u^1_{0}\|_{H^3}
			&\leq C\|u^2_{0}\|_{H^3}\|u^1_{0}\|_{H^4},
			\nonumber\\
			\|u^3_{0}\partial_z u^1_{0}\|_{H^3}
			&\leq C\|u^3_{0}\|_{H^2}\|u^1_{0}\|_{H^4}
			+C\|u^3_{0}\|_{H^3}\|u^1_{0}\|_{H^3}.
			\notag
		\end{align}
		For the full inhomogeneous time norms, combine
\eqref{eq:roll-recovery-time-final} with the separate
$\nu^{1/2}\|u^\perp_0\|_{L_t^2L^2}$ component of $E_2$ in
\eqref{eq:E2-definition}. Since $\mu_\nu\leq1$, this gives
\begin{align*}
 \nu^{1/2}\|u^2_0\|_{L_t^2H^3}
 &\leq C\nu^{1/2}\bigl(\|u^2_0\|_{L_t^2L^2}
 +\|\nabla_\perp u^2_0\|_{L_t^2H^2}\bigr)\leq CE_2,\\
 \nu^{1/2}\|u^3_0\|_{L_t^2H^2}
 &\leq C\nu^{1/2}\bigl(\|u^3_0\|_{L_t^2L^2}
 +\|\nabla_\perp u^3_0\|_{L_t^2H^1}\bigr)\leq CE_2,\\
 \nu^{1/2}\|\mu_\nu^{1/2}u^3_0\|_{L_t^2H^3}
 &\leq C\nu^{1/2}\bigl(\|u^3_0\|_{L_t^2L^2}
 +\|\mu_\nu^{1/2}\nabla_\perp u^3_0\|_{L_t^2H^2}\bigr)\leq CE_2.
\end{align*}
Use the last bound together with \eqref{eq:a-H3-transient-revised}
for the last product; $m_\nu\leq\mu_\nu$ cancels the weight.  Thus
		\begin{equation*}
			\nu^{1/2}\|u^\perp_{0}\cdot\nabla_\perp u^1_{0}\|_{L_t^2H^3}
			\leq CE_{2}(E_{1}+\frac{\varepsilon}{\nu}).
		\end{equation*}
		Together with the wave estimate this proves \eqref{eq:Na-Y2-revised}.  The pointwise estimate \eqref{eq:Na-point-revised} follows from the already established $L_t^\infty H^2$ bounds.
	\end{proof}
	
	\begin{proof}[Proof of \Cref{prop:A-closed-final}]
		We estimate separately the low-frequency amplitude, the top homogeneous energy, and the time derivative.
		
		\emph{Step 1: the $L_t^\infty L^2$ amplitude.}
		Apply the exact superposition identity \eqref{eq:superposition-identity} with the roll stress and streak flux displayed there. The heat term is bounded by $C\|u^1_{0}(0)\|_{L^2}$. Using directly $u^2_{0}(0)$, the $L^2$ heat estimate \eqref{eq:heat-liftup} gives
		\begin{equation*}
			\sup_{t>0}\|t\ee^{\nu t\Delta_\perp}u^2_{0}(0)\|_{L^2}
			\leq C\nu^{-1}\|u^2_{0}(0)\|_{\dot B^{-2}_{2,\infty}}.
		\end{equation*}
		The $N=0$ case of \Cref{lem:superposition} uses only the $L_t^1L^1$ norm and gives
		\begin{align}
			\|\mathcal S_\nu(u^\perp_{0}\otimes u^\perp_{0}+(u^\perp_{\neq}\otimes u^\perp_{\neq})_{0})\|_{L_t^\infty L^2}
			&\leq C\nu^{-1}\left(\|u^\perp_{0}\|_{L_t^2L^2}^2+\|(u^\perp_{\neq}\otimes u^\perp_{\neq})_{0}\|_{L_t^1L^1}\right)
			\nonumber\\
			&\leq C\frac{(E_{2})^2}{\nu^2}+C\nu^{-1}\bigl(\nu^{-1/3}E_{4}^2\bigr)
			\nonumber\\
			&=C\frac{(E_{2})^2}{\nu^2}+C\frac{E_{4}^2}{\nu^{4/3}},
			\notag
		\end{align}
		where the wave-stress estimate is exactly \eqref{eq:T-L1-time-explicit}.
		Furthermore,
		\begin{equation*}
			\|u^\perp_{0}u^1_{0}\|_{L_t^2L^2}
			\leq C\|u^1_{0}\|_{L_t^\infty H^2}\|u^\perp_{0}\|_{L_t^2L^2}
			\leq C\nu^{-1/2}E_{2}E_{1},
		\end{equation*}
		and the quadratic wave estimate gives $\|(u^\perp_{\neq}u^1_{\neq})_{0}\|_{L_t^2L^2}\leq C\nu^{-1/2}E_{4}^2$.  Hence \Cref{lem:direct-Duhamel} yields
		\begin{equation*}
			\|\mathcal D_\nu(u^\perp_{0}u^1_{0}+(u^\perp_{\neq}u^1_{\neq})_{0})\|_{L_t^\infty L^2}
			\leq C\frac{E_{2}}{\nu}E_{1}+C\frac{E_{4}^2}{\nu}.
		\end{equation*}
		The last term is bounded by $CE_{4}^2/\nu^{4/3}$ because $\nu^{-1}\leq\nu^{-4/3}$ for $0<\nu\leq1$.  Consequently,
		\begin{equation}
			\|u^1_{0}\|_{L_t^\infty L^2}
			\leq C\left(\|u^1_{0}(0)\|_{L^2}+\nu^{-1}\|u^2_{0}(0)\|_{\dot B^{-2}_{2,\infty}}+\frac{(E_{2})^2}{\nu^2}+\frac{E_{4}^2}{\nu^{4/3}}+\frac{E_{2}}{\nu}E_{1}\right).
			\label{eq:a-L2-closed-revised}
		\end{equation}
		
		\emph{Step 2: the top homogeneous streak energy.}
		Apply $\Delta_\perp^2$ to \eqref{eq:a-Na-revised}, take the $L^2$ scalar product with $\Delta_\perp^2u^1_{0}$, and integrate one derivative from each forcing term.  We obtain
		\begin{align}
			&\|\Delta_\perp^2u^1_{0}\|_{L_t^\infty L^2}
			+\nu^{1/2}\|\nabla_\perp\Delta_\perp^2u^1_{0}\|_{L_t^2L^2}
			\nonumber\\
			&\leq C\|u^1_{0}(0)\|_{H^4}
			+C\nu^{-1/2}\|\nabla_\perp\Delta_\perp u^2_{0}\|_{L_t^2L^2}
			+C\nu^{-1/2}\|\nabla_\perp\Delta_\perp\bigl(u^\perp_{0}\cdot\nabla_\perp u^1_{0}+(u_{\neq}\cdot\nabla u^1_{\neq})_{0}\bigr)\|_{L_t^2L^2}.
			\label{eq:a-top-energy-revised}
		\end{align}
		Indeed,
		\begin{equation*}
			\big|\langle\Delta_\perp^2\bigl(u^\perp_{0}\cdot\nabla_\perp u^1_{0}+(u_{\neq}\cdot\nabla u^1_{\neq})_{0}\bigr),\Delta_\perp^2u^1_{0}\rangle\big|
			=\big|\langle\nabla_\perp\Delta_\perp\bigl(u^\perp_{0}\cdot\nabla_\perp u^1_{0}+(u_{\neq}\cdot\nabla u^1_{\neq})_{0}\bigr),\nabla_\perp\Delta_\perp^2u^1_{0}\rangle\big|,
		\end{equation*}
		so no inhomogeneous $L_t^2H^3$ norm of the zero mode $\bigl(u^\perp_{0}\cdot\nabla_\perp u^1_{0}+(u_{\neq}\cdot\nabla u^1_{\neq})_{0}\bigr)$ is required.  The first forcing norm in \eqref{eq:a-top-energy-revised} is at most $CE_{2}/\nu$ after the prefactor, by \eqref{eq:roll-recovery-time-final}.  The dissipative part of \eqref{eq:Na-Y2-revised} gives
		\begin{equation*}
			\nu^{1/2}\|\nabla_\perp\bigl(u^\perp_{0}\cdot\nabla_\perp u^1_{0}+(u_{\neq}\cdot\nabla u^1_{\neq})_{0}\bigr)\|_{L_t^2H^2}
			\leq C\left[E_{2}(E_{1}+\frac{\varepsilon}{\nu})+E_{4}E_{5}\right],
		\end{equation*}
		and hence the second forcing norm in \eqref{eq:a-top-energy-revised} is at most
		\begin{equation*}
			C\nu^{-1}\left[E_{2}(E_{1}+\frac{\varepsilon}{\nu})+E_{4}E_{5}\right].
		\end{equation*}
		Therefore
		\begin{align}
			&\|\Delta_\perp^2u^1_{0}\|_{L_t^\infty L^2}
			+\nu^{1/2}\|\nabla_\perp\Delta_\perp^2u^1_{0}\|_{L_t^2L^2}
			\nonumber\\
			&\leq C\|u^1_{0}(0)\|_{H^4}+C\frac{E_{2}}{\nu}
			+C\frac{E_{2}}{\nu}(E_{1}+\frac{\varepsilon}{\nu})+C\frac{E_{4}E_{5}}{\nu}.
			\label{eq:a-top-closed-revised}
		\end{align}
		The elliptic inequality $\|u^1_{0}\|_{H^4}\leq C(\|u^1_{0}\|_{L^2}+\|\Delta_\perp^2u^1_{0}\|_{L^2})$ combines \eqref{eq:a-L2-closed-revised} and \eqref{eq:a-top-closed-revised}.
		
		\emph{Step 3: the time derivative.}
		Equation \eqref{eq:a-Na-revised} and \eqref{eq:Na-point-revised} give
		\begin{align}
			\nu^{-1}\|\partial_t u^1_{0}\|_{L_t^\infty H^2}
			&\leq C\|u^1_{0}\|_{L_t^\infty H^4}
			+C\frac{E_{2}}{\nu}
			+C\frac{E_{2}}{\nu}(E_{1}+\frac{\varepsilon}{\nu})
			+C\frac{E_{4}E_{5}}{\nu}.
			\label{eq:at-closed-revised}
		\end{align}
		Substituting the bounds from Steps 1--2 into \eqref{eq:at-closed-revised}, and recalling the definition \eqref{eq:A-final}, proves \eqref{eq:A-closed-final}.
	\end{proof}

\subsection{Nonzero-mode energy estimates}
	\label{sec:nonzero-closure-final}
For the nonzero modes, we estimate $E_{6}$, $E_{4}$, and $E_{5}$ using the good-unknown, coupled, and high-order linear bounds, respectively.
	
Apply the transverse recovery estimate to the exact nonlinear system, using the complete forcing bound from Section~\ref{sec:interactions-final}.
	
	\begin{proof}[Proof of \Cref{prop:E6-nonlinear-final}]
		Apply \Cref{prop:complementary-transverse-closure} to \eqref{eq:nonzero-transverse-final}. Its initial terms are exactly bounded by
		\[
		C\left(\|(u^2_{\neq},u^3_{\neq})(0)\|_{H^2}^2+\|\Delta q(0)\|_{L^2}^2\right).
		\] If the three nonnegative terms defining $\mathfrak F(T)$ in \eqref{eq:Fforcing-final} are denoted by $F_1,F_2,F_3$, then the forcing contribution in \eqref{eq:complementary-transverse-final} is $F_1^2+F_2^2+F_3^2\leq\mathfrak F(T)^2$. Hence
		\begin{equation*}
			E_{6}(T)^2\leq C\left(\|(u^2_{\neq},u^3_{\neq})(0)\|_{H^2}^2+\|\Delta q(0)\|_{L^2}^2\right)
			+C\left(\frac{E_{2}}{\nu}E_{6}+\frac{E_{4}^2}{\nu}\right)^2.
		\end{equation*}
		This proves \eqref{eq:E6-nonlinear-final}.
	\end{proof}
	
The coupled Couette estimate and the primary forcing bounds from Section~\ref{sec:interactions-final} give the $E_{4}$ proposition.
	
	\begin{proof}[Proof of \Cref{prop:E3-primary-final}]
		Apply \Cref{prop:coupled-Couette-final} to \eqref{eq:primary-W2-final}--\eqref{eq:primary-W3-final}, with $q=\Delta u^2_{\neq}$ and $h=u^3_{\neq}$. The initial contribution is bounded by $C(\|\Delta u^2_{\neq}(0)\|_{L^2}+\|u^3_{\neq}(0)\|_{H^2})$ by \eqref{eq:coupled-estimate-final}. By \Cref{lem:streak-primary-revised}, the two streak-primary forcing terms contribute
		\begin{align}
			C(E_{1}+\frac{\varepsilon}{\nu})\left[E_{6}+(E_{4}E_{6})^{1/2}\right]
			&\leq C(E_{1}+\frac{\varepsilon}{\nu})(E_{4}+E_{6}),\notag
		\end{align}
		where we used $2(E_{4}E_{6})^{1/2}\leq E_{4}+E_{6}$.
		The concrete nonlinear terms require the strong $2\gamma$-weighted primary forcing norm appearing in \Cref{prop:coupled-Couette-final}.  By \Cref{lem:primary-forcing-2gamma},
		\begin{equation*}
			\nu^{-1/2}\|\ee^{2\gamma\nu^{1/3}t}\nabla g^2\|_{L_t^2L_x^2}
			+\nu^{-1/2}\|\ee^{2\gamma\nu^{1/3}t}(\partial_x,\partial_z)g^3\|_{L_t^2L_x^2}
			\leq C\frac{E_{2}}{\nu}E_{6}+C\frac{E_{4}^2}{\nu}.
		\end{equation*}
		Thus the nonlinear transport and pressure terms contribute exactly the last two terms on the right-hand side of \eqref{eq:E3-primary-final}.
		This proves the estimate for the first two components of $E_{4}$. The final term $\nu^{2/3}\|\Delta u^3_{\neq}\|_{X_{3\gamma}}$ is bounded directly by $E_{6}$, by definition \eqref{eq:E6-final}. Combining the estimates proves \eqref{eq:E3-primary-final}.
	\end{proof}

The linear estimate in \Cref{lem:triangular-high-revised} and the high-order products in Section~\ref{sec:interactions-final} now give the fourth-order bound.
	
	\begin{proof}[Proof of \Cref{prop:E4-closed-final}]
		The full Leray projector does not commute with the Couette transport. Since $\nabla\cdot u_{\neq}=0$,
		\begin{equation*}
			\nabla\cdot(y\partial_xu_{\neq})=\partial_xu^2_{\neq},
		\end{equation*}
		and therefore
		\begin{equation*}
			\mathbb P(y\partial_xu_{\neq})
			=y\partial_xu_{\neq}-\nabla\Delta^{-1}\partial_xu^2_{\neq}.
		\end{equation*}
		Applying $\mathbb P$ to the nonzero equation gives the exact identity
		\begin{equation}
			L_{0}u_{\neq}
			=\nabla\Delta^{-1}\partial_xu^2_{\neq}
			-\mathbb P(u^2_{\neq}e_1)
			-\mathbb P (u\cdot\nabla u)_{\neq}.
			\label{eq:Leray-nonzero-revised}
		\end{equation}
		Apply \Cref{lem:triangular-high-revised} to each component. The lift-up forcing and the Leray commutator satisfy
		\begin{align}
			\|\ee^{2\gamma\nu^{1/3}t}\mathbb P(u^2_{\neq}e_1)\|_{Y^2}
			&\leq C E_{4}(T),
			\label{eq:liftup-Y2-complete}\\
			\|\ee^{2\gamma\nu^{1/3}t}\nabla\Delta^{-1}\partial_xu^2_{\neq}\|_{Y^2}
			&\leq C E_{4}(T).
			\label{eq:Leray-commutator-Y2}
		\end{align}
		Indeed, $\mathbb P$ is bounded on every $L^2$-based Sobolev space used here, while the multiplier of $\nabla\Delta^{-1}\partial_x$ is
		\begin{equation*}
			\frac{\xi k}{k^2+\eta^2+\ell^2},
		\end{equation*}
		whose modulus is at most one. Hence both estimates reduce to the corresponding $Y^2$ bound for $u^2_{\neq}$, which follows from $\|u^2_{\neq}\|_{H^2}\lesssim\|\Delta u^2_{\neq}\|_{L^2}$ and $\|\nabla u^2_{\neq}\|_{H^2}\lesssim\|\nabla\Delta u^2_{\neq}\|_{L^2}$. Furthermore, \eqref{eq:velocity-reconstruction-final} with $s=1$ and \eqref{eq:P-profiles-revised} give
		\begin{align}
			&\nu^{-1/2}\|\ee^{2\gamma\nu^{1/3}t}
			\nabla\partial_x(\partial_x,\partial_z)u_{\neq}\|_{L_t^2L_x^2}
			\nonumber\\
			&\qquad\leq C\nu^{-1/2}
			\|\ee^{2\gamma\nu^{1/3}t}\Pi_1\|_{L_t^2}
			\leq C\nu^{-1}E_{4}(T).
			\notag
		\end{align}
		Thus \Cref{lem:triangular-high-revised}, together with \eqref{eq:liftup-Y2-complete} and \eqref{eq:Leray-commutator-Y2}, yields
		\begin{equation}
			E_{5}
			\leq C\|u_{\neq}(0)\|_{H^4}+C\nu^{-1}(E_{4}+E_{6})
			+C\nu^{-1}\|\ee^{2\gamma\nu^{1/3}t}(u\cdot\nabla u)_{\neq}\|_{Y^2}.
			\label{eq:E4-pre-final}
		\end{equation}
		The additional commutator term in \eqref{eq:Leray-nonzero-revised} therefore contributes only another $C\nu^{-1}E_{4}$, already contained in the displayed linear term; in particular it does not alter the statement of \Cref{prop:E4-closed-final}.
		Decompose
		\begin{align}
			(u\cdot\nabla u)_{\neq}
			={}&(u_{\neq}\cdot\nabla u_{\neq})_{\neq}
			+u^\perp_{0}\cdot\nabla_\perp u_{\neq}
			+u^\perp_{\neq}\cdot\nabla_\perp u^\perp_{0}
			\nonumber\\
			&+u^1_{0}\partial_xu_{\neq}+u^\perp_{\neq}\cdot\nabla_\perp u^1_{0}\,e_1.
			\notag
		\end{align}
		For clarity, we record the complete interaction ledger before applying the external factor $\nu^{-1}$ from \eqref{eq:E4-pre-final}:
		\begin{align}
			\|\ee^{2\gamma\nu^{1/3}t}(u_{\neq}\cdot\nabla u_{\neq})_{\neq}\|_{Y^2}
			&\leq C E_{4}E_{5},
			\label{eq:E4-ledger-wave-wave}\\
			\|\ee^{2\gamma\nu^{1/3}t}(u^\perp_{0}\cdot\nabla_\perp u_{\neq}+u^\perp_{\neq}\cdot\nabla_\perp u^\perp_{0})\|_{Y^2}
			&\leq C E_{2}E_{5}+C E_{3}E_{4},
			\label{eq:mixed-Y2-sharp-revised}\\
			\|\ee^{2\gamma\nu^{1/3}t}(u^1_{0}\partial_xu_{\neq}+u^\perp_{\neq}\cdot\nabla_\perp u^1_{0}\,e_1)\|_{Y^2}
			&\leq C\nu(E_{1}+\frac{\varepsilon}{\nu})E_{5}+C(E_{1}+\frac{\varepsilon}{\nu})(E_{4}+E_{6}).
			\label{eq:E4-ledger-streak-wave}
		\end{align}
		The first line is \Cref{lem:wave-Y2-final}, the second is the last assertion of \Cref{prop:forcing-final}, and the third is \Cref{lem:streak-high-product-revised}. In particular,
		\[
		\nu^{-1}\bigl[\nu(E_{1}+\frac{\varepsilon}{\nu})E_{5}\bigr]=(E_{1}+\frac{\varepsilon}{\nu})E_{5},
		\]
		so the high-order streak interaction has a perturbative coefficient uniformly as $\nu\to0$; no term of the form $(E_{1}/\nu)E_{5}$ remains. Substitution of \eqref{eq:E4-ledger-wave-wave}--\eqref{eq:E4-ledger-streak-wave} into \eqref{eq:E4-pre-final} gives \eqref{eq:E4-closed-final}. \end{proof}

\appendix

\section{Linear estimates}
\label{sec:nonzero-linear}
We recall the linear estimates from
\cite[Sections~3.2, 4, and~9.2]{WeiZhang2021} in our notation.
\Cref{prop:constant-scalar,prop:variable-scalar,prop:coupled-Couette-final}
correspond to Propositions~4.1, 4.4, and~4.3 there, respectively.
\Cref{lem:coordinate-equivalence} follows from the proofs of
Lemmas~4.5--4.6, and \Cref{lem:triangular-high-revised} is the
differentiated scalar estimate derived in Section~9.2,
pp.~2457--2458. We omit the proofs and explain the domain and
normalization adjustments in \Cref{rem:linear-continuous-spanwise}.

The scalar estimate controls the four components of $X_\sigma$.

\begin{Prop}
		\label{prop:constant-scalar}
		There exist absolute constants $\gamma_{0}>0$ and $C>0$ such that the following holds. Let $w$ solve
		\begin{equation*}
			L_{0}w=\px f_1+f_2+\operatorname{div}f_3,
			\qquad
			w_{0}=(f_1)_{0}=(f_2)_{0}=(f_3)_{0}=0,
		\end{equation*}
		on $[0,T]$. Then, for $0\leq\sigma\leq\gamma_{0}$,
		\begin{align}
			\norm{w}{X_\sigma([0,T])}^2
			\leq C\Big(&\norm{w(0)}{L^2}^2
			+\norm{\ee^{\sigma\nu^{1/3}t}\nabla f_1}{L^2(0,T;L^2)}^2
			\nonumber\\
			&+\nu^{-1/3}\norm{\ee^{\sigma\nu^{1/3}t}f_2}{L^2(0,T;L^2)}^2
			+\nu^{-1}\norm{\ee^{\sigma\nu^{1/3}t}f_3}{L^2(0,T;L^2)}^2\Big).
			\label{eq:constant-scalar-est}
		\end{align}
		The constant is independent of $T$, $\nu$, and the continuous spanwise frequency.
	\end{Prop}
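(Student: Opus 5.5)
Since the statement is the constant-coefficient scalar estimate of Wei--Zhang (their Proposition~4.1), the plan is to redo that argument frequency by frequency and check that no step uses a spanwise spectral gap. Concretely, I would pass to Couette coordinates \eqref{eq:couette-transform} and take the Fourier transform in $(X,Y,Z)$. Then $L_0$ becomes $\partial_t+\nu(k^2+(\eta-kt)^2+\ell^2)$ acting on $\widehat{\widetilde w}(t,k,\eta,\ell)$, with $k\in2\pi\mathbb Z\setminus\{0\}$ and $(\eta,\ell)\in\R^2$. Each frequency evolves as a scalar ODE. Plancherel then reduces \eqref{eq:constant-scalar-est} to a pointwise-in-frequency estimate, provided every constant is independent of $(k,\eta,\ell)$. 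The only spectral input I will use is $|k|\geq2\pi$; $\ell$ enters only through $p(t):=k^2+(\eta-kt)^2+\ell^2\geq k^2$.

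The key step is the ghost-weight energy method. Fix a frequency and set $\widehat w(t)$. Introduce the multiplier
\[
m(t)=\exp\Bigl(\int_0^t\Bigl[\frac{k^2}{p(s)}+\frac{\nu^{1/3}|k|^{2/3}}{\nu^{1/3}|k|^{2/3}+\,|\eta/k-s|^2\nu^{-1/3}|k|^{4/3}\cdots}\Bigr]ds\Bigr),
\]
more precisely a product of two bounded, monotone factors. The first is $m_1$ with $\dot m_1/m_1=k^2/p$, which is bounded because $\int k^2/p\,ds\leq\int ds/(1+(\eta/k-s)^2)\leq\pi$. The second is $m_2$ with $\dot m_2/m_2=\nu^{1/3}/(1+\nu^{2/3}(s-\eta/k)^2)$, whose integral is also bounded by $\pi$. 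Both bounds are uniform in $\ell$.

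Computing $\frac{d}{dt}\bigl(m^{-1}e^{2\sigma\nu^{1/3}t}|\widehat w|^2\bigr)$ produces a dissipation term $\nu p|\widehat w|^2$, an inviscid-damping term $(k^2/p)|\widehat w|^2$ (this is exactly $|\nabla\Delta^{-1}\partial_x w|^2$), and the $m_2$ term. The elementary inequality $\nu p+\dot m_2/m_2\geq c\nu^{1/3}|k|^{2/3}\geq c\nu^{1/3}$ gives the enhanced-dissipation term. Choosing $\gamma_0\leq c/4$ lets the weight derivative $2\sigma\nu^{1/3}$ be absorbed.

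For the forcing, I pair each source term with $\widehat w$ and use Young's inequality:
\begin{itemize}
\item $f_3$: the pairing $\ell$-uniformly gives $|\xi_L\widehat f_3\cdot\widehat w|\leq\frac{\nu}{4}p|\widehat w|^2+\nu^{-1}|\widehat f_3|^2$.
\item $f_2$: absorb into the enhanced-dissipation term, which gives $\nu^{-1/3}|\widehat f_2|^2$.
\item $f_1$: write $|k\widehat f_1\widehat w|=\bigl(|k|p^{-1/2}|\widehat w|\bigr)\bigl(p^{1/2}|\widehat f_1|\bigr)$ and absorb the first factor into the inviscid-damping term, which gives $p|\widehat f_1|^2=|\widehat{\nabla_L f_1}|^2$.
\end{itemize}
Integrating in time, summing over $k$, integrating over $(\eta,\ell)$, and undoing the coordinate change (an $L^2$ isometry that maps $\nabla_L$ to $\nabla$) yields all four components of $X_\sigma$ together with the stated right-hand side.

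The main obstacle is the step $\nu p+\dot m_2/m_2\gtrsim\nu^{1/3}$, with all constants checked uniformly in $\ell$. I would verify it by splitting times into $|s-\eta/k|\leq\nu^{-1/3}$, where $m_2$ supplies $\gtrsim\nu^{1/3}$, and its complement, where $\nu k^2(s-\eta/k)^2\geq\nu^{1/3}$. The term $\ell^2$ only increases $p$ and so only helps. Every other step is insensitive to whether $z$ is periodic, which justifies citing Wei--Zhang with this remark.
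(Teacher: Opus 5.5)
Your proposal is correct and matches the paper's treatment. The paper gives no independent proof: it invokes Wei--Zhang's Proposition~4.1 and observes in \Cref{rem:linear-continuous-spanwise} that the constant-shear bounds use only the streamwise gap and are uniform in $\ell\in\R$, which is exactly what your frequency-wise ghost-multiplier computation verifies (with $m_1,m_2$ bounded uniformly in $\ell$ and $|\widehat{\nabla\Delta^{-1}\partial_x w}|^2=k^2p^{-1}|\widehat w|^2$).

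One harmless slip: with your $k$-independent $m_2$, the intermediate bound $\nu p+\dot m_2/m_2\geq c\nu^{1/3}|k|^{2/3}$ fails near $s=\eta/k$ when $1\ll|k|\ll\nu^{-1/2}$. That bound would require $\dot m_2/m_2=\nu^{1/3}|k|^{2/3}/(1+\nu^{2/3}|k|^{4/3}(s-\eta/k)^2)$. However, only $\nu p+\dot m_2/m_2\geq c\nu^{1/3}$ is needed, and your splitting at $|s-\eta/k|=\nu^{-1/3}$ proves that.
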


For the map $\Psi_t$ defined in Section~\ref{sec:full-system}, the following composition lemma includes the negative-order norm needed for inviscid damping.
	
	\begin{Lem}
		\label{lem:coordinate-equivalence}
		There exists $\delta_*>0$ such that, if
		\begin{equation}
			\norm{\nablaP u^1_{0}(t)}{L^\infty}+\norm{\nablaP^2u^1_{0}(t)}{L^\infty}\leq\delta_*
			\label{eq:coord-small}
		\end{equation}
		for a fixed $t$, then $\Psi_t$ is a $C^1$ diffeomorphism. If $w_{0}=0$, then
		\begin{equation*}
			\norm{(w\circ\Psi_t^{-1})}{L^2}\simeq\norm{w}{L^2},
			\qquad
			\norm{\nabla (w\circ\Psi_t^{-1})}{L^2}\simeq\norm{\nabla w}{L^2},
		\end{equation*}
		\begin{equation}
			\norm{\nabla\Delta^{-1}\pa_X(w\circ\Psi_t^{-1})}{L^2}
			\simeq
			\norm{\nabla\Delta^{-1}\px w}{L^2}.
			\label{eq:Hminus1-equiv}
		\end{equation}
		The constants are uniform as long as the left-hand side of \eqref{eq:coord-small} is bounded by $\delta_*$.
	\end{Lem}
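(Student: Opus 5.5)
The plan is to treat the two assertions of \Cref{lem:coordinate-equivalence} separately: first the diffeomorphism property, then the three norm equivalences. Since $\Psi_t(x,y,z)=(x,V(t,y,z),z)$ with $V=y+u^1_{0}$, everything reduces to a statement about the planar map $\psi(y,z)=(V(y,z),z)$ on $\R^2$; the variable $x$ is a passive parameter.

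\emph{Step 1: the diffeomorphism.} From \eqref{eq:coord-small}, $V_y=1+\py u^1_{0}\ge 1-\delta_*\ge 1/2$ and $|V_z|\le\delta_*$. For each fixed $z$, $y\mapsto V(y,z)$ is therefore strictly increasing, with $V(y,z)-y$ bounded by $|V(0,z)|+\delta_*|y|$. This gives a bijection of $\R$, because $V(y,z)\to\pm\infty$ as $y\to\pm\infty$. The Jacobian matrix of $\psi$ is upper triangular with determinant $V_y\in[1/2,2]$, so the inverse function theorem makes $\psi$, and hence $\Psi_t$, a global $C^1$ diffeomorphism. The entries of $D\Psi_t$ and $D\Psi_t^{-1}$ are bounded by absolute constants.

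\emph{Step 2: $L^2$ and $\dot H^1$ equivalence.} The change of variables gives $\|w\circ\Psi_t^{-1}\|_{L^2}^2=\int|w|^2V_y\,dx\,dy\,dz$, which is comparable to $\|w\|_{L^2}^2$. The chain rule gives $\nabla(w\circ\Psi_t^{-1})=(D\Psi_t^{-T}\nabla w)\circ\Psi_t^{-1}$, which yields the $\dot H^1$ equivalence with constants depending only on $\delta_*$. Here $\partial_x$ is unchanged by the map, and $\partial_X$ of the composition equals $(\px w)\circ\Psi_t^{-1}$. Since $w_{0}=0$ and composition acts only in the transverse variables, the streamwise average is preserved.

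\emph{Step 3: the negative-order norm \eqref{eq:Hminus1-equiv}; this is the main obstacle.} I would use the dual characterization
\[
\|\nabla\Delta^{-1}\px w\|_{L^2}=\sup\Bigl\{\int \px w\,\phi : \|\nabla\phi\|_{L^2}\le1,\ \phi_{0}=0\Bigr\},
\]
which holds because $\Delta^{-1}$ is invertible on the $x$-nonzero subspace. Write $\tilde w=w\circ\Psi_t^{-1}$ and, for a test function $\phi$, set $\tilde\phi=\phi\circ\Psi_t^{-1}$. The change of variables gives
\[
\int\partial_X\tilde w\,\tilde\phi=\int \px w\,\phi\,V_y .
\]
The Jacobian $V_y$ therefore appears as a multiplier on the test function. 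Since $\phi V_y$ has zero $x$-average and
\[
\|\nabla(\phi V_y)\|_{L^2}\le\|V_y\|_{L^\infty}\|\nabla\phi\|_{L^2}+\|\nabla V_y\|_{L^\infty}\|\phi\|_{L^2}\lesssim\|\nabla\phi\|_{L^2},
\]
the class of admissible test functions is stable under this multiplication. The bound on $\|\nabla V_y\|_{L^\infty}$ is supplied by the second-derivative part of \eqref{eq:coord-small}, and $\|\phi\|_{L^2}\le\|\px\phi\|_{L^2}$ by the streamwise Poincar\'e inequality. Combining this with Step 2, $\|\nabla\tilde\phi\|\simeq\|\nabla\phi\|$, gives one direction of \eqref{eq:Hminus1-equiv}. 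For the reverse direction I would apply the same argument to the inverse map, using the multiplier $1/V_y\circ\Psi_t^{-1}$, whose gradient is controlled in the same way.

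The delicate point is that $\Delta^{-1}$ itself does not commute with composition. The duality route avoids this issue entirely, at the cost of requiring the $W^{1,\infty}$ control of $V_y$. That control is exactly why the hypothesis \eqref{eq:coord-small} includes $\nablaP^2u^1_{0}$. All constants depend only on $\delta_*$ and are therefore uniform, including in the continuous spanwise variable.
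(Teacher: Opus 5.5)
Your proof is correct. The paper does not prove this lemma itself: it defers to the proofs of Lemmas~4.5--4.6 in Wei--Zhang and, in \Cref{rem:linear-continuous-spanwise}, only asserts that the coordinate bounds remain valid when $\T_z$ is replaced by $\R_z$. Your argument is therefore a self-contained substitute rather than a reconstruction of something written in the text.

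It has two advantages. First, it runs entirely in physical space, with no Fourier decomposition in $z$. The uniformity in the continuous spanwise variable, which the paper's remark has to argue for, is then automatic.

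Second, it shows exactly where each part of \eqref{eq:coord-small} enters. The Lipschitz bound on $u^1_{0}$ gives $V_y\in[1/2,2]$ and bounded $D\Psi_t^{\pm1}$. That is all that global bijectivity, the $L^2$ equivalence and the $\dot H^1$ equivalence require. The bound on $\nablaP^2u^1_{0}$ is used only once: it makes multiplication by the Jacobian $V_y$, or by $(1/V_y)\circ\Psi_t^{-1}$ for the reverse direction, bounded on $x$-mean-zero $\dot H^1$ test functions in the dual characterization of $\|\nabla\Delta^{-1}\px w\|_{L^2}$.

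The two remaining ingredients are exactly what that duality step needs. The streamwise Poincar\'e inequality absorbs the term $\|\nabla V_y\|_{L^\infty}\|\phi\|_{L^2}$. The fact that $\Psi_t$ acts only transversally ensures both $\partial_X(w\circ\Psi_t^{-1})=(\px w)\circ\Psi_t^{-1}$ and preservation of $x$-averages.
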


	The next result assumes uniform smallness of the streak in $H^4$ and of its time derivative in $\nu H^2$.
	
	\begin{Prop}
		\label{prop:variable-scalar}
		There exist absolute constants $\delta_{0},\gamma_{0}>0$ and $C>0$ such that the following holds. Assume
		\begin{equation}
			\sup_{0\leq \tau\leq T}\bigl(\|u^1_{0}(\tau)\|_{H^4(\R^2)}+\nu^{-1}\|\partial_tu^1_{0}(\tau)\|_{H^2(\R^2)}\bigr)\leq\delta_{0}.
			\label{eq:variable-small-assumption}
		\end{equation}
		Let $w$ solve
		\begin{equation}
			L_Vw=\px f_1+f_2+\operatorname{div}f_3,
			\qquad
			w_{0}=(f_1)_{0}=(f_2)_{0}=(f_3)_{0}=0,
			\label{eq:variable-scalar}
		\end{equation}
		on $[0,T]$. Then, for $0\leq\sigma\leq\gamma_{0}$,
		\begin{align}
			\norm{w}{X_\sigma([0,T])}^2
			\leq C\Big(&\norm{w(0)}{L^2}^2
			+\norm{\ee^{\sigma\nu^{1/3}t}\nabla f_1}{L^2(0,T;L^2)}^2
			\nonumber\\
			&+\nu^{-1/3}\norm{\ee^{\sigma\nu^{1/3}t}f_2}{L^2(0,T;L^2)}^2
			+\nu^{-1}\norm{\ee^{\sigma\nu^{1/3}t}f_3}{L^2(0,T;L^2)}^2\Big).
			\label{eq:variable-scalar-est}
		\end{align}
		The constants are independent of $T$ and $\nu$.
	\end{Prop}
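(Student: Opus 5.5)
The plan is to reduce \Cref{prop:variable-scalar} to the constant-coefficient estimate \Cref{prop:constant-scalar} by straightening the streak. We work in the variables $(x,Y,z)=\Psi_t(x,y,z)$ with $Y=V(t,y,z)$, in which $V\partial_x$ becomes the Couette transport $Y\partial_x$. The perturbative errors are then absorbed using the smallness \eqref{eq:variable-small-assumption}. This follows Wei--Zhang's Proposition~4.4, and the main work is to check that every multiplier bound is uniform in the continuous spanwise frequency $\ell$.

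\emph{Step 1: change of variables.} Set $W(t,x,Y,z):=w(t,\Psi_t^{-1}(x,Y,z))$. By the chain rule,
\[
 L_V w\;\longleftrightarrow\;\partial_tW-\nu\Delta W+Y\partial_xW+\mathcal R W,
\]
where $\mathcal R$ collects three kinds of terms. The first is $\partial_tV\,\partial_YW$, of size $\nu\delta_0$ in $H^2$ coefficients by the $\partial_tu^1_0$ bound. The second comes from the metric deviation $\nu(|\nabla_\perp V|^2-1)\partial_Y^2W+2\nu V_z\partial_Y\partial_zW$. The third is a first-order term $\nu\Delta_\perp V\,\partial_YW$. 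All coefficients are $O(\delta_0)$ in $W^{1,\infty}$ by $H^4(\R^2)\hookrightarrow W^{2,\infty}$. We write the second-order terms in divergence form, $\nu\,\mathrm{div}(A\nabla W)$ with $\|A\|_{W^{1,\infty}}\lesssim\delta_0$, plus lower-order remainders. The forcings $\partial_xf_1,f_2,\mathrm{div}f_3$ transform into the same three classes, with Jacobian factors bounded above and below. Since $\Psi_t$ fixes $x$, we have $W_0=0$.

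\emph{Step 2: apply the constant estimate.} Apply \Cref{prop:constant-scalar} to $W$ with forcing $\partial_x\widetilde f_1+\widetilde f_2+\mathrm{div}(\widetilde f_3-\nu A\nabla W)-(\text{first-order errors})$. The divergence error contributes $\nu^{-1}\|\ee^{\sigma\nu^{1/3}t}\nu A\nabla W\|_{L^2L^2}^2\lesssim\delta_0^2\,\nu\|\ee^{\sigma\nu^{1/3}t}\nabla W\|^2_{L^2L^2}$, which is absorbed into the dissipative part of $X_\sigma$. The term $\partial_tV\,\partial_YW$ is put in divergence form as $\mathrm{div}(\partial_tV\,W e_Y)-(\partial_Y\partial_tV)W$. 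The divergence part costs $\nu^{-1}\nu^2\delta_0^2\|W\|^2\le\delta_0^2\nu^{1/3}\|W\|^2$, absorbed by the enhanced-dissipation component. The zeroth-order part, taken as an $f_2$-term, costs $\nu^{-1/3}\nu^2\delta_0^2\|W\|^2$, absorbed in the same way. The first-order viscous term $\nu\Delta_\perp V\partial_YW$ is treated as $f_2$ with cost $\nu^{-1/3}\nu^2\delta_0^2\|\nabla W\|^2$, absorbed by $\nu\|\nabla W\|^2$.

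\emph{Step 3: return to the original variables.} Transfer back via \Cref{lem:coordinate-equivalence}, whose hypothesis \eqref{eq:coord-small} follows from \eqref{eq:variable-small-assumption}. It gives equivalence of the $L^2$, $\dot H^1$, and inviscid-damping norms \eqref{eq:Hminus1-equiv} at each fixed time. The exponential weights depend only on $t$, so they pass through unchanged, and we obtain \eqref{eq:variable-scalar-est}.

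\emph{Main obstacle.} The main obstacle is the term $\partial_x\widetilde f_1$, since the inviscid-damping component requires control of $\nabla f_1$ rather than of $f_1$. After the change of variables, $\nabla_{x,Y,z}\widetilde f_1$ involves $\nabla\Psi_t^{-1}$, which is bounded. A further subtlety is that \Cref{prop:constant-scalar} is proved through the $X_\sigma$ inviscid-damping term with the multiplier $\nabla\Delta^{-1}\partial_x$. This term is not invariant under $\Psi_t$, and here \eqref{eq:Hminus1-equiv} is essential. It holds uniformly in $\ell$ because $|k|\ge 2\pi$ bounds the symbol $k\xi/|\xi|^2$ away from the transverse origin. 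We would verify this by a duality argument, pairing with $\partial_x$-derivatives of $H^1$ test functions, composing the test functions with $\Psi_t$, and using $V_y\ge1/2$.
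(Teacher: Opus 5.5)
Your plan is the same route the paper takes. The paper defers to Wei--Zhang's Proposition~4.4, with the domain adjustments of \Cref{rem:linear-continuous-spanwise}: straighten the streak with $\Psi_t$, treat the metric, first-order and $\partial_tV\,\partial_Y$ errors as forcing in \Cref{prop:constant-scalar}, and return via \Cref{lem:coordinate-equivalence}, including \eqref{eq:Hminus1-equiv}.

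One small repair is needed. The bound on $\|\partial_tu^1_0\|_{H^2(\R^2)}$ does not put $\partial_Y\partial_tV$ in $L^\infty$, so the splitting of $(\partial_tV)\partial_YW$ into a divergence part plus $(\partial_Y\partial_tV)W$ is not justified as stated. Instead, treat $(\partial_tV)\partial_YW$ directly as an $f_2$ term, using $\|\partial_tV\|_{L^\infty}\lesssim\nu\delta_0$. Its cost $\nu^{-1/3}\nu^2\delta_0^2\|\ee^{\sigma\nu^{1/3}t}\nabla W\|_{L^2L^2}^2\le\delta_0^2\nu^{2/3}\cdot\nu\|\ee^{\sigma\nu^{1/3}t}\nabla W\|_{L^2L^2}^2$ is still absorbed by the dissipative part of $X_\sigma$.
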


The primary energy $E_{4}$ uses the following coupled estimate. Fix
$\sigma_{\rm c}>0$ below the admissible weight threshold in
\Cref{prop:constant-scalar}.

\begin{Prop}
		\label{prop:coupled-Couette-final}
		Let $q_{0}=h_{0}=0$ and suppose
		\begin{equation}
			L_{0}q=\Delta f,
			\qquad
			L_{0}h-2\partial_x\partial_z\Delta^{-2}q=r.
			\label{eq:coupled-system-final}
		\end{equation}
		Then, for $0\leq\sigma\leq\sigma_{\rm c}$,
		\begin{align}
			\|q\|_{X_\sigma}^2+\|-(\partial_x^2+\partial_z^2)h\|_{X_\sigma}^2
			\leq C\Big(&\|q(0)\|_{L^2}^2+\|h(0)\|_{H^2}^2
			\nonumber\\
			&+\nu^{-1}\|\ee^{\sigma\nu^{1/3}t}\nabla f\|_{L_t^2L_x^2}^2
			+\nu^{-1}\|\ee^{\sigma\nu^{1/3}t}(\partial_x,\partial_z)r\|_{L_t^2L_x^2}^2\Big).
			\label{eq:coupled-estimate-final}
		\end{align}
		The constant is uniform in the continuous spanwise Fourier frequency.
	\end{Prop}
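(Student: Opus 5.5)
The plan is to reduce the coupled system \eqref{eq:coupled-system-final} to scalar Couette problems, working frequency by frequency. Both equations have the constant operator $L_{0}$, so after passing to Couette coordinates \eqref{eq:couette-transform} and taking the Fourier transform in $(X,Y,Z)$, each triple $(k,\eta,\ell)$ with $k\neq0$ decouples. The problem becomes a triangular system of ODEs in time, with multiplier $p(t)=k^2+(\eta-kt)^2+\ell^2$. The first equation is handled by \Cref{prop:constant-scalar}: write $\Delta f=\operatorname{div}(\nabla f)$ and take $f_3=\nabla f$. This gives
\[
\|q\|_{X_\sigma}^2\le C\bigl(\|q(0)\|_{L^2}^2+\nu^{-1}\|\ee^{\sigma\nu^{1/3}t}\nabla f\|_{L_t^2L_x^2}^2\bigr).
\]
Everything in this bound is uniform in the continuous $\ell$, because the proof of \Cref{prop:constant-scalar} only uses $p(t)\ge k^2\ge(2\pi)^2$.

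For $h$, I apply $-(\partial_x^2+\partial_z^2)$, whose symbol is $k^2+\ell^2$ and which commutes with $L_{0}$. Set $H:=-(\partial_x^2+\partial_z^2)h$. Then
\[
L_{0}H=-(\partial_x^2+\partial_z^2)r+2(k^2+\ell^2)\partial_x\partial_z\Delta^{-2}q .
\]
The forcing $-(\partial_x^2+\partial_z^2)r$ is written as $\operatorname{div}\bigl((\partial_x,0,\partial_z)r\bigr)$ with sign adjusted. In \Cref{prop:constant-scalar} this contributes $\nu^{-1}\|\ee^{\sigma\nu^{1/3}t}(\partial_x,\partial_z)r\|^2$, which is exactly the stated term. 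The initial datum gives $\|H(0)\|_{L^2}\le\|h(0)\|_{H^2}$.

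The main obstacle is the coupling term. Its symbol is
\[
-2k\ell\,(k^2+\ell^2)\,p(t)^{-2}\,\widehat q ,
\]
and it must be bounded uniformly as $\ell$ ranges over $\mathbb R$, including $\ell\to0$ and $|\ell|\to\infty$. I will not treat it as an $L^2$ source with the lossy $\nu^{-1/3}$ weight. Instead I use the inviscid-damping component of $\|q\|_{X_\sigma}$, namely $\|\ee^{\sigma\nu^{1/3}t}\nabla\Delta^{-1}\partial_xq\|_{L^2_tL^2}$, whose symbol is $|k|p^{-1/2}|\widehat q|$.

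Concretely, the frequency-wise energy identity for $\widehat H$ contains the pairing of the coupling term with $\widehat H$. Since $|\widehat H|=(k^2+\ell^2)|\widehat h|$, this pairing is bounded by
\[
\frac{|k||\ell|(k^2+\ell^2)}{p^2}\,|\widehat q|\,|\widehat H|
\le \Bigl(\frac{|k|}{p^{1/2}}|\widehat q|\Bigr)\Bigl(\frac{|\ell|(k^2+\ell^2)}{p^{3/2}}|\widehat H|\Bigr).
\]
I then use $p\ge k^2+\ell^2$, so $|\ell|(k^2+\ell^2)p^{-3/2}\le |\ell|/p^{1/2}\le 1$. This bound is uniform in $\ell$. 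It is also the point where the nonzero $k$, rather than a spanwise gap, supplies the lower bound on $p$. In fact the sharper bound $|\ell|(k^2+\ell^2)p^{-3/2}\lesssim |k|p^{-1/2}\cdot(|\ell|/|k|)\cdot(k^2+\ell^2)/p$ should let me estimate the product through the inviscid-damping norm of $H$ as well.

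Using Cauchy--Schwarz in time, the coupling contribution is bounded by
\[
\|\ee^{\sigma\nu^{1/3}t}\nabla\Delta^{-1}\partial_xq\|_{L^2_tL^2}\cdot\|\ee^{\sigma\nu^{1/3}t}\nabla\Delta^{-1}\partial_x H\|_{L^2_tL^2}.
\]
If the crude bound by $|\widehat H|$ is too weak to close, I will instead apply a ghost-type multiplier $\exp\bigl(\int_0^t|k|^2p^{-1}ds\bigr)$, bounded above and below uniformly, in the energy for $H$ to produce that second damping factor. Young's inequality then absorbs half into the left side of the $H$ estimate. The remaining term is $C\|q\|_{X_\sigma}^2$, which was already bounded in the first step.

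Finally, I insert the exponential weight $\ee^{\sigma\nu^{1/3}t}$ throughout, with $\sigma\le\sigma_{\rm c}$ small enough to be absorbed by the enhanced dissipation as in \Cref{prop:constant-scalar}. Summing over frequencies by Plancherel then gives \eqref{eq:coupled-estimate-final}.
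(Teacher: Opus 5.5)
Your treatment of the $q$-equation via \Cref{prop:constant-scalar} with $f_3=\nabla f$ is fine. So are the commutation of $-(\partial_x^2+\partial_z^2)$ with $L_{0}$ and the divergence-form handling of the $r$-forcing. The gap is the coupling term.

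Your Cauchy--Schwarz step bounds the pairing by the product of the two inviscid-damping norms. This requires $2|k\ell|(k^2+\ell^2)p^{-2}\le Ck^2p^{-1}$, i.e.\ $2|\ell|(k^2+\ell^2)\le C|k|p$, and that fails when $|\ell|\gg|k|$. On the window $|\eta-kt|\le|\ell|$, of length about $|\ell|/|k|$, one has $|\ell|(k^2+\ell^2)p^{-3/2}\sim1$ while $|k|p^{-1/2}\sim|k|/|\ell|$. Your ``sharper bound'' is just an identity, and its extra factor $(|\ell|/|k|)(k^2+\ell^2)/p$ is of size $|\ell|/|k|$ on that window.

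The fallbacks do not help either:
\begin{itemize}
\item The ghost multiplier $\exp(\int_0^tk^2p^{-1}\,ds)$ only puts $\int k^2p^{-1}|\widehat H|^2\,dt$ on the left. It does not fix the coefficient mismatch.
\item The crude bound is genuinely too weak. Pairing $|k|p^{-1/2}|\widehat q|$ with $|\widehat H|$ in $L^2_t$ costs $\|\ee^{\sigma\nu^{1/3}t}H\|_{L^2_tL^2}\le\nu^{-1/6}\|H\|_{X_\sigma}$, which gives only $\|H\|_{X_\sigma}\lesssim\nu^{-1/6}\|q\|_{X_\sigma}+\cdots$. Using $\sup_t|\widehat H|$ instead meets the divergent integral $\int_0^\infty|k|p^{-1/2}\,dt$.
\item Any split putting an $L^2_t$ damping norm on one factor and $\sup_t$ on the other needs $\int_0^\infty\ell^2(k^2+\ell^2)^2p^{-3}\,dt$, which can be of order $|\ell|/|k|$.
\end{itemize}
The obstruction is large $|\ell|/|k|$, not small continuous $\ell$; it already occurs on $\mathbb T\times\mathbb R\times\mathbb T$.

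The missing idea is to keep the full time decay of the coupling coefficient and put $\sup_t$ on both factors. Uniformly in $(k,\eta,\ell)$,
\[
\int_0^\infty\frac{2|k\ell|(k^2+\ell^2)}{p(t)^2}\,dt\le2|\ell|(k^2+\ell^2)\int_{\mathbb R}\frac{du}{(k^2+\ell^2+u^2)^2}=\frac{\pi|\ell|}{\sqrt{k^2+\ell^2}}\le\pi .
\]
So at each fixed frequency the weighted coupling contribution is at most $\pi\sup_t(\ee^{\sigma\nu^{1/3}t}|\widehat q|)\sup_t(\ee^{\sigma\nu^{1/3}t}|\widehat H|)$. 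Young's inequality absorbs the $\widehat H$ factor. The same frequency-wise energy identity, with the bounded ghost weights for damping and enhanced dissipation, then gives all four components of $X_\sigma$ for $\widehat H$.

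This needs the $q$-bound in mode-wise form,
\[
\sup_t\ee^{2\sigma\nu^{1/3}t}|\widehat q(t,\xi)|^2\lesssim|\widehat q(0,\xi)|^2+\nu^{-1}\int_0^T\ee^{2\sigma\nu^{1/3}t}p|\widehat f|^2\,dt .
\]
The proof of \Cref{prop:constant-scalar} provides this, but its statement does not: the $L^\infty_tL^2$ part of $X_\sigma$ is $\sup_t\int_\xi$, not $\int_\xi\sup_t$. Integrating in $\xi$ and using $\sup_t\int_\xi\le\int_\xi\sup_t$ then gives \eqref{eq:coupled-estimate-final}.

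The paper does not reprove this estimate. It imports Wei--Zhang's Proposition~4.3, and \Cref{rem:linear-continuous-spanwise} only notes that these constant-shear bounds use just $|k|\geq1$, are uniform in $\ell\in\mathbb R$, and are obtained by estimating the time supremum mode by mode. Plancherel then applies with integration in $\ell$.
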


We also need a differentiated scalar estimate for the high-order energy. Let $\gamma_{\rm C}>0$ be an admissible exponential-weight constant in \Cref{prop:constant-scalar}, and set
	\begin{equation*}
		\sigma_{\rm h}:=\min\{\sigma_{\rm c},\gamma_{\rm C}\}.
	\end{equation*}
	\begin{Lem}
		\label{lem:triangular-high-revised}
		Let $F$ be $x$-nonzero and solve
		\begin{equation*}
			L_{0}F=H.
		\end{equation*}
		Then, for $0\leq\sigma\leq\sigma_{\rm h}$,
		\begin{align}
			&\|\ee^{\sigma\nu^{1/3}t}(\partial_x,\partial_z)F\|_{L_t^\infty H^3}
			+\nu^{1/2}\|\ee^{\sigma\nu^{1/3}t}\nabla(\partial_x,\partial_z)F\|_{L_t^2H^3}
			\nonumber\\
			&\leq C\|F(0)\|_{H^4}
			+C\nu^{-1}\|\ee^{\sigma\nu^{1/3}t}H\|_{Y^2}
			+C\nu^{-1/2}\|\ee^{\sigma\nu^{1/3}t}
			\nabla\partial_x(\partial_x,\partial_z)F\|_{L_t^2L_x^2}.
			\label{eq:triangular-high-revised}
		\end{align}
	\end{Lem}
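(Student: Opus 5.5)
The plan is to commute $(\partial_x,\partial_z)\nabla^s$, $0\le s\le 3$, through $L_{0}$ and apply \Cref{prop:constant-scalar} to each differentiated component. Since $\partial_x$ and $\partial_z$ commute with $L_{0}=\pt-\nu\Delta+y\px$, the function $G:=(\partial_x,\partial_z)F$ solves $L_{0}G=(\partial_x,\partial_z)H$. The only noncommuting derivative is $\partial_y$, through $[\partial_y,y\partial_x]=\partial_x$. Hence for a multi-index $\alpha$ with $|\alpha|\le3$ and $a=\alpha_y$ we get
\[
L_{0}\partial^\alpha G=\partial^\alpha(\partial_x,\partial_z)H-a\,\partial_x\partial^{\alpha-e_y}G .
\]
We then induct on $a$ (triangular structure). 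The commutator term carries one more $\partial_x$ and one fewer $\partial_y$, so it is a lower-order quantity in the ordering.

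For the forcing, at top order we write $\partial^\alpha(\partial_x,\partial_z)H=\operatorname{div}f_3$ with $f_3=\nabla^{3}H$-type terms. The $\nu^{-1}\|\ee^{\sigma\nu^{1/3}t}f_3\|^2_{L^2_tL^2}$ contribution requires $\nu^{-1}\|\ee^{\sigma\nu^{1/3}t}H\|_{L^2_tH^3}^2$. This would give only $\nu^{-1/2}$, worse than the claimed $\nu^{-1}\|H\|_{Y^2}$. So we instead use the $f_2$ slot, $\nu^{-1/3}\|\cdot\|^2$, or better a direct energy estimate. Pairing $\partial^\alpha G$ with $\partial^\alpha(\partial_x,\partial_z)H$ and integrating one derivative onto $\nabla\partial^\alpha G$ gives $\nu^{-1}\|\nabla^3H\|^2_{L^2_tL^2}$, which is $\nu^{-2}\|H\|_{Y^2}^2$ times $\nu$. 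The dissipative part of $Y^2$ (namely $\nu\|\nabla H\|^2_{L^2H^2}$) therefore yields exactly $C\nu^{-1}\|H\|_{Y^2}$ after square roots. Since $H$ is $x$-nonzero, the Poincar\'e remark after \eqref{eq:Ym-final} bounds $\nu^{1/2}\|H\|_{L^2H^3}$ by $\|H\|_{Y^2}$. The exponential weight is absorbed because $\sigma\le\gamma_{\rm C}$: the term $\sigma\nu^{1/3}\|\ee^{\sigma\nu^{1/3}t}\partial^\alpha G\|^2$ is dominated by the enhanced dissipation part of $X_\sigma$, which is why we go through \Cref{prop:constant-scalar} rather than a naive energy estimate.

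For the commutator $a\,\partial_x\partial^{\alpha-e_y}G$, we put it in the $f_1$ slot, which costs $\|\ee^{\sigma\nu^{1/3}t}\nabla\partial^{\alpha-e_y}G\|_{L^2_tL^2}^2$ with no $\nu$ loss. When $|\alpha|=3$, this is a quantity $\nabla\nabla^{2}(\partial_x,\partial_z)F$ of the same total order but with fewer $y$ derivatives. Since we want a bound in terms of $\nu^{-1/2}\|\nabla\partial_x(\partial_x,\partial_z)F\|$, we iterate: each step trades a $\partial_y$ for a $\partial_x$ and costs a factor $\nu^{-1/2}$ converted from the dissipation component $\nu\|\nabla\cdot\|^2$ of the previous level. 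Ordering by $a=0,1,2,3$, the $a=0$ case (only $x,z$ derivatives) commutes exactly. Higher $a$ are bounded by the $\nu^{1/2}L^2_t$ dissipation of the lower-$a$ level, divided by $\nu^{1/2}$.

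We expect this bookkeeping to be the main obstacle: the powers of $\nu$ must accumulate only to $\nu^{-1/2}$ in front of a single fixed low-order quantity. I would first try using the inviscid damping component $\|\ee^{\sigma\nu^{1/3}t}\nabla\Delta^{-1}\partial_x\,\cdot\|_{L^2_tL^2}$ of $X_\sigma$ at each level. The commutator $\partial_x\partial^{\alpha-e_y}G$ equals $\nabla\Delta^{-1}\partial_x$ applied to a quantity with one more derivative. This gives $\nu$-free control of the commutator chain, with the single residual $\nu^{-1/2}\|\nabla\partial_x(\partial_x,\partial_z)F\|$ appearing only at the bottom level. Summing over $|\alpha|\le3$, using $\|F(0)\|_{H^4}$ for the initial data of $\partial^\alpha G$, and recalling that the weight constants are admissible yields \eqref{eq:triangular-high-revised}.
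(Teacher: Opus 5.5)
The gap is in the commutator. Placing $-a\,\px\partial^{\alpha-e_y}G$ in the $f_1$ slot of \Cref{prop:constant-scalar} costs $\|\ee^{\sigma\nu^{1/3}t}\nabla\partial^{\alpha-e_y}G\|_{L^2_tL^2}^2$. But $\nabla\partial^{\alpha-e_y}G$ contains $\py\partial^{\alpha-e_y}G=\partial^\alpha G$, which is the unknown $w$ itself, with no power of $\nu$ in front. Meanwhile $\|w\|_{X_\sigma}^2$ only controls $\nu^{1/3}\|\ee^{\sigma\nu^{1/3}t}w\|_{L^2_tL^2}^2$. So this term neither has fewer $y$-derivatives nor a small coefficient, and your induction on $a$ never starts.

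Your fallbacks do not repair this. A chain losing $\nu^{-1/2}$ per traded $\py$ ends with a power of $\nu$ worse than the claimed $\nu^{-1/2}$. The rewriting $\px\partial^{\alpha-e_y}G=\nabla\Delta^{-1}\px\cdot\nabla\partial^{\alpha-e_y}G$ again involves $w$ at the same level, paired against $\|\ee^{\sigma\nu^{1/3}t}w\|_{L^2_tL^2}$, which is only available with a loss $\nu^{-1/6}$.

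Your forcing bound is right, but the $f_3$ slot already gives it, contrary to your remark. Take $f_3=\partial^\alpha H\,e_j$ with $j\in\{x,z\}$. Its cost is $\nu^{-1}\|\ee^{\sigma\nu^{1/3}t}H\|_{L^2_tH^3}^2\le C\nu^{-2}\|\ee^{\sigma\nu^{1/3}t}H\|_{Y^2}^2$ by the $x$-Poincar\'e inequality, which is exactly $\nu^{-1}\|\cdot\|_{Y^2}$ after square roots. The $f_2$ slot would need $\nabla^4H$, which $Y^2$ does not control.

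The missing idea is that no chain is needed: put the commutator in divergence form too. For $|\alpha|\ge2$ choose $j$ with $e_j\le\alpha-e_y$; then
\[
 a\,\px\partial^{\alpha-e_y}G=\partial_j\bigl(a\,\px\partial^{\alpha-e_y-e_j}G\bigr).
\]
The $f_3$ cost is $\nu^{-1}\|\ee^{\sigma\nu^{1/3}t}\px\partial^{\alpha-e_y-e_j}G\|_{L^2_tL^2}^2\le C\nu^{-1}\|\ee^{\sigma\nu^{1/3}t}\nabla\px G\|_{L^2_tL^2}^2$. This holds because $|\alpha-e_y-e_j|\le1$; use $x$-Poincar\'e when $|\alpha|=2$. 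For $\alpha=e_y$ the commutator $\px G$ goes into $f_2$, or into $f_3=G\,e_1$, with the same bound.

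Hence every commutator is charged, in a single step and uniformly in $a$, to the fixed quantity $\nabla\px(\px,\pz)F$. Apply \Cref{prop:constant-scalar} to each $\partial^\alpha G$ with $|\alpha|\le3$, use $\|\partial^\alpha G(0)\|_{L^2}\le\|F(0)\|_{H^4}$, and sum; this yields \eqref{eq:triangular-high-revised}. The paper omits the proof and cites Wei--Zhang, Section~9.2. This one-step divergence-form treatment is what produces a remainder of exactly the form $\nu^{-1/2}\|\ee^{\sigma\nu^{1/3}t}\nabla\px(\px,\pz)F\|_{L^2_tL^2_x}$.
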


\begin{Rem}
\label{rem:linear-continuous-spanwise}
The estimates above extend to $\T_x\times\R_y\times\R_z$ from
\cite[Lemmas~4.2, 4.5--4.6 and Propositions~4.3--4.4]{WeiZhang2021}.
The constant-shear bounds are uniform in $\ell\in\R$ and use only
the streamwise gap $|k|\geq1$. Thus Plancherel applies with integration
in $\ell$ instead of summation, after the time supremum is estimated
mode by mode. For variable shear, the coordinate and coefficient bounds required
there follow from \eqref{eq:coord-small} and
\eqref{eq:variable-small-assumption} and remain valid on $\R^2$.
This also gives the negative-order norm equivalence and the
commutator bounds used in the differentiated estimates. Finally, $X_\sigma$ corresponds to $X_a$ with $a=\sigma$.
Fixed Fourier normalization factors are absorbed into the constants
and admissible weight thresholds. The time translation from
$[1,1+T]$ to $[0,T]$ changes the exponential weights only by the
bounded factor $\ee^{\sigma\nu^{1/3}}$.
\end{Rem}

\section{Good unknowns and transverse velocity recovery}
\label{sec:second-good}
The good-unknown estimates used below follow from
\cite[Sections~3.3, 4, and~9.3]{WeiZhang2021}.
In particular, \Cref{prop:good-operator,prop:good-derivative-scalar}
correspond to Propositions~4.9 and~4.7, respectively.
The identity in \Cref{lem:good-unknown-algebra} is given by
equations~(3.8)--(3.9), with the additional pressure source $h$
contributing $-(\partial_y+\kappa\partial_z)\Delta^{-1}h$.
The decomposition in \Cref{lem:rho-decomposition} corresponds to
equations~(3.10)--(3.11), and its coefficient bounds follow from
Lemma~4.8. The correctors $q^{(2)}$ and $q^{(3)}$ correspond to
$W^{2,2}$ and $W^{2,3}$ in equation~(9.4).
The estimates \eqref{eq:q3-good-est},
\eqref{eq:q2-px2-est}--\eqref{eq:q2-full-gradient-est}, and
\eqref{eq:q2-rho-q3-est} in \Cref{lem:singular-correctors}
correspond to Lemmas~9.1, 9.2, and~9.3, respectively. The remaining forced estimates are extracted from the proof of
Proposition~9.4, before the nonlinear forcing is substituted
in Step~5.
Specifically, \Cref{lem:complementary-r} corresponds to
equations~(9.11)--(9.12).
The estimate in \Cref{lem:f21-regular} follows from
the decomposition~(9.10) and the bounds~(9.16)--(9.17),
followed by time integration.
\Cref{prop:second-good-unknown} corresponds to
equations~(9.13)--(9.18), while
\Cref{prop:complementary-transverse-closure} collects
equations~(9.19)--(9.22). Here $W^2$ and $W^{2,1}$ are denoted by $q$ and $q^{(1)}$,
and the auxiliary squared energy $E_6$ of the cited work
is denoted by $\mathcal E_{\perp,\sigma}$.
The estimates are stated for the admissible weight $\sigma$,
with the starting time shifted to zero and the initial-data
terms retained explicitly.
The passage to $\T_x\times\R_y\times\R_z$ is addressed in
\Cref{rem:linear-continuous-spanwise}, so the borrowed proofs
are omitted. Finally, \Cref{cor:w1-recovery} follows from incompressibility
and the Poincar\'e inequality.

The estimates in this appendix allow any nonnegative weight below their stated linear thresholds; $\sigma$ denotes such a weight locally. All $X_\sigma$ norms are taken on $[0,T]$, and their arguments are $x$-nonzero. We use repeatedly the elementary consequences of \eqref{eq:Xgamma-def},
	\begin{equation}
		\nu^{1/3}\norm{\ee^{\sigma\nu^{1/3}t}f}{L_t^2L_x^2}^2
		+\nu\norm{\ee^{\sigma\nu^{1/3}t}\nabla f}{L_t^2L_x^2}^2
		\leq \norm{f}{X_\sigma}^2,
		\label{eq:Xgamma-basic-parts}
	\end{equation}
	and the Poincar\'e inequalities
	\begin{equation}
		\norm{f}{H^m(\Omega)}\leq \norm{\px f}{H^m(\Omega)},
		\qquad
		\norm{f}{X_\sigma}\leq \norm{\px f}{X_\sigma}.
		\label{eq:x-Poincare-X}
	\end{equation}

	The pressure operator $\mathscr L_V$ defined in Section~\ref{sec:full-system} satisfies the following elliptic estimate.
	
	\begin{Prop}
		\label{prop:good-operator}
		There exist absolute constants $\delta_1,\gamma_1>0$ and $C>0$ such that the following holds. Assume
		\begin{equation*}
			\sup_{0\leq \tau\leq T}\bigl(\|u^1_{0}(\tau)\|_{H^4(\R^2)}+\nu^{-1}\|\partial_tu^1_{0}(\tau)\|_{H^2(\R^2)}\bigr)\leq\delta_1.
		\end{equation*}
		Let $q$ solve
		\begin{equation*}
			\mathscr L_Vq=f,
			\qquad q_{0}=f_{0}=0,
		\end{equation*}
		on $[0,T]$. Then, for $0\leq\sigma\leq\gamma_1$,
		\begin{equation}
			\norm{\Delta q}{X_\sigma([0,T])}^2
			\leq C\left(\norm{\Delta q(0)}{L^2}^2
			+\nu^{-1}\norm{\ee^{\sigma\nu^{1/3}t}\nabla f}{L^2(0,T;L^2)}^2\right).
			\label{eq:good-operator-est}
		\end{equation}
		The constant is independent of $T$, $\nu$, and the continuous spanwise frequency.
	\end{Prop}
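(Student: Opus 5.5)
The plan is to reduce the estimate for $\Delta q$ to \Cref{prop:variable-scalar}. Applying $\Delta$ to $\mathscr L_Vq=f$ should give a transport--diffusion equation $L_V\Delta q=\Delta f+\mathcal R$ in which the principal commutator cancels. The remainder $\mathcal R$ should have the form $\partial_xf_1$ with $f_1$ small, so that it can be absorbed using the inviscid-damping component of $X_\sigma$. This is the mechanism of \cite[Proposition~4.9]{WeiZhang2021}. The passage to continuous spanwise frequencies is handled as in \Cref{rem:linear-continuous-spanwise}.

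\emph{Step 1: the commutator cancellation.} Since $V$ is independent of $x$ and $\partial_t$ commutes with $\Delta$,
\begin{equation*}
 \Delta L_Vq=L_V\Delta q+(\Delta V)\partial_xq+2\nablaP V\cdot\nablaP\partial_xq.
\end{equation*}
For the pressure part I would write $2(\py+\kappa\pz)\Delta^{-1}(V_y\px q)=2\Delta^{-1}(\py+\kappa\pz)(V_y\px q)+2[\kappa,\Delta^{-1}]\pz(V_y\px q)$ and apply $\Delta$. The leading piece is $2(\py+\kappa\pz)(V_y\px q)$. Because $\kappa V_y=V_z$, this equals
\begin{equation*}
 2\nablaP V\cdot\nablaP\partial_xq+2\bigl(\py V_y+\kappa\pz V_y\bigr)\partial_xq .
\end{equation*}
The second-order cross term therefore cancels exactly, leaving
\begin{equation*}
 L_V\Delta q=\Delta f-\partial_x\Bigl[\bigl(\Delta V-2\py V_y-2\kappa\pz V_y\bigr)q+2\Delta[\kappa,\Delta^{-1}]\pz(V_yq)\Bigr]=:\operatorname{div}\nabla f+\partial_xf_1 .
\end{equation*}
All terms in $f_1$ have zero streamwise average. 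The coefficients involve $\nablaP^2u^1_0$, $\nablaP\kappa$ and $\nablaP^2\kappa$, all of which are $O(\delta_1)$ in $W^{1,\infty}$ by $H^4(\R^2)\hookrightarrow W^{2,\infty}$ and the composition bounds for $\kappa$.

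\emph{Step 2: apply \Cref{prop:variable-scalar}.} Take $w=\Delta q$, $f_3=\nabla f$, $f_2=0$, and $f_1$ as above. Then
\begin{equation*}
 \|\Delta q\|_{X_\sigma}^2\le C\Bigl(\|\Delta q(0)\|_{L^2}^2+\nu^{-1}\|\ee^{\sigma\nu^{1/3}t}\nabla f\|_{L^2_tL^2}^2+\|\ee^{\sigma\nu^{1/3}t}\nabla f_1\|_{L^2_tL^2}^2\Bigr).
\end{equation*}
The last term is the one to absorb. The inviscid-damping part of $\|\Delta q\|_{X_\sigma}$ is $\|\ee^{\sigma\nu^{1/3}t}\nabla\Delta^{-1}\partial_x\Delta q\|_{L^2_tL^2}=\|\ee^{\sigma\nu^{1/3}t}\partial_x\nabla q\|_{L^2_tL^2}$. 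By the $x$-Poincar\'e inequality this controls $\|\ee^{\sigma\nu^{1/3}t}q\|_{L^2_tH^1}$. It thus suffices to prove the pointwise bound
\begin{equation*}
 \|\nabla f_1\|_{L^2}\le C\delta_1\|q\|_{H^1},
\end{equation*}
after which choosing $\delta_1$ small absorbs the term with no loss in $\nu$.

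\emph{Main obstacle.} I expect the difficulty to lie in the nonlocal commutator $\Delta[\kappa,\Delta^{-1}]\pz(V_yq)$. I would expand $[\kappa,\Delta^{-1}]=\Delta^{-1}[\Delta,\kappa]\Delta^{-1}$, so that
\begin{equation*}
 \Delta[\kappa,\Delta^{-1}]g=(\Delta\kappa)\Delta^{-1}g+2\nablaP\kappa\cdot\nablaP\Delta^{-1}g,\qquad g=\pz(V_yq).
\end{equation*}
One more gradient must then be bounded by $\|q\|_{H^1}$. This uses that $\nabla\Delta^{-1}\pz$ and $\nabla^2\Delta^{-1}\pz$ are bounded multipliers of order $0$ and $1$, uniformly in $\ell\in\R$ on the $x$-nonzero subspace, together with the coefficient bound $\|\kappa\|_{W^{3,\infty}}$. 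The latter requires $u^1_0\in H^4$ and possibly $\nablaP^3\kappa\in L^4$ via the anisotropic product \eqref{eq:anis-prod-L2} instead of $L^\infty$; this is the place where I would check carefully that $H^4$ control of the streak suffices. The $\partial_tV$ terms do not enter $f_1$, because $\Delta$ commutes with $\partial_t$. The time-derivative smallness in the hypothesis is used only inside \Cref{prop:variable-scalar}.
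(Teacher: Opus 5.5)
Your reduction is correct, and it is the mechanism behind \cite[Proposition~4.9]{WeiZhang2021}, which the paper cites without reproducing a proof. After applying $\Delta$, the term $2\nablaP V\cdot\nablaP\px q$ cancels and no $\pt V$ term appears. The remainder is $\px f_1$ with $(f_1)_0=0$. Its contribution $\|\ee^{\sigma\nu^{1/3}t}\nabla f_1\|_{L^2_tL^2}$ in \Cref{prop:variable-scalar} is absorbed by the inviscid-damping component $\|\ee^{\sigma\nu^{1/3}t}\px\nabla q\|_{L^2_tL^2}$ of $\|\Delta q\|_{X_\sigma}$, with no loss in $\nu$. (The commutator term in your formula for $L_V\Delta q$ has the wrong sign; this is immaterial.)

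What needs repair is the coefficient bookkeeping at the step you flagged. The bounds you invoke do not follow from $H^4$ control of the streak. The embedding $H^4(\R^2)\hookrightarrow W^{2,\infty}$ gives $\nablaP^2u^1_0\in L^\infty$ but not $W^{1,\infty}$. Likewise $\kappa$ lies only in $H^3$, so neither $\kappa\in W^{3,\infty}$ nor $\nablaP^3\kappa\in L^4$ is available.

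The estimate $\|\nabla f_1\|_{L^2}\le C\delta_1\|q\|_{H^1}$ is nevertheless true with a different pairing of norms. Write $a:=\DeltaP V-2\py V_y-2\kappa\pz V_y$ and $h:=\pz\Delta^{-1}(V_yq)$. Since $\pz\Delta^{-1}$ gains one derivative on the $x$-nonzero subspace, $\|h\|_{H^2}\le C\|V_yq\|_{H^1}\le C\|q\|_{H^1}$.

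The only term carrying four derivatives of $u^1_0$ is $(\nabla\Delta\kappa)h$. There, put the coefficient in $L^2(\R^2)$ and the wave factor in $L^\infty_{y,z}L^2_x$:
\begin{equation*}
 \|(\nabla\Delta\kappa)h\|_{L^2}\le\|\nabla\Delta\kappa\|_{L^2(\R^2)}\sup_{y,z}\|h(\cdot,y,z)\|_{L^2_x}\le C\|\kappa\|_{H^3}\|h\|_{H^2}.
\end{equation*}
This is the flat case of \eqref{eq:anis-prod-L2-coeff}.

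The remaining terms are easier:
\begin{itemize}
\item $(\nabla a)q$, $(\Delta\kappa)\nabla h$ and $(\nabla\nablaP\kappa)\cdot\nablaP h$ pair an $H^1(\R^2)$ coefficient with an $H^1(\Omega)$ factor, and are handled by $L^4\times L^4$ on transverse slices.
\item $a\nabla q$ and $\nablaP\kappa\cdot\nabla\nablaP h$ pair an $L^\infty$ coefficient with an $L^2$ factor.
\end{itemize}
All coefficient norms are bounded by $C\|u^1_0\|_{H^4}\le C\delta_1$, so the absorption closes as you describe.
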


	Combining the transverse equations gives the following identity,
corresponding to equations~(3.8)--(3.9) in the cited work with an
additional pressure source $h$.
	
	\begin{Lem}
		\label{lem:good-unknown-algebra}
		Suppose $w^2,w^3,p$ satisfy
		\begin{equation}
			L_Vw^2+\py p=f^2,
			\qquad
			L_Vw^3+\pz p=f^3,
			\label{eq:transverse-system}
		\end{equation}
		\begin{equation}
			\Delta p=-2\px(V_yw^2+V_zw^3)+h.
			\label{eq:pressure-good}
		\end{equation}
		Let $q:=w^2+\kappa w^3$. Then
		\begin{align}
			\mathscr L_Vq={}&f^2+\kappa f^3-(\py+\kappa\pz)\Delta^{-1}h
			\nonumber\\
			&+(\pt-\nu\DeltaP)\kappa\,w^3
			-2\nu\nablaP\kappa\cdot\nablaP w^3.
			\label{eq:good-unknown-equation}
		\end{align}
	\end{Lem}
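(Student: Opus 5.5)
The identity is a direct commutator computation. I would expand $L_V(\kappa w^3)$ by the product rule, then eliminate the pressure through \eqref{eq:pressure-good}, using that the pressure source recombines into $V_yq$. Throughout I would use two structural facts. First, $\kappa=V_z/V_y$ depends only on $(t,y,z)$, because $V=y+u^1_{0}$ is a streamwise average. Second, all fields are $x$-nonzero, so $\Delta$ has symbol of modulus at least $k^2\geq(2\pi)^2$ and $\Delta^{-1}$ is a bounded multiplier there. This second fact lets me write $p=\Delta^{-1}(\Delta p)$ without ambiguity. I regard this as the one point needing care: the pressure in \eqref{eq:transverse-system} must be the decaying solution of \eqref{eq:pressure-good} on the nonzero subspace, as it is for $p^V$ in Section~\ref{sec:full-system}.

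\emph{Step 1: commutator.} Since $L_V=\pt-\nu\Delta+V\px$ and $\px\kappa=0$, the Leibniz rule gives
\begin{equation*}
 L_V(\kappa w^3)=\kappa L_Vw^3+\bigl((\pt-\nu\DeltaP)\kappa\bigr)w^3-2\nu\nablaP\kappa\cdot\nablaP w^3 .
\end{equation*}
Here $\Delta\kappa=\DeltaP\kappa$ and $\nabla\kappa\cdot\nabla w^3=\nablaP\kappa\cdot\nablaP w^3$, and the transport part contributes nothing because $V\px\kappa=0$. Adding $L_Vw^2$ and substituting \eqref{eq:transverse-system}, I get
\begin{equation*}
 L_Vq=f^2+\kappa f^3-(\py+\kappa\pz)p+(\pt-\nu\DeltaP)\kappa\,w^3-2\nu\nablaP\kappa\cdot\nablaP w^3 .
\end{equation*}

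\emph{Step 2: pressure.} Because $V_z=\kappa V_y$, we have $V_yw^2+V_zw^3=V_y(w^2+\kappa w^3)=V_yq$. Since $V_y$ is independent of $x$, it also holds that $\px(V_yq)=V_y\px q$. Inverting \eqref{eq:pressure-good} on the $x$-nonzero subspace then gives
\begin{equation*}
 p=-2\Delta^{-1}(V_y\px q)+\Delta^{-1}h .
\end{equation*}
This step requires the source $-2\px(V_yq)+h$ to have zero streamwise average. That holds because $q_{0}=0$, and $h$ is $x$-nonzero in every application (for instance $h$ built from $p^{0\neq}$ or $p^{\neq\neq}$).

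\emph{Step 3: rearrangement.} Substituting Step 2 into Step 1 gives the term
\begin{equation*}
 -(\py+\kappa\pz)p=2(\py+\kappa\pz)\Delta^{-1}(V_y\px q)-(\py+\kappa\pz)\Delta^{-1}h .
\end{equation*}
I would move the first piece to the left-hand side. By definition of $\mathscr L_V$, this turns $L_Vq$ into $\mathscr L_Vq$, and the remaining terms are exactly \eqref{eq:good-unknown-equation}. The computation is pointwise once $\Delta^{-1}$ is justified, so I expect no genuine obstacle beyond bookkeeping of the $\kappa$-derivative terms.
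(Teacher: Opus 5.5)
Your computation is correct and matches the paper's approach. The paper gives no proof of its own and simply cites Wei--Zhang's equations (3.8)--(3.9), which rest on exactly this Leibniz expansion of $L_V(\kappa w^3)$ together with the recombination $V_yw^2+V_zw^3=V_yq$; the extra source $h$ is carried through as $-(\py+\kappa\pz)\Delta^{-1}h$.

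One minor inaccuracy does not affect the argument. In the paper's actual application, \eqref{eq:transverse-forced-pressure}, one has $h=0$, because the mixed pressures are placed inside $g^j$. Your illustration of $h$ as built from $p^{0\neq}$ or $p^{\neq\neq}$ is therefore not how the paper uses the lemma.
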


	The next result controls the good derivatives $\partial_x$ and $D_z^V$.
	
	\begin{Prop}
		\label{prop:good-derivative-scalar}
		There exist absolute constants $\delta_2,\gamma_2>0$ such that the following holds. Assume \eqref{eq:variable-small-assumption} with $\delta_{0}$ replaced by $\delta_2$. Let $w$ solve
		\begin{equation}
			L_Vw=F_1+F_2+F_3,
			\qquad w_{0}=(F_1)_{0}=(F_2)_{0}=(F_3)_{0}=0.
			\label{eq:good-derivative-scalar-equation}
		\end{equation}
		Then, for $0\leq\sigma\leq\gamma_2$,
		\begin{align}
			\bigl(\norm{\px^2w}{X_\sigma}^2+\norm{\px D_z^Vw}{X_\sigma}^2\bigr)
			\leq C\Big(&\norm{w(0)}{H^2}^2
			+\norm{\ee^{\sigma\nu^{1/3}t}\Delta F_1}{L_t^2L_x^2}^2
			\nonumber\\
			&+\nu^{-1/3}\norm{\ee^{\sigma\nu^{1/3}t}\px^2F_2}{L_t^2L_x^2}^2
			+\nu^{-1/3}\norm{\ee^{\sigma\nu^{1/3}t}\px D_z^VF_2}{L_t^2L_x^2}^2
			\nonumber\\
			&+\nu^{-1}\norm{\ee^{\sigma\nu^{1/3}t}\px F_3}{L_t^2L_x^2}^2\Big).
			\label{eq:good-derivative-scalar-estimate}
		\end{align}
		The constant is independent of $T$, $\nu$, and the continuous spanwise frequency.
	\end{Prop}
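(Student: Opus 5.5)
The plan is to reduce both components to the scalar estimate of \Cref{prop:variable-scalar}, applied twice: once to $\px^2w$ and once to $\px D_z^Vw$. All commutator terms will be placed into the three admissible forcing slots $\px f_1$, $f_2$, $\operatorname{div}f_3$ and then absorbed using the smallness \eqref{eq:variable-small-assumption} with $\delta_2$. I take $\gamma_2\le\gamma_0$ so that \Cref{prop:variable-scalar} applies at every weight $\sigma\le\gamma_2$.

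\emph{First component.} Since $V=V(t,y,z)$, $\px$ commutes with $L_V$, so $L_V(\px^2w)=\px^2F_1+\px^2F_2+\px^2F_3$. I would write:
\begin{itemize}
\item $\px^2F_1=\px f_1$ with $f_1=\px F_1$, and use $\|\nabla\px F_1\|_{L^2}\le\|\Delta F_1\|_{L^2}$ (a Fourier bound);
\item $\px^2F_2$ as the $f_2$ term;
\item $\px^2F_3=\operatorname{div}(\px F_3\,e_1)$.
\end{itemize}
\Cref{prop:variable-scalar} then gives exactly the corresponding terms of \eqref{eq:good-derivative-scalar-estimate}. The initial term is $\|\px^2w(0)\|_{L^2}\le\|w(0)\|_{H^2}$.

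\emph{Second component.} The key identity is $D_z^VV=0$. It gives $[V\px,D_z^V]=-(D_z^VV)\px=0$, so the shear transport commutes with $D_z^V$. The remaining commutators are
\[
 [\pt,D_z^V]=-(\pt\kappa)\py,\qquad
 [\Delta,D_z^V]=-2\nabla\kappa\cdot\nabla\py-(\Delta\kappa)\py .
\]
Hence $L_V(\px D_z^Vw)=\px D_z^V(F_1+F_2+F_3)+(\pt\kappa)\px\py w-\nu\bigl(2\nabla\kappa\cdot\nabla\py\px w+(\Delta\kappa)\py\px w\bigr)$, with signs to be fixed. I would distribute these terms as follows.
\begin{itemize}
\item The term $(\pt\kappa)\px\py w$ goes into $f_2$. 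From \eqref{eq:variable-small-assumption}, $\|\pt\kappa\|_{L^\infty}\lesssim\|\pt u^1_0\|_{H^3}$; since only $\nu^{-1}\|\pt u^1_0\|_{H^2}$ is assumed small, I would rather use $\|\pt\kappa\|_{L^4}\lesssim\nu\delta_2$ together with an $L^4$ (i.e.\ $H^1$) bound on $\px\py w$. Its cost is $\nu^{-1/3}\nu^2\delta_2^2\|\ee^{\sigma\nu^{1/3}t}\px\py w\|_{L_t^2H^1}^2$. The $x$-Poincar\'e inequality bounds this by $\delta_2^2\nu\|\ee^{\sigma\nu^{1/3}t}\nabla\px^2w\|^2_{L_t^2L^2}$ plus a similar term with $\nabla\px D_z^Vw$ (after $\pz=D_z^V+\kappa\py$), so it is absorbed by the left side.
\item The viscous commutator is put in divergence form,
\[
 \nu\,\partial_j\kappa\,\partial_j\py\px w=\nu\,\partial_j\bigl(\partial_j\kappa\,\py\px w\bigr)-\nu(\Delta\kappa)\py\px w .
\]
The divergence part goes into $f_3$, at cost $\nu^{-1}\nu^2\|\nabla\kappa\|_{L^\infty}^2\|\py\px w\|^2\lesssim\delta_2^2\,\nu\|\nabla\px^2w\|^2$, again after $x$-Poincar\'e.
\item The $\Delta\kappa$ terms go into $f_2$. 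Here $\Delta\kappa\in H^1(\R^2)\subset L^4$ only, so I would pair it with $\|\py\px w\|_{L^4}\lesssim\|\py\px w\|_{H^1}$. The cost $\nu^{-1/3}\nu^2$ leaves ample room.
\end{itemize}

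\emph{Forcing for the second component.}
\begin{itemize}
\item $\px D_z^VF_1=\px f_1$ with $f_1=D_z^VF_1$. The bound $\|\nabla D_z^VF_1\|\lesssim\|F_1\|_{H^2}\lesssim\|\Delta F_1\|$ uses $\kappa\in W^{1,\infty}$ and $x$-Poincar\'e.
\item $\px D_z^VF_2$ is an $f_2$ term as stated.
\item For $F_3$, I use $\px D_z^VF_3=\pz(\px F_3)-\py(\kappa\px F_3)+(\py\kappa)\px F_3$: a divergence with $f_3=(0,-\kappa\px F_3,\px F_3)$ plus an $f_2$ term. The latter costs $\nu^{-1/3}\le\nu^{-1}$ times $\|\px F_3\|^2$.
\end{itemize}
The initial term is $\|\px D_z^Vw(0)\|\lesssim\|w(0)\|_{H^2}$. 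Adding the two estimates and absorbing the $O(\delta_2^2)$ commutator contributions into the dissipative components of the left-hand $X_\sigma$ norms gives \eqref{eq:good-derivative-scalar-estimate}. Uniformity in $\ell$ is inherited from \Cref{prop:variable-scalar}, as in \Cref{rem:linear-continuous-spanwise}.

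\emph{Main obstacle.} The step I expect to be hardest is controlling the $\pt\kappa$ and $\Delta\kappa$ commutators with only $H^4$ and $\nu H^2$ control of the streak. The concern is that closing requires $\|\py\px w\|_{H^1}$, which contains $\px\py^2w$. I would first try to bound that by $\|\nabla\px^2w\|$ and $\|\nabla\px D_z^Vw\|$ via $x$-Poincar\'e. If this fails, the fallback is to replace $\px\py^2w$ with $\px\Delta w$ and use the equation for $w$ to express the viscous term.
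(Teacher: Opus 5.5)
Your overall scheme is sound. The paper does not prove this estimate itself; it imports it from Wei--Zhang (Proposition~4.7 there; see the opening of Appendix~\ref{sec:second-good}). The route you take is the natural one: commute $\px^2$ and $\px D_z^V$ through $L_V$, use $[V\px,D_z^V]=-(D_z^VV)\px=0$, and feed the result into \Cref{prop:variable-scalar}. Your bookkeeping of $F_1,F_2,F_3$ is correct. So is the divergence-form treatment of $2\nu\nablaP\kappa\cdot\nablaP\py\px w$: its $f_3$-cost is $C\nu\|\nablaP\kappa\|_{L^\infty}^2\|\px\py w\|_{L^2}^2\le C\delta_2^2\,\nu\|\nabla\px^2w\|_{L^2}^2$, which the left side absorbs.

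\textbf{The gap.} It is exactly the step you flagged, and your proposed resolution does not work. What remains are terms $b\,\px\py w$ whose $x$-independent coefficients $b\in\{\pt\kappa,\nu\DeltaP\kappa\}$ lie only in $H^1(\R^2)$, with norm $\le C\nu\delta_2$.

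\emph{Why the $L^4\times L^4$ pairing fails.} It requires $\|\px\py w\|_{H^1}$, hence $\px\py^2w$. Nothing on the left-hand side controls this, whatever power of $\nu$ is in front. The left side contains $\px^2w$ and $\px D_z^Vw$ with at most one further derivative. Already for $\kappa=0$, the symbol $|k|\eta^2$ is not dominated by $|k|^2|\xi|+|k||\ell||\xi|$ as $|\eta|\to\infty$. The $x$-Poincar\'e inequality only adds factors of $\px$, so it cannot absorb a second $\py$. Writing $\pz=D_z^V+\kappa\py$ brings back $\kappa\px\py^2w$. Your fallback through the equation introduces $\pt\px w$ and the unbounded term $V\px^2w$.

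\emph{The missing tool.} It is the tangential anisotropic product \eqref{eq:anis-prod-L2}: for $x$-independent $b$,
\[
\|bh\|_{L^2(\Omega)}\le C\|b\|_{H^1(\R^2)}\bigl(\|h\|_{L^2}+\|D_z^Vh\|_{L^2}\bigr).
\]
To see it, bound $b$ in $L^\infty_y$ at fixed $z$ by one-dimensional Sobolev. Then bound
\[
\sup_z\|h(\cdot,\cdot,z)\|_{L^2_{x,y}}^2\le2\|h\|_{L^2}\|D_z^Vh\|_{L^2}+\|\py\kappa\|_{L^\infty}\|h\|_{L^2}^2,
\]
which follows from $\frac{d}{dz}\|h\|_{L^2_{x,y}}^2=2\langle h,D_z^Vh\rangle_{L^2_{x,y}}-\langle\py\kappa,h^2\rangle_{L^2_{x,y}}$.

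\emph{Closing the estimate.} For $h=\px\py w$ one has $\|h\|_{L^2}\le\|\nabla\px^2w\|_{L^2}$ and $D_z^Vh=\py(\px D_z^Vw)+(\py\kappa)h$. Hence these terms cost
\[
C\nu^{5/3}\delta_2^2\bigl(\|\ee^{\sigma\nu^{1/3}t}\nabla\px^2w\|_{L_t^2L_x^2}^2+\|\ee^{\sigma\nu^{1/3}t}\nabla\px D_z^Vw\|_{L_t^2L_x^2}^2\bigr)
\]
in the $f_2$ slot, and they are absorbed. The tangential derivative $D_z^V$, not $\pz$, is essential here: the flat version of this product would again require $\px\py^2w$.
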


We now use the correctors from Section~\ref{sec:full-system} to
control the normal viscous term.
	
	For a general transverse field $(w^2,w^3)$, we use
$\mathcal E_{\perp,\sigma}$ with $u^j_{\neq}$ replaced by $w^j$
in its definition in Section~\ref{sec:full-system}.
We now specialize to the transverse system
	\begin{equation}
		L_Vw^2+\py p=-g^2,
		\qquad
		L_Vw^3+\pz p=-g^3,
		\label{eq:transverse-forced-system}
	\end{equation}
	\begin{equation}
		\Delta p=-2\px(V_yw^2+V_zw^3).
		\label{eq:transverse-forced-pressure}
	\end{equation}
	Set
	\begin{equation*}
		q=w^2+\kappa w^3.
	\end{equation*}
	Then \Cref{lem:good-unknown-algebra} gives
	\begin{equation}
		\mathscr L_Vq+(g^2+\kappa g^3)
		=(\pt-\nu\DeltaP)\kappa\,w^3
		-2\nu\nablaP\kappa\cdot\nablaP w^3.
		\label{eq:q-equation-second}
	\end{equation}
	
	The following decomposition and bounds follow from equations~(3.10)--(3.11)
and Lemma~4.8 of the cited work.
	
	\begin{Lem}
		\label{lem:rho-decomposition}
		With the coefficients defined in \eqref{eq:rho-definitions},
		\begin{equation}
			\nablaP\kappa=\rho_1\nablaP V+\rho_2(-\kappa,1),
			\label{eq:rho-vector-decomposition}
		\end{equation}
		and hence
		\begin{equation}
			\nablaP\kappa\cdot\nablaP f
			=\rho_1\nablaP V\cdot\nablaP f+\rho_2D_z^Vf.
			\label{eq:rho-scalar-decomposition}
		\end{equation}
		If \eqref{eq:variable-small-assumption} holds with $\delta_{0}$ replaced by $\delta_2$, then
		\begin{align}
			\norm{\kappa}{H^3}+\norm{\rho_1}{H^2}+\norm{\rho_2}{H^2}
			&\leq C\delta_2,
			\label{eq:rho-coefficient-small}
			\\
			\norm{(\pt-\nu\DeltaP)\kappa}{H^1}
			+\norm{\pt\rho_1+\nu\DeltaP\rho_1}{L^2}
			&\leq C\nu\delta_2.
			\label{eq:rho-time-small}
		\end{align}
	\end{Lem}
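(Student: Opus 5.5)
We must prove Lemma \ref{lem:rho-decomposition}. It has two parts: an algebraic identity, and the coefficient bounds \eqref{eq:rho-coefficient-small}--\eqref{eq:rho-time-small}.

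\emph{The algebraic identity.} The vectors $\nablaP V=(V_y,V_z)=V_y(1,\kappa)$ and $(-\kappa,1)$ are orthogonal. They are nonzero because $V_y\geq1/2$. So they form a basis of $\R^2$ at every point, and it suffices to compute the coordinates of $\nablaP\kappa=(\py\kappa,\pz\kappa)$ in this basis.

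Taking the inner product with $(1,\kappa)$ and dividing by $|\nablaP V|^2=V_y^2(1+\kappa^2)$ gives
\[
\rho_1=\frac{\py\kappa+\kappa\pz\kappa}{V_y(1+\kappa^2)}.
\]
Taking the inner product with $(-\kappa,1)$ and dividing by $1+\kappa^2$ gives
\[
\rho_2=\frac{\pz\kappa-\kappa\py\kappa}{1+\kappa^2}.
\]
These agree with \eqref{eq:rho-definitions}, which proves \eqref{eq:rho-vector-decomposition}. Dotting \eqref{eq:rho-vector-decomposition} with $\nablaP f$ and using $(-\kappa,1)\cdot\nablaP f=\pz f-\kappa\py f=D_z^Vf$ gives \eqref{eq:rho-scalar-decomposition}.

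\emph{Spatial bounds.} Under \eqref{eq:variable-small-assumption} with $\delta_2$ small, Sobolev embedding gives $\|\nablaP u^1_0\|_{L^\infty}\leq C\delta_2\leq1/2$, hence $V_y\geq1/2$.

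Write $\kappa=\partial_zu^1_0\,G(\partial_yu^1_0)$ with $G(s)=(1+s)^{-1}$, which is smooth on $|s|\leq1/2$. Moser/composition estimates in $H^3(\R^2)$, which is an algebra, give $\|\kappa\|_{H^3}\leq C\|u^1_0\|_{H^4}\leq C\delta_2$. Note that $G(\partial_yu^1_0)-1\in H^3$, so there is no issue from the constant part on $\R^2$.

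Similarly, $\rho_1$ and $\rho_2$ are products of $\nablaP\kappa\in H^2$ with smooth functions of $(\kappa,\partial_yu^1_0)$ that lie in $1+H^3$. Since $H^2$ is an algebra, $\|\rho_j\|_{H^2}\leq C\delta_2$.

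\emph{Time bounds.} For $(\pt-\nu\DeltaP)\kappa$, the chain rule expresses $\pt\kappa$ as a linear combination of $\pt\nablaP u^1_0$ with smooth coefficients. This is bounded in $H^1$ by $C\|\pt u^1_0\|_{H^2}\leq C\nu\delta_2$. The term $\nu\DeltaP\kappa$ is bounded in $H^1$ by $\nu\|\kappa\|_{H^3}\leq C\nu\delta_2$.

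For $\pt\rho_1+\nu\DeltaP\rho_1$ in $L^2$, the term $\nu\DeltaP\rho_1$ is controlled by $\nu\|\rho_1\|_{H^2}$. The term $\pt\rho_1$ involves $\pt\nablaP\kappa$, which is second order in $\pt u^1_0$. Its $L^2$ norm is bounded by $\|\pt u^1_0\|_{H^2}$ times the $L^\infty$ norms of the coefficients, plus lower-order products of the form $\pt\nablaP u^1_0\cdot\nablaP^2u^1_0$. The latter are handled by $H^1\cdot H^2\subset L^2$. The result is $C\nu\delta_2$.

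\emph{Main obstacle.} The main difficulty is derivative counting in $\pt\rho_1$. It requires exactly $\pt u^1_0\in H^2$ and no more, so at each step one must check that only $H^1\times H^2\to L^2$ and $H^2\times H^2\to H^2$ products are used.
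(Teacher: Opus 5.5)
Your proof is correct and matches what the paper relies on: the paper gives no separate argument but cites the algebraic decomposition (3.10)--(3.11) and the coefficient bounds of Lemma~4.8 in Wei--Zhang, which amount to your orthogonal-basis computation in the frame $\{(1,\kappa),(-\kappa,1)\}$ followed by Moser composition and two-dimensional product estimates under $\|u^1_0\|_{H^4}+\nu^{-1}\|\partial_tu^1_0\|_{H^2}\leq\delta_2$. Your version is self-contained on $\R^2$ and handles the one domain point that matters there, writing $1/V_y=1+(1/V_y-1)$ so that composition estimates are applied only to functions vanishing at zero; your derivative count for $\partial_t\rho_1$ in $L^2$ is also correct, using only $\|\partial_tu^1_0\|_{H^2}$ together with $H^1\cdot H^2\subset L^2$.
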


	For the corrector estimates, the quantities $W^{2,2},W^{2,3}$ in
the cited work correspond to $q^{(2)},q^{(3)}$, and its auxiliary
squared energy $E_6$ corresponds to $\mathcal E_{\perp,\sigma}$.
The bounds hold for each admissible weight $\sigma$, with the
coefficient parameter replaced by $\delta_2$.
The correctors have zero initial data, so $q^{(1)}(0)=q(0)$ and
$(\Delta w^3-2\partial_xq^{(3)})(0)=\Delta w^3(0)$.
	
	\begin{Lem}
		\label{lem:singular-correctors}
		Assume \eqref{eq:variable-small-assumption} with $\delta_{0}$ replaced by $\delta_2$. Then
		\begin{align}
			\bigl(\norm{\px^2q^{(3)}}{X_\sigma}^2+\norm{\px D_z^Vq^{(3)}}{X_\sigma}^2\bigr)
			&\leq C\nu^{-4/3}\mathcal E_{\perp,\sigma}(T),
			\label{eq:q3-good-est}
			\\
			\norm{\px^2q^{(2)}}{X_\sigma}^2
			&\leq C\delta_2^2\nu^{-4/3}\mathcal E_{\perp,\sigma}(T),
			\label{eq:q2-px2-est}
			\\
			\norm{\px\nabla q^{(2)}}{X_\sigma}^2
			&\leq C\delta_2^2\nu^{-2}\mathcal E_{\perp,\sigma}(T),
			\label{eq:q2-full-gradient-est}
			\\
			\norm{\px(q^{(2)}-\rho_1q^{(3)})}{X_\sigma}^2
			&\leq C\delta_2^2\nu^{-2/3}\mathcal E_{\perp,\sigma}(T).
			\label{eq:q2-rho-q3-est}
		\end{align}
	\end{Lem}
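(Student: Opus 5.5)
The statement is \Cref{lem:singular-correctors}, which estimates the correctors $q^{(3)}$ and $q^{(2)}$ defined by \eqref{eq:q3-corrector} and \eqref{eq:q2-corrector}. The plan is to treat each corrector as a solution of the scalar variable-shear equation \eqref{eq:variable-scalar} with zero initial data. I will estimate it with \Cref{prop:variable-scalar} and \Cref{prop:good-derivative-scalar}, placing the forcing in whichever slot ($f_1,f_2,f_3$, respectively $F_1,F_2,F_3$) gives the right power of $\nu$. I will then control the forcing by the components of $\mathcal E_{\perp,\sigma}$.

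\emph{Estimate for $q^{(3)}$.} The forcing is $-\nablaP V\cdot\nablaP w^3$. Since $V_z=\kappa V_y$, we have
\[
\nablaP V\cdot\nablaP w^3=V_y\bigl(\partial_y w^3+\kappa\partial_z w^3\bigr)=V_y(1+\kappa^2)\partial_y w^3+V_y\kappa\, D_z^V w^3 .
\]
The $D_z^V$ piece is a good derivative. The $\partial_y w^3$ piece can be written in divergence form, $\partial_y(V_y(1+\kappa^2)w^3)$ minus lower-order terms, and is then placed in the $F_3$ slot. Applying \eqref{eq:good-derivative-scalar-estimate} requires $\nu^{-1}\|\px F_3\|^2$, which is bounded by $\nu^{-1}\|\px w^3\|_{L^2_tL^2}^2$ together with the $X_\sigma$ dissipation. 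Since $\nu^{1/3}\|\px^2 w^3\|^2_{L^2}\le\|\px^2 w^3\|_{X_\sigma}^2$, this gives $\nu^{-1}\cdot\nu^{-1/3}\mathcal E_{\perp,\sigma}=\nu^{-4/3}\mathcal E_{\perp,\sigma}$. The $D_z^V$ piece goes in $F_2$ at cost $\nu^{-1/3}\nu^{-1/3}$, which is better. Commutators of $\px$ and $D_z^V$ with the coefficients are harmless by \eqref{eq:rho-coefficient-small}. This gives \eqref{eq:q3-good-est}.

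\emph{Estimates for $q^{(2)}$.} The forcing is $\rho_1$ times the $q^{(3)}$ forcing. Repeating the argument with the extra small factor $\|\rho_1\|_{H^2}\le C\delta_2$ gives \eqref{eq:q2-px2-est}. For the full gradient \eqref{eq:q2-full-gradient-est}, I will use \Cref{prop:variable-scalar} for $\px q^{(2)}$ with divergence forcing in the $f_3$ slot. This costs $\nu^{-1}\|\px(\rho_1V_y w^3)\|^2_{L^2_tL^2}\lesssim\delta_2^2\nu^{-1}\nu^{-1/3}\cdot\nu^{-2/3}$; the last factor $\nu^{-2/3}$ is the loss in passing from $\px^2$ to $\px\nabla$ through the dissipation weight. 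Altogether this yields $\delta_2^2\nu^{-2}\mathcal E_{\perp,\sigma}$.

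\emph{The difference estimate \eqref{eq:q2-rho-q3-est}, the main obstacle.} Here the cancellation must be exploited. The difference $Q:=q^{(2)}-\rho_1q^{(3)}$ satisfies
\[
L_VQ=-(L_V\rho_1)\,q^{(3)}+2\nu\nablaP\rho_1\cdot\nablaP q^{(3)},
\]
because the singular forcings cancel exactly. By \eqref{eq:rho-time-small}, $\pt\rho_1+\nu\DeltaP\rho_1=O(\nu\delta_2)$, and then $L_V\rho_1=(\pt-\nu\DeltaP)\rho_1=(\pt+\nu\DeltaP)\rho_1-2\nu\DeltaP\rho_1$ is also $O(\nu\delta_2)$. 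Both forcing terms therefore carry a factor $\nu$. I will put the first in the $f_2$ slot and the second in divergence form in the $f_3$ slot, and then use the $q^{(3)}$ bounds from \eqref{eq:q3-good-est} together with the dissipation of $\px q^{(3)}$. This gives $\nu^{2}\cdot\nu^{-1}\cdot\nu^{-1/3}\cdot\nu^{-4/3}$, that is, $\delta_2^2\nu^{-2/3}\mathcal E_{\perp,\sigma}$.

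The delicate part is bookkeeping the coefficient regularity. The term $\nablaP\rho_1$ needs an $L^\infty$ or $L^4$ bound from $H^2$, and the product $(\pt+\nu\DeltaP)\rho_1\cdot\px q^{(3)}$ has one factor only in $L^2$. I expect to handle that product with the anisotropic bilinear estimate \eqref{eq:anis-prod-L2}, using the $\px D_z^V$ control of $q^{(3)}$.
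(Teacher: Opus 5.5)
Your cancellation identity for $Q=q^{(2)}-\rho_1q^{(3)}$ is right, and so is its power counting. The paper itself gives no argument here and defers to Lemmas~9.1--9.3 of Wei--Zhang. However, the two estimates feeding the difference bound do not go through as written.

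\emph{The bound \eqref{eq:q3-good-est}.} In \Cref{prop:good-derivative-scalar}, $F_3$ is the forcing itself, and the price is $\nu^{-1}\|\px F_3\|^2_{L^2_tL^2}$. There is no slot for a $\py$-divergence.

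With $F_3=-V_y(1+\kappa^2)\py w^3$, the estimate charges essentially $\nu^{-1}\|\px\py w^3\|^2_{L^2_tL^2}$. But $\mathcal E_{\perp,\sigma}$ controls $\px\py w^3$ in $L^2_tL^2$ only through its viscous part $\nu\|\nabla\px^2w^3\|^2_{L^2_tL^2}$. So you get $\nu^{-2}\mathcal E_{\perp,\sigma}$, not $\nu^{-4/3}$. Your count $\nu^{-1}\|\px w^3\|^2$ presupposes a divergence slot the proposition does not have.

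To use the $\py$-divergence structure, apply $\px^2$ and $\px D_z^V$ to \eqref{eq:q3-corrector} yourself and invoke \Cref{prop:variable-scalar} with its $\operatorname{div}f_3$ slot. The principal parts $\py(V_y(1+\kappa^2)\px^2w^3)$ and $\py(V_y(1+\kappa^2)\px D_z^Vw^3)$ then cost $\nu^{-1}(\|\px^2w^3\|^2+\|\px D_z^Vw^3\|^2)_{L^2_tL^2}\le\nu^{-4/3}\mathcal E_{\perp,\sigma}$. This is the correct mechanism.

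It forces you to treat by hand the commutators that \Cref{prop:good-derivative-scalar} hides:
\begin{itemize}
\item $[D_z^V,V\px]=0$, because $D_z^VV=0$;
\item $[D_z^V,\py]=(\py\kappa)\py$;
\item $[L_V,D_z^V]=-(\pt-\nu\Delta)\kappa\,\py+2\nu\nabla\kappa\cdot\nabla\py$.
\end{itemize}
Applied to $\px q^{(3)}$ and written in divergence form, the last term costs $C\nu\delta_2^2\|\nabla\px q^{(3)}\|^2_{L^2_tL^2}\le C\delta_2^2\|\px^2q^{(3)}\|^2_{X_\sigma}$, so it is absorbed.

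A smaller correction: the $D_z^V$ piece is not of size $\nu^{-2/3}$. In the $F_2$ slot its cost involves $\px^2D_z^Vw^3$, which $\mathcal E_{\perp,\sigma}$ controls only through dissipation. This gives $\delta_2^2\nu^{-4/3}$, which is harmless but not better. The same repair is needed for \eqref{eq:q2-px2-est}.

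\emph{The bound \eqref{eq:q2-full-gradient-est}.} Applying \Cref{prop:variable-scalar} to $\px q^{(2)}$ controls only $\|\px q^{(2)}\|_{X_\sigma}$. No inequality upgrades this to $\|\px\nabla q^{(2)}\|_{X_\sigma}$ at cost $\nu^{-2/3}$, because the $L^\infty_tL^2$ and viscous parts of the latter are one derivative higher.

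Apply it instead to $\px\pa_jq^{(2)}$, $j\in\{y,z\}$.
\begin{itemize}
\item Commuting $\pa_j$ past the transport gives the extra forcing $-(\pa_jV)\px^2q^{(2)}$, with no factor of $\nu$. In the $f_2$ slot it costs $\nu^{-1/3}\|\px^2q^{(2)}\|^2_{L^2_tL^2}\le\nu^{-2/3}\|\px^2q^{(2)}\|^2_{X_\sigma}\le C\delta_2^2\nu^{-2}\mathcal E_{\perp,\sigma}$, by \eqref{eq:q2-px2-est}. This is where your heuristic ``$\nu^{-2/3}$ loss'' actually comes from.
\item The principal forcing $-\pa_j(\rho_1\nablaP V\cdot\nablaP\px w^3)$ goes in the $f_3$ slot. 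It costs $\nu^{-1}\delta_2^2\|\nabla\px w^3\|^2_{L^2_tL^2}\le C\delta_2^2\nu^{-2}\mathcal E_{\perp,\sigma}$.
\end{itemize}

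\emph{The difference estimate.} One correction is needed. The coefficients $(\pt-\nu\DeltaP)\rho_1$ and $\DeltaP\rho_1$ lie only in $L^2$; the latter is left over when $2\nu\nablaP\rho_1\cdot\nablaP\px q^{(3)}$ is put in divergence form. So these products must be estimated with \eqref{eq:anis-prod-L2-coeff}, not \eqref{eq:anis-prod-L2}. This uses the viscous parts $\nu\|\nabla\px^2q^{(3)}\|^2_{L^2_tL^2}+\nu\|\nabla\px D_z^Vq^{(3)}\|^2_{L^2_tL^2}$ of \eqref{eq:q3-good-est}. The count $\nu^{2}\cdot\nu^{-1/3}\cdot\nu^{-1}\cdot\nu^{-4/3}=\nu^{-2/3}$ still yields \eqref{eq:q2-rho-q3-est}.
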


	Equations~(9.11)--(9.12) of the cited work give the complementary
component estimate, with its initial norm retained below.
	
	\begin{Lem}
		\label{lem:complementary-r}
Then
		\begin{equation}
			L_V(\Delta w^3-2\px q^{(3)})=-\Delta g^3+2\pz(V_y\px q)-(\DeltaP V)\px w^3.
			\label{eq:r-complementary-equation}
		\end{equation}
		Consequently,
		\begin{align}
			\norm{(\Delta w^3-2\px q^{(3)})}{X_\sigma}^2
			\leq C\Big(&\norm{\Delta w^3(0)}{L^2}^2
			+\nu^{-2/3}\mathcal E_{\perp,\sigma}(T)
			\nonumber\\
			&+\nu^{-1}\norm{\ee^{\sigma\nu^{1/3}t}\nabla g^3}{L_t^2L_x^2}^2\Big).
			\label{eq:r-complementary-est}
		\end{align}
	\end{Lem}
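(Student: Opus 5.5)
The identity \eqref{eq:r-complementary-equation} will come from a commutator computation, and \eqref{eq:r-complementary-est} will then follow by applying the variable-shear scalar estimate \Cref{prop:variable-scalar} to the unknown $r:=\Delta w^3-2\px q^{(3)}$, which has zero streamwise average.

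\emph{Step 1: the equation for $\Delta w^3$.} We apply $\Delta$ to the second equation in \eqref{eq:transverse-forced-system}. The commutator is
\[
[\Delta,V\px]w^3=2\nablaP V\cdot\nablaP\px w^3+(\DeltaP V)\px w^3 .
\]
Since $\pz\Delta p=-2\pz\px(V_yw^2+V_zw^3)$, we write $V_yw^2+V_zw^3=V_yq$, where $q=w^2+\kappa w^3$ and $V_z=\kappa V_y$. This gives
\[
L_V\Delta w^3=-\Delta g^3+2\pz\px(V_yq)-2\nablaP V\cdot\nablaP\px w^3-(\DeltaP V)\px w^3 .
\]
\emph{Step 2: remove the singular term with the corrector.} By \eqref{eq:q3-corrector}, $L_V(2\px q^{(3)})=-2\px(\nablaP V\cdot\nablaP w^3)$, since $\px$ commutes with $L_V$. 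Subtracting this cancels the term $-2\nablaP V\cdot\nablaP\px w^3$. It remains to reconcile $2\pz\px(V_yq)$ with the stated $2\pz(V_y\px q)$, which agree because $V_y$ is independent of $x$. This yields \eqref{eq:r-complementary-equation}.

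\emph{Step 3: the estimate.} We apply \Cref{prop:variable-scalar} at weight $\sigma$ and place the forcing as follows.

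First, $-\Delta g^3$ goes into divergence form, $f_3=-\nabla g^3$. This contributes $\nu^{-1}\|\ee^{\sigma\nu^{1/3}t}\nabla g^3\|_{L^2_tL^2_x}^2$.

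Second, $2\pz(V_y\px q)$ also goes into divergence form, with $f_3=2V_y\px q\,e_3$. This contributes $\nu^{-1}\|\ee^{\sigma\nu^{1/3}t}\px q\|_{L^2_tL^2_x}^2$. By \eqref{eq:Xgamma-basic-parts} applied to $\px\nabla q$ together with Poincar\'e, $\nu\|\ee^{\sigma\nu^{1/3}t}\px\nabla q\|_{L^2_tL^2_x}^2\leq\mathcal E_{\perp,\sigma}$, and $\|\px q\|\le\|\px\nabla q\|$ on the nonzero subspace. This bounds the contribution by $\nu^{-2}\mathcal E_{\perp,\sigma}$, which is too weak. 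So I would instead put this term in the $\px f_1$ slot: write $2\pz(V_y\px q)=\px\bigl(2\pz(V_yq)\bigr)$, so $f_1=2\pz(V_yq)$. Its cost is then $\|\ee^{\sigma\nu^{1/3}t}\nabla\pz(V_yq)\|^2_{L^2_tL^2_x}$, and the gradient of $\nabla q$ is not in $\mathcal E_{\perp,\sigma}$ either. The right balance is the $f_2$ slot, costing $\nu^{-1/3}\|\ee^{\sigma\nu^{1/3}t}\pz(V_y\px q)\|^2$. Using $\nu^{1/3}\|\ee^{\sigma\nu^{1/3}t}\px\nabla q\|^2\le\|\px\nabla q\|_{X_\sigma}^2\le\mathcal E_{\perp,\sigma}$, this gives $\nu^{-2/3}\mathcal E_{\perp,\sigma}$, exactly the claimed power. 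The $\pz V_y\,\px q$ part is lower order, since $\|V_y\|_{W^{1,\infty}}\le C$ under \eqref{eq:variable-small-assumption}.

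Third, $(\DeltaP V)\px w^3$ is also put in $f_2$. Since $\|\DeltaP V\|_{L^\infty}\lesssim\|u^1_0\|_{H^4}$ is small, the cost is $\nu^{-1/3}\|\ee^{\sigma\nu^{1/3}t}\px w^3\|^2\le\nu^{-2/3}\|\px^2w^3\|^2_{X_\sigma}\le\nu^{-2/3}\mathcal E_{\perp,\sigma}$, again by \eqref{eq:Xgamma-basic-parts} and Poincar\'e.

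Finally, the initial datum of $r$ is $\Delta w^3(0)$, because $q^{(3)}(0)=0$.

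The main obstacle is choosing the correct slot for $2\pz(V_y\px q)$ so that only $\nu^{-2/3}$ is lost. I expect the $f_2$ placement above to work; if it does not, the fallback is the splitting $\pz=D_z^V+\kappa\py$, which uses the $\px D_z^V$-type control in $\mathcal E_{\perp,\sigma}$.
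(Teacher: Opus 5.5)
Your proposal is correct and follows the route the paper imports from Wei--Zhang (their (9.11)--(9.12)). The commutator $[\Delta,V\px]$, the identity $V_yw^2+V_zw^3=V_yq$, and the cancellation by $2\px q^{(3)}$ give \eqref{eq:r-complementary-equation}; then \Cref{prop:variable-scalar} with $f_3=-\nabla g^3$ and $f_2=2\pz(V_y\px q)-(\DeltaP V)\px w^3$, together with $\nu^{1/3}\|\ee^{\sigma\nu^{1/3}t}(\px\nabla q,\px^2w^3)\|_{L^2_tL^2_x}^2\leq C\mathcal E_{\perp,\sigma}(T)$ and $x$-Poincar\'e, yields exactly the $\nu^{-2/3}$ loss, so the discarded $f_1$/$f_3$ placements and the $D_z^V$ fallback can simply be omitted.
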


	For the regular equation, define
	\begin{equation*}
		f_{2,1}:=\mathscr L_Vq^{(2)}+2\rho_1\nablaP V\cdot\nablaP w^3.
	\end{equation*}
	Using \eqref{eq:rho-scalar-decomposition}, \eqref{eq:q-equation-second}, and \eqref{eq:q1-second-good}, we obtain
	\begin{equation}
		\mathscr L_Vq^{(1)}+(g^2+\kappa g^3)
		=(\pt-\nu\DeltaP)\kappa\,w^3
		-2\nu\rho_2D_z^Vw^3
		-\nu f_{2,1}.
		\label{eq:q1-regular-equation}
	\end{equation}
	
	Equations~(9.16)--(9.17) in the same proof, combined with the
corrector and complementary-component estimates, give the following bound.

\begin{Lem}
		\label{lem:f21-regular}
		Under the hypotheses above,
		\begin{align}
			\nu\norm{\ee^{\sigma\nu^{1/3}t}\nabla f_{2,1}}{L_t^2L_x^2}^2
			\leq C\delta_2^2\Big(&\mathcal E_{\perp,\sigma}(T)
			+\nu^{2/3}\norm{\Delta w^3(0)}{L^2}^2
			\nonumber\\
			&+\nu^{-1/3}\norm{\ee^{\sigma\nu^{1/3}t}\nabla g^3}{L_t^2L_x^2}^2\Big).
			\label{eq:f21-regular-est}
		\end{align}
	\end{Lem}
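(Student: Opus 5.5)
The plan is to treat \eqref{eq:q1-regular-equation} as a perturbation of the corrector equation and bound $\nabla f_{2,1}$ term by term. I would first expand $f_{2,1}=\mathscr L_Vq^{(2)}+2\rho_1\nablaP V\cdot\nablaP w^3$ using the corrector equation \eqref{eq:q2-corrector}: since $L_Vq^{(2)}=-\rho_1\nablaP V\cdot\nablaP w^3$, the definition of $\mathscr L_V$ gives
\[
 f_{2,1}=\rho_1\nablaP V\cdot\nablaP w^3-2(\py+\kappa\pz)\Delta^{-1}(V_y\px q^{(2)}).
\]
The crucial step is to rewrite the first term with the other corrector. By \eqref{eq:q3-corrector}, $\rho_1\nablaP V\cdot\nablaP w^3=-\rho_1L_Vq^{(3)}=-L_V(\rho_1q^{(3)})+q^{(3)}(\pt+V\px-\nu\Delta)\rho_1+2\nu\nablaP\rho_1\cdot\nablaP q^{(3)}$. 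Because $\rho_1$ is $x$-independent, $V\px\rho_1=0$. This leaves a combination in $q^{(2)}-\rho_1q^{(3)}$, which is exactly the quantity controlled by \eqref{eq:q2-rho-q3-est}.

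The second step estimates each piece of $\nabla f_{2,1}$ in $\nu^{1/2}\ee^{\sigma\nu^{1/3}t}L^2_tL^2_x$. The pressure piece $\nabla(\py+\kappa\pz)\Delta^{-1}(V_y\px q^{(2)})$ is bounded by order-zero multipliers and $\|\kappa\|_{H^3},\|V_y-1\|_{H^3}\lesssim\delta_2$ from \eqref{eq:rho-coefficient-small}. That reduces it to $\|\px\nabla q^{(2)}\|$, and via the nonzero-mode Poincar\'e inequality it can be taken as $\|\px q^{(2)}\|$ up to coefficient gradients. Here I would use \eqref{eq:q2-px2-est} together with the enhanced-dissipation component $\nu^{1/3}\|\cdot\|^2_{L^2}$ of $X_\sigma$, via \eqref{eq:Xgamma-basic-parts}, so that $\nu\cdot\nu^{-1/3}\cdot\nu^{-4/3}\delta_2^2\mathcal E_\perp$ is replaced by better weights. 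If that power count fails, I would instead insert $\rho_1q^{(3)}$ and apply \eqref{eq:q2-rho-q3-est} and \eqref{eq:q3-good-est}.

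The coefficient terms $q^{(3)}(\pt-\nu\DeltaP)\rho_1$ and $\nu\nablaP\rho_1\cdot\nablaP q^{(3)}$ carry an extra factor $\nu\delta_2$ from \eqref{eq:rho-time-small}. They are therefore controlled by \eqref{eq:q3-good-est} and the dissipative part of $X_\sigma$. The normal derivatives of $q^{(3)}$ not covered by $\px^2$ and $\px D_z^V$ are recovered from the complementary quantity $\Delta w^3-2\px q^{(3)}$ through \Cref{lem:complementary-r}. This is precisely where $\nu^{2/3}\|\Delta w^3(0)\|^2$ and $\nu^{-1/3}\|\ee^{\sigma\nu^{1/3}t}\nabla g^3\|^2$ enter, after multiplication by the factor $\nu\cdot\nu^{-1/3}$ that \eqref{eq:r-complementary-est} requires.

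The main obstacle is the leading term, where the weak normal derivative $\py^2 w^3$ appears through $\nablaP V\cdot\nablaP w^3$. No bound on it is available beyond $\nu^{-2/3}$. The two correctors are designed to cancel this leading singular part, and tracking the $\nu$-powers so that every residual lands at $\delta_2^2\mathcal E_{\perp,\sigma}$ (not $\nu^{-1/3}\mathcal E_{\perp,\sigma}$) is the delicate bookkeeping. I would follow the decomposition (9.10) and the bounds (9.16)--(9.17) of \cite{WeiZhang2021} line by line, noting that only $|k|\ge1$ and Plancherel in the continuous $\ell$ are used (\Cref{rem:linear-continuous-spanwise}). The last step is to integrate in time.
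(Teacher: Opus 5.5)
Your expansion
$f_{2,1}=\rho_1\nablaP V\cdot\nablaP w^3-2(\py+\kappa\pz)\Delta^{-1}(V_y\px q^{(2)})$
is correct, but the step you call crucial does not work. Writing $\rho_1\nablaP V\cdot\nablaP w^3=-\rho_1L_Vq^{(3)}$ and commuting $\rho_1$ through $L_V$ only re-expresses the same function. (The last term should read $-2\nu\nablaP\rho_1\cdot\nablaP q^{(3)}$.) This identity is what lies behind \eqref{eq:q2-rho-q3-est}, which you may take as given; it is not an estimate for $f_{2,1}$. To use it you would need $\nu\|\ee^{\sigma\nu^{1/3}t}\nabla L_V(\rho_1q^{(3)})\|_{L^2_tL^2_x}^2$, that is, control of $\nabla\pt q^{(3)}$, which nothing supplies. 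Nor does it combine with the pressure term into $q^{(2)}-\rho_1q^{(3)}$: that term involves the nonlocal operator $(\py+\kappa\pz)\Delta^{-1}V_y\px$, not $L_V$.

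Bounding the two terms of $f_{2,1}$ separately is hopeless. $\nabla(\rho_1\nablaP V\cdot\nablaP w^3)$ contains $\rho_1V_y\py^2w^3$, and the available bounds give only $\nu\|\py^2w^3\|_{L^2_tL^2_x}^2\lesssim\nu^{-2/3}\mathcal E_{\perp,\sigma}+\cdots$. The pressure piece, through \eqref{eq:q2-px2-est} or \eqref{eq:q3-good-est}, loses the same $\nu^{-2/3}$, as your own power count indicates. Likewise, the complementary quantity does not recover normal derivatives of $q^{(3)}$.

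The missing idea is that the two terms of $f_{2,1}$ cancel at leading order. Your fallback of inserting $q^{(2)}=\rho_1q^{(3)}+(q^{(2)}-\rho_1q^{(3)})$ into the pressure term is the right first move. It must then be followed by $2\px q^{(3)}=\Delta w^3-r$, where $r:=\Delta w^3-2\px q^{(3)}$. Set $b:=\rho_1V_y$ and use $\nablaP V\cdot\nablaP=V_y(\py+\kappa\pz)$, together with $\partial_j\Delta^{-1}\Delta w^3=\partial_jw^3$ on $x$-nonzero functions. This gives the exact identity
\begin{align*}
 f_{2,1}={}&-[\py\Delta^{-1},b]\Delta w^3-\kappa[\pz\Delta^{-1},b]\Delta w^3\\
 &+(\py+\kappa\pz)\Delta^{-1}(br)-2(\py+\kappa\pz)\Delta^{-1}\bigl(V_y\px(q^{(2)}-\rho_1q^{(3)})\bigr).
\end{align*}
Thus the pressure part of $\mathscr L_Vq^{(2)}$ reproduces $-\rho_1\nablaP V\cdot\nablaP w^3$ to leading order. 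This is why $q^{(2)}$ is defined through $L_V$ with coefficient one while $f_{2,1}$ carries $2\rho_1$.

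Each remaining term is then harmless, with the weight $\ee^{\sigma\nu^{1/3}t}$ throughout:
\begin{itemize}
\item The commutators are bounded in $H^1$ by \eqref{eq:anis-commutator}, with $\|b\|_{H^2}\le C\delta_2$. The resulting quantities $\nu\|\nabla w^3\|^2$ and $\nu\|D_z^V\nabla w^3\|^2$ in $L^2_tL^2_x$ are controlled by the dissipative parts of $\|\px^2w^3\|_{X_\sigma}$ and $\|\px D_z^Vw^3\|_{X_\sigma}$, via $x$-Poincar\'e, hence by $\mathcal E_{\perp,\sigma}$.
\item The $r$-term is of order zero in $br$. The bound $\nu\|r\|^2_{L^2_tL^2_x}\le\nu^{2/3}\|r\|_{X_\sigma}^2$ together with \eqref{eq:r-complementary-est} produces exactly $\mathcal E_{\perp,\sigma}$, $\nu^{2/3}\|\Delta w^3(0)\|_{L^2}^2$ and $\nu^{-1/3}\|\ee^{\sigma\nu^{1/3}t}\nabla g^3\|^2_{L^2_tL^2_x}$.
\item The last term gives $\nu^{2/3}\|\px(q^{(2)}-\rho_1q^{(3)})\|_{X_\sigma}^2\le C\delta_2^2\mathcal E_{\perp,\sigma}$ by \eqref{eq:q2-rho-q3-est}.
\end{itemize}
This is the substance of the decomposition (9.10) and bounds (9.16)--(9.17) that the paper cites and your final sentence defers to. The mechanism you actually describe would not reproduce it.
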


	The next estimate follows from equations~(9.13)--(9.18) there,
with the initial norms kept explicit.

\begin{Prop}
		\label{prop:second-good-unknown}
		Assume \eqref{eq:variable-small-assumption} with $\delta_{0}$ replaced by $\delta_2$. Then
		\begin{align}
			\norm{\Delta q^{(1)}}{X_\sigma}^2
			\leq C\Big(&\norm{\Delta q(0)}{L^2}^2
			+\nu^{-1}\norm{\ee^{\sigma\nu^{1/3}t}\nabla (g^2+\kappa g^3)}{L_t^2L_x^2}^2
			\nonumber\\
			&+\nu^{-1/3}\norm{\ee^{\sigma\nu^{1/3}t}\nabla g^3}{L_t^2L_x^2}^2
			+\delta_2^2\nu^{2/3}\norm{\Delta w^3(0)}{L^2}^2
			\nonumber\\
			&+\delta_2^2\mathcal E_{\perp,\sigma}(T)\Big).
			\label{eq:second-good-unknown-est}
		\end{align}
	\end{Prop}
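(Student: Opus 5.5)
The estimate \eqref{eq:second-good-unknown-est} will be obtained by applying \Cref{prop:good-operator} to the regular equation \eqref{eq:q1-regular-equation} for $q^{(1)}=q-\nu q^{(2)}$. The data come from $q^{(2)}(0)=0$, which gives $\Delta q^{(1)}(0)=\Delta q(0)$. The forcing in \eqref{eq:q1-regular-equation} is $f=-(g^2+\kappa g^3)+(\pt-\nu\DeltaP)\kappa\,w^3-2\nu\rho_2D_z^Vw^3-\nu f_{2,1}$, and this is $x$-nonzero because $w^3$, $g^j$ and $f_{2,1}$ are. \Cref{prop:good-operator} therefore reduces the statement to bounding
\[
\nu^{-1}\norm{\ee^{\sigma\nu^{1/3}t}\nabla f}{L_t^2L_x^2}^2
\]
term by term. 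The first term, $\nu^{-1}\|\ee^{\sigma\nu^{1/3}t}\nabla(g^2+\kappa g^3)\|^2$, already appears on the right-hand side of \eqref{eq:second-good-unknown-est}.

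For the regular corrector term, $\nu^{-1}\cdot\nu^2\|\ee^{\sigma\nu^{1/3}t}\nabla f_{2,1}\|^2=\nu\|\ee^{\sigma\nu^{1/3}t}\nabla f_{2,1}\|^2$. \Cref{lem:f21-regular} bounds this by
\[
C\delta_2^2\bigl(\mathcal E_{\perp,\sigma}+\nu^{2/3}\|\Delta w^3(0)\|_{L^2}^2+\nu^{-1/3}\|\ee^{\sigma\nu^{1/3}t}\nabla g^3\|^2\bigr),
\]
which produces the $\delta_2^2$ initial term, the $\nu^{-1/3}\nabla g^3$ term and the $\delta_2^2\mathcal E_{\perp,\sigma}$ term.

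For the tangential viscous term, $\nu^{-1}\nu^2\|\nabla(\rho_2D_z^Vw^3)\|^2$. I will use $\|\rho_2\|_{H^2}\le C\delta_2$ from \eqref{eq:rho-coefficient-small} together with the anisotropic product estimate and $x$-Poincar\'e to bound it by
\[
C\delta_2^2\nu\bigl(\|\nabla\px D_z^Vw^3\|^2+\|\px D_z^Vw^3\|^2\bigr).
\]
The dissipative component of $\|\px D_z^Vw^3\|_{X_\sigma}^2$, via \eqref{eq:Xgamma-basic-parts}, controls this, so it is at most $C\delta_2^2\mathcal E_{\perp,\sigma}$.

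For the coefficient term $(\pt-\nu\DeltaP)\kappa\,w^3$, \eqref{eq:rho-time-small} gives $\|(\pt-\nu\DeltaP)\kappa\|_{H^1}\le C\nu\delta_2$. The product of an $H^1(\R^2)$ coefficient with an $x$-nonzero field, through \eqref{eq:two-level-zero-wave-E6}/\eqref{eq:anis-prod-L2}-type bounds, then gives
\[
\nu^{-1}\|\nabla((\pt-\nu\DeltaP)\kappa\,w^3)\|^2\lesssim\nu\delta_2^2\|w^3\|_{H^2}^2 .
\]
This is the main obstacle. A full $H^2$ norm of $w^3$ contains $\py^2w^3$, which $\mathcal E_{\perp,\sigma}$ controls only with a loss. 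My first attempt is to split the derivatives:
\begin{itemize}
\item any derivative landing on $w^3$ that is $\px$ or $D_z^V$ is controlled by $\mathcal E_{\perp,\sigma}$ through its dissipative parts, with room to spare from the factor $\nu$;
\item the pure $\py\nabla w^3$ piece needs only one derivative of $w^3$ beyond $\nabla$, so if the product is arranged as a gradient falling on the coefficient against $\nabla w^3$ in $L^2$, it suffices to have $\nu\|\nabla w^3\|_{L^2_tL^2}^2\le\|\px w^3\|_{X_\sigma}^2\lesssim\mathcal E_{\perp,\sigma}$.
\end{itemize}
This requires the coefficient in $W^{1,\infty}$-like norms, and $H^1$ alone does not give that. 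The fallback is the complementary component from \Cref{lem:complementary-r}, writing $\Delta w^3=(\Delta w^3-2\px q^{(3)})+2\px q^{(3)}$. Combined with \eqref{eq:r-complementary-est} and \eqref{eq:q3-good-est}, this gives $\nu^{4/3}\|\Delta w^3\|_{X_\sigma}^2$-type control by
\[
\nu^{2/3}\|\Delta w^3(0)\|^2+\mathcal E_{\perp,\sigma}+\nu^{1/3}\|\nabla g^3\|^2,
\]
and the extra $\nu$ in front makes every contribution of the same form as the terms already on the right of \eqref{eq:second-good-unknown-est}.

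Summing the four contributions gives \eqref{eq:second-good-unknown-est}.
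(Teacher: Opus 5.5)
Your architecture matches the argument the paper imports from Wei--Zhang (equations (9.13)--(9.18)). The following steps are all fine: applying \Cref{prop:good-operator} to \eqref{eq:q1-regular-equation}, using $\Delta q^{(1)}(0)=\Delta q(0)$, invoking \Cref{lem:f21-regular} for $\nu f_{2,1}$, and controlling $\nu\rho_2D_z^Vw^3$ by the dissipative part of $\|\px D_z^Vw^3\|_{X_\sigma}$.

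The gap is the coefficient term $\kappa'w^3$, with $\kappa':=(\pt-\nu\DeltaP)\kappa$, and your fallback does not close it. The only control of $\Delta w^3$ in $L^2_tL^2_x$ that $X_\sigma$ provides is $\nu^{1/3}\|\ee^{\sigma\nu^{1/3}t}\Delta w^3\|_{L_t^2L_x^2}^2\le\|\Delta w^3\|_{X_\sigma}^2$; the dissipative part combined with $x$-Poincar\'e is worse. Hence
\[
\nu\delta_2^2\|\ee^{\sigma\nu^{1/3}t}\Delta w^3\|_{L_t^2L_x^2}^2
\le\delta_2^2\nu^{2/3}\|\Delta w^3\|_{X_\sigma}^2
=\delta_2^2\nu^{-2/3}\bigl(\nu^{4/3}\|\Delta w^3\|_{X_\sigma}^2\bigr).
\]
Inserting \eqref{eq:r-complementary-est} and \eqref{eq:q3-good-est} then yields $\delta_2^2\nu^{-2/3}\mathcal E_{\perp,\sigma}(T)$, not $\delta_2^2\mathcal E_{\perp,\sigma}(T)$. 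The ``extra $\nu$'' falls short by $\nu^{2/3}$. This loss is fatal: $\delta_2$ is a fixed threshold independent of $\nu$, and \Cref{prop:complementary-transverse-closure} must absorb this term into its left-hand side uniformly as $\nu\to0$.

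The missing idea is that no second $y$-derivative of $w^3$ is needed. The term $\py^2w^3$ appeared only because you used the isotropic bound $\|\kappa'\nabla w^3\|_{L^2}\le C\|\kappa'\|_{H^1}\|\nabla w^3\|_{H^1}$. The anisotropic products of \Cref{lem:anisotropic-products-R2} require only $H^1(\R^2)$ on the coefficient, so your $W^{1,\infty}$ worry disappears. They spend the extra derivative on the wave only in the good direction $D_z^V$. Apply \eqref{eq:anis-prod-L2} with $b=\kappa'$, $f=\nabla w^3$, and \eqref{eq:anis-prod-L2-coeff} with $b=\nablaP\kappa'$, $f=w^3$:
\[
\|\nabla(\kappa'w^3)\|_{L^2}\le C\|\kappa'\|_{H^1}\bigl(\|w^3\|_{H^1}+\|D_z^Vw^3\|_{H^1}+\|D_z^V\nabla w^3\|_{L^2}\bigr).
\]
The commutator $[D_z^V,\partial_m]=(\partial_m\kappa)\py$, together with $\|\nabla\kappa\|_{L^\infty}\le C\delta_2$, reduces the last norm to $\|\nabla D_z^Vw^3\|_{L^2}+C\delta_2\|\nabla w^3\|_{L^2}$. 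Then \eqref{eq:rho-time-small}, $x$-Poincar\'e, and \eqref{eq:Xgamma-basic-parts} give
\[
\nu^{-1}\|\ee^{\sigma\nu^{1/3}t}\nabla(\kappa'w^3)\|_{L_t^2L_x^2}^2
\le C\delta_2^2\nu\bigl(\|\ee^{\sigma\nu^{1/3}t}\nabla\px^2w^3\|_{L_t^2L_x^2}^2
+\|\ee^{\sigma\nu^{1/3}t}\nabla\px D_z^Vw^3\|_{L_t^2L_x^2}^2\bigr)
\le C\delta_2^2\mathcal E_{\perp,\sigma}(T).
\]
This is exactly the admissible form. With this replacement, and the complementary-component detour dropped, your proof goes through.
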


Equations~(9.19)--(9.22) in the cited proof give the transverse
closure below for the general forcing $(g^2,g^3)$.

	\begin{Prop}
		\label{prop:complementary-transverse-closure}
		There exist absolute constants $\delta_3,\gamma_3>0$ and $C>0$ such that the following holds. Assume \eqref{eq:variable-small-assumption} with $\delta_{0}$ replaced by $\delta_3$, and let $(w^2,w^3,p)$ solve \eqref{eq:transverse-forced-system}--\eqref{eq:transverse-forced-pressure}. Then, for $0\leq\sigma\leq\gamma_3$,
		\begin{align}
			&\mathcal E_{\perp,\sigma}(T)
			+\nu^{4/3}\norm{\Delta w^3}{X_\sigma}^2
			\nonumber\\
			&\quad\leq C\Big(
			\norm{(w^2,w^3)(0)}{H^2}^2
			+\norm{\Delta q(0)}{L^2}^2
			\nonumber\\
			&\qquad\qquad
			+\nu^{-1}\norm{\ee^{\sigma\nu^{1/3}t}\nabla (g^2+\kappa g^3)}{L_t^2L_x^2}^2
			+\nu^{-1}\norm{\ee^{\sigma\nu^{1/3}t}\px g^3}{L_t^2L_x^2}^2
			\nonumber\\
			&\qquad\qquad
			+\nu^{-1/3}\norm{\ee^{\sigma\nu^{1/3}t}\nabla g^3}{L_t^2L_x^2}^2
			\Big).
			\label{eq:complementary-transverse-final}
		\end{align}
		No spanwise spectral gap is used; the corresponding modal constants are uniform in $\ell\in\mathbb R$ and remain uniform after integration over the continuous $z$-Fourier variable.
	\end{Prop}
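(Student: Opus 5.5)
The estimate in \Cref{prop:complementary-transverse-closure} will be assembled from the three pieces already recorded in this appendix: the good-derivative scalar estimate for $\px^2 w^j$ and $\px D_z^V w^j$, the second good unknown $q^{(1)}$ for $\px\nabla q$, and the complementary quantity $\Delta w^3-2\px q^{(3)}$ for the weak term $\nu^{4/3}\|\Delta w^3\|_{X_\sigma}^2$. The resulting inequality has the form
\[
\mathcal E_{\perp,\sigma}(T)\leq C(\text{data}+\text{forcing})+C\delta_3\,\mathcal E_{\perp,\sigma}(T),
\]
and the last term is absorbed by taking $\delta_3$ small. Throughout, $\delta_3\le\min\{\delta_0,\delta_1,\delta_2\}$ and $\gamma_3\le\min\{\gamma_0,\gamma_1,\gamma_2\}$, so that every cited proposition applies at weight $\sigma$. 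The mode-by-mode constants are uniform in $\ell\in\R$, and integrating over $\ell$ needs no spanwise gap, as explained in \Cref{rem:linear-continuous-spanwise}.

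\emph{Step 1: the $q$ component.} Write $q=q^{(1)}+\nu q^{(2)}$. By the $x$-Poincar\'e inequality \eqref{eq:x-Poincare-X}, $\|\px\nabla q^{(1)}\|_{X_\sigma}\lesssim\|\Delta q^{(1)}\|_{X_\sigma}$, and \Cref{prop:second-good-unknown} bounds the latter by the data, the forcings $\nu^{-1}\|\nabla(g^2+\kappa g^3)\|^2$ and $\nu^{-1/3}\|\nabla g^3\|^2$, and $\delta_2^2\mathcal E_{\perp,\sigma}$. For the corrector, \eqref{eq:q2-full-gradient-est} gives
\[
\nu^2\|\px\nabla q^{(2)}\|_{X_\sigma}^2\le C\delta_2^2\,\mathcal E_{\perp,\sigma},
\]
which is absorbable. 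The data term $\delta_2^2\nu^{2/3}\|\Delta w^3(0)\|^2$ is dominated by $\|(w^2,w^3)(0)\|_{H^2}^2$.

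\emph{Step 2: the good derivatives.} Apply \Cref{prop:good-derivative-scalar} to $w^2$ and $w^3$ separately, with $F=-\py p-g^2$ and $F=-\pz p-g^3$ respectively. For $w^3$, the pressure $\pz p$ is determined by $\Delta p=-2\px V_y q$. The pressure term goes into $F_1$ through $\Delta F_1$, giving a contribution of $\|\px q\|$-type, which Step 1 and Poincar\'e control at the correct $\nu$-scaling. The forcing $g^3$ goes into $F_3$ with $\nu^{-1}\|\px g^3\|^2$, and into $F_2$ where needed. For $w^2$, I would not use its equation directly. Instead I would write $w^2=q-\kappa w^3$ and use the product rule together with $\|\kappa\|_{H^3}\le C\delta_3$: this recovers $\px^2w^2$ and $\px D_z^Vw^2$ from the $q$ bound of Step 1 and the $w^3$ bound, with commutator errors of size $\delta_3\,\mathcal E_{\perp,\sigma}$.

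\emph{Step 3: the weak term and closure.} \Cref{lem:complementary-r} controls $\|\Delta w^3-2\px q^{(3)}\|_{X_\sigma}^2$ up to $\nu^{-2/3}\mathcal E_{\perp,\sigma}$. Multiplying by $\nu^{4/3}$ turns this into $\nu^{2/3}\mathcal E_{\perp,\sigma}$, and since $\nu\le1$ this is harmless only after it is inserted back into a bound where $\mathcal E_{\perp,\sigma}$ is already controlled by data and forcing. The corrector satisfies
\[
\nu^{4/3}\|\px q^{(3)}\|_{X_\sigma}^2\le C\,\mathcal E_{\perp,\sigma},
\]
which follows from \eqref{eq:q3-good-est} and Poincar\'e. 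So this block is controlled \emph{after} $\mathcal E_{\perp,\sigma}$ is closed; it does not feed back into it. Accordingly, I close $\mathcal E_{\perp,\sigma}$ first via Steps 1--2, absorbing the $\delta$-small terms, and then add the $\nu^{4/3}\|\Delta w^3\|_{X_\sigma}^2$ bound.

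\emph{Main obstacle.} The delicate point is Step 2: the pressure and $\kappa$-commutator terms must land in admissible forcing slots without losing powers of $\nu$. The risk is that some $\nu^{-1/3}$ or $\nu^{-1}$ weight multiplies a quantity controlled only by $\mathcal E_{\perp,\sigma}$ without a $\delta$ factor. If that happens, I would follow the cited Wei--Zhang argument (equations (9.19)--(9.22)): use $D_z^V V=0$, so that $D_z^V$ commutes with the leading transport up to $\delta$-small coefficients, and route the pressure through $q$, whose equation already contains it through $\mathscr L_V$.
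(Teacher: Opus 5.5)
Your overall assembly is the route the paper defers to, namely Wei--Zhang's (9.19)--(9.22). Step~1, Step~3 (run after $\mathcal E_{\perp,\sigma}$ is closed) and the recovery of $\px^2w^2$, $\px D_z^Vw^2$ from $w^2=q-\kappa w^3$ are fine. The last of these needs the damping component of $X_\sigma$ to be stable under multiplication by $\kappa$, which follows from a commutator computation with $[\Delta^{-1},\kappa]$ and $\|\kappa\|_{H^3}\le C\delta_3$.

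\textbf{The gap is the pressure in the $w^3$ good-derivative estimate.} With $F_1=-\pz p$ you have $\Delta F_1=2\px\pz(V_yq)$. So \Cref{prop:good-derivative-scalar} charges $\|\ee^{\sigma\nu^{1/3}t}\px\nabla q\|_{L_t^2L_x^2}^2$ with \emph{no} power of $\nu$, which is not an $\|\px q\|$-type term. Step~1's bound on $\|\px\nabla q\|_{X_\sigma}$ does not control it. The enhanced-dissipation component of that norm carries a factor $\nu^{1/3}$, and its damping component is only $\|\nabla\Delta^{-1}\px(\px\nabla q)\|=\|\px^2q\|$; Poincar\'e does not help. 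What is needed is the inviscid-damping component of $\|\Delta q\|_{X_\sigma}$. You have this for $q^{(1)}$ from \Cref{prop:second-good-unknown}, but not for the corrector $\nu q^{(2)}$. The available bounds (\eqref{eq:q2-full-gradient-est}, \eqref{eq:q2-px2-est}, or \eqref{eq:q2-rho-q3-est} with \eqref{eq:q3-good-est}) only give $\nu^{2}\|\ee^{\sigma\nu^{1/3}t}\px\nabla q^{(2)}\|_{L_t^2L_x^2}^2\le C\delta_2^2\nu^{-1/3}\mathcal E_{\perp,\sigma}$. This cannot be absorbed, because $\delta_2$ is fixed independently of $\nu$. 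So the failure is a missing $\nu^{1/3}$, not the missing $\delta$ factor your obstacle paragraph anticipates, and your fallback does not say how to supply it.

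\textbf{The repair is to split the pressure along $q=q^{(1)}+\nu q^{(2)}$.} Write $p=p^{(1)}+\nu p^{(2)}$ with $\Delta p^{(i)}=-2\px(V_yq^{(i)})$.
\begin{itemize}
\item Put $-\pz p^{(1)}$ in $F_1$. Its cost is $\|\ee^{\sigma\nu^{1/3}t}\px\pz(V_yq^{(1)})\|_{L_t^2L_x^2}^2\le C\|\Delta q^{(1)}\|_{X_\sigma}^2$, since $\|\nabla\Delta^{-1}\px\Delta q^{(1)}\|=\|\px\nabla q^{(1)}\|$.
\item Put $-\nu\pz p^{(2)}=2\nu\px\pz\Delta^{-1}(V_yq^{(2)})$ in $F_2$. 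Because $V_y$ is $x$-independent and the multipliers $\px^3\pz\Delta^{-1}$, $\px^2\pz^2\Delta^{-1}$, $\px^2\py\pz\Delta^{-1}$ are dominated by $\px^2$, you get $\|\px^2F_2\|+\|\px D_z^VF_2\|\le C\nu\|\px^2q^{(2)}\|$. The $F_2$ cost is then at most $C\nu^{-1/3}\nu^{2}\nu^{-1/3}\|\px^2q^{(2)}\|_{X_\sigma}^2=C\nu^{4/3}\|\px^2q^{(2)}\|_{X_\sigma}^2\le C\delta_2^2\mathcal E_{\perp,\sigma}$, by \eqref{eq:q2-px2-est}.
\end{itemize}
Together with $F_3=-g^3$ this yields
\[
\|\px^2w^3\|_{X_\sigma}^2+\|\px D_z^Vw^3\|_{X_\sigma}^2\le C\bigl(\|w^3(0)\|_{H^2}^2+\|\Delta q^{(1)}\|_{X_\sigma}^2+\nu^{-1}\|\ee^{\sigma\nu^{1/3}t}\px g^3\|_{L_t^2L_x^2}^2\bigr)+C\delta_2^2\mathcal E_{\perp,\sigma}.
\]
With this bound in place of your Step~2 pressure argument, the rest of your scheme closes as you planned.
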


	Incompressibility gives
\begin{equation*}
 \partial_xw^1=-\partial_yq-D_z^Vw^3+(\partial_y\kappa)w^3.
\end{equation*}
Together with \eqref{eq:x-Poincare-X} and
\Cref{lem:anisotropic-products-R2}, this yields the following recovery estimate.
	
	\begin{Cor}
		\label{cor:w1-recovery}
		Assume the hypotheses of \Cref{prop:complementary-transverse-closure} and let $w=(w^1,w^2,w^3)$ be divergence free. Then
		\begin{equation*}
			\norm{w^1}{X_\sigma}^2+\norm{\px w^1}{X_\sigma}^2
			\leq C\mathcal E_{\perp,\sigma}(T).
		\end{equation*}
		Consequently, the right-hand side of \eqref{eq:complementary-transverse-final} controls all three components of the $x$-nonzero velocity in the anisotropic norms required in the nonlinear estimates of Section~\ref{sec:interactions-final}.
	\end{Cor}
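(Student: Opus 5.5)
The plan is to read off the estimate from the incompressibility identity displayed just before the statement,
\[
 \partial_xw^1=-\partial_yq-D_z^Vw^3+(\partial_y\kappa)w^3 ,
\]
and to control each term on the right in $X_\sigma$ by the components of $\mathcal E_{\perp,\sigma}(T)$. The first step removes the undifferentiated term. Since $w^1$ is $x$-nonzero, the Poincar\'e inequality \eqref{eq:x-Poincare-X} gives $\|w^1\|_{X_\sigma}\le\|\partial_xw^1\|_{X_\sigma}$. It therefore suffices to prove $\|\partial_xw^1\|_{X_\sigma}^2\le C\mathcal E_{\perp,\sigma}(T)$.

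The two principal terms come next. By \eqref{eq:x-Poincare-X} once more, $\|\partial_yq\|_{X_\sigma}\le\|\partial_x\partial_yq\|_{X_\sigma}\le\|\partial_x\nabla q\|_{X_\sigma}$, and this last quantity is one of the terms defining $\mathcal E_{\perp,\sigma}$. Likewise $\|D_z^Vw^3\|_{X_\sigma}\le\|\partial_xD_z^Vw^3\|_{X_\sigma}$, which is also in $\mathcal E_{\perp,\sigma}$. In both cases the point is that $\partial_x$ commutes with the $x$-independent coefficient $\kappa$. It also commutes with the multipliers in all four parts of the $X_\sigma$ norm, including the inviscid-damping part $\nabla\Delta^{-1}\partial_x$, so Poincar\'e holds at the level of $X_\sigma$ exactly as stated.

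The coefficient term $(\partial_y\kappa)w^3$ is the only genuine product, and it is where I expect the main work. I would estimate each of the four components of $X_\sigma$ separately.
\begin{itemize}
\item The $L^\infty_tL^2$ and $\nu^{1/3}L^2_tL^2$ parts follow from $\|\partial_y\kappa\|_{L^\infty}\le C\|\kappa\|_{H^3}\le C\delta_3$ (\eqref{eq:rho-coefficient-small}) together with $\|w^3\|_{X_\sigma}\le\|\partial_x^2w^3\|_{X_\sigma}$.
\item For the viscous part, expand $\nabla((\partial_y\kappa)w^3)=(\partial_y\kappa)\nabla w^3+(\nabla\partial_y\kappa)w^3$. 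The second product needs $\nabla^2\kappa$ only in $L^4$ or via \Cref{lem:anisotropic-products-R2} (an $H^1_{y,z}$ coefficient times an $x$-nonzero function, controlled by $\|w^3\|_{H^1}$). Since $\|\kappa\|_{H^3}$ is small, this is bounded by $C\delta_3\|\nabla w^3\|_{L^2}+C\delta_3\|\nabla\partial_xw^3\|_{L^2}$, and these are controlled by the dissipative part of $\|\partial_x^2w^3\|_{X_\sigma}$ after Poincar\'e.
\item The delicate component is the inviscid-damping one, $\|\nabla\Delta^{-1}\partial_x((\partial_y\kappa)w^3)\|$. Here I would bound $\nabla\Delta^{-1}\partial_x$ by an order-zero operator in $L^2$, using $|k\xi|/|\xi|^2\le1$. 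This reduces the component to $\|(\partial_y\kappa)w^3\|_{L^2}$. Its time-$L^2$ norm with no $\nu$ prefactor is then needed, and I would obtain it from the inviscid-damping part of $\|\partial_x^2w^3\|_{X_\sigma}$: $\|\nabla\Delta^{-1}\partial_x\,\partial_x^2w^3\|_{L^2}\ge c\|\partial_xw^3\|_{L^2}\ge c\|w^3\|_{L^2}$, because $|k|^3|\xi|/|\xi|^2\ge|k|$ whenever $|k|\ge|\xi|$. That inequality fails when $|\xi|>|k|$, so this part of the argument breaks there.
\end{itemize}

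If the comparison in the last item fails for $|\xi|\gg|k|$, my fallback is to use the smallness time weight. Since $\|\partial_y\kappa\|_{H^2}\le C(E_1+\varepsilon/\nu)m_\nu^{1/2}$ by \eqref{eq:kappa-H2-transient-revised}, I would combine this with the enhanced-dissipation part $\nu^{1/3}\|\cdot\|_{L^2_tL^2}$ and the gap between weights, absorbing $\nu^{-1/3}$ via $m_\nu^{1/2}e^{-c\nu^{1/3}t}\lesssim\nu^{1/3}$. Summing all contributions and absorbing the $C\delta_3\|\partial_xw^1\|$-type terms, if any arise, then gives the corollary. The final sentence of the statement follows by combining this bound with the definition of $\mathcal E_{\perp,\sigma}$.
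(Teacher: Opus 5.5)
\paragraph{Where the proposal breaks.} Most of your argument is correct and follows the paper's route: the incompressibility identity, the $X_\sigma$ Poincar\'e inequality for $w^1$, $\partial_yq$ and $D_z^Vw^3$, and the $L^\infty_tL^2$, enhanced-dissipation and viscous components of the coefficient term $(\partial_y\kappa)w^3$ all go through. The gap is the inviscid-damping component $\|\ee^{\sigma\nu^{1/3}t}\nabla\Delta^{-1}\partial_x((\partial_y\kappa)w^3)\|_{L^2_tL^2}$.

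Replacing $\nabla\Delta^{-1}\partial_x$ by an order-zero bound discards exactly the structure you need. It leaves you with $\|w^3\|_{L^2_tL^2}$ carrying no power of $\nu$, and $\mathcal E_{\perp,\sigma}$ does not provide this. Its only $\nu$-free time-integrated information on $w^3$ carries the damping factor $|\xi|^{-1}$, which degenerates for $|\eta|\gg|k|,|\ell|$; you noticed this failure yourself.

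Your fallback does not repair it, for two reasons.
\begin{itemize}
\item The transient bound \eqref{eq:kappa-H2-transient-revised} depends on $E_1(T)$ and the restarted data, which are not among the hypotheses of \Cref{prop:complementary-transverse-closure}.
\item The gain $m_\nu^{1/2}\ee^{-c\nu^{1/3}t}\lesssim\nu^{1/3}$ consumes a weight gap. At best it bounds the left side by $\mathcal E_{\perp,\sigma+c}(T)$ rather than $\mathcal E_{\perp,\sigma}(T)$. That is useless in the application: \Cref{lem:streak-streamwise-profile} invokes the corollary at $\sigma=3\gamma$, the top weight available in $E_6$.
\end{itemize}

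\paragraph{The missing idea.} Multiplication by an $x$-independent coefficient is bounded on the negative-order norm, by duality. Let $h$ be $x$-nonzero and $b=b(y,z)\in H^2(\mathbb R^2)$. For $x$-nonzero $\phi$ you have $\langle bh,\phi\rangle=\langle h,b\phi\rangle$, so
\[
 |\langle bh,\phi\rangle|\le\|\nabla\Delta^{-1}h\|_{L^2}\|\nabla(b\phi)\|_{L^2}.
\]
Transverse $H^1\hookrightarrow L^4$ and the $x$-Poincar\'e inequality give
\[
 \|\nabla(b\phi)\|_{L^2}
 \le\|b\|_{L^\infty}\|\nabla\phi\|_{L^2}+\|\nabla b\|_{L^4(\mathbb R^2)}\|\phi\|_{L^2_xL^4_{y,z}}
 \le C\|b\|_{H^2}\|\nabla\phi\|_{L^2}.
\]
Taking the supremum over $\phi$ yields $\|\nabla\Delta^{-1}(bh)\|_{L^2}\le C\|b\|_{H^2}\|\nabla\Delta^{-1}h\|_{L^2}$.

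Apply this with $b=\partial_y\kappa$ and $h=\partial_xw^3$, using that $\partial_x$ commutes with $b$. Then use the multiplier Poincar\'e inequality $|k|/|\xi|\le|k|^3/|\xi|$. You obtain
\[
 \|\nabla\Delta^{-1}\partial_x((\partial_y\kappa)w^3)\|_{L^2}
 \le C\|\kappa\|_{H^3}\,\|\nabla\Delta^{-1}\partial_x(\partial_x^2w^3)\|_{L^2}.
\]
After the weight $\ee^{\sigma\nu^{1/3}t}$ and time integration, this is bounded by $C\delta_3\|\partial_x^2w^3\|_{X_\sigma}$, at the same weight $\sigma$. It uses only $\|\kappa\|_{H^3}\le C\delta_3$, so no transient factor or weight gap is needed. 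With this replacement for your third bullet, your proof closes.
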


	The domain and normalization adjustments in \Cref{rem:linear-continuous-spanwise} apply throughout this appendix.

\section{Auxiliary averaging, product, and interpolation estimates}
	\label{app:functional-tools}
\label{app:averaging-tools}
\label{app:anisotropic-tools}
\label{app:interpolation-tools}
We collect the averaging, product, and interpolation estimates
used in the nonlinear argument.

	The averaging convention in \eqref{eq:mode-convention} yields the following product bounds.
	
	\begin{Lem}
		\label{lem:average-products}
		Let $N\geq2$. Then
		\begin{align}
			\norm{(fg)_{0}}{L^1(\R^2)}&\leq \norm{f}{L^2(\Omega)}\norm{g}{L^2(\Omega)},
			\label{eq:avg-L1}\\
			\norm{(fg)_{0}}{L^2(\R^2)}&\leq C\norm{f}{H^1(\Omega)}\norm{g}{H^1(\Omega)},
			\label{eq:avg-L2}\\
			\norm{(fg)_{0}}{H^N(\R^2)}&\leq C_N\norm{f}{H^N(\Omega)}\norm{g}{H^N(\Omega)},
			\label{eq:avg-HN}
		\end{align}
		whenever the right-hand sides are finite.
	\end{Lem}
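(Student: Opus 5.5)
The three bounds in \Cref{lem:average-products} all reduce to the definition $(fg)_0(y,z)=\int_{\T}f(x,y,z)g(x,y,z)\,dx$ combined with Cauchy--Schwarz in $x$, using $|\T|=1$. We treat each estimate in turn.

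\emph{The $L^1$ bound.} Here $|(fg)_0(y,z)|\leq\int_{\T}|f||g|\,dx$. Integrate over $\R^2$ and apply Cauchy--Schwarz on $\Omega$. This gives \eqref{eq:avg-L1} with constant one.

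\emph{The $L^2$ bound.} Cauchy--Schwarz in $x$ gives the pointwise estimate
\[
|(fg)_0(y,z)|\leq F(y,z)\,G(y,z),\qquad F:=\|f(\cdot,y,z)\|_{L^2(\T)},\quad G:=\|g(\cdot,y,z)\|_{L^2(\T)}.
\]
Hence $\|(fg)_0\|_{L^2(\R^2)}\leq\|F\|_{L^4(\R^2)}\|G\|_{L^4(\R^2)}$. The functions $F,G$ lie in $H^1(\R^2)$ with $\|F\|_{H^1(\R^2)}\leq\|f\|_{H^1(\Omega)}$. To see this, note $\|F\|_{L^2}=\|f\|_{L^2}$, and Minkowski/Cauchy--Schwarz give $|\nablaP F|\leq\|\nablaP f(\cdot,y,z)\|_{L^2(\T)}$ almost everywhere (standard for the norm of a vector-valued Sobolev function). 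The two-dimensional embedding $H^1(\R^2)\hookrightarrow L^4(\R^2)$ then yields \eqref{eq:avg-L2}.

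Alternatively, one can expand $f$ and $g$ in $x$-Fourier series, $(fg)_0=\sum_k f_k g_{-k}$, and apply the $\R^2$ product estimate $\|ab\|_{L^2}\lesssim\|a\|_{H^1}\|b\|_{H^1}$ followed by Cauchy--Schwarz in $k$. This gives the same result.

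\emph{The $H^N$ bound.} Since averaging commutes with $\nablaP$, for $|\alpha|\leq N$ transverse we have
\[
\partial^\alpha(fg)_0=\sum_{\beta\leq\alpha}\binom{\alpha}{\beta}\bigl(\partial^\beta f\,\partial^{\alpha-\beta}g\bigr)_0 .
\]
Each term is estimated by Cauchy--Schwarz in $x$, leaving a product on $\R^2$ of $L^2(\T)$-valued functions. Following the $L^2$ step, we bound the low-order factor in $L^\infty_{y,z}L^2_x$ or $L^4_{y,z}L^2_x$ and the high-order factor in $L^2$ or $L^4$, split by cases:
\begin{itemize}
\item If $|\beta|\leq N-2$, use $H^2(\R^2;L^2(\T))\hookrightarrow L^\infty$ on $\partial^\beta f$.
\item If $|\beta|=N-1$, both factors have order at least one below $N$, so use $H^1\hookrightarrow L^4$ on both (this requires $N\geq2$ when $|\alpha-\beta|=1$).
\item The symmetric cases are handled the same way.
\end{itemize}
The vector-valued embeddings hold because $L^2(\T)$ is a Hilbert space; equivalently, use the Fourier series in $k$. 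Summing over the finitely many terms gives \eqref{eq:avg-HN} with constant $C_N$.

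The only slightly delicate point is justifying the transverse Sobolev embeddings for $L^2(\T)$-valued functions. We plan to handle this via Plancherel in $x$ (Fourier coefficients $f_k(y,z)$) combined with the scalar embeddings and Cauchy--Schwarz in $k$. This is routine, and no other obstacle is expected.
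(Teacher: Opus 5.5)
Your proof is correct, but it takes a different route from the paper. The $L^1$ bound is identical to the paper's (Fubini plus Cauchy--Schwarz).

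For the other two bounds the paper works on $\Omega$ and then projects. It first uses that $x$-averaging is an $L^2$ contraction (Jensen, $|\T|=1$), so $\|(fg)_0\|_{L^2(\R^2)}\le\|fg\|_{L^2(\Omega)}$. Since averaging commutes with $\nablaP$, it likewise gets $\|(fg)_0\|_{H^N(\R^2)}\le\|fg\|_{H^N(\Omega)}$. It then quotes the three-dimensional embedding $H^1(\Omega)\hookrightarrow L^4(\Omega)$ and the algebra property of $H^N(\Omega)$ for $N\ge2$.

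You project first and work on $\R^2$. A pointwise Cauchy--Schwarz in $x$ reduces everything to products of $L^2(\T)$-valued functions of $(y,z)$. You then apply two-dimensional embeddings ($H^1\hookrightarrow L^4$, $H^2\hookrightarrow L^\infty$), with the Leibniz split $|\beta|\le N-2$, $|\beta|=N-1$, $|\beta|=N$. That split covers all terms and uses $N\ge2$ exactly where needed.

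The paper's route is shorter because everything is delegated to standard facts on $\Omega$. Yours must justify the Hilbert-valued embeddings, which your bound $|\nablaP F|\le\|\nablaP f(\cdot,y,z)\|_{L^2(\T)}$ or the Fourier-series-in-$x$ argument does. In return you get a slightly stronger, anisotropic statement. Only transverse derivatives of $f$ and $g$ enter, so the right-hand sides can be replaced by norms in $H^1(\R^2;L^2(\T))$ and $H^N(\R^2;L^2(\T))$, with no $\partial_x$ regularity required.
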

	
	\begin{proof}
The first estimate follows from Fubini and Cauchy--Schwarz.
For the second, Jensen's inequality and $H^1(\Omega)\hookrightarrow L^4(\Omega)$
give $\|(fg)_{0}\|_{L^2}\leq\|fg\|_{L^2}\leq C\|f\|_{H^1}\|g\|_{H^1}$.
The last follows because averaging commutes with transverse derivatives
and $H^N(\Omega)$ is an algebra for $N\geq2$.
\end{proof}
	
	\begin{Lem}
		\label{lem:2d-asymmetric-tame}
		For sufficiently regular scalar or vector-valued functions $f,g$ on $\mathbb R^2$,
		\begin{align}
			\|fg\|_{H^2(\mathbb R^2)}
			&\leq C\Big(
			\|f\|_{H^1(\mathbb R^2)}\|g\|_{H^3(\mathbb R^2)}
			+\|f\|_{H^2(\mathbb R^2)}\|g\|_{H^2(\mathbb R^2)}
			\Big),
			\notag\\
			\|fg\|_{H^3(\mathbb R^2)}
			&\leq C\Big(
			\|f\|_{H^2(\mathbb R^2)}\|g\|_{H^3(\mathbb R^2)}
			+\|f\|_{H^3(\mathbb R^2)}\|g\|_{H^2(\mathbb R^2)}
			\Big).
			\notag
		\end{align}
		The estimates hold componentwise for products of vector fields and scalar functions.
	\end{Lem}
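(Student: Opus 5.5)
Both inequalities follow from the Leibniz rule, H\"older's inequality, and the two-dimensional embeddings $H^1(\R^2)\hookrightarrow L^4$ and $H^2(\R^2)\hookrightarrow L^\infty$. We organize the terms so that $f$ is never asked for more regularity than its $H^1$ norm (first estimate) or $H^2$ norm (second estimate) allows. For a multi-index $\alpha$ with $|\alpha|\le m$, $m\in\{2,3\}$, write
\[
\partial^\alpha(fg)=\sum_{\beta\le\alpha}\binom{\alpha}{\beta}\partial^\beta f\,\partial^{\alpha-\beta}g,
\]
and distribute each term according to $j=|\beta|$.

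\emph{The $H^2$ estimate.} We take $|\alpha|\le2$.
\begin{itemize}
\item If $j=0$, we bound $\|f\,\partial^{\alpha}g\|_{L^2}\le\|f\|_{L^4}\|\partial^\alpha g\|_{L^4}\le C\|f\|_{H^1}\|g\|_{H^3}$.
\item If $j=1$, we use $\|\partial f\|_{L^4}\|\partial^{\alpha-\beta}g\|_{L^4}\le C\|f\|_{H^2}\|g\|_{H^2}$.
\item If $j=2$, we use $\|\partial^2 f\|_{L^2}\|g\|_{L^\infty}\le C\|f\|_{H^2}\|g\|_{H^2}$.
\end{itemize}
Summing over $\alpha$ and $\beta$ gives the first inequality.

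\emph{The $H^3$ estimate.} We take $|\alpha|\le3$.
\begin{itemize}
\item If $j=0$, we use $\|f\|_{L^\infty}\|\partial^3g\|_{L^2}\le C\|f\|_{H^2}\|g\|_{H^3}$.
\item If $j=1$, we use $\|\partial f\|_{L^4}\|\partial^2 g\|_{L^4}\le C\|f\|_{H^2}\|g\|_{H^3}$.
\item If $j=2$, we use $\|\partial^2 f\|_{L^4}\|\partial g\|_{L^4}\le C\|f\|_{H^3}\|g\|_{H^2}$.
\item If $j=3$, we use $\|\partial^3 f\|_{L^2}\|g\|_{L^\infty}\le C\|f\|_{H^3}\|g\|_{H^2}$.
\end{itemize}
The lower-order multi-indices ($|\alpha|<m$) are handled the same way, since the norms used only dominate the corresponding lower derivatives.

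For vector-valued $f$ and $g$, we apply the scalar bounds to each component product and sum. No step is delicate. The only point requiring care is the endpoint term $f\,\partial^2 g$ in the $H^2$ bound: there the factor $f$ is only in $H^1$, so we cannot place it in $L^\infty$. Instead we put $f$ in $L^4$, which forces $\partial^2 g$ into $L^4$, and this is exactly why $\|g\|_{H^3}$ appears paired with $\|f\|_{H^1}$.
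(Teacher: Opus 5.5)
Your proposal is correct and is essentially the paper's argument: Leibniz's rule with H\"older and the embeddings $H^1\hookrightarrow L^4$, $H^2\hookrightarrow L^\infty$, where in the $H^2$ bound only the term $f\,\nabla^2 g$ uses $\|f\|_{H^1}\|g\|_{H^3}$. In the $H^3$ bound, terms with at most one derivative on $f$ go to $\|f\|_{H^2}\|g\|_{H^3}$ and the rest to $\|f\|_{H^3}\|g\|_{H^2}$, exactly the allocation the paper describes.
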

	
	\begin{proof}
Apply Leibniz's rule and the two-dimensional embeddings
$H^1\hookrightarrow L^4$, $H^2\hookrightarrow L^\infty\cap W^{1,4}$,
and $H^3\hookrightarrow W^{2,4}$. In the $H^2$ estimate, the term
$f\nabla^2g$ is bounded by $C\|f\|_{H^1}\|g\|_{H^3}$;
the other terms are bounded by $C\|f\|_{H^2}\|g\|_{H^2}$.
For $H^3$, allocate at most one derivative to the $H^2$ factor
and use the symmetric bound for the remaining terms.
\end{proof}

	We next record the anisotropic product and coordinate estimates on the present domain.
	
	\begin{Lem}
		\label{lem:anisotropic-products-R2}
		Assume \eqref{eq:variable-small-assumption}, with $\delta_{0}$ sufficiently small. Fix $t\in[0,T]$, set $V=y+u^1_{0}(t,y,z)$, $\kappa=V_z/V_y$, and $D_z^V=\partial_z-\kappa\partial_y$. Let $b=b(y,z)$ and let $f=f(x,y,z)$ satisfy $f_{0}=0$. Then
		\begin{align}
			\|bf\|_{L^2(\Omega)}
			&\leq C\|b\|_{H^1(\R^2)}
			\left(\|f\|_{L^2(\Omega)}+\|D_z^Vf\|_{L^2(\Omega)}\right),
			\label{eq:anis-prod-L2}
			\\
			\|bf\|_{L^2(\Omega)}
			&\leq C\|b\|_{L^2(\R^2)}
			\left(\|f\|_{H^1(\Omega)}+\|D_z^Vf\|_{H^1(\Omega)}\right),
			\label{eq:anis-prod-L2-coeff}
			\\
			\|\nabla\Delta^{-1}(bf)\|_{L^2(\Omega)}
			&\leq C\|b\|_{L^2(\R^2)}
			\left(\|f\|_{L^2(\Omega)}+\|D_z^Vf\|_{L^2(\Omega)}\right),
			\label{eq:anis-prod-Hminus1}
			\\
			\|\nabla(bf)\|_{L^2(\Omega)}
			&\leq C\|b\|_{H^1(\R^2)}
			\left(\|f\|_{H^1(\Omega)}+\|D_z^Vf\|_{H^1(\Omega)}\right).
			\label{eq:anis-prod-H1}
		\end{align}
		Moreover, for $j\in\{y,z\}$,
		\begin{equation}
			\|[\partial_j\Delta^{-1},b]\Delta f\|_{H^1(\Omega)}
			\leq C\|b\|_{H^2(\R^2)}
			\left(\|\nabla f\|_{L^2(\Omega)}+\|D_z^V\nabla f\|_{L^2(\Omega)}\right).
			\label{eq:anis-commutator}
		\end{equation}
		The same bounds hold componentwise for vector- and tensor-valued functions. The constant is independent of $t$, $T$, and $\nu$; it depends only on the fixed admissibility threshold in \eqref{eq:variable-small-assumption}. In particular, no norm of $u^1_{0}$ above the $H^4$ level occurring in \eqref{eq:variable-small-assumption} is used.
	\end{Lem}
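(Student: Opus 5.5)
The plan is to work at fixed $x$ (and, when needed, fixed Fourier mode $k\neq0$), treating $b(y,z)$ as a two-dimensional coefficient and $f(x,\cdot,\cdot)$ as a function on $\R^2$. The bounds are then integrated in $x$ by Fubini. The main tool is a flat anisotropic product on $\R^2$:
\begin{equation*}
 \|bf\|_{L^2(\R^2)}\leq C\|b\|_{H^1}\bigl(\|f\|_{L^2}+\|\partial_zf\|_{L^2}\bigr).
\end{equation*}
This follows by taking $L^\infty_z$ of $f$ via $\|f(\cdot,z)\|_{L^\infty_z}\lesssim\|f\|_{L^2_z}^{1/2}\|\partial_zf\|_{L^2_z}^{1/2}$, with $b\in L^2_zL^\infty_y$ controlled through $H^1$ and Gagliardo--Nirenberg in one variable (more precisely, the $L^\infty_yL^2_z\times L^2_yL^\infty_z$ splitting). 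The condition $f_0=0$ is not needed here. It enters only in \eqref{eq:anis-prod-L2-coeff}, \eqref{eq:anis-prod-Hminus1}, and \eqref{eq:anis-commutator}, where it supplies $\|f\|\lesssim\|\partial_xf\|$ and lets $\Delta^{-1}$ act on the nonzero subspace.

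To replace $\partial_z$ by $D_z^V$, I would straighten the coordinates with the diffeomorphism $(y,z)\mapsto(V(y,z),z)$. By \eqref{eq:variable-small-assumption}, the bounds $\|u^1_0\|_{H^4}\leq\delta_0$, $V_y\geq1/2$, and $\|\nabla_\perp u^1_0\|_{W^{2,\infty}}\lesssim\delta_0$ all hold. Under this change of variables, $D_z^V$ becomes $\partial_{z'}$ up to the harmless factor: indeed $D_z^V(g\circ\Psi)=(\partial_{z}g)\circ\Psi$ since $D_z^VV=0$. Composition with $\Psi^{\pm1}$ preserves $L^2$ and $H^1$ norms on $\R^2$ with constants depending only on $\delta_0$, in the spirit of \Cref{lem:coordinate-equivalence}. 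Applying the flat estimate in the new variables therefore gives \eqref{eq:anis-prod-L2}.

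The other product bounds follow from this one. For \eqref{eq:anis-prod-L2-coeff}, I would use the dual allocation, $b\in L^2$ and $f\in L^\infty_{y,z}$, via the same one-dimensional inequalities in $y$ and $z'$ applied to $f$ with one derivative each. Here $\|\partial_yD_z^Vf\|$ is controlled by $\|D_z^Vf\|_{H^1}$ plus a commutator $(\partial_y\kappa)\partial_yf$, which is bounded by $\|f\|_{H^1}$. For \eqref{eq:anis-prod-H1}, I would apply Leibniz: $\nabla b\,f$ uses \eqref{eq:anis-prod-L2} with $\nabla b\in L^2$ routed as in \eqref{eq:anis-prod-L2-coeff}, and $b\nabla f$ uses \eqref{eq:anis-prod-L2} with $f$ replaced by $\nabla f$, together with $[D_z^V,\nabla]f=\nabla\kappa\,\partial_yf$. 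For \eqref{eq:anis-prod-Hminus1}, I would argue by duality: $\|\nabla\Delta^{-1}(bf)\|=\sup\langle bf,\varphi\rangle$ over $\|\nabla\varphi\|\leq1$ with $\varphi_0=0$. Moving $b$ onto $\varphi$ in the pairing, Hölder with $b\in L^2$, $\varphi\in L^2_xL^\infty_{y}L^2_z$ (via $\|\varphi\|_{H^1}$), and $f\in L^2_xL^2_yL^\infty_z$ (via $f,D_z^Vf$ after straightening) gives the bound.

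The commutator \eqref{eq:anis-commutator} is the step I expect to be the main obstacle. I would write
\begin{equation*}
 [\partial_j\Delta^{-1},b]\Delta f=\partial_j\Delta^{-1}(b\Delta f)-b\,\partial_jf.
\end{equation*}
Then $b\Delta f=\operatorname{div}(b\nabla f)-\nabla b\cdot\nabla f$, so the expression equals
\begin{equation*}
 \partial_j\Delta^{-1}\operatorname{div}(b\nabla f)-b\partial_jf-\partial_j\Delta^{-1}(\nabla b\cdot\nabla f).
\end{equation*}
The last term is bounded in $H^1$ by $\|\nabla b\cdot\nabla f\|_{L^2}$ using order-zero Riesz multipliers, and then by \eqref{eq:anis-prod-L2} with $\nabla b\in H^1$. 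The first two terms combine into a Calderón-type commutator $[\mathcal R_j\mathcal R_i,b]\partial_if$. Its $H^1$ norm needs one derivative on $b$ paired with $\nabla f$, plus $\mathcal R\mathcal R(b\,\partial\partial_if)-b\,\mathcal R\mathcal R\partial\partial_if$. Handling this at the $L^2$ level through \eqref{eq:anis-prod-L2} with coefficient $\nabla b\in H^1$ requires $b\in H^2$, which matches the statement. If this route fails, I would fall back to a Fourier-side estimate of the commutator kernel: use $|m(\xi)-m(\xi-\zeta)|\lesssim|\zeta|/|\xi|$ on the transverse frequencies with $k\neq0$ fixed, and absorb the $\zeta$ weight into $\|b\|_{H^2}$.
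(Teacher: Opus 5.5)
The four product estimates \eqref{eq:anis-prod-L2}--\eqref{eq:anis-prod-H1} are handled correctly, and essentially as in the paper: flat one-dimensional embeddings in $y$ and $z$, transfer to $D_z^V$ through $\Psi_t$ via $\partial_Z(f\circ\Psi_t^{-1})=(D_z^Vf)\circ\Psi_t^{-1}$, Leibniz for the gradient bound, and duality against streamwise mean-zero test functions for the negative-order bound. There are two harmless slips. First, $\|u^1_0\|_{H^4(\R^2)}$ controls $\nablaP u^1_0$ only in $W^{1,\infty}$, not $W^{2,\infty}$; you only use Lipschitz bounds on $\Psi_t$ and $\nablaP\kappa\in L^\infty$, so this does no damage. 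Second, $f_0=0$ is not needed for \eqref{eq:anis-prod-L2-coeff}.

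\textbf{The gap is in \eqref{eq:anis-commutator}.} You combine the first two terms into $[\mathcal R_j\mathcal R_i,b]\partial_if$. The $L^2$ part of its norm, and the terms where a derivative falls on $b$, are fine. The $\dot H^1$ part, however, contains $[\mathcal R\mathcal R,b]\partial_m\partial_if$, an order-zero commutator acting on second derivatives of $f$.

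You propose to bound this piece through \eqref{eq:anis-prod-L2} with coefficient $\nabla b\in H^1$, but that does not follow. \eqref{eq:anis-prod-L2} is a product estimate. Converting a commutator into a product with $\nabla b$ is Calder\'on-type commutator theory, whose standard form needs a Lipschitz coefficient, $\nabla b\in L^\infty$, which $b\in H^2(\R^2)$ does not give. It is also stated in isotropic norms, not in $\|h\|_{L^2}+\|D_z^Vh\|_{L^2}$.

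Your Fourier fallback does not rescue this. The mean-value bound $|m(\xi)-m(\xi-\zeta)|\lesssim|\zeta|/|\xi-\zeta|$ reduces the commutator to a positive bilinear form in $|\widehat{\nabla b}|$ and $|\widehat{\nabla f}|$. That form can only produce the flat quantity $\|\nabla f\|_{L^2}+\|\partial_z\nabla f\|_{L^2}$. The $D_z^V$ structure is lost once $\widehat f$ is replaced by $|\widehat f|$, and straightening cannot restore it, because $\Delta^{-1}$ and the Riesz transforms do not commute with $\Psi_t$. The flat bound is genuinely weaker than \eqref{eq:anis-commutator}: $\partial_z\nabla f=D_z^V\nabla f+\kappa\partial_y\nabla f$ contains $\partial_y^2f$, which is exactly the bad derivative of $u^3_{\neq}$ the lemma is designed to avoid.

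\textbf{The missing idea is that no commutator estimate is needed.} Write $b\partial_jf=\Delta^{-1}\Delta(b\partial_jf)$ and expand $\partial_j(b\Delta f)-\Delta(b\partial_jf)$. This gives the exact identity
\[
[\partial_j\Delta^{-1},b]\Delta f=\operatorname{div}\Delta^{-1}\bigl((\partial_jb)\nabla f\bigr)-\partial_j\Delta^{-1}\bigl(\nablaP b\cdot\nablaP f\bigr)-\operatorname{div}\Delta^{-1}\bigl((\nabla b)\partial_jf\bigr).
\]
In particular, your Calder\'on-type term equals $\operatorname{div}\Delta^{-1}\bigl((\partial_jb)\nabla f-(\nabla b)\partial_jf\bigr)$.

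Every term is an order $-1$ multiplier applied to a streamwise mean-zero product of one derivative of $b$ with one derivative of $f$. Such multipliers map $L^2$ to $H^1$ on the $x$-nonzero subspace because $|k|\geq1$. Applying \eqref{eq:anis-prod-L2} with coefficients $\partial_jb,\nablaP b\in H^1$ then yields \eqref{eq:anis-commutator} with $\|b\|_{H^2}$ on the right. The variable coefficient enters only through \eqref{eq:anis-prod-L2}, which you have already established by straightening.
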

	
	\begin{proof}
When $V=y$, one-dimensional Sobolev embedding in $y$ gives
\begin{equation}
 \|b(\cdot,z)f(\cdot,\cdot,z)\|_{L^2_{x,y}}
 \leq C\|b(\cdot,z)\|_{H^1_y}
          \|f(\cdot,\cdot,z)\|_{L^2_{x,y}}.
 \label{eq:flat-fixed-z-product}
\end{equation}
The Hilbert-valued Sobolev inequality on the real $z$ line is
\begin{equation}
 \sup_z\|f(\cdot,\cdot,z)\|_{L^2_{x,y}}
 \leq C(\|f\|_{L^2}+\|\partial_zf\|_{L^2}).
 \label{eq:Hilbert-Sobolev-z}
\end{equation}
These prove \eqref{eq:anis-prod-L2}. Applying the same embedding
successively in $y,z$ yields
\begin{equation}
 \|bf\|_{L^2}\leq\|b\|_{L^2}\sup_{y,z}\|f(\cdot,y,z)\|_{L_x^2}
 \leq C\|b\|_{L^2}(\|f\|_{H^1}+\|\partial_zf\|_{H^1}),
 \label{eq:iterated-Hilbert-Sobolev}
\end{equation}
which proves \eqref{eq:anis-prod-L2-coeff}.
The product rule then gives \eqref{eq:anis-prod-H1}.
These flat product estimates do not require $f_{0}=0$.

For general $V$, the Jacobian is $V_y$, bounded above and away
from zero, and
\begin{equation*}
 \partial_Z(f\circ\Psi_t^{-1})=(D_z^Vf)\circ\Psi_t^{-1}.
\end{equation*}
The $L^2$ and $H^1$ coordinate equivalences in
\Cref{lem:coordinate-equivalence}, and the $H^2$ equivalence for
$b$ following from the chain rule, transfer the three product
estimates to the variable coordinates.

For the negative-order bound, use duality. For every streamwise mean-zero test function $\phi$,
the flat product estimate applied to the $x$-Fourier coefficients,
and then Cauchy--Schwarz in $k$, give
\begin{equation*}
 \|(f\phi)_{0}\|_{L^2(\mathbb R^2)}
 \leq C(\|f\|_{L^2}+\|D_z^Vf\|_{L^2})\|\nabla\phi\|_{L^2}.
\end{equation*}
For general $V$ this follows after the same coordinate change;
the Jacobian is bounded and the $H^1$ norm of each nonzero
$x$-coefficient of $\phi$ is controlled by its full gradient.
Thus
$|\langle bf,\phi\rangle|
\leq C\|b\|_{L^2}(\|f\|_{L^2}+\|D_z^Vf\|_{L^2})\|\nabla\phi\|_{L^2}$,
which proves \eqref{eq:anis-prod-Hminus1}.

Finally, the commutator identity identified above is
\begin{align*}
 [\partial_j\Delta^{-1},b]\Delta f
 =\operatorname{div}\Delta^{-1}((\partial_jb)\nabla f)
 -\partial_j\Delta^{-1}(\nablaP b\cdot\nablaP f)
 -\operatorname{div}\Delta^{-1}((\nabla b)\partial_jf).
\end{align*}
Every product on the right is streamwise mean-zero.
The order-minus-one multipliers map $L^2$ to $H^1$ on this
subspace by $|k|\geq1$. Applying \eqref{eq:anis-prod-L2} with
coefficients $\partial_jb$ and $\nablaP b$ proves
\eqref{eq:anis-commutator}. This completes the domain adaptation.
\end{proof}
	
	\begin{Lem}
		\label{lem:streamwise-spanwise-bilinear}
		Let $j\in\{x,z\}$ and let $f,g$ be smooth scalar functions on $\Omega=\mathbb T_x\times\mathbb R_y\times\mathbb R_z$.  The estimates \eqref{eq:bilinear-good-derivative}--\eqref{eq:bilinear-good-product} below hold under the additional assumption $g_{0}=0$, whereas the two-level estimates \eqref{eq:bilinear-two-level}--\eqref{eq:bilinear-two-level-derivative} require no mean-zero condition. One has
		\begin{align}
			\|f\,\partial_jg\|_{L^2}
			&\leq C\bigl(\|f\|_{L^2}+\|\partial_jf\|_{L^2}\bigr)\|\Delta g\|_{L^2},
			\label{eq:bilinear-good-derivative}\\
			\|fg\|_{L^2}+\|\partial_j(fg)\|_{L^2}
			&\leq C\bigl(\|f\|_{L^2}+\|\partial_jf\|_{L^2}\bigr)\|g\|_{H^2},
			\label{eq:bilinear-good-product}\\
			\|fg\|_{L^2}
			&\leq C\Big[\bigl(\|f\|_{H^1}+\|\partial_jf\|_{H^1}\bigr)\|g\|_{L^2}
			+\bigl(\|f\|_{L^2}+\|\partial_jf\|_{L^2}\bigr)\|g\|_{H^1}\Big],
			\label{eq:bilinear-two-level}\\
			\|\partial_j(fg)\|_{L^2}
			&\leq C\Big[\bigl(\|f\|_{H^1}+\|\partial_jf\|_{H^1}\bigr)
			\bigl(\|g\|_{L^2}+\|\partial_jg\|_{L^2}\bigr)
			\nonumber\\
			&\hspace{27mm}+\bigl(\|f\|_{L^2}+\|\partial_jf\|_{L^2}\bigr)
			\bigl(\|g\|_{H^1}+\|\partial_jg\|_{H^1}\bigr)\Big].
			\label{eq:bilinear-two-level-derivative}
		\end{align}
	\end{Lem}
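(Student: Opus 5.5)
The four estimates reduce to one-dimensional Sobolev embeddings in the direction $j\in\{x,z\}$, applied fiberwise, combined with Hölder and Sobolev embeddings in the remaining two variables. We use the vector-valued inequality
\[
\sup_{s}\|h(s,\cdot)\|_{L^2}\leq C\bigl(\|h\|_{L^2}+\|\partial_sh\|_{L^2}\bigr).
\]
For $s=z\in\R$ this is \eqref{eq:Hilbert-Sobolev-z}. For $s=x\in\T$ it is the same inequality on the circle, valid because $|\T|=1$. In every case we put the $L^\infty_{x_j}$ norm on $f$, so that only $\|f\|+\|\partial_jf\|$ appears, which is what the statement requires. The remaining two variables $x'$ are then handled on $L^2_{x'}$-type spaces.

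For \eqref{eq:bilinear-good-derivative}, write
\[
\|f\partial_jg\|_{L^2}\leq\bigl\|\|f\|_{L^\infty_{x_j}}\|\partial_jg\|_{L^2_{x_j}}\bigr\|_{L^2_{x'}}.
\]
Apply Hölder in $x'$ with exponents $(4,4)$. Then use the two-dimensional Ladyzhenskaya inequality in $x'$ applied to $\|f\|_{L^\infty_{x_j}}$, whose $H^1_{x'}$ norm is at most $C(\|f\|_{H^1}+\|\partial_jf\|_{H^1})$ by the fiber inequality.

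This naive route needs a derivative on $f$ in $x'$, which \eqref{eq:bilinear-good-derivative} does not allow, so the estimate must be rebalanced. We place $f$ in $L^\infty_{x_j}L^2_{x'}$ and $\partial_jg$ in $L^2_{x_j}L^\infty_{x'}$. The latter is bounded by $\|\partial_jg\|_{H^2}$ restricted to the $x'$ variables, and by the Fourier multiplier bound together with $g_0=0$ this is controlled by $C\|\Delta g\|_{L^2}$. Concretely, $|\partial_j\widehat g|\,\langle\xi'\rangle^{1+}$ is integrated in $\xi'$ and $L^2$ in the remaining frequency. The mean-zero condition enters only when $j=x$ (so $|k|\geq1$, and $\Delta$ controls the inhomogeneous $H^2$ norm). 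When $j=z$, we use $g_0=0$ to obtain $\|g\|_{H^2}\lesssim\|\Delta g\|$.

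Estimate \eqref{eq:bilinear-good-product} follows the same way. Its $L^2$ part is the undifferentiated version. For the derivative, use the Leibniz rule: $\partial_j(fg)=(\partial_jf)g+f\partial_jg$. For $(\partial_jf)g$, place $g$ in $L^\infty$ via $H^2(\Omega)\hookrightarrow L^\infty$ (three dimensions, since $2>3/2$). For $f\partial_jg$, use the bound just proved.

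The two-level estimate \eqref{eq:bilinear-two-level} is the main point, and we expect it to be the main obstacle. We bound
\[
\|fg\|_{L^2}\leq\bigl\|\|f\|_{L^\infty_{x_j}}\,\|g\|_{L^2_{x_j}}\bigr\|_{L^2_{x'}}\leq\|F\|_{L^4_{x'}}\|G\|_{L^4_{x'}},
\]
where $F=\|f\|_{L^\infty_{x_j}}$ and $G=\|g\|_{L^2_{x_j}}$. Ladyzhenskaya gives $\|F\|_{L^4}^2\leq C\|F\|_{L^2}\|F\|_{\dot H^1}$, and likewise for $G$. By the fiber inequality, $\|F\|_{L^2}\lesssim A_0:=\|f\|_{L^2}+\|\partial_jf\|_{L^2}$ and $\|F\|_{H^1}\lesssim A_1:=\|f\|_{H^1}+\|\partial_jf\|_{H^1}$. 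Minkowski gives $\|G\|_{L^2}\leq\|g\|_{L^2}$ and $\|G\|_{H^1}\lesssim\|g\|_{H^1}$. Hence
\[
\|fg\|_{L^2}\lesssim (A_0A_1\|g\|_{L^2}\|g\|_{H^1})^{1/2}\leq A_1\|g\|_{L^2}+A_0\|g\|_{H^1}
\]
by AM--GM, which is \eqref{eq:bilinear-two-level}.

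The delicate steps are justifying the Lipschitz bound $\|\nabla_{x'}F\|\leq\|\nabla_{x'}f\|_{L^\infty_{x_j}}$ and bounding the latter by the fiber inequality applied to $\nabla_{x'}f$. Both are routine via difference quotients.

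Finally, for \eqref{eq:bilinear-two-level-derivative}, expand $\partial_j(fg)=(\partial_jf)g+f\partial_jg$ and apply \eqref{eq:bilinear-two-level} to each product. Replacing $f$ by $\partial_jf$ in the first product would require $\partial_j^2f$, which is not available. So in the first product we exchange roles, letting $g$ carry the $L^\infty_{x_j}$ bound: since \eqref{eq:bilinear-two-level} is symmetric in form, this uses $\|g\|+\|\partial_jg\|$ at the two levels and plain norms of $\partial_jf$, which are dominated by $A_0$ and $A_1$. The second product uses \eqref{eq:bilinear-two-level} directly, with $\partial_jg$ in place of $g$. Summing the two bounds gives \eqref{eq:bilinear-two-level-derivative}.
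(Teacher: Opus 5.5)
Your proofs of \eqref{eq:bilinear-two-level} and \eqref{eq:bilinear-two-level-derivative} are fine. They are a physical-space counterpart of the paper's argument (Fourier transform in $x_j$, summability of $(1+k^2)^{-1}$ and $(1+\ell^2)^{-1}$, Ladyzhenskaya on the two-dimensional section), and the role swap for $(\partial_jf)g$ is exactly right. For $j=z$ the section is $\T\times\R$, where only the inhomogeneous Ladyzhenskaya inequality holds, but that is all you use. The $L^2$ part and the $(\partial_jf)g$ term of \eqref{eq:bilinear-good-product} are also fine.

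The gap is \eqref{eq:bilinear-good-derivative}, on which the $f\,\partial_jg$ term of \eqref{eq:bilinear-good-product} also rests. Your rebalanced route requires
\[
\|\partial_jg\|_{L^2_{x_j}L^\infty_{x'}}\le C\|\Delta g\|_{L^2},
\]
and the justification does not add up. The norm $\|\partial_jg\|_{L^2_{x_j}H^2_{x'}}$ costs three derivatives. The weight $|\xi_j|\langle\xi'\rangle^{1+\epsilon}$ is of size $|\xi|^{2+\epsilon}$ when $|\xi_j|\sim|\xi'|\to\infty$, so it is not dominated by $|\xi|^2$. No slicewise repair (for instance Agmon on each $x_j$-slice) can help, because the displayed inequality is false.

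For $j=x$, take $\widehat\alpha,\widehat\beta\ge0$ nontrivial and compactly supported with $\operatorname{supp}\widehat\beta\subset[1,2]$, let $c_K=K^{-3/2}(\log K)^{-1}$, and set
\[
g=\sum_{K\in2\pi\mathbb N,\ K\le K_0}\frac{c_K}{iK}\,\ee^{iK(x-y)}\alpha(y)\,K^{1/2}\beta(Kz).
\]
The summands are mutually orthogonal with frequencies $|\xi|\simeq K$, so $\|\Delta g\|_{L^2}^2\le C\sum_KK^2c_K^2\le C$ uniformly in $K_0$. But for $x\in[0,1)$,
\[
|\partial_xg(x,x,0)|=|\alpha(x)|\beta(0)\sum_{K\le K_0}K^{1/2}c_K\ge c|\alpha(x)|\log\log K_0,
\]
so $\|\partial_xg\|_{L^2_xL^\infty_{y,z}}\to\infty$. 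An analogous superposition (phases aligned on $\{y=z\}$, concentration at scale $N^{-1}$ in $x$) gives a counterexample for $j=z$.

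The missing idea is to keep the $x_j$-variable on the Fourier side, as the paper does. Then $\partial_j$ is interpolated against the transverse derivatives at a fixed $x_j$-frequency. Write $\widehat{\ \cdot\ }$ for the transform in $x_j$ alone (Fourier series if $j=x$), with dual variable $\zeta$. Then
\[
\|\widehat{f\partial_jg}(\zeta)\|_{L^2_{x'}}\le\int\|\widehat f(\zeta-\zeta')\|_{L^2_{x'}}\,|\zeta'|\,\|\widehat g(\zeta')\|_{L^\infty_{x'}}\,d\zeta'.
\]
Young's inequality $L^1*L^2\subset L^2$ puts $f$ in $L^1_\zeta L^2_{x'}$ and $\partial_jg$ in $L^2_\zeta L^\infty_{x'}$. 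The first is bounded by $C(\|f\|_{L^2}+\|\partial_jf\|_{L^2})$, by Cauchy--Schwarz against $(1+\zeta^2)^{-1}$. For the second, Agmon's inequality on the section at each fixed $\zeta$ gives
\[
\zeta^2\|\widehat g(\zeta)\|_{L^\infty}^2\le C\zeta^2\|\widehat g(\zeta)\|_{L^2}\|\widehat g(\zeta)\|_{H^2}\le C\bigl(\zeta^4\|\widehat g(\zeta)\|_{L^2}^2+\|\widehat g(\zeta)\|_{H^2}^2\bigr),
\]
which integrates to $C\|\Delta g\|_{L^2}^2$.

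The only use of $g_0=0$ is to absorb the zeroth-order part of $\|\widehat g(\zeta)\|_{H^2}$ when $j=z$, via $\|\widehat g(\ell)\|_{L^2}\le C\|\partial_x^2\widehat g(\ell)\|_{L^2}$. For $j=x$ the factor $k$ already removes the zero mode. With \eqref{eq:bilinear-good-derivative} established this way, your deduction of \eqref{eq:bilinear-good-product} goes through.
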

	
	\begin{proof}
For $j=x$, the bilinear arguments identified above use Fourier series
only in $x$ and
two-dimensional Sobolev inequalities on the transverse section.
They therefore apply with that section replaced by $\mathbb R^2$.
For $j=z$, one cannot interchange $x$ and $z$ as in the periodic
proof. Instead use the Fourier transform in $z$ and the same
convolution argument, since
\begin{equation}
 \sum_{k\in2\pi\mathbb Z}(1+k^2)^{-1}<\infty,
 \qquad \int_{\mathbb R}(1+\ell^2)^{-1}\,d\ell<\infty.
 \label{eq:lambda-summability}
\end{equation}
On the remaining section $M=\mathbb T_x\times\mathbb R_y$,
the inhomogeneous Agmon inequality gives
\begin{equation*}
 |\ell|^2\|g_\ell\|_{L^\infty(M)}^2
 \leq C\bigl(\ell^4\|g_\ell\|_{L^2}^2
       +\|\Delta_Mg_\ell\|_{L^2}^2+\|g_\ell\|_{L^2}^2\bigr).
\end{equation*}
The last term is controlled by $\|\Delta g\|_{L^2}^2$ after integration
because $g_{0}=0$. This verifies the Laplacian bound in
\eqref{eq:bilinear-good-derivative}, including small $\ell$.
For the two-level estimates, use instead
$\|h\|_{L^4(M)}^2\leq C\|h\|_{L^2}\|h\|_{H^1(M)}$.
The remaining Minkowski, Cauchy--Schwarz, and Leibniz steps are
unchanged, with integrals replacing sums.
\end{proof}
	
	\begin{Lem}
		\label{lem:mean-zero-high-product}
		Let $f,g$ be smooth scalar functions on $\Omega$ and assume $f_{0}=0$. For $m\in\{1,2,3\}$,
		\begin{equation}
			\|fg\|_{H^m}
			\leq C_m\Big(
			\|\partial_xf\|_{H^{m+1}}\|g\|_{L^2}
			+\|\partial_xf\|_{L^2}\|g\|_{H^{m+1}}
			\Big).
			\label{eq:mean-zero-high-product}
		\end{equation}
	\end{Lem}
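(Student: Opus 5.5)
The plan is to use the streamwise Fourier series of $f$ together with Leibniz's rule. Since $f_{0}=0$, write $f=\sum_{k\neq0}f_k(y,z)e^{ikx}$ with $|k|\geq2\pi$. For a multi-index $|\alpha|\leq m$, expand
\[
\partial^\alpha(fg)=\sum_{\beta\leq\alpha}\binom{\alpha}{\beta}\partial^\beta f\,\partial^{\alpha-\beta}g ,
\]
and estimate each term in $L^2(\Omega)$. The target is a bound of the form
\[
\|\partial^\beta f\,\partial^{\alpha-\beta}g\|_{L^2}\leq C\,\|\partial_x f\|_{H^{|\beta|+1}}\,\|g\|_{H^{|\alpha-\beta|}},
\]
or the variant that moves the extra derivative onto $g$. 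I would prove this by putting $\partial^\beta f$ in $L^\infty(\Omega)$ and $\partial^{\alpha-\beta}g$ in $L^2$. When $\partial^\beta f$ is too rough for that, I would split the integrability instead, using $L^4\times L^4$ or $L^\infty_x L^2_{yz}\times L^2_xL^\infty_{yz}$ Hölder pairs.

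The key ingredient is an anisotropic $L^\infty$ bound for mean-zero functions. Summability over $k$ comes from $\sum_{k\neq0}|k|^{-2}<\infty$, so Cauchy--Schwarz gives
\[
\|h\|_{L^\infty_x}\leq C\|\partial_x h\|_{L^2_x}
\]
whenever $h_0=0$. Combining this with the two-dimensional embedding $H^{1+\epsilon}(\R^2)\hookrightarrow L^\infty$, or more cleanly the Agmon-type estimate $\|h\|_{L^\infty(\R^2)}^2\lesssim\|h\|_{L^2}\|h\|_{H^2}$, yields
\[
\|\partial^\beta f\|_{L^\infty(\Omega)}\leq C\|\partial_x\partial^\beta f\|_{H^2}\leq C\|\partial_x f\|_{H^{|\beta|+2}}.
\]
This costs one derivative too many on $f$. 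To recover it I would use the intermediate bound
\[
\|h\|_{L^\infty}\lesssim\|\partial_xh\|_{L^2}^{1/2}\|\partial_x h\|_{H^2}^{1/2},
\]
obtained from the mixed-norm estimate that puts $L^\infty_x$ via $\partial_x$ and $L^\infty_{yz}$ via the 2D Agmon inequality. With this, each Leibniz term becomes a product of norms whose derivative indices interpolate between the two endpoint configurations $(\|\partial_xf\|_{H^{m+1}},\|g\|_{L^2})$ and $(\|\partial_xf\|_{L^2},\|g\|_{H^{m+1}})$.

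The endpoints are handled as follows. When $\beta=\alpha$, with all derivatives on $f$, I would put $g$ in a Lebesgue space using only $\|g\|_{L^2}$ and $\|g\|_{H^{m+1}}$ after interpolation. Concretely, $\|\partial^\alpha f\,g\|_{L^2}\leq\|\partial^\alpha f\|_{L^4}\|g\|_{L^4}$ in three dimensions, and each factor is interpolated by Gagliardo--Nirenberg between the $L^2$ and $H^{m+1}$ levels; $f$ can use $\partial_x$-control thanks to the mean-zero Poincaré inequality $\|f\|_{H^s}\leq\|\partial_xf\|_{H^s}$. The same treatment applies when $\beta=0$, with the roles reversed.

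For each term the exponents are chosen so the $f$-factor carries $\|\partial_xf\|_{L^2}^{1-\theta}\|\partial_xf\|_{H^{m+1}}^{\theta}$ and the $g$-factor carries $\|g\|_{L^2}^{\theta}\|g\|_{H^{m+1}}^{1-\theta}$ with the same $\theta$. Weighted AM--GM then gives the two-term right-hand side of the statement. A scaling check supports this: the total derivative count is $m+1$ after accounting for the $\partial_x$ gain and the 3D Sobolev embedding. Since $m\leq3$ there are finitely many terms.

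The main obstacle I expect is matching the interpolation exponents exactly so that the same $\theta$ appears on both factors. In three dimensions the $L^4$ Gagliardo--Nirenberg exponent alone does not balance the count, since $L^4\times L^4$ costs $3/4$ derivative per factor, $3/2$ in total, while only one extra derivative (from the $\partial_x$ gain used as "regularity") is available. My first attempt would therefore use the mean-zero anisotropic embedding $\|h\|_{L^\infty_xL^2_{yz}}\lesssim\|\partial_xh\|_{L^2}$ for the $f$-factor and $L^2_xL^\infty_{yz}$ for the $g$-factor via 2D Agmon. That costs exactly one derivative beyond $L^2$ on $g$, split as $\|g\|^{1/2}\|g\|_{H^2}^{1/2}$, and no extra derivatives on $f$ beyond $\partial_x$. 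The count then closes by interpolation, which is exactly the structure of the right-hand side of the statement.
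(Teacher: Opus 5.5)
Your argument can be completed, but it takes a different route from the paper's, and the case split has to be stated more carefully than you do.

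\textbf{The paper's route.} The paper applies one balanced inequality to every Leibniz term: the two-level estimate \eqref{eq:bilinear-two-level} with distinguished direction $x$. For $F_0=0$ it reads
\[
\|FG\|_{L^2}\le C\bigl(\|\partial_xF\|_{H^1}\|G\|_{L^2}+\|\partial_xF\|_{L^2}\|G\|_{H^1}\bigr).
\]
It comes from an $L^\infty_x\times L^2_x$ split in the streamwise variable and an $L^4\times L^4$ (Ladyzhenskaya) split on the transverse plane, so each factor pays only half a transverse derivative. Applied to $\partial^\beta f\,\partial^{\alpha-\beta}g$, it gives indices $|\beta|+1$ and $|\alpha-\beta|+1$. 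Both are at most $m+1$ for every $\beta$, so interpolation and Young close with no endpoint discussion.

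\textbf{Your route.} Your two H\"older pairs are the unbalanced versions of this.
\begin{itemize}
\item Putting $\partial^\beta f$ in $L^\infty(\Omega)$ via $\|h\|_{L^\infty}\lesssim\|\partial_xh\|_{L^2}^{1/2}\|\partial_xh\|_{H^2}^{1/2}$ needs $\|\partial_xf\|_{H^{|\beta|+2}}$. It therefore fails exactly at $\beta=\alpha$, $|\alpha|=m$.
\item The $L^\infty_xL^2_{yz}\times L^2_xL^\infty_{yz}$ pair needs $\|g\|_{H^{|\alpha-\beta|+2}}$. It therefore fails exactly at $\beta=0$, $|\alpha|=m$.
\end{itemize}
Since $m\ge1$, no term sits at both endpoints. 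The correct assembly is:
\begin{itemize}
\item use the $L^\infty(\Omega)$ route whenever $|\beta|\le m-1$; this includes $\beta=0$, which needs no special treatment;
\item use the mixed pair only for $\beta=\alpha$ with $|\alpha|=m$.
\end{itemize}
In both cases the interpolation exponents match as you want. For example, the last case gives
\[
\bigl(\|\partial_xf\|_{H^{m+1}}\|g\|_{L^2}\bigr)^{m/(m+1)}\bigl(\|\partial_xf\|_{L^2}\|g\|_{H^{m+1}}\bigr)^{1/(m+1)}.
\]
Your route is slightly more elementary, using only Agmon's inequality and the one-dimensional mean-zero embedding, but it costs this case split. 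The paper's balanced estimate treats all terms uniformly, and it is the same tool the paper reuses for its zero--wave products.

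\textbf{What to drop.} Discard the $L^4(\Omega)\times L^4(\Omega)$ endpoint argument entirely; as you observe, it costs $m+3/2$ derivatives. The paper does use $L^4$, but only on the transverse section and combined with $L^\infty_x\times L^2_x$. Also do not treat $\beta=0$ ``with the roles reversed'': neither $L^4\times L^4$ nor the mixed pair works there.

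\textbf{The per-term target.} The target you display with ``or'' is false for either single term. For instance, $\|fg\|_{L^2}\le C\|\partial_xf\|_{H^1}\|g\|_{L^2}$ would force $f\in L^\infty$. That fails for $f=\cos(2\pi x)\phi(y,z)$ with $\phi\in H^1(\R^2)$ unbounded. Only the sum of the two terms holds (that is the paper's two-level bound), or the geometric-mean versions you actually use.
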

	
	\begin{proof}
In the high-order product estimate identified above, take
$k=m$ and $(f_1,f_2)=(f,g)$.
The proof uses the two-level product estimate with distinguished
direction $x$, the condition $f_{0}=0$, Leibniz expansion, and
Sobolev interpolation. The product estimate on the present domain
is \eqref{eq:bilinear-two-level}; the other steps are unchanged.
In particular, the only Poincar\'e inequality is in $x$.
\end{proof}
	
	\begin{Lem}
		\label{lem:tame-zero-wave-high}
		Let $b=b(y,z)$ and let $f=f(x,y,z)$.  For $m\in\{3,4\}$,
		\begin{align}
			\|bf\|_{H^m(\Omega)}
			\leq C_m\Big[&\|b\|_{H^1(\mathbb R^2)}
			\bigl(\|f\|_{H^m(\Omega)}+\|\partial_zf\|_{H^m(\Omega)}\bigr)
			\nonumber\\
			&+\|b\|_{H^m(\mathbb R^2)}
			\bigl(\|f\|_{H^1(\Omega)}+\|\partial_zf\|_{H^1(\Omega)}\bigr)\Big].
			\label{eq:tame-zero-wave-high}
		\end{align}
	\end{Lem}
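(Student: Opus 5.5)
The plan is to prove \eqref{eq:tame-zero-wave-high} the same way as \Cref{lem:wide-gap-zero-wave-H3}: expand by Leibniz, apply the flat anisotropic product estimate to each term, and interpolate. Fix a multi-index $\alpha$ with $|\alpha|\leq m$ and split $\partial^\alpha(bf)=\sum_{\beta\leq\alpha}\binom{\alpha}{\beta}\partial^\beta b\,\partial^{\alpha-\beta}f$. Since $b$ is independent of $x$, $\beta$ is purely transverse. Write $\ell=|\beta|$, so that $0\leq\ell\leq m$.

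\emph{Step 1: the case $\ell\leq m-1$.} The flat product estimates \eqref{eq:flat-fixed-z-product}--\eqref{eq:Hilbert-Sobolev-z} (one-dimensional Sobolev in $y$, Hilbert-valued Sobolev in $z$) give
\begin{equation*}
 \|\partial^\beta b\,\partial^{\alpha-\beta}f\|_{L^2}
 \leq C\|b\|_{H^{\ell+1}}\bigl(\|f\|_{H^{m-\ell}}+\|\partial_zf\|_{H^{m-\ell}}\bigr).
\end{equation*}
Both regularity indices then lie in the admissible ranges: $\ell+1\in[1,m]$ for $b$, and $m-\ell\in[1,m]$ for $f$. Set $\theta=\ell/(m-1)$. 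Sobolev interpolation gives
\begin{equation*}
 \|b\|_{H^{\ell+1}}\leq\|b\|_{H^1}^{1-\theta}\|b\|_{H^m}^{\theta},
 \qquad
 \|f\|_{H^{m-\ell}}\leq\|f\|_{H^m}^{1-\theta}\|f\|_{H^1}^{\theta},
\end{equation*}
and the same interpolation holds for $\partial_zf$. Weighted AM--GM then bounds each such term by the right-hand side of \eqref{eq:tame-zero-wave-high}.

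\emph{Step 2: the endpoint $\ell=m$.} Here every derivative falls on $b$, and the flat estimate would require $\|b\|_{H^{m+1}}$, which is not available. I expect this to be the main obstacle. To avoid it, use the coefficient-side estimate \eqref{eq:iterated-Hilbert-Sobolev} instead:
\begin{equation*}
 \|\partial^\beta b\,f\|_{L^2}\leq\|\partial^\beta b\|_{L^2}\sup_{y,z}\|f(\cdot,y,z)\|_{L^2_x}
 \leq C\|b\|_{H^m}\bigl(\|f\|_{H^1}+\|\partial_zf\|_{H^1}\bigr).
\end{equation*}
This is exactly the second term in \eqref{eq:tame-zero-wave-high}. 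The required $\sup_{y,z}$ bound is obtained by applying the one-dimensional Sobolev embedding successively in $y$ and in $z$, using that $\partial_z$ can act on $f$ in $H^1$.

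\emph{Step 3: summation.} Summing the finitely many Leibniz terms over $|\alpha|\leq m$ proves \eqref{eq:tame-zero-wave-high}. The constant depends only on $m\in\{3,4\}$. No mean-zero condition on $f$ is used anywhere, which is consistent with the statement.
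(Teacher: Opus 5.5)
Your proof is correct and is essentially the paper's argument: Leibniz expansion, the flat anisotropic product bound, interpolation with exponent $\ell/(m-1)$, and weighted AM--GM. The only difference is bookkeeping. The paper writes $\|G\|_{H^m}\leq C_m(\|G\|_{L^2}+\sum_j\|\nabla\partial_j^{m-1}G\|_{L^2})$ and applies the $H^1$ product bound \eqref{eq:tame-base-H1} after expanding $\partial_j^{m-1}(bf)$, so only $0\leq\ell\leq m-1$ occurs; when the outer gradient falls on $b$ inside \eqref{eq:tame-base-H1}, that is exactly your Step~2 estimate via \eqref{eq:iterated-Hilbert-Sobolev}.
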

	
	\begin{proof}
The flat versions of \eqref{eq:anis-prod-L2} and \eqref{eq:anis-prod-H1}
hold without a mean-zero assumption, as proved above. In particular,
\begin{equation}
 \|\nabla(bF)\|_{L^2}\leq C\|b\|_{H^1}
       (\|F\|_{H^1}+\|\partial_zF\|_{H^1}).
 \label{eq:tame-base-H1}
\end{equation}
Use $\|G\|_{H^m}\leq C_m(\|G\|_{L^2}+
\sum_{j\in\{x,y,z\}}\|\nabla\partial_j^{m-1}G\|_{L^2})$ and expand by Leibniz's rule.
The zeroth-order term is controlled by the flat $L^2$ product estimate.
Since $\partial_xb=0$, each differentiated term is bounded by
\begin{equation}
 C\|b\|_{H^{\ell+1}}
 (\|f\|_{H^{m-\ell}}+\|\partial_zf\|_{H^{m-\ell}}),
 \qquad 0\leq\ell\leq m-1.
 \label{eq:tame-intermediate-term}
\end{equation}
Interpolation with exponent $\ell/(m-1)$ bounds this product by
\begin{equation*}
 C\bigl[\|b\|_{H^1}(\|f\|_{H^m}+\|\partial_zf\|_{H^m})\bigr]^{1-\ell/(m-1)}
  \bigl[\|b\|_{H^m}(\|f\|_{H^1}+\|\partial_zf\|_{H^1})\bigr]^{\ell/(m-1)}.
\end{equation*}
Young's inequality and summation give \eqref{eq:tame-zero-wave-high}.
\end{proof}
	
This interpolation bound balances the transient streak factor against the extra exponential weight in the wave norms.
	
	\begin{Lem}
		\label{lem:mnu-interpolation-revised}
		With the weight $m_\nu$ defined in Section~\ref{sec:full-system}, for $r=2,3$ and every sufficiently regular function $F$,
		\begin{equation}
			m_\nu(t)\|F(t)\|_{H^r}
			\leq C\nu\|F(t)\|_{H^{r+1}}
			+C\nu(1+t)^3\|F(t)\|_{H^{r-2}}.
			\label{eq:mnu-interpolation-revised}
		\end{equation}
		Moreover,
		\begin{equation}
			\sup_{t\geq0}\nu(1+t)^3\ee^{-\gamma\nu^{1/3}t}\leq C_\gamma.
			\label{eq:weight-gap-polynomial-revised}
		\end{equation}
	\end{Lem}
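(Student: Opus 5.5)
The plan is to prove \eqref{eq:mnu-interpolation-revised} by splitting into the two regimes $\nu(1+t)\geq1$ and $\nu(1+t)<1$. In the second regime $m_\nu(t)=\nu(1+t)$, and the key tool will be frequency splitting in the Fourier variable $\xi$, carried out on the full domain so that all Sobolev norms become weighted $L^2$ norms of $\widehat F$. Fix $t$ and write $\langle\xi\rangle$ for the inhomogeneous multiplier. We need a pointwise multiplier bound
\begin{equation*}
 m_\nu\langle\xi\rangle^{r}\leq C\nu\langle\xi\rangle^{r+1}+C\nu(1+t)^3\langle\xi\rangle^{r-2}.
\end{equation*}
Once this holds, squaring and integrating against $|\widehat F|^2$, then Plancherel, gives the estimate; the multipliers are exactly those of $H^{r+1}$ and $H^{r-2}$. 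Since $r\geq2$, the exponent $r-2$ is nonnegative and $H^{r-2}$ is a genuine Sobolev space.

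For the multiplier bound, first consider $\langle\xi\rangle\geq m_\nu/\nu$. Then $m_\nu\langle\xi\rangle^r\leq\nu\langle\xi\rangle^{r+1}$ immediately.

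Otherwise $\langle\xi\rangle< m_\nu/\nu\leq 1+t$, since $m_\nu\leq\nu(1+t)$. Then
\begin{equation*}
 m_\nu\langle\xi\rangle^r=m_\nu\langle\xi\rangle^2\langle\xi\rangle^{r-2}\leq\nu(1+t)\cdot(1+t)^2\langle\xi\rangle^{r-2}=\nu(1+t)^3\langle\xi\rangle^{r-2}.
\end{equation*}
This covers both regimes simultaneously: the only input is $m_\nu\leq\nu(1+t)$, so no case distinction on $m_\nu$ is needed. The constants do not depend on $r$, and the same argument applies to vector-valued $F$ componentwise.

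For \eqref{eq:weight-gap-polynomial-revised}, substitute $s=\nu^{1/3}t$. Then
\begin{equation*}
 \nu(1+t)^3=\bigl(\nu^{1/3}+\nu^{1/3}t\bigr)^3\leq(1+s)^3,
\end{equation*}
using $\nu\leq1$. Hence the supremum is bounded by $\sup_{s\geq0}(1+s)^3\ee^{-\gamma s}=C_\gamma<\infty$.

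I do not foresee a real obstacle; the only point requiring care is choosing the threshold $m_\nu/\nu$ rather than a $t$-dependent one. This choice makes the low-frequency region's cube factor come out exactly as $(1+t)^2$ times $m_\nu$.
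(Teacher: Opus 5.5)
Your argument is correct, but it reaches \eqref{eq:mnu-interpolation-revised} by a different route from the paper.

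The paper does not split frequencies. It applies the Sobolev interpolation inequality $\|F\|_{H^r}\le C\|F\|_{H^{r+1}}^{2/3}\|F\|_{H^{r-2}}^{1/3}$. It then uses Young's inequality with exponents $3/2$ and $3$ in the form $m_\nu\|F\|_{H^{r+1}}^{2/3}\|F\|_{H^{r-2}}^{1/3}=(\nu\|F\|_{H^{r+1}})^{2/3}(m_\nu^3\nu^{-2}\|F\|_{H^{r-2}})^{1/3}$. This produces the coefficient $m_\nu^3\nu^{-2}$, and only at the end does the paper use $m_\nu\le\nu(1+t)$ to bound it by $\nu(1+t)^3$.

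Your pointwise multiplier bound with threshold $\langle\xi\rangle=m_\nu/\nu$ is the Fourier-side counterpart of that Young step. In your low-frequency case you could keep $m_\nu\langle\xi\rangle^2<m_\nu^3/\nu^2$ and recover exactly the paper's intermediate coefficient before relaxing it.

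Each route has its advantage:
\begin{itemize}
\item Your version needs no interpolation lemma. Since in each case a single term dominates, Minkowski in $L^2_\xi$ (sum over $k$, integral over $(\eta,\ell)$) gives the inequality with an explicit absolute constant for the Fourier-defined norms. It is also the same splitting the paper uses in \eqref{eq:spanwise-frequency-splitting}.
\item The paper's version is shorter and relies only on the interpolation property of the $H^s$ scale, not on an explicit Plancherel computation on $\T\times\R^2$.
\end{itemize}

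Your proof of \eqref{eq:weight-gap-polynomial-revised}, via $s=\nu^{1/3}t$ and $\nu\le1$, coincides with the paper's.
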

	
	\begin{proof}
For the shifted and capped transient factor, Sobolev interpolation
and Young's inequality give
\begin{equation*}
 m_\nu\|F\|_{H^r}
 \leq C m_\nu\|F\|_{H^{r+1}}^{2/3}\|F\|_{H^{r-2}}^{1/3}
 \leq C\nu\|F\|_{H^{r+1}}
 +C m_\nu^3\nu^{-2}\|F\|_{H^{r-2}}.
\end{equation*}
Since $m_\nu\leq\nu(1+t)$, the last coefficient is at most
$\nu(1+t)^3$. With $s=\nu^{1/3}t$, the coefficient in
\eqref{eq:weight-gap-polynomial-revised} equals
$(\nu^{1/3}+s)^3\ee^{-\gamma s}$, which is uniformly bounded.
\end{proof}
	
	\section{Local $H^2$ theory, smoothing, continuation, and approximation}
	\label{app:local-H4}
This appendix provides the local solution theory and the passage from smooth a priori estimates to the global solution in the main theorem.
	
	\subsection{Local theory, smoothing, and continuation}
\label{app:restart-proof}

We give the proof of the local $H^2$ construction and positive-time smoothing.
	
	\begin{Prop}
		\label{prop:H2-short-time-final}
		There exist absolute constants $c_{\rm loc},C>0$ such that, if $u_{\rm in}\in H^2(\Omega)$ is divergence free and
		\[
		\|u_{\rm in}\|_{H^2}\le c_{\rm loc}\nu,
		\]
		then \eqref{eq:pert} has a unique solution on $[0,2]$ satisfying
		\begin{equation}
			\sup_{0\le t\le2}\|u(t)\|_{H^2}^2+\nu\int_{0}^2\|u(t)\|_{H^3}^2\,dt\le C\|u_{\rm in}\|_{H^2}^2.
			\label{eq:H2-local-energy-final}
		\end{equation}
		For $m=3,4$,
		\begin{equation}
			\sup_{0<t\le2}(\nu t)^{m-2}\|u(t)\|_{H^m}^2
			+\nu\int_{0}^2(\nu t)^{m-2}\|u(t)\|_{H^{m+1}}^2\,dt
			\le C\|u_{\rm in}\|_{H^2}^2.
			\label{eq:H2-weighted-smoothing-final}
		\end{equation}
		In particular,
		\begin{equation}
			\nu\sup_{1\le t\le2}\|u(t)\|_{H^4}\le C\|u_{\rm in}\|_{H^2}.
			\label{eq:H2-to-H4-final}
		\end{equation}
		The streak component also satisfies
		\begin{equation}
			\sup_{1\le t\le2}\|\partial_t u^1_{0}(t)\|_{H^2}\le C\|u_{\rm in}\|_{H^2}.
			\label{eq:H2-at-final}
		\end{equation}
		Consequently $u(1)\in H^4$, so the $H^4$ continuation theory below applies from physical time one onward.
	\end{Prop}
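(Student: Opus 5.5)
Existence and uniqueness come from a standard contraction argument for the mild formulation of \eqref{eq:pert}. The linear part $L_0u+u^2e_1$ generates a semigroup that is bounded on $H^2(\Omega)$ over $[0,2]$ with an absolute constant, since $t\leq2$ and the transport/shear growth is at most polynomial in $t$. The parabolic gain is $\nu^{1/2}\|\nabla u\|_{L^2_tH^2}$. The nonlinearity $\mathbb P(u\cdot\nabla u)$ is estimated in $L^2_tH^1$ by $\|u\|_{L^\infty_tH^2}\|u\|_{L^2_tH^3}$ in dimension three. Closing therefore requires $\|u_{\rm in}\|_{H^2}\ll\nu$, which is exactly the hypothesis.

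\textbf{Energy estimate \eqref{eq:H2-local-energy-final}.} I would derive it directly as an a priori bound and justify it by the usual approximation. Apply $\partial^\alpha$ with $|\alpha|\leq2$ and test against $\partial^\alpha u$. The term $y\partial_x$ is skew. The commutator $[\partial^\alpha,y\partial_x]=\alpha_y\partial^{\alpha-e_y}\partial_x$ and the lift-up term $u^2e_1$ are bounded by $C\|u\|_{H^2}^2$, and Gronwall over $[0,2]$ absorbs them. The nonlinear term is handled by $|\langle\partial^\alpha(u\cdot\nabla u),\partial^\alpha u\rangle|\lesssim\|u\|_{H^2}\|u\|_{H^2}\|u\|_{H^3}\leq\frac\nu4\|u\|_{H^3}^2+C\nu^{-1}\|u\|_{H^2}^4$. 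Here the low-order $L^2$ part of $\|u\|_{H^3}$ is taken from the energy itself. Under a bootstrap $\|u\|_{H^2}\leq 2C\|u_{\rm in}\|_{H^2}\leq 2Cc_{\rm loc}\nu$, the last term is at most $Cc_{\rm loc}^2\nu\|u\|_{H^2}^2$, which is Gronwall-harmless. This gives \eqref{eq:H2-local-energy-final}, and continuation on $[0,2]$ follows.

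\textbf{Weighted smoothing \eqref{eq:H2-weighted-smoothing-final}.} I would use time weights $(\nu t)^{m-2}$ at the levels $m=3,4$ and induct. At level $m$, testing $\partial^\alpha$ with $|\alpha|=m$ produces three kinds of terms.
\begin{itemize}
\item The weight-derivative term $(m-2)\nu(\nu t)^{m-3}\|u\|_{H^m}^2$. This is controlled by the dissipation $\nu\int(\nu t)^{m-3}\|u\|_{H^m}^2$ from the previous level.
\item The linear commutators, which are bounded by $C(\nu t)^{m-2}\|u\|_{H^m}^2$ and handled by Gronwall.
\item The nonlinear term, bounded by $\|u\|_{H^2}\|u\|_{H^m}\|u\|_{H^{m+1}}$ via tame estimates. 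Here $\|u\|_{H^2}\lesssim c_{\rm loc}\nu$ allows absorption into $\frac\nu2(\nu t)^{m-2}\|u\|_{H^{m+1}}^2$.
\end{itemize}
The level $m=2$ is the unweighted estimate above, so the induction starts there. Evaluating \eqref{eq:H2-weighted-smoothing-final} at $m=4$ on $t\in[1,2]$, where $(\nu t)^2\geq\nu^2$, gives $\nu^2\|u\|_{H^4}^2\lesssim\|u_{\rm in}\|_{H^2}^2$, which is \eqref{eq:H2-to-H4-final}. Uniqueness is an $L^2$ difference estimate, with Gronwall using $\nabla u\in L^2_tL^\infty$, which follows from the $L^2_tH^3$ bound.

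\textbf{Streak time derivative \eqref{eq:H2-at-final}.} I expect this to be the main obstacle, because a naive bound from the equation gives only $\nu\|u^1_0\|_{H^4}+\dots\lesssim\|u_{\rm in}\|_{H^2}$. Reading off \eqref{eq:a-Na-revised},
\[
\|\partial_tu^1_0\|_{H^2}\leq\nu\|u^1_0\|_{H^4}+\|u^2_0\|_{H^2}+\|u^\perp_0\cdot\nabla_\perp u^1_0\|_{H^2}+\|(u_{\neq}\cdot\nabla u^1_{\neq})_0\|_{H^2}.
\]
The four terms are handled as follows.
\begin{itemize}
\item The first is $\leq C\|u_{\rm in}\|_{H^2}$ by \eqref{eq:H2-to-H4-final}. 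The factor $\nu$ is exactly what cancels the $H^4$ loss.
\item The second is bounded by \eqref{eq:H2-local-energy-final}.
\item The third, by the algebra property and \Cref{lem:average-products}, is at most $C\|u\|_{H^3}\|u\|_{H^3}$. I would rather use $C\|u\|_{H^2}\|u\|_{H^4}\leq Cc_{\rm loc}\nu\cdot\nu^{-1}\|u_{\rm in}\|_{H^2}$ for $t\in[1,2]$.
\item The fourth obeys the same bound by \eqref{eq:avg-HN} combined with one derivative.
\end{itemize}
The subtle point is checking that each quadratic term carries one $H^2$ factor, which supplies $\nu$, against one $H^4$ factor, which costs $\nu^{-1}$, rather than two $H^4$ factors. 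Finally, $u(1)\in H^4$ follows from \eqref{eq:H2-to-H4-final}.
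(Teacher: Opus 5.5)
Your proposal is correct and follows essentially the same route as the paper. The shared steps are: an $H^2$ energy estimate closed by Young's inequality and the smallness $\|u_{\rm in}\|_{H^2}\le c_{\rm loc}\nu$; $(\nu t)^{m-2}$-weighted estimates at $m=3,4$, whose weight-derivative terms are paid for by the dissipation of the previous level; evaluation on $[1,2]$; and reading off $\partial_t u^1_0$ from the streak equation, with each quadratic term split as one $H^2$ factor against one $H^4$ factor.

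The differences are technical. The paper works in Kelvin variables $X=x-ty$ with a Friedrichs truncation. This removes the unbounded coefficient $y$, so the cutoff commutes with the linear part and the energy identities are rigorous for $H^2$ data. You instead stay in physical variables, handle the commutator $[\partial^\alpha,y\partial_x]$ directly, and get existence from a mild-solution contraction. You also absorb the nonlinear smoothing terms directly into the dissipation, where the paper carries them in a Gronwall factor.
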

	
	\begin{proof}
		On $0\le t\le2$, pass to Kelvin variables $X=x-ty$ and $U(t,X,y,z)=u(t,X+ty,y,z)$. The equation has uniformly elliptic viscous operator $\Delta_L$ and the standard transport cancellation. Friedrichs regularization gives smooth divergence-free approximate solutions. At the $H^2$ level, the tame product estimate and Young's inequality give
		\begin{equation}
			\frac{d}{dt}\|U\|_{H^2}^2+c\nu\|U\|_{H^3}^2
			\le C\|U\|_{H^2}^2+C\nu^{-1}\|U\|_{H^2}^4.
			\label{eq:H2-energy-proof-final}
		\end{equation}
		Indeed, the nonlinear commutators are bounded by $C\|U\|_{H^2}^2\|U\|_{H^3}$, which is absorbed into the dissipation up to the last term in \eqref{eq:H2-energy-proof-final}. A continuity argument under $\|U(0)\|_{H^2}\le c_{\rm loc}\nu$ yields \eqref{eq:H2-local-energy-final}.
		
		We next prove the weighted smoothing estimate without using any positive-time Sobolev norm as an initial datum. For $m=3,4$, the differentiated regularized equation and the same transport cancellation give
		\begin{equation}
			\frac{d}{dt}\|U\|_{H^m}^2+c\nu\|U\|_{H^{m+1}}^2
			\leq C_m\left(1+\nu^{-1}\|U(t)\|_{H^2}^2+\|U(t)\|_{H^3}\right)\|U\|_{H^m}^2,
			\label{eq:H2-smoothing-unweighted-m}
		\end{equation}
		where the nonlinear commutators obey
		\begin{equation*}
			|\mathcal N_m|\le \frac{c\nu}{4}\|U\|_{H^{m+1}}^2+C_m\nu^{-1}\|U\|_{H^2}^2\|U\|_{H^m}^2+C_m\|U\|_{H^3}\|U\|_{H^m}^2,
		\end{equation*}
		
		The local $H^2$ bound gives
		\begin{equation}
			\int_{0}^2C_m\left(1+\nu^{-1}\|U(t)\|_{H^2}^2+\|U(t)\|_{H^3}\right)\,dt\leq C_m,
			\qquad
			\nu\int_{0}^2\|U(t)\|_{H^3}^2\,dt\leq C\|u_{\rm in}\|_{H^2}^2,
			\label{eq:H2-smoothing-K-integrable}
		\end{equation}
		where we used $\|u_{\rm in}\|_{H^2}\leq c_{\rm loc}\nu$ and Cauchy--Schwarz for the $L_t^1H^3$ term.
		
		For $m=3$, multiply \eqref{eq:H2-smoothing-unweighted-m} by $\nu t$. Since $\partial_t(\nu t)=\nu$,
		\begin{equation*}
			\frac{d}{dt}\left[(\nu t)\|U\|_{H^3}^2\right]
			+c\nu(\nu t)\|U\|_{H^4}^2
			\leq \nu\|U\|_{H^3}^2+C_3\left(1+\nu^{-1}\|U(t)\|_{H^2}^2+\|U(t)\|_{H^3}\right)(\nu t)\|U\|_{H^3}^2.
		\end{equation*}
		The first term on the right is exactly controlled by the $H^2$ dissipation in \eqref{eq:H2-local-energy-final}. Since the weighted quantity vanishes at $t=0$, Gronwall's inequality and \eqref{eq:H2-smoothing-K-integrable} yield
		\begin{equation}
			\sup_{0<t\leq2}(\nu t)\|U(t)\|_{H^3}^2
			+\nu\int_{0}^2(\nu t)\|U(t)\|_{H^4}^2\,dt
			\leq C\|u_{\rm in}\|_{H^2}^2.
			\label{eq:H2-smoothing-m3-closed}
		\end{equation}
		
		For $m=4$, multiply \eqref{eq:H2-smoothing-unweighted-m} by $(\nu t)^2$. Since $\partial_t(\nu t)^2=2\nu(\nu t)$,
		\begin{equation}
			\frac{d}{dt}\left[(\nu t)^2\|U\|_{H^4}^2\right]
			+c\nu(\nu t)^2\|U\|_{H^5}^2
			\leq 2\nu(\nu t)\|U\|_{H^4}^2+C_4\left(1+\nu^{-1}\|U(t)\|_{H^2}^2+\|U(t)\|_{H^3}\right)(\nu t)^2\|U\|_{H^4}^2.
			\label{eq:H2-smoothing-m4-weighted}
		\end{equation}
		The first term on the right is controlled by the weighted $H^3$ dissipation in \eqref{eq:H2-smoothing-m3-closed}. A second application of Gronwall therefore gives
		\begin{equation}
			\sup_{0<t\leq2}(\nu t)^2\|U(t)\|_{H^4}^2
			+\nu\int_{0}^2(\nu t)^2\|U(t)\|_{H^5}^2\,dt
			\leq C\|u_{\rm in}\|_{H^2}^2.
			\label{eq:H2-smoothing-m4-closed}
		\end{equation}
		Equations \eqref{eq:H2-smoothing-m3-closed}--\eqref{eq:H2-smoothing-m4-closed} prove \eqref{eq:H2-weighted-smoothing-final}. In particular, \eqref{eq:H2-to-H4-final} follows by restricting \eqref{eq:H2-smoothing-m4-closed} to $1\leq t\leq2$. Moreover, for every $\tau>0$ the same estimates give $U\in L^\infty(\tau,2;H^4)\cap L^2(\tau,2;H^5)$, while the equation gives $\partial_tU\in L^2(\tau,2;H^3)$. The Hilbert-triple continuity theorem therefore yields
		\begin{equation}
			U\in C([\tau,2];H^4),\qquad \tau>0.
			\label{eq:H2-positive-time-H4-continuity}
		\end{equation}
		Thus the value $u(1)\in H^4$ is well defined in the strong topology.
		
		Finally, the zero-mode streak equation is
		\[
		\partial_t u^1_{0}=\nu\Delta_\perp u^1_{0}-u^2_{0}-\nabla_\perp\cdot(u^\perp_{0}u^1_{0}+(u^\perp_{\neq}u^1_{\neq})_{0}).
		\]
		For $1\le t\le2$, the preceding $H^2$ and $H^4$ estimates and the two-dimensional tame products yield
		\[
		\|\partial_t u^1_{0}\|_{H^2}\le \nu\|u^1_{0}\|_{H^4}+\|u^\perp_{0}\|_{H^2}+C\|u\|_{H^2}\|u\|_{H^4}\le C\|u_{\rm in}\|_{H^2},
		\]
		provided $c_{\rm loc}$ is sufficiently small. This proves \eqref{eq:H2-at-final}. Compactness of the Friedrichs approximations gives the asserted solution and uniqueness at the $H^2$ level.
	\end{proof}
	
	\begin{Lem}
		\label{lem:short-besov-final}
		Under the assumptions of \Cref{prop:H2-short-time-final}, assume $u^\perp_{0,\rm in}\in\dot B^{-2}_{2,\infty}(\mathbb R^2)$. Then
		\begin{equation}
			\sup_{0\le t\le1}\|u^\perp_{0}(t)\|_{\dot B^{-2}_{2,\infty}}
			\le \|u^\perp_{0,\rm in}\|_{\dot B^{-2}_{2,\infty}}+C\|u_{\rm in}\|_{H^2}^2.
			\label{eq:short-besov-final}
		\end{equation}
	\end{Lem}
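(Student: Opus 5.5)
The plan is to apply \Cref{lem:B-prop} on the short interval $[0,1]$ and then bound both quadratic time integrals by the local $H^2$ energy estimate \eqref{eq:H2-local-energy-final}. The Duhamel representation of the roll equation \eqref{eq:forced-roll} holds for the short-time solution, after a routine approximation by the Friedrichs-regularized smooth solutions and a limit passage. For the Besov limit we use that the norm is lower semicontinuous under $L^2$ convergence, blockwise. \Cref{lem:B-prop} with $T=1$ then gives
\[
\sup_{0\le t\le1}\|u^\perp_{0}(t)\|_{\dot B^{-2}_{2,\infty}}
\le\|u^\perp_{0,\rm in}\|_{\dot B^{-2}_{2,\infty}}
+C\int_0^1\|u^\perp_{0}\|_{L^2}^2\,dt
+C\int_0^1\|(u^\perp_{\neq}\otimes u^\perp_{\neq})_{0}\|_{L^1}\,dt .
\]
The semigroup is a contraction on each dyadic block, and \Cref{lem:div-B} handles the divergence forcing.

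For the roll self-interaction, use $\|u^\perp_{0}(t)\|_{L^2}\le\|u(t)\|_{L^2(\Omega)}$, which holds because $|\T|=1$ and averaging is an $L^2$ contraction. Then \eqref{eq:H2-local-energy-final} gives $\int_0^1\|u^\perp_0\|_{L^2}^2\,dt\le\sup_{t\le2}\|u(t)\|_{H^2}^2\le C\|u_{\rm in}\|_{H^2}^2$. For the wave stress, \eqref{eq:avg-L1} gives $\|(u^\perp_{\neq}\otimes u^\perp_{\neq})_0\|_{L^1}\le\|u_{\neq}\|_{L^2}^2\le\|u\|_{H^2}^2$, and integrating over $[0,1]$ yields the same bound. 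Adding these proves \eqref{eq:short-besov-final}. No time integrability beyond the interval length is needed, and the constant does not depend on $\nu$.

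The only delicate point is justifying the Duhamel identity in $\dot B^{-2}_{2,\infty}$ for the $H^2$ solution, since the forcing $\PP\nablaP\cdot F$ with $F\in L^1$ is only distributional. I would argue at the level of each frequency block $\dot\Delta_j$, where everything lies in $L^2$ and the mild formulation holds classically. I would apply the bounds blockwise, as in the proof of \Cref{lem:div-B}, and then take the supremum over $j$. This avoids any question of which homogeneous realization is meant, because $u^\perp_0(t)\in L^2$ fixes the representative.
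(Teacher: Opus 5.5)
Your proposal is correct and follows the paper's proof: you apply the Duhamel formula with \Cref{lem:div-B} (that is, \Cref{lem:B-prop} on $[0,1]$), then bound $\int_0^1\|u^\perp_0\|_{L^2}^2\,dt$ and $\int_0^1\|(u^\perp_{\neq}\otimes u^\perp_{\neq})_0\|_{L^1}\,dt\le\int_0^1\|u_{\neq}\|_{L^2}^2\,dt$ by the local energy estimate \eqref{eq:H2-local-energy-final}. Your blockwise justification of the mild formulation for the $H^2$ solution is a sound extra detail that the paper leaves implicit.
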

	
	\begin{proof}
		The exact roll equation is
		\[
		\partial_tu^\perp_{0}-\nu\Delta_\perp u^\perp_{0}=-P_\perp\nabla_\perp\cdot\bigl(u^\perp_{0}\otimes u^\perp_{0}+(u^\perp_{\neq}\otimes u^\perp_{\neq})_{0}\bigr).
		\]
		The Duhamel formula, boundedness of the heat semigroup on $\dot B^{-2}_{2,\infty}$, and the projected-divergence endpoint estimate from \Cref{sec:prelim} give
		\[
		\|u^\perp_{0}(t)\|_{\dot B^{-2}_{2,\infty}}\le \|u^\perp_{0,\rm in}\|_{\dot B^{-2}_{2,\infty}}+C\int_{0}^t\bigl(\|u^\perp_{0}(s)\|_{L^2}^2+\|\bigl(u^\perp_{\neq}(s)\otimes u^\perp_{\neq}(s)\bigr)_{0}\|_{L^1}\bigr)\,ds.
		\]
		Since $\|\bigl(u^\perp_{\neq}(s)\otimes u^\perp_{\neq}(s)\bigr)_{0}\|_{L^1}\le\|u_{\neq}(s)\|_{L^2}^2$, \eqref{eq:H2-local-energy-final} proves \eqref{eq:short-besov-final}.
	\end{proof}

The short-time estimates above provide the data required by the framework in Section~\ref{sec:full-system}. We check the good-unknown initial quantities explicitly.
\begin{proof}[Proof of \Cref{lem:H2-restart-initialization-final}]
		Proposition~\ref{prop:H2-short-time-final} gives \eqref{eq:H2-restart-smoothing-global}. Lemma~\ref{lem:short-besov-final} gives \eqref{eq:H2-restart-besov-global}, since $\varepsilon\leq c_{\rm r}\nu\leq c_{\rm r}$. They give the first two terms in \eqref{eq:H2-restart-sizes-final}; the $H^4$ bound follows from $\nu\|u(1)\|_{H^4}\leq C\varepsilon$. Equation \eqref{eq:H2-at-final} gives \eqref{eq:H2-restart-at-final}.
		
		It remains to verify the auxiliary low-order initial quantities. At physical time one, the shear coefficient is $\partial_z u^1_{0}(1)/(1+\partial_y u^1_{0}(1))$.
		For $c_{\rm r}$ sufficiently small, \eqref{eq:H2-restart-sizes-final} gives $\|\nabla_\perp u^1_{0}(1)\|_{L^\infty}\leq1/4$. The two-dimensional Moser estimate therefore yields
		\begin{equation*}
			\|\frac{\partial_zu^1_{0}(1)}{1+\partial_yu^1_{0}(1)}\|_{H^3(\mathbb R^2)}\leq C\|u^1_{0}(1)\|_{H^4}\leq C\frac{\varepsilon}{\nu}.
		\end{equation*}
		The mixed-dimensional Sobolev product estimates then give
		\begin{equation}
\begin{aligned}
&
			\|\Delta \left(u^2_{\neq}(1)+\frac{\partial_z u^1_{0}(1)}{1+\partial_yu^1_{0}(1)}u^3_{\neq}(1)\right)\|_{L^2}
			\\&\quad+\sum_{j=2}^3\left(\|\partial_x^2u^j_{\neq}(1)\|_{L^2}+\|\partial_x\left(\partial_z-\frac{\partial_z u^1_{0}(1)}{1+\partial_y u^1_{0}(1)}\partial_y\right)u^j_{\neq}(1)\|_{L^2}\right)
			\\&\quad+\|\partial_x\nabla \left(u^2_{\neq}(1)+\frac{\partial_z u^1_{0}(1)}{1+\partial_yu^1_{0}(1)}u^3_{\neq}(1)\right)\|_{L^2}
			\\&\leq C\|u_{\neq}(1)\|_{H^2}
			\leq C\varepsilon.
\end{aligned}
\label{eq:H2-restart-good-initial}
\end{equation}
		The restarted singular correctors satisfy $q^{(2)}(0)=q^{(3)}(0)=0$. Thus the complementary scalar starts from $\Delta u^3_{\neq}(1)$, and
		\begin{equation}
			\|\Delta u^3_{\neq}(1)\|_{L^2}\leq C\varepsilon.
			\label{eq:H2-restart-r-initial}
		\end{equation}
		Thus \eqref{eq:restarted-low-data-standing} follows from \eqref{eq:H2-restart-sizes-final}, \eqref{eq:H2-restart-good-initial}, and \eqref{eq:H2-restart-r-initial}, while \eqref{eq:restarted-high-data-standing} follows from \eqref{eq:H2-restart-sizes-final}--\eqref{eq:H2-restart-at-final}.
		
		Finally, the definitions of $E_{4}$ and $E_{6}$ together with \eqref{eq:H2-restart-good-initial} show that their initial values are $O(\varepsilon)$. The $H^2$ and Besov bounds for $u^\perp_{0}(1)$ control every initial component of $E_{2}$, because $\mu_\nu(0)\leq1$. The high-order bounds in \eqref{eq:H2-restart-sizes-final} control $E_{5}(0)$ and $E_{3}(0)$, while \eqref{eq:H2-restart-sizes-final}--\eqref{eq:H2-restart-at-final} control $E_{1}(0)$. The spacetime dissipative pieces vanish at terminal time zero. This proves \eqref{eq:H2-energy-initialization-final}.
	\end{proof}
	
After the restart, the energy bounds are used through the following local $H^4$ theory and continuation criterion.
	
	\begin{Def}
		\label{def:H4-strong-solution}
		Let $T>0$. A pair $(u,p)$ is an $H^4$ strong solution of \eqref{eq:pert} on $[0,T]$ if, after the Kelvin change of variables
		\begin{equation}
			X=x-ty,
			\qquad
			U(t,X,y,z)=u(t,X+ty,y,z),
			\qquad
			P(t,X,y,z)=p(t,X+ty,y,z),
			\label{eq:strong-solution-Kelvin}
		\end{equation}
		one has
		\begin{equation*}
			U\in C([0,T];H^4(\Omega))\cap L^2(0,T;H^5(\Omega)),
			\qquad
			\partial_tU\in L^2(0,T;H^3(\Omega)),
		\end{equation*}
		\begin{equation*}
			\nabla_LP\in L^2(0,T;H^3(\Omega)),
			\qquad
			\nabla_L\cdot U=0,
		\end{equation*}
		and the Kelvin system \eqref{eq:Kelvin-local-system} holds almost everywhere. The pressure is understood modulo an additive function of time. On every compact time interval the Kelvin and physical Sobolev norms are equivalent, so the definition is unchanged if the Kelvin clock is reset at an arbitrary reference time.
	\end{Def}
	
	\begin{Lem}
		\label{lem:local-continuation-final}
		For every divergence-free $u_{\rm in}\in H^4(\Omega)$ and every $0<\nu\leq1$, equation \eqref{eq:pert} has a unique $H^4$ strong solution on a maximal interval $[0,T_{\max})$. The local lifespan after a restart at time $t_{0}<T_{\max}$ has a positive lower bound depending only on $\nu$ and $\|u(t_{0})\|_{H^4}$, and not on $t_{0}$. In particular,
		\begin{equation}
			T_{\max}<\infty
			\quad\Longrightarrow\quad
			\limsup_{t\uparrow T_{\max}}\|u(t)\|_{H^4}=\infty.
			\label{eq:H4-blowup-alternative}
		\end{equation}
		If $T_{\max}<\infty$ and
		\begin{equation}
			\sup_{0<T<T_{\max}}\bigl(\sum_{j=1}^6 E_{j}(T)\bigr)<\infty,
			\label{eq:continuation-uniform-revised}
		\end{equation}
		then the solution extends beyond $T_{\max}$. Consequently, any a priori argument which keeps these six quantities uniformly bounded forces $T_{\max}=\infty$.
	\end{Lem}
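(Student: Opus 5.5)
The plan is to run the standard Kato-type local theory in Kelvin variables \eqref{eq:strong-solution-Kelvin}, where the transport $y\px$ disappears and the viscous operator becomes $\nu\Delta_L$.

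\emph{Local existence.} On any bounded time interval $[t_0,t_0+1]$ (with the Kelvin clock reset at $t_0$), $\Delta_L$ is uniformly elliptic with constants independent of $t_0$. The lift-up term $U^2e_1$ is linear and bounded. The pressure is recovered from $\Delta_LP=-2\pa_XU^2-\pa_iU^j\pa_jU^i$, so the $\nabla_LP$ contribution is bounded in $H^3$ by $\|U\|_{H^4}+\|U\|_{H^4}^2$.

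\emph{A priori $H^4$ estimate.} For Friedrichs-mollified approximants, the $H^4$ energy inequality reads
\begin{equation*}
\frac{d}{dt}\|U\|_{H^4}^2+c\nu\|U\|_{H^5}^2\le C\|U\|_{H^4}^2+C\nu^{-1}\|U\|_{H^4}^4 .
\end{equation*}
Here the transport cancellation and the Kato--Ponce commutator bound $C\|U\|_{H^4}^2\|U\|_{H^5}$ are used, and the latter is absorbed into the dissipation. This inequality gives a lifespan $\tau(\nu,\|u(t_0)\|_{H^4})>0$ independent of $t_0$.

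\emph{Passage to the limit and uniqueness.} Compactness yields the regularity class of Definition~\ref{def:H4-strong-solution}. $\partial_tU\in L^2H^3$ is read off from the equation, and the Lions--Magenes lemma gives $U\in C([0,T];H^4)$. Uniqueness follows from an $L^2$ difference estimate with Gronwall, using $\nabla U\in L^\infty$. Gluing local solutions gives a maximal interval $[0,T_{\max})$. If $\|u(t)\|_{H^4}$ stayed bounded by $K$ near a finite $T_{\max}$, restarting at $T_{\max}-\tau(\nu,K)/2$ would extend the solution; this contradiction gives \eqref{eq:H4-blowup-alternative}.

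\emph{Continuation criterion.} The remaining point is to show that \eqref{eq:continuation-uniform-revised} bounds $\sup_{t<T_{\max}}\|u(t)\|_{H^4}$, and I expect this to be the main obstacle. The physical $H^4$ norm has three pieces:
\begin{itemize}
\item the streak $u^1_0$ is controlled in $H^4$ by $E_1$;
\item the roll $u^\perp_0$ is controlled in $H^4$ by $E_3$;
\item for $u_{\neq}$, $E_5$ controls $(\px,\pz)u_{\neq}$ in $H^3$, and the $x$-Poincar\'e inequality gives $\|u_{\neq}\|_{H^3}$. Pure wall-normal derivatives such as $\py^4u_{\neq}$ are not obviously controlled.
\end{itemize}
For that last case I would argue by Fourier multipliers. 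Since $|k|\ge1$, one has $\langle\xi\rangle^4\le\langle\xi\rangle^3|k|\cdot\langle\xi\rangle$. So $\|u_{\neq}\|_{H^4}$ is controlled by $\|\px u_{\neq}\|_{H^4}$, which sits in the dissipative part $\nu^{1/2}\|\nabla\px u_{\neq}\|_{L^2_tH^3}$ of $E_5$, only in $L^2_t$. This gives $\int_0^{T_{\max}}\|u\|_{H^4}^2dt<\infty$, after also controlling $u_0$ pointwise.

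The plan is then to bootstrap once: because $T_{\max}$ is finite, an $L^2_tH^4$ bound makes the Gronwall factor in the $H^4$ inequality integrable. Concretely, redo the $H^4$ estimate with the commutator bounded by $C\|U\|_{W^{1,\infty}}\|U\|_{H^4}^2$. Then $\|U\|_{W^{1,\infty}}\lesssim\|U\|_{H^{3}}$ is uniformly bounded, since all six energies control $H^3$ (the wave part by $E_4$ and $E_5$). Gronwall over the finite interval then gives $\sup_{t<T_{\max}}\|u(t)\|_{H^4}<\infty$. Kelvin and physical norms are equivalent on the compact interval $[0,T_{\max}]$, and the alternative \eqref{eq:H4-blowup-alternative} yields the extension.
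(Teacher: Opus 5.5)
Your proposal is correct and follows essentially the paper's argument. It uses Kelvin-variable band-limited approximation with the clock reset to get a $t_0$-independent lifespan, then the blow-up alternative, and then Gronwall for the $H^4$ energy with coefficient $1+\|\nabla u\|_{L^\infty}\lesssim 1+E_1+E_3+E_5$ (it is $E_5$ alone, via $x$-Poincar\'e, that bounds $\|u_{\neq}\|_{H^3}$ pointwise, not $E_4$); the differences are cosmetic: the paper writes the final energy inequality in physical variables, using skew-adjointness of $y\partial_x$ and $[\partial^\alpha,y\partial_x]=\alpha_y\partial_x\partial^{\alpha-e_y}$, and your $L^2_tH^4$ detour through the dissipative part of $E_5$ is never used, since the pointwise $W^{1,\infty}$ bound already keeps the Gronwall factor bounded.
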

	
	\begin{proof}
In the Kelvin variables \eqref{eq:strong-solution-Kelvin}, the system is
\begin{equation}
 \partial_tU-\nu\Delta_LU+U^2e_1+U\cdot\nabla_LU+\nabla_LP=0,
 \qquad \nabla_L\cdot U=0,
 \label{eq:Kelvin-local-system}
\end{equation}
with
\begin{equation}
 \Delta_LP=-2\partial_XU^2-\partial_i^LU^j\partial_j^LU^i,
 \qquad \partial^L=(\partial_X,\partial_y-t\partial_X,\partial_z).
 \label{eq:Kelvin-pressure-local}
\end{equation}
On $0\leq t\leq1$, the Kelvin and ordinary Sobolev norms are uniformly
equivalent. The pressure operators $\nabla_L\Delta_L^{-1}\partial_X$
and $\nabla_L\Delta_L^{-1}\operatorname{div}_L$ are bounded order-zero
Fourier multipliers; their symbols at the origin are set to zero.

Let $\mathcal J_N$ be the orthogonal projector onto
$k^2+\eta^2+\ell^2\leq N^2$. On its band-limited Hilbert space, solve
\begin{equation}
 \partial_tU_N-\nu\Delta_LU_N+U_N^2e_1
 +\mathcal J_N(U_N\cdot\nabla_LU_N)+\nabla_LP_N=0,
 \qquad U_N(0)=\mathcal J_Nu_{\rm in},
 \label{eq:Friedrichs-Kelvin}
\end{equation}
where
\begin{equation}
 \Delta_LP_N=-2\partial_XU_N^2
 -\operatorname{div}_L\mathcal J_N(U_N\cdot\nabla_LU_N).
 \label{eq:Friedrichs-pressure}
\end{equation}
The right-hand side is locally Lipschitz in $H^4$ for fixed $N$,
so Picard's theorem applies. Since $\mathcal J_N$ commutes with $\nabla_L$
and $\partial_t(\nabla_L\cdot U_N)=\nabla_L\cdot\partial_tU_N-\partial_XU_N^2$,
the divergence solves the homogeneous heat equation with zero data.
Thus $\nabla_L\cdot U_N=0$ and $\mathcal J_NU_N=U_N$.

The pressure and principal transport terms cancel in the $H^4$ energy.
The Moser commutator estimate gives
\begin{equation*}
 \frac{d}{dt}\|U_N\|_{H^4}^2+\nu\|\nabla_LU_N\|_{H^4}^2
 \leq C(1+\|U_N\|_{H^4})\|U_N\|_{H^4}^2.
\end{equation*}
Consequently, for some $\tau_{0}=\tau_{0}(\|u_{\rm in}\|_{H^4})\in(0,1]$,
\begin{equation}
 \sup_N\left(\|U_N\|_{L^\infty(0,\tau_{0};H^4)}^2
       +\nu\|\nabla_LU_N\|_{L^2(0,\tau_{0};H^4)}^2\right)
 \leq C(\|u_{\rm in}\|_{H^4}).
 \label{eq:Kelvin-local-uniform}
\end{equation}

For $N\geq M$, the nonlinear difference splits into
\begin{align*}
 &\mathcal J_N\bigl((U_N-U_M)\cdot\nabla_LU_N
                   +U_M\cdot\nabla_L(U_N-U_M)\bigr)\\
 &\qquad +(\mathcal J_N-\mathcal J_M)(U_M\cdot\nabla_LU_M).
\end{align*}
Using $\mathcal J_N(U_N-U_M)=U_N-U_M$, transport cancellation,
and the $H^3$ product estimate, Gronwall gives
\begin{align}
 &\|U_N-U_M\|_{L^\infty(0,\tau_{0};H^3)}^2
 +\nu\|\nabla_L(U_N-U_M)\|_{L^2(0,\tau_{0};H^3)}^2\nonumber\\
 &\quad\leq C_{\nu,\tau_{0},u_{\rm in}}
 \Bigl(\|(\mathcal J_N-\mathcal J_M)u_{\rm in}\|_{H^3}^2
 +\|(\mathcal J_N-\mathcal J_M)(U_M\cdot\nabla_LU_M)\|_{L^2(0,\tau_{0};H^3)}^2\Bigr).
 \label{eq:Friedrichs-Cauchy}
\end{align}
The last norm is at most
$CM^{-1}\|U_M\|_{L^\infty H^4}\|U_M\|_{L^2H^5}$ and tends to zero.
Hence $U_N$ converges strongly in $C_tH^3\cap L_t^2H^4$, and
\begin{equation}
 U\in L^\infty(0,\tau_{0};H^4)\cap L^2(0,\tau_{0};H^5).
 \label{eq:Kelvin-limit-bounds}
\end{equation}
The product estimate implies convergence of the nonlinear term in
$L_t^2H^3$, and the bounded pressure multipliers give convergence
of the pressure gradients in the same space. This yields
\eqref{eq:Kelvin-local-system}--\eqref{eq:Kelvin-pressure-local} and
$\partial_tU\in L_t^2H^3$. The Hilbert triple
$H^5\hookrightarrow H^4\hookrightarrow H^3$ gives $U\in C_tH^4$.
The same difference estimate proves uniqueness and continuous dependence.

Restarting with $X=x-(t-t_{0})y$ gives a local lifespan depending only
on $\nu$ and $\|u(t_{0})\|_{H^4}$, proving \eqref{eq:H4-blowup-alternative}.
For the energy criterion, skew-adjointness of $y\partial_x$ and
$[\partial^\alpha,y\partial_x]=\alpha_y\partial_x\partial^{\alpha-e_y}$,
together with the Moser estimate, yield
\begin{equation}
 \frac{d}{dt}\|u(t)\|_{H^4}^2+\nu\|\nabla u(t)\|_{H^4}^2
 \leq C(1+\|\nabla u(t)\|_{L^\infty})\|u(t)\|_{H^4}^2.
 \label{eq:H4-continuation-revised}
\end{equation}
This energy inequality follows first for the regularized solutions with transverse
cutoffs and then by passage to the limit. The definitions and the
streamwise Poincar\'e inequality give, for $0\leq t\leq T<T_{\max}$,
\begin{equation*}
 \|\nabla u(t)\|_{L^\infty}
 \leq C\bigl(\|u^1_{0}(t)\|_{H^4}
             +\|u^\perp_{0}(t)\|_{H^4}+\|u_{\neq}(t)\|_{H^3}\bigr)
 \leq C(E_{1}(T)+E_{3}(T)+E_{5}(T)).
\end{equation*}
Under \eqref{eq:continuation-uniform-revised}, this is integrable up to
any finite $T_{\max}$. Gronwall in \eqref{eq:H4-continuation-revised}
contradicts \eqref{eq:H4-blowup-alternative}, proving continuation.
\end{proof}
	
	\subsection{Continuity and approximation}
The bootstrap requires continuity in the terminal time, including the endpoint Besov component. We verify this for the complete collection of functionals.
	
	\begin{Lem}
		\label{lem:bootstrap-continuity-final}
		Let $u$ be an $H^4$ strong solution on a compact interval $[0,T_{0}]\subset[0,T_{\max})$, with
$u^\perp_0(0)\in\dot B^{-2}_{2,\infty}(\mathbb R^2)$.
Assume that the shear coordinates used in the nonzero-mode module are admissible on this interval, in the sense that $\inf_{[0,T_{0}]\times\mathbb R^2}V_y>0$, and construct the auxiliary variables as above. Then
		\begin{equation*}
			T\longmapsto E_{1}(T),\ E_{2}(T),\ E_{3}(T),\ E_{4}(T),\ E_{5}(T),\ E_{6}(T)
		\end{equation*}
		are finite, nondecreasing, and continuous on $[0,T_{0}]$. The correctors $q^{(2)},q^{(3)}$, the second good unknown $q^{(1)}$, and the complementary scalar $\Delta u^3_{\neq}-2\partial_xq^{(3)}$ are also strongly continuous at every Sobolev level used in their energy estimates.
	\end{Lem}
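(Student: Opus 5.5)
The plan is to treat the six functionals in three groups, according to how each one depends on the terminal time $T$. The first group consists of pure supremum terms, $\sup_{0\le t\le T}\Phi(t)$ with $\Phi$ continuous in $t$. The second group consists of time integrals, $\bigl(\int_0^T\phi(t)\,dt\bigr)^{1/2}$ with $\phi\in L^1(0,T_0)$. The third group consists of the auxiliary corrector quantities.

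For a supremum term, monotonicity is immediate. Continuity in $T$ follows from continuity of $t\mapsto\Phi(t)$, since the running supremum of a continuous function is continuous. For an integral term, monotonicity and continuity both follow from absolute continuity of the Lebesgue integral. Finiteness in both cases follows from $U\in C([0,T_0];H^4)\cap L^2(0,T_0;H^5)$, the bound $\partial_tU\in L^2H^3$, and equivalence of the Kelvin and physical norms on the compact interval. The exponential weights $\ee^{\gamma\nu^{1/3}t}$ and the weight $\mu_\nu$ are continuous and bounded on $[0,T_0]$, so they do not affect either argument.

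The remaining work is to check, term by term, that each pointwise quantity is continuous in $t$. Most of these are Sobolev norms of $u$ of order at most $4$, and they are continuous because $u\in C_tH^4$; this covers the streak $H^4$ term, the roll $H^4$ norm, and $\nabla^3(\partial_x,\partial_z)u_{\neq}\in C_tL^2$. Three quantities need separate treatment.

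First, $\partial_tu^1_0$ in $H^2$. I will use the streak equation \eqref{eq:a-Na-revised} to express $\partial_tu^1_0$ through $\Delta_\perp u^1_0$, $u^2_0$, and quadratic terms. All of these lie in $C_tH^2$ by the $C_tH^4$ regularity and the product lemmas, so $\partial_tu^1_0\in C_tH^2$.

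Second, the coefficient quantities involving $\kappa$ and $D_z^V$. The assumption $\inf V_y>0$ together with the Moser estimate gives $\kappa\in C_tH^3$, and products then give continuity of $\partial_xD_z^Vu_{\neq}$ and $\partial_x\nabla q$ in $L^2$.

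Third, the Besov supremum $\sup_t\|u^\perp_0(t)\|_{\dot B^{-2}_{2,\infty}}$. This is the step I expect to be the main obstacle, because the homogeneous low-frequency norm is not controlled by $H^4$. I will use the Duhamel formula from \Cref{lem:B-prop}. The heat semigroup is strongly continuous on the closure of nice functions, but $\dot B^{-2}_{2,\infty}$ is not separable, so strong continuity is not automatic. To get around this, I will split
\[
u^\perp_0(t)-u^\perp_0(s)=(\ee^{\nu(t-s)\DeltaP}-I)u^\perp_0(s)+\text{Duhamel}(s,t).
\]
The Duhamel piece is bounded in $\dot B^{-2}_{2,\infty}$ by $C\int_s^t\bigl(\|u^\perp_0\|_{L^2}^2+\|u_{\neq}\|_{L^2}^2\bigr)\,d\tau\to0$, using \Cref{lem:div-B}. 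For the heat piece, I will bound the dyadic blocks with $2^{j}\ge R$ by $2^{-2j}\|u^\perp_0(s)\|_{L^2}\le R^{-2}\|u^\perp_0\|_{L_t^\infty L^2}$. The blocks with $2^j<R$ carry the multiplier bound $|\ee^{-\nu h|\xi|^2}-1|\le\nu hR^2$ times the Besov norm, which is uniformly bounded. Choosing $R$ first and then letting $h\to0$ gives continuity of $t\mapsto u^\perp_0(t)$ in $\dot B^{-2}_{2,\infty}$, and hence continuity of its running supremum.

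For the correctors $q^{(2)},q^{(3)}$, and hence for $q^{(1)}$ and $\Delta u^3_{\neq}-2\partial_xq^{(3)}$, I will use linear parabolic theory for $L_V$. The forcings in \eqref{eq:q2-corrector}--\eqref{eq:q3-corrector} lie in $L^2_tH^3$ with coefficients in $C_tH^3$, so standard energy estimates give solutions in $C_tH^4\cap L^2_tH^5$. This in turn gives the continuity required of the corresponding terms in $E_6$ and $E_4$.
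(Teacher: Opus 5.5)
Your proposal is correct and follows the paper's proof essentially step by step: running suprema and integrals coming from $u\in C_tH^4\cap L^2_tH^5$, Moser composition for $\kappa$, the streak equation for $\partial_tu^1_0$, the Duhamel split with \Cref{lem:div-B} for the Besov part, and linear parabolic estimates for the correctors. The paper is more economical in two places, and one of your justifications needs adjusting:
\begin{itemize}
\item For the heat piece, the paper bounds $\|(\ee^{\nu h\DeltaP}-I)f\|_{\dot B^{-2}_{2,\infty}}\le C\nu h\|f\|_{L^2}$ directly, since $2^{-2j}\min\{1,\nu h2^{2j}\}\le\nu h$. This makes your cut at $R$ and the uniform Besov bound unnecessary.
\item For the correctors, the paper works in Kelvin variables, where the unbounded term $y\partial_x$ disappears, so the Hilbert-triple theorem gives continuity in time. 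Your ``standard energy estimates'' in physical variables give only $L^\infty_t$ bounds, and continuity needs this extra step. The paper also claims only $q^{(2)}\in C_tH^2$ and $q^{(3)}\in C_tH^3$, which is all the energy estimates use.
\item Your $L^2_tH^3$ claim for the $q^{(2)}$ forcing is true, but not because the coefficients are in $C_tH^3$. The coefficient $\rho_1$ involves $\nabla\kappa$ and is only in $C_tH^2$, so you need $\rho_1\in L^2_tH^3$, which follows from $u^1_0\in L^2_tH^5$.
\end{itemize}
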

	
	\begin{proof}
The strong-solution regularity and compact-time Kelvin equivalence give
\begin{equation}
 u^1_{0},\ u^\perp_{0},\ u_{\neq}
 \in C([0,T_{0}];H^4)\cap L^2(0,T_{0};H^5).
 \label{eq:bootstrap-continuity-basic-variables}
\end{equation}
Since $V_y$ is bounded away from zero, Moser composition gives
\begin{equation}
 \kappa\in C_tH^3(\mathbb R^2),\qquad
 (\rho_1,\rho_2)\in C_tH^2(\mathbb R^2).
 \label{eq:bootstrap-continuity-coefficients}
\end{equation}
Thus $q,D_z^Vu_{\neq}\in C_tH^3$; the streak equation also gives
$\partial_tu^1_{0}\in C_tH^2$.

For the correctors, set
\begin{equation}
 \widetilde{q^{(i)}}(t,X,y,z)=q^{(i)}(t,X+ty,y,z),\qquad i=2,3.
 \label{eq:corrector-Kelvin-pullback}
\end{equation}
Their equations are
\begin{align}
 \partial_t\widetilde{q^{(2)}}-\nu\Delta_L\widetilde{q^{(2)}}
 +u^1_{0}\partial_X\widetilde{q^{(2)}}
 &=-\rho_1\bigl(V_y(\partial_y-t\partial_X)+V_z\partial_z\bigr)
       \widetilde{u^3_{\neq}},\nonumber\\
 \partial_t\widetilde{q^{(3)}}-\nu\Delta_L\widetilde{q^{(3)}}
 +u^1_{0}\partial_X\widetilde{q^{(3)}}
 &=-\bigl(V_y(\partial_y-t\partial_X)+V_z\partial_z\bigr)
       \widetilde{u^3_{\neq}},
 \label{eq:corrector-Kelvin-equations}
\end{align}
with zero initial data. By \eqref{eq:bootstrap-continuity-basic-variables}--\eqref{eq:bootstrap-continuity-coefficients},
the sources belong to $L_t^2H^2$ and $L_t^2H^3$, respectively.
Parabolic energy estimates and the equations therefore give
\begin{align*}
 \widetilde{q^{(2)}}&\in L_t^\infty H^2\cap L_t^2H^3,
 &\partial_t\widetilde{q^{(2)}}&\in L_t^2H^1,\\
 \widetilde{q^{(3)}}&\in L_t^\infty H^3\cap L_t^2H^4,
 &\partial_t\widetilde{q^{(3)}}&\in L_t^2H^2.
\end{align*}
The Hilbert-triple continuity theorem and the continuous inverse Kelvin
transform yield $q^{(2)}\in C_tH^2$ and $q^{(3)}\in C_tH^3$.
It follows that $q^{(1)}$ and $\Delta u^3_{\neq}-2\partial_xq^{(3)}$
belong to $C_tH^2$. The remaining auxiliary quantities are Sobolev
products or bounded elliptic multipliers of these fields.
Every ordinary energy component is consequently a running maximum
of a continuous norm or an integral of an $L^1$ function, and is continuous in $T$.

For the endpoint component, the Duhamel formula and \Cref{lem:div-B,lem:average-products} first give
\begin{equation*}
 \sup_{0\leq t\leq T_0}\|u^\perp_0(t)\|_{\dot B^{-2}_{2,\infty}}
 \leq C\|u^\perp_0(0)\|_{\dot B^{-2}_{2,\infty}}
 +C\int_0^{T_0}\bigl(\|u^\perp_0(s)\|_{L^2}^2
 +\|u^\perp_{\neq}(s)\|_{L^2}^2\bigr)\,ds<\infty.
\end{equation*}
The dyadic heat multiplier satisfies
\begin{equation*}
 \|\dot\Delta_j(\ee^{\nu h\DeltaP}-I)f\|_{L^2}
 \leq C\min\{1,\nu h2^{2j}\}\|\dot\Delta_jf\|_{L^2}.
\end{equation*}
Since $2^{-2j}\min\{1,\nu h2^{2j}\}\leq\nu h$, this gives directly
\begin{equation}
 \|(\ee^{\nu h\DeltaP}-I)f\|_{\dot B^{-2}_{2,\infty}}
 \leq C\nu h\|f\|_{L^2}\longrightarrow0
 \qquad(h\downarrow0).
 \label{eq:heat-strong-continuity-Bminus2}
\end{equation}
For $0\leq t_1<t_2\leq T_{0}$, Duhamel gives
\begin{align}
 u^\perp_{0}(t_2)-u^\perp_{0}(t_1)
 ={}&(\ee^{\nu(t_2-t_1)\DeltaP}-I)u^\perp_{0}(t_1)\nonumber\\
 &-\int_{t_1}^{t_2}\ee^{\nu(t_2-s)\DeltaP}\PP\nablaP\cdot
 \bigl(u^\perp_{0}\otimes u^\perp_{0}
       +(u^\perp_{\neq}\otimes u^\perp_{\neq})_{0}\bigr)(s)\,ds.
 \label{eq:R-Besov-continuity-Duhamel}
\end{align}
By \Cref{lem:div-B,lem:average-products} and \eqref{eq:heat-strong-continuity-Bminus2},
\begin{align}
 \|u^\perp_{0}(t_2)-u^\perp_{0}(t_1)\|_{\dot B^{-2}_{2,\infty}}
 \leq{}&C\nu(t_2-t_1)\|u^\perp_{0}(t_1)\|_{L^2}\nonumber\\*
 &+C\int_{t_1}^{t_2}(\|u^\perp_{0}(s)\|_{L^2}^2+\|u^\perp_{\neq}(s)\|_{L^2}^2)\,ds.
 \label{eq:R-Besov-continuity-bound}
\end{align}
The right-hand side tends to zero uniformly as $t_2-t_1\to0$.
Thus the Besov norm and its running maximum are continuous.
Monotonicity of all six energies follows from their definitions.
\end{proof}

Finally, smooth approximation transfers the a priori bounds to the stated solution class. The endpoint norm is handled by lower semicontinuity.
	
	\begin{Lem}
		\label{lem:Besov-lower-semicontinuity-final}
		Let $f_n,f\in C([0,T];L^2(\mathbb R^2))$ and assume
		\begin{equation}
			f_n\longrightarrow f\qquad\hbox{in }C([0,T];L^2(\mathbb R^2)).
			\label{eq:Besov-lsc-strong-L2}
		\end{equation}
		Then, with values in $[0,\infty]$,
		\begin{equation}
			\sup_{0\leq t\leq T}\|f(t)\|_{\dot B^{-2}_{2,\infty}}
			\leq
			\liminf_{n\to\infty}\sup_{0\leq t\leq T}\|f_n(t)\|_{\dot B^{-2}_{2,\infty}}.
			\label{eq:Besov-lsc-terminal}
		\end{equation}
		In particular, a uniform $L_t^\infty\dot B^{-2}_{2,\infty}$ bound passes to the strong $C_tL^2$ limit.
	\end{Lem}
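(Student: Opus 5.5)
The plan is to reduce \eqref{eq:Besov-lsc-terminal} to lower semicontinuity of each dyadic block at each fixed time, and then to pass the suprema through the liminf. Throughout, take the representatives of $f(t)$ and $f_n(t)$ to be the given $L^2$ functions; for these the dyadic blocks $\dot\Delta_j$ are well-defined bounded operators on $L^2(\mathbb R^2)$.

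First fix $t\in[0,T]$ and $j\in\mathbb Z$. The multiplier $\varphi(2^{-j}\cdot)$ is bounded by $\|\varphi\|_{L^\infty}$, so Plancherel gives
\[
\|\dot\Delta_jf(t)-\dot\Delta_jf_n(t)\|_{L^2}\leq C\|f(t)-f_n(t)\|_{L^2}.
\]
By \eqref{eq:Besov-lsc-strong-L2} the right-hand side tends to zero. Hence
\[
2^{-2j}\|\dot\Delta_jf(t)\|_{L^2}=\lim_{n\to\infty}2^{-2j}\|\dot\Delta_jf_n(t)\|_{L^2}.
\]
Each term of this limit is bounded by $\|f_n(t)\|_{\dot B^{-2}_{2,\infty}}$, which in turn is at most $A_n:=\sup_{0\leq s\leq T}\|f_n(s)\|_{\dot B^{-2}_{2,\infty}}$. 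Therefore
\[
2^{-2j}\|\dot\Delta_jf(t)\|_{L^2}\leq\liminf_{n\to\infty}A_n.
\]
This uses pointwise-in-time convergence only; uniform convergence in $t$ is more than enough here.

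Next, the right-hand side $\liminf_n A_n$ does not depend on $j$ or $t$. Taking the supremum over $j\in\mathbb Z$ and then over $t\in[0,T]$ therefore yields \eqref{eq:Besov-lsc-terminal}. The inequality is understood in $[0,\infty]$: if $\liminf_nA_n=\infty$ there is nothing to prove. The final assertion then follows at once, since a bound $A_n\leq K$ for all $n$ gives $\sup_t\|f(t)\|_{\dot B^{-2}_{2,\infty}}\leq K$.

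There is no substantial obstacle. The only point needing care is the realization of the homogeneous Besov space. Because $f(t)\in L^2$, the quantity $\|f(t)\|_{\dot B^{-2}_{2,\infty}}$ is defined directly by \eqref{eq:Bdef} with the $L^2$ representative, so no polynomial ambiguity or modulo-polynomials issue arises. One should also state explicitly that the Besov norm is a supremum of continuous functionals on $L^2$. It is therefore lower semicontinuous with respect to strong $L^2$ convergence, which is exactly what the argument uses.
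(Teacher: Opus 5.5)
Your proposal is correct and matches the paper's proof essentially step for step. You use strong $L^2$ convergence at each fixed $(j,t)$ to get convergence of $2^{-2j}\|\dot\Delta_j f_n(t)\|_{L^2}$, bound each term by $\sup_{s}\|f_n(s)\|_{\dot B^{-2}_{2,\infty}}$, and then take the supremum over $j$ and $t$; your observation that pointwise-in-time convergence already suffices is also accurate.
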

	
	\begin{proof}
For fixed $j,t$, strong $L^2$ convergence gives
$2^{-2j}\|\dot\Delta_jf(t)\|_{L^2}
=\lim_n2^{-2j}\|\dot\Delta_jf_n(t)\|_{L^2}$.
The latter is bounded by the lower limit of
$\sup_{s\leq T}\|f_n(s)\|_{\dot B^{-2}_{2,\infty}}$.
Take the supremum over $j,t$ to obtain \eqref{eq:Besov-lsc-terminal}.
\end{proof}
	
	\begin{Lem}
		\label{lem:H4-approx-stability-final}
		Let $u_n$ and $u$ be $H^4$ strong solutions on $[0,T]$ such that
		\begin{equation}
			u_n\longrightarrow u
			\quad\hbox{in}\quad
			C([0,T];H^3)\cap L^2(0,T;H^4),
			\label{eq:H4-approx-strong}
		\end{equation}
		and
		\begin{equation}
			\sup_n\left(
			\|u_n\|_{L^\infty(0,T;H^4)}
			+\|u_n\|_{L^2(0,T;H^5)}
			\right)<\infty.
			\label{eq:H4-approx-uniform}
		\end{equation}
		Assume that the associated shear maps are uniformly admissible on $[0,T]$. Construct the coefficients, good unknowns, parabolic correctors, and complementary scalar from $u_n$ and $u$ by the formulas in Section~\ref{sec:framework-notation} and Appendix~\ref{sec:second-good}. Then the auxiliary variables associated with $u_n$ converge to those associated with $u$ strongly at one derivative below their top energy level and weakly at the top level. In particular, every Sobolev part of
		\begin{equation*}
			E_{1}(T),\ E_{2}(T),\ E_{3}(T),\ E_{4}(T),\ E_{5}(T),\ E_{6}(T)
		\end{equation*}
		is lower semicontinuous under \eqref{eq:H4-approx-strong}--\eqref{eq:H4-approx-uniform}, and the endpoint component satisfies
		\begin{equation}
			\sup_{0\leq t\leq T}\|u^\perp_{0}(t)\|_{\dot B^{-2}_{2,\infty}}
			\leq
			\liminf_{n\to\infty}\sup_{0\leq t\leq T}\|(u_n^\perp)_{0}(t)\|_{\dot B^{-2}_{2,\infty}}.
			\label{eq:H4-approx-Besov-lsc}
		\end{equation}
	\end{Lem}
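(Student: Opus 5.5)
The plan is to treat the statement in three layers: the basic velocity components, the auxiliary variables built from them, and finally the lower semicontinuity of the energies.

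\emph{Basic fields.} By \eqref{eq:H4-approx-strong}--\eqref{eq:H4-approx-uniform} and Banach--Alaoglu, every subsequence has a further subsequence with $u_n\rightharpoonup u$ weak-$*$ in $L^\infty(0,T;H^4)$ and weakly in $L^2(0,T;H^5)$. The limit is identified by the strong convergence, so the whole sequence converges in these weak senses. Streamwise averaging and $P_{\neq}$ are bounded and commute with these limits. Hence $u^1_0,u^\perp_0,u_{\neq}$ converge strongly in $C_tH^3\cap L_t^2H^4$ and weakly at the top level.

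For $\partial_tu^1_0$, I will use the streak equation \eqref{eq:a-exact}. Its right-hand side is a combination of $\nu\DeltaP u^1_0$, $u^2_0$, and quadratic products. By the two-dimensional tame product estimates and \Cref{lem:average-products}, it converges strongly in $C_tH^1$ and is bounded in $L_t^\infty H^2$, so $\partial_t u^1_{0,n}\to\partial_tu^1_0$ in $C_tH^1$ and weak-$*$ in $L_t^\infty H^2$.

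\emph{Coefficients and correctors.} Uniform admissibility gives $V_y\ge c>0$ uniformly in $n$. The Moser composition estimate, together with its Lipschitz version at one derivative lower, then yields
\[
\kappa_n\to\kappa \text{ in } C_tH^2,\qquad (\rho_{1,n},\rho_{2,n})\to(\rho_1,\rho_2) \text{ in } C_tH^1,
\]
with uniform bounds in $C_tH^3$ and $C_tH^2$, respectively. Products with $u_{\neq,n}$ converge by the same tame product bounds, so $q_n$, $D_z^{V_n}u_{\neq,n}$, and the quantities in $\mathcal U_0,\mathcal U_1$ converge strongly one level below the top.

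For the correctors, I pass to the Kelvin form \eqref{eq:corrector-Kelvin-equations}. The differences $\widetilde{q^{(i)}_n}-\widetilde{q^{(i)}}$ satisfy a linear parabolic equation with zero data. Its source splits into a term that tends to zero in $L_t^2H^{1}$ (respectively $L_t^2H^2$) and a transport term $(u^1_{0,n}-u^1_0)\partial_X\widetilde{q^{(i)}}$, which is small by the $C_tH^3$ convergence of the streak. The energy estimate of Lemma~\ref{lem:bootstrap-continuity-final}, run one derivative lower, gives strong convergence of $q^{(i)}_n$ in $C_tH^{1+i-2}\cap L_t^2H^{2+i-2}$. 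The uniform top-level bounds then give weak convergence at the top. The same applies to $q^{(1)}_n$ and to $\Delta u^3_{\neq,n}-2\partial_xq^{(3)}_n$.

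\emph{Lower semicontinuity.} Each Sobolev part of $E_j(T)$ is a norm of the form $\|\ee^{\sigma\nu^{1/3}t}\vartheta(t)\,Lw\|$ in $L_t^\infty L^2$ or in $L_t^2L^2$. Here $\vartheta$ is a fixed weight, either $\mu_\nu^{1/2}$ or $1$; it is bounded on $[0,T]$ and does not depend on $n$. The operator $L$ is a fixed differential or Fourier multiplier, except that it contains $D_z^V$ or $\kappa$ in $E_6$. Once $L_nw_n\rightharpoonup Lw$ weakly in $L_t^2L^2$, or weak-$*$ in $L_t^\infty L^2$ (via duality with $L_t^1L^2$), weak lower semicontinuity of norms gives the inequality.

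For the $X_\gamma$ component $\nabla\Delta^{-1}\px$, this operator is bounded, so weak convergence persists. For the $L_t^\infty L^2$ parts I will use $\sup_t\|f(t)\|\le\liminf\sup_t\|f_n(t)\|$, which follows from pointwise-in-$t$ weak convergence in $L^2$. That pointwise convergence follows from strong $C_t$ convergence one level lower combined with uniform top-level bounds. The variable-coefficient terms such as $\px D_z^{V_n}u^j_{\neq,n}$ converge weakly because each is a product of a strongly convergent coefficient with a weakly convergent top-order derivative.

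The Besov inequality \eqref{eq:H4-approx-Besov-lsc} is \Cref{lem:Besov-lower-semicontinuity-final} applied to $f_n=(u_n^\perp)_0$, which converges in $C_tL^2$. The $\nu^{1/2}\|u^\perp_0\|_{L^2_tL^2}$ part actually converges.

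\emph{Main obstacle.} I expect the main difficulty to be the top-level weak convergence of the products involving $D_z^{V_n}$ and $\kappa_n$ in $E_6$, together with the corrector-dependent terms hidden in its linear estimates. The approach I will try first is to write each as (strongly convergent $L^\infty$ coefficient) $\times$ (weakly convergent $L_t^2L^2$ factor), with the coefficient controlled by $\kappa_n\to\kappa$ in $C_tW^{1,\infty}$ via $H^3(\R^2)\hookrightarrow W^{1,\infty}$. A strong–weak pairing then suffices.
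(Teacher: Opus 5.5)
Your proposal follows essentially the paper's route: strong convergence one derivative below the top with the top-level weak limits identified from it, Moser estimates for $\kappa_n$ and $\rho_{j,n}$, the zero-data difference energy estimate for the Kelvin-form correctors, weak lower semicontinuity of the norms, and \Cref{lem:Besov-lower-semicontinuity-final} for the endpoint component. There is one small slip in your last paragraph. The embedding $H^3(\R^2)\hookrightarrow W^{1,\infty}$ gives only a uniform bound on $\kappa_n$, not $C_tW^{1,\infty}$ convergence; that requires interpolating your $C_tH^2$ convergence against the uniform $C_tH^3$ bound, which is the paper's $C_tH^{2+\theta}$ step. The obstacle you anticipate does not actually arise, though: every component of $E_6$ involves at most third derivatives of $u$ and second derivatives of $\kappa$, so it already converges strongly from $u_n\to u$ in $C_tH^3$ and $\kappa_n\to\kappa$ in $C_tH^2$.
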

	
	\begin{proof}
Interpolation gives $u_n\to u$ in $C_tH^{3+\theta}$ for $0<\theta<1$.
For the zero modes, $H^{3+\theta}(\mathbb R^2)\hookrightarrow W^{2,\infty}$;
the uniform admissibility and Moser estimates therefore give convergence
of the shear maps and their inverses in $C_tW^{2,\infty}$, and
\begin{equation*}
 \kappa_n\to\kappa\quad\hbox{in }C_tH^{2+\theta},\qquad
 (\rho_{1,n},\rho_{2,n})\to(\rho_1,\rho_2)
 \quad\hbox{in }C_tH^{1+\theta}.
\end{equation*}
Apply the difference energy estimate to \eqref{eq:corrector-Kelvin-equations}.
The forcing differences tend to zero in $L_t^2H^1$ for $q^{(2)}$
and in $L_t^2H^2$ for $q^{(3)}$, including the differences of the
transport coefficients. Hence
\begin{equation*}
 \widetilde{q_n^{(2)}}\to\widetilde{q^{(2)}}\quad\hbox{in }C_tH^1\cap L_t^2H^2,
 \qquad
 \widetilde{q_n^{(3)}}\to\widetilde{q^{(3)}}\quad\hbox{in }C_tH^2\cap L_t^2H^3.
\end{equation*}
Uniform top-order energy bounds and uniqueness identify the weak limits
with the correctors of $u$. The defining formulas then give the corresponding
convergence of $q^{(1)}$, $\Delta u^3_{\neq}-2\partial_xq^{(3)}$,
and the elliptic auxiliary fields. The streak equation similarly identifies
the weak-star limit of $\partial_tu^1_{0,n}$ in $L_t^\infty H^2$.
Weak lower semicontinuity proves the Sobolev energy bounds, and
\Cref{lem:Besov-lower-semicontinuity-final} gives
\eqref{eq:H4-approx-Besov-lsc} from the strong $C_tL^2$ convergence of the rolls.
\end{proof}

	\section*{Declarations}
	\begin{itemize}
		\item \textbf{Acknowledgments} 
		The authors thank Prof. Wendong Wang for posing this problem and for helpful discussions.
		\item \textbf{Conflict of interest} 
        The authors declare that they have no conflict of interest.
		\item \textbf{Data Availability} 
        Data sharing is not applicable to this article as no datasets were generated or analyzed during the current study.
        \item \textbf{Use of artificial intelligence}
        During the preparation of this work, the authors used ChatGPT (OpenAI) in order to assist with language editing and checking mathematical arguments. After using this tool, the authors reviewed and edited the content as needed and take full responsibility for the content of the published article.
	\end{itemize}
	
	% 设置参考文献风格
	\bibliographystyle{myamsalpha}
	% 引入 bib 文件
	\bibliography{Refs}

\end{document}